\newtheorem{theorem}{Theorem}[section]
\newtheorem{lemma}[theorem]{Lemma}
\newtheorem{proposition}[theorem]{Proposition}
\newtheorem{corollary}[theorem]{Corollary}
\theoremstyle{definition}
\newtheorem{definition}[theorem]{Definition}
\newtheorem{remark}[theorem]{Remark}
\newtheorem{claim}[theorem]{Claim}
\newtheorem{notation}[theorem]{Notation}
\crefname{theorem}{Theorem}{Theorems}
\crefname{lemma}{Lemma}{Lemmas}
\crefname{proposition}{Proposition}{Propositions}
\crefname{corollary}{Corollary}{Corollaries}
\crefname{conjecture}{Conjecture}{Conjectures}
\crefname{definition}{Definition}{Definitions}
\crefname{example}{Example}{Examples}
\crefname{question}{Question}{Questions}
\crefname{remark}{Remark}{Remarks}
\crefname{claim}{Claim}{Claims}
\crefname{notation}{Notation}{Notations}
\crefname{equation}{Equation}{Equations}
\crefname{figure}{Figure}{Figures}
\newenvironment{proofclaim}{\paragraph{\emph{Proof of the Claim}.}}{\hfill\\} 
\newenvironment{proofsketch}{\paragraph{\emph{Sketch of proof}.}}{\hfill$\qed$\\}
\newcounter{mycount}
\newcommand{\myref}[1]{\hyperref[#1]{#1}}
\newcommand{\Pries}{{\sf{Pries}}}
\newcommand{\Spec}{{\sf{Spec}}}
\newcommand{\JMS}{{\sf{JM_{SLat}}}}
\newcommand{\JML}{{\sf{JM}}}
\newcommand{\CGS}{{\sf{CG_\SLat}}}
\newcommand{\CG}{{\sf{CG}}}
\newcommand{\BDGMS}{{\sf BDGM_{SLat}}}
\newcommand{\BDGM}{{\sf BDGM}}
\newcommand{\HsS}{{\sf{Hs_{SLat}}}}
\newcommand{\HsL}{{\sf{Hs}}}
\newcommand{\HD}{{\sf{DH}}}
\newcommand{\Hg}{{\sf{Hg}}}
\newcommand{\Urq}{{\sf{Urq}}}
\newcommand{\GvG}{{\sf{GvG}}}
\newcommand{\Plos}{Plo\v{s}\v{c}ica}
\newcommand{\Plo}{{\sf{Plo}}}
\newcommand{\AlgLatS}{{\sf{AlgLat_{\sigma}}}}
\newcommand{\AlgLatL}{{\sf{AlgLat_{\lambda}}}}
\newcommand{\AL}{{\sf{AL}}}
\newcommand{\CohLatS}{{\sf{CohLat_{\sigma}}}}
\newcommand{\CohLatL}{{\sf{CohLat_{\lambda}}}}
\newcommand{\SLat}{{\sf SLat}}
\newcommand{\Lat}{{\sf Lat}}
\newcommand{\FSpace}{F-space}
\newcommand{\LC}{{\sf LC}\,}
\newcommand{\RC}{{\sf RC}\,}
\newcommand{\LCUp}{(\LC\U)_p}
\newcommand{\RCUp}{(\RC\U)_p}
\newcommand{\LCUO}{(\LC\U)_0}
\newcommand{\RCUO}{(\RC\U)_0}
\newcommand{\OF}{{\sf OF}}
\newcommand{\KOF}{{\sf KOF}}
\newcommand{\CLF}{{\sf CLF}}
\newcommand{\ClopUp}{{\sf ClopUp}}
\newcommand{\ClosedUp}{{\sf ClosedUp}}
\newcommand{\OpenUp}{{\sf OpenUp}}
\newcommand{\fsat}{{\sf fsat}}
\newcommand{\FSat}{{\sf FSat}}
\newcommand{\Filt}{{\sf Filt}}
\newcommand{\Idl}{{\sf Idl}}
\newcommand{\Up}{{\sf Up}}
\newcommand{\cl}{\Delta}
\newcommand{\bd}{\blacklozenge}
\newcommand{\bs}{\blacksquare}
\newcommand{\up}{{\uparrow}}
\newcommand{\down}{{\downarrow}}
\newcommand\twoheaduparrow{%
\mathrel{\mathchoice{\raise2pt\hbox{\ooalign{\hss$\uparrow$\hss\cr\lower2pt\hbox{$\uparrow$}}}} {\raise2pt\hbox{\ooalign{\hss$\uparrow$\hss\cr\lower2pt\hbox{$\uparrow$}}}}  {\raise1.5pt\hbox{\ooalign{\hss$\scriptstyle\uparrow$\hss\cr\lower1.5pt\hbox{$\scriptstyle\uparrow$}}}}{\raise1.1pt\hbox{\ooalign{\hss$\scriptscriptstyle\uparrow$\hss\cr\lower1.1pt\hbox{$\scriptscriptstyle\uparrow$}}}}}}
\newcommand{\C}{\mathcal{C}}
\newcommand{\D}{\mathcal{D}}
\newcommand{\G}{\mathcal{G}}
\renewcommand{\H}{\mathcal{H}}
\newcommand{\K}{\mathcal{K}}
\renewcommand{\L}{L\,}
\newcommand{\U}{\mathcal{U}}
\newcommand{\Pl}{\mathcal{P}}
\newcommand{\Z}{\mathcal{Z}}
\newcommand{\HHs}{\mathbb{H}\mathbbm{s}}
\newcommand{\EE}{\mathbb{E}}
\newcommand{\FF}{\mathbb{F}}
\newcommand{\GG}{\mathbb{G}}
\newcommand{\HH}{\mathbb{H}}
\newcommand{\LL}{\mathbb{L}}
\newcommand{\UU}{\mathbb{U}}
\newcommand{\DD}{\mathbb{D}}
\newcommand{\PP}{\mathbb{P}}
\newcommand\nr[1]{\not\mathrel{#1}}
\newcommand\rel[1]{\mathrel{#1}}
\newcommand{\rad}{.09}
\newcommand{\spac}{10}
\newcommand{\scale}{.6}
\setlist[enumerate,1]{label={\upshape(\arabic*)},ref=\arabic*} 
\edef\plabelformat{(\string#2\string#1\string#3)}
\edef\plabelrangeformat{(\string#3\string#1,\string#2\string#6)}
\newcommand{\plabel}[1]{\label{#1}
\immediate\write\@auxout{\noexpand\crefformat{#1}{\noexpand\cref{#1}\plabelformat}
\noexpand\crefmultiformat{#1}{\noexpand\cref{#1}\plabelformat}{,\plabelformat}{,\plabelformat}{,\plabelformat}
\noexpand\crefrangeformat{#1}{\noexpand\cref{#1}\plabelrangeformat}}}
\newcommand{\abbrevref}[1]{%
  \begingroup
    \crefname{definition}{Def.}{Defs.}%
    \crefname{remark}{Rem.}{Rems.}%
    \cref{#1}%
  \endgroup
}
\begin{document}

\title[Duality Theory for Bounded Lattices]{Duality Theory for Bounded Lattices: \\ A Comparative Study}

\author{G.~Bezhanishvili}
\address{New Mexico State University}
\email{guram@nmsu.edu}

\author{L.~Carai}
\address{University of Milan}
\email{luca.carai.uni@gmail.com}

\author{P.~J.~Morandi}
\address{New Mexico State University}
\email{pmorandi@nmsu.edu}

\subjclass[2020]{06B15; 06B35; 06A12; 06A15; 06E15; 06F30; 18F70} 
\keywords{lattice; semilattice; algebraic lattice; Scott topology; Lawson topology; Galois correspondence; Stone duality; Priestley duality}  
\thanks{The third author was supported in part by NSF DMS-2231414}

\begin{abstract}
There are numerous generalizations of the celebrated Priestley duality for bounded distributive lattices 
to the non-distributive setting. The resulting dualities rely on an earlier foundational work of such authors as Nachbin, Birkhoff-Frink, Bruns-Lakser, Hofmann-Mislove-Stralka, and others.  
We undertake a detailed comparative study of the existing dualities for arbitrary bounded (non-distributive) lattices, including supplying the dual description of bounded lattice homomorphisms where it was lacking. This is achieved by working with relations instead of functions. As a result, we arrive at a landscape of categories that provide various generalizations of the category of Priestley spaces. We provide explicit descriptions of the functors yielding equivalences of these categories, together with explicit dual equivalences with the category of bounded lattices and bounded lattice homomorphisms. 
\end{abstract}

\maketitle
\tableofcontents

\section{Introduction} \label{sec: intro}

There is a well-developed duality theory for distributive lattices. It was first established by Stone \cite{Sto37} as a generalization of his celebrated duality for boolean algebras \cite{Sto36}. The resulting spaces are known as spectral spaces, which also arise as prime spectra of commutative rings \cite{Hoc69} (see also \cite{DST19}). An alternative duality was established by Priestley \cite{Pri70,Pri72} in the language of ordered Stone spaces, which became known as Priestley spaces. Cornish \cite{Cor75} showed that the two approaches are two sides of the same coin by establishing an isomorphism between the categories $\Spec$ of spectral spaces and $\Pries$ of Priestley spaces (see \cref{sec: preliminaries} for details).

The above approaches heavily depend on the Prime Filter Lemma (that distinct elements of a distributive lattice can be separated by a prime filter), which is no longer available for arbitrary lattices. Over the years, various approaches have been developed to generalize Stone and Priestley dualities to arbitrary (non-distributive) lattices. The first such approach was undertaken by Urquhart \cite{Urq78}, followed by Hartung \cite{Har92}, \Plos\ \cite{Plo95}, Dunn and Hartonas \cite{HD97}, Hartonas \cite{Har97}, Gehrke and van Gool \cite{GvG14}, and Jipsen and Moshier \cite{MJ14a}.  More recent approaches include  \cite{Har18,CG20,BDGM24,Har24}.

While some of these authors pointed out connections with earlier works, a comprehensive comparison of all these dualities is still lacking, and it is the goal of this article to fill in this gap. In addition, we provide an axiomatization of the category arising from the Dunn-Hartonas approach, and describe the morphisms dual to arbitrary lattice homomorphisms that was missing in the approaches of Urquhart, Hartung, \Plos, and Gehrke-van Gool. This is done by working with relations instead of functions between the appropriate spaces, a technique that originates in modal logic and has been used to describe duals of morphisms of distributive and implicative semilattices (see \cite{Cel03,Cel03b,BJ11,BJ13}). In the non-distributive case it was used by Celani and Gonz\'{a}lez \cite{CG20}.

An important underlying theme in these considerations is Nachbin's well-known correspondence between semilattices and algebraic lattices \cite{Nac49} (see also \cite{BF48}), and the resulting Pontryagin-style duality for semilattices \cite{HMS74}. This can be achieved by either working with join-semilattices and the ideal functor or meet-semilattices and the filter functor. Since most of the above approaches follow the latter, so will we. 

Every algebraic lattice is a continuous lattice, and hence carries the Scott and Lawson topologies (see, e.g., \cite{GHKLMS03}), which are central to some of the dualities mentioned above. These topologies can be defined in the more general setting of posets, giving rise to a more general approach to various Stone-like dualities. We refer to \cite{GHKLMS03} as well as to \cite{Ern91,Ern93,Ern04} and the references therein. For our purposes, working with the Scott topology yields the Jipsen-Moshier duality for semilattices, and working with the Lawson topology the BDGM duality \cite{BDGM24}. If the semilattice under consideration is a lattice, then the corresponding algebraic lattice is arithmetic (binary meets of compact elements are compact). While it is not necessary, it is convenient (and common) to restrict one's attention to bounded lattices. This results in further restriction that the top element of an arithmetic lattice is compact (equivalently, finite meets of compact elements are compact). Borrowing the terminology from pointfree topology \cite{Joh82}, we call such complete lattices coherent. Then Jipsen-Moshier duality for bounded lattices is established by working with the Scott topology  (see \cref{sec: JM}), while the BDGM duality by working with the Lawson topology  on coherent lattices (see \cref{sec: BDGM}). The equivalence of the two approaches is then a simple matter of appropriate restriction of the Cornish isomorphism between $\Spec$ and $\Pries$ (see \cref{sec: BDGM}). 

It is interesting to note that the spaces that Hartonas works with in \cite{Har97} are nothing more but BDGM-spaces in disguise. Thus, we derive Hartonas duality from the BDGM duality (see \cref{sec: Hs}).

While in non-distributive lattices we don't have sufficient supply of prime filters, we do have sufficient supply of the filters that are meet-irreducible elements of the lattice of filters. This allows to represent arbitrary (semi)lattices as subbasic closed sets of the topology on the set of meet-irreducible filters, an approach undertaken in \cite{CG20} (see also \cite{Har24}). This approach is outlined in \cref{sec: CG}, where we discuss how to go back and forth between the Jipsen-Moshier and Celani-Gonz\'{a}lez spaces.

Dunn-Hartonas work not only with the filters $\Filt(A)$, but also with the ideals $\Idl(A)$ of a lattice $A$, and the relation $R \subseteq \Filt(A) \times \Idl(A)$ given by $F \rel{R} I$ iff $F \cap I \ne \varnothing$. They call such triples canonical L-frames and develop their duality in the language of these triples. We provide an axiomatization of such objects, and show that the resulting category is equivalent to the category of Hartonas spaces. This yields an equivalence between the Jipsen-Moshier, Bezhanishvili et al, Celani-Gonz\'{a}lez, Hartonas, and Dunn-Hartonas approaches (see \cref{sec: DH}).

The remaining approaches are more faithful to the Priestley approach to distributive lattices in that these authors work with not all filters and ideals, but smaller collections that are big enough to separate distinct elements of the lattice. As a consequence, axiomatizations of the corresponding spaces are more involved. We show that the Gehrke-van Gool spaces can be obtained from Dunn-Hartonas spaces $(X, R, Y)$ by selecting appropriate closed subspaces of $X$ and $Y$ and restricting the relation $R$ to those (see \cref{sec: DH and GvG}). This is done by working with distributive joins and meets, an idea that goes back to MacNeille \cite{Mac37} (see also \cite{BL70,HK71}), which allows us to work with an appropriate weakening of the notion of prime element. Distributive joins and meets are also known as exact \cite{Bal84} (see also \cite{BPP14,BPW16}). Hartung spaces can then be obtained by further restricting Gehrke-van Gool spaces to subsets $X_0$ of $X$ and $Y_0$ of $Y$ that are maximal in certain sense, and taking $R_0 = R \cap (X_0 \times Y_0)$ (see \cref{sec: GvG and Hg}). We point out that $X_0$ and $Y_0$ are no longer closed subsets of $X$ and $Y$. Because of this, instead of the subspace topology of the original topology, we work with the subspace topology of the open downset topology, which is not  as well behaved (see \cite[pp.~447-448]{GvG14}).

Urquhart spaces can be obtained from Hartung spaces $(X_0,R_0,Y_0)$ by working with the subset $Z$ of  $X_0 \times Y_0$ consisting of pairs $(x, y)$ such that $x$ is $y$-maximal and $y$ is $x$-maximal, which is equipped with two natural orders $\le_1$ and $\le_2$ (see \cref{sec: Hg and Urq}). 
\Plos~spaces can be thought of as an alternative presentation of Urquhart spaces, where the two quasi-orders $\le_1$ and $\le_2$ are replaced by a single reflexive relation $R$ (see \cref{sec: Urq and Plo}).
Finally, we show that each Urquhart space $(Z,\le_1,\le_2)$ gives rise to the Dunn-Hartonas space $(X,R,Y)$, thus completing the circle of correspondences (see \cref{sec: Urq and DH}).

We point out that the approaches of Urquhart, Hartung, \Plos, and Gehrke-van Gool did not have morphisms fully developed. For example, Urquhart and Hartung developed duals of onto bounded lattice homomorphisms, while Gehrke-van Gool did the same for the more general class of admissible homomorphisms (see \cite[Def.~3.19]{GvG14}). These duals are described by appropriate pairs of functions. This approach does not generalize to arbitrary bounded lattice homomorphisms, which could be explained by the fact that restrictions of a 
Dunn-Hartonas morphism $(f, g) \colon (X,R,Y) \to (X',R',Y')$ to various subsets of $X, X'$ and $Y, Y'$ that the other authors work with may no longer be well-defined functions. 
We overcome this by working with pairs of relations between various such subsets, instead of pairs of functions, 
thus yielding a series of functors from the appropriate categories which we show are equivalences (see \cref{sec: HD GvG Hg Urq}). As a result, we arrive at the diagram of equivalences in \cref{figure: intro}, which captures in a nutshell how these various dualities for lattices relate to each other. In the diagram, the double arrows represent isomorphism and the single arrows equivalence of categories. In \cref{table: intro} we describe the main categories at play.

Letting $\Lat$ be the category of bounded lattices and bounded lattice homomorphisms, we show that composing each of the above equivalences with the corresponding dual equivalence with $\Lat$ yields a diagram that commutes up to natural isomorphism (see \cref{sec: conclusion}). We conclude the paper by demonstrating how each of these dualities restricts to the distributive case. As expected, the dualities of Celani-Gonz\'{a}lez, Gehrke-van Gool, Hartung, Urquhart, and \Plos~all collapse to Priestley duality. We also indicate how each of the above dualities works on a non-distributive lattice, thus showcasing the similarities and differences between all these different approaches (see \cref{sec: similarities and differences}). 

\vspace{1mm}
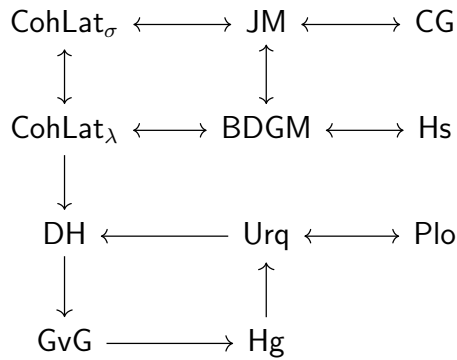
\begin{figure}[ht] 
\[
\begin{tikzcd}
\CohLatS \arrow[d, <->] \arrow[r, <->]& \JML \arrow[d, <->] \arrow[r] & \CG \\
\CohLatL \arrow[r, <->] \arrow[d, ->] & \BDGM \arrow[r, <->] & \HsL\\
 \HD \arrow[d] & \Urq \arrow[l, ->] & \Plo \arrow[l, <->]\\
\GvG \arrow[r] & \Hg \arrow[u] &
\end{tikzcd}
\]
\caption{Diagram of the equivalences}
\label{figure: intro}
\end{figure}
\vspace{1mm}

\begin{table}[ht]
\centering
\begin{tabular}{|lll|} \hline
\textbf{Category} & \textbf{Objects}  & \textbf{Location}\\ \hline
$\CohLatS$ & Coherent lattices with Scott topology & \abbrevref{CohLatS}\\
$\CohLatL$ & Coherent lattices with Lawson topology & \abbrevref{CohLatL}\\
$\JML$ & Jipsen-Moshier spaces for lattices   &  \abbrevref{def: JML-space}\\
$\BDGM$ & BDGM-spaces & \abbrevref{def: BDGM space} \\
$\HsL$ & Hartonas spaces for lattices & \abbrevref{def: HsL space}\\
$\CG$ & Celani-Gonz\'{a}lez spaces & \abbrevref{def: CG} \\
$\HD$ & Dunn-Hartonas spaces & \abbrevref{def: DH morphisms}\\
$\GvG$ & Gehrke-van Gool spaces& \abbrevref{def: GvG}\\
$\Hg$ & Hartung spaces& \abbrevref{def: Hg}\\
$\Urq$ & Urquhart spaces & \abbrevref{def: Urq}\\ 
$\Plo$ & \Plos\ spaces & \abbrevref{def: Plo space}\\ \hline
\end{tabular}
\vspace{3mm}
\caption{Table of the categories}
\label{table: intro}
\end{table}


\section{Preliminaries} \label{sec: preliminaries}

In this preliminary section we recall the Cornish isomorphism \cite{Cor75} between $\Spec$ and $\Pries$, the Nachbin theorem \cite{Nac49} connecting semilattices with algebraic lattices, and the resulting Pontryagin-style duality between semilattices and algebraic lattices \cite{HMS74}. We also specialize this to an equivalence between bounded lattices and what we term coherent lattices (the term that we borrow from pointfree topology \cite[p.~65]{Joh82}).

For a topological space $X$, let $\mathcal O(X)$ be the frame of opens of $X$. We recall \cite[Sec.~II.3.4]{Joh82} that $X$ is {\em coherent} if the compact opens form a bounded sublattice of $\mathcal O(X)$ that is a basis for $X$. Observe that each coherent space is compact. A nonempty closed subset $C$ of $X$ is \emph{irreducible} if it cannot be written as the union of two proper closed subsets. The space $X$ is \emph{sober} if each closed irreducible subset is the closure of a unique point. Observe that each sober space is $T_0$. If $X$ is coherent and sober, then $X$ is called a \emph{spectral space} \cite[p.~43]{Hoc69} (see also \cite[Def.~1.1.5] {DST19} or \cite[p.~65]{Joh82}; in the latter these spaces are called coherent). A map $f \colon X \to Y$ between spectral spaces is a \emph{spectral map} provided the pullback of a compact open is compact open. Note that each spectral map is continuous. Let $\Spec$ be the category of spectral spaces and spectral maps. 

A compact ordered space $X$ is a \emph{Priestley space} if it satisfies the Priestley separation axiom: If $x \not\le y$, then there is a clopen upset $U$ containing $x$ and missing $y$. Let $\Pries$ be the category of Priestley spaces and continuous order-preserving maps.

\begin{theorem} [Cornish's isomorphism \cite{Cor75}] \label{thm: Cornish}
$\Spec$ is isomorphic to  $\Pries$. 
\end{theorem}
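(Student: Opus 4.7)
The plan is to construct mutually inverse functors $F\colon \Spec \to \Pries$ and $G\colon \Pries \to \Spec$, both acting as the identity on underlying sets. For a spectral space $(X,\tau)$, set $F(X) = (X, \tau_p, \le)$, where $\tau_p$ is the \emph{patch topology} generated as a subbase by the compact opens of $\tau$ together with their complements, and $\le$ is the specialization order of $\tau$ (so every $\tau$-open is an upset). For a Priestley space $(X, \tau, \le)$, set $G(X) = (X, \tau^{\uparrow})$, where $\tau^{\uparrow}$ is the topology of $\tau$-open upsets. The content of the theorem lies in checking that these objects are what we claim, that the constructions are mutually inverse, and that spectral maps and continuous order-preserving maps correspond under identity on underlying sets.

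To see that $F(X)$ is a Priestley space, I would verify compactness of $\tau_p$ via an Alexander subbase argument exploiting sobriety and compactness of $\tau$; Hausdorffness by using sobriety to find, for any two distinct points, a compact open of $\tau$ separating them (both it and its complement lie in $\tau_p$); and the Priestley separation axiom by noting that for $x \not\le y$, some compact open $U$ of $\tau$ contains $x$ but not $y$, and $U$ is automatically a $\tau$-open upset and a $\tau_p$-clopen.

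To see that $G(X)$ is a spectral space, I would first identify the compact $\tau^{\uparrow}$-opens as exactly the clopen upsets of $\tau$: a clopen upset is $\tau$-compact and hence $\tau^{\uparrow}$-compact, while a compact $\tau^{\uparrow}$-open must be a finite union of the clopen upsets supplied by Priestley separation at each of its points, hence clopen. The clopen upsets are closed under finite meets and joins and form a basis for $\tau^{\uparrow}$, since any $\tau$-open upset is a union of the clopen upsets contained in it (again by Priestley separation). Sobriety reduces to showing that every irreducible closed downset $D$ of $\tau$ equals $\down x$ for a unique $x \in D$; Zorn's lemma produces maximal elements of $D$, and if $D$ had two such maximals the Priestley axiom would separate them by a clopen upset, splitting $D$ into two proper closed downsets and contradicting irreducibility.

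To finish, I would check that $G \circ F$ and $F \circ G$ restore the original objects, the crux being that clopen upsets on the Priestley side match compact opens on the spectral side; and that morphisms transport correctly, since a spectral map pulls back compact opens and hence the subbase of the patch topology (and is automatically specialization-preserving), while a continuous order-preserving map between Priestley spaces pulls back clopen upsets to clopen upsets and hence compact opens of the upset topology. I expect the main obstacle to be the sobriety step for $G(X)$, where the specific separation afforded by clopen upsets of a Priestley space (rather than merely open upsets of a compact ordered space) is essentially used; the other verifications are comparatively routine once the patch and upset topologies are in hand.
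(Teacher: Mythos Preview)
Your proposal is correct and follows exactly the approach the paper indicates. Note, however, that the paper does not actually prove this theorem: it is cited from Cornish \cite{Cor75}, and the paper only records, in the remark immediately following the statement, the two functors (patch topology plus specialization order in one direction, open upsets in the other) and the fact that spectral maps correspond to continuous order-preserving maps. Your sketch fleshes out precisely these functors with the standard verifications, so there is nothing to compare beyond saying that you have supplied the details the paper chose to omit.
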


\begin{remark}
We recall the functors establishing this isomorphism. If $(X, \tau)$ is a spectral space, then $(X, \pi, \le)$ is a Priestley space, where $\pi$ is the patch topology and $\le$ is the specialization order of $\tau$. Conversely, if $(X, \pi, \le)$ is a Priestley space, then $(X, \tau)$ is a spectral space, where $\tau$ is the topology of open upsets. In addition, a map between spectral spaces is spectral iff it is continuous and order-preserving with respect to the corresponding Priestley spaces.
\end{remark}

We next recall Nachbin's well-known result \cite{Nac49} that algebraic frames are exactly the ideal lattices of join-semilattices. By a \emph{join-semilattice} we mean a poset in which all finite joins exist. In particular, each join-semilattice has a least element 0. 

An element $k$ of a complete lattice $L$ is \emph{compact} if $k\le\bigvee S$ implies $k\le\bigvee T$ for some finite $T\subseteq S$. Let $K(L)$ be the set of compact elements of $L$. Then it is easy to see that $K(L)$ is a join-semilattice, where the order on $K(L)$ is the restriction of the order on $L$. The following definitions are well known (see, e.g., \cite[Def.~1-4.2]{GHKLMS03}, \cite[p.~252]{Joh82}).

\begin{definition}
We call a complete lattice $L$
\begin{enumerate}
\item a \emph{compact lattice} if $1 \in K(L)$;
\item an \emph{algebraic lattice} if $K(L)$ join-generates $L$; 
\item an \emph{arithmetic lattice} if $L$ is algebraic and $K(L)$ is closed under binary meets;
\item a \emph{coherent lattice} if $L$ is a compact arithmetic lattice.
\end{enumerate} 
\end{definition}

\begin{remark}
It is immediate from the definition that an algebraic lattice $L$ is coherent iff $K(L)$ is a bounded sublattice of $L$.
\end{remark}

For a join-semilattice $A$, let $\Idl(A)$ be the lattice of ideals of $A$.

\begin{theorem} [Nachbin's theorem \cite{Nac49}] \plabel{thm: Nachbin}
Let $L$ be a complete lattice.
\begin{enumerate}
\item $L$ is algebraic iff $L\cong\Idl(A)$ for a join-semilattice $A$. 
\item $L$ is arithmetic iff $L\cong\Idl(A)$ for a lattice $A$.
\item \label[thm: Nachbin]{Nachbin ideal} $L$ is coherent iff $L\cong\Idl(A)$ for a bounded lattice $A$. 
\end{enumerate}
\end{theorem}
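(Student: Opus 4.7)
The plan is to establish part (1) by constructing an explicit isomorphism and then to bootstrap parts (2) and (3) by tracking the extra structure on $A$. For the forward direction of (1), suppose $L$ is algebraic and consider
\[
\varphi \colon L \to \Idl(K(L)), \quad \varphi(x) = K(L) \cap {\down}x.
\]
Since the set of compact elements is closed under finite joins, $\varphi(x)$ is indeed an ideal of $K(L)$. Injectivity follows from the fact that $K(L)$ join-generates $L$: one has $x = \bigvee \varphi(x)$ for every $x \in L$. For surjectivity, given $I \in \Idl(K(L))$, set $x = \bigvee I$; the nontrivial inclusion $\varphi(x) \subseteq I$ uses compactness, namely if $k \in K(L)$ satisfies $k \le \bigvee I$, then $k \le k_1 \vee \cdots \vee k_n$ for some $k_1,\ldots,k_n \in I$, whence $k \in I$ since $I$ is a downset closed under finite joins. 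Order preservation in both directions is immediate, so $\varphi$ is a lattice isomorphism.

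For the backward direction of (1), let $A$ be a join-semilattice. The principal ideals ${\down}a$ are compact in $\Idl(A)$: if ${\down}a \subseteq \bigvee_{i \in S} I_i$, then $a$ lies in the ideal generated by $\bigcup_{i \in S} I_i$, hence $a \le a_1 \vee \cdots \vee a_n$ with each $a_j$ in some $I_{i_j}$, so ${\down}a$ is contained in a finite subjoin. As every ideal is the join of the principal ideals it contains, the compact elements join-generate $\Idl(A)$.

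For parts (2) and (3), the key observation is that the compact elements of $\Idl(A)$ are exactly the principal ideals: if $I \in \Idl(A)$ is compact, then $I \subseteq \bigvee_{a \in I}{\down}a$ yields $I \subseteq {\down}a_1 \vee \cdots \vee {\down}a_n = {\down}(a_1 \vee \cdots \vee a_n) \subseteq I$ for suitable $a_1,\ldots,a_n \in I$, so $I$ is principal. Hence $a \mapsto {\down}a$ is an isomorphism of join-semilattices between $A$ and $K(\Idl(A))$. If $A$ is a lattice, then ${\down}a \cap {\down}b = {\down}(a \wedge b)$ shows $K(\Idl(A))$ is closed under binary meets, so $\Idl(A)$ is arithmetic; conversely, if $L$ is arithmetic, then $K(L)$ is a lattice by assumption, and (1) gives $L \cong \Idl(K(L))$. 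This proves (2). For (3), observing that the top of $\Idl(A)$ is the ideal $A$ itself, we have $1 \in K(\Idl(A))$ iff $A$ is principal as an ideal, iff $A$ has a top element, iff $A$ is a bounded lattice.

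The main obstacle is correctly handling the surjectivity of $\varphi$ in (1): one must verify that for any ideal $I$ of $K(L)$, the element $x = \bigvee I$ satisfies $K(L) \cap {\down}x = I$, which is precisely the step where compactness of the elements of $K(L)$ is essential. Once this is secured, parts (2) and (3) follow essentially formally by tracking how the structure on $A$ transfers through the isomorphism $a \mapsto {\down}a$.
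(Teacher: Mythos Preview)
The paper does not actually prove this theorem: it is stated with a citation to Nachbin~\cite{Nac49} and used as a classical result throughout, with no proof given. Your argument is the standard one and is correct. The forward direction of~(1) via $\varphi(x)=K(L)\cap{\down}x$, with surjectivity established through compactness, is exactly the expected construction; the identification $K(\Idl(A))=\{{\down}a:a\in A\}$ is the right bridge to~(2) and~(3). One minor omission: in the backward direction of~(1) you implicitly use that $\Idl(A)$ is a complete lattice (arbitrary meets are intersections, joins are ideals generated by unions), which is worth stating explicitly even though it is routine. Otherwise there is nothing to compare against, and your write-up would serve as a self-contained proof where the paper simply appeals to the literature.
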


Nachbin's theorem has an obvious reformulation for the filter lattices of meet-semilattices (by which we mean posets in which all finite meets exist). Following the standard practice in duality theory for lattices, we will have a blanket assumption that all lattices are bounded. We will thus concentrate on the following two results, where $\Filt(A)$ is the lattice of filters of a meet-semilattice $A$:
\begin{itemize}
\item $L$ is an algebraic lattice iff $L\cong\Filt(A)$ for a meet-semilattice $A$; 
\item $L$ is a coherent lattice iff $L\cong\Filt(A)$ for a bounded lattice $A$.
\end{itemize}

There are several natural topologies on $\Filt(A)$. To recall them, we introduce the following standard notation: 

\begin{notation}
Let $X$ be a poset.
\begin{enumerate}
\item For $S \subseteq X$, we write $\up S$ for the upset and $\down S$ for the downset generated by $S$. Then $S$ is an upset if $S=\up S$ and a downset if $S=\down S$. 
\item For $S, T  \subseteq X$ we write $-S$ for set-theoretic complement and $S - T$ for set difference. 
\end{enumerate}
\end{notation}

\begin{definition} \cite[Def.~II-1.3]{GHKLMS03}
Let $L$ be an algebraic lattice. An upset $U$ of $L$ is \emph{Scott open} if $\bigvee S \in U$ implies $S \cap U \ne \varnothing$ for each directed $S \subseteq L$. 
\end{definition}

The Scott open subsets of $L$ form a topology, known as the \emph{Scott topology} and denoted by $\sigma(L)$. In what follows we will freely use the following well-known facts about the Scott topology (see \cite[Sec.~II-1, II-2]{GHKLMS03}).

\begin{theorem} \plabel{thm: Scott facts}
Let $L$ be an algebraic lattice.
\begin{enumerate}
\item \label[thm: Scott facts]{specialization order} The specialization order of the Scott topology on $L$ is the existing order of $L$.
\item \label[thm: Scott facts]{compact elements} Let $a \in L$. Then $\up a$ is Scott open iff $a \in K(L)$.
\item \label[thm: Scott facts]{basis} $\{ \up k : k \in K(L) \}$ is a basis for $\sigma(L)$.
\item \label[thm: Scott facts]{compact sober} The Scott topology on $L$ is a spectral topology.
\item \label[thm: Scott facts]{Scott continuous} Let $f \colon L \to M$ be a map between algebraic lattices. Then $f$ is Scott-continuous iff $f$ preserves directed joins. 
\end{enumerate}
\end{theorem}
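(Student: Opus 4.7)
The plan is to establish the five items in order, since each prepares for the next. For (1), I would first verify that $\down y$ is Scott closed for every $y\in L$: if $D$ is directed and $\bigvee D \not\le y$, then some $d\in D$ must already satisfy $d\not\le y$ (otherwise $\bigvee D \le y$), so $L\setminus\down y$ is a Scott-open upset. Since any closed set containing $y$ is a downset containing $y$, this gives $\overline{\{y\}} = \down y$, which is precisely the statement that the specialization order coincides with the original order on $L$.

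For (2), the forward direction is the compactness of $a$: if $a\in K(L)$ and directed $D$ has $\bigvee D \ge a$, then $a\le\bigvee F$ for some finite $F\subseteq D$, and directedness produces $d\in D$ above $F$, so $d\in\up a$. Conversely, if $\up a$ is Scott open and $a\le\bigvee S$, the set of finite joins from $S$ is directed with join $\bigvee S$, so some finite subjoin lies in $\up a$, exhibiting compactness. For (3), I combine (2) with algebraicity: the set $\{k\in K(L):k\le u\}$ is directed (finite joins of compacts are compact), has join $u$, and therefore meets any Scott open $U$ containing $u$, yielding a compact $k\le u$ with $u\in\up k\subseteq U$.

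For (4) I have to establish both sobriety and coherence of $\sigma(L)$. For sobriety, given an irreducible Scott-closed $C$, I use (2): for $k_1,k_2\in K(L)\cap C$, the Scott opens $\up k_1$ and $\up k_2$ both meet $C$, and irreducibility produces $c\in C$ above both, so $k_1\vee k_2\in C$. Hence $K(L)\cap C$ is directed, its join $m$ lies in $C$, and algebraicity gives every element of $C$ below $m$, so $C=\down m=\overline{\{m\}}$; uniqueness of $m$ follows from $T_0$-ness via (1). For coherence, every $\up k$ with $k\in K(L)$ is compact open (any open cover, refined via the basis from (3), must contain a basic set whose generator lies below $k$, and that single basic set already covers $\up k$); the whole space is $\up 0$; and $\up k_1\cap\up k_2=\up(k_1\vee k_2)$, so the compact opens contain the basic opens, are closed under finite intersections, and together with finite unions form a bounded sublattice of $\mathcal{O}(L)$ that is a basis.

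For (5), by (1) any Scott-continuous $f$ preserves the specialization order and is therefore monotone. The implication ($\Leftarrow$) is then a direct check: for Scott open $V\subseteq M$ and directed $D\subseteq L$ with $\bigvee D\in f^{-1}(V)$, monotonicity makes $f(D)$ directed, and preservation of its join together with Scott openness of $V$ yield an element of $D$ inside $f^{-1}(V)$. For ($\Rightarrow$), given directed $D$, monotonicity gives $f(\bigvee D)\ge\bigvee f(D)$; if the inequality were strict, then $M\setminus\down\bigvee f(D)$ would be a Scott open (by the argument of (1)) containing $f(\bigvee D)$, so Scott-continuity would produce $d\in D$ with $f(d)\not\le\bigvee f(D)$, a contradiction. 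The main obstacle is the irreducibility argument in (4), where algebraicity, the explicit basis from (2)--(3), and the irreducibility of $C$ must be combined to force $C$ to have a maximum; the remaining items are direct unpackings of the definitions once (1)--(3) are in hand.
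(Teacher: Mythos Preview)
Your proof is correct. The paper itself does not prove this theorem at all: it is stated as a collection of well-known facts about the Scott topology, with a bare citation to \cite[Sec.~II-1, II-2]{GHKLMS03} and no argument given. So there is nothing to compare against on the paper's side; you have supplied the details that the paper deliberately omits. Two minor remarks: in the backward direction of (5) you invoke monotonicity of $f$ before you have justified it, but preservation of directed joins immediately gives monotonicity (apply it to the directed set $\{x,y\}$ when $x\le y$), so this is harmless; and in the sobriety argument of (4) you might note explicitly that $K(L)\cap C$ is nonempty (e.g.\ because $0\in K(L)$ and $C$ is a nonempty downset), so that the directed-join step is not vacuous.
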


We next recall the Lawson topology (see \cite[Sec.~III-1]{GHKLMS03}).

\begin{definition}
Let $L$ be an algebraic lattice.
\begin{enumerate}
\item  The \emph{lower topology} $\omega(L)$ on $L$ is generated by the subbasis $\{ - \up x : x \in L\}$.  
\item  The \emph{Lawson topology} $\lambda(L)$ is the join of the Scott and lower topologies. 
\end{enumerate}
\end{definition}

We will freely use the following well-known facts about the Lawson topology (see \cite[Sec.~III-1, V-5]{GHKLMS03}). A \emph{meet-semilattice homomorphism} between two meet-semilattices is a map  $f \colon L \to M$ that preserves finite meets (and hence $f(1) = 1$).

\begin{theorem} \plabel{thm: Lawson facts}
Let $L$ be an algebraic lattice.
\begin{enumerate}
\item \label[thm: Lawson facts]{Lawson is patch}  The Lawson topology on $L$ is the patch topology of the Scott topology. 
\item \label[thm: Lawson facts]{Lawson upset is Scott}  An upset of $L$ is Lawson open iff it is Scott open.
\item \label[thm: Lawson facts]{Lawson continuous} Let $f \colon L \to M$ be a meet-semilattice homomorphism between algebraic lattices. Then $f$ is Lawson-continuous iff $f$ is Scott-continuous and preserves  arbitrary meets.
\end{enumerate}
\end{theorem}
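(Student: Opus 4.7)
The plan is to prove the three parts in sequence: part (1) by comparing generators of the two topologies, part (2) as an immediate consequence of (1) via the Cornish isomorphism, and part (3) using both.

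For part (1), I would first identify the compact opens of $\sigma(L)$. By \cref{thm: Scott facts}, $\{\up k : k \in K(L)\}$ is a basis of compact opens, and since $\sigma(L)$ is spectral, its compact opens form a bounded sublattice, so every compact open is a finite union of principal filters $\up k$ with $k \in K(L)$. Thus the patch topology $\pi(\sigma(L))$ is generated by all $\up k$ together with all complements $-\up k$ for $k \in K(L)$, while $\lambda(L)$ is generated by $\sigma(L)$ together with the subbasis $\{-\up x : x \in L\}$. To show these topologies agree, I would use algebraicity: each $x \in L$ equals $\bigvee\{k \in K(L) : k \le x\}$, so $\up x = \bigcap_{k \le x,\, k \in K(L)} \up k$ and hence $-\up x = \bigcup_{k \le x,\, k \in K(L)} -\up k$ is already open in the patch topology. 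This yields $\lambda(L) \subseteq \pi(\sigma(L))$, while the reverse inclusion is immediate since every patch generator is itself Lawson open.

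For part (2), once (1) is in hand, I would apply the description of the Cornish isomorphism recalled after \cref{thm: Cornish}: the spectral topology of a spectral space is recovered as the open upsets of its patch with respect to the specialization order. Since the specialization order of $\sigma(L)$ coincides with the given order on $L$ by \cref{thm: Scott facts}, this is exactly the statement that a subset of $L$ is Scott open iff it is a Lawson open upset.

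For part (3), the forward direction proceeds in two steps. A meet-semilattice homomorphism is order-preserving, so preimages of upsets remain upsets; Lawson continuity combined with part (2) therefore forces $f$ to be Scott-continuous, which by \cref{thm: Scott facts} means $f$ preserves directed joins. Since an arbitrary meet in a complete lattice equals the filtered meet of all finite meets of the given elements, preservation of arbitrary meets reduces to preservation of filtered meets. I would then establish the key convergence fact that any filtered family $S$ converges to its infimum $a = \bigwedge S$ in the Lawson topology: given a basic Lawson neighborhood $\up k \cap \bigcap_{i=1}^{n} -\up x_i$ of $a$, we have $k \le a \le s$ for every $s \in S$; and each $x_i \not\le a$ yields some $s_i \in S$ with $x_i \not\le s_i$, so by filteredness there exists $s^\ast \in S$ below every $s_i$, whence every $s' \in S$ with $s' \le s^\ast$ satisfies $x_i \not\le s'$ for all $i$. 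If $f$ failed to preserve the meet $\bigwedge S$, one could pick $k \in K(M)$ below $\bigwedge f(S)$ but not below $f(a)$ and use the Lawson open $-\up k$ to contradict Lawson continuity of $f$. For the converse, Scott-continuity handles the Scott generators of $\lambda(M)$, and if $f$ preserves arbitrary meets then the upset $\{a \in L : f(a) \ge x\}$ is closed under arbitrary meets, hence equals a principal filter $\up a_x$, so $f^{-1}(-\up x) = -\up a_x$ is lower open and hence Lawson open.

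The principal technical obstacle is the convergence of filtered families to their infimum in the Lawson topology, which needs a careful use of filteredness to produce the common lower bound $s^\ast$; the remaining steps are direct consequences of (1), (2), and the facts already assembled in \cref{thm: Scott facts}.
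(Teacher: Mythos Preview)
The paper does not provide a proof of this theorem: it states the three items as well-known facts about the Lawson topology and refers the reader to \cite[Sec.~III-1, V-5]{GHKLMS03}. So there is no proof in the paper to compare against.

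Your proposed argument is correct and self-contained. The reduction in part~(1) via algebraicity (writing $-\up x$ as a union of the $-\up k$) is the standard identification of the Lawson and patch topologies on an algebraic lattice; part~(2) is indeed an immediate consequence via the Cornish correspondence; and the argument for part~(3) is sound, including the key step that a down-directed family, viewed as a net, converges in $\lambda(L)$ to its infimum (your verification of this is correct), together with the observation that $f^{-1}(\up x)$ is a principal filter when $f$ preserves arbitrary meets. One cosmetic remark: in the forward direction of~(3) you could alternatively observe that Lawson-continuous maps between compact Hausdorff ordered spaces automatically preserve limits of convergent nets, which packages the contradiction argument a bit more cleanly, but your explicit contradiction via a compact $k \in K(M)$ works fine.
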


\needspace{3\baselineskip}
\begin{remark} \plabel{rem: Cornish and AlgLat}
\hfill
\begin{enumerate}
\item \label[rem: Cornish and AlgLat]{objects} Putting together Theorems~\ref{thm: Cornish}, \ref{thm: Scott facts}\eqref{compact sober}, and \ref{thm: Lawson facts}\eqref{Lawson is patch}, for each algebraic lattice $L$, we have that $(L, \sigma(L))$ is a spectral space and $(L, \lambda(L), \le)$ is the corresponding Priestley space. 
\item \label[rem: Cornish and AlgLat]{morphisms} In \cite{GHKLMS03}, meet-semilattices are not assumed to have a top, and hence meet-semilattice homomorphisms need not preserve the top. Under our definition of meet-semilattice homomorphisms, \cite[Thm.~III-1.8]{GHKLMS03} takes the stronger form of \cref{Lawson continuous} (namely, preserving nonempty meets becomes equivalent to preserving arbitrary meets). 
\end{enumerate}
\end{remark}

We next define the categories of algebraic lattices with the Scott topology and algebraic lattices with the Lawson topology to which the Cornish isomorphism restricts.

\begin{definition} \plabel{def: KAlgLat}
\hfill
\begin{enumerate}
\item \label[def: KAlgLat]{KAlgLatS} Let $\AlgLatS$ be the category of algebraic lattices with the Scott topology and Scott-continuous maps that preserve arbitrary meets.
\item \label[def: KAlgLat]{KAlgLatL} Let $\AlgLatL$ be the category of algebraic lattices with the Lawson topology and meet-semilattice homomorphisms that are Lawson-continuous.
\end{enumerate}
\end{definition}

\begin{remark} \label{rem: why spectral}
Clearly isomorphisms in both $\AlgLatS$ and $\AlgLatL$ are order-isomorphisms.
Let $f \colon L \to M$ be an $\AlgLatS$-morphism. We show that $f$ is spectral. Let $U \subseteq M$ be compact open. It is enough to show that $f^{-1}(U)$ is compact. By \cref{basis}, it suffices to assume that $U = \up k$ for some $k \in K(M)$. Set $x = \bigwedge f^{-1}(U)$, so $f^{-1}(U) \subseteq \up x$. Because $f$ preserves arbitrary meets, $f(x) = \bigwedge \{ f(z) : z \in f^{-1}(U)\}$. This shows that $k \le f(x)$. Therefore, $x \in f^{-1}(U)$, and hence $f^{-1}(U) = \up x$. Thus, $f^{-1}(U)$ is compact since each principal upset is compact. Consequently, $\AlgLatS$ is a (non-full) subcategory of $\Spec$. Similarly, $\AlgLatL$ is a (non-full) subcategory of $\Pries$.
\end{remark}

As an immediate consequence of the above we obtain:

\begin{theorem} \label{thm: iso between AlgLat_S and AlgLat_L}
The Cornish isomorphism between $\Spec$ and $\Pries$ restricts to an isomorphism between $\AlgLatS$ and $\AlgLatL$. 
\end{theorem}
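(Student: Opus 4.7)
The plan is to leverage the groundwork already laid in \cref{rem: Cornish and AlgLat,thm: Scott facts,thm: Lawson facts,rem: why spectral} and essentially just verify that the two restrictions (to objects and to morphisms) behave correctly under the Cornish isomorphism. Since \cref{rem: why spectral} already certifies that $\AlgLatS$ is a subcategory of $\Spec$ and $\AlgLatL$ is a subcategory of $\Pries$, I only need to check that the Cornish functors restrict to these subcategories and are mutually inverse there.

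First, for the action on objects. In the forward direction, if $L$ is an algebraic lattice, then by \cref{rem: Cornish and AlgLat}\eqref{objects} the Cornish functor sends the spectral space $(L, \sigma(L))$ to the Priestley space $(L, \lambda(L), \le)$, so $\AlgLatS$-objects go to $\AlgLatL$-objects. For the reverse direction, starting from $(L, \lambda(L), \le) \in \AlgLatL$, the inverse Cornish functor takes it to $(L, \tau)$, where $\tau$ is the topology of $\lambda(L)$-open upsets. By \cref{thm: Lawson facts}\eqref{Lawson upset is Scott}, $\tau = \sigma(L)$, so the inverse returns the $\AlgLatS$-object we expect (and the underlying order is recovered as the specialization order of $\sigma(L)$ by \cref{thm: Scott facts}\eqref{specialization order}).

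For morphisms, the key ingredient is \cref{thm: Lawson facts}\eqref{Lawson continuous}. A map $f\colon L \to M$ between algebraic lattices is an $\AlgLatS$-morphism iff it is Scott-continuous and preserves arbitrary meets. Such an $f$ automatically preserves finite meets (including the empty meet $1$), so it is a meet-semilattice homomorphism, and then by \cref{thm: Lawson facts}\eqref{Lawson continuous} it is Lawson-continuous; hence it is an $\AlgLatL$-morphism. Conversely, if $f$ is an $\AlgLatL$-morphism, then by the same theorem it is Scott-continuous and preserves arbitrary meets, so it is an $\AlgLatS$-morphism. Since the underlying set-functions are the same on both sides of the Cornish isomorphism, this gives a bijection of hom-sets compatible with composition.

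Putting these two verifications together, the restrictions of the Cornish functors to $\AlgLatS$ and $\AlgLatL$ yield mutually inverse functors, and therefore an isomorphism of categories. There is no real obstacle here; the only point requiring a moment of care is the double use of the hypothesis ``preserves arbitrary meets'' in \cref{def: KAlgLat}\eqref{KAlgLatS}, which both (i) ensures $f$ is a meet-semilattice homomorphism so that \cref{thm: Lawson facts}\eqref{Lawson continuous} can even be applied, and (ii) supplies the extra condition beyond Scott-continuity needed to get Lawson-continuity out of that theorem.
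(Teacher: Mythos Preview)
Your proof is correct and follows essentially the same approach as the paper, which simply states the theorem as an immediate consequence of \cref{objects} and \cref{Lawson continuous}. You have merely spelled out in detail what the paper leaves implicit, including the reverse direction on objects via \cref{Lawson upset is Scott}.
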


\begin{remark}\label{rem: AL}
Since the Scott and Lawson topologies are intrinsic to algebraic lattices, each of $\AlgLatS$ and $\AlgLatL$ is also isomorphic to the category $\sf{AL}$, considered in \cite[Def.~IV-1.13]{GHKLMS03}, whose objects are algebraic lattices and whose morphisms are maps preserving arbitrary meets and directed joins.
\end{remark}

Now suppose that $L, M$ are coherent lattices and $f \colon L \to M$ is an $\AlgLatS$-morphism. Since $f$ preserves arbitrary meets, it has a left adjoint $\ell \colon M \to L$. By \cite[Cor.~IV-1.12]{GHKLMS03}, $\ell[K(M)] \subseteq K(L)$. In addition, we have:

\begin{lemma} \label{lem: ell preserves meets}
Let $f \colon L \to M$ be an $\AlgLatS$-morphism with left adjoint $\ell$. The following are equivalent.
\begin{enumerate}
\item $\ell$ preserves finite meets.
\item $\ell$ preserves finite meets of compact elements.
\item $f^{-1}$ preserves finite joins of principal filters.
\item $f^{-1}$ preserves finite joins of principal filters of compact elements.
\end{enumerate}
\end{lemma}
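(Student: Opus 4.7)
The plan is to deduce $(1) \Leftrightarrow (3)$ and $(2) \Leftrightarrow (4)$ directly from the adjunction $\ell \dashv f$, and to reduce $(1) \Leftrightarrow (2)$ to coherence of $M$ together with the fact that $L$, being algebraic, is meet-continuous. The central identity I would record first is $f^{-1}(\up a) = \up\ell(a)$ for all $a \in M$, which follows since $x \in f^{-1}(\up a) \iff a \le f(x) \iff \ell(a) \le x$. Because in $\Filt(M)$ the join of two principal filters satisfies $\up a \vee \up b = \up(a \wedge b)$ and the bottom filter is $\up 1_M$, unpacking condition $(3)$ becomes $\up \ell(a \wedge b) = \up(\ell(a) \wedge \ell(b))$ and $\up\ell(1_M) = \up 1_L$ for all $a,b \in M$, which is exactly condition $(1)$. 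Restricting $a, b$ to $K(M)$, and using that $1_M \in K(M)$ since $M$ is coherent, gives $(2) \Leftrightarrow (4)$ by the same unpacking.

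The implication $(1) \Rightarrow (2)$ is a triviality since $K(M)$ is closed under finite meets by arithmeticity. For the essential direction $(2) \Rightarrow (1)$, given $a, b \in M$ set $K_a = K(M) \cap \down a$ and $K_b = K(M) \cap \down b$. These are directed because $K(M)$ is closed under finite joins, and $\bigvee K_a = a$, $\bigvee K_b = b$ by algebraicity of $M$. Since $K(M)$ is closed under binary meets, every compact $k \le a \wedge b$ lies in $K_a \cap K_b$ and equals $k \wedge k$, so one easily checks $a \wedge b = \bigvee\{k \wedge k' : k \in K_a,\, k' \in K_b\}$, a directed join of compact elements. Apply $\ell$: as a left adjoint it preserves all joins, and by $(2)$ it preserves the meets $k \wedge k'$ inside, so $\ell(a \wedge b) = \bigvee\{\ell(k) \wedge \ell(k') : k \in K_a,\, k' \in K_b\}$.

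The remaining step is to identify this with $\ell(a) \wedge \ell(b) = \bigvee \ell[K_a] \wedge \bigvee \ell[K_b]$, which is a matter of distributing binary meets over two directed joins in $L$. This is the only real obstacle, and it is resolved by the well-known fact that every algebraic lattice is meet-continuous: if $D \subseteq L$ is directed and $k \in K(L)$ with $k \le x \wedge \bigvee D$, then compactness of $k$ gives $k \le d$ for some $d \in D$, hence $k \le x \wedge d \le \bigvee(x \wedge D)$, and joining over all such $k$ yields $x \wedge \bigvee D = \bigvee (x \wedge D)$. Applying this twice to the directed sets $\ell[K_a]$ and $\ell[K_b]$ (which are directed since $\ell$ preserves joins) gives $\ell(a) \wedge \ell(b) = \bigvee\{\ell(k) \wedge \ell(k') : k \in K_a,\, k' \in K_b\}$, completing the equality $\ell(a \wedge b) = \ell(a) \wedge \ell(b)$. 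The top case $\ell(1_M) = 1_L$ is the nullary instance of $(2)$, since $1_M \in K(M)$. Together with $(1) \Leftrightarrow (3)$ and $(2) \Leftrightarrow (4)$, this closes the chain of equivalences.
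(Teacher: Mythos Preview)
Your proof is correct. The equivalences $(1)\Leftrightarrow(3)$ and $(2)\Leftrightarrow(4)$ are exactly the paper's argument, unpacking the adjunction identity $f^{-1}(\up a)=\up\ell(a)$.

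For $(2)\Rightarrow(1)$ you take a slightly different route than the paper. You work on the domain side: write $a\wedge b$ as a (directed) join of meets $k\wedge k'$ of compacts in $M$, push through $\ell$, and then invoke meet-continuity of the algebraic lattice $L$ to distribute $\ell(a)\wedge\ell(b)=\bigl(\bigvee\ell[K_a]\bigr)\wedge\bigl(\bigvee\ell[K_b]\bigr)$ over the two directed joins. The paper instead works on the codomain side: it picks a compact $k\in K(L)$ below $\bigwedge_{x\in S}\ell(x)$, uses compactness of $k$ against the directed join $\ell(x)=\bigvee\{\ell(m):m\in K(M),\,m\le x\}$ to find $m_x\le x$ with $k\le\ell(m_x)$, and then applies $(2)$ to the finite family $\{m_x\}$. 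This avoids naming meet-continuity as a separate lemma, though of course the same compactness mechanism is doing the work. Both arguments need only that $L$ and $M$ are algebraic; your appeals to coherence of $M$ (closure of $K(M)$ under binary meets, $1_M\in K(M)$) are not actually used in your argument and can be dropped---in particular, ``a directed join of compact elements'' is not quite right without arithmeticity, but you never rely on compactness of $k\wedge k'$, only on $k,k'\in K(M)$ when invoking $(2)$.
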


\begin{proof}
(1)$\Longleftrightarrow$(2) 
One direction is obvious. For the other, let $S\subseteq L$ be finite. Because $\ell$ is order-preserving, $\ell(\bigwedge S) \le \bigwedge \{ \ell(x) : x \in S\}$. To see the equality, let $x \in S$ and $k \in K(M)$ with $k \le \ell(x)$. Since $\ell$ preserves arbitrary joins, $\ell(x) = \bigvee \{ \ell(m) : m \in \down x \cap K(L) \}$. Because $k$ is compact and this join is directed, there is $m \in K(L)$ with $m \le x$ and $k \le \ell(m)$. Now, if $k \in K(M)$ with $k \le \bigwedge \{ \ell(x) : x \in S\}$, then for each $x \in S$ there is $m_x \in K(L)$ with $m_x \le x$ and $k \le \ell(m_x)$. Since $\ell$ preserves finite meets of compact elements, 
\[
k \le \ell\left(\bigwedge \{ m_x : x \in S\}\right) \le \ell\left(\bigwedge S \right).
\]
This yields the reverse inequality, and hence $\ell$ preserves all finite meets.

(1)$\Longleftrightarrow$(3) Since $f^{-1}(\up y) = \up \ell(y)$ for each $y \in M$, for $S \subseteq M$ finite, we have
\[
f^{-1}\left(\bigvee \{ \up y : y \in S\}\right) = f^{-1}\left( \big\uparrow \bigwedge S\right) = \big\uparrow \ell\left(\bigwedge S\right)
\]
and
\[
\bigvee \{ f^{-1}(\up y) : y \in S \} = \bigvee \{ \up \ell(y) : y \in S\} = \big\uparrow \bigwedge \{ \ell(y) : y \in S \}.
\]
From this we see that $f^{-1}$ preserves finite joins of principal filters iff $\ell$ preserves finite meets. Therefore, (1) and (3) are equivalent.

(2)$\Longleftrightarrow$(4) This follows from the same argument as for (1)$\Longleftrightarrow$(3).
\end{proof}

\begin{definition} \plabel{def: CohLat}
\hfill
\begin{enumerate}
\item \label[def: CohLat]{CohLatS} Let $\CohLatS$ be the category of coherent lattices and $\AlgLatS$-morphisms whose left adjoint preserves finite meets.
\item \label[def: CohLat]{CohLatL} Let $\CohLatL$ be the category of coherent lattices and $\AlgLatL$-morphisms whose left adjoint preserves finite meets.
\end{enumerate}
\end{definition}

Clearly $\CohLatS$ is a (non-full) subcategory of $\AlgLatS$ and $\CohLatL$ is a (non-full) subcategory of $\AlgLatL$. As an immediate consequence we obtain: 

\begin{theorem} \label{thm: iso between CohLat_S and CohLat_L}
The isomorphism of Theorem~\emph{\ref{thm: iso between AlgLat_S and AlgLat_L}} restricts to an isomorphism between $\CohLatS$ and $\CohLatL$.
\end{theorem}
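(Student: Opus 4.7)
The plan is to observe that the isomorphism of \cref{thm: iso between AlgLat_S and AlgLat_L} is, on objects, just the identity on the underlying lattice (with a switch from the Scott to the Lawson topology), and on morphisms it is the identity on underlying functions. Since $\CohLatS$ and $\CohLatL$ both have coherent lattices as objects, and the defining extra condition on morphisms --- that the left adjoint preserves finite meets --- depends only on the underlying function and the lattice structure, the restriction is automatic. So the work is to unpack the definitions and check this carefully.

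\emph{Step 1 (objects).} The functors underlying the isomorphism of \cref{thm: iso between AlgLat_S and AlgLat_L} act on objects by $(L,\sigma(L))\mapsto(L,\lambda(L),\le)$ and $(L,\lambda(L),\le)\mapsto(L,\sigma(L))$, as recorded in \cref{objects}. In particular, the underlying lattice $L$ is preserved. Since ``coherent lattice'' is a property of $L$ alone, the functors send objects of $\CohLatS$ to objects of $\CohLatL$ and vice versa.

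\emph{Step 2 (morphisms).} Let $L,M\in\CohLatS$. By definition, a $\CohLatS$-morphism $f\colon L\to M$ is precisely an $\AlgLatS$-morphism such that its left adjoint $\ell\colon M\to L$ preserves finite meets. Under the isomorphism of \cref{thm: iso between AlgLat_S and AlgLat_L}, the set of $\AlgLatS$-morphisms from $L$ to $M$ coincides (as a set of functions) with the set of $\AlgLatL$-morphisms from $L$ to $M$: both amount to functions preserving arbitrary meets and directed joins (cf.~\cref{Lawson continuous} and the remark following \cref{thm: iso between AlgLat_S and AlgLat_L}). Now the left adjoint $\ell$ is determined by $f$ and the lattice orders of $L$ and $M$ via $\ell(y)=\bigwedge f^{-1}(\up y)$; it does not depend on whether we view $f$ as an $\AlgLatS$- or $\AlgLatL$-morphism. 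Therefore the condition ``$\ell$ preserves finite meets'' cuts out the same subclass of $\AlgLatS(L,M)=\AlgLatL(L,M)$, which by definition is $\CohLatS(L,M)=\CohLatL(L,M)$.

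\emph{Step 3 (functoriality of the restriction).} Composition in $\CohLatS$ and $\CohLatL$ is inherited from $\AlgLatS$ and $\AlgLatL$ respectively, and under the isomorphism of \cref{thm: iso between AlgLat_S and AlgLat_L} these compositions agree. Thus the two functors furnished by \cref{thm: iso between AlgLat_S and AlgLat_L} restrict to mutually inverse functors between $\CohLatS$ and $\CohLatL$, yielding the desired isomorphism.

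There is no real obstacle here beyond assembling the definitions; the only point that deserves a brief mention is the well-definedness of the morphism restriction, which is what Step~2 addresses.
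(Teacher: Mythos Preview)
Your proposal is correct and matches the paper's approach: the paper simply records the theorem as an immediate consequence of the definitions, and you have spelled out exactly the straightforward checks that make it immediate. There is nothing to add.
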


We recall from the introduction that $\Lat$ is the category of bounded lattices and bounded lattice homomorphisms. Let also  $\SLat$ be the category of meet-semilattices and meet-semilattice homomorphisms.

\begin{theorem} \plabel{thm: CohLat = Lat}
\hfill
\begin{enumerate}
\item \label[thm: CohLat = Lat]{thm: Nachbin 1} The categories $\AlgLatS$ and $\AlgLatL$ are each dually equivalent to $\SLat$.
\item \label[thm: CohLat = Lat]{thm: Nachbin 2} The categories $\CohLatS$ and $\CohLatL$ are each dually equivalent to $\Lat$.
\end{enumerate}
\end{theorem}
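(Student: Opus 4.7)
The plan is to build mutually quasi-inverse contravariant functors between $\SLat$ and $\AlgLatS$, and then transfer the duality to $\AlgLatL$ via \cref{thm: iso between AlgLat_S and AlgLat_L}; the same construction, restricted to coherent lattices and combined with \cref{thm: iso between CohLat_S and CohLat_L}, will yield part~(2). Define $\mathsf{F}\colon \SLat \to \AlgLatS^{\op}$ on objects by $A \mapsto (\Filt(A), \sigma(\Filt(A)))$ and on morphisms by sending $f\colon A \to B$ to the preimage map $f^{-1}\colon \Filt(B) \to \Filt(A)$. Define $\mathsf{K}\colon \AlgLatS^{\op} \to \SLat$ on objects by $L \mapsto K(L)^{\op}$ and on morphisms by sending $\phi\colon L \to M$ to the restriction $\ell|_{K(M)}\colon K(M)^{\op} \to K(L)^{\op}$, where $\ell$ is the left adjoint of $\phi$ (which exists because $\phi$ preserves arbitrary meets).

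First I would check well-definedness on morphisms. For $\mathsf{F}$: the preimage of a filter under $f$ is a filter because $f$ preserves finite meets and $f(1)=1$; preimages commute with intersections, so $\mathsf{F}(f)$ preserves arbitrary meets in $\Filt$; and $\mathsf{F}(f)$ preserves directed unions, which are the directed joins in $\Filt$, so it is Scott-continuous by \cref{Scott continuous}. For $\mathsf{K}$: by \cite[Cor.~IV-1.12]{GHKLMS03} the left adjoint $\ell$ restricts to $K(M) \to K(L)$; it preserves finite joins in $K(M)$ (as every left adjoint preserves arbitrary joins), and it sends $0_M$ to $0_L$; viewed as $K(M)^{\op} \to K(L)^{\op}$ it is therefore a morphism in $\SLat$.

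Next I would produce natural isomorphisms $\eta_A\colon A \to \mathsf{K}\mathsf{F}(A)$, $a \mapsto \up a$, and $\varepsilon_L\colon L \to \mathsf{F}\mathsf{K}(L)$, $x \mapsto \{k \in K(L) : k \le x\}$. The map $\eta_A$ is an isomorphism because principal filters are exactly the compact elements of $\Filt(A)$ (by \cref{compact elements} applied to the filter lattice) and $\up a \subseteq \up b$ iff $b \le a$; naturality follows from the computation that the left adjoint of $f^{-1}$ sends $\up a$ to $\up f(a)$. The map $\varepsilon_L$ is a bijection because filters of $K(L)^{\op}$ coincide with ideals of $K(L)$, and Nachbin's theorem supplies $L \cong \Idl(K(L))$ under $x \mapsto \{k \le x\}$, with inverse $F \mapsto \bigvee F$; since every $\AlgLatS$-isomorphism is an order isomorphism of the underlying algebraic lattices (\cref{rem: why spectral}), $\varepsilon_L$ is automatically an $\AlgLatS$-isomorphism, and its naturality reduces to the adjunction identity $\ell(k) \le x \iff k \le \phi(x)$. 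This establishes part~(1) on the Scott side, and \cref{thm: iso between AlgLat_S and AlgLat_L} transports it to $\AlgLatL$.

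For part~(2) I would restrict to bounded lattices and coherent lattices. If $A \in \Lat$, then $\Filt(A)$ is coherent by the filter form of \cref{Nachbin ideal}; moreover, the left adjoint of $\mathsf{F}(f)$ sends $\up a$ to $\up f(a)$, so it preserves finite meets of principal filters because $A$ has finite joins and $f$ preserves them, and hence $\mathsf{F}(f) \in \CohLatS$ by \cref{lem: ell preserves meets}. If $L \in \CohLatS$, then $K(L)^{\op}$ is a bounded lattice because $K(L)$ is a bounded sublattice of $L$, and the $\CohLatS$-axiom that $\ell$ preserves finite meets says exactly that $\mathsf{K}(\phi)\colon K(M)^{\op} \to K(L)^{\op}$ preserves finite joins and the top, turning it into a $\Lat$-morphism. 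The isomorphisms $\eta$ and $\varepsilon$ restrict to these subcategories, and \cref{thm: iso between CohLat_S and CohLat_L} carries the duality over to $\CohLatL$. The principal obstacle will be the bookkeeping of variances (distinguishing $K(\cdot)$ from $K(\cdot)^{\op}$) and the use of \cref{lem: ell preserves meets} to align the $\CohLatS$-axiom with preservation of joins in $K(M)^{\op}$; all other verifications amount to direct unwindings of the definitions.
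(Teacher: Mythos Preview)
Your proposal is correct and follows exactly the construction the paper records in \cref{rem: functors giving the duality}: the contravariant functors $A\mapsto\Filt(A)$, $f\mapsto f^{-1}$ and $L\mapsto (K(L),\ge)$, $\phi\mapsto \ell|_{K(M)}$, with the evident unit and counit. The only difference is presentational: the paper's proof is a citation sketch that invokes \cite[Thm.~I.3.9]{HMS74} and \cite[Thm.~IV-1.16]{GHKLMS03} for the $\SLat$--$\AlgLatL$ duality and then transfers it via \cref{thm: iso between AlgLat_S and AlgLat_L}, whereas you verify the equivalence directly on the Scott side and then transfer. One small wording issue: when you justify that $\varepsilon_L$ is an $\AlgLatS$-isomorphism you appeal to \cref{rem: why spectral}, but that remark gives the converse implication (isomorphisms in $\AlgLatS$ are order-isomorphisms); what you actually need---and what is true---is that any order-isomorphism between algebraic lattices is automatically an $\AlgLatS$-isomorphism, since the Scott topology is determined by the order.
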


\begin{proofsketch}
\eqref{thm: Nachbin 1} In \cite[Thm.~I.3.9]{HMS74} (see also \cite[Thm.~IV-1.16]{GHKLMS03}), Nachbin's theorem was generalized to a duality between $\SLat$ and the category of Stone semilattices. Since the latter is (isomorphic to) $\AlgLatL$, the result follows by applying \cref{thm: iso between AlgLat_S and AlgLat_L}.

\eqref{thm: Nachbin 2} The dual equivalence of $\AlgLatL$ and $\SLat$ restricts to a dual equivalence of $\CohLatL$ and $\Lat$ (see, e.g., \cite[Prop.~3.19]{HMS74}). Now apply \cref{thm: iso between CohLat_S and CohLat_L}.
\end{proofsketch}

\begin{remark}\ \label{rem: functors giving the duality}
\begin{enumerate}
\item \cref{thm: Nachbin 1} together with \cref{rem: AL} yields that $\SLat$ is also dually equivalent to $\AL$ (see \cite[Cor.~5.9(3)]{Ern91}, \cite[Thm.~IV.1.16]{GHKLMS03}).

\item The functors establishing the dual equivalences of \cref{thm: CohLat = Lat} are described as follows. One functor sends a semilattice or lattice $A$ to $\Filt(A)$ and a morphism $\alpha \colon A \to B$ to $\alpha^{-1} \colon \Filt(B) \to \Filt(A)$. The other sends an algebraic or coherent lattice $L$ to $(K(L), \ge)$  and a morphism $f \colon L \to M$ to the restriction of its left adjoint.
\end{enumerate}
\end{remark}


\section{The Jipsen-Moshier approach} \label{sec: JM}

In \cite{MJ14a}, Jipsen and Moshier defined two categories of topological spaces and showed that they are dually equivalent to $\SLat$ and $\Lat$. In this section we show that these two categories of spaces are isomorphic to $\AlgLatS$ and $\CohLatS$, respectively.

For a topological space $X$, we write $\le$ for the specialization order on $X$. A nonempty subset $F$ of $X$ is a \emph{filter} if for each $x, y \in F$, there is $z \in F$ with $z \le x, y$. Observe that when $X$ is a meet-semilattice, this notion is equivalent to the standard notion of filter. Let $\OF(X)$ be the open filters and $\KOF(X)$ the compact open filters of $X$. A subset $A$ of $X$ is $F$-\emph{saturated} if it is an intersection of open filters. For $A \subseteq X$, set
\[
\fsat(A) = \bigcap \{ U \in \OF(X) : A \subseteq U\}.
\]
Then $\fsat$ is a closure operator on $X$, and so its fixpoints form the complete lattice $\FSat(X)$, where meet is given by intersection and join by $\bigvee \mathcal S = \fsat\left(\bigcup \mathcal S\right)$ (see, e.g., \cite[Prop. 7.2(ii)]{DP02}).

\begin{definition} \label{def: MJ-space}
\hfill
\begin{enumerate}
\item \label[def: MJ-space]{MJ objects} Let $X$ be a topological space. We call $X$ a {\em Jipsen-Moshier space}, or JM-space for short, if $X$ is sober and $\KOF(X)$ forms a basis that is closed under finite intersections.
\item \label[def: MJ-space]{MJ morphisms}Let $\JMS$ be the category of Jipsen-Moshier spaces and maps $f \colon X \to Y$ satisfying $U \in \KOF(Y)$ implies $f^{-1}(U) \in \KOF(X)$.
\end{enumerate}
\end{definition}

\begin{remark} \label{rem: JM maps are continuous}
JM-spaces are called HMS-spaces in \cite{MJ14a}, to emphasize their reliance on Hofmann-Mislove-Stralka duality for semilattices \cite{HMS74}. 
The definition of a JM-space is a strengthening of the definition of a spectral space in that the coherence condition on compact open subsets holds for the smaller collection $\KOF(X)$, and similarly for morphisms. Observe that $\KOF(X)$ is a meet-subsemilattice of $\FSat(X)$ and every $\JMS$-morphism is automatically continuous.
\end{remark}

Because we will be viewing algebraic lattices as topological spaces, we will typically denote them by $X$, $Y$, $Z$. 
We recall the following results from \cite[Sec.~2, 3]{MJ14a}. 
\begin{theorem} \plabel{thm: JM results}
Let $X$ be a topological space and $\le$ its specialization order.
\begin{enumerate}
\item \label[thm: JM results]{compact is principal} A filter of $X$ with respect to $\le$ is compact iff it is principal. 
\item \label[thm: JM results]{JMS is AlgLatS} $X \in \JMS$ iff $(X, \le)$ is an algebraic lattice and the topology of $X$ is the Scott topology.
\end{enumerate}
\end{theorem}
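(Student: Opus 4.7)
The plan is to prove (1) first and use it in (2).

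For (1), open sets are always upsets with respect to the specialization order, so if $F = \up x$ is principal then every open $U$ containing $x$ also contains $\up x$, and any open cover of $F$ admits a singleton subcover. For the converse, suppose $F$ is a non-principal filter; then for each $y \in F$ there is some $x \in F$ with $y \not\le x$. Since $\overline{\{x\}} = \down x$ is closed, $V_x := X \setminus \down x$ is open, and the family $\{V_x : x \in F\}$ covers $F$. If this admitted a finite subcover $V_{x_1},\dots,V_{x_n}$, the filter property would produce $z \in F$ with $z \le x_i$ for every $i$, forcing $z \in \bigcap_i \down x_i$ and contradicting coverage.

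For (2), the direction $(\Leftarrow)$ is immediate from \cref{thm: Scott facts}. Sobriety is part of \cref{compact sober}. For $k \in K(X)$, \cref{compact elements} gives that $\up k$ is Scott open; being principal, it is compact by (1). Then \cref{basis} shows $\{\up k : k \in K(X)\}$ is a basis. Since $K(X)$ is closed under binary joins in any algebraic lattice, $\up k_1 \cap \up k_2 = \up(k_1 \vee k_2)$ lies in this basis, so $\KOF(X)$ is a basis closed under finite intersections.

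The main obstacle is $(\Rightarrow)$, where one must reconstruct an algebraic lattice structure from the JM-space data. The approach is to invoke the Hofmann-Mislove-Stralka-style correspondence between meet-semilattices and algebraic lattices recalled in \cref{sec: preliminaries}. By (1), every element of $\KOF(X)$ is a principal filter $\up k$, and under reverse inclusion $\KOF(X)$ is a meet-semilattice, closure under finite intersections being part of the JM-space axioms. Define
\[
\eta \colon X \longrightarrow \Filt(\KOF(X)), \qquad \eta(x) = \{U \in \KOF(X) : x \in U\}.
\]
Each $\eta(x)$ is a filter because members of $\KOF(X)$ are upsets closed under finite intersections. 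Sobriety implies $T_0$, hence injectivity of $\eta$, and the basis property of $\KOF(X)$ makes $\eta$ an order-embedding. The crux is surjectivity: given $\mathcal F \in \Filt(\KOF(X))$, sobriety is used to produce a unique generic point $x \in X$ with $\eta(x) = \mathcal F$, by recognizing that a suitably defined closed set is irreducible. This yields $X \cong \Filt(\KOF(X))$, so $(X, \le)$ is algebraic by Nachbin's theorem, with $K(X)$ identified with the principal filters in $\KOF(X)$. Finally, $\tau$ and $\sigma(X)$ share the specialization order $\le$ and the common basis $\{\up k : k \in K(X)\}$, hence coincide. The delicate step is the surjectivity of $\eta$, which is where sobriety plays an essential role.
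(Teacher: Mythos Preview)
Your argument for (1) is correct and complete. For (2)($\Leftarrow$) your sketch is fine, modulo the easy observation that every $U\in\KOF(X)$ is some $\up k$ with $k\in K(X)$ (by (1) and \cref{compact elements}), so $\KOF(X)=\{\up k:k\in K(X)\}$ and closure under finite intersection follows.

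For (2)($\Rightarrow$) you take a genuinely different route from the paper's sketch. The paper argues \emph{directly}: sobriety gives directed joins in $(X,\le)$ (the closure of a directed set is irreducible, hence has a generic point, which is the supremum), and then arbitrary meets are obtained by taking, for $S\subseteq X$, the directed join of $\{k: S\subseteq \up k\in\KOF(X)\}$; algebraicity and $\tau=\sigma(X)$ then follow. You instead propose the structural identification $X\cong\Filt(\KOF(X))$ via $\eta$ and appeal to Nachbin. Both are valid; your approach is cleaner conceptually but hides the analytic content inside ``surjectivity of $\eta$''.

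Two points to fix. First, a slip: $\KOF(X)$ is a meet-semilattice under \emph{inclusion} (meets are intersections), not reverse inclusion; your $\eta(x)$ is then a filter in the usual sense. Second, you correctly flag surjectivity of $\eta$ as the delicate step but do not prove it, and the phrase ``a suitably defined closed set'' is not yet an argument. Here is what is needed: given $\mathcal F\in\Filt(\KOF(X))$, write each $U\in\mathcal F$ as $\up k_U$ by (1); the set $D=\{k_U:U\in\mathcal F\}$ is directed (since $\mathcal F$ is closed under intersection). As above, sobriety gives $m=\sup D$ and, crucially, shows that every $\tau$-open is Scott-open for $\le$ (if $m\in V$ then $V$ meets $D$, since $\overline D=\down m$). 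From this one checks $\eta(m)=\mathcal F$: the inclusion $\mathcal F\subseteq\eta(m)$ is immediate, and for $V\in\eta(m)$ Scott-openness yields $k_U\in V$ for some $U\in\mathcal F$, whence $U\subseteq V$ and $V\in\mathcal F$. Note that this is exactly the ``directed joins via sobriety'' step from the paper's sketch, so the two approaches converge at the point where the real work happens.
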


\begin{remark}
We briefly comment on the proof of \cref{JMS is AlgLatS}. Using \cref{thm: Scott facts}, it is easy to see that each algebraic lattice with the Scott topology is a JM-space. The key then is to prove that a JM-space is an algebraic lattice and the topology is the Scott topology. This can be seen as follows. 
Since every JM-space is sober, directed joins exist. This together with the fact that compact open filters are principal implies that arbitrary meets exist, and hence $X$ is a complete lattice. In addition, $X$ is algebraic since compact open filters are precisely the principal upsets of compact elements, which also yields that the topology is the Scott topology.
\end{remark}

Putting \cref{compact elements,compact is principal} together yields the following result, which will be used throughout.

\begin{lemma} \label{lem: KOF = up k's}
Let $X$ be an algebraic lattice. Viewing $X$ as a topological space with the Scott topology, $\KOF(X) = \{ \up k : k \in K(X)\}$.
\end{lemma}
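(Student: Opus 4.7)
The plan is simply to combine the two results that the text has already assembled, namely \cref{compact elements} of \cref{thm: Scott facts} (which says that $\up k$ is Scott open precisely when $k \in K(X)$) and \cref{compact is principal} of \cref{thm: JM results} (which says that a filter with respect to the specialization order is topologically compact precisely when it is principal). Before invoking them, I would note that by \cref{specialization order} the specialization order of the Scott topology on $X$ is the original lattice order, so a ``filter with respect to the specialization order'' agrees with the usual order-theoretic notion of filter on the lattice $X$. In particular, every principal upset $\up k$ is automatically a filter in the sense required by the definition of $\KOF$.

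For the easy inclusion $\{\up k : k \in K(X)\} \subseteq \KOF(X)$, I would fix $k \in K(X)$, observe that $\up k$ is a filter, apply \cref{compact elements} to get that $\up k$ is Scott open, and apply \cref{compact is principal} to get that it is topologically compact; hence $\up k \in \KOF(X)$.

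For the reverse inclusion $\KOF(X) \subseteq \{\up k : k \in K(X)\}$, I would take $U \in \KOF(X)$. Then $U$ is a filter with respect to $\le$ and is topologically compact, so by \cref{compact is principal} there exists $x \in X$ with $U = \up x$. Since $U = \up x$ is Scott open, \cref{compact elements} forces $x \in K(X)$, finishing the proof.

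I do not anticipate any real obstacle: the lemma is a short bookkeeping corollary of the two prior theorems, and the only substantive point to articulate explicitly is the identification of order-theoretic filters on the lattice $X$ with filters with respect to the specialization order of the Scott topology, which is immediate from \cref{specialization order}.
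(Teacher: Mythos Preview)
Your proposal is correct and follows exactly the paper's approach: the paper simply states that the lemma follows by putting together \cref{compact elements} and \cref{compact is principal}, and you have spelled out precisely these two inclusions.
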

 
We next turn our attention to morphisms. The next result can be derived from \cite[Sec.~5]{MJ14a}. To keep the paper self-contained, we give a short proof.

\begin{lemma} \label{lem: morphisms of JM}
Let $f \colon X \to Y$ be a map between algebraic lattices. Then $f$ is a $\JMS$-morphism iff $f$ is an $\AlgLatS$-morphism.
\end{lemma}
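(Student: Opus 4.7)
The plan is to establish the two directions separately, leveraging the description $\KOF(X)=\{\up k : k\in K(X)\}$ from \cref{lem: KOF = up k's} together with the basic facts about the Scott topology collected in \cref{thm: Scott facts}.

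For the forward direction, suppose $f$ is an $\AlgLatS$-morphism. Let $U\in\KOF(Y)$, so $U=\up k$ for some $k\in K(Y)$. Scott-continuity of $f$ immediately yields that $f^{-1}(U)$ is open, so it remains to exhibit a compact element $x\in K(X)$ with $f^{-1}(U)=\up x$. This is essentially the argument already given in \cref{rem: why spectral}: set $x=\bigwedge f^{-1}(U)$; since $f$ preserves arbitrary meets, $f(x)=\bigwedge\{f(z) : z\in f^{-1}(U)\}\ge k$, so $x\in f^{-1}(U)$ and hence $f^{-1}(U)=\up x$. Because $\up x=f^{-1}(U)$ is Scott open, \cref{compact elements} gives $x\in K(X)$, and so $f^{-1}(U)\in\KOF(X)$.

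For the backward direction, suppose $f$ is a $\JMS$-morphism. Scott-continuity is immediate from \cref{basis}: any Scott open $V\subseteq Y$ is a union of sets in $\KOF(Y)$, hence $f^{-1}(V)$ is a union of sets in $\KOF(X)\subseteq\sigma(X)$, so $f^{-1}(V)$ is Scott open. In particular $f$ is continuous, so it preserves the specialization order, which by \cref{specialization order} is the lattice order; thus $f$ is order-preserving, which yields $f(\bigwedge S)\le\bigwedge f(S)$ for every $S\subseteq X$.

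The main (and only slightly nontrivial) step is the reverse inequality, showing $f$ preserves arbitrary meets. Given $S\subseteq X$, it suffices, by algebraicity of $Y$, to show that every $k\in K(Y)$ with $k\le\bigwedge f(S)$ satisfies $k\le f(\bigwedge S)$. For such $k$ we have $S\subseteq f^{-1}(\up k)$; the hypothesis gives $f^{-1}(\up k)\in\KOF(X)$, so by \cref{lem: KOF = up k's} there is $x\in K(X)$ with $f^{-1}(\up k)=\up x$. Then $x\le s$ for every $s\in S$, whence $x\le\bigwedge S$, and order preservation of $f$ yields $k\le f(x)\le f(\bigwedge S)$, as required. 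I expect no real obstacle beyond keeping track of the two characterizations $\KOF=\{\up k : k\in K\}$ and ``Scott open upset of $k$ iff $k$ compact,'' which together convert the $\JMS$-condition directly into the defining conditions of an $\AlgLatS$-morphism.
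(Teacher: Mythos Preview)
Your proof is correct. The forward direction matches the paper's argument (both rest on the computation from \cref{rem: why spectral} showing $f^{-1}(\up k)=\up x$ with $x$ compact).

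The backward direction is where you diverge from the paper. The paper argues indirectly: since compact opens of $Y$ are finite unions from $\KOF(Y)$, the $\JMS$-condition implies $f$ is spectral; then the Cornish isomorphism (\cref{thm: iso between AlgLat_S and AlgLat_L}) is invoked to conclude $f$ is Lawson-continuous, from which meet-preservation is extracted. Your argument is purely algebraic and more elementary: you exploit directly that $f^{-1}(\up k)=\up x_k$ with $x_k\in K(X)$, and use algebraicity of $Y$ to test the inequality $\bigwedge f(S)\le f(\bigwedge S)$ against compact elements. This avoids the detour through the patch/Lawson topology and makes the argument self-contained, independent of the Cornish machinery. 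The paper's route, on the other hand, illustrates the theme that the $\Spec$/$\Pries$ correspondence transports properties between the Scott and Lawson pictures; your route shows the result follows from first principles about compact elements alone.
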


\begin{proof}
Suppose that $f$ is a $\AlgLatS$-morphism. Let $U \in \KOF(Y)$. By \cref{rem: why spectral}, $f$ is spectral, and hence $f^{-1}(U)$ is compact open. In addition, since $f$ preserves arbitrary meets, $f^{-1}(U)$ is a filter. Thus, $f^{-1}(U) \in \KOF(X)$, and so $f$ is a $\JMS$-morphism.

Conversely, let $f$ be a $\JMS$-morphism. Then $f$ is Scott-continuous by \cref{rem: JM maps are continuous,JMS is AlgLatS}. If $U$ is compact open, then $U$ is a finite union from $\KOF(Y)$. Therefore, $f^{-1}(U)$ is a finite union from $\KOF(X)$. Consequently, $f$ is spectral. 
To see that $f$ preserves arbitrary meets, first observe that $f$ is order preserving since the orders on $X$ and $Y$ are the specialization orders for the Scott topology by \cref{specialization order}. Now let $S \subseteq X$ and set $x = \bigwedge S$. Then $f(x) \le \bigwedge f[S]$. For the reverse inequality, let $l \in K(Y)$ with $l \le \bigwedge f[S]$. Then $l \le f(z)$ for each $z \in S$. By \cref{lem: KOF = up k's}, there is $k \in K(X)$ with $f^{-1}(\up l) = \up k$. Therefore, if $z \in S$, then $k \le z$. This yields $k \le x$, and so $l \le f(k) \le f(x)$, again since $f$ is order preserving. Because $\bigwedge f[S]$ is the join of all such $l$, we get $\bigwedge f[S] \le f(x)$, and hence $f(x) = \bigwedge f[S]$. Thus, $f$ preserves arbitrary meets, and hence is an $\AlgLatS$-morphism.
\end{proof}
 
\begin{theorem} \label{thm: AlgLat = JM}
$\AlgLatS$ is isomorphic to $\JMS$.
\end{theorem}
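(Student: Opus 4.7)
The plan is essentially to observe that the isomorphism is witnessed by the identity on underlying sets and maps, with all the real work already carried out in \cref{thm: JM results} and \cref{lem: morphisms of JM}.

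First I would define a functor $F \colon \AlgLatS \to \JMS$ by sending an algebraic lattice $L$ (viewed as the topological space $(L, \sigma(L))$) to itself, and by sending each $\AlgLatS$-morphism $f$ to $f$ regarded as a map between the corresponding spaces. To see $F$ is well-defined on objects, note that if $L$ is an algebraic lattice equipped with the Scott topology, then by the easy direction of \cref{JMS is AlgLatS} the space $(L,\sigma(L))$ is a JM-space. On morphisms, \cref{lem: morphisms of JM} tells us that an $\AlgLatS$-morphism between algebraic lattices is exactly a $\JMS$-morphism between the associated JM-spaces, so $F$ lands in $\JMS$.

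Next I would define a functor $G \colon \JMS \to \AlgLatS$ by sending a JM-space $X$ to the algebraic lattice $(X, \le)$ equipped with the Scott topology, where $\le$ is the specialization order. By the nontrivial direction of \cref{JMS is AlgLatS}, $(X,\le)$ is indeed an algebraic lattice and the given topology on $X$ coincides with $\sigma(X, \le)$; in particular, $G(X)$ equals $X$ as a topological space. On morphisms, $G$ again sends a $\JMS$-morphism to itself viewed as a map of algebraic lattices, which by \cref{lem: morphisms of JM} is an $\AlgLatS$-morphism.

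Finally I would verify that $F$ and $G$ are mutually inverse. Because of the two identifications above, both composites $GF$ and $FG$ act as the identity on the underlying set, the topology, and the order, hence as the identity functor on each category. Composition and identities are preserved trivially since in both $\AlgLatS$ and $\JMS$ they are inherited from the category of sets and functions. This yields the isomorphism. There is no genuine obstacle here: the theorem is a packaging statement whose entire content is supplied by \cref{JMS is AlgLatS} for objects and \cref{lem: morphisms of JM} for morphisms.
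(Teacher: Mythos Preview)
Your proposal is correct and follows essentially the same approach as the paper: both use \cref{JMS is AlgLatS} to identify the objects and \cref{lem: morphisms of JM} to identify the morphisms, then observe that the resulting assignment is the identity on underlying sets and maps, hence an isomorphism of categories. The paper's version is slightly terser (it does not spell out the inverse functor $G$ separately), but the content is the same.
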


\begin{proof}
By \cref{JMS is AlgLatS}, $X\in \AlgLatS$ iff $(X, \sigma(X)) \in \JMS$. Moreover, by \cref{lem: morphisms of JM},  ${f\in{\sf Hom}_\AlgLatS(X, Y)}$ iff $f\in{\sf Hom}_\JMS(X, Y)$. Thus, the assignment $X \mapsto (X, \sigma(X))$ and $f \mapsto f$ defines the desired isomorphism of categories. 
\end{proof}

The next result  is an immediate consequence of the above and \cref{thm: Nachbin 1}.

\begin{corollary} \cite[Thm.~5.2]{MJ14a}\label{cor: JMS = SLat}
$\JMS$ is dually equivalent to $\SLat$.
\end{corollary}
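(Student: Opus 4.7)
The plan is to deduce this corollary as a straightforward composition of two results already established in the excerpt, with no new content required. By \cref{thm: AlgLat = JM}, the assignment $X \mapsto (X, \sigma(X))$ and $f \mapsto f$ yields an isomorphism of categories $\AlgLatS \cong \JMS$. On the other hand, \cref{thm: Nachbin 1} (the first part of \cref{thm: CohLat = Lat}) asserts that $\AlgLatS$ is dually equivalent to $\SLat$, via the functors described in \cref{rem: functors giving the duality}: a semilattice $A$ is sent to $\Filt(A)$ (equipped with its Scott topology), a meet-semilattice homomorphism $\alpha \colon A \to B$ is sent to $\alpha^{-1} \colon \Filt(B) \to \Filt(A)$, and an algebraic lattice $X$ is sent to $(K(X), \ge)$ with the obvious action on morphisms given by the restriction of the left adjoint.

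First, I would invoke \cref{thm: AlgLat = JM} to transport along the isomorphism $\AlgLatS \cong \JMS$. Since an isomorphism of categories is, in particular, a (dual) equivalence when composed with any (dual) equivalence, the composition of the isomorphism $\JMS \cong \AlgLatS$ with the dual equivalence $\AlgLatS \simeq \SLat^{\op}$ of \cref{thm: Nachbin 1} yields the desired dual equivalence $\JMS \simeq \SLat^{\op}$. Concretely, the functor $\JMS \to \SLat^{\op}$ sends a JM-space $X$ to $(K(X), \ge)$, where $K(X)$ is the set of compact elements of $X$ viewed as an algebraic lattice under its specialization order via \cref{JMS is AlgLatS}; on a $\JMS$-morphism $f \colon X \to Y$, one takes the restriction to compact elements of the left adjoint of $f$ (viewed as an $\AlgLatS$-morphism via \cref{lem: morphisms of JM}). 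In the reverse direction, a semilattice $A$ is sent to $\Filt(A)$ equipped with its Scott topology, and a morphism $\alpha \colon A \to B$ to $\alpha^{-1} \colon \Filt(B) \to \Filt(A)$.

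There is no real obstacle here: the corollary is a formal consequence of combining the isomorphism and the dual equivalence, so the only task is to state the composition cleanly and cite the two theorems. The only point worth a brief sanity check is that the dual equivalence of \cref{thm: Nachbin 1} is indeed the correct source of the semilattice side (as opposed to the coherent-lattice version stated in \cref{thm: Nachbin 2}), which the numbering in \cref{thm: CohLat = Lat} makes explicit.
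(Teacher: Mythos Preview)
Your proposal is correct and matches the paper's approach exactly: the corollary is stated as an immediate consequence of \cref{thm: AlgLat = JM} together with \cref{thm: Nachbin 1}, and your description of the functors agrees with \cref{rem: functors giving JMS = SLat}.
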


\begin{remark} \label{rem: functors giving JMS = SLat}
The functors yielding the duality of \cref{cor: JMS = SLat} are essentially those described in \cref{rem: functors giving the duality}. That is, $X \in \JMS$ is sent to $\KOF(X)$, which is isomorphic to $(K(X), \ge)$, and $A \in \SLat$ to $\Filt(A)$. A $\JMS$-morphism $f \colon X \to X'$ is sent to the restriction of $f^{-1}$ to $\KOF(X')$, and a $\SLat$-morphism $f \colon A \to B$ to $f^{-1} \colon \Filt(B) \to \Filt(A)$.
\end{remark}

We now consider a (non-full) subcategory of $\JMS$ that is dually equivalent to $\Lat$.

\begin{definition} \label{def: JML-space}
Let $\JML$ be the category of Jipsen-Moshier spaces such that $\KOF(X)$ forms a bounded sublattice of $\FSat(X)$ and $\JMS$-morphisms $f$ such that $f^{-1}$ preserves finite joins of compact open filters. 
\end{definition}

Putting \cref{JMS is AlgLatS} together with \cite[Thm.~3.2]{MJ14a} yields:

\begin{lemma} \label{lem: arithmetic lattice case for JM}
Let $X \in \JMS$. Then $X \in \JML$ iff $X$ is a coherent lattice and the topology of $X$ is the Scott topology.
\end{lemma}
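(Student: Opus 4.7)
The plan is to invoke \cref{JMS is AlgLatS} to reduce to the case where $(X,\le)$ is an algebraic lattice carrying its Scott topology, and then check concretely when $\KOF(X)$ sits as a bounded sublattice of $\FSat(X)$, matching this against the fact (from the remark after the definition of coherent lattices) that an algebraic lattice is coherent iff $K(X)$ is a bounded sublattice of $X$.

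After reducing via \cref{JMS is AlgLatS}, I would use \cref{lem: KOF = up k's} to identify $\KOF(X)$ with $\{\up k : k \in K(X)\}$ through the order-reversing bijection $k \mapsto \up k$, and then verify the four ``bounded sublattice'' requirements in turn. Meets in $\FSat(X)$ are intersections, and $\up k \cap \up k' = \up(k \vee k')$ is automatically in $\KOF(X)$ since $K(X)$ is closed under finite joins; similarly the top $X = \up 0$ of $\FSat(X)$ is automatically in $\KOF(X)$ as $0 \in K(X)$. For joins I would compute $\fsat(\up k \cup \up k')$: every open filter containing both $k$ and $k'$ must, as a filter, contain $k \wedge k'$, and as an upset contain $\up(k \wedge k')$; conversely, if $x \not\ge k \wedge k'$ then algebraicity produces a compact $c \le k \wedge k'$ with $c \not\le x$, so that $\up c$ is an open filter separating $k \wedge k'$ from $x$. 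Together these give $\fsat(\up k \cup \up k') = \up(k \wedge k')$, which lies in $\KOF(X)$ precisely when $k \wedge k' \in K(X)$. The analogous argument for the empty set yields $\fsat(\varnothing) = \bigcap_{k \in K(X)} \up k = \up \bigvee K(X) = \up 1 = \{1\}$, using $\bigvee K(X) = 1$ from algebraicity; and $\{1\} \in \KOF(X)$ iff $\up 1$ is Scott open, which by \cref{compact elements} holds iff $1 \in K(X)$.

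Assembling the four checks, $\KOF(X)$ is a bounded sublattice of $\FSat(X)$ iff $K(X)$ is closed under binary meets and contains $1$, i.e.\ iff $K(X)$ is a bounded sublattice of $X$, which is exactly the coherence of $X$. I expect the subtlest step to be the two $\fsat$ computations: in each case, algebraicity is what lets me translate ``every open filter containing $S$'' into ``every principal filter $\up c$ on a compact $c$ lying below $\bigwedge S$'', so that $\fsat(S)$ collapses to a single principal filter whose membership in $\KOF(X)$ is then governed by compactness of its generator.
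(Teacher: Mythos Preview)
Your proposal is correct and follows essentially the same approach as the paper: both reduce via \cref{JMS is AlgLatS} to an algebraic lattice with the Scott topology, then use the order-reversing bijection $k \mapsto \up k$ from \cref{lem: KOF = up k's} to match ``$\KOF(X)$ is a bounded sublattice of $\FSat(X)$'' against ``$K(X)$ is a bounded sublattice of $X$''. The paper simply cites \cite[Thm.~3.2]{MJ14a} and records the key idea in a one-line remark, whereas you carry out the explicit $\fsat$ computations (for binary unions and for the empty set) that make the argument self-contained; your computations are correct, including the use of algebraicity to find a compact $c \le k \wedge k'$ with $c \not\le x$.
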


\begin{remark}
The key idea of the proof is as follows. By \cref{lem: KOF = up k's}, $\KOF(X)$ is isomorphic to $(K(X), \ge)$. Thus, $\KOF(X)$ is a bounded sublattice of $\FSat(X)$ iff $K(X)$ is a bounded sublattice of $X$. 
\end{remark}

In addition, putting \cref{lem: ell preserves meets,lem: KOF = up k's,lem: morphisms of JM} together yields:

\begin{lemma} \label{lem: morphisms of JML}
Let $f \colon X \to Y$ be an $\AlgLatS$-morphism between coherent lattices topologized with the Scott topology. Then $f$ is a $\CohLatS$-morphism iff $f$ is a $\JML$-morphism.
\end{lemma}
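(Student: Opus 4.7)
The plan is to derive this equivalence as a direct combination of the three previously established lemmas mentioned in the statement immediately preceding it, namely \cref{lem: ell preserves meets}, \cref{lem: KOF = up k's}, and \cref{lem: morphisms of JM}. Since we are already assuming that $f$ is an $\AlgLatS$-morphism between coherent lattices with the Scott topology, \cref{lem: morphisms of JM} tells us that $f$ is automatically a $\JMS$-morphism. The task therefore reduces to showing that the extra condition defining $\CohLatS$ (``the left adjoint $\ell$ of $f$ preserves finite meets'') is equivalent to the extra condition defining $\JML$ (``$f^{-1}$ preserves finite joins of compact open filters'').

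First I would translate the $\JML$ condition into a statement about principal filters. By \cref{lem: KOF = up k's}, applied to both $X$ and $Y$, we have $\KOF(X) = \{\up k : k \in K(X)\}$ and $\KOF(Y) = \{\up k : k \in K(Y)\}$. Hence, saying that $f^{-1}$ preserves finite joins of compact open filters is literally saying that $f^{-1}$ preserves finite joins of principal filters of compact elements in the filter lattice, which is item (4) of \cref{lem: ell preserves meets}.

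Next I would invoke \cref{lem: ell preserves meets}, which gives that items (1) and (4) are equivalent: the left adjoint $\ell$ preserves finite meets if and only if $f^{-1}$ preserves finite joins of principal filters of compact elements. Combining this with the translation in the previous step, we get that $\ell$ preserves finite meets iff $f^{-1}$ preserves finite joins of compact open filters.

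Putting everything together: $f$ is a $\CohLatS$-morphism iff $f$ is an $\AlgLatS$-morphism whose left adjoint preserves finite meets, iff (by the combination above together with \cref{lem: morphisms of JM}) $f$ is a $\JMS$-morphism such that $f^{-1}$ preserves finite joins of compact open filters, iff $f$ is a $\JML$-morphism. There is no real obstacle here; the whole point of isolating \cref{lem: ell preserves meets} and \cref{lem: KOF = up k's} beforehand was to make this step a bookkeeping assembly, so the only thing to be careful about is to spell out the identification $\KOF = \{\up k : k \in K\}$ for both the source and the target before quoting the four-way equivalence.
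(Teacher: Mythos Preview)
Your proposal is correct and follows exactly the approach the paper takes: the paper states this lemma simply as the result of ``putting \cref{lem: ell preserves meets,lem: KOF = up k's,lem: morphisms of JM} together,'' and you have spelled out precisely how these three lemmas combine. The only thing one might add for completeness is a remark that the finite joins of compact open filters computed in $\FSat(X)$ coincide with the joins $\up k_1 \vee \cdots \vee \up k_n = \up(k_1 \wedge \cdots \wedge k_n)$ used implicitly in \cref{lem: ell preserves meets}, which holds because $X$ is coherent; but the paper itself does not make this explicit either.
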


We thus obtain:

\begin{theorem} \label{thm: CohLatL = JML}
$\CohLatS$ is isomorphic to $\JML$. 
\end{theorem}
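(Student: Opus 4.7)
The plan is to mimic the proof of \cref{thm: AlgLat = JM}, now restricted on both sides. All the real work has been done in the preceding two lemmas (\cref{lem: arithmetic lattice case for JM} for objects and \cref{lem: morphisms of JML} for morphisms), so what remains is to package them into a functor and verify it is bijective on objects and hom-sets.

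First I would define the functor $F$ on objects by sending a coherent lattice $X \in \CohLatS$ to the topological space $(X, \sigma(X))$. By \cref{lem: arithmetic lattice case for JM}, a JM-space $X$ lies in $\JML$ exactly when its specialization poset is a coherent lattice and its topology is the Scott topology. Combined with \cref{JMS is AlgLatS}, which ensures every $\JML$-space already has its underlying topology recovered as the Scott topology of its specialization order, this shows that $X \mapsto (X, \sigma(X))$ is a bijection between the objects of $\CohLatS$ and those of $\JML$.

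Next I would define $F$ on morphisms by $f \mapsto f$. To see this is well-defined and bijective on hom-sets, I would invoke \cref{lem: morphisms of JML}: for any map $f \colon X \to Y$ between coherent lattices equipped with their Scott topologies, $f$ is a $\CohLatS$-morphism if and only if it is a $\JML$-morphism. Since composition and identities in both categories are the set-theoretic ones, functoriality is automatic.

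The only step requiring any attention is checking that this genuinely produces an isomorphism of categories rather than merely an equivalence; but because the underlying set of a coherent lattice in $\CohLatS$ coincides with the underlying set of the corresponding $\JML$-space (the topology is intrinsically determined by the lattice structure), both the object assignment and the morphism assignment are literal identities on the underlying data. Hence $F$ is an isomorphism of categories. I do not anticipate a genuine obstacle here: the substantive content has already been absorbed into \cref{lem: arithmetic lattice case for JM,lem: morphisms of JML}, and the argument is a two-line bookkeeping exercise parallel to the proof of \cref{thm: AlgLat = JM}.
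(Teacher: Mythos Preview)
Your proposal is correct and matches the paper's own proof essentially line for line: the paper invokes \cref{lem: arithmetic lattice case for JM} for objects and \cref{lem: morphisms of JML} for morphisms, then concludes that the isomorphism of \cref{thm: AlgLat = JM} restricts to one between $\CohLatS$ and $\JML$. Your only cosmetic difference is that you explicitly build the functor rather than phrasing it as a restriction of the existing isomorphism.
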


\begin{proof}
By \cref{lem: arithmetic lattice case for JM}, $X \in \CohLatS$ iff $(X, \sigma(X))  \in \JML$. By \cref{lem: morphisms of JML}, we have that $f\in{\sf Hom}_\CohLatS(X, Y)$ iff $f\in{\sf Hom}_\JML(X, Y)$. Therefore, the isomorphism between $\AlgLatS$ and $\JMS$ restricts to an isomorphism between $\CohLatS$ and $\JML$. 
\end{proof}

The next result  is an immediate consequence of the above and \cref{thm: Nachbin 2}.

\begin{corollary} \cite[Thm.~5.2]{MJ14a}\label{cor: JML = Lat}
$\JML$ is dually equivalent to $\Lat$.
\end{corollary}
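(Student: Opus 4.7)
The plan is to derive the corollary as an immediate consequence of the two results already established in the section: the isomorphism $\CohLatS \cong \JML$ of \cref{thm: CohLatL = JML} and the dual equivalence $\CohLatS \simeq \Lat^{\op}$ of \cref{thm: Nachbin 2}. Since the composition of an isomorphism of categories with a dual equivalence is again a dual equivalence, chaining these yields the desired dual equivalence between $\JML$ and $\Lat$. There is no real obstacle to overcome here; the work has been done in establishing \cref{thm: CohLatL = JML}, which in turn rested on \cref{lem: arithmetic lattice case for JM,lem: morphisms of JML}.

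Concretely, I would unpack the composite functors using \cref{rem: functors giving the duality}. In one direction, send a bounded lattice $A \in \Lat$ to $\Filt(A)$, now viewed as an object of $\JML$ by equipping it with the Scott topology; this is well-defined by \cref{Nachbin ideal} (applied to $\Filt$ instead of $\Idl$) and \cref{lem: arithmetic lattice case for JM}. A bounded lattice homomorphism $\alpha \colon A \to B$ is sent to $\alpha^{-1} \colon \Filt(B) \to \Filt(A)$, which is a $\JML$-morphism by \cref{lem: morphisms of JML}. In the other direction, send $X \in \JML$ to $\KOF(X)$, which by \cref{lem: KOF = up k's} is isomorphic to $(K(X), \ge)$ and is a bounded lattice by \cref{lem: arithmetic lattice case for JM}. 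A $\JML$-morphism $f \colon X \to Y$ is sent to the restriction of $f^{-1}$ to $\KOF(Y) \to \KOF(X)$, which preserves finite joins by the definition of $\JML$-morphism and finite meets since $f^{-1}$ preserves intersections.

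Finally, I would note that the natural isomorphisms witnessing the dual equivalence are inherited directly from those of \cref{thm: Nachbin 2}: for $A \in \Lat$, the isomorphism $A \cong \KOF(\Filt(A))$ sends $a \in A$ to $\up({\uparrow}a) \in \KOF(\Filt(A))$ (using that principal filters are exactly the compact elements of $\Filt(A)$), and for $X \in \JML$, the isomorphism $X \cong \Filt(\KOF(X))$ sends $x \in X$ to $\{U \in \KOF(X) : x \in U\}$. Since both these assignments and their naturality have already been verified in \cref{thm: Nachbin 2} via the description of \cref{rem: functors giving the duality}, the proof reduces to a one-line citation of \cref{thm: CohLatL = JML,thm: Nachbin 2}.
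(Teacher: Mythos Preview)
Your proposal is correct and matches the paper's approach exactly: the paper derives the corollary as an immediate consequence of \cref{thm: CohLatL = JML} and \cref{thm: Nachbin 2}, precisely the two results you cite. Your additional unpacking of the functors is also accurate and corresponds to what the paper records in \cref{rem: functor JM to Lat} (via \cref{rem: functors giving JMS = SLat}).
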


\begin{remark} \label{rem: functor JM to Lat}
The functors yielding the duality of \cref{cor: JML = Lat} are the restrictions of those given in \cref{rem: functors giving JMS = SLat}.
\end{remark}


\section{The Bezhanishvili-Dmitrieva-de Groot-Moraschini approach} \label{sec: BDGM}

In this section we consider the spaces introduced in \cite{BDGM24}, which we term BDGM-spaces. The same way JM-spaces are a strengthening of spectral spaces, BDGM-spaces are a strengthening of Priestley spaces, and the Cornish isomorphism between $\Spec$ and $\Pries$ restricts to an isomorphism between the categories of JM-spaces and BDGM-spaces. As we pointed out in the previous section, these categories come in two flavors, depending on whether we want to obtain a dual equivalence for $\SLat$ or $\Lat$. We thus arrive at two categories $\BDGMS$ and $\BDGM$. We show that $\BDGMS$ is isomorphic to $\AlgLatL$, from which we derive that $\BDGMS$ is isomorphic to $\JMS$. We then show that the above isomorphism restricts to an isomorphism between $\BDGM$ and $\CohLatL$, from which we derive that $\BDGM$ is isomorphic to $\JML$.

\begin{definition}\ \label{def: BDGMS space}
\begin{enumerate}
\item We call an ordered topological space $X$ a {\em BDGM-space} if it is a compact space, a meet-semilattice, and the following strengthening of the Priestley separation axiom holds: 
\[
\mbox{If } x \not\le y, \mbox{ then there is a clopen filter } F \mbox{ such that } x\in F \mbox{ and } y\notin F.
\]
\item A {\em BDGM-morphism} between BDGM-spaces is a continuous meet-semilattice morphism. 
\item Let $\BDGMS$ be the category of BDGM-spaces and BDGM-morphisms.
\end{enumerate}
\end{definition} 

\begin{remark} \label{rem: properties of BDGM spaces}
It is clear from the above definition that each BDGM-space is a Priestley space, 
and so $\le$ is a closed order on $X$. In particular, $\up x, \down x$ are closed for each $x\in X$. Moreover, an argument similar to that for Priestley spaces shows that 
\[
\{F - G : F, G \in \CLF(X) \}
\]
is a subbasis for the topology of $X$, where $\CLF(X)$ denotes the collection of clopen filters of~$X$.
\end{remark}

It is trivial to see that each algebraic lattice with the Lawson topology is a BDGM-space. The following lemma shows the converse.

\begin{lemma} \plabel{lem: BDGM basic properties}
Let $X$ be a BDGM-space.
\begin{enumerate}
\item \label[lem: BDGM basic properties]{BDGM 2} $X$ is a complete lattice.
\item \label[lem: BDGM basic properties]{BDGM 1} $\CLF(X) = \{ \up k : k \in K(X) \}$.
\item \label[lem: BDGM basic properties]{BDGM 3} $X$ is an algebraic lattice and the topology is the Lawson topology.
\end{enumerate}
\end{lemma}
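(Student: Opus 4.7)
The plan is to prove the three items in order, with each supporting the next.

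For (1), I would show arbitrary joins exist in $X$; arbitrary meets then follow, since the meet of any $S$ can be realized as the join of the set of its lower bounds. Given $S \subseteq X$, the set of upper bounds $U(S) = \bigcap_{s \in S} \up s$ is closed (each $\up s$ is closed since $\le$ is closed), nonempty (it contains the top $1$ of the meet-semilattice), and closed under binary meets. The family $\{\down u \cap U(S) : u \in U(S)\}$ of closed subsets of the compact set $U(S)$ has the finite intersection property by closure under meets, so its intersection is nonempty; any point in it is the minimum of $U(S)$, that is, $\bigvee S$.

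For (2), I would first establish $\subseteq$. Given a clopen filter $F$, the same FIP--compactness argument applied to $F$ itself (closed and closed under meets) produces its minimum $k$, whence $F = \up k$. To show $k \in K(X)$, let $S$ be directed with $k \le \bigvee S$. The key topological input is that $\bigvee S \in \overline{S}$: any cluster point $p$ of the net $S$ satisfies $p \ge s$ for every $s \in S$ (since $S$ is eventually above $s$ and $\le$ is closed), hence $p \ge \bigvee S$; and $p \in \overline{S} \subseteq \down(\bigvee S)$ forces $p \le \bigvee S$. Since $F$ is open and contains $\bigvee S \in \overline{S}$, it meets $S$, yielding $k \le s$ for some $s \in S$. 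For $\supseteq$, given $k \in K(X)$, the set $\up k$ is a closed filter; to see it is open, for $x \notin \up k$ I use BDGM-separation to get a clopen filter $F_x \ni k$ with $x \notin F_x$; by the $\subseteq$ already proved, $F_x = \up k_x$ for some $k_x \le k$, so $X - F_x \subseteq X - \up k$ is a clopen neighborhood of $x$.

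For (3), algebraicity follows immediately from (2) and separation: for $x \in X$, the set $D = \{k \in K(X) : k \le x\}$ is directed (finite joins of compact elements are compact, and the empty join $0$ belongs to $K(X)$), and if $\bigvee D < x$ then separation yields a clopen filter $\up k \ni x$ missing $\bigvee D$, whose generator $k \in D$ forces the contradiction $\bigvee D \ge k$. For the topology, by \cref{rem: properties of BDGM spaces} the subbasis of the given topology on $X$ is $\{F - G : F, G \in \CLF(X)\}$, which by (2) equals $\{\up k \cap (X - \up j) : k, j \in K(X)\}$; the Lawson topology's subbasis is $\{\up k : k \in K(X)\} \cup \{X - \up y : y \in X\}$, and since $\up y = \bigcap\{\up k : k \in K(X),\ k \le y\}$ the lower piece reduces to $\{X - \up k : k \in K(X)\}$. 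Both subbases generate the same topology. The main obstacle will be the lattice-compactness of the minimum of a clopen filter in (2): it hinges on the topological fact that directed sups are limit points of the directed set, a subtle link between order and topology in compact ordered spaces that I would need to justify carefully.
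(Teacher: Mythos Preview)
Your arguments for (1), the $\subseteq$ direction of (2), and (3) are sound, and broadly parallel the paper's (you compute joins where the paper computes meets, and your net argument for compactness of the minimum of a clopen filter is a legitimate alternative to the paper's use of $\bigcap_{x\in S}\up x = \up\bigvee S \subseteq \up k$ plus topological compactness).

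However, your proof of the $\supseteq$ direction of (2) has a genuine gap. You want to show that $\up k$ is \emph{open} when $k\in K(X)$, but your argument actually reproves that $\up k$ is \emph{closed}: from $k\in F_x=\up k_x$ you get $k_x\le k$, hence $\up k\subseteq F_x$, hence $X-F_x\subseteq X-\up k$; so you have produced, for each $x\notin\up k$, an open neighborhood of $x$ inside $X-\up k$. That makes $X-\up k$ open, not $\up k$. Notice that your argument nowhere uses the hypothesis $k\in K(X)$, which is a sign that it cannot be proving what you want.

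The paper's fix is to use compactness of $k$ explicitly: separation gives $\up k=\bigcap\{\up l:l\in S\}$ for some $S\subseteq K(X)$ (by the $\subseteq$ direction already proved), so $k=\bigvee S$; since $k$ is compact, $k$ is a finite join from $S$, and hence $\up k$ is a finite intersection of clopen filters, so clopen.
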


\begin{proof}
(\ref{BDGM 2}) 
Since $1 \in X$, it is enough to show that $\bigwedge S$ exists in $X$ for each nonempty $S \subseteq X$. Let $T$ be the set of all finite meets from $S$. Because $X$ is compact and each $\down t$ is closed, $\bigcap \{ \down t : t \in T\}$ is nonempty. This intersection is the set $S^l$ of lower bounds of $S$. If $x_1, \dots, x_n \in S^l$ and $t_1, \dots, t_m \in T$, then $x_i \le t_j$ for each $i,j$, so
\[
t_1 \wedge \dots \wedge t_m \in \up x_1 \cap \dots \cap \up x_n \cap \down t_1 \cap \dots \cap \down t_m.
\]
Therefore, $\{ \up x : x \in S^l\} \cup \{ \down t : t \in T\}$ has the finite intersection property, and hence compactness implies that there is
\[
s \in \bigcap \{ \up x : x \in S^l\} \cap \bigcap \{ \down t : t \in T\}.
\]
Thus, $s = \bigwedge S$. 

(\ref{BDGM 1}) Let $F \in \CLF(X)$. Then $\{F\} \cup \{ \down x : x \in F\}$ is a collection of closed sets 
with the finite intersection property. Therefore, there is $ k \in F \cap \bigcap \{ \down x : x \in F\}$, which forces $F = \up k$.  To see that $k \in K(X)$, suppose that $S \subseteq X$ with $k \le \bigvee S =: s$. Then $\up s \subseteq \up k$, so $\bigcap \{ \up x : x \in S \} = \up s \subseteq \up k$. Since each $\up x$ is closed and $\up k = F$ is clopen, compactness implies that there are $x_1, \dots, x_n \in S$ with $\up x_1 \cap \dots \cap \up x_n \subseteq \up k$, and so $k \le x_1 \vee \dots \vee x_n$. Thus, $k$ is compact.

Conversely, let $k \in K(X)$. The separation axiom for BDGM-spaces shows that the filter $\up k$ is an intersection of clopen filters. Therefore, by the previous paragraph, there is $S \subseteq K(X)$ with $\up k = \bigcap \{ \up l : l \in S\}$. This implies that $k = \bigvee S$. Since $k$ is compact, $k$ is a finite join from $S$, and hence $\up k$ is a finite intersection of clopen filters. Thus, $\up k$ is clopen.

(\ref{BDGM 3}) By (\ref{BDGM 2}), for $X$ to be an algebraic lattice, it is enough to show that each $x\in X$ is a join from $K(X)$. By the separation axiom for BDGM-spaces and (\ref{BDGM 1}), $\up x = \bigcap \{ \up k : k \in K(X), \ k \le x \}$. Thus, $x=\bigvee (\down x\cap K(X))$. 
Since $\{ \up k - \up l : k, l \in K(X) \}$ is a subbasis for the topology on $X$, 
the topology must be the Lawson topology.
\end{proof}

\begin{proposition} \plabel{prop: BDGMS = AlgLatL}
\hfill
\begin{enumerate}
\item \label[prop: BDGMS = AlgLatL]{BDGMS = AlgLatL on objects} Let $X$ be an ordered Stone space. Then $X\in\BDGMS$ iff $X \in \AlgLatL$.
\item \label[prop: BDGMS = AlgLatL]{BDGMS = AlgLatL on morphisms} A map $f \colon X \to Y$ is a $\BDGMS$-morphism iff $f$ is an $\AlgLatL$-morphism.
\end{enumerate}
\end{proposition}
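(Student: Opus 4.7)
The plan is to derive the proposition essentially from \cref{lem: BDGM basic properties}, which already contains most of the substantive content. For the forward direction of (1), I will invoke \cref{BDGM 3} directly: any BDGM-space is an algebraic lattice whose topology is the Lawson topology, hence an object of $\AlgLatL$.

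For the converse of (1), suppose $X$ is an algebraic lattice carrying the Lawson topology. Then $(X, \lambda(X), \le)$ is a Priestley space by \cref{objects}, so in particular $X$ is compact, and $X$ is a meet-semilattice since it is complete. The only point requiring argument is the BDGM separation axiom. Given $x \not\le y$, algebraicity of $X$ produces some $k \in K(X)$ with $k \le x$ and $k \not\le y$, for otherwise $x = \bigvee(\down x \cap K(X)) \le y$. Then $\up k$ is a filter; it is Scott open by \cref{compact elements}, hence Lawson open by \cref{Lawson upset is Scott}; and it is Lawson closed because $-\up k$ is a subbasic open of the lower topology $\omega(X)$, which is contained in $\lambda(X)$. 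Thus $\up k$ is a clopen filter separating $x$ from $y$, as required.

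For (2), once (1) identifies the objects of $\BDGMS$ and $\AlgLatL$ as precisely the algebraic lattices equipped with the Lawson topology, the morphism conditions in \cref{def: BDGMS space} and \cref{KAlgLatL} both reduce to being a continuous meet-semilattice homomorphism with respect to the Lawson topology, and therefore coincide. The only step with any real technical content is the separation argument in the converse of (1), and it is immediate from how $K(X)$ interacts with the Scott and lower topologies; no genuine obstacle arises.
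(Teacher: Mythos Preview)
Your proposal is correct and follows essentially the same approach as the paper. The paper's proof invokes \cref{BDGM 3} for the implication $\BDGMS \Rightarrow \AlgLatL$ and dismisses $\AlgLatL \Rightarrow \BDGMS$ as ``trivial'' (stated just before \cref{lem: BDGM basic properties}); you give the explicit separation argument the paper omits, and for (2) both proofs simply note that once the objects coincide the morphism conditions are literally the same.
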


\begin{proof}
(\ref{BDGMS = AlgLatL on objects}) As we pointed out above, if $X \in \AlgLatL$, then $X \in \BDGMS$. The converse follows from \cref{BDGM 3}.

(\ref{BDGMS = AlgLatL on morphisms}) By (\ref{BDGMS = AlgLatL on objects}), $X, Y \in \BDGMS$ iff $X, Y \in \AlgLatL$. Furthermore, the topology on a BDGM-space is the Lawson topology by \cref{BDGM 3}. Thus, $f$ is a $\BDGMS$-morphism iff $f$ is an $\AlgLatL$-morphism.
\end{proof}

As an immediate consequence, we obtain:

\begin{theorem} \label{thm: BDGMS = AlgLatL}
$\BDGMS$ is isomorphic to $\AlgLatL$.
\end{theorem}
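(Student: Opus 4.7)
The plan is to exhibit an explicit functor $F \colon \BDGMS \to \AlgLatL$ that acts as the identity on underlying sets and underlying maps, and then appeal to \cref{prop: BDGMS = AlgLatL} to conclude it is an isomorphism of categories.

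On objects, given $X \in \BDGMS$, I would set $F(X) = X$ viewed as an algebraic lattice with the Lawson topology. This is well-defined by \cref{BDGM 3}, which already tells us that every BDGM-space is an algebraic lattice whose topology \emph{is} the Lawson topology, so no change of structure is involved. Conversely, as noted just before \cref{lem: BDGM basic properties}, every algebraic lattice equipped with the Lawson topology is a BDGM-space. Hence the object classes of $\BDGMS$ and $\AlgLatL$ coincide, and the object assignment of $F$ is a bijection.

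On morphisms, given a $\BDGMS$-morphism $f \colon X \to Y$, I would set $F(f) = f$ regarded as a map between the corresponding objects of $\AlgLatL$. This is well-defined by \cref{BDGMS = AlgLatL on morphisms}, which asserts that a map between such spaces is a $\BDGMS$-morphism iff it is an $\AlgLatL$-morphism. Since the underlying set-theoretic maps are unchanged, identities and composition are preserved on the nose, so $F$ is a functor, and in fact the same identity assignment in the opposite direction provides a functor $G \colon \AlgLatL \to \BDGMS$ with $GF = 1_{\BDGMS}$ and $FG = 1_{\AlgLatL}$.

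Combining the two parts, $F$ is a bijection on objects and induces equalities ${\sf Hom}_\BDGMS(X,Y) = {\sf Hom}_\AlgLatL(X,Y)$ of hom-sets, so it is an isomorphism of categories. There is no real obstacle here: all the substantive work — showing that the BDGM separation axiom forces the lattice to be algebraic with the Lawson topology, and that the morphism notions agree — has already been carried out in \cref{prop: BDGMS = AlgLatL}, and the theorem is the resulting bookkeeping consequence.
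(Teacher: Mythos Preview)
Your proposal is correct and matches the paper's approach exactly: the paper states the theorem as ``an immediate consequence'' of \cref{prop: BDGMS = AlgLatL}, and your explicit identity-functor argument is precisely the routine unpacking of that consequence.
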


As a corollary, we obtain:

\begin{corollary} \plabel{cor: BDGMS = SLat}
\hfill
\begin{enumerate}
\item \label[cor: BDGMS = SLat]{JMS = BDGMS} \cite[Rem.~2.15]{BDGM24} The Cornish isomorphism restricts to an isomorphism between $\JMS$ and $\BDGMS$.
\item \label[cor: BDGMS = SLat]{BDGMS = SLat}\cite[Thm.~2.11]{BDGM24} $\BDGMS$ is dually equivalent to $\SLat$.
\end{enumerate}
\end{corollary}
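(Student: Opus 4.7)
The plan is to derive both parts by chaining together isomorphisms and equivalences that have already been established in the excerpt, with essentially no new work required.

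For part~\eqref{JMS = BDGMS}, I would start by observing that we have at our disposal three isomorphisms sitting in a square: $\JMS \cong \AlgLatS$ by \cref{thm: AlgLat = JM}, $\AlgLatS \cong \AlgLatL$ via the restriction of the Cornish isomorphism (\cref{thm: iso between AlgLat_S and AlgLat_L}), and $\AlgLatL \cong \BDGMS$ by \cref{thm: BDGMS = AlgLatL}. Composing the three yields an isomorphism $\JMS \cong \BDGMS$. The key point to verify is that this composite \emph{is} the Cornish isomorphism restricted to $\JMS$. This is essentially free from the explicit descriptions: the isomorphisms $\JMS \cong \AlgLatS$ and $\AlgLatL \cong \BDGMS$ do not change the underlying set or morphism, they merely view it as an algebraic lattice carrying the Scott, respectively Lawson, topology (see \cref{JMS is AlgLatS,BDGM 3}). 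Hence the composite sends a JM-space $(X,\tau)$ to the space $(X,\pi,\le)$ where $\pi$ is the patch topology of $\tau$ and $\le$ is the specialization order, and acts as the identity on morphisms; this is precisely the Cornish functor.

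For part~\eqref{BDGMS = SLat}, I would simply compose the isomorphism $\BDGMS \cong \AlgLatL$ from \cref{thm: BDGMS = AlgLatL} with the dual equivalence $\AlgLatL \simeq \SLat^{\op}$ recorded in \cref{thm: Nachbin 1}. Under the descriptions in \cref{rem: functors giving the duality}, the resulting contravariant functors admit the expected explicit form: $A \in \SLat$ is sent to the BDGM-space $\Filt(A)$ (with the Lawson topology), a semilattice morphism $\alpha \colon A \to B$ to $\alpha^{-1} \colon \Filt(B) \to \Filt(A)$, and in the other direction $X \in \BDGMS$ is sent to $K(X)$ with the opposite order.

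There is no real obstacle: since all the heavy lifting (specifically \cref{prop: BDGMS = AlgLatL,thm: AlgLat = JM}) has already been done, the proof of \cref{cor: BDGMS = SLat} is a one-line composition argument for each part. The only thing that deserves a sentence of justification is the identification of the composite in~\eqref{JMS = BDGMS} with the Cornish isomorphism itself, rather than merely with \emph{some} isomorphism between $\JMS$ and $\BDGMS$; this is immediate from the fact that the intermediate isomorphisms with $\AlgLatS$ and $\AlgLatL$ are given object-wise by retopologizing the same underlying algebraic lattice.
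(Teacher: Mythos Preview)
Your proposal is correct and follows essentially the same approach as the paper: for each part the paper simply cites the same three (respectively two) results you compose, namely \cref{thm: iso between AlgLat_S and AlgLat_L,thm: AlgLat = JM,thm: BDGMS = AlgLatL} for \eqref{JMS = BDGMS} and \cref{thm: Nachbin 1,thm: BDGMS = AlgLatL} for \eqref{BDGMS = SLat}. Your extra sentence verifying that the composite really is the Cornish functor (rather than merely some isomorphism) is a welcome clarification that the paper leaves implicit.
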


\begin{proof}
For (\ref{JMS = BDGMS}) apply \cref{thm: iso between AlgLat_S and AlgLat_L,thm: AlgLat = JM,thm: BDGMS = AlgLatL}, and for (\ref{BDGMS = SLat}) apply Theorems~\ref{thm: CohLat = Lat}(\ref{thm: Nachbin 1})  and \ref{thm: BDGMS = AlgLatL}.
\end{proof}

\begin{remark} \label{rem: BDGMS duality with SLat}
The duality between $\BDGMS$ and $\SLat$ is obtained by sending $X$ to $\CLF(X)$ and $f \colon X \to Y$ to $f^{-1} \colon \CLF(Y) \to \CLF(X)$. For the other direction, $A$ is sent to $\Filt(A)$ and $\alpha \colon A \to B$ to $\alpha^{-1} \colon \Filt(B) \to \Filt(A)$.
\end{remark}

We now consider a (non-full) subcategory of $\BDGMS$ that is dually equivalent to $\Lat$.

\begin{definition} \label{def: BDGM space}
Let $\BDGM$ be the category of BDGM-spaces satisfying 
\begin{itemize}
\item finite joins of clopen filters are clopen,
\end{itemize}
and $\BDGM$-morphisms $f \colon X \to Y$ satisfying
\begin{itemize}
\item $f(x) = 1$ implies $x = 1$;
\item $y' \wedge z' \le f(x)$ implies that there are $y, z$ with $y' \le f(y)$, $z' \le f(z)$, and $y \wedge z \le x$. 
\end{itemize}
\end{definition} 

\begin{remark}
The above spaces and morphisms were introduced in \cite{BDGM24} under the name of L-spaces and L-morphisms. 
\end{remark}   

For the next lemma, observe that if $f$ is a $\BDGMS$-morphism, it has a left adjoint by \cref{BDGMS = AlgLatL on morphisms}.

\begin{lemma} \label{lem: when is f a BDGM morphism}
Let $f \colon X \to Y$ be a $\BDGMS$-morphism and $\ell$ its left adjoint. The following are equivalent.
\begin{enumerate}
\item $f$ is a $\BDGM$-morphism.
\item $f^{-1}$ preserves finite joins of principal filters.
\item $f^{-1}$ preserves finite joins of principal filters of compact elements.
\item $\ell$ preserves finite meets.
\item $\ell$ preserves finite meets of compact elements.
\end{enumerate}
\end{lemma}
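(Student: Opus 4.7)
The plan is to establish (2)$\iff$(3) by invoking the already-proved \cref{lem: ell preserves meets}, and then to show (1)$\iff$(3) by translating the two defining clauses of a $\BDGM$-morphism through the adjunction $\ell \dashv f$.

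First, I would verify that every BDGM-space $X$ lying in $\BDGM$ is a coherent lattice. By \cref{BDGM 1}, $\CLF(X)$ coincides with $\{\up k : k \in K(X)\}$, and via the anti-isomorphism $K(X) \cong \CLF(X)^{\op}$, finite joins of clopen filters correspond to the top and binary meets in $K(X)$. Hence the clause ``finite joins of clopen filters are clopen'' in the definition of $\BDGM$ is equivalent to $K(X)$ being a bounded sublattice of $X$, i.e.\ to $X$ being coherent. Combined with \cref{BDGMS = AlgLatL on morphisms} and \cref{thm: iso between AlgLat_S and AlgLat_L}, $f \colon X \to Y$ is then an $\AlgLatS$-morphism between coherent lattices, so the equivalence (2)$\iff$(3) is exactly the equivalence of clauses (1) and (3) of \cref{lem: ell preserves meets}.

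For (1)$\iff$(3), I would translate each clause of the $\BDGM$-morphism definition using the adjunction. Since $\ell(1) \le x \iff 1 \le f(x) \iff f(x) = 1$, the first clause ``$f(x) = 1 \Rightarrow x = 1$'' says exactly ``$\ell(1) \le x \Rightarrow x = 1$''; setting $x = \ell(1)$ shows this is equivalent to $\ell(1) = 1$. For the second clause, the key observations are $y' \wedge z' \le f(x) \iff \ell(y' \wedge z') \le x$ and $y' \le f(y) \iff \ell(y') \le y$, together with the unit inequality $y' \le f(\ell(y'))$. Assuming (1), apply the clause with $x = \ell(y' \wedge z')$ (whose hypothesis holds by the unit) to obtain $y, z$ with $\ell(y') \le y$, $\ell(z') \le z$, and $y \wedge z \le \ell(y' \wedge z')$; this forces $\ell(y') \wedge \ell(z') \le \ell(y' \wedge z')$. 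The reverse inequality is automatic from the monotonicity of $\ell$, so $\ell$ preserves binary meets; combined with $\ell(1) = 1$, this gives (3). Conversely, assuming (3) and given $y' \wedge z' \le f(x)$, the canonical choice $y = \ell(y')$, $z = \ell(z')$ verifies the second clause since $y' \le f(\ell(y')) = f(y)$, similarly $z' \le f(z)$, and $y \wedge z = \ell(y') \wedge \ell(z') = \ell(y' \wedge z') \le x$.

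The main obstacle is untangling the existential quantifier in the second $\BDGM$-morphism clause. The decisive move is to recognize that $y = \ell(y')$ and $z = \ell(z')$ are the canonical witnesses supplied by the adjunction, which collapses the clause to the transparent statement that $\ell$ preserves binary meets.
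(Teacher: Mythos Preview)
Your proposal is correct and follows essentially the same route as the paper: invoke \cref{lem: ell preserves meets} for (2)$\iff$(3), and prove (1)$\iff$(3) directly via the adjunction, instantiating $x = \ell(y' \wedge z')$ for one direction and $y = \ell(y')$, $z = \ell(z')$ for the other. The only difference is your opening paragraph about coherence, which is unnecessary: the hypothesis is merely that $f$ is a $\BDGMS$-morphism (so $X,Y$ need not lie in $\BDGM$), and the equivalence (1)$\iff$(3) of \cref{lem: ell preserves meets} requires only that $f$ be an $\AlgLatS$-morphism, which follows from \cref{BDGMS = AlgLatL on morphisms} without any coherence assumption.
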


\begin{proof}
By \cref{lem: ell preserves meets}, conditions (2) to (5) are equivalent. Therefore, it is enough to show that (1) and (4) are equivalent.

(1)$\Longrightarrow$(4) Since $1 \le f \ell (1)$, we have $f \ell (1)=1$, which yields $\ell(1)=1$. Therefore, $\ell$ preserves the empty meet. Let $y',z' \in Y$. Then $y' \wedge z' \le f \ell (y' \wedge z')$. Since $f$ is a $\BDGM$-morphism, there are $y, z \in X$ with $y' \le f(y)$, $z' \le f(z)$, and $y \wedge z \le \ell (y' \wedge z')$. Thus, $\ell(y') \le y$ and $\ell(z') \leq z$, which imply that $\ell(y') \wedge \ell(z') \le \ell (y' \wedge z')$. Because $\ell$ is order-preserving, we conclude that it preserves finite meets.

(4)$\Longrightarrow$(1) If $f(x)=1$, then $1 = \ell(1) \le \ell f(x) \le x$. Therefore, $x=1$. Next, suppose that $x \in X$ and $y', z' \in Y$ with $y' \wedge z' \le f(x)$. Then $\ell(y') \wedge \ell(z')=\ell(y' \wedge z') \le x$. We also have that $y' \le f\ell(y')$ and $z' \le f\ell(z')$. Letting $y = \ell(y')$ and $z= \ell (z')$ yields $y' \le f(y)$, $z' \le f(z)$, and $y \wedge z \le x$. Thus, $f$ is a $\BDGM$-morphism.
\end{proof}

\begin{proposition} \plabel{prop: BDGM = CohLatL}
Let $X, Y \in \BDGMS$ and $f \colon X \to Y$ be a $\BDGMS$-morphism. 
\begin{enumerate}
\item \label[prop: BDGM = CohLatL]{BDGM = CohLatL on objects} $X \in \BDGM$ iff $X \in \CohLatL$.
\item \label[prop: BDGM = CohLatL]{BDGM = CohlatL on morphisms} $f$ is a $\BDGM$-morphism iff $f$ is a $\CohLatL$-morphism.
\end{enumerate}
\end{proposition}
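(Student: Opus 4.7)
The plan is to reduce both parts to results already established: \cref{BDGM 1}, which identifies the clopen filters with the principal upsets of compact elements; \cref{lem: when is f a BDGM morphism}, which rephrases the $\BDGM$-morphism conditions via the left adjoint; and \cref{prop: BDGMS = AlgLatL}, which identifies $\BDGMS$ with $\AlgLatL$.

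For (\ref{BDGM = CohLatL on objects}), I would begin from \cref{BDGMS = AlgLatL on objects}, which already tells me that every BDGM-space is an algebraic lattice equipped with the Lawson topology. The problem thus reduces to showing that the extra $\BDGM$ axiom---that finite joins of clopen filters are again clopen---is equivalent to $X$ being coherent, i.e., $1\in K(X)$ and $K(X)$ closed under binary meets. By \cref{BDGM 1}, the clopen filters of $X$ are precisely the principal filters $\up k$ with $k\in K(X)$. The join in $\Filt(X)$ of principal filters $\up k_1,\dots,\up k_n$ is $\up(k_1\wedge\cdots\wedge k_n)$, with the empty join being $\up 1$. Hence this join is clopen iff $k_1\wedge\cdots\wedge k_n\in K(X)$. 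The empty case forces $1\in K(X)$, the binary case forces closure of $K(X)$ under binary meets, and these two conditions together give closure under all finite meets. Thus the $\BDGM$ axiom holds iff $X$ is coherent.

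For (\ref{BDGM = CohlatL on morphisms}), \cref{BDGMS = AlgLatL on morphisms} identifies $\BDGMS$-morphisms with $\AlgLatL$-morphisms, so it suffices to match the extra conditions. By \cref{lem: when is f a BDGM morphism}, a $\BDGMS$-morphism $f$ is a $\BDGM$-morphism iff its left adjoint preserves finite meets, which is precisely the requirement placed on $\CohLatL$-morphisms in \cref{CohLatL}. Combining these two observations yields the claim.

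The only genuinely subtle point I foresee is the empty-join case in part (\ref{BDGM = CohLatL on objects}): one must explicitly note that $\up 1$ is the bottom of the join-semilattice of filters, since this observation is precisely what encodes compactness of $1$ in the $\BDGM$ axiom. Once this is handled, both parts reduce to bookkeeping against the lemmas already in hand.
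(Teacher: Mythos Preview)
Your proposal is correct and follows essentially the same approach as the paper: both parts reduce to \cref{prop: BDGMS = AlgLatL} together with \cref{BDGM 1} for objects and \cref{lem: when is f a BDGM morphism} for morphisms. The paper additionally cites \cref{lem: ell preserves meets} in part~(\ref{BDGM = CohlatL on morphisms}), but since that lemma is already absorbed into the proof of \cref{lem: when is f a BDGM morphism}, your streamlined citation is sufficient.
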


\begin{proof}
(\ref{BDGM = CohLatL on objects}) By \cref{BDGMS = AlgLatL on objects}, $X \in \AlgLatL$. Now, $X \in \BDGM$ iff finite joins of clopen filters are clopen. By \cref{BDGM 1}, this occurs iff $K(X)$ is closed under finite meets, which is equivalent to $X \in \CohLatL$. 

(\ref{BDGM = CohlatL on morphisms}) By \cref{BDGMS = AlgLatL on morphisms}, $f$ is an $\AlgLatL$-morphism. Thus, it is sufficient to apply \cref{lem: ell preserves meets,lem: when is f a BDGM morphism}. 
\end{proof}

As an immediate consequence, we obtain:

\begin{theorem} \plabel{thm: BDGM = CohLatL}
\hfill
\begin{enumerate}
\item \label[thm: BDGM = CohLatL]{BDGM = CohLatL} $\BDGM$ is isomorphic to $\CohLatL$.
\item \label[thm: BDGM = CohLatL]{JML = BDGM} \cite[Rem.~2.15]{BDGM24} The Cornish isomorphism restricts to an isomorphism between $\JML$ and $\BDGM$.
\end{enumerate}
\end{theorem}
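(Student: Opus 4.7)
The proof is essentially a bookkeeping exercise, since all the substantive content has already been established in the preceding lemma and proposition, along with the earlier results on $\AlgLatS$, $\AlgLatL$, $\CohLatS$, and $\CohLatL$. My plan is as follows.

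For part (\ref{BDGM = CohLatL}), I would simply invoke \cref{prop: BDGM = CohLatL}. Part (\ref{BDGM = CohLatL on objects}) of that proposition, together with \cref{BDGMS = AlgLatL on objects}, identifies the objects of $\BDGM$ with those of $\CohLatL$, while part (\ref{BDGM = CohlatL on morphisms}) does the same for morphisms. Thus the assignment $X \mapsto X$ (retaining the Lawson topology, which is the topology of any BDGM-space by \cref{BDGM 3}) and $f \mapsto f$ extends to a functor $\BDGM \to \CohLatL$ that is bijective on objects and on hom-sets, and hence is an isomorphism of categories. Composition of morphisms is preserved since both categories inherit it from the underlying set-theoretic composition.

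For part (\ref{JML = BDGM}), the strategy is to chase the already-established isomorphisms around the square
\[
\begin{tikzcd}
\JML \arrow[r, "\sim"] \arrow[d, dashed] & \CohLatS \arrow[d, "\text{Cornish}"] \\
\BDGM \arrow[r, "\sim"] & \CohLatL
\end{tikzcd}
\]
The top horizontal isomorphism is \cref{thm: CohLatL = JML}, sending a JM-space $X$ to the coherent lattice $X$ (equivalently, a coherent lattice $L$ to $(L, \sigma(L))$). The bottom horizontal isomorphism is part (\ref{BDGM = CohLatL}), sending a BDGM-space $X$ to the coherent lattice $X$ (equivalently, $L$ to $(L, \lambda(L), \le)$). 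The right vertical arrow is the restriction of the Cornish isomorphism provided by \cref{thm: iso between CohLat_S and CohLat_L}, which sends $(L, \sigma(L))$ to $(L, \lambda(L), \le)$ and is the identity on morphisms. Composing these, the dashed arrow $\JML \to \BDGM$ sends $(X, \sigma(X))$ to $(X, \lambda(X), \le)$ and acts as the identity on morphisms, so it is precisely the Cornish isomorphism restricted to $\JML$.

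I don't foresee any real obstacle: the proposition and lemma immediately preceding the theorem do all of the nontrivial work (matching the extra axioms on objects via \cref{BDGM 1} and matching morphisms via \cref{lem: when is f a BDGM morphism} and \cref{lem: ell preserves meets}), and the remaining content is the observation that the square of isomorphisms commutes on the nose because every arrow involved is the identity on the underlying set.
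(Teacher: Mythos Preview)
Your proposal is correct and follows essentially the same approach as the paper. For part~(\ref{BDGM = CohLatL}) both you and the paper invoke \cref{prop: BDGM = CohLatL} (together with the ambient isomorphism $\BDGMS \cong \AlgLatL$), and for part~(\ref{JML = BDGM}) the paper cites (1), \cref{thm: CohLatL = JML}, and \cref{JMS = BDGMS}, which amounts to the same square-chasing you spell out, the only cosmetic difference being that the paper routes through \cref{JMS = BDGMS} rather than \cref{thm: iso between CohLat_S and CohLat_L} directly.
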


\begin{proof}
For (1) apply \cref{thm: BDGMS = AlgLatL,prop: BDGM = CohLatL}; and for (2) apply (1), \cref{thm: iso between CohLat_S and CohLat_L,thm: CohLatL = JML}, and \cref{JMS = BDGMS}.
\end{proof}

Putting \cref{{BDGM = CohLatL}} together with
\cref{thm: Nachbin 2} yields: 

\begin{corollary} \label{cor: BDGM = Lat}
\cite[Thm.~2.14]{BDGM24} $\BDGM$ is dually equivalent to $\Lat$.
\end{corollary}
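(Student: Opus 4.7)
The plan is to obtain the statement as a direct composition of two results already established in the excerpt: the isomorphism $\BDGM \cong \CohLatL$ from \cref{BDGM = CohLatL}, and the dual equivalence $\CohLatL \simeq \Lat^{\op}$ from \cref{thm: Nachbin 2}. No new constructions are needed; the entire argument should fit in one line.

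Concretely, first I would invoke \cref{BDGM = CohLatL}, which identifies $\BDGM$ with the category $\CohLatL$ of coherent lattices equipped with the Lawson topology (whose morphisms are $\AlgLatL$-morphisms whose left adjoint preserves finite meets). Then I would apply \cref{thm: Nachbin 2}, which provides a dual equivalence between $\CohLatL$ and $\Lat$. Composing the isomorphism of categories with the dual equivalence yields the desired dual equivalence $\BDGM \simeq \Lat^{\op}$.

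For the reader's convenience, I would also briefly indicate the resulting functors under this composition. Unwinding \cref{rem: functors giving the duality} through the isomorphism of \cref{BDGM = CohLatL}, a BDGM-space $X$ is sent to $K(X)^{\op}$, which by \cref{BDGM 1} may equivalently be described as $\CLF(X)$ ordered by reverse inclusion, and a $\BDGM$-morphism $f \colon X \to Y$ is sent to the restriction of its left adjoint (equivalently, to $f^{-1} \colon \CLF(Y) \to \CLF(X)$, which is a bounded lattice homomorphism by \cref{lem: when is f a BDGM morphism}). In the reverse direction, a bounded lattice $A$ is sent to $\Filt(A)$ with the Lawson topology, and a bounded lattice homomorphism $\alpha \colon A \to B$ to $\alpha^{-1} \colon \Filt(B) \to \Filt(A)$. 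There is no hard step here; the only point worth verifying is that the functors of \cref{rem: functors giving the duality} indeed land in $\BDGM$ and not just $\CohLatL$, which is immediate from \cref{BDGM = CohLatL on objects,BDGM = CohlatL on morphisms}.
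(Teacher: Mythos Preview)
Your proof is correct and is exactly the paper's argument: the paper simply states that the corollary follows by putting \cref{BDGM = CohLatL} together with \cref{thm: Nachbin 2}. One small slip in your ancillary description of the functors: $\CLF(X)$ should be ordered by ordinary inclusion (so that $\up k \subseteq \up l$ iff $l \le k$), which is what makes it isomorphic to $(K(X),\ge)$; compare \cref{rem: BDGMS duality with SLat}.
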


\begin{remark} \label{rem: duality between BDGM and Lat}
The duality above is a restriction of the duality given in \cref{rem: BDGMS duality with SLat}.
\end{remark}

The following diagrams show the relationships between the various categories of \cref{sec: preliminaries,sec: JM,sec: BDGM}. Double arrows with no label represent isomorphisms of categories and dual equivalences when labeled with $d$.

\[
\begin{tikzcd}[column sep = 2.5pc, row sep=0.50pc]
& \AlgLatS \arrow[r, <->] \arrow[dd, <->]& \JMS \arrow[dd, <->] && \CohLatS \arrow[r, <->] \arrow[dd, <->] & \JML \arrow[dd, <->] \\
\SLat \arrow[ur, <->, "d"] \arrow[dr, <->, "d"'] &&& \Lat \arrow[ur, <->,"d"] \arrow[dr, <->,"d"']\\
& \AlgLatL \arrow[r, <->] & \BDGMS && \CohLatL \arrow[r, <->] & \BDGM	
\end{tikzcd}
\]

\section{The Hartonas approach} \label{sec: Hs}

In this section we consider the spaces introduced by Hartonas \cite{Har97}, 
giving rise to two categories that we show are isomorphic to $\BDGMS$ and $\BDGM$, respectively. 

As usual, for a poset $(X, \le)$ and $A \subseteq X$, we let $A^u$ be the set of upper bounds and $A^l$ the set of lower bounds of $A$. 
We then have the closure operator $\Gamma$ on $X$ given by $\Gamma(A) = A^{lu}$.  So $\Gamma X = \{ \Gamma(A) : A \subseteq X\}$ is a complete lattice, where meet is intersection, join is given by $\bigvee A_i = \Gamma(\bigcup_i A_i)$, top is $\Gamma(X) = X$, and bottom is $\Gamma(\varnothing) = X^u$, which is $\{1\}$ if $X$ has a top, and $\varnothing$ otherwise. 
We note that $\Gamma(\{x\}) = \up x$ for each $x \in X$. 

\begin{definition} \label{def: lambda X}
If $X$ is a partially ordered Stone space, we set
\[
\Lambda X =\{ U \in \Gamma X : U \textrm{ is clopen} \}. 
\]
\end{definition}

We recall that a subset $S$ of a complete lattice $X$ is \emph{meet-dense} in $X$ if each element of $X$ is a meet from $S$. 

\begin{definition} \plabel{def: Hartonas space}
\hfill
\begin{enumerate}
\item Let $X$ be a partially ordered Stone space.  We call $X$ an {\em Hartonas space} if
\begin{enumerate}
\item \label[def: Hartonas space]{def: Hartonas space 3} $X$ is a complete lattice;
\item \label[def: Hartonas space]{def: Hartonas space 1} the topology on $X$ is generated by $\Lambda X \cup \{- U : U \in \Lambda X\}$;
\item \label[def: Hartonas space]{def: Hartonas space 4} $\Lambda X$ is meet-dense in $\Gamma X$.
\end{enumerate}
\item \label[def: Hartonas space]{Hs continuity} Let $\HsS$ be the category of Hartonas spaces and continuous maps $f \colon X \to Y$ such that $U \in \Lambda Y$ implies $f^{-1}(U) \in \Lambda X$. 
\end{enumerate}
\end{definition}

\begin{remark} \plabel{rem: properties of Hartonas spaces}
\hfill
\begin{enumerate}
\item \label[rem: properties of Hartonas spaces]{missing axiom} Our definition generalizes that of Hartonas \cite[Def.~4.3]{Har97} to obtain a category equivalent to $\BDGMS$ and dually equivalent to $\SLat$. The axiom \cite[Def.~4.3(2)]{Har97} which is dropped from \cref{def: Hartonas space} will be added in \cref{def: HsL space} to obtain a category equivalent to $\BDGM$ and dually equivalent to $\Lat$.  
\item In \cref{def: Hartonas space}, we start with a partial order $\le$ on a Stone space $X$ and define $\Gamma$ from it. On the other hand, Hartonas starts with $\Gamma$ on $X$ which arises from a binary relation $\prec$, and defines $\le$ by $x \le y$ if $\Gamma(y) \subseteq \Gamma(x)$. The partial order $\le$ and the closure operator $\Gamma$ are definable from one another, but we prefer to start with $\le$ since it is easier to connect the resulting duality with  BDGM-duality.
\item \label[rem: properties of Hartonas spaces]{Gamma elements are filters} The condition that $X$ is a complete lattice is equivalent to Hartonas's condition that $\Gamma X = \{\up x : x \in X\}$. Also, the compactness condition in \cite[Def.~4.2(1)]{Har97} is superfluous since it follows from compactness of $X$. 
\item \label[rem: properties of Hartonas spaces]{up x is closed}
Let $x \in X$. Then $\up x$ is closed since $\up x \in \Gamma X$, so is an intersection from $\Lambda X$ by \cref{def: Hartonas space 4}, and hence is an intersection of clopen sets.  (Note that $\Lambda X$ is closed under finite intersections.)
\item By \crefrange{def: Hartonas space 1}{Hs continuity}, each $\HsS$-morphism is continuous.
\end{enumerate}
\end{remark}

\begin{lemma} \label{lem: KOF = LX}
Let $X$ be an algebraic lattice. 
Viewing $X$ as a topological space with the Lawson topology, $\Lambda X = \{ \up k : k \in K(X)\} = \CLF(X)$.
\end{lemma}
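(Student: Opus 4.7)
The plan is to show the three-way equality by identifying $\Gamma X$ with the principal filters of $X$ and then invoking the BDGM machinery to pin down which of them are clopen. The main observation is that since $X$ is algebraic, it is in particular a complete lattice, and an algebraic lattice equipped with the Lawson topology is a BDGM-space (as noted immediately before \cref{prop: BDGMS = AlgLatL}). This lets us import \cref{BDGM 1} directly.

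First I would compute $\Gamma X$. For $A \subseteq X$, the set $A^l$ of lower bounds equals $\down\bigl(\bigwedge A\bigr)$, and upper bounds of $\down\bigl(\bigwedge A\bigr)$ are exactly $\up\bigl(\bigwedge A\bigr)$, so $\Gamma(A) = \up\bigl(\bigwedge A\bigr)$. In particular, $\Gamma X = \{\up x : x \in X\}$ (with $\Gamma(\varnothing) = \{1\} = \up 1$, using that $X$ is bounded). Consequently, $\Lambda X = \{\up x : x \in X \text{ and } \up x \text{ is clopen}\}$, and since each $\up x$ is clearly a filter of $X$, we have the trivial inclusion $\Lambda X \subseteq \CLF(X)$.

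Next I would apply \cref{BDGM 1} to the BDGM-space $(X,\lambda(X))$ to conclude $\CLF(X) = \{\up k : k \in K(X)\}$. For the reverse inclusion $\CLF(X) \subseteq \Lambda X$, note that any clopen filter $F$ is, by this identity, of the form $\up k$ for some $k \in K(X)$, and $\up k = \Gamma(\{k\}) \in \Gamma X$; being clopen by assumption, it lies in $\Lambda X$. This closes the chain of inclusions
\[
\{\up k : k \in K(X)\} = \CLF(X) \subseteq \Lambda X \subseteq \CLF(X),
\]
yielding the desired equality.

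There is no real obstacle here; the only subtlety is being sure to invoke the identification $\Gamma X = \{\up x : x \in X\}$ (which needs $X$ to be a complete lattice, supplied by algebraicity) before appealing to \cref{BDGM 1}. Everything else is bookkeeping.
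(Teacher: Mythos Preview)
Your proof is correct and follows essentially the same route as the paper: both establish $\Lambda X \subseteq \CLF(X)$ from the definition, invoke \cref{BDGM 1} (via the identification of algebraic lattices under the Lawson topology with BDGM-spaces) to get $\CLF(X) = \{\up k : k \in K(X)\}$, and close the loop by noting each $\up k$ is a clopen element of $\Gamma X$. You spell out the computation $\Gamma X = \{\up x : x \in X\}$ explicitly, whereas the paper simply cites \cref{Gamma elements are filters}; otherwise the arguments coincide.
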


\begin{proof}
By definition, $\Lambda X \subseteq \CLF(X)$.  
By \cref{BDGMS = AlgLatL on objects}, $X$ is a BDGM-space. Therefore, $\CLF(X) = \{ \up k : k \in K(X)\}$ by \cref{BDGM 1}. Finally, $\up k \in \Lambda X$ for each $k\in K(X)$ since it is a clopen principal filter, hence an element of $\Lambda X$ since $\Gamma X = \{\up x : x \in X\}$ (see \cref{Gamma elements are filters}).
\end{proof}

\begin{remark} \label{rem: CLF = KOF} 
Putting \cref{lem: KOF = LX} together with \cref{lem: KOF = up k's} yields that $\KOF(X) = \CLF(X)$ for each algebraic lattice $X$.
\end{remark}

\begin{proposition} \plabel{prop: BDGMS = HsS}
\hfill
\begin{enumerate}
\item \label[prop: BDGMS = HsS]{BDGMS = HsS objects} A partially ordered Stone space is an Hartonas space iff it is a BDGM-space.
\item \label[prop: BDGMS = HsS]{BDGMS = HsS morphisms} A map $f \colon X \to Y$ is a $\BDGMS$-morphism iff $f$ is an $\HsS$-morphism.
\end{enumerate}
\end{proposition}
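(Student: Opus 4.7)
My plan is to reduce both parts of the proposition to the structural description of BDGM-spaces supplied by \cref{BDGM 3} and \cref{lem: KOF = LX}, which together identify BDGM-spaces with algebraic lattices equipped with the Lawson topology and identify $\Lambda X$ with $\CLF(X) = \{\up k : k \in K(X)\}$.

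For part~(1), in the direction BDGM $\Rightarrow$ Hartonas, assume $X$ is a BDGM-space. By \cref{BDGM 3} it is an algebraic lattice with the Lawson topology, and in particular a complete lattice. Since $\Lambda X = \CLF(X)$ by \cref{lem: KOF = LX}, the subbasis description $\{F - G : F,G \in \CLF(X)\}$ recorded in \cref{rem: properties of BDGM spaces} yields the topology-generation axiom, and the identity $\up x = \bigcap\{\up k : k \in K(X),\ k \le x\}$ (valid by algebraicity) yields meet-density of $\Lambda X$ in $\Gamma X$. For the reverse direction, assume $X$ is a Hartonas space. Compactness is immediate from the Stone assumption, and $X$ is a meet-semilattice since it is a complete lattice. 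To verify the Priestley separation axiom with clopen filters, suppose $x \not\le y$. Then $y \notin \up x \in \Gamma X$, so by meet-density there is $U \in \Lambda X$ with $\up x \subseteq U$ and $y \notin U$; since $U \in \Gamma X$ is a principal filter, $U$ is a clopen filter separating $x$ from $y$.

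For part~(2), the forward direction is short. If $f$ is a $\BDGMS$-morphism, then by \cref{BDGMS = AlgLatL on morphisms} and \cref{Lawson continuous} it preserves arbitrary meets. Given $U = \up k \in \Lambda Y$, setting $m := \bigwedge f^{-1}(U)$ gives $f(m) \ge k$, whence $f^{-1}(U) = \up m$; continuity makes this set clopen, so $f^{-1}(U) \in \Lambda X$. The reverse direction is the subtle one. Let $f$ be an $\HsS$-morphism. By part~(1), $X$ and $Y$ are algebraic lattices with the Lawson topology, and via \cref{lem: KOF = LX} the condition $f^{-1}(\Lambda Y) \subseteq \Lambda X$ produces a function $\ell \colon K(Y) \to K(X)$ with $f^{-1}(\up k) = \up \ell(k)$. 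I would first deduce that $f$ is order-preserving: if $x \le x'$ and $f(x) \ge k$ with $k \in K(Y)$, then $x \in \up \ell(k)$ forces $x' \in \up \ell(k)$ and hence $f(x') \ge k$; algebraicity of $Y$ then promotes this to $f(x) \le f(x')$. Next I would show that $f$ preserves arbitrary meets: for any $S \subseteq X$ and compact $k \le f(s)$ for all $s \in S$, each $s$ lies in $\up \ell(k)$, so $\ell(k) \le \bigwedge S$ and $f(\bigwedge S) \ge f(\ell(k)) \ge k$; algebraicity of $Y$ combined with order-preservation then gives $f(\bigwedge S) = \bigwedge f(S)$. Thus $f$ is a continuous meet-semilattice morphism, hence a $\BDGMS$-morphism.

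The main obstacle is the reverse direction of part~(2): the other three directions are essentially formal consequences of \cref{BDGM 3} and \cref{lem: KOF = LX}, but this one demands that we manufacture algebraic-structure preservation out of the purely topological condition $f^{-1}(\Lambda Y) \subseteq \Lambda X$. The decisive observation is that $\Lambda$-preservation is strictly stronger than preserving clopens: it forces preimages of clopen principal filters to be themselves principal filters, yielding a left-adjoint-like map $\ell$ on compacts that lets us propagate preservation from compact meets to arbitrary ones via algebraicity of the target.
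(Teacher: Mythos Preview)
Your proof is correct, and part~(1) and the forward direction of part~(2) essentially match the paper's argument (the paper phrases the forward direction of~(2) slightly differently, noting that $f^{-1}(F)$ is a clopen filter directly, but this is the same content).

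The reverse direction of part~(2) is where your approach genuinely differs from the paper's. You define $\ell$ only on $K(Y)$ and then argue by hand, first establishing order-preservation of $f$ and then arbitrary-meet preservation, both via algebraicity of $Y$. The paper instead makes a single structural observation: since $\Lambda Y$ is meet-dense in $\Gamma Y$ and $f^{-1}$ commutes with intersections, the hypothesis $f^{-1}(\Lambda Y) \subseteq \Lambda X$ immediately upgrades to $f^{-1}(\Gamma Y) \subseteq \Gamma X$; that is, $f^{-1}$ pulls \emph{every} principal filter of $Y$ back to a principal filter of $X$. This yields a total map $\ell \colon Y \to X$ satisfying $f^{-1}(\up y) = \up \ell(y)$, which is by definition a left adjoint to $f$, whence $f$ preserves all meets at once. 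The paper's route is shorter and conceptually cleaner (it exploits the meet-density axiom of Hartonas spaces directly rather than re-deriving its consequences via compacts), while your route is more elementary in that it never invokes the adjoint functor characterization and works entirely with compact elements.
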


\begin{proof}
(\ref{BDGMS = HsS objects}) Let $X$ be an Hartonas space. Then $X$ is compact. If $x \not\le y$, then $\up y \not\subseteq \up x$. Since $\Lambda X$ is meet-dense in $\Gamma X$, there is $F \in \Lambda X$ with $\up y \not\subseteq F$ and $\up x \subseteq F$. Therefore, $F$ is a clopen filter containing $x$ but not $y$, and hence $X$ is a BDGM-space.

Conversely, let $X$ be a BDGM-space. By \cref{BDGM 2}, $X$ is a complete lattice, so \cref{def: Hartonas space 3} follows. By Lemmas~\ref{lem: BDGM basic properties}(\ref{BDGM 3}) and \ref{lem: KOF = LX}, $\Lambda X = \CLF(X)$. By \cref{rem: properties of BDGM spaces}, $\{ F - G : F, G \in \CLF(X) \}$ is a subbasis for the topology of $X$, so \cref{def: Hartonas space 1} follows. Finally, for \cref{def: Hartonas space 4}, if $A \in \Gamma X$, then $A = \up x$ for some $x \in X$. Because $\up x$ is the intersection of clopen filters, 
we see that $\Lambda X$ is meet-dense in $\Gamma X$. Thus, $X$ is an Hartonas space.

(\ref{BDGMS = HsS morphisms}) Let $f$ be a $\BDGMS$-morphism. Then $f^{-1}(F) \in \CLF(X)$ for each $F \in \CLF(Y)$. Now apply \cref{lem: KOF = LX} to conclude that $f$ is an $\HsS$-morphism.

Conversely, let $f$ be an $\HsS$-morphism, so $f^{-1}(\Lambda Y) \subseteq \Lambda X$. Since $\Lambda Y$ is meet-dense in $\Gamma Y$, we obtain that $f^{-1}(\Gamma Y) \subseteq \Gamma X$. Therefore, $f^{-1}$ pulls principal filters to principal filters. Thus, there is a map $\ell : Y \to X$ satisfying $f^{-1}(\up y) = \up \ell(y)$. This says exactly that $\ell$ is left adjoint to $f$. Therefore, $f$ preserves all meets, and so is a $\SLat$-morphism. Thus, $f$ is a $\BDGMS$-morphism.
\end{proof}

\cref{prop: BDGMS = HsS} yields:

\begin{theorem} \label{thm: CohLat = Hs}
$\HsS$ is isomorphic to $\BDGMS$.
\end{theorem}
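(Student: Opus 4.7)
The plan is to observe that the theorem is essentially a direct corollary of \cref{prop: BDGMS = HsS}, and to spell out why that proposition is strong enough to yield an isomorphism of categories rather than merely a correspondence of objects and morphisms. I would define two functors $F \colon \HsS \to \BDGMS$ and $G \colon \BDGMS \to \HsS$ that both act as the identity on underlying sets, underlying orders, and underlying functions.

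For well-definedness on objects: an Hartonas space is by definition a partially ordered Stone space satisfying extra axioms, so \cref{BDGMS = HsS objects} immediately says $F$ sends it to a BDGM-space. In the other direction, a BDGM-space is a compact meet-semilattice with the strengthened Priestley separation axiom; by \cref{rem: properties of BDGM spaces} it is in particular a partially ordered Stone space, so \cref{BDGMS = HsS objects} again applies and shows that $G$ lands in $\HsS$. Well-definedness on morphisms is exactly \cref{BDGMS = HsS morphisms}.

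Functoriality of $F$ and $G$ is automatic: composition in both categories is ordinary composition of functions between the underlying sets, and identity morphisms are the set-theoretic identities (in $\BDGMS$ the identity is trivially a continuous meet-semilattice map; in $\HsS$ it trivially pulls $\Lambda X$ back to $\Lambda X$). Since $F$ and $G$ are identities on objects, morphisms, composition, and identities, we have $G \circ F = \mathrm{id}_{\HsS}$ and $F \circ G = \mathrm{id}_{\BDGMS}$ literally, not merely up to natural isomorphism.

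There is no substantive obstacle remaining, as all the work has been absorbed into \cref{prop: BDGMS = HsS}; the only thing to be slightly careful about is that a BDGM-space is presented as a meet-semilattice rather than as a poset, so one should note explicitly that its underlying order makes it a Priestley space (hence a partially ordered Stone space in the sense required by \cref{def: Hartonas space}) before invoking \cref{BDGMS = HsS objects}.
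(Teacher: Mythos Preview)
Your proposal is correct and takes essentially the same approach as the paper: the paper simply states that the theorem follows by ``putting the previous two results together,'' i.e., the two parts of \cref{prop: BDGMS = HsS}. Your write-up spells out in more detail why this yields a genuine isomorphism of categories (identity-on-underlying-data functors, automatic functoriality), which is a harmless and arguably helpful elaboration of the same one-line argument.
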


\begin{corollary} \plabel{cor: HsS = AlgLatL}
\hfill
\begin{enumerate}
\item $\HsS$ is isomorphic to $\AlgLatL$ and $\JMS$.
\item \label[cor: HsS = AlgLatL]{cor: HsS = SLat} $\HsS$ is dually equivalent to $\SLat$.
\end{enumerate}
\end{corollary}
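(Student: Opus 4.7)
The plan is to derive both parts of the corollary by composing the isomorphisms and dual equivalences already established in this and the preceding section, so no new machinery is needed beyond assembling the chain in the right order.

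For part (1), I would first invoke \cref{thm: CohLat = Hs}, which says that $\HsS$ is isomorphic to $\BDGMS$. Then \cref{thm: BDGMS = AlgLatL} gives that $\BDGMS$ is isomorphic to $\AlgLatL$, and \cref{JMS = BDGMS} from \cref{cor: BDGMS = SLat} gives that $\BDGMS$ is isomorphic to $\JMS$. Composing the first isomorphism with each of the latter two yields $\HsS \cong \AlgLatL$ and $\HsS \cong \JMS$, respectively. Concretely, tracing through the constructions, the functor $\HsS \to \AlgLatL$ sends a Hartonas space $X$ (which is already a complete lattice by \cref{def: Hartonas space 3}) to itself viewed as an algebraic lattice with the Lawson topology, using \cref{BDGMS = AlgLatL on objects} and \cref{BDGMS = HsS objects}; morphisms are sent to themselves by \cref{BDGMS = AlgLatL on morphisms} and \cref{BDGMS = HsS morphisms}. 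The functor $\HsS \to \JMS$ sends $X$ to the same underlying set equipped with the Scott topology of the specialization order, as in the Cornish-isomorphism restriction from \cref{JMS = BDGMS}.

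For part (2), I would combine part (1) with \cref{BDGMS = SLat} from \cref{cor: BDGMS = SLat}, which states that $\BDGMS$ is dually equivalent to $\SLat$. Since isomorphism of categories composed with a dual equivalence yields a dual equivalence, the isomorphism $\HsS \cong \BDGMS$ together with the dual equivalence $\BDGMS \simeq \SLat^{\op}$ gives $\HsS \simeq \SLat^{\op}$. Under this composite, an object $X \in \HsS$ is sent to $\CLF(X) = \Lambda X$ (the two coincide by \cref{lem: KOF = LX} and \cref{BDGMS = HsS objects}), and an $\HsS$-morphism $f$ is sent to $f^{-1}$ restricted to $\Lambda$; in the reverse direction, a meet-semilattice $A$ is sent to $\Filt(A)$ equipped with the Lawson topology of its lattice order.

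There is no substantive obstacle here, since every ingredient has been verified separately. The only point that warrants a brief check in writing up the proof is that the isomorphism from \cref{thm: CohLat = Hs} is literally the identity on underlying objects and morphisms (once one chooses the specialization order from the Stone topology of an Hartonas space), so the composites really do agree with the functors described in \cref{rem: functors giving JMS = SLat} and \cref{rem: BDGMS duality with SLat}. Once this is noted, both statements follow immediately from the cited results.
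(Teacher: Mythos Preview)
Your proposal is correct and takes essentially the same approach as the paper: both parts follow by composing the isomorphisms and dual equivalences already established, with no new work needed. The paper's proof is even terser---for (1) it cites \cref{thm: BDGMS = AlgLatL,thm: CohLat = Hs} for $\HsS \cong \AlgLatL$ and then \cref{thm: iso between CohLat_S and CohLat_L,thm: AlgLat = JM} for $\HsS \cong \JMS$, and for (2) it applies (1) together with \cref{thm: Nachbin 1}---whereas you route through $\BDGMS$ via \cref{JMS = BDGMS} and \cref{BDGMS = SLat}, but these are equivalent chains through the same diagram of isomorphisms.
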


\begin{proof}
(1) That $\HsS$ is isomorphic to $\AlgLatL$ follows from \cref{thm: BDGMS = AlgLatL,thm: CohLat = Hs}, and that $\HsS$ is isomorphic to $\JMS$ then follows from \cref{thm: iso between AlgLat_S and AlgLat_L,thm: AlgLat = JM}.

(2) Apply (1) and \cref{thm: Nachbin 1}. 
\end{proof}

\begin{remark} \label{rem: functors for Hs}
The functors that yield the duality of \cref{cor: HsS = SLat} are those of BDGM-duality (see \cref{rem: BDGMS duality with SLat,rem: duality between BDGM and Lat}).
\end{remark}

We now restrict the objects and morphisms of $\HsS$ to yield a category dually equivalent to $\Lat$.

\begin{definition} \label{def: HsL space}
Let $\HsL$ be the category of Hartonas spaces such that $\Lambda X$ is a bounded sublattice of $\Gamma X$, and $\HsS$-morphisms $f \colon X \to Y$ such that $f^{-1} \colon \Lambda Y \to \Lambda X$ preserves finite joins.
\end{definition}

\begin{remark}
The spaces defined in \cref{def: HsL space} are the same as those of \cite[Def.~4.3]{Har97} (see \cref{missing axiom}). Since Hartonas only works with semilattice morphisms between lattices, the extra condition on morphisms does not appear in \cite{Har97}. This extra condition is necessary to establish duality between $\HsL$ and $\Lat$.
\end{remark}

\begin{proposition} \label{prop: BDGM = HsL}
Let $X, Y \in \HsS$.
\begin{enumerate}
\item \label[prop: BDGM = HsL]{BDGM = HsL objects} $X \in \HsL$ iff $X \in \BDGM$.
\item \label[prop: BDGM = HsL]{BDGM = HsL morphisms} Let $f \colon X \to Y$ be an $\HsS$-morphism. Then $f$ is an $\HsL$-morphism iff $f$ is a $\BDGM$-morphism.
\end{enumerate}
\end{proposition}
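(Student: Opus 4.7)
The plan is to reduce both parts to the identifications already established in this section. By \cref{prop: BDGMS = HsS}, the hypothesis $X,Y\in\HsS$ (respectively, $f$ an $\HsS$-morphism) is equivalent to $X,Y\in\BDGMS$ (respectively, $f$ a $\BDGMS$-morphism), and by \cref{lem: KOF = LX} we have the key identification
\[
\Lambda X \;=\; \CLF(X) \;=\; \{\up k : k \in K(X)\},
\]
through which the extra axiom distinguishing $\HsL$ from $\HsS$ and the one distinguishing $\BDGM$ from $\BDGMS$ will be matched.

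For part~(1), I would analyze when $\Lambda X$ is a bounded sublattice of $\Gamma X$. In $\Gamma X$ the meet is intersection, the top is $X=\up 0$, the bottom is $\{1\}$, and a short calculation gives $\up k \vee \up l = \up(k\wedge l)$. Three of the four sublattice requirements are automatic: the top $X$ belongs to $\Lambda X$ because $0$ is always compact, intersections of clopens are clopen, and $K(X)$ is always closed under binary joins (a one-line compactness argument), so $\Lambda X$ is automatically closed under binary meets in $\Gamma X$. What remains is that $\{1\} \in \Lambda X$ and that $\up(k\wedge l) \in \Lambda X$ whenever $k,l\in K(X)$. But these two conditions are precisely the empty and binary instances of the $\BDGM$-axiom that finite joins of clopen filters are clopen, which settles (1).

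For part~(2), because $f$ is a $\BDGMS$-morphism it has a left adjoint $\ell$ satisfying $f^{-1}(\up y)=\up\ell(y)$. Via the identification of $\Lambda$ with the principal filters of compact elements, requiring $f^{-1}\colon \Lambda Y \to \Lambda X$ to preserve finite joins is exactly condition~(4) of \cref{lem: ell preserves meets}; by that lemma it is equivalent to $\ell$ preserving all finite meets (condition~(1)), which by \cref{lem: when is f a BDGM morphism} is in turn equivalent to $f$ being a $\BDGM$-morphism.

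The only real obstacle is bookkeeping around the nullary cases: the empty join in $\Gamma X$ is the singleton $\{1\}$, so $f^{-1}$ preserving the empty join translates to $f^{-1}(\{1\})=\{1\}$, i.e.\ $f(x)=1\Rightarrow x=1$, which is precisely the first clause in the definition of a $\BDGM$-morphism; the existential second clause is then captured by preservation of binary joins. Once these empty-versus-binary translations are carefully written out, both parts follow from the cited lemmas without further work.
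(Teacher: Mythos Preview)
Your proposal is correct and follows essentially the same route as the paper: identify $\Lambda X = \CLF(X)$ via \cref{lem: KOF = LX}, then reduce part~(1) to the observation that closure under finite joins is the only non-automatic piece of the bounded-sublattice condition, and reduce part~(2) to the chain of equivalences in \cref{lem: ell preserves meets} and \cref{lem: when is f a BDGM morphism}. The paper's own proof is terser (it simply cites these three lemmas without unpacking the nullary cases or explaining why the meet and top conditions are automatic), but the argument is the same.
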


\begin{proof}
(\ref{BDGM = HsL objects}) Let $X \in \HsS$. By \cref{BDGMS = HsS objects}, $X \in \BDGMS$. Moreover, by \cref{lem: KOF = LX}, $\CLF(X) = \Lambda X$. Therefore, $\CLF(X)$ is closed under finite joins iff $\Lambda X$ is closed under finite joins. Thus, $X \in \HsL$ iff $X \in \BDGM$. 

(\ref{BDGM = HsL morphisms}) Let $f \colon X \to Y$ be an $\HsS$-morphism. By \cref{BDGMS = HsS morphisms}, $f$ is a $\BDGMS$-morphism. Thus, by \cref{lem: when is f a BDGM morphism,lem: KOF = LX}, $f$ is a $\BDGM$-morphism iff $f$ is an $\HsL$-morphism.
\end{proof}

As an immediate consequence of \cref{thm: CohLat = Hs,prop: BDGM = HsL}, we obtain:

\begin{theorem} \label{thm: CohLatL = HsL}
$\HsL$ is isomorphic to $\BDGM$.
\end{theorem}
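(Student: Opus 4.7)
The plan is to observe that this is an immediate restriction of the isomorphism $\HsS \cong \BDGMS$ established in \cref{thm: CohLat = Hs}. The isomorphism there is essentially the identity on the underlying space (both categories consist of certain algebraic lattices carrying the Lawson topology, with the same morphisms); only the axioms that cut out the objects and morphisms are phrased differently. Since $\HsL$ is a (non-full) subcategory of $\HsS$ and $\BDGM$ is a (non-full) subcategory of $\BDGMS$, it suffices to verify that the isomorphism $\HsS \to \BDGMS$ restricts and corestricts correctly.

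First I would invoke \cref{BDGM = HsL objects} of \cref{prop: BDGM = HsL}, which states that for $X \in \HsS$ (equivalently, $X \in \BDGMS$), we have $X \in \HsL$ iff $X \in \BDGM$. This handles the object part: the isomorphism $\HsS \cong \BDGMS$ sends $\HsL$-objects to $\BDGM$-objects and vice versa. Next I would invoke \cref{BDGM = HsL morphisms} of the same proposition, which guarantees that under this isomorphism an $\HsS$-morphism is an $\HsL$-morphism iff it is a $\BDGM$-morphism. Combining these two observations, the isomorphism functor from \cref{thm: CohLat = Hs} restricts to a functor $\HsL \to \BDGM$, and its inverse restricts to a functor $\BDGM \to \HsL$; these restrictions remain mutually inverse because the original functors were.

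There is no real obstacle here — all the work has already been done in \cref{prop: BDGM = HsL}. The only thing to be slightly careful about is verifying that the two restricted functors really are inverse to each other, but this is automatic since the restriction of an isomorphism of categories to a pair of subcategories that correspond under the isomorphism (both on objects and morphisms) is again an isomorphism of categories.
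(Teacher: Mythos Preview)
Your proposal is correct and matches the paper's own argument exactly: the paper states the theorem as an immediate consequence of \cref{thm: CohLat = Hs} and \cref{prop: BDGM = HsL}, which is precisely the restriction argument you outline.
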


\begin{corollary} \plabel{cor: JML = HsL}
\hfill
\begin{enumerate}
\item \label[cor: JML = HsL]{thm: JML = HsL}
$\HsL$ is isomorphic to $\JML$.
\item \label[cor: JML = HsL]{cor: HsL = Lat}
$\HsL$ is dually equivalent to $\Lat$.
\end{enumerate}
\end{corollary}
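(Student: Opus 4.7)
The plan is to read off both statements as immediate consequences of the isomorphism $\HsL \cong \BDGM$ just established in \cref{thm: CohLatL = HsL}, together with results already collected in \cref{sec: preliminaries,sec: JM,sec: BDGM}. There is essentially no new work to do: the substantive content has been absorbed into \cref{prop: BDGM = HsL}, so both items are obtained by composing functors.

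For part (1), I would proceed as follows. Invoke \cref{thm: CohLatL = HsL} to obtain an isomorphism of categories $\HsL \cong \BDGM$. Invoke \cref{JML = BDGM}, which states that the Cornish isomorphism restricts to an isomorphism $\JML \cong \BDGM$. Composing the first with the inverse of the second yields the desired isomorphism $\HsL \cong \JML$. Concretely, the composite sends an Hartonas space $X \in \HsL$ first to the same underlying complete lattice viewed as a BDGM-space, and then to the same lattice equipped with the Scott topology, which lies in $\JML$ by \cref{lem: arithmetic lattice case for JM}; on morphisms it is the identity on underlying maps, since $\BDGMS$-, $\HsS$-, and $\JMS$-morphisms on a fixed algebraic lattice coincide by \cref{lem: morphisms of JM,BDGMS = HsS morphisms}.

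For part (2), combine \cref{thm: CohLatL = HsL} ($\HsL \cong \BDGM$) with \cref{cor: BDGM = Lat} ($\BDGM$ is dually equivalent to $\Lat$). The composite dual equivalence is described as in \cref{rem: duality between BDGM and Lat,rem: functors for Hs}: an object $X \in \HsL$ is sent to $\Lambda X$, which by \cref{lem: KOF = LX} coincides with $\CLF(X)$ and forms a bounded lattice, and a morphism $f \colon X \to Y$ is sent to $f^{-1} \colon \Lambda Y \to \Lambda X$; conversely, a bounded lattice $A$ is sent to $\Filt(A)$ equipped with the Lawson topology inherited from its identification with an object of $\CohLatL$, and a lattice homomorphism $\alpha$ is sent to $\alpha^{-1}$.

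The main (and only) obstacle is purely bookkeeping: one must check that the functors one obtains by composition agree with the natural candidate descriptions in terms of $\Lambda X$ and $\Filt(A)$. This is routine, since the functor $\HsL \to \BDGM$ given by \cref{thm: CohLatL = HsL} acts as the identity on underlying ordered sets (only the topology changes, from the mixed subbasis of \cref{def: Hartonas space 1} to the Lawson topology, which gives the same open filters by \cref{lem: KOF = LX}), and since the morphism conditions match by \cref{BDGM = HsL morphisms}. No further verification is required.
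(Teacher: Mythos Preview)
Your proposal is correct and follows essentially the same route as the paper: for (1) compose \cref{thm: CohLatL = HsL} with \cref{JML = BDGM}, and for (2) compose \cref{thm: CohLatL = HsL} with \cref{cor: BDGM = Lat}. One small descriptive slip: the functor $\HsL \to \BDGM$ does not change the topology at all---by \cref{BDGMS = HsS objects} a partially ordered Stone space is an Hartonas space iff it is a BDGM-space, so the isomorphism is literally the identity on objects and morphisms---but this does not affect the argument.
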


\begin{proof}
(1) Apply Theorems~\ref{thm: CohLatL = HsL} and \ref{thm: BDGM = CohLatL}(\ref{JML = BDGM}).

(2) Apply \cref{thm: CohLatL = HsL,cor: BDGM = Lat}.
\end{proof}

\begin{remark} \plabel{rem: functors for Hs = Lat}
\hfill
\begin{enumerate}
\item \label[rem: functors for Hs = Lat]{Hs = Lat}The functors yielding the duality of \cref{cor: HsL = Lat} are the restrictions of those given in \cref{rem: functors for Hs}.
\item Hartonas worked with meet-semilattice homomorphisms between lattices. Because of this, the lattices he considered only required having top but not necessarily bottom. His duality theorem \cite[Thm.~4.4]{Har97} can be obtained by replacing coherent lattices with arithmetic lattices and $\HsL$-morphisms with $\HsS$-morphisms.
\end{enumerate}
\end{remark}

\section{The Celani-Gonz\'{a}lez approach} \label{sec: CG}

In this section we describe the Celani-Gonz\'{a}lez approach \cite{CG20} (see also \cite{Har24}). Like Jipsen and Moshier, they first describe the category $\CGS$ dual to $\SLat$ and show that $\CGS$ is equivalent to $\JMS$. Next, they restrict their attention to the category $\CG$ and prove that it is dual to $\Lat$. It follows that $\CG$ is equivalent to $\JML$. In this section we show that the functors yielding the equivalence between $\CGS$ and $\JMS$ restrict to give an equivalence between $\CG$ and $\JML$. As a consequence, we conclude our series of equivalences of the upper part of \cref{figure: intro} (see the introduction). 

Celani and Gonz\'{a}lez work with pairs $\C = (X, \K)$ where $X$ is a topological space and $\K$ is a fixed subbasis of $X$. We set 
\[
\L\C = \{ -U : U \in \K\}
\]
and define the corresponding closure operator $\cl_\C$ by 
\[
\cl_\C(Y) = \bigcap \{ A \in \L\C : Y \subseteq A\}.
\]
We call the fixpoints of this closure operator {\em $\Delta_\C$-closed sets}.

Let $Y \subseteq X$ be $\Delta_\C$-closed. A family $\Z \subseteq \L\C$ is a \emph{$Y$-family} if for all $A, B \in \Z$, there are $H \in \L\C$ and $C \in \Z$ such that $Y \subseteq H$ and $A \cap H, B \cap H \subseteq C$.
For a binary relation $S\subseteq X_1 \times X_2$, define $\Box_S \colon \wp(X_2) \to \wp(X_1)$ between the powersets by $\Box_S B = -S^{-1}[-B]$. The notation $\Box_S$ is motivated by its use in modal logic (see, e.g., \cite[p.~64]{CZ97}).

\begin{definition} \cite[Defs.~3.5, 3.14]{CG20} \label{def: CGS}
\begin{enumerate}
\item Let $X$ be a topological space with a fixed subbasis $\K$. The pair $ \C = (X, \K)$ is a \emph{Celani-Gonz\'{a}lez space}, or {\em CG-space} for short, provided
\begin{enumerate}
\item $X$ is a $T_0$-space and $X = \bigcup \K$;
\item each $U \in \K$ is compact open and $\K$ is closed under finite unions (so $\varnothing \in \K$);
\item for each $U, V \in \K$, if $x \in U \cap V$, then there are $W, D \in \K$ with $x \in D - W$ and $D \subseteq (U \cap V) \cup W$;
\item if $Y$ is $\Delta_\C$-closed and $\Z \subseteq \L\C$ is a $Y$-family such that $Y -A \ne \varnothing$ for all $A \in \Z$, then $Y \cap \bigcap \{ -A : A \in \Z \} \ne \varnothing$.
\end{enumerate}
\item Let $\C_1 = (X_1, \K_1)$ and $\C_2 = (X_2, \K_2)$ be CG-spaces. A relation $S \subseteq X_1 \times X_2$ is a {\em CG-morphism} provided
\begin{enumerate}
\item if $B \in \L\C_2$, then $\Box_S B \in \L\C_1$;
\item $S[x]$ is $\Delta_{\C_2}$-closed for each $x \in X_1$.
\end{enumerate}
\item Let $\CGS$ be the category of CG-spaces and CG-morphisms.
\end{enumerate}
\end{definition}

It is proved in \cite[Sec.~3]{CG20} that $\CGS$ is indeed a category, where the composition $S_2 \star S_1$ of two CG-morphisms $S_1, S_2$ is defined by
\[
x (\rel{S_2 \star S_1}) z \iff x \in \Box_{S_1}\Box_{S_2} B \textrm{ implies } z \in B \textrm{ for each } B \in \L\C_3
\] 
and the identity morphism of $(X, \K)$ is the dual  
$\le$ of the specialization order $\sqsubseteq$ of $X$.

\begin{remark} \plabel{rem: basic properties of CGS}
Let $\C = (X, \K)$ be a CG-space.
\begin{enumerate}
\item \label[rem: basic properties of CGS]{SX are upsets} $\K$ is a join-semilattice, where join is union and bottom is $\varnothing$. Consequently, $\L\C$ is a meet-semilattice, where meet is intersection and top is $X$. Since open sets are $\sqsubseteq$-upsets, elements of $\L\C$ are $\le$-upsets.
\item \label[rem: basic properties of CGS]{up x is intersection from SX} If $x \in X$, then $\up x = \bigcap \{ B \in \L\C : x \in B\}$, where $\up$ is calculated with respect to $\le$. To see this, the inclusion $\subseteq$ is clear since each $B \in \L\C$ is a $\le$-upset by (\ref{SX are upsets}). For the reverse inclusion, if $x \not\le y$, then $y \not\sqsubseteq x$, and since $\K$ is a subbasis, there is $U \in \K$ with $y \in U$ and $x \notin U$. Letting $B = -U$, we have $B \in \L\C$, $x \in B$, and $y \notin B$, completing the proof.
\item \label[rem: basic properties of CGS]{CG equivalent condition for Box} The first condition of a CG-morphism is equivalent to: If $V \in \K_2$, then $S^{-1}[V] \in \K_1$.
\end{enumerate}
\end{remark}

\begin{definition} \plabel{def: CG}
\cite[Defs.~3.25, 3.29]{CG20} Let $\CG$ be the category of those $(X, \K) \in \CGS$ that satisfy 
\begin{enumerate}
\item \label[def: CG]{CG X in K} $X \in \K$;
\item \label[def: CG]{CG meet in K} $\bigcup \{ W \in \K : W \subseteq U \cap V \} \in \K$ for all $U, V \in \K$,
\end{enumerate}
and of those CG-morphisms $S \subseteq X_1 \times X_2$ that satisfy 
\begin{enumerate}
\setcounter{enumi}{2}
\item \label[def: CG]{CG serial} $S[x] \ne \varnothing$ for each $x\in X_1$ (that is, $S$ is serial);
\item \label[def: CG]{CG Box join} $\Box_S\cl_{\C_2}(B_1 \cup B_2) \subseteq \cl_{\C_1}(\Box_S B_1 \cup \Box_S B_2)$ for all $B_1, B_2 \in \L\C_2$.
\end{enumerate}
\end{definition}

\begin{remark} \plabel{rem: basic properties of CG}
Let $\C = (X, \K)$ be a CG-space.
\begin{enumerate}
\item \label[rem: basic properties of CG]{CG meet in K alternate} \cref{CG meet in K} says that if $U, V \in \K$, then the meet $U \wedge V$ exists in
$\K$. Consequently, both $\K$ and $\L\C$ are bounded lattices.  The bottom of $\L\C$ is $\varnothing$ by \cref{CG X in K}.
\item \label[rem: basic properties of CG]{CG Box preserves join} The reverse inclusion in \cref{CG Box join} holds automatically, so the condition says that $\Box_S$ preserves binary joins in 
$\L\C$. This is equivalent to 
$S^{-1}$ preserving binary meets in $\K$. 
\end{enumerate}
\end{remark}

Clearly $\CG$ is a (non-full) subcategory of $\CGS$. 
By \cite[Thm.~3.24]{CG20}, $\CGS$ is dually equivalent to $\SLat$, and $\CG$ is dually equivalent to $\Lat$ by \cite[Thm.~3.32]{CG20}. Consequently, $\CG$ is equivalent to $\JML$. To obtain the equivalence of the upper part of \cref{figure: intro}, we give a direct proof that $\CG$ is equivalent to $\JML$, from which the duality between $\CG$ and $\Lat$ follows from \cref{cor: JML = Lat}.

We start by recalling the functor $\mathbb{M} \colon \JMS \to \CGS$ of \cite{CG20}. For $X \in \JMS$ let $X_m$ be the set of meet-irreducible elements of $X$, where we recall that $x \in X - \{ 1 \}$ is \emph{meet-irreducible} if $x = y \wedge z$ 
implies that $x = y$ or $x = z$. 
Set
\[
\K_X = \{  X_m - U : U \in \KOF(X) \}, 
\]
so $\K_X = \{   X_m - \up k: k \in K(X)\}$ by \cref{lem: KOF = up k's}.
By \cite[Thm.~5.20]{CG20}, 
$ \C_X := (X_m, \K_X)$ is a CG-space. 

\begin{remark}
The topology on $X_m$ is the subspace topology of the lower topology on the algebraic lattice $X$. To see this, the lower topology on $X$ has $\{\up x : x \in X\}$ as a subbasis of closed sets. Since $\up x = \bigcap \{ \up k : k \in K(X), k \le x\}$, $-\up x = \bigcup \{ -\up k : k \in K(X), k \le x\}$. 
Therefore, $\{ -\up k : k \in K(X)\}$ is a subbasis of open sets for $X$, and so $\{ X_m - \up k : k \in K(X)\}$ is a subbasis of open sets for $X_m$. Thus, the topology on $X_m$ is the subspace topology of the lower topology on $X$.
\end{remark}

If $f \colon X \to Y$ is a $\JMS$-morphism, define $S_f \subseteq X_m \times Y_m$ by
\[
x \rel{S_f} y \iff f(x) \le y. 
\]
Then $S_f$ is a $\CGS$-morphism, and the assignment $X \mapsto  \C_X$ and $f \mapsto S_f$ defines the functor $\mathbb{M} \colon \JMS \to \CGS$ 
\cite[Thm.~5.30]{CG20}. \cref{thm: functor M} shows that $\mathbb{M}$ restricts to a functor $\JML\to\CG$, the proof of which requires the following two lemmas.

\begin{lemma}  \plabel{lem: 5.11 and 5.13 of CG}
Let $X \in \JMS$, $x\in X$, and $k, l \in K(X)$.
\begin{enumerate}
\item \cite[Lem.~5.11]{CG20} \label[lem: 5.11 and 5.13 of CG]{5.11 of CG} $x = \bigwedge (\up x \cap X_m)$.
\item \cite[Prop.~5.13]{CG20} \label[lem: 5.11 and 5.13 of CG]{5.13 of CG} $ X_m - \up k \subseteq  X_m - \up l \iff k \le l$.
\end{enumerate}
\end{lemma}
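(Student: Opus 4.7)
\medskip

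For part (1), the $\le$ direction is immediate since every element of $\up x \cap X_m$ lies above $x$. For the reverse inequality, I would argue by contradiction: set $w = \bigwedge (\up x \cap X_m)$ and suppose $x < w$. Since $X \in \JMS$ is an algebraic lattice (by \cref{JMS is AlgLatS}), the compact elements join-generate $X$, so there is some $k \in K(X)$ with $k \le w$ but $k \not\le x$. Apply Zorn's lemma to the poset
\[
P = \{ z \in X : x \le z \text{ and } k \not\le z \},
\]
which is nonempty (it contains $x$). For any chain $C \subseteq P$ with join $c = \bigvee C$, we have $x \le c$, and $k \not\le c$ because $k$ is compact and hence $k \le c$ would force $k \le z$ for some $z \in C$. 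So Zorn yields a maximal element $z_0 \in P$.

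The key observation is that $z_0$ is meet-irreducible: if $z_0 = u \wedge v$ with $u, v > z_0$, then $u, v \notin P$ by maximality, hence $k \le u$ and $k \le v$, so $k \le u \wedge v = z_0$, a contradiction. Thus $z_0 \in \up x \cap X_m$, so by definition of $w$ we get $z_0 \ge w \ge k$, contradicting $k \not\le z_0$. This forces $w = x$ and completes (1).

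For part (2), the $\Leftarrow$ direction is trivial: $k \le l$ gives $\up l \subseteq \up k$, hence $-\up k \subseteq -\up l$. For the $\Rightarrow$ direction, the hypothesis $(-\up k) \cap X_m \subseteq (-\up l) \cap X_m$ is equivalent, by taking complements within $X_m$, to $\up l \cap X_m \subseteq \up k \cap X_m$. Since the meet of a larger set is smaller, this gives
\[
\bigwedge (\up k \cap X_m) \le \bigwedge (\up l \cap X_m).
\]
Applying part (1) to both $k$ and $l$ yields $k \le l$.

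The main (and essentially only) obstacle is the Zorn's lemma step in (1): one must ensure that the set $P$ has upper bounds for chains, which requires compactness of $k$, and then extract meet-irreducibility from maximality. Once (1) is established, (2) is a purely formal consequence via order-reversing behavior of meets on inclusions.
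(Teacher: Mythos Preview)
Your proof is correct. The paper does not give its own proof of this lemma; it simply cites \cite[Lem.~5.11]{CG20} and \cite[Prop.~5.13]{CG20}, so there is nothing to compare against directly. Your argument for (1) is the standard one showing that in an algebraic lattice every element is a meet of meet-irreducibles: the Zorn step works precisely because compactness of $k$ guarantees that chains in $P$ have their supremum still in $P$, and maximality in $P$ forces meet-irreducibility (with $z_0 \ne 1$ coming for free from $k \not\le z_0$). Part (2) then follows formally from (1), exactly as you say.

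One small remark: in the contrapositive step for (2) you might state explicitly that (1) applies to \emph{any} element of $X$, in particular to the compact elements $k$ and $l$; this is of course immediate since (1) has no compactness hypothesis on $x$, but it is the only place where parts (1) and (2) interact.
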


\begin{lemma}  \plabel{lem: JML to CG facts}
Let $f \colon X \to Y$ be a $\JML$-morphism and $\ell$ its left adjoint.
\begin{enumerate}
\item \label[lem: JML to CG facts]{Box and up k} If $k \in K(Y)$, then $\Box_{S_f}(\up k \cap Y_m) = \up \ell(k) \cap X_m$.
\item \label[lem: JML to CG facts]{cl and union} If $k, l \in K(Y)$, then $\cl_{ \C_Y}((\up k \cap Y_m) \cup (\up l \cap Y_m)) = \up (k \wedge l) \cap Y_m$.
\end{enumerate}
\end{lemma}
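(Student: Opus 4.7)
For part (1), the plan is to unfold the definitions and reduce the statement to the meet-density property recalled in \cref{5.11 of CG}, then apply the adjunction $\ell \dashv f$. Concretely, by definition of $\Box$, an element $x \in X_m$ lies in $\Box_{S_f}(\up k \cap Y_m)$ iff every $y \in Y_m$ with $f(x) \le y$ satisfies $k \le y$. I would first observe that if $k \le f(x)$, this condition is immediate. For the converse, I would take the meet over all such $y$ and invoke \cref{5.11 of CG} applied at $f(x) \in Y$, giving $f(x) = \bigwedge(\up f(x) \cap Y_m)$, so that the pointwise condition $k \le y$ for all such $y$ forces $k \le f(x)$. Finally, since $\ell$ is left adjoint to $f$, we have $k \le f(x) \iff \ell(k) \le x$, yielding $\Box_{S_f}(\up k \cap Y_m) = \up \ell(k) \cap X_m$.

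For part (2), the plan is to make the lattice $\L\C(Y)$ explicit and then use \cref{5.13 of CG} together with coherence of $Y$ to identify the smallest $\Delta_{\C(Y)}$-closed set containing $(\up k \cap Y_m) \cup (\up l \cap Y_m)$. Since $\K_Y = \{-\up m \cap Y_m : m \in K(Y)\}$ and complementation is taken inside $Y_m$, I would observe that $\L\C(Y) = \{\up m \cap Y_m : m \in K(Y)\}$. Then from the definition of $\cl_{\C(Y)}$,
\[
\cl_{\C(Y)}\bigl((\up k \cap Y_m) \cup (\up l \cap Y_m)\bigr) = \bigcap \bigl\{\up m \cap Y_m : m \in K(Y), \ \up k \cap Y_m \subseteq \up m \cap Y_m \text{ and } \up l \cap Y_m \subseteq \up m \cap Y_m\bigr\}.
\]
By \cref{5.13 of CG}, the two containment conditions translate to $m \le k$ and $m \le l$, i.e., $m \le k \wedge l$. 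Because $f$ is a $\JML$-morphism, $Y \in \JML$, so $Y$ is coherent and hence $k \wedge l \in K(Y)$. Thus $k \wedge l$ is the largest $m$ satisfying the condition, making $\up(k \wedge l) \cap Y_m$ itself a member of the intersection and moreover its smallest term, which gives the claimed equality.

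I do not expect either part to present a substantial obstacle; both arguments are direct computations. The only subtlety worth double-checking is in part (2), where coherence of $Y$ is essential: without $K(Y)$ being closed under binary meets, $k \wedge l$ need not be compact, in which case $\up(k \wedge l) \cap Y_m$ would not lie in $\L\C(Y)$ and the closure would only be described as an intersection, not as a single basic set. Conversely, for part (1) one should note that $\ell(k) \in K(X)$ (since $\ell$ restricts to a map $K(Y) \to K(X)$ for $\JML$-morphisms), so that the expression $\up \ell(k) \cap X_m$ is a legitimate element of $\L\C(X)$ for the subsequent applications of the lemma.
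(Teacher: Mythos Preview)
Your proposal is correct and follows essentially the same approach as the paper's proof: for part (1) you unfold the definition of $\Box_{S_f}$, use the meet-density result \cref{5.11 of CG} to pass from the pointwise condition on $\up f(x)\cap Y_m$ to $k\le f(x)$, and then apply the adjunction; for part (2) you identify $\L\C(Y)$, translate the containment conditions via \cref{5.13 of CG}, and use coherence of $Y$ to obtain $k\wedge l\in K(Y)$. These are exactly the moves the paper makes, and your closing remarks on the role of coherence and the compactness of $\ell(k)$ are accurate.
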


\begin{proof}
(\ref{Box and up k}) For $x \in X_m$, we have
\begin{align*}
x \in \Box_{S_f}(\up k \cap  X_m) &\iff S_f[x] \subseteq \up k \cap X_m \\
&\iff \up f(x) \cap X_m \subseteq \up k \cap X_m \\
&\iff k \le f(x) \\
&\iff x \in f^{-1}(\up k) \cap X_m \\
&\iff x \in \up \ell(k) \cap X_m,
\end{align*}
where the third equivalence follows from \cref{5.11 of CG}.

(\ref{cl and union}) 
By \cref{5.13 of CG}, for each $m \in K(Y)$ we have
\begin{align*}
(\up k \cap Y_m) \cup (\up l \cap  Y_m) \subseteq \up m \cap Y_m &\iff m \le k, l \\
&\iff m \le k \wedge l \\
&\iff \up (k\wedge l) \subseteq \up m.
\end{align*} 
Because $k \wedge l \in K(Y)$ and $\cl_{ \C_Y}(B) = \bigcap \{ \up m \cap  Y_m : B \subseteq \up m\}$ for each $B \subseteq Y_m$ (see \cref{lem: KOF = up k's}), the result follows.
\end{proof}

\begin{proposition} \label{thm: functor M}
\hfill
\begin{enumerate}
\item If $X \in \JML$, then $ \C_X \in \CG$.
\item If $f \colon X \to Y$ is a $\JML$-morphism, then $S_f \subseteq X_m \times Y_m$ is a $\CG$-morphism.
\end{enumerate}
\end{proposition}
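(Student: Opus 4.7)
My plan is to verify the additional axioms in \cref{def: CG} beyond those for $\CGS$-objects and $\CGS$-morphisms, exploiting the identification $\KOF(X) = \{\up k : k \in K(X)\}$ from \cref{lem: KOF = up k's} together with the bridge lemmas \cref{lem: 5.11 and 5.13 of CG} and \cref{lem: JML to CG facts}. Under this dictionary, a generic element of $\K_X$ has the form $-\up k \cap X_m$ with $k \in K(X)$, and its complement in $X_m$ (a generic element of $\L\C(X)$) has the form $\up k \cap X_m$.

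For (1), I first check $X_m \in \K_X$. Coherence of $X$ gives $1 \in K(X)$, so $\{1\} = \up 1 \in \KOF(X)$ and hence $X_m = -\{1\} \cap X_m \in \K_X$. To verify \cref{CG meet in K}, I take $U = -\up k \cap X_m$ and $V = -\up l \cap X_m$ in $\K_X$ and analyze which $W = -\up m \cap X_m$ satisfy $W \subseteq U \cap V$. The contrapositive of this inclusion together with \cref{5.11 of CG} reduces the condition to $m \le k$ and $m \le l$, i.e., $m \le k \wedge l$. Since $X$ is coherent, $k \wedge l \in K(X)$, so $-\up(k\wedge l) \cap X_m$ is itself the largest such $W$ and therefore equals the whole union, which lies in $\K_X$.

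For (2), Celani and Gonz\'{a}lez have already shown $S_f$ is a $\CGS$-morphism, so only \cref{CG serial,CG Box join} remain. For seriality I use \cref{lem: morphisms of JML}: the $\JML$-morphism $f$ is a $\CohLatS$-morphism, so its left adjoint $\ell$ preserves finite meets and in particular $\ell(1) = 1$; via the Galois adjunction this forces $f(x) = 1 \Leftrightarrow x = 1$. Hence for $x \in X_m$ we have $f(x) \ne 1$, so \cref{5.11 of CG} yields some $y \in \up f(x) \cap Y_m$, witnessing $x \rel{S_f} y$. For \cref{CG Box join}, by \cref{CG Box preserves join} it suffices to prove that $\Box_{S_f}$ preserves binary joins in $\L\C(Y)$, which amounts to checking the equation on subbasic sets $B_i = \up k_i \cap Y_m$ with $k_i \in K(Y)$. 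Here \cref{lem: JML to CG facts} computes both sides explicitly: the left-hand side becomes $\up \ell(k_1 \wedge k_2) \cap X_m$ and the right-hand side becomes $\up(\ell(k_1) \wedge \ell(k_2)) \cap X_m$, and these coincide precisely because $\ell$ preserves finite meets.

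The main obstacle is bookkeeping rather than mathematics: one must juggle the triple correspondence between compact elements $k \in K(X)$, compact open filters $\up k \in \KOF(X)$, and the subbasic sets $-\up k \cap X_m \in \K_X$ together with their complements $\up k \cap X_m \in \L\C(X)$. Once these translations become fluent, the proof distills to two conceptual observations --- $X$ being coherent means $K(X)$ is closed under binary meets, and $f$ being a $\CohLatS$-morphism means $\ell$ preserves finite meets --- which are exactly what the extra clauses \cref{CG X in K,CG meet in K} (for objects) and \cref{CG serial,CG Box join} (for morphisms) demand on the $\CG$-side.
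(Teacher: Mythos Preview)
Your proposal is correct and follows essentially the same route as the paper's proof: both reduce the extra $\CG$-axioms to closure of $K(X)$ under finite meets (for objects) and to the left adjoint $\ell$ preserving finite meets (for morphisms), using \cref{lem: JML to CG facts} to compute both sides of \cref{CG Box join}. The only cosmetic difference is that for the inclusion $W \subseteq U \cap V$ in part (1) the paper invokes \cref{5.13 of CG} directly, whereas you invoke \cref{5.11 of CG}; since \cref{5.11 of CG} immediately yields \cref{5.13 of CG}, this is not a substantive deviation.
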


\begin{proof}
(1) First, $ \C_X \in \CGS$ by \cite[Thm.~5.20]{CG20}. Since $X \in \JML$, $\up 1 \in \KOF(X)$, so $X_m =  X_m - \up 1 \in  \K_X$. Next, let $U, V \in \K_X$. Then there are $k, l \in K(X)$ with $U = X_m - \up k$ and $V =  X_m - \up l$. Let $W \in \K_X$ with $W \subseteq U \cap V$, and write $W = X_m - \up m$ for some $m \in K(X)$. Then $m \le k, l$ by \cref{5.13 of CG}, so $m \le k \wedge l$. Because $X$ is coherent, $k\wedge l \in K(X)$, so $\up (k\wedge l) \in \KOF(X)$. Thus, $W \subseteq  X_m - \up (k \wedge l)$. This shows that $\bigcup \{ W \in \K_X : W \subseteq U \cap V \} \subseteq   X_m - \up (k\wedge l)$, and the reverse inclusion holds since $ X_m - \up (k \wedge l) \in \K_X$. Consequently, $ \C_X \in \CG$.

(2) By \cite[Prop.~5.24]{CG20}, $S_f$ is a $\CGS$-morphism. Let $x \in X_m$. If $\ell$ is the left adjoint of $f$, then $f(x) = 1$ implies $x = 1$ (because $\ell(1) = 1$).  Thus, since $1 \notin X_m$, we have $f(x) \ne 1$. Therefore, \cref{5.11 of CG} yields $y \in X_m$ with $f(x) \le y$. Consequently, $y \in S_f[x]$, so $S_f[x] \ne \varnothing$. 
Next, let $B_1, B_2 \in \L \C_Y$. Then there are $m_1, m_2 \in K(Y)$ with $B_i = \up m_i \cap Y_m$. By \cref{cl and union}, 
\[
\cl_{ \C_Y}(B_1 \cup B_2) = \cl_{ \C_Y}((\up m_1 \cap Y_m) \cup (\up m_2 \cap Y_m)) = \up (m_1 \wedge m_2) \cap Y_m,
\]
so
\[
\Box_{S_f}\cl_{ \C_Y}(B_1 \cup B_2) = \up \ell(m_1 \wedge m_2) \cap X_m
\]
by \cref{Box and up k}. Also, using \cref{lem: JML to CG facts} again,
\begin{align*}
\cl_{ \C_X}(\Box_{S_f}B_1 \cup \Box_{S_f}B_2) &= \cl_{ \C_X}(\Box_{S_f}(\up m_1 \cap Y_m) \cup \Box_{S_f}(\up m_2 \cap Y_m)) \\
&= \cl_{ \C_X}((\up \ell(m_1) \cap X_m) \cup (\up \ell(m_2) \cap X_m)) \\
&= \up (\ell(m_1) \wedge \ell(m_2)) \cap X_m.
\end{align*}
Because $\ell(m_1 \wedge m_2) = \ell(m_1) \wedge \ell(m_2)$, we conclude
\begin{align*}
\Box_{S_f}\cl_{ \C_Y}(B_1 \cup B_2) &=  \up \ell(m_1 \wedge m_2) \cap X_m\\
&=  (\up \ell(m_1) \wedge \ell(m_2)) \cap X_m\\
&= \cl_{ \C_X}(\Box_{S_f}B_1 \cup \Box_{S_f}B_2).
\end{align*}
Consequently, $S_f$ is a $\CG$-morphism.
\end{proof}

We next recall the functor $\mathbb{H} \colon \CGS \to \JMS$ (defined immediately before \cite[Thm.~5.30]{CG20}). Let $\C = (X, \K)$ be a CG-space. Set
\[
\H(\C) = \left\{ \bigcup \mathcal{S} \in \wp(X) : \mathcal{S} \subseteq \K \right\},
\]
and for $U \in \K$ set
\[
\Psi_U = \{ H \in \H(\C) : U \subseteq H\}. 
\]
By \cite[Thm.~5.9]{CG20}, if we topologize $\H(\C)$ with the subbasis $\{ \Psi_U : U \in \K\}$, we obtain that $\H(\C) \in \JMS$. In particular, it is a complete lattice, where join is union, and meet is given by $\bigwedge \mathcal{S} = \bigcup \{ U \in \K : U \subseteq \bigcap \mathcal{S} \}$.

Let $\C_1 = (X_1, \K_1)$ and $\C_2 = (X_2, \K_2)$ be CG-spaces. For a $\CGS$-morphism $S \subseteq X_1 \times X_2$, define $f_S \colon \H(\C_1) \to \H(\C_2)$ by 
\[
f_S(H) = \bigcup \{ V \in \K_2 :  S^{-1}[V] \subseteq H\}.
\]
Then $f_S$ is a $\JMS$-morphism, and the assignment $\C \mapsto \H(\C)$ and $S \mapsto f_S$ defines the functor $\mathbb{H} \colon \CGS \to \JMS$ 
\cite[Thm.~5.30]{CG20}.
\cref{thm: functor H} shows that $\mathbb{H}$ restricts to a functor $\CG \to \JML$, the proof of which requires the following:

\begin{lemma}  \plabel{lem: 5.7 of CG}
Let $\C = (X, \K)$ be a CG-space. 
\begin{enumerate}
\item \label[lem: 5.7 of CG]{5.7 of CG} $\KOF(\H(\C)) = \{ \Psi_U : U \in \K \}$.
\item \label[lem: 5.7 of CG]{CG K(H)} $K(\H(\C)) = \K$.
\end{enumerate}
\end{lemma}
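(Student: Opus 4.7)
The plan is to prove (2) first and then derive (1) as an immediate consequence. The key observation is that joins in the complete lattice $\H(\C)$ coincide with set-theoretic unions (a union of members of $\K$ is again in $\H(\C)$ and is plainly the least element of $\H(\C)$ containing all its summands), so compactness of an element of $\H(\C)$ can be analyzed via covers by members of $\K$ inside $X$.

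For the inclusion $\K \subseteq K(\H(\C))$, fix $U \in \K$ and suppose $U \subseteq \bigvee_i H_i$ in $\H(\C)$. Writing each $H_i = \bigcup \mathcal{S}_i$ with $\mathcal{S}_i \subseteq \K$ turns this inequality into an open cover of $U$ by members of $\K$ inside the topological space $X$. Since each member of $\K$ is compact open in $X$ by \cref{def: CGS}, finitely many suffice to cover $U$, and these finitely many elements lie in finitely many of the $H_i$, giving $U \le H_{i_1} \vee \dots \vee H_{i_n}$ in $\H(\C)$. Thus $U$ is compact. For the reverse inclusion $K(\H(\C)) \subseteq \K$, any compact element $k$ can be written as $\bigcup \mathcal{S} = \bigvee \mathcal{S}$ for some $\mathcal{S} \subseteq \K$; compactness then yields a finite $\mathcal{T} \subseteq \mathcal{S}$ with $k = \bigvee \mathcal{T} = \bigcup \mathcal{T}$, which lies in $\K$ because $\K$ is closed under finite unions by \cref{def: CGS}.

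Part (1) then follows by combining (2) with \cref{lem: KOF = up k's} applied to $\H(\C) \in \JMS$: that lemma gives $\KOF(\H(\C)) = \{\up k : k \in K(\H(\C))\}$, and the principal upset $\up U$ in $\H(\C)$ (ordered by inclusion) is exactly $\Psi_U = \{H \in \H(\C) : U \subseteq H\}$, so the two descriptions of $\KOF(\H(\C))$ coincide.

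No step is a genuine obstacle; the main care lies in keeping track of two a priori distinct notions of compactness, namely compactness of $U$ as an open subset of $X$ (built into the CG-space axioms) versus compactness of $U$ as an element of the complete lattice $\H(\C)$, and in confirming once that joins in $\H(\C)$ are honest set-theoretic unions so that these two notions can be transferred back and forth.
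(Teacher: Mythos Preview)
Your proof is correct. The paper proceeds in the opposite order: it obtains (1) by citing \cite[Prop.~5.7]{CG20} and then deduces (2) from (1) via \cref{lem: KOF = up k's} together with the observation $\Psi_U = \up U$. You instead prove (2) directly by exploiting that joins in $\H(\C)$ are set-theoretic unions and that each $U \in \K$ is compact open in $X$, and then deduce (1) from (2) using the same \cref{lem: KOF = up k's}. Your route is more self-contained (it does not defer to \cite{CG20}) and makes explicit the interplay between topological compactness in $X$ and order-theoretic compactness in $\H(\C)$; the paper's route is shorter precisely because it outsources that work to the external reference.
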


\begin{proof}
(\ref{5.7 of CG}) This is proved in \cite[Prop.~5.7]{CG20}. 

(\ref{CG K(H)}) Since $\Psi_U = \up U$ for each $U \in \K$, it is enough to observe that $\H(\C) \in \JMS$ and apply (\ref{5.7 of CG}) and \cref{lem: KOF = up k's}. 
\end{proof}

\begin{proposition} \label{thm: functor H}
\hfill
\begin{enumerate}
\item If $\C \in \CG$, then $\H(\C) \in \JML$.
\item If $\C_1 ,\C_2 \in \CG$ and $S \colon \C_1 \to \C_2$ is a $\CG$-morphism, then $f_S \colon \H(\C_1) \to \H(\C_2)$ is a $\JML$-morphism.
\end{enumerate}
\end{proposition}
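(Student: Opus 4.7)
My plan is to prove (1) by invoking \cref{lem: arithmetic lattice case for JM}: since $\H(\C) \in \JMS$ already by \cite{CG20}, it suffices to show that $\H(\C)$ is a coherent lattice, and it is immediate from the definition that this is equivalent to $K(\H(\C))$ being a bounded sublattice of $\H(\C)$. By \cref{CG K(H)} we have $K(\H(\C)) = \K$. The lattice operations of $\H(\C)$ are set inclusion, union for join, and $\bigwedge \mathcal{S} = \bigcup\{U \in \K : U \subseteq \bigcap\mathcal{S}\}$ for meet. Closure of $\K$ under finite unions is part of the definition of a CG-space, while closure under the binary meet of $\H(\C)$ is exactly axiom~\cref{CG meet in K}. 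Finally, $\varnothing \in \K$ (as the empty union) and $X \in \K$ by \cref{CG X in K}, supplying the two bounds.

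For (2) we already know from \cite{CG20} that $f_S$ is a $\JMS$-morphism, so by \cref{lem: morphisms of JM} it is an $\AlgLatS$-morphism, and part~(1) places it between coherent lattices carrying the Scott topology. By \cref{def: JML-space}, what remains is to show that $f_S^{-1}$ preserves finite joins of compact open filters. The key step is the identity
\[
f_S^{-1}(\Psi_V) = \Psi_{S^{-1}[V]} \quad \text{for every } V \in \K_2.
\]
The inclusion $\supseteq$ is immediate from the definition of $f_S$. For $\subseteq$, if $H$ satisfies $V \subseteq f_S(H) = \bigcup\{W \in \K_2 : S^{-1}[W] \subseteq H\}$, then compactness of $V$ and closure of $\K_2$ under finite unions produce a single $W \in \K_2$ with $V \subseteq W$ and $S^{-1}[W] \subseteq H$, so $S^{-1}[V] \subseteq H$. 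Note that $S^{-1}[V] \in \K_1$ by \cref{CG equivalent condition for Box}, so both sides of the identity are compact open filters of $\H(\C_1)$. Using that the join in $\FSat(\H(\C_i))$ of principal filters $\Psi_{V_1}$ and $\Psi_{V_2}$ is $\Psi_{V_1 \wedge V_2}$ (with the intrinsic meet of $V_1$ and $V_2$ in $\H(\C_i)$), preservation of binary joins of compact open filters reduces to $S^{-1}$ preserving binary meets of $\K$, which is the content of \cref{CG Box preserves join} reinterpreting axiom~\cref{CG Box join}; and preservation of the empty join reduces to $S^{-1}[X_2] = X_1$, i.e., the seriality condition \cref{CG serial}.

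The main obstacle is keeping straight two distinct meet operations: the meet in $\K$ (equivalently in $\H(\C)$) is the intrinsic lattice meet $\bigcup\{W \in \K : W \subseteq U \cap V\}$, not the set-theoretic intersection. This subtlety is exactly what axioms~\cref{CG meet in K} and \cref{CG Box join} of a $\CG$-object and $\CG$-morphism are designed to handle, and once one is careful to use the intrinsic meet both when checking sublattice closure in (1) and when computing joins of principal filters in $\FSat$ in (2), the remaining arguments are direct unwrappings of definitions.
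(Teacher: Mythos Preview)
Your proof is correct and closely parallels the paper's, with slight differences in which characterizations you invoke.

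For (1), the paper works directly from \cref{def: JML-space}, showing that $\KOF(\H(\C))$ is closed under finite joins in $\FSat(\H(\C))$ by proving $\Psi_U \vee \Psi_V = \Psi_{U \wedge V}$; you instead go via \cref{lem: arithmetic lattice case for JM} and show that $K(\H(\C)) = \K$ is a bounded sublattice of $\H(\C)$. These are equivalent descriptions of the same fact, and your route is arguably a touch cleaner since the sublattice condition on $\K$ lines up verbatim with the $\CG$-axioms.

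For (2), the paper invokes \cref{lem: ell preserves meets} and simply states that the left adjoint of $f_S$ is $S^{-1}$, then checks that $S^{-1}$ preserves finite meets of compact elements. You instead verify directly that $f_S^{-1}$ preserves finite joins of compact open filters, and your key identity $f_S^{-1}(\Psi_V) = \Psi_{S^{-1}[V]}$ is precisely the computation (via compactness of $V$) that identifies $S^{-1}$ as the left adjoint on compact elements. So your argument is a more explicit unpacking of what the paper obtains by citing \cref{lem: ell preserves meets}; both reduce the morphism condition to $S^{-1}$ preserving binary meets in $\K$ (from \cref{CG Box preserves join}) together with seriality (\cref{CG serial}) for the empty join.
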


\begin{proof}
(1) Let  $\C = (X, \K)$. By \cite[Thm.~5.9]{CG20}, $\H(\C) \in \JMS$. To see that $\H(\C) \in \JML$, we need to see that $\KOF(\H(\C))$ is a bounded sublattice of $\FSat(\H(\C))$. 
By \cref{5.7 of CG}, we need to show that if $U, V \in \K$, then $\Psi_U \vee \Psi_V \in \KOF(\H(\C))$, where the join is taken in $\FSat(\H(\C))$. Because $U \wedge V \in \K$, the join is contained in $\Psi_{U \wedge V}$. If $\mathcal{F}$ is an open filter of $\H(\C)$ containing $\Psi_U \cup \Psi_V$, then $U, V \in \mathcal{F}$, so $U \wedge V \in \mathcal{F}$, and hence $\Psi_{U \wedge V} \subseteq \mathcal{F}$. Thus, $\Psi_U \vee \Psi_V = \Psi_{U \wedge V} \in \KOF(\H(\C))$. Since $ \{X\} = \Psi_X$, the result follows.

(2) Let $\C_1 = (X_1, \K_1)$ and $\C_2 =(X_2, \K_2)$. By \cite[Prop.~5.27]{CG20}, $f_S$ is a $\JMS$-morphism, and hence an $\AlgLatS$-morphism. By \cref{lem: morphisms of JML}, to see that $f_S$ is a $\JML$-morphism, it suffices to show that the left adjoint of $f_S$ preserves finite meets of compact elements. It is straightforward to verify that the left adjoint of $f_S$ is given by $S^{-1} \colon \H(\C_2) \to \H(\C_1)$ (which is well defined by \cref{CG equivalent condition for Box}). By \cref{CG Box preserves join}, $S^{-1}$ preserves binary meets from $\K_2$. The result 
follows since $K(\H(\C_2)) = \K_2$ by \cref{CG K(H)}.
\end{proof}

We are ready to prove that $\CG$ is equivalent to $\JML$.

\begin{theorem} \label{thm: CG = JM}
The functors $\mathbb{M}$ and $\mathbb{H}$ restrict to yield an equivalence between $\CG$ and $\JML$.
\end{theorem}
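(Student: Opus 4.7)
The plan is to restrict the equivalence $\CGS \simeq \JMS$ established in \cite[Thm.~5.30]{CG20}. Propositions \ref{thm: functor M} and \ref{thm: functor H} already yield restricted functors $\mathbb{M} \colon \JML \to \CG$ and $\mathbb{H} \colon \CG \to \JML$. It therefore remains to verify that the natural isomorphisms $\epsilon \colon \mathbb{H}\mathbb{M} \Rightarrow \mathrm{Id}_\JMS$ and $\eta \colon \mathrm{Id}_\CGS \Rightarrow \mathbb{M}\mathbb{H}$ underlying the ambient equivalence have components that lie (together with their inverses) in $\JML$ and $\CG$ when evaluated on objects of those subcategories. Naturality of the restricted transformations is then inherited from naturality in $\JMS$ and $\CGS$, respectively, since the subcategory inclusions are faithful.

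For $X \in \JML$, the component $\epsilon_X \colon \mathbb{H}\mathbb{M}(X) \to X$ is a $\JMS$-isomorphism. Since $\JMS$-morphisms preserve the specialization order by \cref{specialization order}, a $\JMS$-iso is an order-isomorphism, so its inverse-image map induces a lattice isomorphism $\KOF(X) \to \KOF(\mathbb{H}\mathbb{M}(X))$. This lattice map trivially preserves finite joins, so both $\epsilon_X$ and $\epsilon_X^{-1}$ satisfy the extra requirement of \cref{def: JML-space} and are $\JML$-morphisms. Hence $\epsilon$ restricts to a natural isomorphism between $\mathbb{H}\mathbb{M}|_{\JML}$ and $\mathrm{Id}_\JML$.

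For $\C \in \CG$, the component $\eta_\C$ is a $\CGS$-isomorphism between $\CG$-objects, and we need to show it is a $\CG$-isomorphism. The extra requirements are seriality (\cref{CG serial}) and the join-preservation condition \cref{CG Box join}; by \cref{CG Box preserves join}, the latter is equivalent to the inverse relation preserving binary meets in $\K$. The strategy is to transfer these conditions through the equivalence: applying the restricted $\mathbb{H}$ to $\eta_\C$ yields a $\JMS$-iso between the $\JML$-objects $\mathbb{H}(\C)$ and $\mathbb{H}\mathbb{M}\mathbb{H}(\C)$, which by the previous paragraph is automatically a $\JML$-iso. Unpacking the definitions of $\mathbb{M}$ and $\mathbb{H}$ together with \cref{lem: 5.7 of CG} then translates the $\JML$-iso property of $\mathbb{H}(\eta_\C)$ into the required meet-preservation in $\K$ and seriality of $\eta_\C$ itself; the same argument, applied to $\eta_\C^{-1}$, shows it is a $\CG$-morphism too. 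Thus $\eta$ restricts to a natural isomorphism between $\mathrm{Id}_\CG$ and $\mathbb{M}\mathbb{H}|_{\CG}$, and we conclude that the restricted functors form an equivalence.

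The main obstacle is the last paragraph. On the $\JML$-side, the extra morphism condition follows effortlessly from bijectivity plus order-preservation, but on the $\CG$-side the corresponding conditions are relational and require a more delicate translation across the equivalence. The cleanest route is to exploit the functoriality of the restricted $\mathbb{H}$ and the already-verified $\JML$-side, thereby avoiding a direct computation with the composition $\star$ of relations in $\CGS$.
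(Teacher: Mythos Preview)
Your overall strategy coincides with the paper's: restrict the ambient equivalence $\CGS \simeq \JMS$ using Propositions~\ref{thm: functor M} and~\ref{thm: functor H}, and verify that the unit/counit components lie in the subcategories. Your treatment of the $\JML$ side (a $\JMS$-iso is an order-iso, hence a $\JML$-iso) is exactly the paper's.

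The difference is on the $\CG$ side. The paper proves directly that \emph{every} $\CGS$-isomorphism $S$ between $\CG$-objects is a $\CG$-morphism, by two short observations: (i) $\Box_S \colon \L\C_2 \to \L\C_1$ is a $\SLat$-isomorphism between bounded lattices, hence automatically a $\Lat$-isomorphism, which gives condition~\eqref{CG Box join}; (ii) seriality follows from a brief contradiction using $S' \star S = {\le}$ together with $\bigcap \L\C_1 = \varnothing$. This is self-contained and does not invoke $\mathbb{H}$ at all.

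Your route---push $\eta_\C$ through $\mathbb{H}$, use the $\JML$ side, then translate back---can be made to work, but you have not carried it out, and the translation is not as clean as you suggest. What is needed is the converse of Proposition~\ref{thm: functor H}(2): the left adjoint of $f_S = \mathbb{H}(S)$ is the relational preimage $S^{-1}$ (this is in the proof of that proposition), so $f_S$ being a $\JML$-morphism is equivalent to $S^{-1}$ preserving finite meets in $\K$, which unpacks to seriality (preservation of the top $X \in \K$) plus condition~\eqref{CG Box join}. This is correct but more roundabout than the paper's two-line argument for~\eqref{CG Box join}. Also, a small slip: you write ``applying the restricted $\mathbb{H}$'', but at that point $\eta_\C$ is only known to be a $\CGS$-morphism, so you must apply the unrestricted $\mathbb{H}$.
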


\begin{proof}
By \cite[Thm.~20]{CG20},  $\mathbb{M}$ and $\mathbb{H}$ yield an equivalence between $\JMS$ and $\CGS$. Therefore, in view of \cref{thm: functor M,thm: functor H}, it is enough to observe that each $\JMS$-isomorphism is a $\JML$-isomorphism and that each $\CGS$-isomorphism is a $\CG$-isomorphism. The former is obvious since these are order-isomorphisms. To see the latter, let $S$ be a $\CGS$-isomorphism from $\C_1$ to $\C_2$ with inverse $S'$. It is sufficient to show that $S$ is a $\CG$-morphism. By \cite[Thm.~3.24]{CG20}, $\Box_S \colon \L\C_2 \to \L\C_1$ is  an $\SLat$-isomorphism between lattices. Therefore, $\Box_S$ is a $\Lat$-isomorphism, and so $S$ satisfies \cref{CG Box join}. To see that $S$ also satisfies \cref{CG serial}, let $S[x] =\varnothing$. Then $x \in \Box_S\Box_{S'}B$ for each $B \in \L\C_1$. Thus, 
\[
(\forall x' \in X_1,\forall B \in \L\C_1)\,(x (\rel{S' \star S}) x' \Longrightarrow x' \in B).
\]
Since ${\le} = S' \star S$, we have that 
\[
(\forall x' \in X_1,\forall B \in \L\C_1)\,(x \le x' \Longrightarrow x' \in B),
\]
and hence $\up x = \bigcap \L\C_1$.
On the other hand, since $\bigcup \K = X$, we get that $\bigcap \L\C_1 = \varnothing$.  The obtained contradiction shows that $S[x] \ne \varnothing$. Thus, each $\CGS$-isomorphism is a $\CG$-isomorphism, concluding the proof. 
\end{proof}

As an immediate consequence of the above theorem 
and \cref{cor: JML = Lat}, we obtain:

\begin{corollary}
\cite[Thm.~3.32]{CG20} $\CG$ is dually equivalent to $\Lat$.
\end{corollary}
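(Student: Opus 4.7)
The plan is to obtain the dual equivalence purely by composing the two equivalences that have already been established. By \cref{thm: CG = JM}, the functors $\mathbb{M} \colon \JML \to \CG$ and $\mathbb{H} \colon \CG \to \JML$ exhibit $\CG$ and $\JML$ as equivalent categories. By \cref{cor: JML = Lat}, the category $\JML$ is dually equivalent to $\Lat$. Since composition of an equivalence with a dual equivalence is again a dual equivalence, $\CG$ is dually equivalent to $\Lat$.

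Concretely, I would describe the two composed functors explicitly so that the statement is not merely abstract. In one direction, sending a CG-space $\C = (X,\K)$ to $\H(\C)$ and then to $\KOF(\H(\C))$ produces, by \cref{CG K(H)}, a bounded lattice isomorphic to $\K$ (with the lattice structure from \cref{CG meet in K alternate,rem: basic properties of CGS}), and a $\CG$-morphism $S \colon \C_1 \to \C_2$ is sent to $\Box_S \colon \L\C_2 \to \L\C_1$, which up to the lattice isomorphism $\L\C \cong \K^{\op}$ corresponds to $S^{-1} \colon \K_2 \to \K_1$. In the other direction, sending $A \in \Lat$ to $\Filt(A) \in \JML$ via the functor of \cref{rem: functor JM to Lat} and then applying $\mathbb{M}$ yields the CG-space $\C(\Filt(A)) = ((\Filt A)_m, \K_{\Filt(A)})$ of meet-irreducible filters of $A$, and a bounded lattice homomorphism $\alpha \colon A \to B$ is sent to $S_{\alpha^{-1}} \subseteq (\Filt B)_m \times (\Filt A)_m$ given by $G \mathrel{S_{\alpha^{-1}}} F$ iff $\alpha^{-1}(G) \subseteq F$.

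There is no real obstacle here; both pieces are already in place. The only thing worth double-checking is that the composite is well-defined on morphisms, i.e.\ that the restrictions of $\mathbb{M}$ and $\mathbb{H}$ to $\JML$ and $\CG$ (verified in \cref{thm: functor M,thm: functor H}) remain inverse equivalences after restriction, which is exactly the content of \cref{thm: CG = JM}. Composing with the $\JML$–$\Lat$ functors from \cref{rem: functor JM to Lat} thus immediately yields the desired dual equivalence, matching the one obtained in \cite[Thm.~3.32]{CG20}.
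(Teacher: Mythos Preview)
Your proposal is correct and matches the paper's approach exactly: the corollary is stated as an immediate consequence of \cref{thm: CG = JM} together with \cref{cor: JML = Lat}, composing the equivalence $\CG \simeq \JML$ with the dual equivalence $\JML \simeq \Lat^{\op}$. Your explicit description of the resulting functors is likewise essentially what the paper records in \cref{rem: CG = Lat}.
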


\begin{remark} \label{rem: CG = Lat}
The functor from $\CG$ to $\Lat$ sends $\C = (X,\K)$ to $\L\C$ and $S \colon \C_1 \to \C_2$ to $\Box_S \colon \L\C_2 \to \L\C_1$. To describe the functor in the other direction, for a lattice $A$, let $X_A$ be the set of meet-irreducible filters of $A$, let  $\mathfrak s \colon A \to \wp (X_A)$ be the Stone map 
\[
\mathfrak s(a) = \{ x \in X_A : a \in x \},
\]
and let $\K_A = \{ -\mathfrak s(a) : a \in A \}$. Then the functor from $\Lat$ to $\CG$ sends $A$ to the pair $\C(A) = (X_A,\K_A)$ and $\alpha \colon A \to B$ to $S_\alpha \colon X_B \to X_A$ given by $y \rel{S_\alpha} x$ if $\alpha^{-1}(y) \subseteq x$.    
\end{remark}

Putting together the results of \cref{sec: preliminaries,sec: JM,sec: BDGM,sec: Hs,sec: CG}, we arrive
at the  isomorphisms and equivalences of the upper part of \cref{figure: intro}:

\[
\begin{tikzcd}[column sep = 5pc]
\CohLatS \arrow[d, <->, "\ref{thm: iso between CohLat_S and CohLat_L}"'] \arrow[r, <->, "\ref{thm: CohLatL = JML}"]& \JML \arrow[d, <->, "\ref{thm: BDGM = CohLatL}(\ref{JML = BDGM})"] \arrow[r, "\ref{thm: CG = JM}"] & \CG \\
\CohLatL \arrow[r, <->, "\ref{thm: BDGM = CohLatL}(\ref{BDGM = CohLatL})"'] & \BDGM \arrow[r, <->, "\ref{thm: CohLatL = HsL}"'] & \HsL
\end{tikzcd}
\]


\section{The Dunn-Hartonas approach} \label{sec: DH}

Prior to Hartonas \cite{Har97}, Dunn and Hartonas \cite{HD97} developed another duality for $\Lat$. For this they introduced the notion of an \FSpace\ (filter space), a canonical L-frame (lattice frame), and proved that the category of canonical L-frames is dually equivalent to $\Lat$. An explicit axiomatization of the category of canonical L-frames was not given in \cite{HD97}. We fill in this gap and show that it is equivalent to $\CohLatL$, thus recovering Dunn-Hartonas duality from Hartonas duality.
\begin{definition} \plabel{def: FSpace} \cite[Def.~2.1]{HD97}
A topological space $X$ is an \emph{\FSpace} if $X$ is a partially ordered Stone space such that
\begin{enumerate}
\item \label[def: FSpace]{def: FSpace 1} $X$ is a complete lattice;
\item \label[def: FSpace]{def: FSpace 2} There is a family $X^*$ of principal clopen upsets such that $X^* \cup \{- V : V \in X^* \}$ is a subbasis of open sets;
\item \label[def: FSpace]{def: FSpace 4} The set $\{ x \in X : \up x \in X^*\}$ is join-dense in $X$;
\item \label[def: FSpace]{def: FSpace 5} $X^*$ is closed under finite intersections.
\end{enumerate}
\end{definition}

We show that F-spaces are precisely algebraic lattices with the Lawson topology. For this we require the following lemma.

\begin{lemma} \plabel{lem: properties of FSpaces}
Let $X$ be an \FSpace.
\begin{enumerate}
\item \label[lem: properties of FSpaces]{lem: properties of FSpaces 1} The set $\{ x \in X : \up x \in X^*\}$ is closed under finite joins.
\item \label[lem: properties of FSpaces]{lem: properties of FSpaces 2} $\up x$ is closed for each $x \in X$.
\item \label[lem: properties of FSpaces]{lem: properties of FSpaces 3} If $\up x$ clopen, then $x \in K(X)$.
\item \label[lem: properties of FSpaces]{lem: properties of FSpaces 4} $X^* = \{ \up x : x \in K(X) \}$.
\end{enumerate}
\end{lemma}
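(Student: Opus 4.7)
The plan is to verify the four assertions in the stated order, using each as a lemma in the next where needed. All the ingredients come from the definition of an \FSpace: the Stone space structure (in particular compactness), the complete lattice structure, join-density of $\{x:\up x\in X^*\}$, and closure of $X^*$ under finite intersections (which I read to include the empty intersection, so $X\in X^*$).

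\emph{For \eqref{lem: properties of FSpaces 1}.} In any complete lattice $\up(x\vee y)=\up x\cap\up y$, so if $\up x,\up y\in X^*$ then $\up(x\vee y)\in X^*$ by axiom \eqref{def: FSpace 5}; the empty join corresponds to $\up 0=X\in X^*$ via the empty intersection.

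\emph{For \eqref{lem: properties of FSpaces 2}.} By the join-density axiom \eqref{def: FSpace 4}, any $x\in X$ can be written as $x=\bigvee S$ with $S\subseteq\{y:\up y\in X^*\}$. Since $z\ge\bigvee S$ iff $z\ge y$ for every $y\in S$, we have $\up x=\bigcap_{y\in S}\up y$. Each $\up y$ is clopen, so $\up x$ is closed as an intersection of closed sets.

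\emph{For \eqref{lem: properties of FSpaces 3}.} Suppose $\up x$ is clopen and $x\le\bigvee S$. Let $T$ be the collection of finite joins of elements of $S$, which is directed and satisfies $\bigvee T=\bigvee S$. Then $\bigcap_{t\in T}\up t=\up\bigl(\bigvee T\bigr)\subseteq\up x$, so, passing to complements, $-\up x\subseteq\bigcup_{t\in T}(-\up t)$. By \eqref{lem: properties of FSpaces 2}, each $\up t$ is closed, hence each $-\up t$ is open. Since $X$ is a Stone space it is compact, and $-\up x$ is closed, hence compact. Thus finitely many $t_1,\dots,t_n\in T$ cover $-\up x$, giving $\bigcap_{i}\up t_i\subseteq\up x$, i.e.\ $x\le t_1\vee\cdots\vee t_n$. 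As each $t_i$ is itself a finite join from $S$, so is $t_1\vee\cdots\vee t_n$, which shows $x\in K(X)$.

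\emph{For \eqref{lem: properties of FSpaces 4}.} The inclusion $\subseteq$ is immediate from \eqref{lem: properties of FSpaces 3}: every element of $X^*$ is by definition a principal clopen upset $\up y$ with $y\in K(X)$. For the reverse inclusion, let $x\in K(X)$. By join-density, $x=\bigvee S$ for some $S\subseteq D:=\{y:\up y\in X^*\}$. Passing to finite joins from $S$ (which still lie in $D$ by \eqref{lem: properties of FSpaces 1}) and invoking compactness of $x$, we find a finite $T\subseteq S$ with $x\le\bigvee T$; but $\bigvee T\le\bigvee S=x$ forces $x=\bigvee T$, and then $x\in D$ by \eqref{lem: properties of FSpaces 1}, i.e.\ $\up x\in X^*$.

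The only place requiring any care is \eqref{lem: properties of FSpaces 3}, where the key trick is to replace $S$ by its directed set $T$ of finite joins so that $\up(\bigvee T)$ coincides with the intersection $\bigcap_{t\in T}\up t$, after which Stone-space compactness applied to the closed set $-\up x$ does the work.
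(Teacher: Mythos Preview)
Your proof is correct and follows essentially the same approach as the paper. One simplification for part \eqref{lem: properties of FSpaces 3}: the detour through the directed set $T$ of finite joins is unnecessary, since $\up(\bigvee S)=\bigcap_{s\in S}\up s$ holds for \emph{arbitrary} $S$ in a complete lattice (an element lies above $\bigvee S$ iff it lies above every $s\in S$), so the paper applies compactness directly to the family $\{\up s:s\in S\}$.
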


\begin{proof}
\eqref{lem: properties of FSpaces 1} If $S$ is a finite subset of $\{ x \in X : \up x \in X^*\}$, then $\up \bigvee S = \bigcap \{ \up x : x \in S \}$. Therefore, $\bigvee S \in \{ x \in X : \up x \in X^*\}$ by \cref{def: FSpace 5}.

\eqref{lem: properties of FSpaces 2} Let $x \in X$. By \cref{def: FSpace 4}, $x = \bigvee S$ for some $S \subseteq X$ with $\up k \in X^*$ for each $k \in S$. Therefore, $\bigcap \{ \up k : k \in S \} = \up x$. Because each $\up k$ is clopen, $\up x$ is closed.

\eqref{lem: properties of FSpaces 3} Let $\up x$ be clopen and $x \le \bigvee S$ for some $S \subseteq X$. Then $\bigcap \{ \up y : y \in S \} \subseteq \up x$. Since each $\up y$ is closed by (2) and $\up x$ is open, compactness shows that there are $y_1, \dots, y_n \in S$ with $\up y_1 \cap \dots \cap \up y_n \subseteq \up x$. Therefore, $x \le y_1 \vee \dots \vee y_n$, and so $x \in K(X)$.

\eqref{lem: properties of FSpaces 4} If $\up x \in X^*$, then $x \in K(X)$ by (3). For the reverse inclusion, $x \in K(X)$ and \cref{def: FSpace 4} imply that $x$ is a finite join $x = k_1 \vee \dots \vee k_n$ for some $k_i \in X$ with $\up k_i \in X^*$. Thus, $\up x = \up k_1 \cap \dots \cap \up k_n \in X^*$ by \cref{def: FSpace 5}.
\end{proof}

\begin{proposition} \label{prop: FSpace = algebraic lattice}
Let $X$ be a partially ordered Stone space. Then $X$ is an \FSpace\ iff $X$ is an algebraic lattice with the Lawson topology, and $X^* = \CLF(X)$.
\end{proposition}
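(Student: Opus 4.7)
The plan is to prove both directions by pivoting on the identification $X^* = \{\up k : k \in K(X)\}$, which lets us match the F-space axioms against the known structure of algebraic lattices with the Lawson topology.

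For the forward direction, I would start from \cref{lem: properties of FSpaces 4}, which already gives $X^* = \{\up k : k \in K(X)\}$. Combined with \cref{def: FSpace 4}, this says $K(X)$ is join-dense in $X$, and since $X$ is a complete lattice by \cref{def: FSpace 1}, we conclude that $X$ is an algebraic lattice. To identify the topology as the Lawson topology, I would use that the subbasis in \cref{def: FSpace 2} is exactly $\{\up k : k \in K(X)\} \cup \{-\up k : k \in K(X)\}$. The sets $\up k$ for $k \in K(X)$ form a basis for $\sigma(X)$ by \cref{basis}, while for an arbitrary $x \in X$ we have $\up x = \bigcap\{\up k : k \in K(X),\ k \le x\}$, so $-\up x = \bigcup\{-\up k : k \in K(X),\ k \le x\}$ is open in the topology generated by $X^* \cup \{-V : V \in X^*\}$. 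Hence this topology coincides with $\sigma(X) \vee \omega(X) = \lambda(X)$. Finally, $X^* = \CLF(X)$ follows from \cref{lem: KOF = LX}.

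For the reverse direction, suppose $X$ is an algebraic lattice with the Lawson topology and $X^* = \CLF(X)$. By \cref{BDGMS = AlgLatL on objects} together with \cref{rem: properties of BDGM spaces}, $X$ is a Priestley space, hence partially ordered Stone; this gives \cref{def: FSpace 1} since $X$ is algebraic (hence complete). By \cref{lem: KOF = LX}, $X^* = \{\up k : k \in K(X)\}$, so each member of $X^*$ is a principal clopen upset. The argument in the previous paragraph (running in reverse) shows that $X^* \cup \{-V : V \in X^*\}$ generates the Lawson topology, verifying \cref{def: FSpace 2}. For \cref{def: FSpace 4}, $\{x \in X : \up x \in X^*\} = K(X)$, which is join-dense since $X$ is algebraic. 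For \cref{def: FSpace 5}, $\up k \cap \up l = \up(k \vee l)$ and finite joins of compact elements are compact, so $X^*$ is closed under finite intersections.

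I do not anticipate a genuine obstacle here; the proposition is essentially an unpacking of \cref{lem: properties of FSpaces} together with the basic description of the Lawson topology. The only point that requires any care is the bidirectional verification that the subbasis $X^* \cup \{-V : V \in X^*\}$ coincides with the standard Lawson subbasis, which rests on the identity $-\up x = \bigcup\{-\up k : k \in K(X),\ k \le x\}$ available in any algebraic lattice.
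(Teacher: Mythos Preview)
Your proposal is correct and follows essentially the same approach as the paper: both directions pivot on the identification $X^* = \{\up k : k \in K(X)\}$ from \cref{lem: properties of FSpaces 4} and then match the F-space axioms against the standard description of algebraic lattices with the Lawson topology. The only difference is that you spell out the identity $-\up x = \bigcup\{-\up k : k \in K(X),\ k \le x\}$ to justify that the F-space subbasis generates $\lambda(X)$, whereas the paper simply asserts that $\{\up k : k \in K(X)\} \cup \{-\up l : l \in K(X)\}$ is a Lawson subbasis; your extra detail is welcome but not a different argument.
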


\begin{proof}
Let $X$ be an \FSpace. Then $X$ is a complete lattice by \cref{def: FSpace 1}. By \cref{def: FSpace 4,lem: properties of FSpaces 4}, $X$ is algebraic. Because $\{ \up k : k \in K(X) \} \cup \{ - \up l : l \in K(X)\}$ is a subbasis for the Lawson topology, the topology of $X$ is the Lawson topology by \cref{def: FSpace 2}. Moreover,  Lemmas~\ref{lem: KOF = LX} and \ref{lem: properties of FSpaces}(\ref{lem: properties of FSpaces 4}) show that $X^* = \CLF(X)$.

Conversely, suppose that $X$ is an algebraic lattice. Then $X$ is a complete lattice, so \cref{def: FSpace 1} holds. We set $X^* = \{ \up k : k \in K(X) \}$. Then \cref{def: FSpace 2} holds since $\{ \up k : k \in K(X)\} \cup \{ -\up l : l \in K(X) \}$ is a subbasis for the Lawson topology. Because $X$ is an algebraic lattice, $K(X)$ is join-dense in $X$, so \cref{def: FSpace 4} holds. Finally, $X^*$ is closed under finite intersections since $K(X)$ is closed under finite joins. Thus, $X $ is an \FSpace.
\end{proof}

By \cite[p.~399]{HD97}, the canonical L-frame associated to a bounded lattice $A$ is the triple $(\Filt(A), R, \Idl(A))$ such that $F \rel{R} I$ iff $F \cap I \ne \varnothing$. In order to axiomatize such triples, it is more convenient to work with the complement $-R$ of the relation $R$ since it turns out to be an interior relation. For Stone spaces $X, Y$ we recall (see, e.g., \cite[Def.~6]{Esa78}, \cite[Def.~5.1]{BGHJ19}) that a
relation $R \subseteq X \times Y$ is \emph{interior} provided that
\begin{enumerate}
\item $R[x]$ is closed for each $x \in X$ and $R[U]$ is clopen for each clopen $U \subseteq X$;
\item $R^{-1}[y]$ is closed for each $y \in Y$ and $R^{-1}[V]$ is clopen for each clopen $V \subseteq Y$.
\end{enumerate}

\begin{remark}
We recall \cite[p.~53]{Hal62} that a relation $R$ between Stone spaces $X$ and $Y$ is continuous if $R[x]$ is closed for each $x \in X$ and $R^{-1}[V]$ is clopen for each clopen $V \subseteq Y$. Interior relations are strengthening of continuous relations in that $R$ is interior iff $R$ and $R^{-1}$ are continuous. 
\end{remark}
The next two definitions are reminiscent to what happens in positive modal logic; see \cite{Gol89,CJ99} or more recent \cite{Gol20,MB23}.

\begin{definition} \plabel{def: DH relation}
Let $X, Y \in \AlgLatL$ and $R \subseteq X \times Y$ be a relation. We call $R$ a \emph{DH-relation} provided 
\begin{enumerate}
\item \label[def: DH relation]{def: DH relation interior} $R$ is an interior relation;
\item \label[def: DH relation]{def: DH relation filter} For each $x \in X$ and $y \in Y$, both $-R[x]$ and $-R^{-1}[y]$ are filters.
\item \label[def: DH relation]{def: DH relation 3}  $R[x] \subseteq R[x'] \Longrightarrow x' \le x$;
\item  \label[def: DH relation]{def: DH relation 4} $R^{-1}[y] \subseteq R^{-1}[y'] \Longrightarrow y' \le y$.
\end{enumerate}
\end{definition}

\begin{remark} \label{rem: R and le for DH}
By \cref{def: DH relation filter}, the reverse implications in \crefrange{def: DH relation 3}{def: DH relation 4} also hold. To see the reverse implication of \cref{def: DH relation 3}, suppose $x' \le x$. If $R[x] \not\subseteq R[x']$, then there is $y \in R[x]$ such that $y \notin R[x']$. Therefore, $x' \in -R^{-1}[y]$. Since $-R^{-1}[y]$ is a filter and $x' \le x$, we have $x \in -R^{-1}[y]$, so $y \notin R[x]$, a contradiction. The argument for the reverse implication of \cref{def: DH relation 4} is similar.
\end{remark}
\begin{definition} \plabel{def: DH morphisms}
\hfill
\begin{enumerate}
\item We call a triple $(X, R, Y)$ a \emph{Dunn-Hartonas space}, or \emph{DH-space} for short, if $X, Y \in \CohLatL$ and $R \subseteq X \times Y$ is a DH-relation.
\item \label[def: DH morphisms]{DH maps}A \emph{DH-morphism} between DH-spaces $(X_1, R_1, Y_1)$ and $(X_2, R_2, Y_2)$ is a pair of $\CohLatL$-morphisms $f \colon X_1 \to X_2$ and $g \colon Y_1 \to Y_2$ which satisfies
\begin{enumerate}
\item \label[def: DH morphisms]{def: DH morphisms 1} If $x \rel{R_1} y$, then $f(x) \rel{R_2} g(y)$;
\item \label[def: DH morphisms]{def: DH morphisms 2} If $x' \rel{R_2} g(y)$, then there is $x \in X_1$ with $x \rel{R_1} y$ and $x' \le f(x)$;
\item \label[def: DH morphisms]{def: DH morphisms 3} If $f(x) \rel{R_2} y'$, then there is $y \in Y_1$ with $x \rel{R_1} y$ and $y' \le g(y)$.
\end{enumerate}
\item Let $\HD$ be the category of DH-spaces and DH-morphisms, where composition is given by $(f_2, g_2) \circ (f_1, g_1) = (f_2 \circ f_1, g_2\circ g_1)$ and $(1_X, 1_Y)$ is the identity morphism on $(X, R, Y)$.
\end{enumerate}
\end{definition}

\begin{remark} \label{rem: morphisms}
Let $(X_1, R_1, Y_1), (X_2, R_2, Y_2) \in \HD$ and $f \colon X_1 \to X_2$ and $g \colon Y_1 \to Y_2$ be a pair of functions. It is straightforward to verify that $(f, g)$ is a $\HD$-isomorphism iff $f, g$ are order-isomorphisms such that $x \rel{R_1} y$ iff $f(x) \rel{R_2} g(y)$. When this occurs, $(f^{-1}, g^{-1})$ is the inverse of $(f, g)$ in $\HD$. This will be used throughout.
\end{remark}

We now establish an equivalence between $\HD$ and $\CohLatL$. It follows from \cref{def: DH morphisms} that there is a forgetful functor $\HD \to \CohLatL$:

\begin{proposition} \label{prop: functor F}
There is a functor $\FF \colon \HD \to \CohLatL$ defined by $\FF(X, R, Y) = X$ and $\FF(f, g) = f$ for each $(X, R, Y) \in \HD$ and each $\HD$-morphism $(f, g)$.
\end{proposition}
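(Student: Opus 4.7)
The plan is to verify the four standard checks for $\FF$ to be a functor, all of which follow directly from the definition of $\HD$ and essentially no additional work is needed. So this proposition amounts to confirming that the first-coordinate projection is functorial.

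First, I would check that $\FF$ is well-defined on objects: if $(X,R,Y)\in\HD$, then by part (1) of \cref{def: DH morphisms} we have $X\in\CohLatL$, so $\FF(X,R,Y)=X$ lies in $\CohLatL$. Second, I would check that $\FF$ is well-defined on morphisms: if $(f,g)\colon (X_1,R_1,Y_1)\to(X_2,R_2,Y_2)$ is a $\HD$-morphism, then by \cref{DH maps} the first component $f\colon X_1\to X_2$ is already required to be a $\CohLatL$-morphism, so $\FF(f,g)=f$ is a morphism in $\CohLatL$ with the correct domain and codomain.

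Third, I would verify preservation of identities: by the definition of $\HD$ the identity morphism on $(X,R,Y)$ is $(1_X,1_Y)$, hence $\FF(1_X,1_Y)=1_X=1_{\FF(X,R,Y)}$. Fourth, I would verify preservation of composition: given composable DH-morphisms $(f_1,g_1)$ and $(f_2,g_2)$, composition in $\HD$ is defined componentwise, so
\[
\FF\bigl((f_2,g_2)\circ(f_1,g_1)\bigr)=\FF(f_2\circ f_1,\,g_2\circ g_1)=f_2\circ f_1=\FF(f_2,g_2)\circ\FF(f_1,g_1).
\]

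There is no genuine obstacle here; the only thing to emphasize is that the definition of $\HD$-morphisms has been set up precisely so that the first (and, symmetrically, the second) component projection is functorial. I would also note in passing that the analogous assignment $(X,R,Y)\mapsto Y$, $(f,g)\mapsto g$ gives a second forgetful functor $\HD\to\CohLatL$, which will presumably be exploited in the next section when comparing $\HD$ with $\HsL$ via the pair $(\Filt(A),\Idl(A))$.
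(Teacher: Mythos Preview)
Your proposal is correct and matches the paper's approach: the paper does not even write out a proof, simply remarking that the statement follows from \cref{def: DH morphisms}, and your four checks are exactly the routine verification behind that remark. Your aside about a second forgetful functor $(X,R,Y)\mapsto Y$ is true but not used in what follows; the paper instead builds a functor $\EE\colon\CohLatL\to\HD$ in the other direction and shows $\FF$ and $\EE$ form an equivalence.
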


To define a functor in the opposite direction, we require some preparation.

\begin{notation} \label{not: Galois connection}
Let $R \subseteq X \times Y$ be a DH-relation. Using the notation of \cite[p.~124]{Bir79}, we denote by $\phi \colon \wp X \to \wp Y$ and $\psi \colon \wp Y \to \wp X$ the induced (antitone) Galois connection maps associated to the complement $-R$ of the relation $R$.
Therefore,
\[
\phi A = -R[A] \textrm{ and }\psi B = -R^{-1}[B]
\]
for each $A \subseteq X$ and $B \subseteq Y$. If $x \in X$ and $y \in Y$, we write $\phi(x)$ for $\phi(\{x\})$ and $\psi(y)$ for $\psi(\{y\})$.
\end{notation}

\begin{lemma} \plabel{lem: properties of DH relations}
Let $R \subseteq X \times Y$ be a DH-relation with $X, Y \in \AlgLatL$.
\begin{enumerate}
\item \label[lem: properties of DH relations]{lem: open filter} If $y \in Y$, then $\psi(y)$ is an open filter of $X$.
\item \label[lem: properties of DH relations]{lem: open filter 2} If $x \in X$, then $\phi(x)$ is an open filter of $Y$.
\item \label[lem: properties of DH relations]{phi psi is 1} $\psi\phi U = U$ for $U \in \CLF(X)$ and $\phi\psi V = V$ for $V \in \CLF(Y)$.
\item \label[lem: properties of DH relations]{lem: dual iso} $\phi$ and $\psi$ restrict to yield dual isomorphisms between $\CLF(X)$ and $\CLF(Y)$.
\end{enumerate}
\end{lemma}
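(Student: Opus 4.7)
The plan for this lemma is as follows. Parts (1) and (2) will fall out immediately from the definition of a DH-relation. For (1), $\psi(y) = -R^{-1}[y]$ is a filter by \cref{def: DH relation filter}, and open because $R^{-1}[y]$ is closed by the interior property \cref{def: DH relation interior}. Part (2) is symmetric: $\phi(x) = -R[x]$ is a filter by \cref{def: DH relation filter} and is open because $R[x]$ is closed.

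The real work is in part (3). Given $U \in \CLF(X)$, I invoke \cref{lem: KOF = LX} to write $U = \up k$ for some $k \in K(X)$ (available since $X \in \AlgLatL$). The key reduction will be the identity $R[U] = R[k]$: the inclusion $R[k] \subseteq R[U]$ is trivial, while for $z \in \up k$ the filter (hence upset) property of $-R^{-1}[y]$ gives $R[z] \subseteq R[k]$, because $y \notin R[k]$ forces $k \in -R^{-1}[y]$, then $z \in -R^{-1}[y]$, then $y \notin R[z]$. Unwinding the definitions then yields
\[
\psi\phi U = -R^{-1}[-R[k]] = \{x \in X : R[x] \subseteq R[k]\},
\]
and now \cref{def: DH relation 3}, together with its reverse implication established in \cref{rem: R and le for DH}, makes $R[x] \subseteq R[k]$ equivalent to $k \le x$. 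Thus $\psi\phi U = \up k = U$. The identity $\phi\psi V = V$ follows by the symmetric argument using \cref{def: DH relation 4}.

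For (4), I will check that $\phi$ sends $\CLF(X)$ into $\CLF(Y)$; the statement for $\psi$ is again symmetric. Given $U = \up k \in \CLF(X)$, the identity $R[U] = R[k]$ gives $\phi U = \phi(k)$, which is a filter by part (2). Clopenness of $\phi U = -R[U]$ follows because $U$ is clopen and $R$ is an interior relation, so $R[U]$ is clopen. Hence $\phi U \in \CLF(Y)$. Combining this with part (3) shows that $\phi\colon\CLF(X)\to\CLF(Y)$ and $\psi\colon\CLF(Y)\to\CLF(X)$ are mutually inverse bijections; since they form an antitone Galois connection, they are order-reversing, giving the desired dual isomorphism.

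The main obstacle is the passage from $R[U]$ to $R[k]$ and, tandemly, the identification $\{x : R[x] \subseteq R[k]\} = \up k$. Both use the two nontrivial axioms \cref{def: DH relation filter,def: DH relation 3} of a DH-relation in an essential way; the rest of the argument is formal manipulation of a Galois connection together with the interior-relation definition.
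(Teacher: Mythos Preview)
Your proposal is correct and follows essentially the same approach as the paper: parts (1)--(2) are immediate from the DH-relation axioms, part (3) uses \cref{lem: KOF = LX} to write $U = \up k$, reduces $R[U]$ to $R[k]$ via the downset property of $R^{-1}[y]$, and then applies \cref{def: DH relation 3} together with \cref{rem: R and le for DH}. The only cosmetic difference is in (4): you conclude $\phi U$ is a filter from the identity $\phi U = \phi(k)$, whereas the paper writes $\phi U = \bigcap\{\phi(x) : x \in U\}$ and uses that an intersection of filters is a filter; both are equally valid.
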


\begin{proof}
\eqref{lem: open filter}, \eqref{lem: open filter 2} By \cref{def: DH relation filter}, $\psi(y)$ and $\phi(x)$ are filters. They are open by \cref{def: DH relation interior}.

\eqref{phi psi is 1} Let $U \in \CLF(X)$. Then $U = \up k$ for some $k \in K(X)$ by \cref{lem: KOF = LX}. We have 
\begin{align*}
\psi\phi U &= -R^{-1}[-R[U]] = \{ x \in X : R[x] \subseteq R[\up k] \} = \{ x \in X : R[x] \subseteq R[k] \} \\
&= \{ x \in X : k \le x \} = U,
\end{align*}
where the third equality holds since $R^{-1}[y]$ is a downset for each $y \in Y$, and the fourth equality by \cref{rem: R and le for DH}. Similarly, if $V \in \CLF(Y)$, then $V = \phi\psi V$. 

\eqref{lem: dual iso} Let $U \in \CLF(X)$. Then $\phi U$ is clopen since $R$ is interior, and
\[
\phi U = -R[U] = -\bigcup \{ R[x] : x \in U \} = \bigcap \{\phi(x) : x \in U\}
\]
is a filter by \eqref{lem: open filter 2}. Thus, $\phi U \in \CLF(Y)$. A similar argument, using (1) rather than (2), shows that $\psi V \in \CLF(X)$ for each $V \in \CLF(Y)$. Since $\phi$ and $\psi$ are order-reversing maps, it suffices to apply \eqref{phi psi is 1}.
\end{proof}

\begin{remark} \label{rem: X and Y coherent}
Let $R \subseteq X \times Y$ be a DH-relation with $X, Y$ compact algebraic lattices. By \cref{lem: dual iso}, $\CLF(X)$ and $\CLF(Y)$ are dually isomorphic meet-semilattices. Therefore, each is a bounded lattice. It then follows from \cref{lem: KOF = LX} that $K(X)$ and $K(Y)$ are bounded lattices, and hence that $X$ and $Y$ are coherent lattices. In particular, the bounds of $\CLF(X)$ are $\up 1$ and $X = \up 0$, and join and meet are given by 
\[
\up k \vee \up l = \up (k \wedge l) \ \textrm{ and }\ \up k \wedge \up l = \up k \cap \up l = \up (k \vee l)
\]
for $k, l \in K(X)$.
\end{remark}

\begin{lemma} \plabel{lem: OF is coherent}
Let $X \in \CohLatL$. Then $\OF(X)$ is isomorphic to $\Idl(\CLF(X))$. Consequently, $\OF(X) \in \CohLatL$ and $K(\OF(X)) = \CLF(X)$.
\end{lemma}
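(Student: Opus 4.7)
The plan is to construct mutually inverse order-isomorphisms between $\OF(X)$ and $\Idl(\CLF(X))$ explicitly. Define
\[
\Phi \colon \OF(X) \to \Idl(\CLF(X)), \quad \Phi(U) = \{ F \in \CLF(X) : F \subseteq U\},
\]
and
\[
\Psi \colon \Idl(\CLF(X)) \to \OF(X), \quad \Psi(I) = \bigcup I.
\]
Both are manifestly order-preserving (inclusion to inclusion), so it suffices to check that each is well-defined and that they are inverse to each other.

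First I would verify that $\Phi(U)$ is indeed an ideal of $\CLF(X)$; this is where coherence of $X$ is used. By \cref{lem: KOF = LX} together with \cref{rem: X and Y coherent}, elements of $\CLF(X)$ are exactly the principal upsets $\up k$ with $k\in K(X)$, and the join in $\CLF(X)$ is $\up k \vee \up l = \up(k\wedge l)$, which lies in $\CLF(X)$ precisely because $K(X)$ is closed under binary meets. If $\up k, \up l \in \Phi(U)$, then $k, l \in U$, hence $k\wedge l \in U$ since $U$ is a filter, giving $\up(k\wedge l) \subseteq U$. That $\Phi(U)$ is a downset is immediate. For $\Psi(I)$, each member of $I$ is open, so $\Psi(I)$ is open; and if $x, y \in \bigcup I$, the directedness of $I$ as an ideal produces a common $F = \up k \in I$ with $x, y \in F$, so $k$ is a common lower bound in $\bigcup I$, and hence $\Psi(I)$ is a filter.

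Next I would check the two composition identities. For $\Phi\Psi = \mathrm{id}$, given an ideal $I$ and $\up k \subseteq \bigcup I$, pick $F' \in I$ with $k \in F'$; then $\up k \subseteq F'$, so $\up k \in I$ because $I$ is a downset, giving $\Phi(\Psi(I)) \subseteq I$; the reverse containment is trivial. For $\Psi\Phi = \mathrm{id}$, given $U \in \OF(X)$ and $x \in U$: since $X$ is algebraic, $x = \bigvee(\down x \cap K(X))$ is a directed join; since $U$ is Scott-open (an open upset of the Scott topology coincides with an open upset of the Lawson topology by \cref{Lawson upset is Scott}), there is $k \in K(X) \cap \down x$ with $k \in U$, so $\up k \in \Phi(U)$ and $x \in \up k \subseteq \Psi(\Phi(U))$. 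I do not anticipate any real obstacle here; the only subtlety is to make sure one uses Scott-openness of $U$ at this single point.

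For the two consequences, by \cref{rem: X and Y coherent} the poset $\CLF(X)$ is a bounded lattice, so \cref{Nachbin ideal} yields that $\Idl(\CLF(X))$ is a coherent lattice and, by the standard description of the compact elements of an ideal lattice, $K(\Idl(\CLF(X)))$ consists exactly of the principal ideals $\down F$ for $F \in \CLF(X)$. Transporting along the isomorphism $\Psi$ gives $\OF(X) \in \CohLatL$ and $K(\OF(X)) = \{ \Psi(\down F) : F \in \CLF(X)\} = \{ F : F \in \CLF(X)\} = \CLF(X)$, since $\Psi(\down F) = \bigcup \{F' \in \CLF(X) : F' \subseteq F\} = F$.
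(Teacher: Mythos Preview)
Your proposal is correct and follows essentially the same approach as the paper: the maps $\Phi$ and $\Psi$ are exactly the paper's $f$ and $g$, and the key observations (open filters are Scott-open by \cref{Lawson upset is Scott}, hence unions from $\CLF(X)$; the description of join in $\CLF(X)$ via \cref{rem: X and Y coherent}; and the compact elements of an ideal lattice are the principal ideals) are the same. You simply spell out the verification of well-definedness and the two inverse identities in more detail than the paper, which compresses these into ``straightforward to see.''
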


\begin{proof}
Every open filter is a Lawson-open upset, so a Scott-open set by \cref{Lawson upset is Scott}. Therefore, it is a union from $\CLF(X)$ by \cref{basis,lem: KOF = LX}. This union is a directed union by the description of join in $\CLF(X)$ given in \cref{rem: X and Y coherent}. It is then straightforward to see that the desired isomorphism is the map $f \colon \OF(X) \to \Idl(\CLF(X))$ defined by $f(F) = \{ U \in \CLF(X) : U \subseteq F\}$, whose inverse $g \colon \Idl(\CLF(X)) \to \OF(X)$ is given by $g(I) = \bigcup \{ U \in \CLF(X) : U \in I\}$. Thus, $\OF(X) \in \CohLatL$ by \cref{Nachbin ideal}, and $K(\OF(X)) = \CLF(X)$ because the compact elements of $\Idl(\CLF(X))$ are the principal downsets, which are sent to $\CLF(X)$ via $g$.
\end{proof}

Recalling that we are axiomatizing the complement of the relation of a canonical L-frame, we define:

\begin{definition} \label{def: R_X}
If $X \in \CohLatL$, define $R \subseteq X \times \OF(X)$ by $x \rel{R} U$ if $x \notin U$.
\end{definition}

\begin{proposition} \label{lem: triple in HD}
If $X \in \CohLatL$, then $(X, R, \OF(X)) \in \HD$.
\end{proposition}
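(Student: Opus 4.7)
The plan is to verify the three ingredients of being a DH-space: that $X \in \CohLatL$ (by hypothesis), that $\OF(X) \in \CohLatL$ (by \cref{lem: OF is coherent}, which also tells us $K(\OF(X)) = \CLF(X)$ and that meets in $\OF(X)$ are intersections), and that $R$ is a DH-relation. The bulk of the work is the last of these, namely checking the four conditions of \cref{def: DH relation}. Conditions (2)--(4) are quick. First, $-R[x] = \{U \in \OF(X) : x \in U\}$ is a filter of $\OF(X)$: it is nonempty, an upset, and closed under binary meets, because meets in $\OF(X)$ are intersections. Dually, $-R^{-1}[U] = U$, which is a filter of $X$ by hypothesis. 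For (3), $R[x] \subseteq R[x']$ says every open filter containing $x'$ contains $x$; since $\CLF(X) \subseteq \OF(X)$ and the BDGM separation axiom yields $\up x' = \bigcap\{F \in \CLF(X) : x' \in F\}$, we conclude $x' \le x$. For (4), $R^{-1}[U] = X - U$, so the inclusion $R^{-1}[U] \subseteq R^{-1}[U']$ unwinds to $U' \subseteq U$, which is exactly the order $U' \le U$ in $\OF(X)$.

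The main obstacle is showing that $R$ is interior. Two of the four parts are straightforward: $R^{-1}[U] = X - U$ is closed in $X$ because $U$ is Lawson open; and $-R[x] = \bigcup\{\{U \in \OF(X) : \up k \subseteq U\} : k \in K(X),\, k \le x\}$ is a union of Lawson-clopen principal upsets at compact elements of $\OF(X)$, hence Lawson open, so $R[x]$ is closed.

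The remaining two parts require $R[A]$ to be clopen for $A \subseteq X$ clopen and $R^{-1}[V]$ to be clopen for $V \subseteq \OF(X)$ clopen. Since both $R[\cdot]$ and $R^{-1}[\cdot]$ preserve set-theoretic unions, and every Lawson clopen in a coherent lattice is a finite union of sets of the form $\up k - \bigcup_j \up l_j$ with $k$ and the $l_j$ compact, it suffices to verify the clopen property on such generating sets. For $A = \up k - \bigcup_j \up l_j \subseteq X$: either some $l_j \le k$, in which case $A = \varnothing$ and $R[A] = \varnothing$; or else $k \in A$ is the minimum of $A$, whence $A \subseteq U$ iff $\up k \subseteq U$, making $\{U : A \subseteq U\}$ a principal upset at the compact element $\up k$ of $\OF(X)$ and hence Lawson clopen. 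A symmetric analysis handles $R^{-1}[V]$: for $V$ of the analogous form in $\OF(X)$, either $V = \varnothing$ or the open filter $\up k$ itself realizes the minimum of $V$ (because our assumption forces $\up l_j \not\subseteq \up k$), leading to $R^{-1}[V] = X - \up k$, which is clopen in $X$.
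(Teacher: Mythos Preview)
Your proposal is correct and follows essentially the same approach as the paper's proof: verifying conditions (2)--(4) of \cref{def: DH relation} directly from the formulas $-R[x] = \{U : x \in U\}$ and $-R^{-1}[U] = U$, and then reducing the interior-relation checks to basic Lawson clopens and computing explicitly (yielding $R[A] = -\up(\up k)$ and $R^{-1}[V] = X - \up k$). Your treatment is arguably a bit cleaner in two spots: you observe $R^{-1}[U] = X - U$ is closed immediately (the paper routes this through the general fact that continuous relations are point-closed), and you work with the generating form $\up k - \bigcup_j \up l_j$ rather than $\up k - \up l$, which is the correct reduction since $R[\cdot]$ only commutes with unions.
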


\begin{proof}
Since $X \in \CohLatL$, we have $\OF(X) \in \CohLatL$ by \cref{lem: OF is coherent}. We show that $R$ is a DH-relation. For \cref{def: DH relation filter}, if $x \in X$, then $-R[x] = \{ U \in \OF(X) : x \in U\}$ is clearly a filter. If $U \in \OF(X)$, then $-R^{-1}[U] = \{ x \in X : x \in U \} = U$, so is a filter. 
For \cref{def: DH relation 3}, suppose that $R[x] \subseteq R[x']$. If $x' \not\le x$, then there is $k \in K(X)$ with $k \le x'$ and $k \not\le x$. If $V = \up k$, then $V \in \OF(X)$ with $V \in R[x]$ and $V \notin R[x']$. This contradiction shows that $x' \le x$. To see \cref{def: DH relation 4}, let $U, U' \in \OF(X)$ with $R^{-1}[U] \subseteq R^{-1}[U']$. If $U' \not\subseteq U$, then there is $x \in X$ with $x \in U' - U$. Therefore, $x \in R^{-1}[U]$ and $x \notin R^{-1}[U']$, a contradiction. Thus, $U' \subseteq U$.

It is left to verify \cref{def: DH relation interior}. Let $x \in X$. To see that $R[x]$ is closed, let $V \in -R[x]$, so $x \in V$. Because $\up x = \bigcap \{ \up k : k \in K(X), k \le x\}$, compactness of the Lawson topology shows that there are $k_1, \dots, k_n \in K(X)$ with each $k_i \le x$ and $\up k_1 \cap \dots \cap \up k_n \subseteq V$. Set $k = k_1 \vee \dots \vee k_n$. Then $k \in K(X)$ with $k \le x$ and $\up k = \up k_1 \cap \dots \cap \up k_n \subseteq V$. Let $U = \up k$, so $U \in \CLF(X)$ by \cref{lem: KOF = LX}. Then $\mathcal{W}_U := \{ F \in \OF(X) : U \subseteq F \}$ is Scott-open by \cref{lem: OF is coherent}, and hence Lawson-open.  Therefore, $\mathcal{W}_U$ is an open neighborhood of $V$. Furthermore, if $F \in \mathcal{W}_U$, then $k \in F$, so $x \in F$, and hence $x \nr{R} F$. Thus, $\mathcal{W}_U \subseteq -R[x]$, and so $R[x]$ is closed.

Next, let $C$ be clopen in $\OF(X)$. By \cref{Lawson is patch}, we may assume that $C = \mathcal{W}_U - \mathcal{W}_V$ with $U, V \in \CLF(X)$. If $C = \varnothing$, then $R^{-1}[C] = \varnothing$ is clopen. Otherwise, $V \not\subseteq U$. We show that $R^{-1}[C] = -U$. If $x \in R^{-1}[C]$, then there is $W \in C$ with $x \rel{R} W$. This means that $U \subseteq W$, $V \not\subseteq W$, and $x \notin W$. Therefore, $x \notin U$, and so $x \in -U$. Conversely, if $x \in -U$, then $U \in C$ and $x\rel{R}U$, so $x \in R^{-1}[C]$. Thus, $R^{-1}[C] = -U$, and hence is clopen. 
This implies that $R$ is a continuous relation, and so $R^{-1}[F]$ is closed for each $F \in \OF(X)$ (see, e.g., \cite[Thm.~3.1.2]{Esa19}). 

Finally, let $U$ be clopen in $X$. To see that $R[U]$ is clopen in $\OF(X)$, by \cref{Lawson is patch} it suffices to assume that $U = \up k - \up l$ for some $k, l \in K(X)$. If $U = \varnothing$ then $R[U] = \varnothing$ is clopen, so we assume that $U$ is nonempty, which means that $l \not\le k$. We claim that $R[U] = -\mathcal{W}_{\up k}$. To see this, if $V \in \OF(X)$ with $V \in R[U]$, then there is $x \notin V$ with $k \le x$ and $l \not\le x$. Therefore, $k \notin V$, so $\up k \not\subseteq V$, and hence $V \in -\mathcal{W}_{\up k}$. For the reverse inclusion, if $V \in -\mathcal{W}_{\up k}$, then $k \notin V$. Therefore, $k \in U$ and $k \notin V$, so $V \in R[U]$. Thus, $R[U]$ is clopen, and hence $R$ is an interior relation.

Consequently, $R$ is a DH-relation, yielding that $(X, R, \OF(X)) \in \HD$.
\end{proof}

\begin{lemma} \label{lem: OF is coherent 3}
If $f \colon X \to Y$ is a $\CohLatL$-morphism, then $f^{-1} \colon \OF(Y) \to \OF(X)$ has a right adjoint $r \colon \OF(X) \to \OF(Y)$ which is a $\CohLatL$-morphism.
\end{lemma}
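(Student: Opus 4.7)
The plan is to construct $r$ explicitly from the left adjoint of $f$. Since $f$ is a $\CohLatL$-morphism, it is in particular an $\AlgLatL$-morphism, so $f$ preserves arbitrary meets and therefore has a left adjoint $\ell \colon Y \to X$; moreover $\ell$ preserves finite meets by \cref{CohLatL}. The natural candidate is $r(V) = \ell^{-1}(V)$ for $V \in \OF(X)$.

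First I would check that $r(V) \in \OF(Y)$. That $\ell^{-1}(V)$ is a filter uses $\ell(1) = 1$ and $\ell(y_1 \wedge y_2) = \ell(y_1) \wedge \ell(y_2)$ (since $\ell$ preserves finite meets) together with $V$ being a filter; the upset property follows from $\ell$ being order-preserving. For openness, note that $V$ is Scott-open (since on upsets Lawson-open equals Scott-open by \cref{Lawson upset is Scott}) and $\ell$ preserves directed joins as any left adjoint does, so if $S \subseteq Y$ is directed with $\bigvee S \in \ell^{-1}(V)$, then $\bigvee \ell(S) = \ell(\bigvee S) \in V$, and directedness of $\ell(S)$ with Scott-openness of $V$ furnishes $s \in S$ with $\ell(s) \in V$. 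Next I would verify the adjunction $f^{-1} \dashv r$ using the unit $\ell f(x) \le x$ and counit $y \le f\ell(y)$ of $\ell \dashv f$: given $U \in \OF(Y)$ and $V \in \OF(X)$, if $f^{-1}(U) \subseteq V$ and $y \in U$, then $f\ell(y) \ge y$ puts $f\ell(y) \in U$, so $\ell(y) \in f^{-1}(U) \subseteq V$, showing $U \subseteq r(V)$; the converse uses $\ell f(x) \le x$ together with $V$ being an upset.

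Finally I would show that $r$ is a $\CohLatL$-morphism. As a right adjoint, $r$ preserves all meets in $\OF(X)$, so in particular it is a meet-semilattice homomorphism sending top to top. Scott-continuity is immediate because directed joins of open filters are unions and $\ell^{-1}$ commutes with unions; \cref{Lawson continuous} then yields Lawson-continuity, so $r$ is an $\AlgLatL$-morphism. By uniqueness of adjoints, the left adjoint of $r$ is $f^{-1} \colon \OF(Y) \to \OF(X)$, and finite meets in $\OF(Y)$ are finite intersections of open filters, which $f^{-1}$ preserves (together with the top $Y$). Thus $r$ is a $\CohLatL$-morphism.

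The argument is essentially a chain of verifications, with no real obstacle. The most delicate point is confirming $\ell^{-1}(V) \in \OF(Y)$, which crucially uses both halves of the $\CohLatL$ hypothesis on $f$: $\ell$ preserves finite meets (needed for the filter property) and $\ell$ preserves directed joins (needed for openness, automatic since $\ell$ is a left adjoint). Everything else follows from general adjoint-functor bookkeeping.
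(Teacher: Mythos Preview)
Your proof is correct and, in fact, cleaner than the paper's. The key difference is the explicit identification $r = \ell^{-1}$, which the paper never makes: the paper instead invokes \cite[Lem.~5.3]{MJ14a} to show that $f^{-1}$ preserves arbitrary joins on $\OF$, obtains $r$ abstractly as the right adjoint, writes it as $r(U) = \bigcup \{ V \in \CLF(Y) : f^{-1}(V) \subseteq U\}$, and then verifies preservation of directed joins via a compactness argument (using that $f^{-1}(V)$ is compact for $V \in \CLF(Y)$). Your route avoids the external citation entirely and makes the directed-join preservation trivial, since $\ell^{-1}$ commutes with unions. The two formulas for $r$ agree because $f^{-1}(\up k) = \up \ell(k)$, so $f^{-1}(\up k) \subseteq U$ iff $k \in \ell^{-1}(U)$. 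What the paper's formulation buys is that the displayed expression for $r(U)$ is reused verbatim in the subsequent proof of \cref{prop: functor E} (see \cref{eqn}); your formula $r(U) = \ell^{-1}(U)$ would serve just as well there, but one would have to rephrase those later arguments accordingly.
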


\begin{proof}
Since $f$ is a $\CohLatL$-morphism, it is a $\JML$-morphism by \cref{thm: Nachbin 2} and Lemma \ref{lem: morphisms of JML}, and hence $f^{-1} \colon \OF(Y) \to \OF(X)$ preserves arbitrary joins by \cite[Lem.~5.3]{MJ14a}. It therefore has a right adjoint $r \colon \OF(X) \to \OF(Y)$ which preserves all meets. Recall that if $U \in \OF(X)$, then $r(U) = \bigvee \{ W \in \OF(Y) : f^{-1}(W) \subseteq U\}$. Because this family is directed and each $W \in \OF(Y)$ is a union from $\CLF(Y)$, we may write
\begin{equation}
r(U) = \bigcup \{ V \in \CLF(Y) : f^{-1}(V) \subseteq U\}. \label{eqn}
\end{equation}

To show that $r$ is a $\CohLatL$-morphism, it suffices to show that $r$ preserves directed joins (see Theorems~\ref{thm: Scott facts}(\ref{Scott continuous}) and \ref{thm: Lawson facts}(\ref{Lawson continuous})). Let $\mathcal{S} \subseteq \OF(X)$ be directed and set $U = \bigcup \mathcal{S}$. Since $f^{-1}$ preserves joins, $\{ V \in \CLF(Y) : f^{-1}(V) \subseteq U\}$ is directed. Therefore,
\begin{align*}
r(U) &= \bigcup \left\{ V \in \CLF(Y) : f^{-1}(V) \subseteq U\right\} \\
&= \bigcup \left\{ V \in \CLF(Y) : f^{-1}(V) \subseteq W \textrm{ for some } W \in \mathcal{S} \right\} \\
&= \bigcup \left\{ r(W) : W \in \mathcal{S} \right\} = \bigvee \left\{ r(W) : W \in \mathcal{S} \right\},
\end{align*}
where the second equality holds since $f^{-1}(V)$ is compact for each $V \in \CLF(Y)$ and $\mathcal{S}$ is directed, and the final equality holds since the set $\{ r(W) :W \in \mathcal{S} \}$ is directed.
\end{proof}

\begin{proposition} \label{prop: functor E} 
There is a functor $\EE \colon \CohLatL \to \HD$ defined by $\EE(X) = (X, R_X, \OF(X))$ for each $X \in \CohLatL$ and $\EE(f) = (f, r)$ for each $\CohLatL$-morphism $f \colon X_1 \to X_2$, where $R_X$ is defined in Definition~\emph{\ref{def: R_X}} and $r$ is the right adjoint of $f^{-1} \colon \OF(X_2) \to \OF(X_1)$.
\end{proposition}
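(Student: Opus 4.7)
The plan is to verify three things: (i) $\EE$ is well-defined on objects, (ii) for each $\CohLatL$-morphism $f\colon X_1\to X_2$ the pair $\EE(f)=(f,r)$ is a $\HD$-morphism, and (iii) $\EE$ preserves identities and composition. Item (i) is immediate from \cref{lem: triple in HD}, and for (ii) we already know that $f$ is a $\CohLatL$-morphism by hypothesis while $r$ is a $\CohLatL$-morphism by \cref{lem: OF is coherent 3}. So the real work is to check the three back-and-forth conditions of \cref{def: DH morphisms}\eqref{DH maps}.

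For condition \eqref{def: DH morphisms 1}, I would argue contrapositively: if $f(x)\in r(U)$, then formula \eqref{eqn} from the proof of \cref{lem: OF is coherent 3} gives $V\in\CLF(X_2)$ with $f(x)\in V$ and $f^{-1}(V)\subseteq U$, forcing $x\in U$. Condition \eqref{def: DH morphisms 3} admits the elegant witness $U:=f^{-1}(V)$, which is an open filter because $f$ is a $\JML$-morphism, does not contain $x$ since $f(x)\notin V$, and satisfies $V\subseteq r(f^{-1}(V))$ by the unit of the adjunction $f^{-1}\dashv r$.

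The main obstacle is condition \eqref{def: DH morphisms 2}. Given $x'\notin r(U)$, the natural candidate is $x:=\ell(x')$, where $\ell$ is the left adjoint of $f$ (which exists since the $\AlgLatL$-morphism $f$ preserves arbitrary meets). The inequality $x'\le f(x)$ is then the unit of $\ell\dashv f$. To show $\ell(x')\notin U$, I plan to argue by contradiction: writing $x'=\bigvee\{k'\in K(X_2):k'\le x'\}$ as a directed join, the fact that $\ell$ preserves directed joins yields $\ell(x')=\bigvee\{\ell(k'):k'\in K(X_2),\, k'\le x'\}$, also directed. Scott openness of $U$ then produces some $k'\in K(X_2)$ with $k'\le x'$ and $\ell(k')\in U$. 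Since $\ell$ carries compacts to compacts (by \cite[Cor.~IV-1.12]{GHKLMS03}) and $U$ is an upset, $f^{-1}(\up k')=\up\ell(k')\subseteq U$, so $\up k'\subseteq r(U)$ by \eqref{eqn}; hence $x'\in\up k'\subseteq r(U)$, contradicting the assumption.

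For (iii), functoriality will be routine: the right adjoint of $1_X^{-1}=1_{\OF(X)}$ is $1_{\OF(X)}$, so $\EE(1_X)=1_{\EE(X)}$; and for composable morphisms $f_1,f_2$, combining $(f_2\circ f_1)^{-1}=f_1^{-1}\circ f_2^{-1}$ with the standard fact that right adjoints compose in reverse order shows that the right adjoint of $(f_2\circ f_1)^{-1}$ is $r_2\circ r_1$, so $\EE(f_2\circ f_1)=(f_2\circ f_1,r_2\circ r_1)=\EE(f_2)\circ\EE(f_1)$.
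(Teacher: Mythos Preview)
Your proposal is correct and largely parallels the paper's proof: items (i), condition \eqref{def: DH morphisms 1}, and condition \eqref{def: DH morphisms 3} are handled essentially identically (the paper also takes $U=f^{-1}(V)$ for the latter), and your explicit verification of functoriality is a routine addition the paper leaves implicit.

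The one genuine variation is in condition \eqref{def: DH morphisms 2}. The paper does not name the witness $\ell(x')$; instead it shows $f^{-1}(\up x')\not\subseteq U$ by writing $\up x'=\bigcap\{V\in\CLF(X_2):x'\in V\}$, pulling back along $f^{-1}$, and invoking compactness of $X_1$ in the Lawson topology to reduce to a finite intersection $f^{-1}(V_1)\cap\dots\cap f^{-1}(V_n)\subseteq U$; setting $V=V_1\cap\dots\cap V_n$ then forces $x'\in r(U)$, a contradiction. Your argument is essentially dual: rather than writing $\up x'$ as a down-directed intersection of clopen filters and using compactness, you write $x'$ as a directed join of compacts and use Scott-openness of $U$. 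Both establish the same fact (indeed $f^{-1}(\up x')=\up\ell(x')$, so ``$f^{-1}(\up x')\not\subseteq U$'' is exactly ``$\ell(x')\notin U$''), but your route is slightly more constructive in that it pins down the canonical witness $x=\ell(x')$ from the outset.
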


\begin{proof}
By \cref{lem: triple in HD}, $\EE(X) \in \HD$. Let $f \colon X_1 \to X_2$ be a $\CohLatL$-morphism. We show that $(f, r)$ is a $\HD$-morphism. The map $r$ is a $\CohLatL$-morphism by \cref{lem: OF is coherent 3}. Recall from \cref{eqn} that $r(U) = \bigcup \{ V \in \CLF(X_2) : f^{-1}(V) \subseteq U\}$ for each $U \in \OF(X_1)$. If $x \in X_1$ and $U \in \OF(X_1)$ with $x \rel{ R_{X_1}} U$, then $x \notin U$. If $V \in \OF(X_2)$ with $f^{-1}(V) \subseteq U$, then $x \notin f^{-1}(V)$, so $f(x) \notin V$. Therefore, $f(x) \notin r(U)$, and hence  $f(x) \rel{ R_{X_2}} r(U)$, so 
\cref{def: DH morphisms 1} holds.

Next, suppose that $x' \rel{ R_{X_2}} r(U)$ for some $x' \in X_2$ and $U \in \OF(X_1)$. Then $x' \notin V$ for each $V \in \CLF(X_2)$ with $f^{-1}(V) \subseteq U$. We have $\up x' = \bigcap \{ V \in \CLF(X_2) : x' \in V\}$, so $f^{-1}(\up x') = \bigcap \{ f^{-1}(V) : x' \in V \}$. If $f^{-1}(\up x') \subseteq U$, then compactness of $X_1$ implies that there are $V_1, \dots, V_n \in \CLF(X_2)$ with $f^{-1}(V_1) \cap \cdots \cap f^{-1}(V_n) \subseteq U$. If $V = V_1 \cap \dots \cap V_n$, then $V \in \CLF(X_2)$, $x' \in V$, and $f^{-1}(V) \subseteq U$, a contradiction to $x' \notin r(U)$. Therefore, $f^{-1}(\up x') \not\subseteq U$, so there is $x \in f^{-1}(\up x') - U$. Thus, $x \rel{R_{X_1}} U$ and $x' \le f(x)$, and hence \cref{def: DH morphisms 2} holds.

Finally, suppose that $f(x) \rel{R_{X_2}} V$ for some $x \in X_1$ and $V \in \OF(X_2)$. Then $f(x) \notin V$, so $x \notin f^{-1}(V)$. If $U = f^{-1}(V)$, then $U \in \OF(X_1)$, so $V \subseteq r(U)$ and $x \rel{R_{X_1}} U$. Consequently, \cref{def: DH morphisms 3} holds, and thus $(f, r)$ is a $\HD$-morphism.
\end{proof}

In what follows we will require the following result from modal logic (\cite[Lem.~3]{Esa74}, \cite[Lem.~2.17]{BBH15}). We recall that a relation $R \subseteq X \times Y$ between topological spaces is \emph{point-closed} if $R[x]$ is closed for each $x \in X$.

\begin{lemma}[Esakia's Lemma] \label{lem: Esakia}
Let $R \subseteq X \times Y$ be a point-closed relation between compact Hausdorff spaces. If $\{ C_i : i \in I\}$ is a \emph{(}nonempty\emph{)} down-directed family of closed subsets of $Y$, then
\[
R^{-1}\left[\bigcap C_i \right] = \bigcap R^{-1}[C_i].
\]
\end{lemma}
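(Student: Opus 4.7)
The plan is a standard compactness-and-finite-intersection-property argument. The inclusion $R^{-1}\left[\bigcap_i C_i\right] \subseteq \bigcap_i R^{-1}[C_i]$ is immediate from the definitions: if $x$ is $R$-related to some $y \in \bigcap_i C_i$, then $x$ is $R$-related to a point of $C_i$ for each $i$. So the content of the lemma is the reverse inclusion.

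For the reverse inclusion, I would fix $x \in \bigcap_i R^{-1}[C_i]$ and aim to produce a single $y \in R[x] \cap \bigcap_i C_i$. The natural family to consider is
\[
\mathcal{F} = \{\, R[x] \cap C_i : i \in I \,\}.
\]
By the point-closedness hypothesis, $R[x]$ is closed in $Y$; since each $C_i$ is also closed, $\mathcal{F}$ is a family of closed subsets of the compact space $Y$. Thus it suffices to verify that $\mathcal{F}$ has the finite intersection property, because then $\bigcap \mathcal{F} = R[x] \cap \bigcap_i C_i$ is nonempty, and any element $y$ of this intersection witnesses $x \in R^{-1}\left[\bigcap_i C_i\right]$.

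To check the finite intersection property, I would pick indices $i_1, \dots, i_n \in I$ and exploit the down-directedness of $\{C_i\}$ to find a single $j \in I$ with $C_j \subseteq C_{i_1} \cap \cdots \cap C_{i_n}$. Because $x \in R^{-1}[C_j]$, there is some $y_j \in R[x] \cap C_j$, and this $y_j$ lies in $R[x] \cap C_{i_1} \cap \cdots \cap C_{i_n}$, as required. Nonemptiness of the family $\{C_i\}$ ensures the argument is not vacuous (so that $\bigcap_i C_i$ is itself a genuine closed subset of $Y$, possibly empty but not ill-defined).

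The only step with any subtlety is the translation ``$x \in R^{-1}[C_i]$ iff $R[x] \cap C_i \ne \varnothing$,'' which is just unfolding the definition of the inverse image of a relation, and the realization that down-directedness of the $C_i$'s is exactly what is needed to push a finite-intersection argument through. Neither the Hausdorff hypothesis nor compactness of $X$ plays any role in the proof; only compactness of $Y$ and the point-closedness of $R$ on the side of $X$ are used.
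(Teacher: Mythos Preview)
Your proof is correct and is the standard argument for Esakia's Lemma. The paper does not actually prove this statement; it is quoted as a known result with references to \cite{Esa74} and \cite{BBH15}, so there is no paper proof to compare against, but your compactness-plus-FIP argument is exactly the canonical one found in those sources.
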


\begin{lemma} \plabel{lem: nu}
Let $(X, R, Y) \in \HD$.
\begin{enumerate}
\item \label[lem: nu]{lem: l(U) = up y} If $W \in \OF(X)$, then there is $y \in Y$ with $W = \psi(y)$.
\item \label[lem: nu]{lem: nu is an iso}\label[lemma]{lem: DH relations nu} The map $\nu \colon Y \to \OF(X)$, defined by $\nu(y) = \psi(y)$, is a $\CohLatL$-isomorphism.
\end{enumerate}
\end{lemma}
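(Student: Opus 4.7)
\medskip
\noindent\emph{Proof sketch proposal.} The plan is to prove (1) first (surjectivity of $\nu$) and then assemble (2) from the structural properties of $\psi$ already established.

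For (1), let $W \in \OF(X)$ be arbitrary. By \cref{lem: OF is coherent} (its proof), $W = \bigcup_i U_i$ is a directed union of compact open filters $U_i \in \CLF(X)$. By \cref{lem: dual iso}, each $V_i := \phi U_i$ lies in $\CLF(Y)$ and satisfies $\psi V_i = U_i$, and by \cref{lem: KOF = LX} we may write $V_i = \up k_i$ with $k_i \in K(Y)$. Since $\{U_i\}$ is directed and $\phi$ is order-reversing (for inclusion), $\{V_i\}$ is a down-directed family, so $\{k_i\}$ is directed in $Y$. Setting $y := \bigvee_i k_i$, we obtain $\bigcap_i \up k_i = \up y$ in the complete lattice $Y$. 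I claim $\psi(y) = W$.

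For the inclusion $W \subseteq \psi(y)$: any $x \in U_i$ satisfies $R[x] \cap \up k_i = \varnothing$ (by $U_i = \psi V_i$), and since $\up y \subseteq \up k_i$ we get $y \notin R[x]$, i.e., $x \in \psi(y)$. For the reverse inclusion $\psi(y) \subseteq W$, the key observation is that $R[x]$ is a \emph{downset}: by \cref{rem: R and le for DH} applied to \cref{def: DH relation 4}, $y' \le y$ implies $R^{-1}[y] \subseteq R^{-1}[y']$, which unpacks exactly to $R[x]$ being a downset. Thus $x \in \psi(y)$, i.e., $y \notin R[x]$, together with $R[x]$ being a downset yields $R[x] \cap \up y = \varnothing$. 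Now $R[x]$ is closed (since $R$ is interior) and $\{\up k_i\}$ is a down-directed family of closed sets with intersection $\up y$; by Lawson-compactness of $Y$, there exists $i$ with $R[x] \cap \up k_i = \varnothing$. Hence $x \in \psi V_i = U_i \subseteq W$, completing (1).

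For (2), injectivity of $\nu$ is immediate from \cref{def: DH relation 4}: if $\psi(y) = \psi(y')$ then $R^{-1}[y] = R^{-1}[y']$, forcing $y = y'$. By the same principle $y \le y' \iff R^{-1}[y'] \subseteq R^{-1}[y] \iff \psi(y) \subseteq \psi(y')$, so $\nu$ is an order-embedding; combined with surjectivity from (1) it is an order-isomorphism between the complete lattices $Y$ and $\OF(X)$ (the latter being a coherent lattice by \cref{lem: OF is coherent}). Any order-isomorphism between complete lattices preserves arbitrary meets and arbitrary (in particular directed) joins, so $\nu$ is Scott-continuous and meet-preserving, hence Lawson-continuous by \cref{Lawson continuous}, making $\nu$ an $\AlgLatL$-morphism. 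Since $\nu$ is an iso, its left adjoint is $\nu^{-1}$, which is itself an order-isomorphism and therefore preserves finite meets. Thus $\nu$ is a $\CohLatL$-morphism; the symmetric argument applies to $\nu^{-1}$, so $\nu$ is a $\CohLatL$-isomorphism.

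The main obstacle is the compactness step in (1): upgrading the pointwise statement $y \notin R[x]$ to the set-level statement $\up y \cap R[x] = \varnothing$, and then further to the existence of some $i$ with $\up k_i \cap R[x] = \varnothing$. The first upgrade is where the downset property of $R[x]$ (extracted from \cref{def: DH relation 4}) is indispensable, and the second uses that the Lawson topology makes $Y$ a compact Hausdorff space together with the closedness of $R[x]$ coming from $R$ being interior.
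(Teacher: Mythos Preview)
Your proposal is correct and follows essentially the same strategy as the paper. For part~(1), the paper writes $W$ as a directed union of $U \in \CLF(X)$, sets $\phi U = \up l_U$, and takes $y = \bigvee l_U$, exactly as you do; the only presentational difference is that the paper packages your compactness step (``$R[x]$ closed, $\bigcap_i \up k_i = \up y$ disjoint from $R[x]$, hence some $\up k_i$ is disjoint'') as an application of Esakia's Lemma to conclude $\psi\phi W = W$, whereas you prove this instance inline. For part~(2) the arguments are identical: $\nu$ is an order-embedding by \cref{def: DH relation 4} and \cref{rem: R and le for DH}, onto by~(1), hence an order-isomorphism, which suffices for a $\CohLatL$-isomorphism.
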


\begin{proof}
\eqref{lem: l(U) = up y} Let $W$ be an open filter of $X$. Because $W$ is a union from $\CLF(X)$ and $R$ commutes with unions, we have $\phi W = \bigcap \{ \phi U : U \in \CLF(X), U \subseteq W\}$ . Since $X$ is coherent, $\{ U \in \CLF(X) : U \subseteq W\}$ is up-directed. Therefore, $\{ \phi U : U \in \CLF(X), U \subseteq W\}$ is a down-directed family of closed sets, as each $\phi U$ is clopen (since $R$ is interior). Consequently, by Esakia's lemma,
\[
R^{-1}[\phi W] = \bigcap \{ R^{-1}[\phi U] : U \in \CLF(X), U \subseteq W\},
\]
and so
\begin{align*}
\psi\phi W &= -R^{-1}[\phi W] = \bigcup \{-R^{-1}[\phi U] : U \in \CLF(X), U \subseteq W\} \\
&= \bigcup \{ \psi\phi U : U \in \CLF(X), U \subseteq W\} = W
\end{align*}
since $\psi\phi U = U$ for each $U \in \CLF(X)$ by \cref{phi psi is 1}. Also, \cref{lem: dual iso} and \cref{lem: KOF = LX} show that $\phi U = \up l_U$ for some $l_U \in Y$. Therefore,
$\phi W = \bigcap \{\up l_U : U \subseteq W\} = \up y$
for $y = \bigvee \{ l_U : U \subseteq W \}$. Thus, $W = \psi\phi W = \psi(\up y) = -R^{-1}[\up y] = -R^{-1}[y] = \psi(y)$.

\eqref{lem: nu is an iso} By \cref{lem: open filter}, $\nu \colon Y \to \OF(X)$ is well defined, and is onto by \eqref{lem: l(U) = up y}. If $y \le y'$, then $R^{-1}[y'] \subseteq R^{-1}[y]$ since $R^{-1}[y]$ is a downset. Therefore, $\nu(y) \subseteq \nu(y')$. 
Conversely, if $y \not\le y'$, then $R^{-1}[y'] \not\subseteq R^{-1}[y]$ by \cref{def: DH relation 4}. Consequently, $\nu(y) \not\subseteq \nu(y')$. Thus, $\nu$ is an order-isomorphism, and hence a $\CohLatL$-isomorphism.
\end{proof}

\begin{proposition} \label{prop: eta is natural}
Let $\D = (X, R, Y)$ be a DH-space. Define 
\[
\kappa_\D  \colon (X, R, Y) \to (X, R_X, \OF(X))
\]
by $\kappa_\D = (1_X, \nu_\D)$, where $\nu_\D$ is defined in Lemma~\emph{\ref{lem: nu}}. Then both $\kappa_\D$ and its inverse $(1_X, \nu_\D^{-1})$ are $\HD$-morphisms, and $\kappa \colon 1_{\HD} \to \EE\FF$ is a natural isomorphism.
\end{proposition}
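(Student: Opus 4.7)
The plan is to handle the three assertions together by first verifying, via \cref{rem: morphisms}, that $\kappa_\D = (1_X, \nu_\D)$ is an $\HD$-isomorphism; this simultaneously yields (a) that $\kappa_\D$ is an $\HD$-morphism and (b) that its inverse $(1_X, \nu_\D^{-1})$ is one. Both components are bijections ($\nu_\D$ by \cref{lem: nu is an iso}, $1_X$ trivially), so only the relational compatibility $x \rel{R} y \iff x \rel{R_X} \nu_\D(y)$ needs checking. Unpacking \cref{def: R_X} and the formula $\nu_\D(y) = -R^{-1}[y]$ reduces this to the tautology $x \in R^{-1}[y] \iff x \notin -R^{-1}[y]$.

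For naturality, I will show, for any $\HD$-morphism $(f, g) \colon \D_1 \to \D_2$, that $\kappa_{\D_2} \circ (f, g) = \FF\EE(f, g) \circ \kappa_{\D_1}$, where by \cref{prop: functor E} we have $\FF\EE(f, g) = (f, r)$ with $r \colon \OF(X_1) \to \OF(X_2)$ the right adjoint of $f^{-1}$. Since both composites have $f$ in the first coordinate, the computation reduces to the single identity $\nu_{\D_2} \circ g = r \circ \nu_{\D_1}$ on $Y_1$.

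Using the explicit formula $r(U) = \bigcup \{V \in \CLF(X_2) : f^{-1}(V) \subseteq U\}$ from \cref{eqn} and $\nu_\D(y) = -R^{-1}[y]$, I will establish this equality by two short set-theoretic arguments, one for each inclusion. For $r(\nu_{\D_1}(y)) \subseteq \nu_{\D_2}(g(y))$, a point $x_2$ lying in some $V \in \CLF(X_2)$ with $f^{-1}(V) \subseteq -R_1^{-1}[y]$ cannot satisfy $x_2 \rel{R_2} g(y)$: otherwise \cref{def: DH morphisms 2} would produce $x_1 \rel{R_1} y$ with $x_2 \le f(x_1)$, forcing $f(x_1) \in V$ (since $V$ is an upset) and hence $x_1 \in f^{-1}(V) \cap R_1^{-1}[y] = \varnothing$. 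For the reverse inclusion, since $\nu_{\D_2}(g(y))$ is an open filter, it is a union of compact open filters by \cref{lem: OF is coherent}, so any $x_2 \in \nu_{\D_2}(g(y))$ lies in some $V \in \CLF(X_2)$ with $V \subseteq -R_2^{-1}[g(y)]$; then \cref{def: DH morphisms 1} gives $f^{-1}(V) \subseteq -R_1^{-1}[y]$, because any $x_1 \rel{R_1} y$ with $f(x_1) \in V$ would yield $f(x_1) \rel{R_2} g(y)$, contradicting $V \cap R_2^{-1}[g(y)] = \varnothing$.

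The main (mild) obstacle is pairing each direction of the key equality with the correct DH-morphism axiom and carefully managing the complement operation; once the formula for $r$ and the identification $\nu_\D(y) = -R^{-1}[y]$ are in hand, the verification is a routine unwinding of definitions.
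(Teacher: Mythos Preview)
Your proposal is correct and follows essentially the same approach as the paper: both use \cref{rem: morphisms} to reduce the isomorphism claim to the relational compatibility $x \rel{R} y \iff x \notin \nu_\D(y)$, and both prove naturality by establishing $r \circ \nu_{\D_1} = \nu_{\D_2} \circ g$ via the formula \eqref{eqn} for $r$, using axiom \cref{def: DH morphisms 2} for the inclusion $r(\nu_{\D_1}(y)) \subseteq \nu_{\D_2}(g(y))$ and axiom \cref{def: DH morphisms 1} together with the decomposition of an open filter as a union from $\CLF$ for the reverse inclusion. The arguments match the paper's almost line for line, differing only in notation.
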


\begin{proof}
Let $(x, y) \in X \times Y$. Then
\[
x \rel{R} y \iff x \in R^{-1}[y] \iff x \notin -R^{-1}[y] = \nu_\D(y) \iff x \rel{R_X} \nu_\D(y).
\] 
Therefore, \cref{lem: nu,rem: morphisms} show that both $\kappa_\D$ and $\kappa^{-1}_\D$ are $\HD$-isomorphisms.

To show that $\kappa$ is natural, we need to show that the following diagram commutes for each pair of $\HD$-objects $\D = (X, R, Y), \D' = (X', R', Y')$ and each $\HD$-morphism $(f, g) \colon \D \to \D'$. 
\[
\begin{tikzcd}
(X, R, Y) \arrow[r, "(f{,}g)"] \arrow[d, "\kappa_\D"'] & (X', R', Y') \arrow[d, "\kappa_{\D'}"] \\
(X, R_X, \OF(X)) \arrow[r, "(f{,}r)"'] & (X', R_{X'}, \OF(X'))
\end{tikzcd}
\]
This amounts to showing that $r \circ \nu_\D = \nu_{\D'} \circ g$. By \cref{eqn} we have
\[
r(\nu_\D(y)) = \bigcup \{ V' \in \CLF(X') : f^{-1}(V') \subseteq \nu_\D(y) \}.
\]
Let $x' \in r(\nu_\D(y))$. If $x' \notin \nu_{\D'}(g(y))$, then $x' \rel{R'} g(y)$. Therefore, by \cref{def: DH morphisms}\eqref{def: DH morphisms 2}, there is $x \in X$ with $x \rel{R} y$ and $x' \le f(x)$. Because $x' \in r(\nu_\D(y))$, there is $V' \in \CLF(X')$ with $x' \in V'$ and $f^{-1}(V') \subseteq \nu_\D(y)$. Thus, $f(x) \in V'$ since $V'$ is an upset, so $x \in f^{-1}(V')$, and hence $x \in \nu_\D(y) = -R^{-1}[y]$. This means that $x \nr{R} y$, a contradiction. Consequently, $x' \in \nu_{\D'}(g(y))$.

For the reverse inclusion, let $x' \in \nu_{\D'}(g(y))$. Then there is $V' \in \CLF(X')$ such that $x' \in V' \subseteq \nu_{\D'}(g(y))$. If $x \in f^{-1}(V')$, then $f(x) \in \nu_{\D'}(g(y))$, so $f(x) \nr{R'} g(y)$. Therefore, $x \nr{R} y$ by \cref{def: DH morphisms}\eqref{def: DH morphisms 1}. This means that $x \in \nu_\D(y)$, and so $f^{-1}(V') \subseteq \nu_D(y)$. Thus, $x' \in r(\nu_\D(y))$.
\end{proof}

\cref{prop: eta is natural} together with $\FF\EE = 1_{\CohLatL}$ yields:

\begin{theorem} \label{thm: DH = CohLatL}
$\EE$ and $\FF$ establish an equivalence of categories between $\CohLatL$ and $\HD$.
\end{theorem}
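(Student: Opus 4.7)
The plan is to derive the equivalence immediately from the two composites $\FF\EE$ and $\EE\FF$ being naturally isomorphic to the identity functors, using what has already been established in the preceding propositions. No substantively new construction is required.

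First I would unwind the definitions to observe that $\FF \circ \EE$ is in fact the strict identity functor on $\CohLatL$. For each $X \in \CohLatL$, $\FF\EE(X) = \FF(X, R_X, \OF(X)) = X$ directly from the definitions of $\EE$ (\cref{prop: functor E}) and $\FF$ (\cref{prop: functor F}); and for each $\CohLatL$-morphism $f$, $\FF\EE(f) = \FF(f, r) = f$. The identity natural transformation therefore serves as a (trivially natural) isomorphism $1_{\CohLatL} \cong \FF\EE$.

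Second, \cref{prop: eta is natural} already supplies a natural isomorphism $\kappa \colon 1_{\HD} \to \EE\FF$. Its components $\kappa_\D = (1_X, \nu_\D)$ are $\HD$-isomorphisms because $\nu_\D$ is a $\CohLatL$-isomorphism by \cref{lem: nu is an iso}, and naturality in $(f, g)$ has been verified there by an explicit diagram chase that uses \cref{def: DH morphisms} and the formula \cref{eqn} for the right adjoint $r$.

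Putting these two natural isomorphisms together yields the claimed equivalence of categories. Thus the theorem itself is a one-line assembly of the preceding results; all the genuine work---in particular the appeal to Esakia's lemma (\cref{lem: Esakia}) needed to show in \cref{lem: l(U) = up y} that every open filter of $X$ arises as $\psi(y)$ for some $y \in Y$, which is what makes $\nu_\D$ surjective---has already been done.
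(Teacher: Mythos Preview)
Your proposal is correct and takes essentially the same approach as the paper: the proof in the paper is the single sentence ``\cref{prop: eta is natural} together with $\FF\EE = 1_{\CohLatL}$ yields the result,'' which is exactly your two-step assembly. (Note that the paper's statement of \cref{prop: eta is natural} contains a typo, writing $\FF\EE$ where $\EE\FF$ is meant; your version has the composites the right way around.)
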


\begin{corollary} \plabel{cor: HD = Lat}
\hfill
\begin{enumerate}
\item \label[cor: HD = Lat]{DH = HsL} $\HD$ is equivalent to $\JML$, $\BDGM$, $\HsL$, and $\CG$.
\item \label[cor: HD = Lat]{cor: HD = Lat 2}\cite[Thm.~2.15]{HD97} $\HD$ is dually equivalent to $\Lat$.
\end{enumerate}
\end{corollary}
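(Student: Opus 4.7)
The plan is to derive both statements directly from Theorem~\ref{thm: DH = CohLatL}, which establishes an equivalence $\HD \simeq \CohLatL$ via the functors $\EE$ and $\FF$. Since composing equivalences with equivalences (resp.\ dual equivalences) yields equivalences (resp.\ dual equivalences), everything reduces to invoking the previously established results for $\CohLatL$, all of which are already collected in the preceding sections.

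For part~(1), I would chain Theorem~\ref{thm: DH = CohLatL} with the following isomorphisms and equivalences proved earlier: $\CohLatL \cong \CohLatS$ from Theorem~\ref{thm: iso between CohLat_S and CohLat_L}, then $\CohLatS \cong \JML$ from Theorem~\ref{thm: CohLatL = JML}, giving $\HD \simeq \JML$; next $\CohLatL \cong \BDGM$ from Theorem~\ref{thm: BDGM = CohLatL}(\ref{BDGM = CohLatL}), giving $\HD \simeq \BDGM$; then $\BDGM \cong \HsL$ from Theorem~\ref{thm: CohLatL = HsL}, giving $\HD \simeq \HsL$; and finally $\JML \simeq \CG$ from Theorem~\ref{thm: CG = JM}, giving $\HD \simeq \CG$. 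Since each of these is already an equivalence (indeed isomorphism in four of the five cases), no further verification is needed.

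For part~(2), I would apply Theorem~\ref{thm: CohLat = Lat}(\ref{thm: Nachbin 2}), which provides a dual equivalence between $\CohLatL$ and $\Lat$, and compose it with the equivalence $\HD \simeq \CohLatL$ of Theorem~\ref{thm: DH = CohLatL}. This yields the required dual equivalence between $\HD$ and $\Lat$. Alternatively, one could chain Corollary~\ref{cor: BDGM = Lat} with the isomorphism $\HD \simeq \BDGM$ obtained in part~(1), obtaining the same conclusion by a different route.

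There is no real obstacle here: all the substantive work has been done in establishing Theorem~\ref{thm: DH = CohLatL} together with the prior chain of (dual) equivalences between $\CohLatL$, $\CohLatS$, $\JML$, $\BDGM$, $\HsL$, $\CG$, and $\Lat$. The only thing to verify is the formal compatibility of compositions of functors, which is automatic for (dual) equivalences of categories. Accordingly, the written proof should be essentially one or two sentences citing the relevant numbered results.
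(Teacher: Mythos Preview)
Your proposal is correct and matches the paper's approach: the paper also derives both items by chaining Theorem~\ref{thm: DH = CohLatL} with the previously established equivalences (Theorems~\ref{thm: CohLatL = JML}, \ref{thm: CohLatL = HsL}, \ref{thm: CG = JM} for part~(1), and Theorem~\ref{thm: CohLat = Lat}(\ref{thm: Nachbin 2}) for part~(2)). You are slightly more explicit than the paper in spelling out the intermediate isomorphisms (e.g., invoking Theorem~\ref{thm: iso between CohLat_S and CohLat_L} and Theorem~\ref{thm: BDGM = CohLatL}(\ref{BDGM = CohLatL}) separately), but the substance is identical.
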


\begin{proof}
The first item is an immediate consequence of \cref{thm: iso between CohLat_S and CohLat_L,thm: CohLatL = JML,thm: CohLatL = HsL,thm: BDGM = CohLatL,thm: DH = CohLatL,thm: CG = JM}, and the second item follows from Theorems~\ref{thm: CohLat = Lat}(\ref{thm: Nachbin 2}) and \ref{thm: DH = CohLatL}.  
\end{proof}

\begin{remark} \label{rem: functor from DH to Lat}
The duality in \cref{cor: HD = Lat 2} can be described directly as follows. A triple $(X, R, Y) \in \HD$ is sent to $\CLF(X)$ and a $\HD$-morphism $(f, g) \colon (X, R, Y) \to (X', R', Y')$ to $f^{-1} \colon \CLF(X') \to \CLF(X)$.
Going the other direction, $ A \in \Lat$ is sent to $(\Filt(A), R, \Idl(A)) \in \HD$, where $R$ is given by $F \rel{R} I$ if $F \cap I = \varnothing$, and a $\Lat$-morphism $\alpha \colon A \to B$ to $\left(\alpha^{-1}|_{\Filt(B)}, \alpha^{-1}|_{\Idl(B)} \right)$.
\end{remark}


\section{From Dunn-Hartonas to Gehrke-van Gool} \label{sec: DH and GvG}

In this section we connect the Dunn-Hartonas approach to the Gehrke-van Gool approach. The resulting spaces were introduced and studied in \cite{GvG14}. The notion of morphism considered by Gehrke and van Gool dually characterizes special lattice homomorphisms \cite[Def.~3.19]{GvG14}. We generalize their notion of morphism by working with relations instead of functions, thus giving rise to the category $\GvG$. We show that there is a functor from $\HD$ to $\GvG$. In \cref{sec: HD GvG Hg Urq}  we will show that this functor is an equivalence and in \cref{sec: conclusion} that $\GvG$ is dually equivalent to $\Lat$. Therefore, our generalized notion of morphism captures all bounded lattice homomorphisms.

Let $R \subseteq X \times Y$. Motivated by a standard technique in modal logic (see, e.g., \cite[p.~64]{CZ97}),
we define $\Diamond_R,\Box_R \colon \wp(Y)\to\wp(X)$ by 
\[
\Diamond_R B = R^{-1}[B] \quad \mbox{and} \quad \Box_R B = -R^{-1}[-B]
\]
for each $B\subseteq Y$. Similarly, define $\bd_R,\blacksquare_R \colon \wp(X)\to\wp(Y)$ by
\[
\bd_R A = R[A] \quad \mbox{and} \quad \blacksquare_R A = -R[-A]
\]
for each $A\subseteq X$. We will mainly work with $\bd_R$ and $\Box_R$, and when there is no danger of confusion, we simply write $\bd$ and $\Box$. 

The following well-known lemma (see, e.g., \cite[Prop.~2.6]{Pri84}, \cite[Thm.~3.2.1]{Esa19}) will be used throughout.

\begin{lemma} \label{lem: maximal points}
Let $X$ be a Priestley space and let $C$ be a closed subset of $X$. For each $x \in C$ there is $m \in \max C$ with $x \le m$.
\end{lemma}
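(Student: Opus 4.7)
The plan is a standard Zorn's lemma argument exploiting compactness of $X$ and closedness of the order. Fix $x \in C$ and let $P = \{y \in C : x \le y\}$. Since $x \in P$, the set is nonempty, and any maximal element of $P$ is automatically in $\max C$ (if $m \le m'$ with $m' \in C$, then $x \le m \le m'$, so $m' \in P$, forcing $m' = m$). Thus it suffices to produce a maximal element of $P$ via Zorn.

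To apply Zorn's lemma, I would take an arbitrary chain $\mathcal{C} \subseteq P$ and show it has an upper bound in $P$. Consider the family $\{C \cap \up y : y \in \mathcal{C}\}$. Because $\mathcal{C}$ is totally ordered, this family is down-directed, hence has the finite intersection property. Each member is closed: $C$ is closed by assumption, and $\up y$ is closed since the order of a Priestley space is a closed subset of $X \times X$ (so all principal upsets are closed). Compactness of $X$ then yields a point $z \in \bigcap_{y \in \mathcal{C}}(C \cap \up y)$. By construction $z \in C$ and $y \le z$ for all $y \in \mathcal{C}$; in particular $x \le z$ (taking any $y \in \mathcal{C}$, or simply noting $x \le y \le z$), so $z \in P$ is the desired upper bound.

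The main (and only) subtlety is checking that $\up y$ is closed, which is immediate once one recalls that Priestley spaces carry a closed partial order; everything else is a routine application of the finite intersection property and Zorn's lemma. I would therefore keep the proof short, pointing to the closedness of the order as the key input and leaving Zorn as the wrapping step.
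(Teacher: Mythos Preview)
Your argument is correct and is exactly the standard proof of this well-known fact. The paper does not actually supply a proof of this lemma; it merely cites it (referring to \cite[Prop.~2.6]{Pri84} and \cite[Thm.~3.2.1]{Esa19}), and the Zorn's lemma argument you outline---using that $\up y$ is closed because the order on a Priestley space is closed, together with compactness---is precisely what one finds in those references.
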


\begin{definition} \label{def: X0 and Y0}
Let $X, Y$ be Priestley spaces and $R \subseteq X \times Y$ a relation. Define 
\[
X_0 = \bigcup \{ \max R^{-1}[y] : y \in Y\} \quad \mbox{and} \quad  Y_0 =  \bigcup \{ \max R[x] : x \in X\}.
\] 
\end{definition}

\begin{definition} \cite[Def.~4.18]{GvG14} \plabel{def: GvG-space}
We call a triple $(X, R, Y)$ a \emph{Gehrke-van Gool space}, or \emph{GvG-space} for short, if $X, Y$ are Priestley spaces and $R \subseteq X \times Y$ is a relation satisfying
\begin{enumerate}
\item \label[def: GvG-space]{GvG 1} $x \le x'$ iff $R[x'] \subseteq R[x]$.
\item \label[def: GvG-space]{GvG 2} $y \le y'$ iff $R^{-1}[y'] \subseteq R^{-1}[y]$.
\item \label[def: GvG-space]{GvG 3} If $U$ is a clopen upset of $X$, then $\bd U$ is a clopen downset of $Y$.
\item \label[def: GvG-space]{GvG 4} If $V$ is a clopen downset of $Y$, then $\Box V$ is a clopen upset of $X$.
\item \label[def: GvG-space]{GvG 5} If $x \nr{R} y$, then there is a clopen upset $U \subseteq X$ such that $U = \Box\bd U$, $x \in U$, and $y \notin \bd U$.
\item \label[def: GvG-space]{GvG 6} If $U$ is a clopen upset of $X$ with $X_0 \cap \Box\bd U \subseteq U$, then $U = \Box\bd U$.
\item \label[def: GvG-space]{GvG 7} If $V$ is a clopen downset of $Y$ with $Y_0 \cap V \subseteq \bd\Box V$, then $V = \bd\Box V$.
\end{enumerate}
\end{definition}

\begin{remark} \plabel{rem: properties of GvG spaces}
\hfill 
\begin{enumerate}
\item The order on $X$ in the definition above is dual to the order used in \cite{GvG14} and is more convenient to compare GvG-spaces to DH-spaces.
\item In \cite{GvG14} Condition (5) is phrased using clopen sets $U \subseteq X$ and $V \subseteq Y$. The latter can be recovered as $V = \bd U$.
\item \label[rem: properties of GvG spaces]{closed order} $R$ is a closed subset of $X \times Y$, and so both $R$ and $R^{-1}$ are point-closed. 
\item \label[rem: properties of GvG spaces]{Rx in RU} For each $U \subseteq X$ and $V \subseteq Y$, we have  $\Box\bd U = \{ x \in X : R[x] \subseteq R[U] \}$ and $\bd\Box V = \{ y \in Y : \exists x \textrm{ with } x \rel{R} y \textrm{ and } R[x] \subseteq V\}$.
\item  Since $\bd\Box V = -\blacksquare\Diamond-V$, \cref{GvG 7} is equivalent to $Y_0 \cap \blacksquare\Diamond-V \subseteq -V$ implies $V = \bd\Box V$, which is how it is phrased in \cite[Def.~4.18]{GvG14}.
\end{enumerate}
\end{remark}

\begin{definition} \label{def: LG}
Suppose that $\G = (X, R, Y)$ is a GvG-space. We let $\ClopUp(X)$ be the clopen upsets of $X$, and set
\[
\L\G = \{ U \in \ClopUp(X) : U = \Box\bd U \}.
\]
\end{definition}

\begin{remark} \label{rem: lattice ops for LG}
If $\G$ is a GvG-space, then $\L\G$ is a bounded lattice, where meet and join are given by 
\[
U_1 \wedge U_2 = U_1 \cap U_2 \quad \mbox{and} \quad U_1 \vee U_2 = \Box\bd(U_1 \cup U_2) 
\]
for each $U_1,U_2 \in \L\G$. Moreover, $X$ is the top and $\varnothing$ is the bottom of $\L\G$. To see the latter,  
let $x \in X_0$. Then $x \rel{R} y$ for some $y \in Y$, so $R[x] \ne \varnothing$. Therefore, $x \notin \Box\varnothing = \Box\bd\varnothing$, and so $X_0 \cap \Box\bd \varnothing = \varnothing$. By \cref{GvG 6}, $\varnothing = \Box\bd \varnothing$, and hence $\varnothing \in \L\G$.
\end{remark}

\begin{lemma} \plabel{lem: properties of GvG spaces 2}
Let $\G = (X, R, Y)$ be a GvG-space.
\begin{enumerate}
\item \label[lem: properties of GvG spaces 2]{GvG up x} $\up x = \bigcap \{ U \in \L\G : x \in U\}$ for each $x \in X$. 
\item \label[lem: properties of GvG spaces 2]{GvG LG basis} $\L\G$ is a basis for the open upset topology of $X$.
\item \label[lem: properties of GvG spaces 2]{GvG up y} $\down y = \bigcap \{ \bd U : U \in \L\G, y \in \bd U \}$ for each $y \in Y$.
\item \label[lem: properties of GvG spaces 2]{GvG LG basis for Y} $\{ \bd U : U \in \L\G \}$ is a basis for the open downset topology of $Y$.

\item \label[lem: properties of GvG spaces 2]{x notin U GvG} If $U \in \L\G$ and $x \notin U$, then there is $x' \in X_0$ with $x \le x'$ and $x' \notin U$.
\end{enumerate}
\end{lemma}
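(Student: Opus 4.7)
My plan is to do \eqref{GvG up x}, \eqref{GvG up y}, and \eqref{x notin U GvG} first—each being a direct application of one of the axioms with very little bookkeeping—and then derive \eqref{GvG LG basis} and \eqref{GvG LG basis for Y} from them by a standard Priestley compactness argument; the only real obstacle is upgrading this compactness argument for \eqref{GvG LG basis for Y}, where the candidate basis need not be closed under intersections.

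For \eqref{GvG up x}, the nontrivial inclusion is $\bigcap \{U \in \L\G : x \in U\} \subseteq \up x$. I would contrapose: if $x \not\le x'$, then \cref{GvG 1} supplies $y$ with $x' \rel{R} y$ and $x \nr{R} y$, and \cref{GvG 5} applied to $x \nr{R} y$ delivers $U \in \L\G$ with $x \in U$ and $y \notin \bd U$; since $x' \rel{R} y$, we must have $x' \notin U$, otherwise $y \in R[U] = \bd U$. Part \eqref{GvG up y} is symmetric: $y' \not\le y$ yields via \cref{GvG 2} some $x$ with $x \rel{R} y$ and $x \nr{R} y'$, then \cref{GvG 5} applied to $x \nr{R} y'$ produces $U \in \L\G$ with $x \in U$ (forcing $y \in \bd U$) and $y' \notin \bd U$. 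For \eqref{x notin U GvG}, the condition $x \notin U = \Box \bd U$ is exactly $R[x] \not\subseteq \bd U$, so I pick $y \in R[x] \setminus \bd U$; the slice $R^{-1}[y]$ is closed in $X$ because $R$ is a closed subset of $X \times Y$ (cf.\ \cref{closed order}), so \cref{lem: maximal points} raises $x$ to some $x' \in \max R^{-1}[y] \subseteq X_0$, and $x' \in U$ would force $y \in R[U] = \bd U$, a contradiction.

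Part \eqref{GvG LG basis} is then the expected compactness argument: $\L\G$ is closed under binary intersections and contains $X$ by \cref{rem: lattice ops for LG}, so given an open upset $W$ and $x \in W$ we have $\up x \subseteq W$ by \eqref{GvG up x}, compactness of the Priestley space $X$ collapses this to a finite intersection $U_1 \cap \dots \cap U_n \subseteq W$ with $x \in U_i$ for each $i$, and this intersection is itself in $\L\G$.

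The crux is \eqref{GvG LG basis for Y}, where $\{\bd U : U \in \L\G\}$ is not obviously closed under intersection because $\bd(U_1 \cap U_2)$ can be strictly smaller than $\bd U_1 \cap \bd U_2$. My plan is to recover closure via \cref{GvG 7}. Given $U_1, U_2 \in \L\G$, let $V = \bd U_1 \cap \bd U_2$, which is a clopen downset by \cref{GvG 3}; I will verify $Y_0 \cap V \subseteq \bd \Box V$ as follows: for $y \in Y_0 \cap V$ choose $x_0$ with $y \in \max R[x_0]$, and observe that every $y' \in R[x_0]$ satisfies $y' \le y$ and hence lies in each downset $\bd U_i$, so $R[x_0] \subseteq V$, i.e., $x_0 \in \Box V$, which places $y$ in $R[\Box V] = \bd \Box V$. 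Thus \cref{GvG 7} yields $V = \bd \Box V$, and because $\Box = \Box \bd \Box$ (the standard identity for the adjunction $\bd \dashv \Box$), we have $\Box V \in \L\G$, so $V = \bd(\Box V)$ belongs to the candidate basis. The identical argument with $V = Y$ shows $Y = \bd X$ (using $Y_0 \subseteq R[X]$), so even the empty intersection is covered, and combining with \eqref{GvG up y} and compactness of $Y$ exactly as in \eqref{GvG LG basis} gives the basis property.
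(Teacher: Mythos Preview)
Your arguments for \eqref{GvG up x}, \eqref{GvG LG basis}, \eqref{GvG up y}, and \eqref{x notin U GvG} are correct and match the paper's approach (the paper only writes out \eqref{GvG up x} and \eqref{GvG LG basis}, declaring \eqref{GvG up y} and \eqref{GvG LG basis for Y} ``similar'' and not addressing \eqref{x notin U GvG} at all).

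The gap is in \eqref{GvG LG basis for Y}. From $y\in\max R[x_0]$ you infer that every $y'\in R[x_0]$ satisfies $y'\le y$; but a maximal element of $R[x_0]$ need not be its greatest element, so this step fails. Worse, the intersection-closure you are trying to establish is false in general: for the GvG-space arising from $M_3$ one computes that $Y_p$ is a three-element antichain with the discrete topology, while $\{\bd_p U:U\in\L\G\}$ consists of $\varnothing$, $Y_p$, and the three two-element subsets. Any singleton of $Y_p$ is then an open downset that is the intersection of two members of the family but is not a union of members, so the family is not a basis. Thus \eqref{GvG LG basis for Y} as stated appears to be incorrect; what does hold---and what the only later use of this item actually needs---is the \emph{subbasis} version, which follows from \eqref{GvG up y} by exactly the compactness argument you give for \eqref{GvG LG basis}, stopping before the final ``the finite intersection lies in the family'' step. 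The paper's ``similar to \eqref{GvG LG basis}'' glosses over precisely this point.
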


\begin{proof} 
\eqref{GvG up x} The inclusion $\subseteq$ is clear. For the inclusion $\supseteq$, suppose that $x \not\le x'$. Then $R[x'] \not\subseteq R[x]$ by \cref{GvG 1}, so there is $y \in Y$ with $x' \rel{R} y$ and $x \nr{R} y$. Therefore, by \cref{GvG 5}, there is $U \in \L\G$ with $x \in U$ and $y \notin \bd U$. Since $x' \rel{R} y$, we have $x' \notin U$. 

\eqref{GvG LG basis} Because each open upset is a union of clopen upsets, it is enough to see that each clopen upset $V$ is a union from $\L\G$. Let $x \in V$. If $x' \notin V$, then $x \not\le x'$, so \eqref{GvG up x} implies that there is $U_{x'} \in \L\G$ with $x \in U_{x'}$ and $x' \notin U_{x'}$. The set $\{ -U_{x'} : x' \notin V\}$ is an open cover of $-V$, so compactness implies that there are $x_1', \dots, x_n'$ with $-V \subseteq -U_{x_1'} \cup \dots \cup -U_{x_n'}$. Let $U = U_{x_1'} \cap \dots \cap U_{x_n'}$. Then $x \in U$, $U \in \L\G$ (since $\L\G$ is closed under finite intersections), and $U \subseteq V$.

The proofs of \eqref{GvG up y} and \eqref{GvG LG basis for Y} are similar.

\eqref{x notin U GvG} Let $x \notin U$. Since $U = \Box\bd U$, there is $y \in Y$ with $x \rel{R} y$ and $y \notin R[U]$. Clearly $\up x \cap R^{-1}[y]$ is nonempty, and it is closed by  \cref{closed order}. Therefore, by \cref{lem: maximal points}, there is $x' \in \max(\up x \cap R^{-1}[y])$. Thus, $x \le x'$ and $x' \in \max R^{-1}[y] \subseteq X_0$. Since $x' \rel{R} y$ and $y \notin R[U]$, we have $x' \notin U$, concluding the proof.
\end{proof}

Morphisms considered by Gehrke and van Gool are pairs of functions between GvG-spaces that dually characterize special bounded lattice homomorphisms (called admissible homomorphisms in \cite[Def.~3.19]{GvG14}). To characterize all bounded lattice homomorphisms in the language of GvG-spaces, we need to work with pairs of relations.

\begin{definition} \plabel{def: GvG morphism}
Let $\G = (X, R, Y)$ and $\G' = (X', R', Y')$ be GvG-spaces. A pair $(S, T)$ of relations $S \subseteq X \times X'$ and $T \subseteq Y \times Y'$ is a GvG-morphism provided:
\begin{enumerate}
\item \label[def: GvG morphism]{GvG morphism 1} If $U' \in \L\G'$, then $\Box_SU' \in \L\G$ and $\bd \Box_S U' = \Diamond_T \bd' U'$. 
\item \label[def: GvG morphism]{GvG morphism 2} If $x \nr{S} x'$, then there is $U' \in \L\G'$ with $x \in \Box_S U'$ and $x' \notin U'$.
\item  \label[def: GvG morphism]{GvG morphism 3} If $y \nr{T} y'$, then there is $U' \in \L\G'$ with $y \notin \Diamond_T \bd' U'$ and $y' \in \bd' U'$.
\item \label[def: GvG morphism]{GvG morphism 4} If $x\rel{R}y$, then there are $x' \in X'$ and $y' \in Y'$ with $x \rel{S} x'$, $y \rel{T} y'$, and $x' \rel{R'} y'$.
\end{enumerate}
\end{definition}

\begin{remark} \plabel{rem: equivalent condition for GvG S}
Let $(S, T) \colon \G \to \G'$ be a GvG-morphism. 
\begin{enumerate}
\item \label[rem: equivalent condition for GvG S]{S and T closed}  Both $S$ and $T$ are closed relations. To see that $S$ is closed, let $(x, x') \notin S$. Then $x \nr{S} x'$, so by \cref{GvG morphism 2}, there is $U' \in \L\G'$ with $x \in \Box_S U'$ and $x' \notin U'$. By \cref{GvG morphism 1}, $\Box_S U' \times -U'$ is an open neighborhood of $(x,x')$ disjoint from $S$. Thus, $S$ is closed. A similar argument using \cref{GvG morphism 3} shows that $T$ is closed.

\item \label[rem: equivalent condition for GvG S]{GvG S 1}Because the converse of \cref{GvG morphism 2} is obvious, we have $x \rel{S} x'$ iff for each $U' \in \L\G'$, if $x \in \Box_SU'$, then $x' \in U'$. The same applies to \cref{GvG morphism 3}.

\item \label[rem: equivalent condition for GvG S]{GvG S 2}If $x \in X$ and $y \in Y$, then $S[x]$ and $T[y]$ are upsets. To see this, suppose that $x \rel{S} x_1'$ and $x_1' \le x_2'$. If $x \nr{S} x_2'$, by \cref{GvG morphism 2} there is $U' \in \L\G'$ with $x \in \Box_SU'$ and $x_2' \notin U'$. The former implies that $S[x] \subseteq U'$, so $x_1' \in U'$. Because $U'$ is an upset, $x_2' \in U'$. This contradicts the latter, yielding that $x \rel{S} x_2'$. A similar argument shows that $T[y]$ is an upset. 
\item \label[rem: equivalent condition for GvG S]{GvG S 3}If $x' \in X'$ and $y' \in Y'$, then $S^{-1}[x']$ and $T^{-1}[y']$ are downsets.  For this, suppose $x_1 \le x_2$ and $x_2 \in S^{-1}[x']$. Then $x' \in S[x_2]$. If $x_1 \nr{S} x'$, by \cref{GvG morphism 2} there is $U' \in \L\G'$ with $x_1 \in \Box_S U'$ and $x' \notin U'$. Because $\Box_S U' \in \L\G$, it is an upset, so $x_2 \in \Box_S U'$. This implies that $S[x_2] \subseteq U'$, a contradiction since $x' \notin U'$ and $x' \in S[x_2]$. Thus, $x_1 \in S^{-1}[x']$. A similar argument shows that $T^{-1}[y']$ is a downset.
\end{enumerate}
\end{remark}

Composition of two GvG-morphisms is not relation composition, which is reminiscent to what happens in the category of generalized Priestley spaces \cite[p.~106]{BJ11}.

\begin{definition} \label{def: composition in GvG}
Let $(S_1, T_1) \colon \G_1 \to \G_2$ and $(S_2, T_2) \colon \G_2 \to \G_3$ be GvG-morphisms. 
\begin{enumerate}
\item Define $S_2 \star S_1$ by $x \rel{(S_2 \star S_1)} z$ provided that $x \in \Box_{S_1}\Box_{S_2}U$ implies $z \in U$ for each $U \in \L\G_3$.
\item  Define $T_2 \star T_1$ by $y \rel{(T_2 \star T_1)}w$ provided that $w \in \bd U$ implies $y \in \Diamond_{T_1}\Diamond_{T_2}\bd U$ for each $U \in \L\G_3$.
\item Set $(S_2, T_2) \star (S_1, T_1) = (S_2 \star S_1, T_2 \star T_1)$.
\end{enumerate}
\end{definition}

\begin{remark} \plabel{rem: fact about composition}
Let $(S_1, T_1) \colon \G_1 \to \G_2$ and $(S_2, T_2) \colon \G_2 \to \G_3$ be GvG-morphisms. 
\begin{enumerate}
\item \label[rem: fact about composition]{comp inside star} $S_2 \circ S_1 \subseteq S_2 \star S_1$ and $T_2 \circ T_1 \subseteq T_2 \star T_1$, however the equalities may not hold in general.
\item \label[rem: fact about composition]{composition equality 1} $(S_2\star S_1)[x] = \bigcap \{ U \in \L\G_3 : x \in \Box_{S_1}\Box_{S_2}U \} = \bigcap \{ U \in \L\G_3 : S_2S_1[x] \subseteq U \}$.
\item \label[rem: fact about composition]{composition equality 2} $(T_2 \star T_1)^{-1}[w] = \bigcap \{ (T_2 \circ T_1)^{-1} \bd U : w \in \bd U, \,  U \in \L\G_3\}$.
\end{enumerate}
\end{remark}

\begin{lemma} \label{lem: composition lemma for GvG}
Suppose that $(S_1, T_1) \colon \G_1 \to \G_2$ and $(S_2, T_2) \colon \G_2 \to \G_3$ are GvG-morphisms. If $U \in \L\G_3$, then
\[
\Box_{S_2\star S_1}U = \Box_{S_1}\Box_{S_2}U
\]
and
\[
\Diamond_{T_2\star T_1} \bd U = \Diamond_{T_1}\Diamond_{T_2} \bd U.
\]
\end{lemma}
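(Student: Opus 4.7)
The plan is to establish each of the two identities by a pair of inclusions, exploiting Remark~\ref{rem: fact about composition}. One inclusion in each case will come from the trivial containment $S_2 \circ S_1 \subseteq S_2 \star S_1$ (and analogously for $T$), while the other will come from unpacking the very definition of $\star$.

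For $\Box_{S_2 \star S_1} U = \Box_{S_1}\Box_{S_2} U$, I would first handle $\subseteq$ as follows: if $x \in \Box_{S_2 \star S_1} U$, then $(S_2 \star S_1)[x] \subseteq U$, and since $S_2 \circ S_1 \subseteq S_2 \star S_1$ by Remark~\ref{rem: fact about composition}\eqref{comp inside star}, we get $S_2 S_1[x] \subseteq U$, which is exactly the statement $x \in \Box_{S_1}\Box_{S_2} U$. For $\supseteq$, suppose $x \in \Box_{S_1}\Box_{S_2} U$. By Remark~\ref{rem: fact about composition}\eqref{composition equality 1},
\[
(S_2 \star S_1)[x] = \bigcap \{ U' \in \L\G_3 : x \in \Box_{S_1}\Box_{S_2} U'\},
\]
and since $U$ itself belongs to this indexing family, $(S_2 \star S_1)[x] \subseteq U$, i.e., $x \in \Box_{S_2 \star S_1} U$.

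For $\Diamond_{T_2 \star T_1} \bd U = \Diamond_{T_1}\Diamond_{T_2} \bd U$, the inclusion $\supseteq$ is immediate: since $T_2 \circ T_1 \subseteq T_2 \star T_1$ by Remark~\ref{rem: fact about composition}\eqref{comp inside star}, taking preimages of $\bd U$ yields
\[
\Diamond_{T_1}\Diamond_{T_2} \bd U = (T_2 \circ T_1)^{-1}[\bd U] \subseteq (T_2 \star T_1)^{-1}[\bd U] = \Diamond_{T_2 \star T_1} \bd U.
\]
For $\subseteq$, suppose $y \in \Diamond_{T_2 \star T_1} \bd U$. Then there exists $w \in \bd U$ with $y \rel{(T_2 \star T_1)} w$. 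By the very definition of $T_2 \star T_1$ in Definition~\ref{def: composition in GvG}(2), the fact that $w \in \bd U$ forces $y \in \Diamond_{T_1}\Diamond_{T_2} \bd U$, completing the argument.

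There is not really any serious obstacle here; the proof is a direct unwinding once one has the characterizations in Remark~\ref{rem: fact about composition} in hand. The only conceptually subtle point worth flagging is that $S_2 \star S_1$ may strictly contain $S_2 \circ S_1$, so naive identification of $\Box_{S_2 \star S_1}$ with $\Box_{S_1}\Box_{S_2}$ on arbitrary subsets would fail; the identity succeeds precisely on sets from $\L\G_3$ because these are exactly the sets used to define $\star$.
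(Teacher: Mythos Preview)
Your proof is correct and follows essentially the same approach as the paper's: both arguments use \cref{rem: fact about composition} to handle one inclusion via $S_2\circ S_1\subseteq S_2\star S_1$ (resp.\ $T_2\circ T_1\subseteq T_2\star T_1$) and unwind the definition of $\star$ for the other. The only cosmetic difference is that for the $\Diamond$-identity you appeal directly to \cref{def: composition in GvG}(2), whereas the paper routes through \cref{composition equality 2}; these are equivalent.
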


\begin{proof}
For each $x \in X_1$, by \cref{composition equality 1},
\[
(S_2 \star S_1)[x] \subseteq U \iff \bigcap \{ W \in \L\G_3 : S_2S_1[x] \subseteq W \} \subseteq U \iff S_2S_1[x] \subseteq U. 
\]
Thus, $\Box_{S_2\star S_1}U = \Box_{S_1}\Box_{S_2}U$.

For each $y \in Y_1$ we show that $y \in \Diamond_{T_2\star T_1} \bd U$ iff $y \in \Diamond_{T_1}\Diamond_{T_2} \bd U$. It is enough to show that $(T_2 \star T_1)[y] \cap \bd U \ne \varnothing$ iff $(T_2\circ T_1)[y] \cap \bd U \ne \varnothing$.  By \cref{comp inside star}, $(T_2 \circ T_1)[y] \subseteq (T_2 \star T_1)[y]$, and hence $(T_2\circ T_1)[y] \cap \bd U \ne \varnothing$ implies that $(T_2 \star T_1)[y] \cap \bd U \ne \varnothing$. For the converse, suppose that $(T_2 \star T_1)[y] \cap \bd U \ne \varnothing$. Then there is $w \in \bd U$ with $y \in (T_2 \star T_1)^{-1}[w]$. Therefore, by \cref{composition equality 2},
\[
y \in \bigcap \{ (T_2 \circ T_1)^{-1} \bd W : w \in \bd W, \,  W \in \L\G_3\} \subseteq (T_2 \circ T_1)^{-1} \bd U, 
\]
which gives that $(T_2\circ T_1)[y] \cap \bd U \ne \varnothing$.
\end{proof}

\begin{proposition} \plabel{prop: composition in GvG}
Suppose $(S_1, T_1) \colon \G_1 \to \G_2$ and $(S_2, T_2) \colon \G_2 \to \G_3$ are GvG-morphisms. 
\begin{enumerate}
\item \label[prop: composition in GvG]{GvG morphism} $(S_2, T_2) \star (S_1, T_1)$ is a GvG-morphism.
\item \label[prop: composition in GvG]{GvG identity} If $\G = (X, R, Y) $ is a GvG-space, then $(\le_X, \le_Y) \colon \G \to \G$ is a GvG-morphism and is the identity morphism for $\G$.
\item \label[prop: composition in GvG]{GvG associativity} The operation $\star$ is associative.
\end{enumerate}
\end{proposition}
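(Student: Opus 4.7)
The plan is to carry out each part by reducing to Lemma~\ref{lem: composition lemma for GvG} together with Remark~\ref{rem: equivalent condition for GvG S}\eqref{GvG S 1}, so that all the relational work takes place at the level of the functions $\Box$ and $\bd$.

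For part~\eqref{GvG morphism}, I would verify the four clauses of \cref{def: GvG morphism} for $(S_2 \star S_1, T_2 \star T_1)$ in order. Clause~\eqref{GvG morphism 1}: given $U'' \in \L\G_3$, Lemma~\ref{lem: composition lemma for GvG} gives $\Box_{S_2\star S_1}U'' = \Box_{S_1}\Box_{S_2}U''$, and applying \cref{GvG morphism 1} for $(S_2, T_2)$ then for $(S_1, T_1)$ puts this iteratively in $\L\G_2$ and then in $\L\G_1$; the equality $\bd\Box_{S_1}\Box_{S_2}U'' = \Diamond_{T_1}\bd'\Box_{S_2}U'' = \Diamond_{T_1}\Diamond_{T_2}\bd''U''$ comes from the same clause applied twice, and the right-hand side is $\Diamond_{T_2\star T_1}\bd''U''$ by the Lemma. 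Clauses~\eqref{GvG morphism 2} and~\eqref{GvG morphism 3} are direct translations: unpacking the definition of $x \nr{(S_2\star S_1)} x''$ yields exactly a $U'' \in \L\G_3$ with $x \in \Box_{S_1}\Box_{S_2}U'' = \Box_{S_2\star S_1}U''$ and $x'' \notin U''$, and symmetrically for $T_2 \star T_1$. For clause~\eqref{GvG morphism 4}, from $x\rel{R_1}y$ I would apply the clause for $(S_1,T_1)$ to get a witness pair $(u,v) \in S_1 \times T_1$ with $u\rel{R_2}v$, and then apply the clause for $(S_2,T_2)$ to $(u,v)$ to get $(x'',y'')$; since $S_2 \circ S_1 \subseteq S_2\star S_1$ and $T_2\circ T_1\subseteq T_2\star T_1$ by Remark~\ref{rem: fact about composition}\eqref{comp inside star}, this produces the required witnesses.

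For part~\eqref{GvG identity}, I would first check that $(\le_X,\le_Y)$ is a GvG-morphism: for $U \in \L\G$ the fact that $U$ is an upset and $\bd U$ is a downset gives $\Box_{\le_X}U = U$ and $\Diamond_{\le_Y}\bd U = \bd U$, verifying \eqref{GvG morphism 1}; for \eqref{GvG morphism 2} and \eqref{GvG morphism 3} I would invoke \cref{GvG up x,GvG up y} of Lemma~\ref{lem: properties of GvG spaces 2} to produce the separating element of $\L\G$; clause~\eqref{GvG morphism 4} is immediate with $x'=x,\, y'=y$. To see it is a two-sided identity, for $(S,T)\colon \G\to\G'$ the composition $(S\star\le_X)$ is characterized by: $x\rel{(S\star\le_X)}x'$ iff for all $U'\in\L\G'$, $x\in\Box_{\le_X}\Box_SU'$ implies $x'\in U'$, and since $\Box_{\le_X}W=W$ for any upset $W$, this collapses (via Remark~\ref{rem: equivalent condition for GvG S}\eqref{GvG S 1}) to $x\rel{S}x'$; the other three equalities (for $T\star\le_Y$, $\le_{X'}\star S$, $\le_{Y'}\star T$) are analogous, noting in the latter two that $\Box_{S}U'=\Box_S\Box_{\le_{X'}}U'$ since $\Box_{\le_{X'}}U' = U'$.

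For part~\eqref{GvG associativity}, the iterated application of Lemma~\ref{lem: composition lemma for GvG} gives
\[
\Box_{(S_3\star S_2)\star S_1}U = \Box_{S_1}\Box_{S_3\star S_2}U = \Box_{S_1}\Box_{S_2}\Box_{S_3}U = \Box_{S_2\star S_1}\Box_{S_3}U = \Box_{S_3\star (S_2\star S_1)}U
\]
for every $U \in \L\G_4$, and an identical chain works for $\Diamond$ applied to $\bd U$. Since by Remark~\ref{rem: equivalent condition for GvG S}\eqref{GvG S 1} the compositions $S_i\star S_j$ (resp.\ $T_i\star T_j$) are completely determined by how they act on $\L\G$'s via $\Box$ (resp.\ $\bd$), the two bracketings define the same pair of relations. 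The main technical obstacle I anticipate is keeping the $\L\G$-indexing straight in clauses~\eqref{GvG morphism 2}, \eqref{GvG morphism 3} of part~\eqref{GvG morphism}—namely ensuring that the witness $U''$ one extracts from the definition of $\star$ is actually the one that lifts, via the Lemma, to a witness for $\Box_{S_2\star S_1}$; but once the Lemma is in hand this is a bookkeeping matter rather than a substantive issue.
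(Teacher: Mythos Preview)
Your proposal is correct and follows essentially the same approach as the paper's proof: each part is reduced to Lemma~\ref{lem: composition lemma for GvG} and Remark~\ref{rem: equivalent condition for GvG S}\eqref{GvG S 1}, with Remark~\ref{rem: fact about composition}\eqref{comp inside star} handling clause~\eqref{GvG morphism 4}. The only cosmetic difference is in part~\eqref{GvG identity}, where you explicitly invoke \cref{GvG up x,GvG up y} to verify the separation clauses for $(\le_X,\le_Y)$, while the paper simply declares \cref{def: GvG morphism} ``trivially satisfied''; your version is slightly more careful, but the arguments are the same.
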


\begin{proof}
\eqref{GvG morphism} To simplify notation, we write $S = S_2 \star S_1$ and $T = T_2 \star T_1$. Let $U'' \in \L\G_3$ and set $U' = \Box_{S_2}U''$. Then $U' \in \L\G_2$ since $(S_2, T_2)$ is a GvG-morphism. By \cref{lem: composition lemma for GvG},
\begin{align*}
\Box_SU'' &= \Box_{S_1}\Box_{S_2}U'' = \Box_{S_1}U' \in \L\G_1
\end{align*}
because $(S_1, T_1)$ is a GvG-morphism. 

In addition, since $(S_1, T_1)$ and $(S_2, T_2)$ are GvG-morphisms, by \cref{lem: composition lemma for GvG}
\begin{align*}
\bd \Box_{S} U &= \bd \Box_{S_1} (\Box_{S_2}U) = \Diamond_{T_1} \bd \Box_{S_2}U = \Diamond_{T_1} \Diamond_{T_2} \bd U = \Diamond_{T}\bd U.
\end{align*}
This shows that \cref{GvG morphism 1} holds.

Suppose $x \nr{S} z$. By \cref{composition equality 1}, there is $U \in \L\G_3$ with $x \in \Box_{S_1}\Box_{S_2}U$ and $z \notin U$. By \cref{lem: composition lemma for GvG}, $\Box_{S_1}\Box_{S_2}U = \Box_S U$, so $x \in \Box_SU$ and $z \notin U$. Therefore, \cref{GvG morphism 2} holds. 

Suppose $y \nr{T} w$. By \cref{composition equality 2}, there is $U \in \L\G_3$ with $w  \in \bd U$ and $y \notin \Diamond_{T_1}\Diamond_{T_2}\bd U$. By \cref{lem: composition lemma for GvG}, $y \notin \Diamond_T\bd U$, Thus, \cref{GvG morphism 3} holds.

Suppose $x\rel{R_1}y$. Then there are $x' \in X_2$ and $y' \in Y_2$ with $x' \rel{R_2} y'$, $x\rel{S_1} x'$, and $y \rel{T_1} y'$. From this there are $x'' \in X_3$ and $y'' \in Y_3$ with $x'' \rel{R_3} y''$, $x' \rel{S_2} x''$, and $y' \rel{T_2} y''$. Since $S_2\circ S_1 \subseteq S$ and $T_2 \circ T_1 \subseteq T$ by \cref{comp inside star}, we have $x \rel{S} x''$ and $y \rel{T} y''$. This verifies \cref{GvG morphism 4}. Consequently, $(S_2 \star S_1, T_2 \star T_1)$ is a GvG-morphism. 

\eqref{GvG identity} Let $\G = (X, R, Y)$ be a GvG-space. We have $\Box_{\le_X} U = U$ for any upset $U \subseteq X$ and $\Diamond_{\le_Y} V = V$ for any downset $V \subseteq Y$.  Therefore, \cref{def: GvG morphism} is trivially satisfied. Thus, $(\le_X, \le_Y)$ is a GvG-morphism.

Let $(S, T) \colon \G \to \G'$ and $(S', T') \colon \G' \to \G$ be GvG-morphisms. Since $\Box_{\le_X} W = W$ for any upset $W \subseteq X$, we have $x \rel{(S \star {\le_X})} y$ iff $x \in \Box_S U$ implies $y \in U$ for each $U \in \L\G'$. On the other hand, $S \circ {\le_X} = S$ by \cref{GvG S 3}, which by \cref{GvG S 1} is equivalent to $x \in \Box_S U$ implies $y \in U$ for each $U \in \L\G'$. Therefore, $S \star {\le_X} = S \circ {\le_X} = S$, and similarly, $T \star {\le_Y} = T \circ {\le_Y} = T$. Thus, $(S, T) \star (\le_X, \le_Y) = (S, T)$, and an analogous argument gives  $(\le_X, \le_Y) \star (S', T') = (S', T')$. Consequently, $(\le_X, \le_Y)$ is the identity morphism for $\G$.

\eqref{GvG associativity} Let $(S_1, T_1) \colon \G_1 \to \G_2$, $(S_2, T_2) \colon \G_2 \to \G_3$, and $(S_3, T_3) \colon \G_3 \to \G_4$ be GvG-morphisms. By \cref{lem: composition lemma for GvG}, for each $U \in \L\G_4$, we have 
\[
\Box_{(S_3\star S_2) \star S_1}U = \Box_{S_1}\Box_{S_3 \star S_2}U = \Box_{S_1}\Box_{S_2}\Box_{S_3}U.
\]
Therefore, $x \rel{((S_3 \star S_2) \star S_1)} w$ iff $x \in \Box_{S_1}\Box_{S_2}\Box_{S_3}U$ implies $w \in U$ for each $U \in \L\G_4$. A similar calculation shows that $x \rel{(S_3 \star (S_2 \star S_1))} w$ iff $x \in \Box_{S_1}\Box_{S_2}\Box_{S_3}U$ implies $w \in U$. Thus, $(S_3 \star S_2) \star S_1 = S_3 \star (S_2 \star S_1)$. A similar argument shows that $(T_3 \star T_2) \star T_1 = T_3 \star (T_2 \star T_1)$. Consequently, $\star$ is associative.
\end{proof}

The previous lemma shows that we have a category, motivating the following definition.

\begin{definition} \label{def: GvG}
Let $\GvG$ denote the category of GvG-objects and GvG-morphisms.
\end{definition}

The rest of the section is dedicated to defining a functor $\GG \colon \HD \to \GvG$. To define $\GG$ on objects, we recall the notion of distributive joins and meets, which goes back to MacNeille \cite[Def.~3.10]{Mac37} (see also \cite{BL70,HK71}). Distributive joins and meets are also known as exact \cite{Bal84} (see also \cite{BPP14,BPW16}). As in \cite{GvG14}, we will be working with finite distributive joins and meets. Let $L$ be a lattice. For $M \subseteq L$ finite, we say that $\bigwedge M$ is a \emph{distributive meet} if
\[
a \vee \bigwedge M = \bigwedge \{ a \vee m : m \in M \}
\]
for each $a \in L$. Distributive joins are defined similarly. The next definition weakens the standard definition of a (meet-)prime element \cite[Def.~I.3.11]{GHKLMS03}.

\begin{definition} \label{def: d-prime}
Let $Z$ be a coherent lattice. 
\begin{enumerate}
\item An element $x \in Z$ is \emph{d-prime} if $x \ne 1$ and whenever $\bigwedge M$ is a finite distributive meet in $K(Z)$ and $\bigwedge M \le x$, then $m \le x$ for some $m \in M$. 
\item Let $Z_p$ be the set of d-prime elements of $Z$. 
\end{enumerate}
\end{definition}

\begin{lemma} \label{lem: Xa and Ya are Priestley}
Let $Z \in \CohLatL$. Then $Z_p$ is Lawson-closed in $Z$. Therefore, $Z_p$ is a Priestley space with the subspace topology and induced order.
\end{lemma}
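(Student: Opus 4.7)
The plan is to prove that the complement $Z - Z_p$ is Lawson-open by writing it as a union of basic Lawson-open sets, after which the subspace claim becomes standard.

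First I would handle the easy piece: since $Z$ is coherent we have $1 \in K(Z)$, so $\{1\} = \up 1$ is Scott-open by \cref{compact elements}, hence Lawson-open by \cref{Lawson upset is Scott}. Next, for each finite $M \subseteq K(Z)$ for which $\bigwedge M$ is a distributive meet, I would define
\[
W_M = \up\!\left(\bigwedge M\right) \cap \bigcap_{m \in M}\bigl(Z - \up m\bigr).
\]
Because $Z$ is coherent, $K(Z)$ is closed under finite meets, so $\bigwedge M \in K(Z)$ and $\up\!\bigwedge M$ is Scott-open, hence Lawson-open. Each $\up m$ is closed in the lower topology $\omega(Z)$ (being a subbasic closed set), hence Lawson-closed, so $Z - \up m$ is Lawson-open. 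Therefore $W_M$ is Lawson-open as a finite intersection.

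The key set-theoretic step is to verify that
\[
Z - Z_p = \{1\} \cup \bigcup_M W_M,
\]
with $M$ ranging over finite subsets of $K(Z)$ whose meet is distributive. This is just the contrapositive of the definition of d-prime: $x \notin Z_p$ iff $x = 1$ or there exists such an $M$ with $\bigwedge M \le x$ (i.e.\ $x \in \up\!\bigwedge M$) while $m \not\le x$ for every $m \in M$ (i.e.\ $x \in Z - \up m$ for every $m \in M$). Granting this identity, $Z - Z_p$ is a union of Lawson-opens and $Z_p$ is Lawson-closed.

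For the second assertion I would invoke the standard fact that any closed subspace of a Priestley space, equipped with the induced order, is again a Priestley space: the subspace topology is compact, and the Priestley separation axiom transfers by intersecting separating clopen upsets of $Z$ with $Z_p$. Since $Z$ is a Priestley space by \cref{objects}, we conclude that $(Z_p, \le)$ is a Priestley space. No step here is delicate; the main ``obstacle'' is simply unpacking the definition of d-prime correctly to see the union decomposition above, which is routine.
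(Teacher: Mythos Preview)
Your proof is correct and follows essentially the same approach as the paper's: both show $Z - Z_p$ is Lawson-open by exhibiting, for each non-d-prime $x$, the open neighborhood $\up(\bigwedge M) \cap \bigcap_{m\in M}(Z - \up m)$ coming from a witnessing distributive meet $M \subseteq K(Z)$. The paper handles the case $x=1$ implicitly via the empty meet (giving $U=\up 1=\{1\}$), whereas you separate it out explicitly, but this is only a cosmetic difference.
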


\begin{proof}
Let $x \notin Z_p$. Then $x$ is not d-prime, so there is a distributive meet $k = k_1 \wedge \dots \wedge k_n$ in $K(Z)$ with $k \le x$ but all $k_i \not\le x$. Let $U = \up k - (\up k_1 \cup \dots \cup \up k_n)$. Then $U$ is Lawson-open and $x \in U$. Moreover, if $x' \in U$, then $k \le x'$ and all $k_i \not\le x'$, so $x' \notin Z_p$, and hence $U \cap Z_p = \varnothing$. Thus, $Z_p$ is closed in $Z$, yielding that $Z_p$ is a Priestley space.
\end{proof}

To associate with each DH-space a GvG-space, we need the following lemma.

\begin{lemma} \plabel{lem: producing elements from X0 or Y0}
Let $X$, $Y$ be Priestley spaces and $R \subseteq X \times Y$ a closed relation. 
\begin{enumerate}
\item \label[lem: producing elements from X0 or Y0]{producing x' in X0} If $x \rel{R} y$, then there is $x' \in \max R^{-1}[y]$ with $x \le x'$.
\item \label[lem: producing elements from X0 or Y0]{producing y' in Y0} If $x\rel{R}y$ then there is $y' \in \max R[x]$ with $y \le y'$. 
\end{enumerate}
If in addition $(X, R, Y)$ is a DH-space, then 
\begin{enumerate}[resume]
\item \label[lem: producing elements from X0 or Y0]{x is meet from X0} $x = \bigwedge (\up x \cap X_0)$ for each $x \in X$.
\item \label[lem: producing elements from X0 or Y0]{y is meet from Y0} $y = \bigwedge (\up y \cap Y_0)$ for each $y \in Y$.
\end{enumerate}
\end{lemma}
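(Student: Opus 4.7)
The plan is to derive (1) and (2) from \cref{lem: maximal points} applied to the closed fibres of $R$, and then bootstrap these into (3) and (4) via the equivalence $x' \le x \iff R[x] \subseteq R[x']$ supplied by \cref{def: DH relation 3} together with \cref{rem: R and le for DH}.

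For (1), the hypothesis $x \rel{R} y$ means $x \in R^{-1}[y]$. Since $R$ is closed in $X \times Y$ and $Y$ is compact, $R^{-1}[y]$ is the image under the projection $\pi_X$ of the closed set $R \cap (X \times \{y\})$, hence closed in $X$. Then \cref{lem: maximal points} supplies $x' \in \max R^{-1}[y] \subseteq X_0$ with $x \le x'$. Part (2) is entirely symmetric, applied to the closed set $R[x] \subseteq Y$.

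For (3), the inequality $x \le \bigwedge(\up x \cap X_0)$ is trivial, so the content lies in the reverse. I will argue contrapositively: given $z \not\le x$, I produce $x' \in \up x \cap X_0$ with $z \not\le x'$. Since $z \not\le x$, \cref{def: DH relation 3} combined with \cref{rem: R and le for DH} gives $R[x] \not\subseteq R[z]$, so we may pick $y \in R[x] - R[z]$. Applying part (1) to $x \rel{R} y$ produces $x' \in \max R^{-1}[y] \subseteq X_0$ with $x \le x'$. If it were the case that $z \le x'$, then $R^{-1}[y]$ being a downset of $X$ (because $-R^{-1}[y]$ is a filter by \cref{def: DH relation filter}) would force $z \in R^{-1}[y]$, contradicting $z \nr{R} y$. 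Hence $z \not\le x'$, which establishes (3). Part (4) is entirely symmetric, using that $-R[x]$ is a filter together with \cref{def: DH relation 4}.

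The one point deserving brief comment is that a DH-relation is automatically closed as a subset of $X \times Y$, so that parts (1) and (2) can be invoked in the DH setting needed for (3) and (4). This follows from $R$ being interior: if $(x,y) \notin R$, then $x \in -R^{-1}[y]$, which is open, so a clopen neighborhood $U$ of $x$ lies inside $-R^{-1}[y]$; then $R[U]$ is clopen and avoids $y$, and $U \times (Y - R[U])$ is an open rectangle around $(x,y)$ disjoint from $R$. Apart from this mild verification, the argument is a direct combination of Priestley maximality and the filter property of the complements $-R[x]$ and $-R^{-1}[y]$, so no genuine obstacle is anticipated.
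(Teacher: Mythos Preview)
Your argument is correct and matches the paper's approach in all essential respects: both apply \cref{lem: maximal points} to the closed fibre $R^{-1}[y]$ (the paper intersects with $\up x$ first, but this is cosmetic), and both establish (3) by using \cref{def: DH relation 3} to produce a separating $y$ and then invoking (1) together with the downset property of $R^{-1}[y]$. Your added verification that a DH-relation is closed is correct but redundant here, since the lemma's standing hypothesis already assumes $R$ closed and the DH-space condition is stated as an \emph{additional} assumption for (3) and (4).
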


\begin{proof}
\eqref{producing x' in X0} Suppose that $x \rel{R} y$. Because $\up x \cap R^{-1}[y]$ is a nonempty closed subset of $X$ by \cref{closed order}, there is $x' \in \max(\up x \cap R^{-1}[y])$ by \cref{lem: maximal points}. Therefore, $x \le x'$ and it is clear that $x' \in \max R^{-1}[y]$. 

\eqref{producing y' in Y0} The proof is similar to that of \eqref{producing x' in X0}.

\eqref{x is meet from X0} The inequality $x \le \bigwedge (\up x \cap X_0)$ is clear. For the reverse inequality, suppose that $z \le \bigwedge (\up x \cap X_0)$. If $z \not\le x$, then $R[x] \not\subseteq R[z]$ by \cref{def: DH relation 3}. Therefore, there is $y \in Y$ with $x\rel{R}y$ and $z \nr{R} y$. By \eqref{producing x' in X0}, there is $x' \in \max R^{-1}[y] \subseteq X_0$ with $x \le x'$. Thus, $z \not\le x'$ since $R^{-1}[y]$ is a downset. This contradiction shows that $z \le x$, so $x = \bigwedge (\up x \cap X_0)$.

\eqref{y is meet from Y0} The proof is similar to that of \eqref{x is meet from X0}.
\end{proof}

\begin{definition} \label{def: from DH to GvG}
For each $(X, R, Y) \in \HD$ we set $R_p = R \cap (X_p \times Y_p)$.
\end{definition}

If $(X, R, Y) \in \HD$, then we can apply \cref{def: X0 and Y0} starting from $(X, R, Y)$ or $(X_p, R_p, Y_p)$. The following lemma shows that we get the same result.

\begin{lemma} \label{lem: X0 the same}
Let $(X, R, Y) \in \HD$. Then $X_0 = (X_p)_0$ and $Y_0 = (Y_p)_0$.
\end{lemma}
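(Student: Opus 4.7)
The plan is to establish $X_0 = (X_p)_0$ (with $Y_0 = (Y_p)_0$ following by the evident symmetry of the DH-axioms in the roles of $X$ and $Y$). The argument breaks into two preparatory observations followed by two straightforward inclusions.

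The crux is to show first that $X_0 \subseteq X_p$ (and dually $Y_0 \subseteq Y_p$), since this is what permits us to replace the maximal element of $R^{-1}[y]$ inside $X_p$ by one inside all of $X$. Take $x \in \max R^{-1}[y]$. Because $\psi(y) = -R^{-1}[y]$ is a filter of $X$ by \cref{lem: open filter}, it contains $1$, hence $R[1] = \varnothing$ and so $x \ne 1$. Now suppose $M = \{k_1,\dots,k_n\} \subseteq K(X)$ with $\bigwedge M$ a distributive meet and $\bigwedge M \le x$; we must find some $k_i \le x$. If, on the contrary, $k_i \not\le x$ for every $i$, then $x \vee k_i > x$, so by maximality of $x$ in $R^{-1}[y]$ we have $x \vee k_i \in \psi(y)$ for each $i$. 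Since $\psi(y)$ is a filter, $\bigwedge_i (x \vee k_i) \in \psi(y)$, and by distributivity of the meet $\bigwedge M$ this equals $x \vee \bigwedge M = x$. But then $x \in \psi(y)$, contradicting $x \rel{R} y$. Thus $x$ is d-prime.

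Granted this, both inclusions are easy. For $X_0 \subseteq (X_p)_0$: given $x \in \max R^{-1}[y]$, apply \cref{producing y' in Y0} to find $y^* \in \max R[x]$ with $y \le y^*$; then $y^* \in Y_0 \subseteq Y_p$ by the dual of the previous paragraph. Clearly $x \in R_p^{-1}[y^*]$. If $x' \in R_p^{-1}[y^*]$ with $x' \ge x$, then $y \le y^*$ combined with \cref{rem: R and le for DH} yields $R^{-1}[y^*] \subseteq R^{-1}[y]$, so $x' \in R^{-1}[y]$; maximality of $x$ in $R^{-1}[y]$ forces $x' = x$, giving $x \in \max R_p^{-1}[y^*] \subseteq (X_p)_0$. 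For the reverse inclusion $(X_p)_0 \subseteq X_0$: given $x \in \max R_p^{-1}[y]$ with $y \in Y_p$, use \cref{producing x' in X0} to obtain $x^* \in \max R^{-1}[y]$ with $x \le x^*$; by the first step $x^* \in X_p$, hence $x^* \in R_p^{-1}[y]$, and maximality of $x$ there yields $x = x^* \in X_0$.

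The main obstacle is the first paragraph: verifying d-primality of maximal elements of $R^{-1}[y]$. Everything hinges on combining the filter property of $\psi(y)$ (from the DH-axioms via \cref{lem: properties of DH relations}) with the distributivity of $\bigwedge M$; neither alone suffices. Once this is done, the remainder is just bookkeeping via \cref{lem: producing elements from X0 or Y0} and the equivalences in \cref{rem: R and le for DH}.
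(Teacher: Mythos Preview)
Your overall architecture matches the paper's: first show $X_0\subseteq X_p$ (and dually), then deduce the two inclusions via \cref{lem: producing elements from X0 or Y0}. The second and third paragraphs are fine and essentially the paper's argument (your direct verification that $x\in\max R_p^{-1}[y^*]$ is in fact slightly cleaner than the paper's, which instead produces an $x'\in\max R_p^{-1}[y']$ above $x$ and then argues $x'=x$).

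The gap is in the first paragraph. The phrase ``$\bigwedge M$ is a distributive meet in $K(X)$'' means, by \cref{def: d-prime} and the definition preceding it, that the identity
\[
a\vee\bigwedge M=\bigwedge\{a\vee m:m\in M\}
\]
holds for every $a$ \emph{in the lattice $K(X)$}, not for every $a\in X$. Your sentence ``by distributivity of the meet $\bigwedge M$ this equals $x\vee\bigwedge M$'' applies the identity with $a=x$, but $x$ is an arbitrary element of $X_0$ and need not be compact. So the step $\bigwedge_i(x\vee k_i)=x\vee\bigwedge_i k_i$ is not yet justified.

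The paper fills exactly this gap with an algebraicity argument: to prove $\bigwedge_i(x\vee k_i)\le x\vee k$ (the nontrivial inequality), it suffices to check that every compact $m\le\bigwedge_i(x\vee k_i)$ satisfies $m\le x\vee k$. For each $i$, compactness of $m$ together with $x=\bigvee(\down x\cap K(X))$ gives $m_i\in K(X)$ with $m_i\le x$ and $m\le m_i\vee k_i$; setting $s=\bigvee_i m_i\in K(X)$, one has $m\le s\vee k_i$ for all $i$, and \emph{now} the distributive-meet identity applies with $a=s\in K(X)$ to give $m\le s\vee k\le x\vee k$. Once this is inserted, your proof goes through.
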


\begin{proof}
To start, we prove that $X_0 \subseteq X_p$ and $Y_0 \subseteq Y_p$. Let $x \in X_0$. Then $x \in \max R^{-1}[y]$ for some $y \in Y$. Suppose that $k = k_1 \wedge \dots \wedge k_n$ is a distributive meet in $K(X)$ with $k \le x$. Suppose that $k_i \not\le x$. Then $x < x \vee k_i$, so $(x \vee k_i) \nr{R} y$. If this happens for each $i$, then $[(x \vee k_1) \wedge \dots \wedge (x \vee k_n)] \nr{R} y$ by \cref{def: DH relation filter}. We show that $x \vee k = (x \vee k_1) \wedge \dots \wedge (x \vee k_n)$, which will show that $(x \vee k) \nr{R} y$, a contradiction since $k \le x$ and $x \rel{R} y$. The inequality $x \vee k \le (x \vee k_1) \wedge \dots \wedge (x \vee k_n)$ is clear. For the reverse inequality, it suffices to show that $m \le x \vee k$ for each $m \in K(X)$ with $m \le (x \vee k_1) \wedge \dots \wedge (x \vee k_n)$. Given such $m$, we have $m \le x \vee k_i$ for each $i$. Because $m \in K(X)$ and $X$ is algebraic, there is $m_i \in K(X)$ with $m_i \le x$ and $m \le m_i \vee k_i$. Set $s = m_1 \vee \dots \vee m_n$. Then $s \le x$ and $m \le s \vee k_i$ for each $i$. Therefore, since $k = k_1 \wedge \dots \wedge k_n$ is a distributive meet,
\[
s \vee k = (s \vee k_1) \wedge \dots \wedge (s \vee k_n).
\]
This implies that $m \le s \vee k \le x \vee k$. Thus, $x \in X_p$. A similar argument shows that $Y_0 \subseteq Y_p$.

We next show that $(X_p)_0 \subseteq X_0$. Let $x \in (X_p)_0$. Then $x \in \max R_p^{-1}[y]$ for some $y \in Y_p$. Suppose there is $x' \in X$ with $x \le x'$ and $x' \rel{R}y$. By \cref{producing x' in X0}, there is $x'' \in \max R^{-1}[y]$ with $x' \le x''$. Therefore, $x'' \in X_0 \subseteq X_p$, which forces $x'' = x$. Thus, $x \in \max R^{-1}[y]$, and hence $x \in X_0$. 

For the reverse inclusion, let $x \in X_0$. Then $x \in \max R^{-1}[y]$ for some $y \in Y$. Therefore, $x \in X_0 \subseteq X_p$. By \cref{producing y' in Y0}, there is $y' \in Y_0$ with $x \rel{R} y'$ and $y \le y'$ . Thus, $y' \in Y_0 \subseteq Y_p$, so $x \rel{R_p} y'$. By \cref{lem: Xa and Ya are Priestley}, $X_p$ and $Y_p$ are Priestley spaces. Moreover, $R_p \subseteq X_p \times Y_p$ is a closed relation by definition. Hence, \cref{producing x' in X0} applies, by which there is $x' \in \max R_p^{-1}[y']$ with $x \le x'$. Because $x' \rel{R} y'$ and $y \le y'$, we have $x' \rel{R} y$. From $x \in \max R^{-1}[y]$ it follows that $x' = x$. Therefore, $x \in \max R_p^{-1}[y'] \subseteq (X_p)_0$. This yields the reverse inclusion. A similar argument shows that $(Y_p)_0 = Y_0$.
\end{proof}

Given $(X, R, Y) \in \HD$, we recall from the beginning of this section that we write $\Box, \bd$ for $\Box_R, \bd_R$. We also write $\Box_p, \bd_p$ for $\Box_{R_p}, \bd_{R_p}$.

\begin{lemma} \plabel{lem: properties of Box and Diamond}
Let $(X, R, Y) \in \HD$.
\begin{enumerate}
\item \label[lem: properties of Box and Diamond]{bd 1} If $U = U' \cap X_p$ with $U'$ an upset of $X$, then $\bd_p U = \bd U' \cap Y_p$. 
\item \label[lem: properties of Box and Diamond]{bd 2} If $V = V' \cap Y_p$ with $V'$ a downset of $Y$, then $\Box_p V = \Box V' \cap X_p$. 
\item \label[lem: properties of Box and Diamond]{bd 3} Let $U' = \up k_1 \cup \dots \cup \up k_n$ with $k_i \in K(X)$ and set $U = U' \cap X_p$. Then $\bd_p U = Y_p - \up l$ for some $l \in K(Y)$. Therefore, if $U$ is a clopen upset of $X_p$, then $\bd_p U$ is a clopen downset of $Y_p$.
\item \label[lem: properties of Box and Diamond]{bd 4}Let $V' = -(\up l_1 \cup \dots \cup \up l_n)$ with $l_i \in K(Y)$ and set $V = V' \cap Y_p$. Then $\Box_p V = \up k \cap X_p$ for some $k \in K(X)$. Therefore, if $V'$ is a clopen downset of $Y_p$, then $\Box_p V$ is a clopen upset of $X_p$.
\item \label[lem: properties of Box and Diamond]{bd 5} If $U = \up k \cap X_p$ with $k \in K(X)$, then $\Box_p\bd_p U = U$.
\item \label[lem: properties of Box and Diamond]{bd 6} If $V =  Y_p - \up l$ with $l \in K(Y)$, then $\bd_p\Box_p V = V$.
\end{enumerate}
\end{lemma}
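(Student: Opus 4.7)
The plan is to prove the six items in order, with later parts reducing to earlier ones via the Galois-connection machinery of \cref{lem: properties of DH relations}.

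Parts (1) and (2) will be handled directly from the definitions. For (1), the inclusion $\bd_p U \subseteq \bd U' \cap Y_p$ is immediate since $R_p \subseteq R$ and $U \subseteq U'$. For the reverse, given $y \in \bd U' \cap Y_p$ with a witness $x \in U'$ such that $x \rel{R} y$, \cref{producing x' in X0} provides $x' \in \max R^{-1}[y]$ with $x \le x'$; then $x' \in X_0 = (X_p)_0 \subseteq X_p$ by \cref{lem: X0 the same}, and $x' \in U'$ because $U'$ is an upset, so $x' \in U' \cap X_p = U$ and $x' \rel{R_p} y$, whence $y \in \bd_p U$. Part (2) is entirely symmetric, using \cref{producing y' in Y0} and $Y_0 \subseteq Y_p$.

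For (3), one computes $\bd U' = \bigcup_i R[\up k_i]$. Since $R^{-1}[y]$ is a downset of $X$ (a consequence of \cref{def: DH relation 3} via \cref{rem: R and le for DH}), one has $R[\up k_i] = R[k_i]$, so $-R[\up k_i] = \phi(k_i)$; by \cref{lem: dual iso} combined with \cref{lem: KOF = LX}, this equals $\up m_i$ for some $m_i \in K(Y)$. Hence $\bd U' = -\bigcap_i \up m_i = -\up l$ with $l = m_1 \vee \dots \vee m_n \in K(Y)$, and (1) then yields $\bd_p U = Y_p - \up l$. Part (4) is dual, using that $R[x]$ is a downset of $Y$ (a consequence of \cref{def: DH relation 4} via \cref{rem: R and le for DH}) and that $\psi$ sends $\CLF(Y)$ into $\CLF(X) = \{\up k : k \in K(X)\}$. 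For the trailing assertions about arbitrary clopen upsets of $X_p$ and clopen downsets of $Y_p$, a standard Priestley-style separation-and-compactness argument shows that every clopen upset of the closed subspace $X_p$ is the restriction of a clopen upset of $X$; combined with the fact that clopen upsets of $X$ with the Lawson topology are finite unions of principal filters $\up k$ with $k \in K(X)$ (by compactness and \cref{lem: KOF = LX}), this reduces to the principal cases just handled, and analogously for clopen downsets of $Y_p$.

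Parts (5) and (6) then follow by composing (3) and (4) and invoking \cref{phi psi is 1}. For (5), taking $U = \up k \cap X_p$, part (3) gives $\bd_p U = Y_p - \up l$ with $\up l = \phi(\up k)$, and then part (4) gives $\Box_p \bd_p U = \up k' \cap X_p$ with $\up k' = \psi(\up l) = \psi\phi(\up k) = \up k$. Thus $\Box_p \bd_p U = U$, and (6) is symmetric. The main obstacle, in my view, is the reduction step for the trailing assertions in (3) and (4): although a Priestley-style argument with separation and compactness of $U$ and $X_p - U$ inside the compact Hausdorff subspace $X_p$ suffices, it is not purely formal given the material developed so far and must be carried out explicitly.
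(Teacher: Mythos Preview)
Your proposal is correct and follows essentially the same route as the paper: parts (1) and (2) are proved exactly as you outline, (3) and (4) via \cref{lem: dual iso} and \cref{lem: KOF = LX}, and (5)--(6) by \cref{phi psi is 1}. The only cosmetic difference is that for (5)--(6) the paper applies (1) and (2) directly to compute $\Box_p\bd_p(\up k \cap X_p) = \psi\phi(\up k) \cap X_p$ in one line, rather than routing through (3) and (4); and the ``reduction step'' you flag as the main obstacle (that a clopen upset of the closed Priestley subspace $X_p$ extends to a clopen upset of $X$) is treated by the paper as a standard fact invoked without further comment.
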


\begin{proof}
(1) Since $\bd_p U = R_p[U]$, the inclusion $\bd_p U \subseteq \bd U' \cap Y_p$ is clear. For the reverse inclusion, suppose that $y \in Y_p$ with $x \rel{R}y$ for some $x \in U'$. By \cref{producing x' in X0}, there is $x' \ge x$ with $x' \in X_0$ and $x' \rel{R} y$. Therefore, $x' \rel{R_p} y$ because $X_0 \subseteq X_p$ by \cref{lem: X0 the same}. Since $U'$ is an upset, $x' \in U'$, so $x' \in U$. Thus, $y \in \bd_p U$.

(2) We have $\Box_p V = \{ x \in X_p : R_p[x] \subseteq V\}$ and $\Box V' = \{ x \in X : R[x] \subseteq V'\}$. If $x \in \Box V' \cap X_p$, then $x \in X_p$ and $R[x] \subseteq V'$. Therefore, $R_p[x] = R[x] \cap Y_p \subseteq V' \cap Y_p = V$. Thus, $x \in \Box_p V$. For the reverse inclusion, let $x \in \Box_p V$ and $y \in Y$ with $x \rel{R} y$. By \cref{producing y' in Y0}, there is $y' \ge y$ with $y' \in Y_0$ and $x \rel{R} y'$. By \cref{lem: X0 the same}, $Y_0 \subseteq Y_p$. Therefore, $y' \in R_p[x]$, so $y' \in V \subseteq V'$. Because $V'$ is a downset, $y \in V'$. Thus, $R[x] \subseteq V'$, so $x \in \Box V' \cap X_p$.

(3) By (1) and \cref{lem: dual iso},
\begin{align*}
\bd_p U &= \bd U' \cap Y_p = (\bd \up k_1 \cup \dots \cup \bd \up k_n) \cap Y_p \\
&= [( - \up l_1) \cup \dots \cup ( - \up l_n)] \cap Y_p \\
&= Y_p - \bigcap \up l_i = Y_p - \up l
\end{align*}
for some $l_i \in K(Y)$, where $l = l_1 \vee \dots \vee l_n$, so $l \in K(Y)$.

If $U$ is a clopen upset of $X_p$, then $U = U' \cap X_p$ for some clopen upset $U'$ of $X$. Therefore, $U = (\up k_1 \cup \dots \cup \up k_n) \cap X_p$ for some $k_i \in K(X)$. The previous paragraph shows that $\bd_p U$ is a clopen downset of $Y_p$.

(4) By \cref{lem: dual iso},
\begin{align*}
\Box V' &= - R^{-1}[\up l_1 \cup \dots \cup \up l_n] = - (R^{-1}[\up l_1] \cup \dots \cup R^{-1}[\up l_n]) \\
&= - R^{-1}[\up l_1] \cap \dots \cap - R^{-1}[\up l_n] = \up k_1 \cap \dots \cap \up k_n 
\end{align*}
for some $k_i \in K(X)$. By (2), $\Box_p V  = \up k \cap X_p$, where $k = k_1 \vee \dots \vee k_n$, so $k \in K(X)$.

If $V$ is a clopen downset of $Y_p$, then $V = V' \cap Y_p$ for some clopen downset $V'$ of $Y$. Therefore, $V =  - (\up l_1 \cup \dots \cup \up l_n) \cap Y_p$ for some $l_i \in K(Y)$. The previous paragraph shows that $\Box_p V$ is a clopen upset of $X_p$.

(5) Set $U = \up k \cap X_p$ with $k \in K(X)$. By (1), (2), and \cref{phi psi is 1},
\begin{align*}
\Box_p\bd_p U &= \Box_p \bd_p(\up k \cap X_p) = \Box_p(\bd \up k \cap Y_p) = \Box\bd(\up k)\cap X_p \\
&= \psi\phi(\up k) \cap X_p = \up k \cap X_p = U.
\end{align*}

(6) Let $V = Y_p - \up l$ with $l \in K(Y)$. By (1), (2), and \cref{phi psi is 1},
\begin{align*}
\bd_p\Box_p V &= \bd_p\Box_p(-\up l \cap Y_p) = \bd_p(\Box(-\up l) \cap X_p) = \bd\Box(-\up l) \cap Y_p \\
&= -\blacksquare\Diamond \up l \cap Y_p = -\phi\psi(\up l) \cap Y_p = -\up l \cap Y_p = V.\qedhere
\end{align*}
\end{proof}

\begin{lemma} \label{lem: totally R-disconnected}
Let $(X, R, Y) \in \HD$. If $x \nr{R_p} y$, then there is a clopen upset $U \subseteq X_p$ with $x \in U$, $y \notin \bd_pU$, and $\Box_p\bd_p U = U$. 
\end{lemma}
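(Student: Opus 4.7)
The plan is to construct $U$ as $\up k \cap X_p$ for a suitably chosen compact element $k \in K(X)$, reducing the statement to the already-established computation $\Box_p\bd_p(\up k \cap X_p) = \up k \cap X_p$ from \cref{lem: properties of Box and Diamond}\eqref{bd 5}.

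First I would translate the hypothesis $x \nr{R_p} y$ into the ambient space. Since $x \in X_p$ and $y \in Y_p$, this is equivalent to $x \nr{R} y$, i.e., $x \in -R^{-1}[y] = \psi(y)$. By \cref{lem: open filter}, $\psi(y)$ is an open filter of $X$, and being a Lawson-open upset it is Scott-open by \cref{Lawson upset is Scott}. Because $X$ is algebraic, \cref{basis} gives that $\{\up k : k \in K(X)\}$ is a basis of the Scott topology, so there exists $k \in K(X)$ with $x \in \up k \subseteq \psi(y)$.

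Next I would set $U = \up k \cap X_p$. Since $\up k \in \CLF(X)$ by \cref{lem: KOF = LX}, $\up k$ is Lawson-clopen; as $X_p$ carries the subspace topology (see \cref{lem: Xa and Ya are Priestley}), $U$ is clopen in $X_p$. Clearly $U$ is an upset of $X_p$, and $x \in U$ because $k \le x$ and $x \in X_p$. The required identity $\Box_p\bd_p U = U$ is then precisely \cref{lem: properties of Box and Diamond}\eqref{bd 5}.

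It remains to verify $y \notin \bd_p U$, which I would handle by contradiction: if $y \in \bd_p U = R_p[U]$, there would be $x' \in U$ with $x' \rel{R_p} y$, hence $x' \rel{R} y$; but $x' \in U \subseteq \up k \subseteq \psi(y) = -R^{-1}[y]$ forces $x' \nr{R} y$, a contradiction. There is no real obstacle here: all the technical work has been absorbed into the prior lemmas (the filter structure of $\psi(y)$, the description of $\CLF(X)$ via compact elements, the coincidence of Lawson-open upsets with Scott-open sets, and the $\Box_p\bd_p$-computation on traces of principal filters), and the proof is essentially a short bookkeeping argument combining these ingredients.
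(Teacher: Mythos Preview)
Your proof is correct and follows essentially the same route as the paper's: both find $k \in K(X)$ with $x \in \up k \subseteq -R^{-1}[y]$, set $U = \up k \cap X_p$, invoke \cref{bd 5} for $\Box_p\bd_p U = U$, and derive $y \notin \bd_p U$ by a short contradiction. The only cosmetic difference is that the paper phrases the existence of $k$ via ``$R^{-1}[y]$ is a closed downset'' while you phrase it via ``$\psi(y)$ is an open filter, hence Scott-open''; these are the same observation.
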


\begin{proof}
Suppose that $x \nr{R_p} y$. Then $x \notin R^{-1}[y]$, and since $R^{-1}[y]$ is a closed downset of $X$, there is $k \in K(X)$ with $k \le x$ and $\up k \cap R^{-1}[y] = \varnothing$. Let $U = \up k \cap X_p$. Then $U$ is a clopen upset of $X_p$ and $x \in U$. By \cref{bd 5}, $U = \Box_p\bd_p U$. If $y \in \bd_p U$, then $x' \rel{R_p} y$ for some $x' \in U$. Therefore, $k \le x'$, so $k \rel{R} y$ by \cref{def: DH relation filter}. This contradicts the choice of $k$. Thus, $y \notin \bd_p U$.
\end{proof}

Let $Z \in \CohLatL$ and $U$ be a clopen upset of $Z_p$. Since $Z_p$ is a closed subset of $Z$ by \cref{lem: Xa and Ya are Priestley}, $U = U' \cap Z_p$ for some clopen upset $U'$ of $Z$ by Priestley duality. Therefore, $U' = \up k_1 \cup \dots \cup \up k_n$ for some $k_i \in K(Z)$ by \cref{basis,Lawson upset is Scott}.

\begin{lemma} \plabel{lem: R-regular implies R-closed}
Let $(X, R, Y) \in \HD$.
\begin{enumerate}
\item \label[lem: R-regular implies R-closed]{R-reg implies R-closed} Let $U$ be a clopen upset of $X_p$. Write $U = U' \cap X_p$ with  $U' = \up k_1 \cup \dots \cup \up k_n$ for some $k_i \in K(X)$. If $\Box_p\bd_p U \cap X_0 \subseteq U$ then $k := k_1 \wedge \dots \wedge k_n$ is a distributive meet and $U = \up k \cap X_p$. Therefore, $\Box_p\bd_p U = U$.
\item \label[lem: R-regular implies R-closed]{lem: R-coregular implies R-open} Let $V$ be a clopen downset of $Y_p$. Write $V = V' \cap X_p$ with  $V' =  - (\up l_1 \cup \dots \cup \up l_n)$ for some $l_i \in K(Y)$. If $Y_0 \cap V \subseteq \bd_p\Box_p V$ then $l := l_1 \wedge \dots \wedge l_n$ is a distributive meet and $V = Y_p - \up l$. Therefore, $\bd_p\Box_p V = V$.
\end{enumerate}
\end{lemma}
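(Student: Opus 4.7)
The plan is to prove both parts by explicitly computing $\Box_p \bd_p U$ and $\bd_p \Box_p V$ in terms of meets of compact generators, and then combining d-primeness of points of $X_p, Y_p$ with the meet-density statements of \cref{x is meet from X0,y is meet from Y0}.

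For part (1), I first compute $\Box \bd U'$. By \cref{lem: dual iso}, write $\phi(\up k_i) = \up l_i$ for some $l_i \in K(Y)$. Then $\bd U' = \bigcup_i R[\up k_i] = -\bigcap_i \up l_i$, and applying $\psi$ together with the formulas for meets in $\CLF(Y)$ and joins in $\CLF(X)$ from \cref{rem: X and Y coherent} yields
\[
\Box \bd U' \;=\; \psi\Bigl(\bigcap_i \up l_i\Bigr) \;=\; \bigvee_i \up k_i \;=\; \up k,
\]
where $k = k_1 \wedge \dots \wedge k_n$. Combined with \cref{bd 1,bd 2}, this gives $\Box_p \bd_p U = \up k \cap X_p$. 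Since $X_0 \subseteq X_p$ by \cref{lem: X0 the same}, the hypothesis $\Box_p\bd_p U \cap X_0 \subseteq U$ reads: for every $x \in X_0$ with $k \le x$, there exists some $i$ with $k_i \le x$.

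Next, I establish that $k$ is a distributive meet. Fix $a \in X$. The inequality $a \vee k \le \bigwedge_i (a \vee k_i)$ is automatic. For the reverse, by \cref{x is meet from X0} we have $a \vee k = \bigwedge (\up(a \vee k) \cap X_0)$, so it suffices to show that every $x \in X_0$ with $a \vee k \le x$ satisfies $\bigwedge_i(a \vee k_i) \le x$. For such $x$ the inequality $k \le x$ and the reformulated hypothesis produce some $i$ with $k_i \le x$; since also $a \le x$, we get $a \vee k_i \le x$, hence $\bigwedge_i(a \vee k_i) \le x$. Finally, to conclude $U = \up k \cap X_p$, note that the inclusion $U \subseteq \Box_p\bd_p U = \up k \cap X_p$ is automatic, while if $x \in \up k \cap X_p$, then d-primeness of $x$ together with $k = \bigwedge_i k_i$ being a distributive meet in $K(X)$ with $k \le x$ yields some $i$ with $k_i \le x$, so $x \in U' \cap X_p = U$. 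In particular, $\Box_p\bd_p U = U$.

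Part (2) proceeds by a dual argument: the analogous $\phi/\psi$ computation together with \cref{bd 1,bd 2} gives $\bd_p \Box_p V = Y_p - \up l$ for $l = l_1 \wedge \dots \wedge l_n$; the hypothesis, combined with \cref{y is meet from Y0}, proves that $l$ is a distributive meet in $K(Y)$ by the same pattern as above; and $V = Y_p - \up l$ then follows from the automatic inclusion $\bd_p\Box_p V \subseteq V$ together with the contrapositive d-primeness argument that $y \in V$ forces each $l_i \not\le y$, whence $l \not\le y$ by d-primeness of $y$. No single step is hard, but the main conceptual point — and the reason each of the seemingly distinct hypotheses (1) and (2) implies the conclusion — is that a covering-type condition on the maximal sets $X_0$, $Y_0$ translates, via the meet-density results of \cref{lem: producing elements from X0 or Y0}, into precisely the algebraic distributivity of the induced meet of compact elements, after which d-primeness closes the loop.
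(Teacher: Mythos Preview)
Your proof is correct but takes a somewhat different route from the paper. You front-load the computation of $\Box_p\bd_p U$: using the dual isomorphism of \cref{lem: dual iso} and the join formula in $\CLF(X)$ from \cref{rem: X and Y coherent}, you obtain $\Box\bd U' = \up k$ and hence $\Box_p\bd_p U = \up k \cap X_p$ immediately, after which the hypothesis becomes the clean order-theoretic statement ``$k\le x$ with $x\in X_0$ implies some $k_i\le x$,'' and distributivity follows by a direct argument using meet-density of $X_0$ from \cref{x is meet from X0} (and in fact you get distributivity against \emph{all} $a\in X$, not just $a\in K(X)$). The paper, by contrast, does not compute $\Box_p\bd_p U$ upfront; it argues by contradiction, supposing $k$ is not a distributive meet in $K(X)$, producing a witness $x\in X_0$ with $k\le x$ but all $k_i\not\le x$, and then verifying $x\in\Box_p\bd_p U$ by a direct relational argument (namely $R_p[x]\subseteq R_p[U]$) that invokes the filter property of $-R^{-1}[y]$ from \cref{def: DH relation filter}. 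Your approach is arguably more streamlined, since the relational content is absorbed into the single appeal to \cref{lem: dual iso}; the paper's approach stays closer to the raw DH-relation axioms and makes the role of the filter condition explicit. Either way, the final identification $U=\up k\cap X_p$ via d-primeness is the same.
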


\begin{proof}
\eqref{R-reg implies R-closed} Suppose that $\Box_p\bd_p U \cap X_0\subseteq U$. If $k$ is not a distributive meet, then there is $l \in K(X)$ with $l \vee k < (l \vee k_1) \wedge \dots \wedge (l \vee k_n)$. By \cref{x is meet from X0}, there is $x \in X_0$ with $l \vee k \le x$ and $(l \vee k_1) \wedge \dots \wedge (l \vee k_n) \not\le x$. We have $k \le l\vee k \le x$. If $k_i \le x$ for some $i$, then $l \vee k_i \le x$, a contradiction, so $k \le x$ and each $k_i \not\le x$. This shows that $x \notin U$. If $y \in R_p[x]$, then $x \rel{R} y$, so $k \rel{R} y$  by \cref{def: DH relation filter}. If $k_i \nr{R} y$ for each $i$, then $k \nr{R} y$, again by \cref{def: DH relation filter}, which is false. Therefore, $k_i \rel{R} y$ for some $i$. This yields $y \in R_p[U]$, and so $R_p[x] \subseteq R_p[U]$. Thus, $X_0 \cap \Box_p\bd_p U \not\subseteq U$, a contradiction. Consequently, $k = k_1 \wedge \dots \wedge k_n$ is a distributive meet. We finish by showing that this implies that $U = \up k \cap X_p$. The inclusion $U \subseteq \up k \cap X_p$ is clear. For the other inclusion, if $x \in \up k \cap X_p$, then $k \le x$ and $x$ is d-prime. Therefore, some $k_i \le x$, so $x \in U$. Thus, $U = \up k \cap X_p$, and hence $\Box_p\bd_p U = U$ by \cref{bd 5}.

\eqref{lem: R-coregular implies R-open} is proved similarly.
\end{proof}

\begin{proposition} \label{prop: G on objects}
If $(X, R, Y)$ is a DH-space, then $(X_p, R_p, Y_p)$ is a GvG-space.
\end{proposition}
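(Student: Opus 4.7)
The plan is to directly verify the seven defining axioms \crefrange{GvG 1}{GvG 7} of Definition~\ref{def: GvG-space} for the triple $(X_p, R_p, Y_p)$. By Lemma~\ref{lem: Xa and Ya are Priestley}, both $X_p$ and $Y_p$ are Priestley spaces, so the setup is meaningful. The bulk of the work has already been carried out in the preceding lemmas, so the proof will mostly amount to invoking them and citing \cref{lem: X0 the same} to identify $(X_p)_0 = X_0$ and $(Y_p)_0 = Y_0$.

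Specifically, I will dispatch the axioms as follows. Condition \cref{GvG 3} is exactly \cref{bd 3}, condition \cref{GvG 4} is exactly \cref{bd 4}, and condition \cref{GvG 5} is \cref{lem: totally R-disconnected}. For conditions \cref{GvG 6} and \cref{GvG 7}, let $U$ be a clopen upset of $X_p$ with $(X_p)_0 \cap \Box_p\bd_p U \subseteq U$. Using \cref{lem: X0 the same}, this becomes $X_0 \cap \Box_p \bd_p U \subseteq U$, and by Lemma~\ref{lem: properties of Box and Diamond} we may write $U = U' \cap X_p$ where $U'$ is a finite union of principal upsets at compact elements; then \cref{R-reg implies R-closed} gives $U = \Box_p\bd_p U$. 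Condition \cref{GvG 7} is handled symmetrically via \cref{lem: R-coregular implies R-open}.

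The only conditions still requiring a direct argument are \cref{GvG 1} and \cref{GvG 2}. For \cref{GvG 1}, the forward direction is immediate: if $x \le x'$ in $X_p$, then by \cref{rem: R and le for DH} applied to $R$ we have $R[x'] \subseteq R[x]$, hence $R_p[x'] \subseteq R_p[x]$. For the converse, I would argue by contradiction: assume $R_p[x'] \subseteq R_p[x]$ but $x \not\le x'$. Then \cref{def: DH relation 3} produces some $y \in Y$ with $x' \rel{R} y$ and $x \nr{R} y$. Applying \cref{producing y' in Y0} to $x' \rel{R} y$ yields $y_0 \in \max R[x']$ with $y \le y_0$, and by \cref{lem: X0 the same} we have $y_0 \in Y_0 \subseteq Y_p$, so $y_0 \in R_p[x']$. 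Finally, the reverse implication of \cref{def: DH relation 4} (see \cref{rem: R and le for DH}) gives $R^{-1}[y_0] \subseteq R^{-1}[y]$, so $x \nr{R} y_0$, contradicting $R_p[x'] \subseteq R_p[x]$. The argument for \cref{GvG 2} is entirely analogous, swapping the roles of $X, Y$ and using \cref{producing x' in X0}.

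The main potential obstacle is keeping track of which direction of each monotonicity condition is being used and verifying that the passage from $R$ to $R_p$ preserves enough structure (in particular, that the maximal points witnessing separation can be chosen in $X_p$ or $Y_p$). This is resolved cleanly by \cref{lem: producing elements from X0 or Y0} together with the identification $X_0 = (X_p)_0$, $Y_0 = (Y_p)_0$ from \cref{lem: X0 the same}; once these are in hand, each axiom follows in a few lines.
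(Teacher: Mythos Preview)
Your proposal is correct and follows essentially the same route as the paper: verify axioms \cref{GvG 3}--\cref{GvG 7} by citing \cref{lem: properties of Box and Diamond,lem: totally R-disconnected,lem: R-regular implies R-closed} together with \cref{lem: X0 the same}, and handle \cref{GvG 1}, \cref{GvG 2} separately. The only difference is that the paper dispatches \cref{GvG 1} and \cref{GvG 2} in one terse sentence (``since the orders on $X_p$ and $Y_p$ are the restrictions of those on $X$ and $Y$''), whereas you spell out the nontrivial converse direction by pushing a separating $y\in Y$ up to $y_0\in Y_0\subseteq Y_p$ via \cref{producing y' in Y0}; this extra detail is correct and arguably worth including.
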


\begin{proof}
Since the orders on $X_p$ and $Y_p$ are the restrictions of the orders on $X$ and $Y$, respectively, the first two properties of \cref{def: GvG-space} hold. The remaining properties hold by \cref{lem: properties of Box and Diamond,lem: Xa and Ya are Priestley,lem: totally R-disconnected,lem: R-regular implies R-closed}.
\end{proof}

The above proposition associates a GvG-space with each DH-space. We extend this correspondence to a functor from $\HD$ to $\GvG$. For this
we require the following two facts. 

\begin{proposition} \label{lem: LG = CLF}
Let $\D = (X, R, Y) \in \HD$ and $\G = (X_p, R_p, Y_p)$. Define the map $\gamma_D \colon \CLF(X) \to \L\G$ by $\gamma_D(U)= U \cap X_p$ for each $U \in \CLF(X)$. Then $\gamma_D$ is a $\Lat$-isomorphism. 
\end{proposition}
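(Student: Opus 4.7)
The plan is to verify, in turn, that $\gamma_D$ is well-defined into $\L\G$, a bounded lattice homomorphism, injective, and surjective. For well-definedness, by \cref{lem: KOF = LX} any $U \in \CLF(X)$ has the form $\up k$ with $k \in K(X)$, and then \cref{bd 5} immediately gives $\Box_p\bd_p(\up k \cap X_p) = \up k \cap X_p$, so that $\gamma_D(U) \in \L\G$.

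For the lattice homomorphism properties, meets and the top are routine: meet in $\CLF(X)$ is intersection, meet in $\L\G$ is intersection (\cref{rem: lattice ops for LG}), and the top of $\CLF(X)$ is $\up 0 = X$, mapping to $X_p$. The bottom of $\CLF(X)$ is $\up 1 = \{1\}$, which maps to $\varnothing$ because $1 \notin X_p$ by definition of a d-prime element (\cref{def: d-prime}), and $\varnothing$ is the bottom of $\L\G$. Preservation of joins is where a real computation is needed: given $\up k, \up l \in \CLF(X)$, their join in $\CLF(X)$ is $\up(k \wedge l)$ (see \cref{rem: X and Y coherent}), while their join in $\L\G$ is $\Box_p\bd_p((\up k \cap X_p) \cup (\up l \cap X_p))$. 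I would use the dual isomorphism $\phi \colon \CLF(X) \to \CLF(Y)$ of \cref{lem: dual iso}: writing $\phi(\up k) = \up l_k$ and $\phi(\up l) = \up l_l$, the fact that $\phi$ is a dual isomorphism yields $\phi(\up(k \wedge l)) = \up(l_k \vee l_l)$, so that applying the argument of \cref{bd 3} and then \cref{bd 4} (together with \cref{phi psi is 1}) computes this join as $\up(k \wedge l) \cap X_p = \gamma_D(\up(k \wedge l))$.

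Injectivity follows from a standard meet-density argument. If $\up k \cap X_p = \up l \cap X_p$, then by \cref{x is meet from X0} combined with the inclusion $X_0 \subseteq X_p$ established in the proof of \cref{lem: X0 the same}, we have $k = \bigwedge(\up k \cap X_0) = \bigwedge(\up k \cap X_p)$, and the analogous equality holds for $l$; therefore $k = l$.

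For surjectivity, given $U \in \L\G$ I need to exhibit some $k \in K(X)$ with $U = \up k \cap X_p$. Here the main obstacle is that one first has to express $U$ as $U' \cap X_p$ for a clopen upset $U'$ of $X$; this is the standard Priestley fact that every clopen upset of a closed subspace lifts to a clopen upset of the ambient space (applicable since $X_p$ is Lawson-closed in $X$ by \cref{lem: Xa and Ya are Priestley}), and it can be proved by a compactness argument using the Priestley separation axiom to separate $V := U$ from its complement in $X_p$ via clopen upsets of $X$. Once this is in hand, every clopen upset of $X$ has the form $\up k_1 \cup \dots \cup \up k_n$ (being a finite union of basic Scott-opens), so by \cref{bd 3} we get $\bd_p U = Y_p - \up l$ for some $l \in K(Y)$, and then \cref{bd 4} gives $\Box_p\bd_p U = \up k \cap X_p$ for some $k \in K(X)$. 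Since $U = \Box_p\bd_p U$, this shows $U = \gamma_D(\up k)$, completing the proof.
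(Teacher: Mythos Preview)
Your proof is correct, but it takes a more laborious route than the paper's. The paper argues that $\gamma_D$ is an \emph{order-isomorphism}: it is clearly order-preserving, it is order-reflecting by the same meet-density idea you use for injectivity (via \cref{x is meet from X0} and $X_0 \subseteq X_p$), and it is onto. An order-isomorphism between lattices is automatically a $\Lat$-isomorphism, so this completely bypasses your separate verification of join preservation via the dual isomorphism $\phi$ of \cref{lem: dual iso}; that computation is valid but unnecessary. For surjectivity, the paper invokes \cref{R-reg implies R-closed} directly: since $U \in \L\G$ means $\Box_p\bd_p U = U$, one trivially has $X_0 \cap \Box_p\bd_p U \subseteq U$, and that lemma then delivers $U = \up k \cap X_p$ in one step. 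The Priestley lifting of $U$ to a clopen upset of $X$ and the $\bd_p/\Box_p$ computation you carry out with \cref{bd 3} and \cref{bd 4} are essentially the content of that lemma, so you are reconstructing it rather than citing it. Both approaches yield the result; the paper's is shorter because it leans on order-isomorphism and on the already-established \cref{R-reg implies R-closed}.
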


\begin{proof}
That $\gamma_D$ is well defined follows from Lemmas~\ref{lem: KOF = LX} and \ref{lem: properties of Box and Diamond}(\ref{bd 5}), and it is clearly order-preserving. To see that it is order-reflecting, let $U, V \in \CLF(X)$ with $U \not\subseteq V$.  
By \cref{lem: KOF = LX}, $U = \up k$ and $V = \up l$ for some $k,l \in K(X)$.
Therefore, $l \not\le k$, so \cref{x is meet from X0} implies that there is $x \in X_0$ with $k \le x$ and $l \not\le x$. By \cref{lem: X0 the same}, $X_0 \subseteq X_p$, so $x \in X_p$, and hence $U \cap X_p \not\subseteq V \cap X_p$. Thus, $\gamma_D(U) \not\subseteq \gamma_D(V)$.
Finally, to see that $\gamma_D$ is onto, let $U \in \L\G$. Then  $\Box_p\bd_p U = U$, so $X_0 \cap \Box_p\bd_p U \subseteq U$. By \cref{R-reg implies R-closed}, $U = \up k \cap X_p$ for some $k \in K(X)$. Therefore,  $\up k \in \CLF(X)$ and $\gamma_D(\up k) = U$. Thus, $\gamma_D$ is an order-isomorphism, and hence a $\Lat$-isomorphism.
\end{proof}

\begin{proposition} \label{prop: G on morphisms}
Let $(f, g) \colon (X, R, Y) \to (X', R', Y')$ be a $\HD$-morphism. Define relations $S_f \subseteq X_p \times X_p'$ and $T_g \subseteq Y_p \times Y_p'$ by
\[
x \rel{S_f} x' \iff f(x) \le x' \quad \textrm{and} \quad y\rel{T_g} y' \iff g(y) \le y'. 
\]
Then $(S_f, T_g)$ is a $\GvG$-morphism.
\end{proposition}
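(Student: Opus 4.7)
I would verify the four axioms of \cref{def: GvG morphism} for $(S_f, T_g)$ in turn, parametrizing $\L\G'$ as $\{\up k \cap X'_p : k \in K(X')\}$ via \cref{lem: LG = CLF} and leveraging the left adjoint $\ell \colon X' \to X$ of $f$ together with the DH-morphism axioms and \cref{lem: producing elements from X0 or Y0}. Throughout, the key structural facts I would exploit are that $R^{-1}[y]$ and $R[x]$ are downsets (since $-R[x]$ and $-R^{-1}[y]$ are filters by \cref{def: DH relation filter}), that $X'_0 \subseteq X'_p$ and $Y'_0 \subseteq Y'_p$ by \cref{lem: X0 the same}, and that $f^{-1}(\up k) = \up \ell(k)$ with $\ell(k) \in K(X)$ whenever $k \in K(X')$.

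For axiom \cref{GvG morphism 1}, given $U' = \up k \cap X'_p$, I would first show $\Box_{S_f}U' = \up \ell(k) \cap X_p$: the condition $x \in \Box_{S_f}U'$ unfolds to $\up f(x) \cap X'_p \subseteq \up k$, and combining $X'_0 \subseteq X'_p$ with $f(x) = \bigwedge(\up f(x) \cap X'_0)$ from \cref{x is meet from X0} yields that this is equivalent to $k \le f(x)$, i.e., $\ell(k) \le x$; hence $\Box_{S_f}U' \in \L\G$ by \cref{bd 5}. For the equality $\bd\Box_{S_f}U' = \Diamond_{T_g}\bd'U'$, \cref{bd 1} reduces both sides to subsets of $Y_p$, and the remaining content is that, for $y \in Y_p$, $\ell(k) \rel{R} y$ holds if and only if some $y'' \in Y'_p$ satisfies $k \rel{R'} y''$ and $g(y) \le y''$. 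For $(\Rightarrow)$, \cref{def: DH morphisms 1} gives $f(\ell(k)) \rel{R'} g(y)$, and since $k \le f(\ell(k))$ and $R'^{-1}[g(y)]$ is a downset, we get $k \rel{R'} g(y)$; then \cref{producing y' in Y0} produces $y'' \in Y'_0 \subseteq Y'_p$ above $g(y)$. For $(\Leftarrow)$, the downset property of $R'[k]$ converts $g(y) \le y''$ and $k \rel{R'} y''$ into $k \rel{R'} g(y)$, and \cref{def: DH morphisms 2} applied with $x' = k$ supplies $x \in X$ with $x \rel{R} y$ and $\ell(k) \le x$; the downset property of $R^{-1}[y]$ then yields $\ell(k) \rel{R} y$.

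Axiom \cref{GvG morphism 2} is immediate: if $f(x) \not\le x'$, algebraicity of $X'$ furnishes $k \in K(X')$ with $k \le f(x)$ and $k \not\le x'$, so $U' := \up k \cap X'_p$ works. Axiom \cref{GvG morphism 3} is more delicate: from $g(y) \not\le y'$ and \cref{def: DH relation 4} I would pick $x' \in R'^{-1}[y'] - R'^{-1}[g(y)]$; since $-R'^{-1}[g(y)]$ is an open filter of $X'$ by \cref{lem: open filter}, it contains a principal filter $\up k$ with $k \in K(X')$ and $k \le x'$. The downset property of $R'^{-1}[y']$ gives $k \rel{R'} y'$, while $\up k \subseteq -R'^{-1}[g(y)]$ yields $k \nr{R'} g(y)$, and the downset property of $R'[k]$ then forces $\up g(y) \cap R'[k] = \varnothing$; consequently $U' := \up k \cap X'_p$ satisfies $y' \in \bd' U'$ and $y \notin \Diamond_{T_g}\bd'U'$. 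For axiom \cref{GvG morphism 4}, given $x \rel{R_p} y$, \cref{def: DH morphisms 1} gives $f(x) \rel{R'} g(y)$; \cref{producing y' in Y0} yields $y' \in \max R'[f(x)] \subseteq Y'_0 \subseteq Y'_p$ with $g(y) \le y'$, and \cref{producing x' in X0} then yields $x' \in \max R'^{-1}[y'] \subseteq X'_0 \subseteq X'_p$ with $f(x) \le x'$, giving the required witnesses.

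The principal obstacle is the equality in axiom \cref{GvG morphism 1}, which requires coordinating both \cref{def: DH morphisms 1} and \cref{def: DH morphisms 2}, the downset structure of $R$ and $R'$ in both coordinates, and \cref{lem: producing elements from X0 or Y0}, all while transferring computations between the full DH-space and its $p$-subspace via \cref{bd 1}. Axiom \cref{GvG morphism 3} is the most subtle construction, because the compact witness $k$ must meet $y'$ under $R'$ while globally avoiding the entire upper set $\up g(y)$; the downset property of $R'[k]$ is precisely what converts this global avoidance into the single-point constraint $k \nr{R'} g(y)$.
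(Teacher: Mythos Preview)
Your proposal is correct and, for axioms \cref{GvG morphism 1}, \cref{GvG morphism 2}, and \cref{GvG morphism 4}, follows essentially the paper's argument (you reduce the equality in \cref{GvG morphism 1} to the biconditional ``$\ell(k) \rel{R} y$ iff $\exists y'' \in Y'_p$ with $k \rel{R'} y''$ and $g(y) \le y''$'' before invoking the DH-morphism axioms, whereas the paper proves the two inclusions element-by-element, but the content is the same).

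The one genuine difference is in axiom \cref{GvG morphism 3}. The paper works on the $Y'$ side: from $g(y) \not\le y'$ it selects $l \in K(Y')$ with $l \le g(y)$ and $l \not\le y'$, sets $U' = \Box_p'(Y_p' - \up l)$, and invokes \cref{lem: R-coregular implies R-open} to get $\bd_p' U' = Y_p' - \up l$. You instead work on the $X'$ side: using \cref{def: DH relation 4} to produce $x' \in R'^{-1}[y'] \setminus R'^{-1}[g(y)]$, then extracting a compact $k \le x'$ from the open filter $-R'^{-1}[g(y)]$ via \cref{lem: open filter}, and taking $U' = \up k \cap X'_p$ directly. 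Your route avoids the passage through $\Box_p'$ and \cref{lem: R-coregular implies R-open}, trading it for an explicit appeal to \cref{def: DH relation 4} and the open-filter structure; both are equally valid and of comparable length.
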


\begin{proof}
Set $\G = (X_p, R_p, Y_p)$ and $\G' = (X_p', R_p', Y_p')$, where $R_p$ and $R_p'$ are the appropriate restrictions of $R$ and $R'$, respectively. We first verify \cref{GvG morphism 1}. Let $U' \in \L\G'$. By \cref{lem: KOF = LX,lem: LG = CLF}, there is $l \in K(X')$ with $U' = \up l \cap X_p'$. Since $S_f[x] = \up f(x) \cap X_p'$, 
\begin{align*}
\Box_{S_f} U' = \{ x \in X_p : S_f[x] \subseteq U' \} = \{ x \in X_p : l \le f(x) \} 
= f^{-1}(\up l) \cap X_p.
\end{align*}
By \cref{lem: KOF = LX}, $\up l \in \CLF(X')$. Therefore, since $f$ is a $\CohLatL$-morphism, $f^{-1}(\up l) \in \CLF(X)$. Thus, $f^{-1}(\up l) = \up k$ for some $k \in K(X)$, and hence $\Box_{S_f} U' = \up k \cap X_p$, which implies that $\Box_{S_f} U' \in \L\G$.

Next, let $y \in \Diamond_{T_g} \bd_p' U'$. Then there is $y' \in Y_p'$ with $y \rel{T_g} y'$ and $x' \in U'$ with $x' \rel{R_p'} y'$. We have $g(y) \le y'$, so $x' \rel{R'} g(y)$ by \cref{def: DH relation filter}. Therefore, there is $x \in X$ with $x \rel{R} y$ and $x' \le f(x)$ by \cref{def: DH morphisms 2}. Furthermore, we may assume that $x \in X_p$ by \cref{producing x' in X0,lem: X0 the same}. Because $x' \in U'$, we have $S_f[x] = \up f(x) \cap X_p' \subseteq U'$, so $y \in \bd_p \Box_{S_f} U'$.

Conversely, if $y \in \bd_p \Box_{S_f} U'$, then there is $x \in X_p$ with $x \rel{R_p} y$ and $S_f[x] \subseteq U'$. Since $x \rel{R_p} y$, we have $x \rel{R} y$, so $f(x) \rel{R'} g(y)$ by \cref{def: DH morphisms 1}. By Lemmas~\ref{lem: producing elements from X0 or Y0}(\ref{producing x' in X0},\ref{producing y' in Y0}) and \ref{lem: X0 the same}, there are $x' \in X_p'$ and $y' \in Y_p'$ such that $f(x) \le x'$, $g(y) \le y'$, and $x' \rel{R_p'} y'$. Therefore, $x \rel{S_f} x'$ and $y \rel{T_g} y'$. Since $S_f[x] \subseteq U'$, we have $x' \in U'$. This shows that $y' \in T_g[y] \cap \bd_p' U'$. Consequently, $y \in \Diamond_{T_g}\bd_p'U'$, and hence $\bd_p\Box_{S_f} U' = \Diamond_{T_g}\bd_p' U'$.

We next verify \cref{GvG morphism 2}. If $x \nr{S_f} x'$, then $f(x) \not\le x'$. Thus, there is $l \in K(X')$ with $l \le f(x)$ and $l \not\le x'$. Set $U' = \up l \cap X_p'$. By \cref{R-reg implies R-closed}, $U' = \Box_p'\bd_p' U'$,  $\up f(x) \cap X_p' \subseteq U'$, and $x' \notin U'$. Therefore, $S_f[x] = \up f(x) \cap X_p' \subseteq U'$, so $x \in \Box_{S_f} U'$.

We now verify \cref{GvG morphism 3}. If $y \nr{T_g} y'$, then $g(y) \not\le y'$, so there is $l \in K(Y')$ with $l \le g(y)$ and $l \not\le y'$. Set $U' = \Box_p' (Y_p' - \up l)$. Then $\bd_p' U' = Y_p' - \up l$ by \cref{lem: R-coregular implies R-open}, and hence $y' \in \bd_p' U'$. On the other hand, $T_g[y] = \up g(y) \cap Y_p' \subseteq \up l \cap Y_p'$, so $T_g[y] \cap \bd_p' U' = \varnothing$, and hence $y \notin \Diamond_{T_g} \bd_p' U'$. 

We finally verify  \cref{GvG morphism 4}. If $x\rel{R}y$, then $f(x) \rel{R'} g(y)$ by \cref{def: DH morphisms 1}. There are $x' \in X_0'$ and $y' \in Y_0'$ with $f(x) \le x'$, $g(y) \le y'$, and $x' \rel{R'} y'$ by \crefrange{def: DH morphisms 2}{def: DH morphisms 3}. Thus, by \cref{lem: X0 the same}, $x' \in X_p'$, $y' \in Y_p'$, $x \rel{S_f}x'$, and $y \rel{T_g} y'$.
\end{proof}

\begin{theorem} \label{thm: G is a functor}
There is a functor $\GG \colon \HD \to \GvG$ sending $(X, R, Y)$ to $(X_p, R_p, Y_p)$ and $(f, g)$ to $(S_f, T_g)$.
\end{theorem}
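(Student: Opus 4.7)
My plan is to verify the three functor axioms: well-definedness on objects, well-definedness on morphisms, and preservation of identities and composition. The first two items are already packaged: \cref{prop: G on objects} handles objects and \cref{prop: G on morphisms} handles morphisms. Preservation of identities is immediate, since $x \rel{S_{1_X}} x'$ iff $x \le x'$ and $y \rel{T_{1_Y}} y'$ iff $y \le y'$ give $\GG(1_X, 1_Y) = ({\le_{X_p}}, {\le_{Y_p}})$, which is the identity morphism for $(X_p, R_p, Y_p)$ in $\GvG$ by \cref{GvG identity}.

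The substantive step is preservation of composition. Given $\HD$-morphisms $(f_1, g_1) \colon \D_1 \to \D_2$ and $(f_2, g_2) \colon \D_2 \to \D_3$, with $\G_i = ((X_i)_p, (R_i)_p, (Y_i)_p)$, I need to show that $(S_{f_2 \circ f_1}, T_{g_2 \circ g_1})$ equals $(S_{f_2}, T_{g_2}) \star (S_{f_1}, T_{g_1})$. The subtlety is that $\star$-composition in $\GvG$ is not set-theoretic composition of relations but is defined through $\Box$ and $\Diamond$ on $\L\G_3$ (\cref{def: composition in GvG}), so the natural approach is to work at the level of these operators.

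For the first coordinate I would fix $U \in \L\G_3$ and use \cref{R-reg implies R-closed} to write $U = \up l \cap (X_3)_p$ for some $l \in K(X_3)$. The computation from the proof of \cref{prop: G on morphisms} then gives $\Box_{S_{f_i}}(\up k \cap (X_{i+1})_p) = f_i^{-1}(\up k) \cap (X_i)_p$, so iterating yields $\Box_{S_{f_1}}\Box_{S_{f_2}} U = (f_2 \circ f_1)^{-1}(\up l) \cap (X_1)_p = \Box_{S_{f_2 \circ f_1}} U$. By \cref{lem: composition lemma for GvG}, $\Box_{S_{f_2} \star S_{f_1}} U = \Box_{S_{f_1}}\Box_{S_{f_2}} U$, so $\Box_{S_{f_2 \circ f_1}}$ and $\Box_{S_{f_2} \star S_{f_1}}$ agree on all of $\L\G_3$. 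Since both relations are already GvG-morphisms (by \cref{prop: G on morphisms} and \cref{GvG morphism}), the characterization of $S$ in \cref{GvG S 1} forces $S_{f_2 \circ f_1} = S_{f_2} \star S_{f_1}$.

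For the second coordinate I would dualize: \cref{GvG morphism 1} rewrites $\Diamond_{T_g} \bd_p' U$ as $\bd_p \Box_{S_f} U$, which converts the $\Diamond$-computation into a $\Box$-computation of the type already handled. Combined with the $T$-analogue of \cref{GvG S 1} provided by \cref{GvG morphism 3}, this yields $T_{g_2 \circ g_1} = T_{g_2} \star T_{g_1}$. The main obstacle is purely bookkeeping: keeping $\bd, \bd_p, \Box, \Box_p$ on the correct spaces and tracking the restrictions to $X_p$ and $Y_p$; once \cref{lem: composition lemma for GvG} is invoked, everything reduces to routine manipulation.
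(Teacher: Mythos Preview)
Your proposal is correct and follows essentially the same approach as the paper: both use \cref{prop: G on objects} and \cref{prop: G on morphisms} for well-definedness, handle identities via \cref{GvG identity}, and establish preservation of composition by writing $U = \up k \cap (X_3)_p$ and reducing $\Box_{S_{f_1}}\Box_{S_{f_2}}U$ to $(f_2\circ f_1)^{-1}(\up k)\cap (X_1)_p$ via the computation in \cref{prop: G on morphisms}. The only cosmetic difference is that the paper verifies $x \rel{S_{f'f}} x'' \iff x \rel{(S_{f'}\star S_f)} x''$ by explicit element-chasing, whereas you show the $\Box$-operators agree on $\L\G_3$ and then invoke \cref{GvG S 1} to conclude the relations coincide; the underlying argument is the same.
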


\begin{proof}
\cref{prop: G on objects} shows that $\GG$ is well defined on objects, and \cref{prop: G on morphisms} proves that it is well defined on morphisms. Let $(X, R, Y) \in \HD$. The identity morphism for $(X, R, Y)$ is $(1_X, 1_Y)$. Since $\GG(1_X, 1_Y) = (\le_X, \le_Y)$, we see that $\GG$ sends identity morphisms to identity morphisms by \cref{GvG identity}.

It is left to show that composition is preserved.  Let $(f, g) \colon (X, R, Y) \to (X', R', Y')$ and $(f', g') \colon (X', R', Y') \to (X'', R'', Y'')$ be $\HD$-morphisms. Set $S = S_{f'} \star S_{f}$ and $T = T_{g'} \star T_{g}$. We wish to show that $(S, T) = (S_{f'f}, T_{g'g})$. We prove that $S = S_{f'f}$; the proof for $T$ is similar. First suppose that $x \rel{S_{f'f}} x''$, so $f'f(x) \le x''$. Let $U \in \L\G''$, where $\G'' = (X''_p, R''_p, Y''_p)$. By \cref{lem: KOF = LX,lem: LG = CLF}, $U = \up k \cap X_p''$ for some $k \in K(X'')$. If $x \in \Box_SU$, then $x \in \Box_{S_f}\Box_{S_{f'}}U$ by \cref{lem: composition lemma for GvG}, so $S_{f'}S_f[x] \subseteq \up k$, and hence $k \le f'f(x)$. Therefore, $k \le x''$, so $x'' \in \up k \cap X_p'' = U$. Thus, $x \rel{S} x''$. Conversely, suppose that $x \rel{S} x''$. If $f'f(x) \not\le x''$, then there is $k \in K(X'')$ with $k \le f'f(x)$ and $k \not\le x''$. Set $U = \up k \cap X_p''$. Then $U \in \L\G''$ by \cref{lem: KOF = LX,lem: LG = CLF}. If $z \in S_{f'}S_f[x]$, then there is $x' \in X_p'$ with $f(x) \le x'$ and $f'(x') \le z$. Therefore, $f'f(x) \le z$. Thus, $S_{f'}S_f[x] \subseteq \up k$, and so $x \in \Box_SU$ by \cref{lem: composition lemma for GvG}. Since $x \rel{S} x''$, we then have $x'' \in U \subseteq \up k$, which is a contradiction. Consequently, $f'f(x) \le x''$, and so $x \rel{S_{f'f}} x''$. Hence, $S_{f'f} = S_{f'} \star S_{f}$, completing the proof that $\GG$ is a functor.
\end{proof}


\section{From Gehrke-van Gool to Hartung} \label{sec: GvG and Hg}

In this section we connect the Gehrke-van Gool approach to that of Hartung. The resulting spaces were introduced and studied in \cite{Har92}. The morphisms that Hartung considered dually correspond to onto bounded lattice homomorphisms \cite[Sec.~3.1]{Har92}. We introduce a more general notion of morphism between Hartung spaces, which dually correspond to all bounded lattice homomorphisms. This gives rise to the category $\Hg$. We show that there is a functor from $\GvG$ to $\Hg$. In \cref{sec: HD GvG Hg Urq}  we will show that this functor is an equivalence and in \cref{sec: conclusion} that $\Hg$ is dually equivalent to $\Lat$.

Let $R \subseteq X \times Y$ be a relation. We write $\phi, \psi$ for the Galois connection associated to $-R$ (see \cref{not: Galois connection}). We then have $\phi A = -\bd A$ for each $A \subseteq X$ and $\psi B = - \Diamond B$  for each $B \subseteq Y$. As in \cref{sec: DH and GvG}, we write $\Box$ for $\Box_R$ and $\bd$ for $\bd_R$.

\begin{definition} \label{def: lattice of a Hartung space}
Let $X, Y$ be topological spaces and $R \subseteq X \times Y$ a relation. Setting $\H = (X, R, Y)$, let
\[
\L\H = \{A \subseteq X :  A = \Box\bd A, \ A \textrm{ and } -\bd A \textrm{ are closed} \}.
\]
\end{definition}

\begin{definition} \plabel{def: le in Hartung}
Let $R \subseteq X \times Y$ be a relation.
\begin{enumerate}
\item \label[def: le in Hartung]{Hg leX} Define $\le_X$ on $X$ by  $x \le_X x'$ if $R[x'] \subseteq R[x]$. 
\item \label[def: le in Hartung]{Hg leY} Define $\le_Y$ on $Y$ by $y \le_Y y'$ if $R^{-1}[y'] \subseteq R^{-1}[y]$.
\item \label[def: le in Hartung]{Hg y-maximal} If $x \in X$ and $y \in Y$ with $x \rel{R} y$, then $x$ is $y$-\emph{maximal} if whenever $x \le_X x'$ and $x' \rel{R} y$, then $x' = x$.
\item \label[def: le in Hartung]{Hg x-maximal} If $x \in X$ and $y \in Y$ with $x \rel{R} y$, then $y$ is $x$-\emph{maximal} if whenever $y \le_Y y'$ and $x \rel{R} y'$, then $y' = y$.
\end{enumerate}
\end{definition}

\begin{definition} \plabel{def: Hartung space}
Let $X, Y$ be topological spaces and $R \subseteq X \times Y$ a relation. The triple $\H = (X, R, Y)$ is a \emph{Hartung space} provided
\begin{enumerate}
\item \label[def: Hartung space]{Hg partial orders} $\le_X$ and $\le_Y$ are partial orders;
\item \label[def: Hartung space]{Hg compact} $R$ is a compact subset of $X \times Y$ (with respect to the product topology);
\item \label[def: Hartung space]{Hg LH subbasis} $\L\H$ is a subbasis of closed sets for $X$, and $\{ -\bd A : A \in \L\H\}$ is a subbasis of closed sets for $Y$;
\item \label[def: Hartung space]{Hg closed} $A \subseteq X$ closed implies $\Box\bd A$ closed, and $B \subseteq Y$ closed implies $\bs\Diamond B$ closed;
\item \label[def: Hartung space]{Hg maximal} 
If $x \in X$, then there is $y \in Y$ such that $x$ is $y$-maximal, and if $y \in Y$, there is $x \in X$ such that $y$ is $x$-maximal;

\item \label[def: Hartung space]{Hg not R related} If $x \nr{R} y$, then there is $A \in \L\H$ with $x \in A$ and $y \notin \bd A$.
\end{enumerate}
\end{definition}

\begin{remark} \plabel{rem: Hartung issues}
\hfill
\begin{enumerate} 
\item The definition above is a reinterpretation of \cite[Def.~2.2.3]{Har92} to connect more directly with the notion of GvG-space.
\item If $\H$ is a Hartung space, then $\L\H$ is a bounded lattice whose top is $X$, bottom is $\varnothing$, join is given by $A_1 \vee A_2 = \Box\bd(A_1 \cup A_2)$, and meet by $A_1 \wedge A_2 = A_1 \cap A_2$.
\item Hartung associates to $\H = (X, R, Y)$ the lattice
\[
\{ (A, B) \in \wp(X) \times \wp(Y) :  B = \phi A, A = \psi B,\textrm{ and } A,B \textrm{ are closed} \}.
\]
Because $B$ is determined from $A$, our definition is isomorphic as a lattice to Hartung's, and is more convenient for us.
\item \label[rem: Hartung issues]{Hs dual relation} We work with the complement of Hartung's relation, which is more convenient to compare with the other categories we consider.
\end{enumerate}
\end{remark}

\begin{lemma} \plabel{rem: partial orders on Hg}
Let $\H = (X, R, Y)$ be a Hartung space, $x \in X$, and $y \in Y$.
\begin{enumerate}
\item \label[rem: partial orders on Hg]{Hg A is upset} If $A \in L\H$, then $A$ is an upset and $\bd A$ is a downset. 
\item \label[rem: partial orders on Hg]{Hg up x is intersection of As} $\up x = \bigcap \{ A \in \L\H : x \in A\}$. 
\item \label[rem: partial orders on Hg]{Hg up y is intersection of diamond As} $\up y = \bigcap \{ -\bd A : A \in \L\H, \, y \in -\bd A \}$.
\item \label[rem: partial orders on Hg]{Rx cap up y compact} $R[x] \cap \up y$ and $\up x \cap R^{-1}[y]$ are compact.
\item \label[rem: partial orders on Hg]{doubly founded} If $x \rel{R} y$, then there are $x' \in X$ and $y' \in Y$ such that $x \le_X x'$, $x'$ is $y$-maximal, $y \le_Y y'$, and $y'$ is $x$-maximal.
\end{enumerate}
\end{lemma}

\begin{proof}
\eqref{Hg A is upset} Since $A \in \L\H$, $A = \Box\bd A$, so 
\[
A = -R^{-1}[-\bd A] = -\bigcup \{ R^{-1}[y] : y \in -\bd A\} = \bigcap \{-R^{-1}[y] : y \in -\bd A\}. 
\]
\cref{Hg leX} implies that $R^{-1}[y]$ is a downset for each $y$. Therefore, $A$ is an upset. A similar argument shows that $\bd A$ is a downset.

\eqref{Hg up x is intersection of As} The inclusion $\subseteq$ is clear since each $A \in \L\H$ is an upset by \eqref{Hg A is upset}. For the reverse inclusion, suppose that $x \not\le_X x'$. Then $R[x'] \not\subseteq R[x]$, so there is $y$ with $x' \rel{R} y$ and $x \nr{R} y$. Therefore, there is $A \in \L\H$ with $x \in A$ and $y \notin \bd A$. Since $x' \rel{R} y$, we have $x' \notin A$, completing the proof. 

\eqref{Hg up y is intersection of diamond As} This is proved similarly to \eqref{Hg up x is intersection of As}. 

\eqref{Rx cap up y compact} By \eqref{Hg up x is intersection of As} and \eqref{Hg up y is intersection of diamond As}, $\up x \times \up y$ is closed in $X \times Y$, so $R \cap (\up x \times \up y)$ is closed in $R$, and hence compact because $R$ is compact by \cref{Hg compact}. Let $\pi \colon X \times Y \to Y$ be the projection map. Since $\pi$ is continuous and $R \cap (\up x \times \up y)$ is compact, $\pi[R \cap (\up x \times \up y)]$ is compact. But $\pi[R \cap (\up x \times \up y)] = R[x] \cap \up y$, so $R[x] \cap \up y$ is compact. A similar argument shows that ${\uparrow} x \cap R^{-1}[y]$ is compact. 

\eqref{doubly founded} Let $x \rel{R} y$. We show that there is $y' \in Y$ such that $y \leq_Y y'$ and $y'$ is $x$-maximal. Let $\mathcal T$ be the set of all totally ordered subsets of $R[x] \cap \up y$, ordered by inclusion. Then $\mathcal T$ is nonempty since $\{y\} \in \mathcal T$. Because the union of a chain of totally ordered subsets of $R[x] \cap \up y$ is again totally ordered, Zorn's lemma shows that there is a maximal element $C \in \mathcal T$. The collection $\{ R[x] \cap \up z : z \in C\}$ is a family of closed subsets of $R[x] \cap \up y$ by \eqref{Hg up y is intersection of diamond As}, and it clearly has the finite intersection property. Therefore, by \eqref{Rx cap up y compact}, $\bigcap \{ R[x] \cap \up z : z \in C\} \ne \varnothing$. Take $y' \in \bigcap \{ R[x] \cap \up z : z \in C\}$. Then $y \leq_Y y'$ and $x \rel{R} y'$. To see that $y'$ is $x$-maximal, let $y' \le_Y y''$ with $x \rel{R} y''$. Then $y'' \ge_Y y' \ge_Y z$ for each $z \in C$, so $C \cup \{y''\}$ is a totally ordered subset of $R[x] \cap \up y$ containing $C$. Maximality of $C$ shows that $y'' \in C$, forcing $y'' \le_Y y'$, and hence $y'' = y'$. Thus, $y'$ is $x$-maximal. That there is $x' \in X$ such that $x \leq_X x'$ and $x'$ is $y$-maximal is proved similarly but uses that $\up x \cap R^{-1}[y]$ is compact. 
\end{proof}

We next define the notion of a Hartung morphism which is based on that of a GvG-morphism (see \cref{def: GvG morphism}).

\begin{definition} \plabel{def: Hartung morphisms}
Let $\H = (X, R, Y)$ and $\H' = (X', R', Y')$ be Hartung spaces. We call a pair $(S, T)$ of relations with $S \subseteq X \times X'$ and $T \subseteq Y \times Y'$ a {\em Hartung morphism} provided 
\begin{enumerate}
\item \label[def: Hartung morphisms]{Hg morphism Box} If $A' \in \L\H'$, then $\Box_SA' \in \L\H$ and $\bd \Box_S A' = \Diamond_T \bd' A'$.
\item \label[def: Hartung morphisms]{Hg morphism not S} If $x \nr{S} x'$, then there is $A' \in \L\H'$ with $x \in \Box_SA'$ and $x' \notin A'$.
\item \label[def: Hartung morphisms]{Hg morphism not T} If $y \nr{T} y'$, then there is $A' \in \L\H'$ with $y \notin \Diamond_T\bd' A'$ and $y' \in \bd' A'$.
\item \label[def: Hartung morphisms]{Hg morphism R} If $x \rel{R} y$, then there are $x' \in X'$ and $y' \in Y'$ with $x \rel{S} x'$, $y \rel{T} y'$, and $x' \rel{R'} y'$.
\end{enumerate}
\end{definition}

As in \cref{rem: equivalent condition for GvG S}, we have:

\begin{remark} \plabel{rem: equivalent condition for Hartung S}
Let $(S, T) \colon \H \to \H'$ be a Hartung morphism between Hartung spaces $\H = (X, R, Y)$ and $\H' = (X', R', Y')$.
\begin{enumerate}
\item \label[rem: equivalent condition for Hartung S]{equivalent conditions for S(1)} 
\cref{Hg morphism not S} is equivalent to: 
\[
x \rel{S} x' \iff (\forall A' \in \L\H')\, (x \in \Box_S A' \Longrightarrow x' \in A'),
\]
and an analogous statement holds for \cref{Hg morphism not T}.
\item \label[rem: equivalent condition for Hartung S]{S[x] an upset} $S[x]$ is an upset of $X'$ for each $x\in X$ and $T[y]$ is an upset of $Y'$ for each $y\in Y$.
\item $S^{-1}[x']$ is a downset of $X$ for each $x' \in X'$ and $T^{-1}[y']$ is a downset of $Y$ for each $y' \in Y'$.
\end{enumerate}
\end{remark}

Composition of Hartung morphisms is defined as that of GvG-morphisms:

\begin{definition} \plabel{def: composition in Hartung}
Let $(S_1, T_1) \colon \H_1 \to \H_2$ and $(S_2, T_2) \colon \H_2 \to \H_3$ be Hartung morphisms. 
\begin{enumerate}
\item Define $S_2 \star S_1$ by $x\rel{ (S_2 \star S_1)} z$ provided that $x \in \Box_{S_1}\Box_{S_2}A$ implies $z \in A$ for each $A \in \L\H_3$.
\item \label[def: composition in Hartung]{star of Ts} Define $T_2 \star T_1$ by $y \rel{(T_2 \star T_1)}w$ provided that $w \in \bd A$ implies $y \in \Diamond_{T_1}\Diamond_{T_2}\bd A$ for each $A \in \L\H_3$.
\item Set $(S_2, T_2) \star (S_1, T_1) = (S_2 \star S_1, T_2 \star T_1)$.
\end{enumerate}
\end{definition}

An argument similar to that of \cref{lem: composition lemma for GvG} yields:

\begin{lemma} \label{lem: composition lemma for Hg}
Suppose that $(S_1, T_1) \colon \H_1 \to \H_2$ and $(S_2, T_2) \colon \H_2 \to \H_3$ are Hartung morphisms. If $A \in \L\H_3$, then $\Box_{S_2\star S_1}A = \Box_{S_1}\Box_{S_2}A$ and $\Diamond_{T_2 \star T_1} \bd A = \Diamond_{T_1} \Diamond_{T_2} \bd A$.
\end{lemma}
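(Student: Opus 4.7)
The plan is to adapt the proof of \cref{lem: composition lemma for GvG} essentially verbatim, since \cref{def: composition in Hartung} mirrors \cref{def: composition in GvG}, with $\L\H_i$ playing the role previously played by $\L\G_i$ and with the definition of the composition $\star$ identical in form.

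For the $\Box$ identity, I would first record the elementary observation that for any $A \subseteq X_3$,
\[
x \in \Box_{S_1}\Box_{S_2}A \iff S_1[x] \subseteq \Box_{S_2}A \iff (S_2 \circ S_1)[x] \subseteq A.
\]
Combining this with the definition of $S_2 \star S_1$ yields the characterization
\[
(S_2 \star S_1)[x] = \bigcap \{A \in \L\H_3 : (S_2 \circ S_1)[x] \subseteq A\}.
\]
Hence for $A \in \L\H_3$, the containment $(S_2 \star S_1)[x] \subseteq A$ is equivalent to $(S_2 \circ S_1)[x] \subseteq A$, which in turn is equivalent to $x \in \Box_{S_1}\Box_{S_2}A$. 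This gives $\Box_{S_2 \star S_1}A = \Box_{S_1}\Box_{S_2}A$.

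For the $\Diamond$ identity, I would first verify the set-theoretic inclusion $T_2 \circ T_1 \subseteq T_2 \star T_1$: given $y \rel{T_1} y'$ and $y' \rel{T_2} w$, for any $A \in \L\H_3$ with $w \in \bd A$ one has $y' \in \Diamond_{T_2}\bd A$ and hence $y \in \Diamond_{T_1}\Diamond_{T_2}\bd A$, which matches the defining condition for $y \rel{(T_2 \star T_1)} w$. This yields the inclusion $\Diamond_{T_1}\Diamond_{T_2}\bd A \subseteq \Diamond_{T_2 \star T_1}\bd A$ directly. Conversely, if $y \in \Diamond_{T_2 \star T_1}\bd A$, choose $w \in \bd A$ with $y \rel{(T_2 \star T_1)} w$; the defining condition of $T_2 \star T_1$, applied to this particular $A$, immediately gives $y \in \Diamond_{T_1}\Diamond_{T_2}\bd A$.

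I do not expect a real obstacle here: the definitions of $\star$ and of $\L\H$ are crafted precisely so that the argument from the GvG setting transfers. The only routine verification is the equivalence $x \in \Box_{S_1}\Box_{S_2}A \iff (S_2 \circ S_1)[x] \subseteq A$, which is a standard manipulation of relational images.
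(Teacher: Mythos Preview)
Your proposal is correct and matches the paper's approach exactly: the paper simply states that an argument similar to \cref{lem: composition lemma for GvG} yields the result, and your outline reproduces that argument step for step in the Hartung setting. Both the $\Box$ part (via the characterization of $(S_2\star S_1)[x]$ as an intersection over $\L\H_3$) and the $\Diamond$ part (one inclusion from $T_2\circ T_1\subseteq T_2\star T_1$, the other directly from the defining clause of $T_2\star T_1$) are precisely what the paper intends.
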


An analogous proof to \cref{prop: composition in GvG} yields:

\begin{proposition} \plabel{prop: composition in Hartung}
Suppose that $(S_1, T_1) \colon \H_1 \to \H_2$ and $(S_2, T_2) \colon \H_2 \to \H_3$ are Hartung morphisms. 
\begin{enumerate}
\item \label[prop: composition in Hartung]{Hg morphism} $(S_2, T_2) \star (S_1, T_1)$ is a Hartung morphism.
\item \label[prop: composition in Hartung]{Hg identity} If $\H = (X, R, Y)$ is a Hartung space, then $(\le_X, \le_Y) \colon \H \to \H$ is a Hartung morphism and is the identity morphism for $\H$.
\item \label[prop: composition in Hartung]{Hg associativity} The operation $\star$ is associative.
\end{enumerate}
\end{proposition}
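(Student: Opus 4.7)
The plan is to mirror the proof of \cref{prop: composition in GvG} step by step, with \cref{lem: composition lemma for Hg} playing the role that \cref{lem: composition lemma for GvG} played there. Before starting, I would record the Hartung analogue of \cref{GvG S 2}: using \cref{Hg morphism not S} together with the fact from \cref{Hg A is upset} that every element of $\L\H'$ is an upset, each $S[x]$ is an upset of $X'$ and each $T[y]$ is an upset of $Y'$ (and dually, $S^{-1}[x']$ and $T^{-1}[y']$ are downsets). These facts will be used tacitly to guarantee the containments $S_2\circ S_1 \subseteq S_2\star S_1$ and $T_2\circ T_1\subseteq T_2\star T_1$ needed for axiom \cref{Hg morphism R} of the composite.

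For (1), I set $S = S_2\star S_1$ and $T = T_2\star T_1$ and verify the four axioms of \cref{def: Hartung morphisms} in order. Axiom \cref{Hg morphism Box} follows by applying the corresponding axiom for $(S_2,T_2)$ and then for $(S_1,T_1)$ in succession and invoking \cref{lem: composition lemma for Hg} to fold the iterated $\Box$ into $\Box_S$ and the iterated $\Diamond$ into $\Diamond_T$. Axioms \cref{Hg morphism not S} and \cref{Hg morphism not T} follow directly from \cref{def: composition in Hartung} combined with \cref{lem: composition lemma for Hg}. For \cref{Hg morphism R}, given $x\rel{R_1} y$, two applications of the corresponding axiom produce intermediate $x'\in X_2$, $y'\in Y_2$ and then $x''\in X_3$, $y''\in Y_3$ with $x\rel{S_1}x'\rel{S_2}x''$, $y\rel{T_1}y'\rel{T_2}y''$, and $x''\rel{R_3}y''$; the containment $S_2\circ S_1\subseteq S$ (verified by a brief $\Box$-chase on an arbitrary $A\in\L\H_3$) gives $x\rel{S}x''$, and analogously $y\rel{T}y''$.

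For (2), since every $A\in\L\H$ is an upset and every $\bd A$ is a downset by \cref{Hg A is upset}, we have $\Box_{\le_X}A = A$ and $\Diamond_{\le_Y}\bd A = \bd A$, so all four axioms of \cref{def: Hartung morphisms} are immediate for $(\le_X,\le_Y)$ (using \cref{Hg up x is intersection of As,Hg up y is intersection of diamond As} for the separation axioms). Given any Hartung morphism $(S,T)\colon\H\to\H'$, the definition of $\star$ together with the characterization in \cref{equivalent conditions for S(1)} yields $S\star{\le_X} = S$ and ${\le_{X'}}\star S = S$, and the same for $T$; thus $(\le_X,\le_Y)$ is a two-sided identity. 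For associativity in (3), \cref{lem: composition lemma for Hg} allows both $(S_3\star S_2)\star S_1$ and $S_3\star(S_2\star S_1)$ to be described via the triple composite $\Box_{S_1}\Box_{S_2}\Box_{S_3}$: in each case $x$ is related to $w$ iff $x\in\Box_{S_1}\Box_{S_2}\Box_{S_3}A$ implies $w\in A$ for every $A\in\L\H_4$, and the analogous argument works for $T$.

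The only step not already explicit in the excerpt is the Hartung analogue of \cref{GvG S 2}, so it must be recorded up front; its proof copies the GvG argument verbatim, using \cref{Hg A is upset} in place of the corresponding fact for GvG-spaces. Once this is in hand, the remainder is a mechanical transcription of \cref{prop: composition in GvG} to the Hartung setting, so I do not anticipate a substantial obstacle.
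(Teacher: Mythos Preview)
Your proposal is correct and follows exactly the approach the paper takes: the paper simply says ``An analogous proof to \cref{prop: composition in GvG} yields'' this proposition, and your plan is a faithful elaboration of that analogy. One small note: the Hartung analogue of \cref{GvG S 2} that you flag as needing to be recorded is in fact already stated in the paper as \cref{S[x] an upset} (part of \cref{rem: equivalent condition for Hartung S}), so you can cite it directly rather than reprove it.
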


The previous proposition shows that we have a category, motivating the following definition.

\begin{definition} \label{def: Hg}
Let $\Hg$ denote the category of Hartung spaces and Hartung morphisms.
\end{definition}

In the rest of the section we define a functor $\HH \colon \GvG \to \Hg$. Let $\G := (X, R, Y) \in \GvG$ and, as in \cref{def: X0 and Y0}, let
\[
X_0 = \bigcup \{ \max R^{-1}[y] : y \in Y\}
\quad
\mbox{and}
\quad
Y_0 = \bigcup \{ \max R[x] : x \in X \}.
\]
We topologize $X_0$ with the subspace topology of the open downset topology on $X$, and topologize $Y_0$ similarly. Set $R_0 = {R} \cap (X_0 \times Y_0)$. Our aim is to show that $ \H := (X_0, R_0, Y_0)$ is a Hartung space, which we do in a series of lemmas.

\begin{lemma} \label{lem: Hg partial orders and R related}
Define $\le_{X_0}$ on $X_0$ and $\le_{Y_0}$ on $Y_0$ by Definition~\emph{\ref{def: le in Hartung}(\ref{Hg leX},\ref{Hg leY})} applied to $R_0$. Then $\le_{X_0}$ and $\le_{Y_0}$ are the restrictions of the partial orders of $X$ and $Y$, respectively, and thus are partial orders.
\end{lemma}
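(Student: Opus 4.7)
The plan is to reduce the definition of $\le_{X_0}$ (resp.~$\le_{Y_0}$) via $R_0$ to the restriction of the Priestley order on $X$ (resp.~$Y$). Once this is established, both relations are automatically partial orders since they are restrictions of partial orders on ambient sets. The argument for the two cases is symmetric, so it will suffice to present the case of $\le_{X_0}$ carefully.

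One inclusion is immediate from the GvG-space axioms: if $x, x' \in X_0$ satisfy $x \le x'$ in $X$, then \cref{GvG 1} yields $R[x'] \subseteq R[x]$, and intersecting both sides with $Y_0$ gives $R_0[x'] \subseteq R_0[x]$, i.e., $x \le_{X_0} x'$. The nontrivial direction is the converse. Suppose $x, x' \in X_0$ satisfy $R_0[x'] \subseteq R_0[x]$ and pick $y \in R[x']$. Recalling that GvG-spaces have $X, Y$ Priestley and $R$ a closed subset of $X \times Y$ by \cref{closed order}, I can apply \cref{producing y' in Y0} to obtain $y' \in \max R[x']$ with $y \le y'$. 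By the very definition of $Y_0$, the point $y'$ lies in $Y_0$, hence $y' \in R_0[x']$, and by hypothesis $y' \in R_0[x]$, which means $x \rel{R} y'$. Applying \cref{GvG 2} to $y \le y'$ gives $R^{-1}[y'] \subseteq R^{-1}[y]$, so $x \rel{R} y$. Thus $R[x'] \subseteq R[x]$, and another application of \cref{GvG 1} gives $x \le x'$.

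The argument for $\le_{Y_0}$ is entirely symmetric, using \cref{producing x' in X0} in place of \cref{producing y' in Y0}, and the monotonicity of $R^{-1}$ in $X$ via \cref{GvG 1} (which says that $R^{-1}[y]$ is a downset of $X$) to conclude the analogue of the final step. No real obstacle is anticipated: the key insight is simply that the maximal extensions in \cref{lem: producing elements from X0 or Y0} land precisely in $X_0$ or $Y_0$, so information about the full relation $R$ and about its restriction $R_0$ to $X_0 \times Y_0$ is interchangeable for the purpose of characterizing the order.
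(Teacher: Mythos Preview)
Your proof is correct and follows essentially the same approach as the paper: both directions are handled identically, using \cref{GvG 1} for the easy inclusion and, for the converse, invoking \cref{producing y' in Y0} to lift an arbitrary $y \in R[x']$ to some $y' \in Y_0$, then using the hypothesis $R_0[x'] \subseteq R_0[x]$ together with \cref{GvG 2} to conclude $y \in R[x]$. The symmetric treatment of $\le_{Y_0}$ is likewise the same.
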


\begin{proof}
Let $x, x' \in X_0$. By \cref{GvG 1}, if $x \le x'$, then $R[x'] \subseteq R[x]$. Because $R_0[x'] = R[x'] \cap Y_0$ and $R_0[x] = R[x] \cap Y_0$, we have $R_0[x'] \subseteq R_0[x]$, so $x \le_{X_0} x'$.

Conversely, suppose that $x \le_{X_0} x'$, so $R_0[x'] \subseteq R_0[x]$. Let $y \in R[x']$. By \cref{producing y' in Y0}, there is $y' \in Y_0$ with $y \le y'$ and $x' \rel{R} y'$. This yields $x' \rel{R_0} y'$, so $y' \in R_0[x'] \subseteq R_0[x] \subseteq R[x]$. Because $y \le y'$, \cref{GvG 2} gives that $y \in R[x]$. Thus, $R[x'] \subseteq R[x]$, and hence $x \le x'$ by \cref{GvG 1}.

A similar argument shows that $\le_{Y_0}$ is the restriction of the partial order of $Y$.
\end{proof}

\begin{lemma} \label{lem: compactness}
$R_0$ is a compact subset of $X_0 \times Y_0$ with respect to the product topology on $X_0 \times Y_0$.
\end{lemma}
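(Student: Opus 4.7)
The plan is to derive compactness of $R_0$ in the product topology on $X_0\times Y_0$ from compactness of $R$ in the finer Priestley product topology on $X\times Y$. By \cref{closed order}, $R$ is a closed subset of $X\times Y$ in the Priestley product topology; since $X$ and $Y$ are Priestley spaces, $X\times Y$ is compact, so $R$ is compact in the Priestley topology. The core idea is that any open cover of $R_0$ in the (coarser) open-downset subspace product topology automatically extends to a Priestley-open cover of $R$, at which point Priestley compactness does the work.

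First I would fix a cover $\{(A_\alpha\cap X_0)\times(B_\alpha\cap Y_0)\}_\alpha$ of $R_0$ by basic open sets of the product topology on $X_0\times Y_0$. Here each $A_\alpha$ is an open downset of $X$ and each $B_\alpha$ is an open downset of $Y$, since the topology on $X_0$ (resp.\ $Y_0$) is the subspace topology of the open downset topology on $X$ (resp.\ $Y$), and basic opens in a product topology are products of basic opens in the factors. The key step is to show that $\{A_\alpha\times B_\alpha\}_\alpha$ covers all of $R$. Given $(x,y)\in R$, apply \cref{producing x' in X0} to obtain $x'\in\max R^{-1}[y]$ with $x\le x'$, and then apply \cref{producing y' in Y0} to $(x',y)$ to obtain $y'\in\max R[x']$ with $y\le y'$; then $x'\in X_0$, $y'\in Y_0$, and $x'\rel{R}y'$, so $(x',y')\in R_0$. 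Pick $\alpha$ with $(x',y')\in(A_\alpha\cap X_0)\times(B_\alpha\cap Y_0)$; since $A_\alpha$ and $B_\alpha$ are downsets and $x\le x'$, $y\le y'$, we conclude $(x,y)\in A_\alpha\times B_\alpha$.

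To finish, observe that every open downset is in particular Priestley-open, so $\{A_\alpha\times B_\alpha\}_\alpha$ is a Priestley-open cover of the Priestley-compact set $R$. Compactness yields a finite subfamily $A_{\alpha_1}\times B_{\alpha_1},\dots,A_{\alpha_n}\times B_{\alpha_n}$ covering $R$, and restricting back to $R_0$ shows that $(A_{\alpha_i}\cap X_0)\times(B_{\alpha_i}\cap Y_0)$, $i=1,\dots,n$, cover $R_0$.

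The main obstacle to a direct approach is that $X_0$ and $Y_0$ need not be closed in the Priestley topology (indeed, as noted in the paper, this is precisely why the open downset subspace topology is less well behaved than an ordinary subspace Priestley topology), so one cannot simply invoke closedness of $R_0$ inside a compact ambient space. The trick that circumvents this is to exploit the two features of the open downset topology simultaneously: its basic opens are downsets, so the extension from a cover of $R_0$ to a cover of $R$ works via the maximality-based lemmas \cref{producing x' in X0,producing y' in Y0}; and these basic opens remain open in the Priestley topology, where compactness of $R$ is available.
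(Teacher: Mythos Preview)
Your proof is correct and follows essentially the same line as the paper's. Both arguments establish that $R=\down R_0$ (in the product Priestley order) via \cref{producing x' in X0,producing y' in Y0} and then exploit compactness of $R$; the only difference is that the paper packages the final step as an appeal to a general fact from \cite[p.~43]{GHKLMS03} (a set is compact iff its specialization-saturation is), whereas you unroll that fact by hand using basic open covers.
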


\begin{proof}
By \cref{closed order}, $R$ is a closed subset of $X\times Y$, hence is compact. Therefore, $R$ is a compact subspace of the weaker topology of the product of the open downset topologies on $X$ and $Y$. We show that $R = \down R_0$, where $\down$ is calculated with respect to the product order on $X \times Y$ of the Priestley orders of $X$ and $Y$. Let $(x, y) \in R$, so $x \rel{R} y$. By \crefrange{producing x' in X0}{producing y' in Y0}, there are $x' \in X_0$ and $y' \in Y_0$ with $x \le x', y \le y'$, and $x' \rel{R} y'$, so $x' \rel{R_0} y'$. Hence, $(x', y') \in R_0$ and $(x,y) \le (x', y')$. Thus, $R \subseteq \down R_0$. For the reverse inclusion, suppose that $(x, y) \le (x', y') \in R_0$. Then $x' \rel{R_0} y'$, so $x' \rel{R} y'$. This yields $x \rel{R} y'$ by \cref{Hg leX}, and hence $x \rel{R} y$ by \cref{Hg leY}, concluding the proof that $R = {\downarrow}R_0$.
By \cref{lem: Hg partial orders and R related}, $\le_{X_0}$ is the restriction to $X_0$ of $\le$ on $X$, and similarly for $\le_{Y_0}$. 
Since $\le$ is the specialization order of the open upset topology, which is dual to the specialization order of the open downset topology, we obtain that
the specialization order of $X_0 \times Y_0$ is the dual of the restriction of the product order on $X \times Y$. Therefore, from $R = \down R_0$ it follows that $R$ is the saturation of $R_0$. Thus, since $R$ is compact, we conclude that $R_0$ is compact (see, e.g., \cite[p.~43]{GHKLMS03}). 
\end{proof}

The previous two lemmas show that $\H$ satisfies \crefrange{Hg partial orders}{Hg compact}. 

\begin{lemma} \plabel{lem: Galois closure preserves closed sets}
Suppose that $A' \subseteq X$ is an upset and $B' \subseteq Y$ is a downset. Set $A = A' \cap X_0$ and $B = B' \cap Y_0$. 
\begin{enumerate}
\item \label[lem: Galois closure preserves closed sets]{phi for H} $\bd_0 A = \bd A' \cap Y_0$.
\item \label[lem: Galois closure preserves closed sets]{psi for H} $\Box_0 B = \Box B' \cap X_0$.
\item \label[lem: Galois closure preserves closed sets]{boxdiamond} $\Box_0\bd_0 A = \Box\bd A' \cap X_0$.
\item \label[lem: Galois closure preserves closed sets]{diamondbox} $\bs_0\Diamond_0 B = \bs\Diamond B' \cap Y_0$.
\item \label[lem: Galois closure preserves closed sets]{boxdiamond is closed} If $A$ is closed, then $\Box_0\bd_0 A$ is closed.
\item \label[lem: Galois closure preserves closed sets]{bsdiamond is closed} If $B$ is closed, then $\bs_0\Diamond_0 B$ is closed.
\end{enumerate}
\end{lemma}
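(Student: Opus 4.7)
My plan is to prove the six assertions in sequence, relying on \cref{lem: producing elements from X0 or Y0} together with the hypotheses that $A'$ is an upset and $B'$ is a downset. For the nontrivial inclusion in \eqref{phi for H}, take $y \in \bd A' \cap Y_0$ with some $x' \in A'$ satisfying $x' \rel{R} y$; by \cref{producing x' in X0} there is $x \in X_0$ with $x' \le x$ and $x \rel{R} y$, and then $x \in A$ because $A'$ is an upset. For the nontrivial inclusion in \eqref{psi for H}, take $x \in \Box_0 B$ and $y \in R[x]$; by \cref{producing y' in Y0} there is $y_0 \in Y_0$ with $y \le y_0$ and $x \rel{R} y_0$, so $y_0 \in R_0[x] \subseteq B \subseteq B'$, and $y \in B'$ follows since $B'$ is a downset.

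Part \eqref{boxdiamond} combines the previous two: \eqref{phi for H} gives $\bd_0 A = \bd A' \cap Y_0$, and $\bd A' = R[A']$ is automatically a downset of $Y$ (each $R[x']$ being a downset by \cref{GvG 2}), so \eqref{psi for H} applied with the downset $\bd A'$ yields $\Box_0\bd_0 A = \Box\bd A' \cap X_0$. Part \eqref{diamondbox} is proved dually: first establish that for a downset $A' \subseteq X$ and $A = A' \cap X_0$ one has $\bs_0 A = \bs A' \cap Y_0$, by the argument parallel to \eqref{psi for H} but using \cref{producing x' in X0} and the downset hypothesis on $A'$. Since $\Diamond B' = R^{-1}[B']$ is always a downset of $X$ (by \cref{GvG 1}), applying this with $A' := \Diamond B'$ gives $\bs_0(\Diamond B' \cap X_0) = \bs\Diamond B' \cap Y_0$. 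Combining with the analogue of \eqref{phi for H} for $\Diamond$, namely $\Diamond_0 B = \Diamond B' \cap X_0$ (obtained by using \cref{producing y' in Y0} to produce a $Y_0$-witness in $B'$ above each $y \in R[x] \cap B'$ via the downset structure of $B'$), completes the proof.

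Parts \eqref{boxdiamond is closed} and \eqref{bsdiamond is closed} invoke Esakia's Lemma (\cref{lem: Esakia}). Given closed $A \subseteq X_0$, write $A = A' \cap X_0$ for a closed upset $A'$ of $X$ and express $A' = \bigcap U_i$ as the intersection of a down-directed family of clopen upsets (via \cref{GvG LG basis}). Since $R$ is a closed relation (\cref{closed order}), Esakia's Lemma gives $\bd A' = \bigcap \bd U_i$; each $\bd U_i$ is clopen by \cref{GvG 3}. Because $\Box$ distributes over intersections, $\Box\bd A' = \bigcap \Box\bd U_i$ is an intersection of clopen upsets by \cref{GvG 4}, hence closed in $X$. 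Combined with \eqref{boxdiamond}, $\Box_0\bd_0 A = \Box\bd A' \cap X_0$ is closed in $X_0$. Part \eqref{bsdiamond is closed} proceeds analogously, using \eqref{diamondbox} in place of \eqref{boxdiamond} and the symmetric distributive property of $\bs$ over intersections.

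The main obstacle will be the identification $\Diamond_0 B = \Diamond B' \cap X_0$ used in \eqref{diamondbox}: while the inclusion $\subseteq$ is immediate, the reverse direction reduces to showing that for each $x \in X_0$, nonemptiness of $R[x] \cap B'$ forces nonemptiness of $R[x] \cap Y_0 \cap B'$. This requires combining \cref{producing y' in Y0} with the compactness of $R[x]$ in the Priestley topology of $Y$, together with the downset structure of $B'$, to produce a maximal witness that lies in $Y_0$.
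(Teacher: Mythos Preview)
Your treatment of \eqref{phi for H}, \eqref{psi for H}, \eqref{boxdiamond}, and \eqref{boxdiamond is closed} is essentially the paper's proof: lift to $X$ or $Y$, produce a witness in $X_0$ or $Y_0$ via \cref{lem: producing elements from X0 or Y0}, and use the upset/downset hypothesis to keep the witness in the right set; for \eqref{boxdiamond is closed} write $A'$ as a down-directed intersection of clopen upsets and apply Esakia's lemma.

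The gap is in \eqref{diamondbox}. Your claim $\Diamond_0 B = \Diamond B' \cap X_0$ with the justification ``produce a $Y_0$-witness in $B'$ above each $y \in R[x] \cap B'$ via the downset structure of $B'$'' is backwards: \cref{producing y' in Y0} gives $y_0 \ge y$, and $B'$ being a \emph{downset} does not put $y_0$ in $B'$. Your closing paragraph senses the problem but the proposed compactness fix does not resolve it --- there is no reason the maximal element of $R[x]$ above $y$ should lie in the downset $B'$.

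The paper's ``similar to \eqref{boxdiamond}'' is the genuine $X\leftrightarrow Y$ dual, which uses the analogues
\[
\Diamond_0 B = \Diamond B' \cap X_0 \quad\text{for $B'$ an \emph{upset} of $Y$,}\qquad
\bs_0 A = \bs A' \cap Y_0 \quad\text{for $A'$ a \emph{downset} of $X$.}
\]
The first is proved exactly like \eqref{phi for H} (now the upset hypothesis on $B'$ lets the $Y_0$-witness $y_0\ge y$ stay in $B'$), and the second reduces to \eqref{phi for H} by complementation as you observed. Since $\Diamond B'$ is always a downset of $X$, the two combine to give \eqref{diamondbox}. This is also what is actually needed in \eqref{bsdiamond is closed}: a closed $B \subseteq Y_0$ has the form $B' \cap Y_0$ with $B'$ a closed \emph{upset} of $Y$ (closed sets for the open-downset topology are closed upsets), so the stated hypothesis ``$B'$ a downset'' appears to be a slip in the lemma; the intended hypothesis for \eqref{diamondbox} and \eqref{bsdiamond is closed} is that $B'$ is an upset.
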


\begin{proof}
\eqref{bd 1} The inclusion $\bd_0 A \subseteq \bd A' \cap Y_0$ is clear. For the reverse inclusion, let $y \in \bd A' \cap Y_0$. Then $x \rel{R} y$ for some $x \in A'$. By \cref{producing x' in X0}, there is $x' \in X_0$ with $x \le x'$ and $x' \rel{R} y$, so $x' \rel{R_0} y$. Since $A'$ is an upset, $x' \in A' \cap X_0 = A$. Therefore, $y \in R_0[A]$, yielding the equality. 

\eqref{bd 2} The proof is similar to that of \eqref{bd 1}.

\eqref{bd 3} By \eqref{bd 1} and \eqref{bd 2} applied to $B' = \bd A'$, we have
\[
\Box_0\bd_0 A = \Box_0(\bd A'\cap Y_0) = \Box \bd A' \cap X_0.
\]

\eqref{bd 4} The proof is similar to that of \eqref{bd 3}.

\eqref{bd 5} If $A$ is closed, we may assume that $A'$ is closed (since the topology on $X_0$ is the subspace topology of the open downset topology on $X$). Therefore, $A' = \bigcap U_i$ for some family $\{U_i\}$ of clopen upsets of $X$ (see, e.g., \cite[Lem.~11.21]{DP02}), and we may assume the family is down-directed. Since $R^{-1}[y]$ is closed for each $y \in Y$ (see \cref{closed order}), \cref{lem: Esakia} shows that $\bd A' = \bigcap \bd U_i$, and so $\Box\bd A' = \bigcap \Box\bd U_i$ since $\Box$ commutes with arbitrary intersections. Each $\Box\bd U_i$ is a clopen upset, so $\Box\bd A'$ is a closed upset. Therefore, $\Box_0\bd_0 A = \Box\bd A' \cap X_0$ is closed.

\eqref{bd 6} The proof is similar to that of \eqref{bd 5}. 
\end{proof}

\begin{lemma} \plabel{lem: S versus BX}
\hfill
\begin{enumerate}
\item\label[lem: S versus BX]{subbasis 1} $\L\H$ is a subbasis of closed sets for $X_0$.
\item\label[lem: S versus BX]{subbasis 2} $\{ Y_0 -\bd U : U \in \L\G\}$ is a subbasis of closed sets for $Y_0$. 
\end{enumerate}
\end{lemma}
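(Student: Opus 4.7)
\emph{Proof plan.} My plan is to show that $\{U \cap X_0 : U \in \L\G\}$ is already a subbasis of closed sets for $X_0$ and is contained in $\L\H$. Since every element of $\L\H$ is by definition closed in $X_0$, this will yield (1): any collection of closed sets that contains a subbasis is itself a subbasis. The proof of (2) will run along the same lines, starting from the $Y$-side basis.

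First I would verify that $U \cap X_0 \in \L\H$ for each $U \in \L\G$. The identity $\Box_0 \bd_0 (U \cap X_0) = U \cap X_0$ follows from \cref{boxdiamond} applied to $A' = U$ together with $U = \Box \bd U$. The set $U \cap X_0$ is closed in $X_0$ because $U$, being a clopen upset of $X$, is closed in the open downset topology on $X$. Finally, \cref{phi for H} and \cref{GvG 3} yield $-\bd_0(U \cap X_0) = (-\bd U) \cap Y_0$ with $-\bd U$ a clopen upset of $Y$, hence closed in the open downset topology on $Y$.

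For the subbasis property in (1), a closed subset of $X_0$ is of the form $F \cap X_0$ with $F$ a closed upset of $X$, hence an intersection of clopen upsets by Priestley separation. It therefore suffices to express each clopen upset $V$ as a finite union from $\L\G$: by \cref{GvG LG basis} we have $V = \bigcup \{U \in \L\G : U \subseteq V\}$, and since $V$ is compact in the Priestley topology this cover admits a finite subcover. Intersecting with $X_0$ then realizes every closed subset of $X_0$ as an intersection of finite unions from $\{U \cap X_0 : U \in \L\G\}$.

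The proof of (2) is entirely analogous. Each $Y_0 - \bd U = (-\bd U) \cap Y_0$ is closed in $Y_0$ by the same argument as above. By \cref{GvG LG basis for Y}, every open downset of $Y$ is a union of sets $\bd U$ with $U \in \L\G$; passing to complements expresses every closed upset of $Y$ as an intersection of $\{-\bd U : U \in \L\G\}$, and intersecting with $Y_0$ yields the desired representation for the closed subsets of $Y_0$. No deep obstacle is expected here; the only real care required is keeping track of which of the Priestley, open upset, and open downset topologies is being used at each step, so that compactness is available precisely where needed to turn a basis cover into a finite cover.
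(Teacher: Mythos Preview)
Your proposal is correct and follows essentially the same approach as the paper: reduce closed subsets of $X_0$ to intersections of clopen upsets of $X$ restricted to $X_0$, then use \cref{GvG LG basis} and compactness to write each clopen upset as a finite union from $\L\G$. You are more explicit than the paper in verifying that $U \cap X_0 \in \L\H$ (which the paper leaves to the reader via the preceding \cref{lem: Galois closure preserves closed sets}), but the route is the same.
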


\begin{proof}
\eqref{subbasis 1} Let $C \subseteq X_0$ be a closed set. Then $C = D \cap X_0$ for some closed upset $D$ of $X$, and $D = \bigcap  U_i$ for some family $\{U_i\}$ of clopen upsets of $X$. By \cref{GvG LG basis}, each $U_i$ is a finite union from $\L\G$, and the result follows.

\eqref{subbasis 2} The proof is similar, but uses \cref{GvG LG basis for Y} instead of \ref{lem: properties of GvG spaces 2}\eqref{GvG LG basis}.
\end{proof}

\cref{lem: S versus BX} shows that $\H$ satisfies \cref{Hg LH subbasis}, and \crefrange{boxdiamond is closed}{bsdiamond is closed} that $\H$ satisfies \cref{Hg closed}.

\begin{lemma}
$\H$ satisfies Definition~\emph{\ref{def: Hartung space}(\ref{Hg maximal},\ref{Hg not R related})}.
\end{lemma}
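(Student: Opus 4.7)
The plan is to establish the two conditions separately, both by transferring structure from the ambient GvG-space $\G = (X,R,Y)$ down to the subspaces $X_0$ and $Y_0$, using the work already done in Lemmas~\ref{lem: Hg partial orders and R related}--\ref{lem: S versus BX}.

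For \cref{Hg maximal}, suppose $x\rel{R_0}y$, so $x\in X_0$, $y\in Y_0$, and $x\rel{R}y$. By \cref{producing x' in X0}, there exists $x'\in\max R^{-1}[y]\subseteq X_0$ with $x\le x'$. Since $\le_{X_0}$ coincides with the restriction of the Priestley order on $X$ by \cref{lem: Hg partial orders and R related}, the inequality $x\le_{X_0}x'$ holds. To see that $x'$ is $y$-maximal in the sense of \cref{Hg y-maximal}, suppose $x'\le_{X_0}x''$ with $x''\rel{R_0}y$. Then $x'\le x''$ (again by \cref{lem: Hg partial orders and R related}) and $x''\in R^{-1}[y]$; maximality of $x'$ in $R^{-1}[y]$ forces $x''=x'$. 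The construction of a corresponding $y'\in Y_0$ with $y\le_{Y_0}y'$ and $y'$ being $x$-maximal is entirely analogous, using \cref{producing y' in Y0}.

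For \cref{Hg not R related}, suppose $x\in X_0$, $y\in Y_0$, and $x\nr{R_0}y$. Since $x\in X_0$ and $y\in Y_0$, this forces $x\nr{R}y$. By \cref{GvG 5}, there is a clopen upset $U\subseteq X$ with $U=\Box\bd U$, $x\in U$, and $y\notin\bd U$, so $U\in\L\G$. Set $A=U\cap X_0$. I claim $A\in\L\H$. By \cref{boxdiamond},
\[
\Box_0\bd_0 A = \Box\bd U\cap X_0 = U\cap X_0 = A.
\]
Since the topology on $X_0$ is the subspace topology of the open downset topology on $X$, and $U$ is a clopen upset, hence closed in that topology, $A$ is closed in $X_0$. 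By \cref{phi for H}, $\bd_0 A=\bd U\cap Y_0$, and by \cref{GvG 3}, $\bd U$ is a clopen downset of $Y$; therefore $Y-\bd U$ is a clopen upset of $Y$, hence closed in the open downset topology on $Y$, so $Y_0-\bd_0 A=(Y-\bd U)\cap Y_0$ is closed in $Y_0$. Thus $A\in\L\H$, and we have $x\in A$ and $y\notin\bd_0 A$, as required.

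The only mildly subtle point is the verification in the second part that $A$ and $-\bd_0 A$ are closed in the subspace topologies, which hinges on the fact that clopen upsets of a Priestley space are closed in the open downset topology; everything else is an immediate bookkeeping consequence of \cref{lem: Hg partial orders and R related,lem: Galois closure preserves closed sets,producing x' in X0,producing y' in Y0}.
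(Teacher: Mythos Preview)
Your proof is correct and follows essentially the same approach as the paper: both parts transfer the relevant GvG-space conditions (\cref{producing x' in X0}, \cref{producing y' in Y0}, \cref{GvG 5}) down to $X_0,Y_0$ via \cref{lem: Hg partial orders and R related} and \cref{lem: Galois closure preserves closed sets}. If anything, your argument is more careful than the paper's, which does not spell out the verification that $A=U\cap X_0\in\L\H$ (the closedness of $A$ and of $-\bd_0 A$) and leaves it implicit.
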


\begin{proof}
For \cref{Hg maximal}, let $x \in X_0$. Then $x$ is $y$-maximal for some $y \in Y$. By \cref{producing y' in Y0}, there is $y' \ge y$ such that $y'$ is $x$-maximal, so $y' \in Y_0$. Suppose that $x \le x'$ with $x' \rel{R} y'$. Then $x' \rel{R} y$ (see \cref{GvG 2}). Thus, $x' = x$ since $x$ is $y$-maximal. Consequently, $x$ is $y'$-maximal for some $y \in Y_0$. The other half of \cref{Hg maximal} is proved similarly. 

For \cref{Hg not R related}, if $x \nr{R_0} y$, then $x \nr{R} y$, so by \cref{GvG 5} there is $U = \Box\bd U$ with $x \in U$ and $y \notin \bd U$. Set $A = U \cap X_0$. Then $x \in A$ and $\bd_0 A = \bd U \cap Y_0$ by \cref{phi for H}, so $y \notin \bd_0 A$.
\end{proof}

The above lemmas verify that $\H$ satisfies the conditions of \cref{def: Hartung space}. We thus have:

\begin{proposition} \label{prop: HH on objects}
If $\G = (X, R, Y)$ is a GvG-space, then $\H = (X_0, R_0, Y_0)$ is a Hartung space.
\end{proposition}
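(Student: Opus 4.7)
The plan is to verify each of the six axioms of \cref{def: Hartung space} for $\H = (X_0, R_0, Y_0)$ in turn, leveraging the preparatory lemmas that transfer structure from $(X, R, Y)$ to $(X_0, R_0, Y_0)$. Since these lemmas have been set up to isolate every bit of substantive work, the proposition itself becomes a matter of assembly.

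First I would dispatch the order and compactness axioms \eqref{Hg partial orders} and \eqref{Hg compact}: the former is \cref{lem: Hg partial orders and R related}, which identifies $\le_{X_0}$ and $\le_{Y_0}$ with the restrictions of the Priestley orders of $X$ and $Y$, and the latter is \cref{lem: compactness}, which deduces compactness of $R_0$ from the identity $R = \down R_0$. The subbasis condition \eqref{Hg LH subbasis} is precisely \cref{lem: S versus BX}, and the closure axiom \eqref{Hg closed} is \cref{boxdiamond is closed,bsdiamond is closed}.

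Next I would handle the maximality axiom \eqref{Hg maximal}: given $x \rel{R_0} y$, \cref{producing x' in X0} yields $x' \in \max R^{-1}[y] \subseteq X_0$ above $x$, and since $\le_{X_0}$ is the restriction of the Priestley order on $X$, such $x'$ is automatically $y$-maximal in the sense of \cref{Hg y-maximal}; the dual invocation of \cref{producing y' in Y0} produces the matching $x$-maximal $y' \in Y_0$. For the separation axiom \eqref{Hg not R related}, if $x \nr{R_0} y$ then $x \nr{R} y$, so \cref{GvG 5} furnishes $U \in \L\G$ with $U = \Box\bd U$, $x \in U$, and $y \notin \bd U$; setting $A = U \cap X_0$ and applying \cref{phi for H,boxdiamond} yields $\bd_0 A = \bd U \cap Y_0$ and $\Box_0 \bd_0 A = \Box\bd U \cap X_0 = A$, with both $A$ and $-\bd_0 A$ closed because $U$ and $-\bd U$ are clopen in $X$ and $Y$ respectively. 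Hence $A \in \L\H$ separates $x$ from $y$.

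The technical crux, already absorbed into the preparatory \cref{lem: Galois closure preserves closed sets}, is the closure axiom \eqref{Hg closed}: one must pass from a closed upset $A' \subseteq X$ to a down-directed presentation $A' = \bigcap U_i$ by clopen upsets and invoke Esakia's \cref{lem: Esakia} to commute the intersection with $\bd$, thereby guaranteeing that $\Box\bd A'$, and hence $\Box_0\bd_0 A = \Box\bd A' \cap X_0$, is closed. Once this is in hand, the remaining axioms follow by routine bookkeeping from the clean interplay of $\Box_0, \bd_0$ with $\Box, \bd$, and the proposition is complete.
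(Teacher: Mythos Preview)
Your proposal is correct and follows essentially the same approach as the paper: both assemble the preparatory Lemmas~\ref{lem: Hg partial orders and R related}--\ref{lem: S versus BX} and the unnamed lemma immediately preceding the proposition to verify each axiom of \cref{def: Hartung space} in turn. If anything, your verification of axiom~\eqref{Hg not R related} is slightly more explicit than the paper's, since you spell out why $A = U \cap X_0$ lies in $\L\H$ (via \cref{boxdiamond} and the clopenness of $U$ and $-\bd U$), whereas the paper's treatment leaves this implicit.
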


It follows from \cite[Ex.~4.1]{GvG14} that in general $X_0$ is properly contained in $X$. The example of Gehrke-van Gool is infinite. The next proposition shows that this is unavoidable as $X_0 = X$ in the finite case (see \cite[Ex.~5.4]{GvG14}):

\begin{proposition}
If $\G = (X, R, Y)$ is a finite GvG-space, then $X_0 = X$.
\end{proposition}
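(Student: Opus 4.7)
The plan is to argue by contradiction: fix $x \in X$ and assume $x \notin X_0$, then manufacture a clopen upset that violates \cref{GvG 6}. The first step is to extract two facts already visible in the paper. First, $R[x] \ne \varnothing$ for every $x \in X$: this is recorded in the proof of \cref{rem: lattice ops for LG}, which shows $\Box\bd\varnothing = \varnothing$, i.e.\ $\{x : R[x] = \varnothing\} = \varnothing$. Second, since $X$ is finite, $\up x$ is a finite intersection of elements of $\L\G$ by \cref{GvG up x}, and $\L\G$ is closed under finite intersections by \cref{rem: lattice ops for LG}, so $\up x \in \L\G$; in particular $\Box\bd\up x = \up x$.

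The main combinatorial step uses $x \notin X_0$. For each $y \in R[x]$, maximality of $x$ in $R^{-1}[y]$ fails, so there is $x' > x$ with $x' \rel{R} y$; hence
\[
R[x] \subseteq \bigcup_{x' > x} R[x'].
\]
The reverse inclusion holds by \cref{GvG 1}, so $R[x] = \bigcup_{x' > x} R[x']$. Now set $A = \up x \setminus \{x\} = \bigcup_{x' > x} \up x'$. This is a clopen upset of $X$ (all subsets are clopen in the finite case), and
\[
\bd A \;=\; \bigcup_{x' > x} R[x'] \;=\; R[x] \;=\; \bd\up x.
\]
Therefore $\Box\bd A = \Box\bd\up x = \up x$.

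To close the argument, observe that $X_0 \cap \Box\bd A = X_0 \cap \up x$, and since $x \notin X_0$ by assumption, $X_0 \cap \up x \subseteq \up x \setminus \{x\} = A$. Applying \cref{GvG 6} to the clopen upset $A$ yields $A = \Box\bd A = \up x$, contradicting $x \in \up x \setminus A$. Hence $x \in X_0$, and $X = X_0$ as desired. The only delicate point is verifying $\up x \in \L\G$ via the finite intersection trick; everything else is a direct application of the axioms. Note that the argument also covers the case where $x$ is maximal in $X$: then the union over $x' > x$ is empty, forcing $R[x] = \varnothing$, which is impossible.
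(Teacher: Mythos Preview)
Your proof is correct. Both your argument and the paper's hinge on the same two ingredients: that $\up x \in \L\G$ (via the finite intersection in \cref{GvG up x}) and an application of \cref{GvG 6} to a suitable clopen upset inside $\up x$. The difference lies in the choice of that upset and how the conclusion is extracted. The paper takes $U = \up(\up x \cap X_0)$, uses \cref{GvG 6} to show $U \in \L\G$, and then invokes \cref{x notin U GvG} to force $x \in U$, hence $x \in X_0$. You instead assume $x \notin X_0$, take $A = \up x \setminus \{x\}$, and use the combinatorial identity $R[x] = \bigcup_{x' > x} R[x']$ (which follows directly from $x \notin X_0$) to compute $\bd A = \bd\up x$; then \cref{GvG 6} forces $A = \up x$, a contradiction. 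Your route is slightly more self-contained in that it avoids \cref{x notin U GvG}, trading that lemma for a direct computation of $\bd A$; the paper's route is non-contradictory and makes the role of $X_0$ more visibly structural. Both are short and natural.
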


\begin{proof}
Let $x \in X$ and set $U = \up (\up x \cap X_0)$. Then $U$ is an upset, and hence a clopen upset of $X$ because $X$ is finite, hence discrete. Clearly $U \subseteq \up x$. For the other inclusion, observe that $\up x \in \L\G$ since the intersection in \cref{GvG up x} is finite and $\L\G$ is closed under finite intersections. Therefore, $U \subseteq \up x$ implies $\Box\bd U \subseteq \up x$. Thus, $X_0 \cap \Box\bd U \subseteq X_0 \cap \up x \subseteq U$, so $\Box\bd U = U$ by \cref{GvG 6}, and hence $U \in \L\G$. If $x \notin U$, then by \cref{x notin U GvG} there is $x' \in X_0$ with $x \le x'$ and $x' \notin U$, contradicting the definition of $U$. Consequently, $U = \up x$, and so $x \in X_0$ since $U = \up (\up x \cap X_0)$.
\end{proof}

Define a functor $\HH \colon \GvG \to \Hg$ by setting $\HH(X, R, Y) = (X_0, R_0, Y_0)$ for each $(X, R, Y) \in \GvG$ and $\HH(S, T) = (S_0, T_0)$ for each $\GvG$-morphism $(S, T) \colon (X, R, Y) \to (X', R', Y')$, where $S_0 = S \cap (X_0 \times X_0')$ and $T_0 = T \cap (Y_0 \times Y_0')$.
That $\HH$ is well defined on objects follows from \cref{prop: HH on objects}. In order to see that $\HH$ is well defined on morphisms, we require the following two facts.

\begin{proposition} \label{lem: LH = LG} 
Let $\G \in \GvG$. Then the map $\delta_\G \colon \L\G \to \L\H$, defined by $\delta_\G(U) = U \cap X_0$, is a $\Lat$-isomorphism. 
\end{proposition}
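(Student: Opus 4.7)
I would verify four items: $\delta_\G$ is well defined, it is a bounded lattice homomorphism, it is injective, and it is surjective. Well-definedness is immediate from \cref{lem: Galois closure preserves closed sets}: for $U \in \L\G$, \cref{boxdiamond} applied to the upset $A' = U$ gives $\Box_0\bd_0(U\cap X_0) = \Box\bd U\cap X_0 = U\cap X_0$; the set $U\cap X_0$ is closed in $X_0$ because $U$ is Priestley-clopen; and $-\bd_0(U\cap X_0) = (-\bd U)\cap Y_0$ by \cref{phi for H} is closed in $Y_0$ because $-\bd U$ is a clopen upset of $Y$ by \cref{GvG 3}.

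The homomorphism conditions are routine. The top $X$ maps to $X_0$, the bottom $\varnothing$ to $\varnothing$, and meets are preserved since set-theoretic intersection commutes with restriction to $X_0$. For joins, \cref{boxdiamond} applied with $A' = U_1\cup U_2$ yields
\[
\delta_\G(U_1\vee U_2) \;=\; \Box\bd(U_1\cup U_2)\cap X_0 \;=\; \Box_0\bd_0(\delta_\G(U_1)\cup \delta_\G(U_2)) \;=\; \delta_\G(U_1)\vee \delta_\G(U_2).
\]
For injectivity, suppose $\delta_\G(U_1)=\delta_\G(U_2)$ but $x\in U_1\setminus U_2$. By \cref{x notin U GvG}, there is $x' \in X_0$ with $x\le x'$ and $x'\notin U_2$; since $U_1$ is an upset and $x\in U_1$, we have $x'\in U_1\cap X_0$, contradicting the assumption. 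Hence $U_1\subseteq U_2$, and equality follows by symmetry.

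The main work is surjectivity. Given $A \in \L\H$, my strategy is to exhibit a clopen downset $V$ of $Y$ such that $V\cap Y_0 = \bd_0 A$; granted this, \cref{GvG 4} makes $U := \Box V$ a clopen upset of $X$, the Galois identity $\Box\bd\Box = \Box$ forces $U = \Box\bd U$, hence $U\in \L\G$, and \cref{psi for H} with $B' = V$ yields $U\cap X_0 = \Box_0(V\cap Y_0) = \Box_0\bd_0 A = A$. To locate $V$, I would use that $-\bd_0 A$ is closed in $Y_0$, so $-\bd_0 A = E\cap Y_0$ for some Priestley-closed upset $E$ of $Y$, and $Y\setminus E$ is an open downset with trace $\bd_0 A$ on $Y_0$. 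By \cref{GvG LG basis for Y}, $Y\setminus E = \bigcup_j \bd V_j$ with $V_j \in \L\G$; an analogous computation on the $X$-side, using that $-A$ is open in $X_0$ and that $\{-V : V\in \L\G\}$ is a basis for the open downset topology on $X$, gives $A = (\bigcap_i V_i)\cap X_0$ for a family that we may assume closed under finite intersections in $\L\G$. Then \cref{lem: Esakia} applied to the closed relation $R$ yields $\bd(\bigcap_i V_i) = \bigcap_i \bd V_i$, and Galois gives $\bigcap_i V_i = \Box\bd(\bigcap_i V_i)$.

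The main obstacle is to collapse this directed intersection to a single clopen set, i.e., to produce a single $V \in \L\G$ (or equivalently a single clopen downset of $Y$ with trace $\bd_0 A$) rather than a directed intersection of such. The plan is to argue, via \cref{GvG 7} applied to a suitable clopen candidate extracted from the subbasic representation of $Y\setminus E$, that the trace condition $Y_0 \cap V \subseteq \bd\Box V$ is automatically satisfied (since $\bd_0 A \subseteq \bd\Box V$ whenever $A \subseteq \Box V$), which pins $V$ down as a $\bd\Box V$-fixed clopen downset and hence makes $\Box V$ the required element of $\L\G$; the symmetric appeal to \cref{GvG 6} on the $X$-side plays the same role for the dual reduction. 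The delicate point is that the two closedness hypotheses packaged in $A\in\L\H$ (namely $A$ closed in $X_0$ and $-\bd_0 A$ closed in $Y_0$) supply precisely the finite-character rigidity required by \cref{GvG 6,GvG 7} to pass from the directed intersection $\bigcap_i V_i$ and its $\bd$-image $\bigcap_i \bd V_i$ to single clopen representatives with the correct traces on $X_0$ and $Y_0$.
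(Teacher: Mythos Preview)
Your treatment of well-definedness, the lattice-homomorphism conditions, and injectivity is correct and matches the paper's argument (the paper phrases it as order-preserving/order-reflecting rather than homomorphism/injective, which for bounded lattices is equivalent).

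The surjectivity argument has a genuine gap. Your endgame is to invoke \cref{GvG 6} or \cref{GvG 7} on ``a suitable clopen candidate extracted from the subbasic representation'', but you never produce such a candidate, and those axioms only apply to \emph{clopen} sets. What you actually have in hand is the closed upset $\bigcap_i V_i$ and the closed downset $\bigcap_i \bd V_i$; neither is clopen, so neither \cref{GvG 6} nor \cref{GvG 7} can be fed to them, and you give no mechanism for selecting a single $V_i$ from the directed family. Esakia's lemma tells you $\bd$ commutes with the directed intersection, but that does not by itself collapse the intersection to a clopen set.

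The paper closes this gap with a different tool you do not mention: the compactness of $R_0$ as a subset of $X_0\times Y_0$ (\cref{lem: compactness}). Writing $A = A'\cap X_0$ with $A' = \bigcap_i U_i$ a down-directed intersection of clopen upsets of $X$, one observes that $A\times(-\bd_0 A)\subseteq -R_0$, hence
\[
R_0 \;\cap\; \bigcap_i\bigl((U_i\cap X_0)\times(-\bd_0 A)\bigr) \;=\; \varnothing.
\]
Each factor $(U_i\cap X_0)\times(-\bd_0 A)$ is closed in $X_0\times Y_0$ (here the hypothesis that $-\bd_0 A$ is closed is used), so compactness of $R_0$ and directedness give a \emph{single} clopen upset $U$ with $R_0\cap((U\cap X_0)\times(-\bd_0 A))=\varnothing$, i.e.\ $U\cap X_0\subseteq \Box_0\bd_0 A = A$. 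Thus $A = U\cap X_0$, and then $\Box\bd U\in\L\G$ with $\delta_\G(\Box\bd U)=A$ by \cref{boxdiamond}. This compactness step is precisely the ``finite-character rigidity'' you were reaching for; without it your plan does not go through.
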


\begin{proof}
\cref{boxdiamond} shows that $\delta_\G$ is well defined, and clearly it is order-preserving. To show it is order-reflecting, let $U, V \in \L\G$ with $U \not\subseteq V$. Then there is $x \in U - V$. By \cref{x notin U GvG}, there is $x' \in X_0$ with $x \le x'$ and $x' \notin V$ . We have $x' \in U$ since $U$ is an upset. Consequently, $\delta_\G(U) \not\subseteq \delta_\G(V)$. 

To see that $\delta_\G$ is onto, let $A \in \L\H$. Since $A$ is closed in $X_0$, there is a closed upset $A'$ of $X$ with $A = A' \cap X_0$. Therefore, there is a down-directed family $\{U_i \}$ of clopen upsets of $X$ with $A' = \bigcap U_i$. Because $A \times -\bd_0 A \subseteq -R_0$, we have
\[
R_0 \cap \bigcap \{ (U_i\cap X_0) \times -\bd_0 A  \} = \varnothing. 
\]
Compactness of $R_0$ implies that a finite intersection is empty. Since $\{U_i\}$ is a down-directed family, there is a clopen upset $U \supseteq A'$ with $R_0 \cap ((U \cap X_0) \times -\bd_0 A) = \varnothing$. This forces $U \cap X_0 \subseteq \Box_0\bd_0 A = A$, and so $A = U \cap X_0$. We then have $A = \Box_0\bd_0 A = \Box\bd U \cap X_0$ by \cref{boxdiamond}. Because $\Box\bd U \in \L\G$, we conclude that $A = \delta_\G(U)$. Thus, $\delta_\G$ is an order-isomorphism, and hence a $\Lat$-isomorphism.
\end{proof}

\begin{lemma} \label{lem: notin U GvG case} 
Let $\G = (X, R, Y)$ and $\G' = (X', R', Y')$ be GvG-spaces. 
Let $(S, T) \colon \G \to \G'$ be a $\GvG$-morphism. If $U' \in \L\G'$ and $A' = U' \cap X_0'$, then
\[
\Box_{S_0}A' = \Box_SU' \cap X_0 \textrm{ and } \Box_{T_0} \bd_0' A'= \Box_T \bd' U' \cap Y_0. 
\]
\end{lemma}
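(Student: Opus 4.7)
The plan is to prove both equalities by double inclusion, relying on \cref{phi for H} applied to $\G'$, which gives $\bd_0' A' = \bd' U' \cap Y_0'$. For each equality, the inclusion $\supseteq$ is routine: from $S[x] \subseteq U'$ one immediately obtains $S_0[x] = S[x] \cap X_0' \subseteq U' \cap X_0' = A'$, and similarly from $T[y] \subseteq \bd' U'$ one obtains $T_0[y] \subseteq \bd_0' A'$.

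For the $\subseteq$ direction of the first equality, I would argue by contradiction: given $x \in \Box_{S_0} A'$ (so $x \in X_0$ and $S[x] \cap X_0' \subseteq U'$), a hypothetical $x^* \in S[x] \setminus U'$ would, by \cref{x notin U GvG} applied to $\G'$, lift to some $x^{**} \in X_0'$ with $x^* \le x^{**}$ and $x^{**} \notin U'$; but $S[x]$ is an upset by \cref{GvG S 2}, forcing $x^{**} \in S[x] \cap X_0' \subseteq U'$, a contradiction.

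The $\subseteq$ direction of the second equality is the main obstacle, since the excerpt establishes no dual of \cref{x notin U GvG} on the $Y$-side. My plan is to derive such a dual from \cref{GvG 7}: applied to the clopen downset $V = Y'$, whose hypothesis $Y_0' \cap Y' = Y_0' \subseteq R'[X'] = \bd' \Box' Y'$ is automatic from the very definition of $Y_0'$, it forces $Y' = R'[X']$, so that $R'^{-1}[y'] \neq \varnothing$ for every $y' \in Y'$. Combined with \cref{producing y' in Y0} applied to the closed relation $R'$ (see \cref{closed order}), this yields that $Y_0'$ is cofinal in $Y'$.

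The finishing argument then mirrors the $X$-side: given $y \in \Box_{T_0} \bd_0' A'$ and supposing for contradiction $y^* \in T[y] \setminus \bd' U'$, I would pick $y^{**} \in Y_0'$ with $y^* \le y^{**}$. Since $\bd' U'$ is a downset of $Y'$ by \cref{GvG 3} applied to $\G'$ and $y^* \notin \bd' U'$, it follows that $y^{**} \notin \bd' U'$ as well; since $T[y]$ is an upset by \cref{GvG S 2}, $y^{**} \in T[y] \cap Y_0' \subseteq \bd' U'$, the sought contradiction.
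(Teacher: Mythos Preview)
Your proof is correct and follows essentially the same route as the paper's. For the first equality, the paper argues the $\subseteq$ direction by unfolding $U' = \Box'\bd' U'$ and lifting via \cref{producing x' in X0}, which is precisely the content of \cref{x notin U GvG} that you invoke directly; your version is cleaner but not different in substance. For the second equality, the paper likewise lifts the offending $y'$ to $Y_0'$ via \cref{producing y' in Y0} and uses that $\bd' U'$ is a downset, exactly as you do. Your explicit derivation of the cofinality of $Y_0'$ in $Y'$ from \cref{GvG 7} is a detail the paper glosses over (it simply cites \cref{producing y' in Y0} without checking that $R'^{-1}[y'] \ne \varnothing$), so your argument is in fact slightly more careful on this point.
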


\begin{proof}
Let $x \in \Box_S U' \cap X_0$. Then $S_0[x] = S[x] \cap X_0' \subseteq U' \cap X_0' = A'$, so $x \in \Box_{S_0} A'$. Conversely, suppose that $x \in \Box_{S_0}A'$. If $x \notin \Box_SU'$, then $S[x] \not\subseteq U'$, so there is $x' \in X'$ with $x \rel{S} x'$ and $x' \notin U'$. Since $U'\in L\G'$ and $x' \notin U'$, $R[x'] \not\subseteq R[U']$ (see \cref{Rx in RU}). Therefore, there is $y \in Y$ with $x' \rel{R} y$ and $y \notin R[U']$. By \cref{producing x' in X0}, there is $z' \in X_0'$ with $x' \le z'$ and $z' \rel{R} y$. Thus, $z' \notin U'$ since $y \notin R[U']$. By \cref{GvG S 2}, $S[x]$ is an upset, so $z' \in S[x] \cap X_0' = S_0[x]$. This forces $z' \in A' \subseteq U'$, a contradiction.  Consequently, $x \in \Box_S U' \cap X_0$. 

For the second statement, the inclusion $\Box_T \bd' U' \cap Y_0 \subseteq \Box_{T_0} \bd_0' A'$ is straightforward. For the reverse inclusion, let $y \in \Box_{T_0} \bd_0' A'$. If $y \notin \Box_T \bd' U'$, then $T[y] \not\subseteq \bd U'$, so there is $y'$ with $y \rel{T} y'$ and $y' \notin \bd U'$. By \cref{S and T closed,producing y' in Y0}, there is $z' \in Y_0'$ with $y' \le z'$ and $y \rel{T} z'$. Therefore, $y \rel{T_0} z'
$, so $z' \in T_0[y]$. Because $\bd U'$ is a downset and $y' \notin \bd U'$, we have $z' \notin \bd U'$.
This is a contradiction to $y \in \Box_{T_0} \bd_0' A'$. Thus, $y \in \Box_T \bd' U'$.
\end{proof}

\begin{proposition} \label{prop: HH on morphisms}
Let $\G,  \G'\in \GvG$, $\H = \HH(\G)$, and $\H' = \HH(\G')$. If $(S, T) \colon \G \to \G'$ is a $\GvG$-morphism, then $(S_0, T_0) \colon \H \to \H'$ is an $\Hg$-morphism.
\end{proposition}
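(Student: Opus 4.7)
The plan is to verify the four conditions of \cref{def: Hartung morphisms} for $(S_0, T_0)$ by translating between $\L\H$ and $\L\G$ via the isomorphism $\delta_\G \colon \L\G \to \L\H$ of \cref{lem: LH = LG}, and then invoking the corresponding GvG-morphism conditions for $(S, T)$. The key computational tools are \cref{lem: notin U GvG case} (which identifies $\Box_{S_0}A'$ with $\Box_S U' \cap X_0$ when $A' = U' \cap X_0'$) and \crefrange{phi for H}{psi for H} (which relates $\bd_0$ and $\Box_0$ with $\bd$ and $\Box$).

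For \cref{Hg morphism Box}, given $A' \in \L\H'$, first use $\delta_{\G'}$ to write $A' = U' \cap X_0'$ for some $U' \in \L\G'$. By the GvG condition \cref{GvG morphism 1} for $(S, T)$, $\Box_S U' \in \L\G$, and by \cref{lem: notin U GvG case}, $\Box_{S_0}A' = \Box_S U' \cap X_0$, which lies in $\L\H$ by \cref{lem: LH = LG}. To establish $\bd_0 \Box_{S_0} A' = \Diamond_{T_0} \bd'_0 A'$, \cref{phi for H} applied to the upset $\Box_S U'$ gives $\bd_0(\Box_S U' \cap X_0) = \bd(\Box_S U') \cap Y_0 = \Diamond_T \bd' U' \cap Y_0$, using the GvG morphism equation. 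The main obstacle is the reverse translation: verifying that $T^{-1}[\bd' U'] \cap Y_0$ equals $T_0^{-1}[\bd' U' \cap Y_0'] = \Diamond_{T_0} \bd'_0 A'$. The nontrivial inclusion is $\supseteq$: given $y \in Y_0$ with $y \rel{T} y'$ and $y' \in \bd' U'$, one must lift $y'$ to an element of $Y_0'$ still $T$-related to $y$ and still in $\bd' U'$. This is achieved by applying \cref{producing y' in Y0} to $R'$ to produce $y'' \in Y_0'$ with $y' \le y''$ and $y'' \in R'[U']$, then using the fact that $T[y]$ is an upset (\cref{GvG S 2}) to conclude $y \rel{T} y''$, hence $y \rel{T_0} y''$.

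Conditions \cref{Hg morphism not S,Hg morphism not T} are straightforward. If $x \nr{S_0} x'$ with $x \in X_0$, $x' \in X_0'$, then $x \nr{S} x'$, so by \cref{GvG morphism 2} there is $U' \in \L\G'$ with $x \in \Box_S U'$ and $x' \notin U'$; taking $A' = U' \cap X_0'$ gives the required witness via \cref{lem: notin U GvG case}. The argument for \cref{Hg morphism not T} is analogous, using \cref{GvG morphism 3} and the computation of $\Diamond_{T_0} \bd_0' A'$ already established above.

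Finally, for \cref{Hg morphism R}, suppose $x \rel{R_0} y$. Then $x \rel{R} y$, so by \cref{GvG morphism 4} there exist $x' \in X'$ and $y' \in Y'$ with $x \rel{S} x'$, $y \rel{T} y'$, and $x' \rel{R'} y'$. Apply \cref{producing y' in Y0} to $R'$ to obtain $y'' \in \max R'[x'] \subseteq Y_0'$ with $y' \le y''$ and $x' \rel{R'} y''$, and then apply \cref{producing x' in X0} to obtain $x'' \in \max (R')^{-1}[y''] \subseteq X_0'$ with $x' \le x''$ and $x'' \rel{R'} y''$. Since $S[x]$ and $T[y]$ are upsets (\cref{GvG S 2}), we get $x \rel{S} x''$ and $y \rel{T} y''$, whence $x \rel{S_0} x''$ and $y \rel{T_0} y''$ with $x'' \rel{R_0'} y''$, as required.
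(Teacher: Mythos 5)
Your proof is correct and follows essentially the same route as the paper's: write $A' = U' \cap X_0'$ via \cref{lem: LH = LG}, compute $\Box_{S_0}$ and $\bd_0$ through \cref{lem: notin U GvG case} and \cref{phi for H}, and lift witnesses into $X_0'$ and $Y_0'$ using \cref{lem: producing elements from X0 or Y0} together with the fact that $S[x]$ and $T[y]$ are upsets (\cref{GvG S 2}). The only variations are cosmetic: you establish $\Diamond_T \bd' U' \cap Y_0 = \Diamond_{T_0}\bd_0' A'$ by a direct lifting argument where the paper invokes the second identity of \cref{lem: notin U GvG case}, and the inclusion you label ``$\supseteq$'' is in fact the containment of $T^{-1}[\bd' U'] \cap Y_0$ in $T_0^{-1}[\bd_0' A']$, which is the one your argument actually (and correctly) proves.
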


\begin{proof}
We first verify \cref{Hg morphism Box}. Let $A' \in \L\H'$. By \cref{lem: LH = LG}, we may write $A' = U' \cap X_0$ with $U' \in \L\G'$. Then $\Box_{S_0}A' = \Box_SU' \cap X_0$ and $\Box_{T_0} \bd_0' A'= \Box_T \bd' U' \cap Y_0$ by \cref{lem: notin U GvG case}, and $\Box_{S_0} A' \in \L\H$ by \cref{lem: LH = LG,GvG morphism 1}. Thus, by \cref{phi for H,GvG morphism 1},
\[
\bd_0\Box_{S_0}A' =  \bd_0(\Box_SU' \cap X_0) = \bd\Box_S U' \cap Y_0 =\Box_T \bd' U' \cap Y_0 = \Diamond_{T_0} \bd_0' A'.
\] 

We next verify \cref{Hg morphism not S}. 
Suppose that $x \nr{S_0} x'$. Then $x \nr{S} x'$, and since $(S, T)$ is a $\GvG$-morphism, there is $U \in \L\G'$ with $x \in \Box_S U'$ and $x' \notin U'$. If $A' = U' \cap X_0$, then $A' \in \L\H$ by \cref{lem: LH = LG}. We have $\Box_{S_0}A' = \Box_S U' \cap X_0$ by the previous paragraph. Therefore, $x \in \Box_{S_0}A'$ and $x' \notin A'$. A similar argument verifies \cref{Hg morphism not T}. 

Finally, we verify \cref{Hg morphism R}. Suppose that $x \rel{R_0} y$. Then $x \rel{R} y$, and since $(S, T)$ is a $\GvG$-morphism, there are $x', y'$ with $x \rel{S} x'$, $y \rel{T} y'$, and $x' \rel{R'} y$. By \cref{lem: producing elements from X0 or Y0}, there are $x'' \in X_0'$ and $y'' \in Y_0'$ such that $x' \le x''$, $y' \le y''$, and $x'' \rel{R'} y''$, so $x'' \rel{R_0'} y''$. Also, $x \rel{S} x''$ and $y \rel{T} y''$, hence $x \rel{S_0} x''$ and $y \rel{T_0} y''$. This completes the proof that $(S_0, T_0)$ is an $\Hg$-morphism.  
\end{proof}

\begin{theorem} \label{prop: H is a functor}
There is a functor $\HH \colon \GvG \to \Hg$ sending $(X, R, Y)$ to $(X_0, R_0, Y_0)$ and $(S, T) \colon \G \to \G'$ to $(S_0, T_0)$.
\end{theorem}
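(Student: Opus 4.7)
\cref{prop: HH on objects} already establishes that $\HH$ is well-defined on objects and \cref{prop: HH on morphisms} on morphisms; the plan is therefore to verify that $\HH$ preserves identities and composition. For identities, the identity morphism of $\G = (X, R, Y)$ in $\GvG$ is $(\le_X, \le_Y)$ by \cref{GvG identity}, while the identity of $\HH(\G) = (X_0, R_0, Y_0)$ in $\Hg$ is $(\le_{X_0}, \le_{Y_0})$ by \cref{Hg identity}. By \cref{lem: Hg partial orders and R related}, the orders $\le_{X_0}$ and $\le_{Y_0}$ are the restrictions of $\le_X$ and $\le_Y$ to $X_0$ and $Y_0$, so $(\le_X)_0 = \le_{X_0}$ and $(\le_Y)_0 = \le_{Y_0}$, showing that $\HH$ preserves identities.

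For composition, given $\GvG$-morphisms $(S_1, T_1) \colon \G_1 \to \G_2$ and $(S_2, T_2) \colon \G_2 \to \G_3$ and writing $\H_i = \HH(\G_i)$, the goal is the equality
\[
((S_2 \star S_1)_0,\ (T_2 \star T_1)_0) \ =\ ((S_2)_0 \star (S_1)_0,\ (T_2)_0 \star (T_1)_0).
\]
For the first coordinate, I would parametrize $\L\H_3$ as $\{U \cap X_{3,0} : U \in \L\G_3\}$ via \cref{lem: LH = LG} and apply \cref{lem: notin U GvG case} twice---the second application is legitimate since $\Box_{S_2}U \in \L\G_2$ by \cref{GvG morphism 1}---to deduce $\Box_{(S_1)_0}\Box_{(S_2)_0}(U \cap X_{3,0}) = \Box_{S_1}\Box_{S_2}U \cap X_{1,0}$. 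Unfolding the definitions of $\star$ in \cref{def: composition in GvG,def: composition in Hartung} then yields $(S_2)_0 \star (S_1)_0 = (S_2 \star S_1)_0$ directly.

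For the second coordinate I would argue by uniqueness rather than compute directly. Both $((S_2 \star S_1)_0, (T_2 \star T_1)_0)$ and $((S_2)_0 \star (S_1)_0, (T_2)_0 \star (T_1)_0)$ are Hartung morphisms $\H_1 \to \H_3$ by \cref{prop: HH on morphisms} and \cref{Hg morphism} respectively, sharing the same first coordinate by the previous step. The defining identity $\Diamond_T \bd_{R_{0,3}} A = \bd_{R_{0,1}} \Box_S A$ from \cref{Hg morphism Box} shows that the family $\{\Diamond_T \bd_{R_{0,3}} A : A \in \L\H_3\}$ depends only on the first coordinate, and by the characterization in \cref{equivalent conditions for S(1)} (applied to $T$ rather than $S$), this family pins down $T$ uniquely. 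Hence the two second coordinates coincide, completing the proof. The main obstacle is precisely this last step: a direct comparison via the $\Diamond$-formula would require lifting a witness $z \in Y_2$ to $Y_{2,0}$ while preserving both $y \rel{T_1} z$ and $z \rel{T_2} w$, which is delicate because $T_1[y]$ is an upset whereas $T_2^{-1}[w]$ is a downset; the detour through the uniqueness of $T$ given $S$ sidesteps this difficulty cleanly.
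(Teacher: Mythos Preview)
Your proof is correct. For identities and for the first coordinate of composition you follow exactly the paper's route: parametrize $\L\H_3$ via \cref{lem: LH = LG}, apply \cref{lem: notin U GvG case} twice to obtain the double-box identity $\Box_{(S_1)_0}\Box_{(S_2)_0}(U\cap X_{3,0})=\Box_{S_1}\Box_{S_2}U\cap X_{1,0}$, and then compare the two $\star$-definitions.

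For the second coordinate you take a genuinely different path. The paper simply asserts that the argument for $T$ is ``similar'' to that for $S$; carrying this out directly requires a $\Diamond$-analog of \cref{lem: notin U GvG case}, namely $\Diamond_{T_0}\bd_0'A'=\Diamond_T\bd'U'\cap Y_0$, which one obtains not by a naive witness-lift (your observation that $T_1[y]$ is an upset while $T_2^{-1}[w]$ is a downset correctly identifies the obstruction) but by routing through the $S$-side via $\Diamond_T\bd'U'=\bd\Box_SU'$ and \cref{phi for H}. Your alternative---observing that in any Hartung morphism $(S,T)$ the relation $T$ is completely determined by $S$ through the identity $\Diamond_T\bd'A'=\bd\Box_SA'$ of \cref{Hg morphism Box} together with the characterization in \cref{equivalent conditions for S(1)}---is a clean and correct shortcut. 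It replaces a second computation by a one-line uniqueness principle, and in fact isolates a useful general fact (the second coordinate of a morphism in $\Hg$, and likewise in $\GvG$ and $\Urq$, is redundant data once the first is fixed). The price is that your argument is less symmetric and depends on having already established that both candidate pairs are genuine $\Hg$-morphisms, which you correctly source from \cref{prop: HH on morphisms} and \cref{Hg morphism}.
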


\begin{proof}
By \cref{prop: HH on objects,prop: HH on morphisms}, $\HH$ is well defined on objects and morphisms. Let $\G = (X, R, Y) \in \GvG$. Then $1_\G = (\le_X, \le_Y)$. Therefore, by \cref{Hg identity},
\[
\HH(\le_X, \le_Y) = (\le_{X_0}, \le_{Y_0}) = 1_{\HH(\G)},
\]
so $\HH$ preserves identity morphisms. 

To show that composition is preserved, let $(S, T) \colon \G \to \G'$ and $(S', T') \colon \G' \to \G''$ be $\GvG$-morphisms. Set $S'' = S' \star S$ and $T'' = T' \star T$. We show that $S_0'' = S_0' \star S_0$; the proof that $T_0'' = T_0' \star T_0$ is similar. Let $x \in X_0$ and $x'' \in X_0''$. Then $x \rel{(S_0' \star S_0)} x''$ iff $x \in \Box_{S_0}\Box_{S_0'} A''$ implies $x'' \in A''$ for each $A'' \in \L\H''$. By \cref{lem: LH = LG}, we may write $A'' = U'' \cap X_0''$ for some $U'' \in \L\G''$. Then $x \rel{(S' \star S)} x''$ iff $x \in \Box_S \Box_{S'} U''$ implies $x'' \in U''$ for each $U'' \in \L\G'$. We show that $x \rel{(S_0' \star S_0)} x''$ iff $x \rel{(S' \star S)} x''$, which will show that $S''_0 = S_0' \star S_0$. First, let $x \rel{(S_0' \star S_0)} x''$. Let $U'' \in \L\G''$ and set $A'' = U'' \cap X_0''$. Suppose that $x \in \Box_S \Box_{S'} U''$. By \cref{lem: notin U GvG case},
\begin{equation}
\Box_{S_0} \Box_{S_0'} A'' = \Box_{S_0}(\Box_{S'}U'' \cap X_0') = \Box_S\Box_{S'}U'' \cap X_0. \label{eqn: double box}
\end{equation}
Therefore, $x \in \Box_S \Box_{S'}U'' \cap X_0 = \Box_{S_0}\Box_{S_0'}A''$, so $x'' \in A''$, and hence $x'' \in U''$. This shows that ${x \rel{(S' \star S)} x''}$. 

Conversely, let $x \rel{(S' \star S)} x''$ and $A'' \in \L\H''$. Suppose $x \in \Box_{S_0}\Box_{S_0'} A''$. By \cref{lem: LH = LG}, there is $U'' \in \L\G''$ with $A'' = U'' \cap X_0''$. By \cref{eqn: double box},  $\Box_{S_0}\Box_{S_0'} A'' = \Box_S \Box_{S'} U'' \cap X_0$. Thus, $x \in \Box_S\Box_{S'} U''$, so $x'' \in U''$, and hence $x'' \in U'' \cap X_0'' = A''$. This shows that $x \rel{(S_0' \star S_0)} x''$.  Consequently, $S_0'' = S_0' \star S_0$, as desired. Therefore, $\HH$ preserves composition, and hence is a functor.
\end{proof}


\section{From Hartung to Urquhart} \label{sec: Hg and Urq}

In this section we connect the Hartung approach to that of Urquhart. The resulting spaces were introduced and studied in \cite{Urq78}. The morphisms that Urquhart considered dually correspond to onto bounded lattice homomorphisms \cite[Sec.~5]{Urq78}. We introduce a more general notion of morphism between Urquhart spaces, which dually correspond to all bounded lattice homomorphisms. We show that there is a functor from $\Hg$ to the resulting category $\Urq$ of Urquhart spaces. In \cref{sec: HD GvG Hg Urq} we will show that this functor is an equivalence and in \cref{sec: conclusion} that $\Urq$ is dually equivalent to $\Lat$. 

Let $Z$ be a topological space with two quasi-orders $\le_1$ and $\le_2$. 
We write $\Up(Z, \le_i)$ for the upsets of $(Z,\le_i)$ for $i = 1,2$. By \cite[Lem.~1]{Urq78}, we have the Galois connection 
$\phi \colon \Up(Z, \le_1) \to \Up(Z, \le_2)$ and $\psi \colon \Up(Z, \le_2) \to \Up(Z, \le_1)$
given by 
\[
\phi A = -\down_2 A \quad \mbox{and} \quad \psi A = -\down_1 A,
\]
where $\down_i A$ is the $\le_i$-downset of $A$.

\begin{definition} \label{def: LU}
For a topological space $\U = (Z, \le_1, \le_2)$ with two quasi-orders, let
\[
\L\U = \{ C \in \Up(Z, \le_1) : C = \psi\phi C, \ C \textrm{ and }\phi C \textrm{ are closed} \}.
\] 
\end{definition}

Following \cite[p.~46]{Urq78}, we call the triple $(Z,\le_1,\le_2)$ a \emph{doubly ordered space} if 
\[
x \le_1 y \mbox{ and } x \le_2 y \mbox{ imply } x = y.
\]

\begin{definition}  \plabel{def: Urquhart space}
Let $\U = (Z, \le_1, \le_2)$ be a topological space with two quasi-orders. Then $\U$ is an \emph{Urquhart space} provided:
\begin{enumerate}
\item \label[def: Urquhart space]{Urq compact} $Z$ is compact and doubly ordered.
\item \label[def: Urquhart space]{Urq subbasis}The family $\L\U \cup \{  \phi C : C \in \L\U \}$ is a subbasis of closed sets for the topology on $Z$.
\item \label[def: Urquhart space]{Urq lattice ops} If $C_1, C_2 \in \L\U$, then $\phi(C_1 \cap C_2)$ and $\psi(\phi C_1 \cap \phi C_2)$ are closed.
\item \label[def: Urquhart space]{Urq dd 1} If $x \not\le_1 y$, then there is $C \in \L\U$ with $x \in C$ and $y \notin C$.
\item \label[def: Urquhart space]{Urq dd 2} If $x \not\le_2 y$, then there is $C \in \L\U$ with $x \in \phi C$ and $y \notin \phi C$.
\end{enumerate}
\end{definition}

\begin{remark} \label{rem: U is T1}
Urquhart calls a doubly ordered space doubly disconnected when it satisfies \crefrange{Urq dd 1}{Urq dd 2}. Observe that every Urquhart space is $T_1$. Indeed, if $x, y \in Z$ with $x \ne y$, then $x \not\le_1 y$ or $x \not\le_2 y$. If $x \not\le_1 y$, there is $C \in \L\U$ with $x \in C$ and $y \notin C$. Therefore, $-C$ is an open set containing $y$ but not $x$. If $x \not\le_2 y$, there is $C \in \L\U$ with $x \in \phi C$ and $y \notin C$. Thus, $-\phi C$ is an open set containing $y$ but not $x$. It follows that $Z$ is $T_1$.

Also observe that if $\U$ is an Urquhart space, then $\L\U$ is a bounded lattice, where top is $X$, bottom is $\varnothing$, $C_1 \wedge C_2 = C_1 \cap C_2$, and $C_1 \vee C_2 = \psi(\phi C_1 \cap \phi C_2)$ for each $C_1, C_2 \in \L\U$.
\end{remark}

We next define Urquhart morphisms. 

\begin{definition} \plabel{def: Urquhart morphisms}
Let $\U = (Z, \le_1, \le_2)$ and $\U' = (Z', \le_1', \le_2')$ be Urquhart spaces. Let also $\phi, \psi$ be the Galois connection maps for $\U$ and $\phi', \psi'$ for $\U'$. A pair $(P, Q)$ of relations $P, Q \subseteq Z \times Z'$ is called an \emph{Urquhart morphism} provided:
\begin{enumerate}
\item \label[def: Urquhart morphisms]{Urq Box}If $C' \in \L\U'$, then $\Box_PC' \in \L\U$ and $\phi\Box_P C' = \Box_Q \phi' C'$.
\item \label[def: Urquhart morphisms]{Urq not P} If $z \nr{P} z'$, then there is $C' \in \L\U'$ with $z \in \Box_PC'$ and $z' \notin C'$.
\item \label[def: Urquhart morphisms]{Urq not Q} If $z \nr{Q} z'$, then there is $C' \in \L\U'$ with $z \in \Box_Q\phi C'$ and $z' \notin \phi C'$.
\item \label[def: Urquhart morphisms]{Urq serial} If $z \in Z$, then $P[z] \cap Q[z] \ne \varnothing$.
\end{enumerate}
\end{definition}

Note that in \cref{Urq not Q},
\[
z \nr{Q} z' \iff \mbox{ there is } C' \in L\U' \mbox{ with } z \notin \Diamond_Q-\phi C' \mbox{ and } z' \in -\phi C',
\]
which is more in line with \cref{Hg morphism not T}.

Composition of Urquhart morphisms is defined similarly to that of GvG-morphisms and Hartung morphisms:

\begin{definition} \plabel{def: composition in Urq}
Let $(P_1, Q_1) \colon \U_1 \to \U_2$ and $(P_2, Q_2) \colon \U_2 \to \U_3$ be Urquhart morphisms. 
\begin{enumerate}
\item Define $P_2 \star P_1$ by $x\rel{ (P_2 \star P_1)} z$ if $x \in \Box_{P_1}\Box_{P_2}C$ implies $z \in C$ for each $C \in \L\U_3$.
\item \label[def: composition in Urq]{star of Q} Define $Q_2 \star Q_1$ by $y \rel{(Q_2 \star Q_1)}w$ if $y \in \Box_{Q_1}\Box_{Q_2}\phi C$ implies $w \in \phi C$ for each $C \in \L\U_3$. 
\item Set $(P_2, Q_2) \star (P_1, Q_1) = (P_2 \star P_1, Q_2 \star Q_1)$.
\end{enumerate}
\end{definition}

Observe that in \cref{star of Q},
\[
y \rel{(Q_2 \star Q_1)}w \iff 
(\forall C \in L\U_3)\, (w \in -\phi C \Longrightarrow y \in \Diamond_{Q_1}\Diamond_{Q_2}-\phi C),
\]
which is more in line with \cref{star of Ts}. We have the following analogue of \cref{rem: equivalent condition for GvG S} for Urquhart morphisms:

\begin{remark} \plabel{rem: equivalent condition for Q}
Let $(P,Q) \colon \U \to \U'$ be an Urquhart morphism. 
\begin{enumerate}
\item \label[rem: equivalent condition for Q]{eq condition for Q}  
$z \rel{P} z' \iff (\forall C \in \L\U')\, (z \in \Box_PC \Longrightarrow z' \in C)$.
\item $z \rel{Q} z' \iff (\forall C \in \L\U')\, (z \in \Box_Q\phi C \Longrightarrow z' \in \phi C)$.
\item \label[rem: equivalent condition for Q]{P[x] is an upset} $P[z]$ is a $\le_1$-upset and $Q[z]$ is a $\le_2$-upset of $Z'$ for each $z \in Z$.
\item \label[rem: equivalent condition for Q]{P inverse is a downset} $P^{-1}[z']$ is a $\le_1$-downset and $Q^{-1}[z']$ is a $\le_2$-downset of $Z$ for each $z' \in Z'$.
\end{enumerate}
\end{remark}

An argument similar to that of \cref{lem: composition lemma for GvG} yields: 
\begin{lemma} \label{lem: composition lemma for Urq}
Let $(P_1, Q_1) \colon \U_1 \to \U_2$ and $(P_2, Q_2) \colon \U_2 \to \U_3$ be Urquhart morphisms. If $C \in \L\U_3$, then
$
\Box_{P_2 \star P_1}C = \Box_{P_1}\Box_{P_2}C
$
and
$
\Box_{Q_2 \star Q_1}\phi C = \Box_{Q_1}\Box_{Q_2}\phi C.
$
\end{lemma}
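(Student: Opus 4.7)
The proof will closely parallel that of Lemma~\ref{lem: composition lemma for GvG}, since the composition for Urquhart morphisms is defined in essentially the same spirit, just phrased through the family $\L\U_3 \cup \{\phi C : C \in \L\U_3\}$ rather than through $\L\G_3 \cup \{\bd U : U \in \L\G_3\}$. The key observation is that both equalities here involve $\Box$ on both sides, so no detour through Esakia-style directedness arguments (as in the $\Diamond$ half of the GvG lemma) is needed.

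For the first equation, I plan to first unpack the definition of $\star$ to write
\[
(P_2 \star P_1)[z] = \bigcap \{ C' \in \L\U_3 : z \in \Box_{P_1}\Box_{P_2} C'\} = \bigcap \{ C' \in \L\U_3 : P_2P_1[z] \subseteq C'\},
\]
where the second equality holds because $z \in \Box_{P_1}\Box_{P_2}C'$ iff $P_1[z] \subseteq \Box_{P_2} C'$ iff $P_2 P_1[z] \subseteq C'$. Then I would argue that $(P_2 \star P_1)[z] \subseteq C$ iff $P_2 P_1[z] \subseteq C$: the forward direction uses that $P_2P_1[z]$ sits inside every member of the indexing family, hence inside the intersection; the reverse direction uses that $C$ itself lies in the family as soon as $P_2P_1[z] \subseteq C$. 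Translating this equivalence into membership of $z$ in the two $\Box$'s gives the first identity.

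For the second equation, I would run the same argument with $\phi C$ in place of $C$ and $\L\U_3$ replaced by $\{\phi C' : C' \in \L\U_3\}$ in the indexing. Namely, the definition yields
\[
(Q_2 \star Q_1)[y] = \bigcap \{ \phi C' : C' \in \L\U_3,\ y \in \Box_{Q_1}\Box_{Q_2}\phi C' \} = \bigcap \{ \phi C' : C' \in \L\U_3,\ Q_2 Q_1[y] \subseteq \phi C' \},
\]
and the same two-directional chase shows that $(Q_2 \star Q_1)[y] \subseteq \phi C$ iff $Q_2 Q_1[y] \subseteq \phi C$, which is exactly $\Box_{Q_2 \star Q_1}\phi C = \Box_{Q_1}\Box_{Q_2}\phi C$.

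I expect no substantive obstacle: the computation is purely formal once one rewrites $\Box_{P_1}\Box_{P_2}C'$ as $\{z : P_2P_1[z] \subseteq C'\}$. The only mild care needed is to make sure the index family in the second identity is $\{\phi C' : C' \in \L\U_3\}$, so that $\phi C$ really is one of its members, ensuring the reverse direction closes up. Since \cref{Urq Box} guarantees that $\Box_{P_1}\Box_{P_2}C' \in \L\U_1$ for $C' \in \L\U_3$ (compose the $\Box$-condition twice), and similarly for the $\phi$-side via $\phi\Box_P C' = \Box_Q \phi' C'$, none of the manipulations step outside the allowed classes; no further assumptions on $\U_1, \U_2, \U_3$ need to be invoked.
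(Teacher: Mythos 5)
Your argument is correct and is exactly the intersection-based computation the paper has in mind when it says the proof is ``similar to that of the GvG composition lemma'': one rewrites $(P_2\star P_1)[z]$ as $\bigcap\{C'\in\L\U_3 : P_2P_1[z]\subseteq C'\}$ and chases the two trivial inclusions, and your observation that the $Q$-half is formally identical to the $P$-half (unlike the $\Diamond$-half of the GvG lemma, which needs the extra containment $T_2\circ T_1\subseteq T_2\star T_1$) is also the right one. The only discrepancy is with the statement itself, whose second identity should read $\Box_{Q_1}\Box_{Q_2}\phi C$ rather than $\Box_{P_1}\Box_{P_2}\phi C$; that is the version you prove, and it is the intended one.
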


An analogous proof to \cref{prop: composition in GvG} yields: 

\begin{proposition} \plabel{prop: composition in Urquhart}
Let $(P_1, Q_1) \colon \U_1 \to \U_2$ and $(P_2, Q_2) \colon \U_2 \to \U_3$ be Urquhart morphisms.
\begin{enumerate}
\item \label[prop: composition in Urquhart]{Urq morphism} $(P_2, Q_2) \star (P_1, Q_1)$ is an Urquhart morphism.
\item \label[prop: composition in Urquhart]{Urq identity} If $\U = (Z, \le_1, \le_2)$ is an Urquhart space, then $(\le_1, \le_2)$ is the identity morphism for~$\U$.
\item \label[prop: composition in Urquhart]{Urq associativity} The operation $\star$ is associative.
\end{enumerate}
\end{proposition}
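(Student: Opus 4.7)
The proof will follow the template set by \cref{prop: composition in GvG}, using \cref{lem: composition lemma for Urq} as the main computational tool. For part (1), write $P = P_2 \star P_1$ and $Q = Q_2 \star Q_1$. To verify \cref{Urq Box}, fix $C \in \L\U_3$; then $\Box_{P_2}C \in \L\U_2$ by \cref{Urq Box} for $(P_2,Q_2)$, so applying \cref{Urq Box} for $(P_1,Q_1)$ and then \cref{lem: composition lemma for Urq} gives $\Box_P C = \Box_{P_1}\Box_{P_2}C \in \L\U_1$. For the Galois-connection identity, chain the two applications of \cref{Urq Box} to get
\[
\phi\Box_P C = \phi\Box_{P_1}\Box_{P_2}C = \Box_{Q_1}\phi_2\Box_{P_2}C = \Box_{Q_1}\Box_{Q_2}\phi' C = \Box_Q \phi' C,
\]
using \cref{lem: composition lemma for Urq} at the last step. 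Conditions \crefrange{Urq not P}{Urq not Q} are immediate translations of \crefrange{Urq not P}{Urq not Q} for the factor morphisms via \cref{lem: composition lemma for Urq}.

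The one genuinely new point, distinct from the GvG case, is \cref{Urq serial}. Given $z \in Z_1$, apply seriality of $(P_1,Q_1)$ to pick $z' \in P_1[z] \cap Q_1[z]$, then seriality of $(P_2,Q_2)$ to pick $z'' \in P_2[z'] \cap Q_2[z']$. A short unfolding of \cref{def: composition in Urq} shows that $P_2 \circ P_1 \subseteq P_2 \star P_1$ and $Q_2 \circ Q_1 \subseteq Q_2 \star Q_1$: if $z \rel{P_1} z'$ and $z' \rel{P_2} z''$ and $z \in \Box_{P_1}\Box_{P_2}C$, then $z' \in \Box_{P_2}C$ and hence $z'' \in C$. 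Consequently $z \rel{P} z''$ and $z \rel{Q} z''$, giving $z'' \in P[z] \cap Q[z]$.

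For part (2), I first check that $(\le_1,\le_2)$ is an Urquhart morphism. Elements of $\L\U$ are $\le_1$-upsets, so $\Box_{\le_1}C = C$ for $C \in \L\U$; similarly $\phi C$ is a $\le_2$-upset so $\Box_{\le_2}\phi C = \phi C$, verifying \cref{Urq Box}. \crefrange{Urq not P}{Urq not Q} then follow directly from \crefrange{Urq dd 1}{Urq dd 2}, and reflexivity of the quasi-orders gives \cref{Urq serial}. To show $(\le_1,\le_2)$ is a two-sided identity, observe that for any Urquhart morphism $(P,Q) \colon \U \to \U'$ and $C' \in \L\U'$ we have $\Box_P C' \in \L\U$, hence $\Box_{\le_1}\Box_P C' = \Box_P C'$ and $\Box_P\Box_{\le_1'}C' = \Box_P C'$. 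Combined with the equivalent formulation in \cref{eq condition for Q}, this yields $P \star \le_1 = P = \le_1' \star P$, and similarly for $Q$. Finally, for part (3), given a triple of composable Urquhart morphisms, iterating \cref{lem: composition lemma for Urq} gives that both $\Box_{(P_3 \star P_2)\star P_1}C$ and $\Box_{P_3 \star(P_2 \star P_1)}C$ equal $\Box_{P_1}\Box_{P_2}\Box_{P_3}C$ for every $C \in \L\U_4$; the defining condition of $\star$ therefore picks out the same relation both ways, and the analogous calculation handles $Q$. I do not foresee a real obstacle: the only genuinely new ingredient beyond a mechanical transcription of \cref{prop: composition in GvG} is the seriality argument, which is settled by the one-line observation $P_2 \circ P_1 \subseteq P_2 \star P_1$.
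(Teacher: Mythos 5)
Your proof is correct and takes exactly the route the paper intends: the paper gives no explicit argument here, stating only that the proof is analogous to that of \cref{prop: composition in GvG}, and your write-up is precisely that analogy carried out via \cref{lem: composition lemma for Urq}. You also correctly isolate and handle the one point where the Urquhart case genuinely departs from the GvG template, namely seriality (\cref{Urq serial}), settled by the containments $P_2 \circ P_1 \subseteq P_2 \star P_1$ and $Q_2 \circ Q_1 \subseteq Q_2 \star Q_1$.
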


We thus arrive at the following definition.

\begin{definition} \label{def: Urq}
Let $\Urq$ denote the category of Urquhart spaces and Urquhart morphisms.
\end{definition}

In the rest of the section we define a functor $\UU \colon \Hg \to \Urq$.

\begin{definition} \plabel{def: maximal pairs}
Let $\H = (X, R, Y)$ be a Hartung space.
\begin{enumerate}
\item Set $Z_\H = \{ (x, y) \in X \times Y : x \textrm{ is }y\textrm{-maximal and }y \textrm{ is }x\textrm{-maximal} \}$.
\item Topologize $Z_\H$ with the subspace topology of the product topology on $X \times Y$. 
\item Define $\le_1$ and $\le_2$ on $Z_\H$ by setting
\[
(x, y) \le_1 (x', y') \textrm{ if } x \le_X x' \quad \textrm{and} \quad (x, y) \le_2 (x', y') \textrm{ if } y \le_Y y'.
\]
\end{enumerate}
\end{definition}

\begin{remark} \plabel{rem: existence of maximal pairs}
Let $\H = (X, R, Y) \in \Hg$.
\begin{enumerate}
\item $Z_\H$ is a subset of $R$ and $\le_1$, $\le_2$ are quasi-orders on $Z_\H$.
\item \label[rem: existence of maximal pairs]{maximal pairs}\cref{doubly founded} shows that if $x \rel{R} y$, then there is $y'$ with $y \le_Y y'$ such that $y'$ is $x$-maximal. Applying it again yields $x'$ with $x \le_X x'$ such that $x'$ is $y'$-maximal. It then follows that $(x', y') \in Z_\H$. 

\item \label[rem: existence of maximal pairs]{maximal pairs 2} For each $x \in X$, there is $y' \in Y$ with $(x, y') \in Z_\H$. To see this, by \cref{Hg maximal}, there is $y \in Y$ such that $x$ is $y$-maximal. Since $x \rel{R} y$, \eqref{maximal pairs} yields $y' \ge_Y y$ such that $y'$ is $x$-maximal. To see that $x$ is $y'$-maximal, suppose $x \le_X x'$ with $x' \rel{R} y'$. Then $x' \rel{R} y$ by \cref{Hg leY}, so $x' = x$ as $x$ is $y$-maximal. Thus, $x$ is $y'$-maximal, and hence $(x, y') \in Z_\H$. A similar argument shows that for each $y \in Y$, there is $x' \in X$ with $(x', y) \in Z_\H$.
\end{enumerate}
\end{remark}

\begin{lemma} \plabel{lem: l and r}
For $\H = (X, R, Y) \in \Hg$ set $\U = (Z_\H, \le_1, \le_2)$. Suppose that $A \subseteq X$ and $B \subseteq Y$ are upsets.
\begin{enumerate}
\item \label[lem: l and r]{action of phi} $\phi( (A \times Y) \cap Z_\H) = (X \times -\bd A) \cap Z_\H$.
\item \label[lem: l and r]{action of psi} $\psi((X \times B)\cap Z_\H) = (-\Diamond B \times Y)\cap Z_\H$.
\item \label[lem: l and r]{action of psiphi} $\psi\phi((A \times Y)\cap Z_\H) = (\Box\bd A \times Y)\cap Z_\H$.
\end{enumerate}
\end{lemma}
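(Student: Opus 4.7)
The three claims are about how the Galois connection $(\phi, \psi)$ on $Z_\H$ interacts with the ``rectangles'' $(A \times Y) \cap Z_\H$ and $(X \times B) \cap Z_\H$. The plan is to prove (1) directly by unpacking the definition $\phi C = -{\down_2} C$, prove (2) by a symmetric argument using $\psi C = -{\down_1} C$, and deduce (3) from (1) and (2) together with the elementary identity $-\Diamond(-\bd A) = \Box\bd A$.

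For (1), I would spell out what it means for $(x',y') \in Z_\H$ to lie in $\down_2((A \times Y) \cap Z_\H)$: precisely that there exists $(x,y) \in Z_\H$ with $x \in A$ and $y' \le y$. So the complement inside $Z_\H$ consists of those $(x',y')$ such that for every $(x,y) \in Z_\H$ with $x \in A$ one has $y' \not\le y$. The whole task reduces to showing this condition is equivalent to $y' \notin \bd A$. The ``if'' direction is the slick one: if $y' \le y$ and $x \rel{R} y$ with $x \in A$, then by \cref{Hg leY} we have $R^{-1}[y] \subseteq R^{-1}[y']$, so $x \rel{R} y'$, giving $y' \in \bd A$. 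The main obstacle is the ``only if'' direction, and this is where \cref{Hg maximal} does its work: starting from $x \in A$ with $x \rel{R} y'$, apply \cref{Hg maximal} twice to produce $y'' \ge y'$ that is $x$-maximal, and then $x'' \ge x$ that is $y''$-maximal; one then has to verify that $y''$ remains $x''$-maximal (this is where $x \le x''$ gives $R[x''] \subseteq R[x]$, forcing any $y''' \ge y''$ with $x'' \rel{R} y'''$ to also satisfy $x \rel{R} y'''$, so $y''' = y''$ by $x$-maximality). Since $A$ is an upset, $x'' \in A$, so $(x'', y'') \in (A \times Y) \cap Z_\H$ with $y' \le y''$, witnessing membership in $\down_2$.

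Claim (2) is proved in parallel, interchanging the roles of the two coordinates and the two orders: the analogous argument reduces to showing that $x' \notin \Diamond B = R^{-1}[B]$ iff no $(x,y) \in Z_\H$ with $y \in B$ satisfies $x' \le x$. The easy direction again uses \cref{Hg leX} ($x' \le x$ gives $R[x] \subseteq R[x']$), and the hard direction again invokes \cref{maximal pairs} to enlarge a given witness $x' \rel{R} y$ (with $y \in B$) to a pair in $Z_\H$ while preserving $y \in B$ because $B$ is an upset.

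Finally, (3) is almost immediate: by (1), $\phi((A\times Y) \cap Z_\H) = (X \times {-\bd A}) \cap Z_\H$, and applying (2) with $B = -\bd A$ yields $\psi\phi((A \times Y) \cap Z_\H) = (-\Diamond(-\bd A) \times Y) \cap Z_\H$. The identity $-\Diamond(-\bd A) = -R^{-1}[-\bd A] = \Box\bd A$ then gives the desired description. I expect no real difficulty here beyond chasing complements.
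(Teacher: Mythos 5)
Your proposal is correct and follows essentially the same route as the paper: both directions of (1) rest on the compatibility of $R$ with $\le_Y$ (\cref{Hg leY}) for the easy inclusion and on lifting a pair $x \rel{R} y'$ with $x\in A$ to a maximal pair in $Z_\H$ via \cref{Hg maximal} for the hard one, with (2) symmetric and (3) obtained by composing (1) and (2) at $B=-\bd A$. The only cosmetic difference is that the paper phrases the hard inclusion as a contradiction using that $\phi C$ is a $\le_2$-upset disjoint from $C$, and cites \cref{maximal pairs} rather than re-verifying (as you do, correctly) that the twice-maximalized pair lands in $Z_\H$.
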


\begin{proof}
(1) Set $C = (A \times Y)\cap Z_\H$ and let $(x, y) \in \phi C$. To show that $y \in -\bd A$, let $w \in A$. 
If $w \rel{R} y$, then by \cref{maximal pairs 2} there is $y_1 \ge_Y y$ with $(w, y_1) \in Z_\H$. Because $\phi C$ is a $\le_2$-upset and $(x, y) \le_2 (w, y_1)$, we have $(w, y_1) \in \phi C$. From $\phi C \cap C = \varnothing$, it follows that $(w, y_1) \notin C$, so $w \notin A$. This contradiction shows that $w \nr{R} y$. Therefore, $y \in -\bd A$, so $(x, y) \in (X \times -\bd A)\cap Z_\H$, and hence $\phi C \subseteq (X \times -\bd A)\cap Z_\H$.

For the reverse inclusion, suppose that $(x, y) \notin \phi C$. Because $-\phi C = \down_2 C$,  there is $(x', y') \in C$ with $(x, y) \le_2 (x', y')$. We then have $ y  \le_Y y'$ and $x' \in A$. Since $x' \rel{R} y'$ and $y \le_Y y'$, it follows that $x' \rel{R} y$ (see \cref{Hg leY}), so $y \in \bd A$. Consequently, $(x, y) \notin (X \times -\bd A)\cap Z_\H$.

(2) The proof is similar to (1). 

(3) By (1) and (2) applied to $B = -\bd A$, we have
\begin{align*}
\psi\phi((A \times Y)\cap Z_\H) &= \psi ((X \times -\bd A)\cap Z_\H) = (-\Diamond-\bd A \times Y)\cap Z_\H \\
&= (\Box\bd A  \times Y)\cap Z_\H.\qedhere
\end{align*}
\end{proof}

Let $(X, R, Y) \in \Hg$ and $\U = (Z_\H, \le_1, \le_2)$. To show that $\U$ is an Urquhart space, we require the following two results.

\begin{lemma} \label{lem: Urq Lem 5}
\cite[Lem.~5]{Urq78} Let $\U = (Z, \le_1, \le_2)$ be a compact doubly ordered space. Suppose that $\mathcal{S}$ is a bounded sublattice of $\L\U$ such that whenever $x \not\le_1 y$, there is $C \in \mathcal{S}$ with $x \in C$ and $y \notin C$. Then $\mathcal{S} = \L\U$.
\end{lemma}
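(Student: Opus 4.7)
The goal is to show that every $C \in \L\U$ lies in $\mathcal{S}$. Since $\mathcal{S}$ is a bounded sublattice of $\L\U$ and meets in $\L\U$ are set-theoretic intersections, it suffices to represent an arbitrary $C$ as a finite intersection $D_{x_1} \cap \cdots \cap D_{x_k}$ of elements of $\mathcal{S}$. I will do this in two stages: first, for each $x \notin C$, construct some $D_x \in \mathcal{S}$ with $C \subseteq D_x$ and $x \notin D_x$; then apply compactness of $Z$ to reduce to finitely many such $D_x$.

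For the first stage, fix $x \notin C$. Since $C$ is a $\le_1$-upset and $x \notin C$, every $y \in C$ satisfies $y \not\le_1 x$, so by hypothesis there is $E_{x, y} \in \mathcal{S}$ with $y \in E_{x, y}$ and $x \notin E_{x, y}$. The family $\{E_{x, y} : y \in C\}$ consists of closed sets whose set-theoretic union covers $C$ while every member avoids $x$. The plan is to extract a finite subfamily whose $\L\U$-join $D_x := \psi\phi(E_{x, y_1} \cup \cdots \cup E_{x, y_n})$ still contains $C$ and still excludes $x$; such a $D_x$ lies in $\mathcal{S}$ because $\mathcal{S}$ is closed under finite $\L\U$-joins and the join remains in $\L\U$ by \cref{Urq lattice ops}. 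The extraction combines the compactness of $Z$ (\cref{Urq compact}) with the subbasis description from \cref{Urq subbasis}, so as to control both the set-theoretic cover of $C$ and its Galois closure $\psi\phi$ simultaneously.

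For the second stage, with $D_x \in \mathcal{S}$ produced for each $x \notin C$, we have $C = \bigcap_{x \notin C} D_x$. Each $-D_x$ is open in $Z$, so $\{-D_x : x \notin C\}$ is an open cover of $-C$. Augmenting this family with an open neighborhood of $C$ (built from the subbasis of \cref{Urq subbasis}) produces an open cover of the compact space $Z$; compactness extracts a finite subcover, and after tightening the auxiliary neighborhood around $C$ one deduces $C = D_{x_1} \cap \cdots \cap D_{x_k}$ for some finite collection. Since $\mathcal{S}$ is closed under finite intersections, $C \in \mathcal{S}$.

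The main obstacle is the first stage: unlike the Priestley case where the elements of $\L\U$ would be clopen and a standard two-step compactness argument succeeds, here the members of $\mathcal{S}$ are merely closed, and the Galois closure $\psi\phi$ may enlarge the set-theoretic union and accidentally admit $x$. Making compactness cooperate with $\psi\phi$ so that the resulting $D_x$ both dominates $C$ and excludes $x$ is the delicate point, and it relies crucially on the subbasis structure and on the closure of $\L\U$ under finite joins.
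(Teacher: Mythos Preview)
The paper does not give its own proof; it cites \cite[Lem.~5]{Urq78}. But your proposal has a genuine gap, precisely at the point you yourself flag as ``the main obstacle,'' and your suggested remedies do not work.

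In Stage~1 you try, for each $x \notin C$, to build $D_x \in \mathcal{S}$ with $C \subseteq D_x$ and $x \notin D_x$ by taking a finite $\L\U$-join $D_x = \psi\phi(E_{x,y_1}\cup\cdots\cup E_{x,y_n})$ of sets each avoiding $x$. There is no reason this Galois closure should still miss $x$, and you give none. Your appeals to \cref{Urq subbasis} and \cref{Urq lattice ops} are not even legitimate: the lemma assumes only a compact doubly ordered space, not a full Urquhart space, so those axioms are unavailable --- and they would not resolve the issue anyway.

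Urquhart's argument runs in the dual order and thereby sidesteps the problem entirely. The key purely Galois-theoretic fact is that for $D,C \in \L\U$ one has $D \subseteq C$ iff $D \cap \phi C = \varnothing$ (cf.\ the proof of \cref{Urq D in C}). Now fix $x \in C$. For each $y \in \phi C$ we have $x \not\le_1 y$, so there is $E_{xy}\in\mathcal{S}$ with $x\in E_{xy}$ and $y\notin E_{xy}$; the open sets $-E_{xy}$ cover the compact set $\phi C$, and a finite subcover gives $D_x := E_{xy_1}\cap\cdots\cap E_{xy_n}\in\mathcal{S}$ with $x\in D_x$ and $D_x\cap\phi C=\varnothing$, hence $D_x\subseteq C$. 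This is a \emph{meet} (plain intersection), so no overshoot is possible. Now the open sets $-\phi D_x$ cover the compact set $C$; a finite subcover yields $C\cap\phi(D_{x_1}\vee\cdots\vee D_{x_m})=\varnothing$, so $C\subseteq D_{x_1}\vee\cdots\vee D_{x_m}$, while the reverse inclusion holds since each $D_{x_i}\subseteq C$. Thus $C\in\mathcal{S}$. The moral: do the meets first (they cannot enlarge), then the join is automatically bounded by $C$. Your ordering --- joins first, then meets --- places the uncontrolled operation where it can do damage.
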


\begin{proposition} \label{lem: LU = LH}
Let $\H = (X, R, Y) \in \Hg$ and $\U = (Z_\H, \le_1, \le_2)$. Then the map $\mu_\H \colon \L\H \to \L\U$, defined by $\mu_\H(A) = (A \times Y) \cap Z_\H$, is a $\Lat$-isomorphism. 
\end{proposition}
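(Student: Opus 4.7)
The plan is to verify in turn that $\mu_\H$ is well defined, is a bounded lattice homomorphism, is injective, and is surjective, using \cref{lem: l and r} as the main computational tool and \cref{lem: Urq Lem 5} to obtain surjectivity. That $\mu_\H(A)\in\L\U$ for $A\in\L\H$ is immediate from Lemma~\ref{lem: l and r}: $\mu_\H(A)$ is a $\le_1$-upset since $A$ is an upset; $\psi\phi\mu_\H(A)=(\Box\bd A\times Y)\cap Z_\H=\mu_\H(A)$ by \cref{action of psiphi} and $A=\Box\bd A$; and $\mu_\H(A)$ is closed since $A$ is closed, while $\phi\mu_\H(A)=(X\times-\bd A)\cap Z_\H$ by \cref{action of phi} is closed since $-\bd A$ is closed.

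For the lattice operations, preservation of meets and bounds is direct from the definition of $\mu_\H$. For joins, recall that $A_1\vee A_2=\Box\bd(A_1\cup A_2)$ in $\L\H$ and $C_1\vee C_2=\psi(\phi C_1\cap\phi C_2)$ in $\L\U$. Using \cref{action of phi} and the fact that $\bd$ sends unions to unions,
$$
\phi\mu_\H(A_1)\cap\phi\mu_\H(A_2)=\bigl(X\times -(\bd A_1\cup\bd A_2)\bigr)\cap Z_\H=\phi\bigl((A_1\cup A_2)\times Y\cap Z_\H\bigr),
$$
and applying $\psi$ with \cref{action of psiphi} gives $\mu_\H(A_1)\vee\mu_\H(A_2)=\mu_\H(A_1\vee A_2)$.

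For injectivity, suppose $A_1\neq A_2$ in $\L\H$; by symmetry take $x\in A_1\setminus A_2$. Since $A_2=\Box\bd A_2$, there is $y$ with $x\rel{R}y$ and $y\notin\bd A_2$. Iterating \cref{Hg maximal} as in the argument for \cref{maximal pairs} produces $x_1\ge x$ and $y_1\ge y$ with $(x_1,y_1)\in Z_\H$. Since $A_1$ is an upset, $x_1\in A_1$; since $\bd A_2$ is a downset (\cref{Hg A is upset}) and $y\notin\bd A_2$, also $y_1\notin\bd A_2$, so $R[x_1]\not\subseteq\bd A_2$ and $x_1\notin A_2$. Hence $(x_1,y_1)\in\mu_\H(A_1)\setminus\mu_\H(A_2)$. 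For surjectivity I invoke \cref{lem: Urq Lem 5}: one first checks that $\U$ is a compact doubly-ordered space (compactness follows from $Z_\H$ being closed in the compact $R$; double-orderedness follows from the definition of $Z_\H$ together with \cref{Hg y-maximal,Hg x-maximal}). By the previous paragraph $\mu_\H(\L\H)$ is a bounded sublattice of $\L\U$, so it remains to verify separation in $\le_1$. If $(x,y)\not\le_1(x',y')$ in $Z_\H$, then $x\not\le x'$, so $R[x']\not\subseteq R[x]$; pick $y''$ with $x'\rel{R}y''$ and $x\nr{R}y''$. By \cref{Hg not R related} there is $A\in\L\H$ with $x\in A$ and $y''\notin\bd A$; since $x'\rel{R}y''$, $x'\notin\Box\bd A=A$, and thus $\mu_\H(A)$ separates the two points.

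The main obstacle is injectivity: without sufficient supply of maximal pairs the map could collapse distinct lattice elements. The key is the double application of \cref{Hg maximal} (already packaged in \cref{maximal pairs}) together with the observation that $\bd A$ is a downset, which together let a witness $y\notin\bd A_2$ be lifted to a witness $y_1\in Y$ with $(x_1,y_1)\in Z_\H$ still avoiding $\bd A_2$. Once this subdirect-product-type behaviour is in hand, everything else is a bookkeeping exercise in the Galois connection of \cref{lem: l and r}.
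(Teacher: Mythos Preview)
Your proof is correct and follows the paper's approach: well-definedness via \cref{lem: l and r}, surjectivity via \cref{lem: Urq Lem 5}, and the same $\le_1$-separation argument. Your injectivity argument is in fact more careful than the paper's: the paper simply asserts (via the subdirect-product remark in \cref{maximal pairs}) that for $x\in A_1\setminus A_2$ there exists $y$ with $(x,y)\in Z_\H$, whereas you use $A_2=\Box\bd A_2$ to produce a $y$ with $x\rel{R}y$ and $y\notin\bd A_2$, then lift to a maximal pair and argue $x_1\notin A_2$ via the downset property of $\bd A_2$---this avoids relying on the projection $Z_\H\to X$ being onto.

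One minor slip: $Z_\H$ need not be \emph{closed} in $R$, so your stated reason for compactness is off. The correct argument (which the paper gives later, in the proof of \cref{thm: U well defined on objects}) is that $R=\down Z_\H$ equals the saturation of $Z_\H$ in the specialization preorder of the product topology, whence compactness of $R$ transfers to $Z_\H$. This does not affect the structure of your proof.
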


\begin{proof}
The map $\mu_\H$ is well defined by \cref{action of psiphi}, and it is clearly order-preserving. To see that it is order-reflecting, let $A_1, A_2 \in \L\H$ with $A_1 \not\subseteq A_2$. Then there is $x \in A_1 - A_2$. By \cref{maximal pairs 2}, there is $y \in Y$ with $(x, y) \in Z_\H$. Consequently, $(x, y) \in (A_1 \times Y) \cap Z_\H$ and $(x, y) \notin (A_2 \times Y) \cap Z_\H$. Thus, $\mu_\H(A_1) \not\subseteq \mu_\H(A_2)$.

To see that $\mu_\H$ is onto, let $\mathcal{S} = \{(A \times Y) \cap Z_\H : A \in \L\H\}$, so $\mathcal{S} \subseteq \L\U$ by \cref{action of psiphi}. By \cref{lem: Urq Lem 5}, to see that $\L\U = \mathcal{S}$, it suffices to show that $\mathcal{S}$ is a bounded sublattice of $\L\U$ such that if $(x_1, y_1), (x_2, y_2) \in Z_\H$ with $(x_1, y_1) \not\le_1 (x_2, y_2)$, then there is $C \in \mathcal{S}$ with $(x_1, y_1) \in C$ and $(x_2, y_2) \notin C$. First, since $\varnothing, X \in \L\H$, we have $\varnothing = (\varnothing \times Y)\cap Z_\H$ and $Z_\H = (X \times Y)\cap Z_\H$ are both in $\mathcal{S}$. Next, let $C_1, C_2 \in \mathcal{S}$, so $C_1 = (A_1 \times Y)\cap Z_\H$ and $C_2 = (A_2 \times Y)\cap Z_\H$ for some $A_1, A_2 \in \L\H$. Then
\[
C_1 \wedge C_2 = C_1 \cap C_2 = ((A_1 \cap A_2) \times Y)\cap Z_\H \in \mathcal{S}
\]
since $A_1 \cap A_2 \in \L\H$. Also, by \cref{rem: U is T1,lem: l and r},
\begin{align}
\begin{split}
C_1 \vee C_2 &= \psi(\phi C_1 \cap \phi C_2) = \psi((X \times -\bd A_1)\cap Z_\H \cap (X \times -\bd A_2) \cap Z_\H) \\
&= \psi((X \times (-\bd A_1 \cap -\bd A_2))\cap Z_\H) = \psi((X \times -\bd (A_1 \cup A_2))\cap Z_\H)\\
&= (-\Diamond-\bd( A_1 \cup A_2) \times Y) \cap Z_\H = (\Box\bd(A_1 \cup A_2) \times Y)\cap Z_\H\in \mathcal{S}\label{eqn: C1 vee C2}
\end{split}
\end{align}
since $\Box\bd(A_1 \cup A_2) \in \L\H$. Finally, suppose that $(x_1, y_1) \not\le_1 (x_2, y_2)$. Then $x_1 \not\le_X x_2$. Therefore, by \cref{Hg not R related}, there is $A \in \L\H$ with $x_1 \in A$ and $x_2 \notin A$. If $C = (A \times Y)\cap Z_\H$, then $C \in \mathcal{S}$ and $(x_1, y_1) \in C$ but $(x_2, y_2) \notin C$. Thus, $\L\H = \mathcal{S}$, and hence $\mu_\H$ is onto. Consequently, $\mu_\H$ is an order-isomorphism, and so a $\Lat$-isomorphism.
\end{proof}

\begin{proposition} \label{thm: U well defined on objects}
If $\H = (X, R, Y) \in \Hg$, then $\U = (Z_\H, \le_1, \le_2) \in \Urq$.
\end{proposition}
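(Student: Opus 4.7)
My plan is to verify the five conditions of \cref{def: Urquhart space} in order, proving compactness of $Z_\H$ (the hardest step) first and then leveraging \cref{lem: LU = LH} for the remaining axioms. For condition (1), the double-ordering half is immediate from the maximality built into $Z_\H$: if $(x, y), (x', y') \in Z_\H$ with $(x, y) \le_1 (x', y')$ and $(x, y) \le_2 (x', y')$, then $y \le_Y y'$ combined with $x \rel{R} y$ yields $x \rel{R} y'$ via \cref{Hg leY}, so $x$-maximality of $y$ forces $y = y'$, and then $y$-maximality of $x$ forces $x = x'$.

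For compactness of $Z_\H$, the main obstacle, I would apply Alexander's subbase theorem. The subbasic closed sets of the subspace topology on $Z_\H$ inherited from $X \times Y$ are precisely the restrictions $(A \times Y) \cap Z_\H$ and $(X \times -\bd A) \cap Z_\H$ for $A \in \L\H$. Given such a family $\{F_\alpha\}$ with the finite intersection property, the corresponding closed subsets of $R$ (obtained by intersecting with $R$ in place of $Z_\H$) still satisfy FIP, because any finite subfamily is witnessed by a point of $Z_\H \subseteq R$. Compactness of $R$ from \cref{Hg compact} then produces a point $(x_0, y_0) \in R$ lying in all of them. The crucial lifting step invokes \cref{Hg maximal} (applied iteratively as in \cref{maximal pairs}) to produce $(x', y') \in Z_\H$ with $x_0 \le_X x'$ and $y_0 \le_Y y'$. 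The defining conditions of the $F_\alpha$ transfer to $(x', y')$ because each $A_\alpha$ is a $\le_X$-upset and each $-\bd B_\beta$ is a $\le_Y$-upset by \cref{Hg A is upset}. Note that $Z_\H$ need not be closed in $R$, so this lifting is essential rather than cosmetic.

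Once compactness and double-ordering are established, the hypotheses of \cref{lem: Urq Lem 5} hold and \cref{lem: LU = LH} yields $\L\U = \{\mu_\H(A) : A \in \L\H\}$. Condition (2) is then immediate, since the subbasic closed sets from the previous paragraph are exactly $\L\U \cup \{\phi C : C \in \L\U\}$ via \cref{action of phi}. For condition (3), writing $C_i = \mu_\H(A_i)$, one computes $\phi(C_1 \cap C_2) = (X \times -\bd(A_1 \cap A_2)) \cap Z_\H$, which is closed because $A_1 \cap A_2 \in \L\H$ forces $-\bd(A_1 \cap A_2)$ to be closed in $Y$; and $\psi(\phi C_1 \cap \phi C_2) = \mu_\H(\Box\bd(A_1 \cup A_2))$ (reproducing equation \eqref{eqn: C1 vee C2} from the proof of \cref{lem: LU = LH}), which is closed because $\Box\bd(A_1 \cup A_2) \in \L\H$. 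Finally, conditions (4) and (5) follow directly from \cref{Hg up x is intersection of As,Hg up y is intersection of diamond As} by taking $C = \mu_\H(A)$ for the $A \in \L\H$ provided by those separation statements.
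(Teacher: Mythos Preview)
Your proof is correct and follows the paper's overall structure for conditions (2)--(5), with the main deviation being the compactness argument. Where the paper observes that $R$ equals the saturation of $Z_\H$ (with respect to the specialization order of the product topology) and invokes \cite[p.~43]{GHKLMS03} to transfer compactness from $R$ down to $Z_\H$, you give a direct Alexander-subbase argument: lift a subbasic FIP family from $Z_\H$ to $R$, use compactness of $R$ to find a witness, then push the witness back into $Z_\H$ via \cref{maximal pairs}. Your approach is more self-contained, avoiding the external reference; the paper's is terser once that reference is in hand. Both rest on the same underlying fact---every point of $R$ lies below a point of $Z_\H$, and the subbasic closed sets are product-order upsets---so the difference is packaging rather than substance.

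One small slip in your double-ordering verification: the step ``$y \le_Y y'$ combined with $x \rel{R} y$ yields $x \rel{R} y'$'' is backwards, since $y \le_Y y'$ means $R^{-1}[y'] \subseteq R^{-1}[y]$, which gives the implication in the wrong direction. What you actually need is $x \le_X x'$ together with $x' \rel{R} y'$ (both of which you have) to conclude $x \rel{R} y'$ via $R[x'] \subseteq R[x]$; then $x$-maximality of $y$ applies as you intended. The paper proceeds symmetrically, deducing $x' \rel{R} y$ first and invoking $y$-maximality of $x$.
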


\begin{proof}
To verify \cref{Urq compact}, $R$ is a compact subset of $X \times Y$ by \cref{Hg compact}. To see that $Z_\H$ is compact, by \cite[p.~43]{GHKLMS03} it suffices to show that $R = \up Z_\H$, where $\up$ is calculated with respect to the product order on $X \times Y$. If $(x, y) \in R$, by \cref{maximal pairs} there is $(x', y') \in Z_\H$ with $x\le_X x'$ and $y \le_Y y'$. Therefore, $(x, y) \le (x', y')$, and hence $R = \up Z_\H$.

To see that $\U$ is doubly ordered, suppose that $(x, y) \le_1 (x', y')$ and $(x, y) \le_2 (x', y')$. Then $x \le_X x'$ and $y \le_Y y'$. We have $x' \rel{R} y'$, so $x' \rel{R} y$ by \cref{Hg leY}. Because $x$ is $y$-maximal, $x' = x$. Similarly, $y' = y$.

Next, to verify \cref{Urq subbasis}, the topology on $Z_\H$ is the subspace topology of the product topology of $X \times Y$. \cref{Hg LH subbasis} shows that
\[
\{ A \times Y : A \in \L\H \} \cup \{ X \times -\bd A : A \in \L\H\}
\]
is a subbasis of closed sets for $X \times Y$. We have $\L\U = \{(A \times Y)\cap Z_\H : A \in \L\H \}$ by \cref{lem: LU = LH}, and
\[
\{ \phi C : C \in \L\U \} = \{ (X \times -\bd A)\cap Z_\H : A \in \L\H\} 
\]
by Lemmas~\ref{lem: l and r}\eqref{action of phi} and \ref{lem: LU = LH}. Thus, $\L\U \cup \{ \phi C : C \in \L\U\}$ is a subbasis of closed sets for $Z_\H$.

To verify \cref{Urq lattice ops}, let $C_1, C_2 \in \L\U$, and write $C_i = (A_i \times Y)\cap Z_\H$ for $i = 1, 2$. By \cref{action of phi},
\[
\phi(C_1 \cap C_2) = \phi(((A_1 \cap A_2) \times Y)\cap Z_\H) = (X \times -\bd(A_1 \cap A_2))\cap Z_\H,
\]
so is closed since $A_1 \cap A_2 \in \L\H$, and hence $-\bd(A_1 \cap A_2)$ is closed in $Y$ by \cref{Hg closed}. Furthermore, by \cref{eqn: C1 vee C2}, $\psi(\phi C_1 \cap \phi C_2) = (\Box\bd(A_1 \cup A_2 ) \times Y)\cap Z_\H$, so is closed because $\Box\bd(A_1 \cup A_2) \in \L\H$. 

We next verify \cref{Urq dd 1}. Suppose $(x_1, y_1), (x_2, y_2) \in Z_\H$ with $(x_1, y_1) \not\le_1 (x_2, y_2)$. Then $x_1 \not\le_X x_2$. By \cref{Hg up x is intersection of As}, there is $A \in \L\H$ with $x_1 \in A$ and $x_2 \notin A$. Letting $C = (A \times Y)\cap Z_\H$, we have $C \in \L\U$ (see \cref{lem: LU = LH}), $(x_1, y_1) \in C$, and $(x_2, y_2) \notin C$. 

Finally, to verify \cref{Urq dd 2}, if $(x_1, y_1) \not\le_2 (x_2, y_2)$, then $y_1 \not\le_Y y_2$. By \cref{Hg up y is intersection of diamond As}, there is $A \in \L\H$ with $y_1 \notin \bd A$ and $y_2 \in \bd A$. Letting $C = (A \times Y)\cap Z_\H$, we have that $C \in \L\U$, $(x_1, y_1) \in \phi C$, and $(x_2, y_2) \notin \phi C$. This completes the proof that $\U \in \Urq$.
\end{proof}

\begin{theorem} \label{thm: U is a functor}
There is a functor $\UU \colon \Hg \to \Urq$ which sends $\H = (X, R, Y)$ to $\U = (Z_\H, \le_1, \le_2)$ and $(S, T) \colon \H \to \H'$ to $(P, Q) \colon \U \to \U'$, where
\[
(x, y) \rel{P} (x', y') \iff x\rel{S}x' \quad\textrm{and}\quad (x, y) \rel{Q} (x', y') \iff y\rel{T}y'.
\]
\end{theorem}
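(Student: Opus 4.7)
The plan is as follows. Well-definedness on objects is already furnished by \cref{thm: U well defined on objects}, so the main task is to check that $(P,Q)$ is an Urquhart morphism whenever $(S,T)$ is a Hartung morphism, and then to verify the two functoriality axioms. The central computational tool will be the isomorphism $\mu_\H \colon \L\H \to \L\U$ from \cref{lem: LU = LH}, together with the formulas from \cref{lem: l and r}. Concretely, I will first establish the key identities
\[
\Box_P\,\mu_{\H'}(A') = \mu_{\H}(\Box_S A') \qquad \text{and} \qquad \Box_Q\,\phi'\mu_{\H'}(A') = \bigl(X \times \Box_T(-\bd' A')\bigr)\cap Z_\H
\]
for each $A' \in \L\H'$. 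The direction $\supseteq$ in each identity is immediate from the definitions; the nontrivial direction $\subseteq$ is the heart of the proof, and relies on the following maximality argument: if $x' \in S[x]$ fails to lie in $A' = \Box'\bd' A'$, then there is $y'$ with $x' \rel{R'} y'$ and $y' \notin \bd' A'$, and \cref{Hg maximal} (applied twice) produces a pair $(x'',y'') \in Z_{\H'}$ with $x' \le x''$, $y' \le y''$; since $S[x]$ is an upset (Remark~\ref{rem: equivalent condition for Hartung S}\eqref{S[x] an upset}) and $-\bd' A'$ is an upset, $x'' \in S[x]$, $y'' \in -\bd' A'$, and so $(x,y) \notin \Box_P \mu_{\H'}(A')$. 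A dual argument handles the $\Box_Q$ identity, using that $T[y]$ is an upset and that $y' \in \bd' A'$ can be expanded (again by \cref{Hg maximal}) to a maximal pair $(x'',y'') \in Z_{\H'}$ with $y'' \in \bd' A'$.

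Granted these identities, I will verify the four clauses of \cref{def: Urquhart morphisms} in turn. For \cref{Urq Box}: $\Box_S A' \in \L\H$ by \cref{Hg morphism Box}, hence $\Box_P C' = \mu_\H(\Box_S A') \in \L\U$; and combining the identity for $\Box_P$ with \cref{action of phi} yields $\phi\Box_P C' = (X \times -\bd\Box_S A')\cap Z_\H = (X \times -\Diamond_T\bd' A')\cap Z_\H = \Box_Q\phi' C'$, where the middle equality is \cref{Hg morphism Box}. For \crefrange{Urq not P}{Urq not Q}: given $x \nr{S} x'$ or $y \nr{T} y'$, the separation axioms \cref{Hg morphism not S}, \cref{Hg morphism not T} for $(S,T)$ produce an $A' \in \L\H'$, and $C' := \mu_{\H'}(A')$ witnesses the Urquhart separation property via the identities just established. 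For \cref{Urq serial}: given $(x,y) \in Z_\H$, apply \cref{Hg morphism R} to $x \rel{R} y$ to obtain $x \rel{S} x'$, $y \rel{T} y'$ with $x' \rel{R'} y'$, then enlarge $(x', y')$ to a pair $(x'', y'') \in Z_{\H'}$ by Remark~\ref{rem: existence of maximal pairs}\eqref{maximal pairs}; since $S[x]$ and $T[y]$ are upsets, $(x'', y'') \in P[(x,y)] \cap Q[(x,y)]$.

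For functoriality, preservation of identities is trivial: $\UU(\le_X,\le_Y) = (\le_1,\le_2)$, which by \cref{Urq identity} is the identity of $\UU(\H)$. Preservation of composition is the place where the identities of the first paragraph do their real work. Given $(S_1,T_1) \colon \H_1 \to \H_2$ and $(S_2,T_2) \colon \H_2 \to \H_3$ with images $(P_i,Q_i)$, iterating the $\Box_P$ identity and invoking \cref{lem: composition lemma for Hg} gives, for each $A \in \L\H_3$,
\[
\Box_{P_1}\Box_{P_2}\mu_{\H_3}(A) \;=\; \mu_{\H_1}\bigl(\Box_{S_1}\Box_{S_2}A\bigr) \;=\; \mu_{\H_1}\bigl(\Box_{S_2 \star S_1} A\bigr).
\]
Unfolding the definitions of $P_2 \star P_1$ in $\Urq$ and of $S_2 \star S_1$ in $\Hg$, one sees that $(x,y) \rel{(P_2 \star P_1)} (x',y')$ holds iff $x \rel{(S_2 \star S_1)} x'$; an analogous argument handles $Q_2 \star Q_1 = Q$. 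Hence $\UU((S_2,T_2) \star (S_1,T_1)) = \UU(S_2,T_2) \star \UU(S_1,T_1)$.

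The main obstacle is the $\subseteq$ direction of the $\Box_P$ identity (and its $\Box_Q$ counterpart): $S[x]$ may contain elements of $X'$ that are not the first coordinate of any pair in $Z_{\H'}$, so $P[(x,y)] \subseteq \mu_{\H'}(A')$ is a priori weaker than $S[x] \subseteq A'$. Bridging this gap is precisely where the double maximality built into $Z_{\H'}$ interacts with the fact that $A'$ is fixed by $\Box'\bd'$ and $\bd' A'$ is a downset; once that bridge is in place, the rest of the argument is essentially bookkeeping with the Galois connections $\phi,\psi$ and the morphism conditions for $(S,T)$.
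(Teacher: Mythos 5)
Your proposal is correct and follows essentially the same route as the paper: reduce everything to the isomorphism $\mu_\H \colon \L\H \to \L\U$ of \cref{lem: LU = LH}, establish the identities $\Box_P\mu_{\H'}(A')=\mu_\H(\Box_S A')$ and $\Box_Q\phi'\mu_{\H'}(A')=(X\times -\Diamond_T\bd'A')\cap Z_\H$, and then verify the four Urquhart-morphism axioms, identities, and composition from them. The only difference is one of explicitness: the paper asserts the equivalence $P[(x,y)]\subseteq C' \iff S[x]\subseteq A'$ without comment, whereas you supply the maximality/upset argument needed for its nontrivial direction, which is a welcome clarification rather than a deviation.
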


\begin{proof}
If $\H \in \Hg$, then $\U \in \Urq$ by \cref{thm: U well defined on objects}. Let $(S, T) \colon \H \to \H'$ be an $\Hg$-morphism. 
We show that $(P, Q) \colon \U \to \U'$ is an $\Urq$-morphism.
To verify \cref{Urq Box}, let $C' \in \L\U'$. By \cref{lem: LU = LH}, $C' = (A' \times Y') \cap Z_{\H'}$ for some $A'\in \L\H'$. We have 
\[
(x, y) \in \Box_{P}C' \iff P[(x, y)] \subseteq C' \iff S[x] \subseteq A' \iff x \in \Box_S A'.
\]
Therefore,
\begin{equation}
\Box_PC' = (\Box_SA' \times Y)\cap Z_\H \in \L\U \label{eqn: BoxP calculation}
\end{equation}
by \cref{Hg morphism Box,lem: LU = LH}.

For the second statement of \cref{Urq Box}, we have $\phi' C' = (X' \times -\bd' A')\cap Z_{\H'}$ by \cref{action of phi}. Moreover, 
\begin{align*}
(x, y) \in \Box_{Q}\phi' C' &\Longleftrightarrow Q[(x, y)] \subseteq \phi' C' \Longleftrightarrow T[y] \subseteq -\bd' A' \\
&\Longleftrightarrow y \in \Box_T-\bd' A'  \Longleftrightarrow y \in -\Diamond_T\bd' A'\\
&\Longleftrightarrow (x, y) \in (X \times -\Diamond_T \bd' A')\cap Z_\H.
\end{align*}
Therefore,
\begin{align}
\begin{split}
\Box_Q \phi' C' &= (X \times -\Diamond_T \bd' A')\cap Z_\H =  (X \times -\bd \Box_S A')\cap Z_\H\\
&= \phi( (\Box_S A' \times Y)\cap Z_\H) = \phi \Box_P C', 
\end{split}\label{box Q display}
\end{align}
where the second equality holds by \cref{Hg morphism Box}, the third equality by \cref{action of phi}, and the final equality by \cref{eqn: BoxP calculation}.

For \cref{Urq not P}, suppose $(x, y) \nr{P} (x', y')$. Then $x \nr{S} x'$. By \cref{Hg morphism not S}, there is $A' \in \L\H'$ with $x \in \Box_SA'$ and $x' \notin A'$. Set $C' = (A' \times Y')\cap Z_{\H'}$. Then $C' \in \L\U'$ by \cref{lem: LU = LH}, and $(x', y') \notin C'$. Moreover,
\begin{align*}
P[(x, y)] &= \{ (x_1, y_1) \in Z_{\H'} : (x, y) \rel{P} (x_1, y_1) \} = \{ (x_1, y_1) \in Z_{\H'} : x \rel{S} x_1 \} \\
&= \{ (x_1, y_1) \in Z_{\H'} : x_1 \in S[x] \} \subseteq (A' \times Y')\cap Z_{\H'} =  C',
\end{align*}
so $(x,y) \in \Box_P C'$.

To verify \cref{Urq not Q}, suppose $(x, y) \nr{Q} (x', y')$. Then $y \nr{T} y'$, so by \cref{Hg morphism not T}, there is $A' \in \L\H'$ with $y \notin \Diamond_T \bd' A'$ and $y' \in \bd' A'$. Set $C' = (A' \times Y')\cap Z_{\H'}$. Then $\phi' C' = (X' \times -\bd' A')\cap Z_{\H'}$ by \cref{action of phi}. Therefore, $(x', y') \notin \phi' C'$, and $(x, y) \in \Box_Q\phi' C'$ by \cref{box Q display}.

Finally, to verify \cref{Urq serial}, let $(x, y) \in Z_\H$. Then $x \rel{R} y$ and by \cref{Hg morphism R}, there are $x' \in X'$, $y' \in Y'$ with $x \rel{S} x'$, $y \rel{T} y'$, and $x' \rel{R'} y'$. By \cref{maximal pairs}, there is $(x'', y'') \in Z_{\H'}$ with $x' \le_X x''$ and $y' \le_Y y''$. By \cref{S[x] an upset}, $x \rel{S} x''$ and $y \rel{T} y''$. Therefore, $(x'', y'') \in P[(x,y)] \cap Q[(x,y)]$. Thus, $(P, Q)$ is an Urquhart morphism.

We next show that $\UU$ sends identity morphisms to identity morphisms. Let $\H \in \Hg$. By \cref{Hg identity}, the identity morphism for $\H$ is $(\le_X, \le_Y)$. If $(P, Q) = \UU(\le_X, \le_Y)$, then $(x, y) \rel{P} (x', y')$ iff $x \le_X x'$ and $(x, y) \rel{Q} (x', y')$ iff $y \le_Y y'$. Therefore, $\UU(\le_X, \le_Y) = (\le_1, \le_2)$, which  by \cref{Urq identity} is the identity morphism for $\UU(\H)$.

To finish the proof, we show that $\UU$ preserves composition. Consider $\Hg$-morphisms $(S_1, T_1) \colon \H_1 \to \H_2$ and $(S_2, T_2) \colon \H_2 \to \H_3$. Set $(S, T) = (S_2, T_2) \star (S_1, T_1)$, so $S = S_2 \star S_1$ and $T = T_2 \star T_1$. If $(x, y) \in Z_1$ and $(x', y') \in Z_3$, then
\[
(x, y) \rel{P} (x', y') \iff x \rel{S} x' \iff x \rel{(S_2 \star S_1)} x'. 
\]
Also, 
\[(x, y) \rel{(P_2 \star P_1)} (x', y') \iff (\forall C \in \L\U_3)\, \left( (x, y) \in \Box_{P_1}\Box_{P_2} C  \Longrightarrow (x', y') \in C\right). 
\]
Given such $C$ there is $A \in \L\H_3$ with $C = (A \times Y_3) \cap Z_3$. By \cref{eqn: BoxP calculation},
\[
\Box_{P_1}\Box_{P_2} C = \Box_{P_1} ((\Box_{S_2}A \times Y_2) \cap Z_2) = (\Box_{S_1}\Box_{S_2}A \times Y_1) \cap Z_1.
\]
Therefore, 
\begin{align*}
(x, y) \rel{(P_2 \star P_1)} (x', y') &\iff (\forall A \in \L\H_3)\, \left(x \in \Box_{S_1}\Box_{S_2} A \Longrightarrow x' \in A\right)\\
&\iff x \rel{(S_2\star S_1)} x'.
\end{align*}
A similar calculation shows that 
\[
(x, y) \rel{(Q_2 \star Q_1)} (x', y') \iff y \rel{(T_2 \star T_1)} y'.
\]
Thus, $(P, Q) = (P_2, Q_2) \star (P_1, Q_1)$, which shows that $\UU$ preserves composition.
\end{proof}


\section{\Plos~spaces: an alternative approach to Urquhart spaces} \label{sec: Urq and Plo}

In \cite{Plo95}, \Plos\ showed that the two quasi-orders of an Urquhart space can be replaced by a reflexive relation. In this section we introduce the category $\Plo$ of \Plos~spaces and \Plos\ morphisms and show that it is isomorphic to $\Urq$.
Following \cref{sec: DH and GvG}, for a relation $R$, we write $\Box, \bd$ instead of $\Box_R, \bd_R$ when the context is clear.

\begin{definition} \plabel{def: LP}
For the pair $ \Pl = (Z, R)$, where $Z$ is a topological space and $R$ is a relation on $Z$, we set
\[
\L\Pl = \{ C \subseteq Z : C = \Box\bd C \textrm{ and } C, -\bd C \textrm{ are closed} \}. 
\]
\end{definition}

\begin{definition} \plabel{def: Plo space}
Let $\Pl = (Z, R)$ be a topological space with a reflexive relation $R$. We call $\Pl$ a \emph{\Plos~space} provided:
\begin{enumerate}
\item \label[def: Plo space]{Plo compact} $Z$ is compact and $T_1$.
\item \label[def: Plo space]{Plo subbasis} The family $\L\Pl \cup \{  -\bd C : C \in \L\Pl \}$ is a subbasis of closed sets for the topology on $Z$.
\item \label[def: Plo space]{Plo lattice ops} If $C_1, C_2 \in \L\Pl$, then $-\bd(C_1 \cap C_2)$ and $\Box\bd(C_1 \cup C_2)$ are closed.
\item \label[def: Plo space]{Plo dd 1} If $R[y] \not\subseteq R[x]$, then there is $C \in \L\Pl$ with $x \in C$ and $y \notin C$.
\item \label[def: Plo space]{Plo dd 2} If $R^{-1}[y] \not\subseteq R^{-1}[x]$, then there is $C \in \L\Pl$ with $x \notin \bd C$ and $y \in \bd C$.
\item \label[def: Plo space]{Plo R from le} If $x \rel{R} y$, then there is $z \in Z$ with $R[z] \subseteq R[x]$ and $R^{-1}[z] \subseteq R^{-1}[y]$.
\end{enumerate}
\end{definition}

\begin{remark} \plabel{rem: basic Plo facts}
\label[rem: basic Plo facts]{LP a lattice} \cref{Plo lattice ops} implies that $\L\Pl$ is a bounded lattice whose top is $X$, bottom is $\varnothing$, $C_1 \wedge C_2 = C_1 \cap C_2$, and $C_1 \vee C_2 = \Box\bd(C_1 \cup C_2)$ for each $C_1,C_2\in\L\Pl$.
\end{remark}

\begin{definition} \plabel{def: Plo morphisms}
Let $\Pl = (Z, R)$ and $\Pl' = (Z', R')$ be \Plos~spaces. We call a pair $(P, Q)$ of relations $P, Q \subseteq Z \times Z'$ a \emph{\Plos~morphism} provided:
\begin{enumerate}
\item \label[def: Plo morphisms]{Plo Box} If $C' \in \L\Pl'$, then $\Box_PC' \in \L\Pl$ and $\bd\Box_P C' = \Diamond_Q \bd'C'$.
\item \label[def: Plo morphisms]{Plo not P} If $z \nr{P} z'$, then there is $C' \in \L\Pl'$ with $z \in \Box_PC'$ and $z' \notin C'$.
\item \label[def: Plo morphisms]{Plo not Q} If $z \nr{Q} z'$, then there is $C' \in \L\Pl'$ with $z \notin\Diamond_Q \bd' C'$ and $z' \in \bd' C'$.
\item \label[def: Plo morphisms]{Plo serial} If $z \in Z$, then $P[z] \cap Q[z] \ne \varnothing$.
\end{enumerate}
\end{definition}

We define composition of \Plos~morphisms exactly as for $\Urq$-morphisms.

\begin{definition} \label{def: composition in Plo}
Let $(P_1, Q_1) \colon \Pl_1 \to \Pl_2$ and $(P_2, Q_2) \colon \Pl_2 \to \Pl_3$ be \Plos~morphisms. 
\begin{enumerate}
\item Define $P_2 \star P_1$ by $x\rel{ (P_2 \star P_1)} z$ provided that $x \in \Box_{P_1}\Box_{P_2}C$ implies $z \in C$ for each $C \in \L\Pl_3$.
\item  Define $Q_2 \star Q_1$ by $y \rel{(Q_2 \star Q_1)}w$ provided that $y \in \Box_{Q_1}\Box_{Q_2}-\bd C$ implies $w \in -\bd C$ for each $C \in \L\Pl_3$. 
\item Set $(P_2, Q_2) \star (P_1, Q_1) = (P_2 \star P_1, Q_2 \star Q_1)$.
\end{enumerate}
\end{definition}

\begin{definition} \label{def: quasi orders from R}
For a \Plos~space $\Pl=(Z, R)$, define $\le_1$ and $\le_2$ on $Z$ by 
\[
x \le_1 z \iff R[z] \subseteq R[x] \quad\mbox{and}\quad y \le_2 z \iff R^{-1}[z] \subseteq R^{-1}[y].
\]
\end{definition}

\needspace{3\baselineskip}
\begin{remark} \plabel{Plo alternative axiom}
\hfill
\begin{enumerate}
\item \label[Plo alternative axiom]{Plo Axiom 6} By the above definition, we can phrase \cref{Plo R from le} as follows: 
\[
\mbox{If $x \rel{R} y$, then there is $z$ with $x \le_1 z$ and $y \le_2 z$}.
\]
\item \label[Plo alternative axiom]{C in LP is an upset} If $C \in \L\Pl$, then $C$ is a $\le_1$-upset and $\bd C$ is a $\le_2$-downset.
\end{enumerate}
\end{remark}

The same argument as in \cref{prop: composition in Urquhart} (which is analogous to that in \cref{prop: composition in GvG}) yields:

\begin{proposition} \label{prop: composition in Ploscica}
The \Plos~spaces and \Plos~morphisms form a category $\Plo$ where composition is given by $\star$ and the identity morphism for $\Pl$ is $(\le_1, \le_2)$.
\end{proposition}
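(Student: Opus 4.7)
My plan is to follow the template of \cref{prop: composition in GvG,prop: composition in Urquhart} essentially verbatim, replacing the clopen-upset machinery with the $\L\Pl$-machinery and using the axioms of \cref{def: Plo morphisms}. The key preliminary step is a composition lemma analogous to \cref{lem: composition lemma for GvG}: for $(P_1,Q_1)\colon\Pl_1\to\Pl_2$ and $(P_2,Q_2)\colon\Pl_2\to\Pl_3$ and $C\in\L\Pl_3$,
\[
\Box_{P_2\star P_1}C=\Box_{P_1}\Box_{P_2}C \quad\text{and}\quad \Diamond_{Q_2\star Q_1}\bd C=\Diamond_{Q_1}\Diamond_{Q_2}\bd C.
\]
Both equalities reduce, via the contrapositive of \cref{Plo not P} (resp.\ \cref{Plo not Q}), to the simple observation that $P_2\circ P_1\subseteq P_2\star P_1$ and $Q_2\circ Q_1\subseteq Q_2\star Q_1$, exactly as in \cref{rem: fact about composition}.

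Using this, to check that $(P_2\star P_1,Q_2\star Q_1)$ is a \Plos~morphism: axiom \eqref{Plo Box} is immediate by chaining \cref{Plo Box} for the two summands (the box of a box lies in $\L\Pl_1$, and the $\bd$-identity is obtained by substituting $\Diamond_{Q_1}\bd\Box_{P_2}C=\Diamond_{Q_1}\Diamond_{Q_2}\bd C$); axioms \eqref{Plo not P} and \eqref{Plo not Q} are direct restatements of the definition of $\star$; and seriality \eqref{Plo serial} follows from the containment $P_2\circ P_1\subseteq P_2\star P_1$ (and similarly for $Q$) by first choosing $y\in P_1[x]\cap Q_1[x]$ and then $z\in P_2[y]\cap Q_2[y]$.

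Next I would show that $(\le_1,\le_2)$ is a \Plos-morphism and serves as the identity. The crucial observation is that, by \cref{C in LP is an upset}, every $C\in\L\Pl$ is a $\le_1$-upset and every $\bd C$ is a $\le_2$-downset, so $\Box_{\le_1}C=C$ and $\Diamond_{\le_2}\bd C=\bd C$ (the last using reflexivity of $\le_2$). This gives \eqref{Plo Box}; axioms \eqref{Plo not P} and \eqref{Plo not Q} are exactly \cref{Plo dd 1,Plo dd 2} after the above rewrites; and \eqref{Plo serial} is reflexivity. For the identity laws $(P,Q)\star(\le_1,\le_2)=(P,Q)$ and $(\le_1',\le_2')\star(P,Q)=(P,Q)$, the composition lemma collapses the $\Box$-chain to a single box, and the characterization
\[
x\rel{P}z \iff (\forall C\in\L\Pl')\,(x\in\Box_PC\Longrightarrow z\in C),
\]
supplied by \cref{Plo not P}, together with its analogue for $Q$, shows the two relations agree on each side.

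Finally, associativity follows formally from the composition lemma: for $C\in\L\Pl_4$, both $\Box_{(P_3\star P_2)\star P_1}C$ and $\Box_{P_3\star(P_2\star P_1)}C$ equal $\Box_{P_1}\Box_{P_2}\Box_{P_3}C$, and combining this with the $\Box$-characterization of the two composed relations gives equality. The analogue for $Q$ is identical. I do not expect any serious obstacle; the only moderately subtle point is the $\Diamond_{\le_2}\bd C=\bd C$ identity, which relies on $\bd C$ being a $\le_2$-downset (\cref{C in LP is an upset}) together with reflexivity, and this is precisely where one sees that the single reflexive relation $R$ of a \Plos~space carries exactly the information of the two quasi-orders of an Urquhart space.
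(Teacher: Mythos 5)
Your proposal is correct and takes essentially the same route as the paper, which proves this proposition simply by observing that the argument of \cref{prop: composition in Urquhart} (itself modeled on \cref{prop: composition in GvG}) carries over verbatim to the \Plos\ setting — exactly the transfer you spell out. The only cosmetic slip is that the composition lemma follows directly from the definition of $\star$ via the analogue of \cref{rem: fact about composition} (the $\Box$ half from the description of $(P_2\star P_1)[x]$ as an intersection, the $\Diamond$ half also using the description of $(Q_2\star Q_1)^{-1}[w]$), rather than from the contrapositive of the morphism axioms, but this does not affect the validity of the argument.
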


We next define a functor $\PP \colon \Urq \to \Plo$. 

\begin{definition} \label{def: R from quasi-orders}
Let $\U = (Z, \le_1, \le_2)$ be a topological space with two quasi-orders. Define $R_\U \subseteq Z \times Z$ by 
\[
x \rel{R_\U}y \iff \exists z \in Z : x \le_1 z \mbox{ and } y \le _2 z.
\]
\end{definition}

\begin{remark} \label{rem: Plo R[x] is a downset}
It is immediate from the above definition that 
$R_\U = {\ge_2 \circ \leq_1}$, and hence $R_\U[C]= \down_2 \up_1 C$ and $R_\U^{-1}[C]=\down_1 \up_2 C$ for each $C \subseteq X$.
\end{remark}

As follows from \cite[Thm.~2.1]{Plo95}, we can recover $\le_1$ and $\le_2$ from $R_\U$:

\begin{lemma} \plabel{lem: relation between R and quasi orders}
Let $\U = (Z, \le_1, \le_2)$ be an Urquhart space and $x, y \in Z$.
\begin{enumerate}
\item \label[lem: relation between R and quasi orders]{RU from le1} $x \le_1 y$ iff  $R_\U[y] \subseteq R_\U[x]$. 
\item \label[lem: relation between R and quasi orders]{RU from le2}$x \le_2 y$ iff $R_\U^{-1}[y] \subseteq R_\U^{-1}[x]$. 
\end{enumerate}
\end{lemma}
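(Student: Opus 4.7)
The plan is to treat both parts in parallel, since they are symmetric, and to use the fact that $R_\U = {\ge_2 \circ \le_1}$ makes the forward directions essentially transitivity, while the reverse directions rely on the double-disconnectedness axioms \cref{Urq dd 1,Urq dd 2} to construct separating sets in $\L\U$ from which a witness can be extracted.

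For the forward direction of (1), I would just chase the definition: if $x \le_1 y$ and $w \in R_\U[y]$, picking $z$ with $y \le_1 z$ and $w \le_2 z$, transitivity of $\le_1$ gives $x \le_1 z$, so $w \in R_\U[x]$. The forward direction of (2) is analogous via transitivity of $\le_2$.

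The interesting direction is the converse of (1); the converse of (2) will be completely dual. I plan to argue by contrapositive: assume $x \not\le_1 y$ and produce $w \in R_\U[y] \setminus R_\U[x]$. By \cref{Urq dd 1}, there is some $C \in \L\U$ with $x \in C$ and $y \notin C$. Using $C = \psi\phi C = -\down_1 \phi C$ together with $y \notin C$, I get an element $w \in \phi C$ with $y \le_1 w$. Then $w \in R_\U[y]$ follows by taking $z = w$ in the definition of $R_\U$ (using reflexivity of $\le_2$). To see $w \notin R_\U[x]$, suppose for contradiction that there is $z'$ with $x \le_1 z'$ and $w \le_2 z'$; since $C$ is a $\le_1$-upset and $\phi C$ is a $\le_2$-upset (by \cref{C in LP is an upset} applied to $\U$, or directly from the definitions of $\phi$ and $\psi$), we get $z' \in C \cap \phi C$. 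But $\phi C = -\down_2 C \subseteq -C$ since $\le_2$ is reflexive, so $C \cap \phi C = \varnothing$, the desired contradiction.

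The main conceptual obstacle is keeping the asymmetry between $\le_1$ and $\le_2$ straight when invoking $\phi$ and $\psi$, but once one remembers that $\phi$ turns $\le_1$-upsets into $\le_2$-upsets (and vice versa for $\psi$) and that the fixpoints of $\psi\phi$ are exactly the members of $\L\U$, the argument is a routine Galois-connection chase. No new machinery is needed; only \cref{Urq dd 1,Urq dd 2,def: LU}, together with reflexivity of both quasi-orders, are invoked.
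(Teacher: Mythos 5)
Your proof is correct and follows essentially the same route as the paper: the forward directions by transitivity of the quasi-orders, and the converses by using \cref{Urq dd 1} (resp.\ \cref{Urq dd 2}) together with $C=\psi\phi C=-\down_1\phi C$ to extract a witness $w\in\phi C$ with $y\le_1 w$, then using reflexivity of $\le_2$ and the disjointness $C\cap\phi C=\varnothing$ exactly as in the paper. No gaps.
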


\begin{proof}
\eqref{RU from le1} 
Suppose that $x \le_1 y$. Then $\up_1 y \subseteq \up_1x$, so
$
R_\U[y] = \down_2 \up_1 y \subseteq \down_2 \up_1 x = R_\U[x].
$
Conversely, suppose that $x \not\le_1 y$. By \cref{Urq dd 1}, there is $C \in \L\U$ with $x \in C$ and $y \notin C$. Because $C = \psi\phi C = -\down_1 \phi C$, there is $z \in \phi C$ with $y \le_1 z$. On the one hand, $z \in \up_1 y \subseteq \down_2 \up_1 y = R_\U[y]$. On the other hand, since $z \in \phi C = -\down_2 C$ and $\up_1 x \subseteq C$, we have $z \notin \down_2\up_1 x = R_\U[x]$. Thus, $R_\U[y]\not\subseteq R_\U[x]$.

\eqref{RU from le2} The proof is similar to that of \eqref{RU from le1}.
\end{proof}

If $\U \in \Urq$, it is clear that $R_\U$ is a reflexive relation, and that $\le_1, \le_2$ are definable in terms of $R_\U$ by \cref{lem: relation between R and quasi orders}. 

\begin{proposition} \plabel{prop: relation between Urq and Plo Galois correspondences}
Let $Z$ be a set, $R$ a relation on $Z$, and $\le_1, \le_2$ quasi-orders on $Z$ such that for all $x, y \in Z$,
\[
x \rel{R} y \iff \exists z \in Z : x \le_1 z \ and \ y \le_2 z.
\]
Let also $\phi, \psi$ be the Galois connection maps defined in Section~\emph{\ref{sec: Hg and Urq}}, $C$ a $\le_1$-upset, and $D$ a $\le_2$-upset. 
\begin{enumerate}
\item \label[prop: relation between Urq and Plo Galois correspondences]{phi} $\phi C = -\bd C$.
\item \label[prop: relation between Urq and Plo Galois correspondences]{psi} $\psi D = \Box-D$.
\item \label[prop: relation between Urq and Plo Galois correspondences]{psi phi} $\psi\phi C = \Box \bd C$.
\end{enumerate}
\end{proposition}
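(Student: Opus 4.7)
\begin{proofsketch}
The plan is to unfold all three definitions carefully and use the hypothesis $x \rel{R} y \Longleftrightarrow \exists z\,(x \le_1 z \text{ and } y \le_2 z)$ to convert $R$-images into chains of $\le_i$-images, which can then be collapsed using the upset hypotheses on $C$ and $D$.

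First I would prove (1). By definition $\phi C = -\down_2 C$ and $\bd C = R[C]$, so it suffices to show $R[C] = \down_2 C$. Given $y \in R[C]$, there is $x \in C$ with $x \rel{R} y$, hence by the hypothesis on $R$ there is $z$ with $x \le_1 z$ and $y \le_2 z$; since $C$ is a $\le_1$-upset, $z \in C$, so $y \in \down_2 C$. Conversely, if $y \in \down_2 C$ then $y \le_2 z$ for some $z \in C$, and $z \le_1 z$ gives $z \rel{R} y$, so $y \in R[C]$. Thus $\phi C = -R[C] = -\bd C$.

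Next I would prove (2) by a symmetric computation. By definition $\psi D = -\down_1 D$ and $\Box(-D) = -R^{-1}[D]$, so I would show $R^{-1}[D] = \down_1 D$ using exactly the mirror argument: the forward inclusion uses that $D$ is a $\le_2$-upset to promote $z$ into $D$, and the reverse inclusion uses reflexivity of $\le_1$ in combination with the characterization of $R$.

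Finally (3) is immediate from (1) and (2). The one subtle point — which I flag as the only real check needed — is that in order to apply (2) to $\phi C$ we must know $\phi C$ is a $\le_2$-upset, equivalently that $\bd C = \down_2 C$ is a $\le_2$-downset; but this is already witnessed by the equality $\bd C = \down_2 C$ established in (1). With that in hand, applying (2) to $D = \phi C = -\bd C$ yields $\psi \phi C = \Box(-(-\bd C)) = \Box \bd C$. No step requires more than direct manipulation of the defining equivalences, so no genuine obstacle is anticipated.
\end{proofsketch}
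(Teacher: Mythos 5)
Your proof is correct and follows essentially the same route as the paper: the paper establishes (1) via the identity $\bd C = R[C] = \down_2\up_1 C = \down_2 C$ (quoting the remark that $R = {\ge_2}\circ{\le_1}$), whereas you verify $R[C]=\down_2 C$ element-wise, which amounts to the same computation. Your handling of (3), including the observation that $\phi C = -\bd C$ is a $\le_2$-upset so that (2) applies to it, matches the paper exactly.
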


\begin{proof}
\eqref{phi} By \cref{rem: Plo R[x] is a downset}, 
$
\bd C = R[C] = \down_2 \up_1 C = \down_2 C.
$
Thus,
$
-\bd C = - \down_2 C = \phi C.
$

\eqref{psi} The proof is similar to that of \eqref{phi}.

\eqref{psi phi} Let $C$ be a $\le_1$-upset. Then $-\bd C$ is a $\le_2$-upset by \eqref{phi}. Consequently, by \eqref{phi} and \eqref{psi} applied to $D = -\bd C$, we have
$
\psi\phi C = \psi(-\bd C) = \Box \bd C.
$
\end{proof}

\begin{lemma} \plabel{lem: LP = LU}
\hfill
\begin{enumerate}
\item \label[lem: LP = LU]{LP} Let $\U = (Z, \le_1, \le_2) \in \Urq$ and $\Pl = (Z, R_\U)$. Then $\L\Pl = \L\U$.
\item \label[lem: LP = LU]{LU} Let $\Pl = (Z, R) \in \Plo$, and define $\le_1, \le_2$ as in Definition~\emph{\ref{def: quasi orders from R}}. If $\U = (Z, \le_1, \le_2)$, then $\L\U = \L\Pl$.
\end{enumerate}
\end{lemma}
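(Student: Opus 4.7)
The plan is to observe that both parts reduce to applying Proposition~\ref{prop: relation between Urq and Plo Galois correspondences}. Indeed, writing out the definitions of $\L\U$ and $\L\Pl$ side by side, the only differences are: $\L\U$ demands $C$ be a $\le_1$-upset, while $\L\Pl$ does not; $\L\U$ uses $\psi\phi C$ where $\L\Pl$ uses $\Box\bd C$; and $\L\U$ uses $\phi C$ being closed where $\L\Pl$ uses $-\bd C$ being closed. So once I verify the relationship $x \rel{R} y \iff \exists z (x \le_1 z \wedge y \le_2 z)$, Proposition~\ref{prop: relation between Urq and Plo Galois correspondences} equates $\phi C$ with $-\bd C$ and $\psi \phi C$ with $\Box\bd C$ on $\le_1$-upsets, and I just need the upset condition to sort itself out.

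For part~(1), $R_\U$ is defined so that $x \rel{R_\U} y$ iff there exists $z$ with $x \le_1 z$ and $y \le_2 z$, which is precisely the hypothesis of Proposition~\ref{prop: relation between Urq and Plo Galois correspondences}. So for any $\le_1$-upset $C$, $\phi C = -\bd C$ and $\psi\phi C = \Box\bd C$, giving $\L\U \subseteq \L\Pl$ immediately. For the converse, if $C \in \L\Pl$, I first need to check that $C$ is a $\le_1$-upset; this follows from $C = \Box\bd C$ together with Lemma~\ref{lem: relation between R and quasi orders}\eqref{RU from le1}: if $x \in C$ and $x \le_1 y$, then $R_\U[y] \subseteq R_\U[x] \subseteq \bd C$, so $y \in \Box\bd C = C$. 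Then the Proposition applies and yields $C \in \L\U$.

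For part~(2), I need to verify that $R$ and the induced $\le_1, \le_2$ satisfy the hypothesis of Proposition~\ref{prop: relation between Urq and Plo Galois correspondences}. The forward direction is exactly Axiom~\ref{Plo R from le} combined with the definition of $\le_1, \le_2$ in Definition~\ref{def: quasi orders from R}. For the reverse, suppose $x \le_1 z$ and $y \le_2 z$, so $R[z] \subseteq R[x]$ and $R^{-1}[z] \subseteq R^{-1}[y]$; using reflexivity of $R$ (part of Definition~\ref{def: Plo space}), $z \rel{R} z$ gives $z \in R[x]$, hence $x \rel{R} z$, hence $x \in R^{-1}[z] \subseteq R^{-1}[y]$, hence $x \rel{R} y$. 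Once this is done, the same two inclusions as in part~(1) go through: $\L\Pl \subseteq \L\U$ using Remark~\ref{Plo alternative axiom}\eqref{C in LP is an upset} for the $\le_1$-upset condition, and $\L\U \subseteq \L\Pl$ directly from the Proposition.

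There is no real obstacle; the only subtle point is the $\le_1$-upset check in part~(1), where I need Lemma~\ref{lem: relation between R and quasi orders} to translate $\le_1$ into a statement about $R_\U$-images so that the hypothesis $C = \Box\bd C$ can be invoked. Everything else is a bookkeeping verification using the already-established correspondences between the Galois operations $\phi, \psi$ and the modal operations $\Box, \bd$.
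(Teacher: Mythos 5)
Your proposal is correct and follows essentially the same route as the paper: both inclusions in each part reduce to Proposition~\ref{prop: relation between Urq and Plo Galois correspondences}, with the only genuine point being the verification that a member of $\L\Pl$ is a $\le_1$-upset (which you handle via Lemma~\ref{lem: relation between R and quasi orders}\eqref{RU from le1}, while the paper cites \cref{rem: Plo R[x] is a downset} — the same observation). Your explicit verification of the hypothesis of the Proposition in part~(2) is exactly the paper's later \cref{lem: R from quasi-orders and vice-versa}, so nothing is missing.
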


\begin{proof}
\eqref{LP} Let $C \in \L\U$. By definition, $C, \phi C$ are closed and $\psi\phi C = C$. Therefore, $C \in \L\Pl$ by \cref{prop: relation between Urq and Plo Galois correspondences}. Conversely, if $C \in \L\Pl$, then $C, -\bd C$ are closed and $\Box\bd C = C$. \cref{rem: Plo R[x] is a downset} shows that $C$ is a $\le_1$-upset, so \cref{prop: relation between Urq and Plo Galois correspondences} yields that $\phi C$ is closed and $\psi\phi C = C$. Thus, $C \in \L\U$, and hence $\L\Pl = \L\U$.

\eqref{LU} The proof is similar to that of \eqref{LP}.
\end{proof}

\begin{proposition} \label{prop: PP on objects}
If $\U = (Z, \le_1, \le_2) \in \Urq$, then $\Pl := (Z, R_\U) \in \Plo$.
\end{proposition}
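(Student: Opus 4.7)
The plan is to verify the six axioms of \cref{def: Plo space} for $\Pl = (Z, R_\U)$ one by one, leveraging two key bridging facts already in hand: the identity $\L\Pl = \L\U$ (\cref{LP}) and the translation between the Galois maps $\phi,\psi$ and the modal operators $\Box,\bd$ (\cref{prop: relation between Urq and Plo Galois correspondences}). First I would note that $R_\U$ is reflexive, since $x \le_1 x$ and $x \le_2 x$ give $x \rel{R_\U} x$ by the definition of $R_\U$. Axiom \cref{Plo compact} is immediate: compactness of $Z$ comes from \cref{Urq compact}, and the $T_1$ property was observed in \cref{rem: U is T1}.

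For axiom \cref{Plo subbasis}, I would apply \cref{LP} to replace $\L\Pl$ by $\L\U$ and then use \cref{Urq subbasis}, which gives $\L\U \cup \{\phi C : C \in \L\U\}$ as a subbasis of closed sets; identifying $\phi C = -\bd C$ (\cref{phi}) then yields exactly the desired subbasis of open sets after taking complements. For axiom \cref{Plo lattice ops}, for $C_1,C_2 \in \L\Pl = \L\U$, \cref{Urq lattice ops} tells us that $\phi(C_1 \cap C_2)$ and $\psi(\phi C_1 \cap \phi C_2)$ are closed; using \cref{phi,psi} I would rewrite these as $-\bd(C_1 \cap C_2)$ and $\Box\bd(C_1 \cup C_2)$ respectively (the second equality goes via $-\bd C_1 \cap -\bd C_2 = -\bd(C_1 \cup C_2)$ and $\psi(-D) = \Box D$ for $\le_2$-upsets $D$).

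Axioms \cref{Plo dd 1} and \cref{Plo dd 2} follow directly from \cref{lem: relation between R and quasi orders} combined with the Urquhart separation axioms. Indeed, $R_\U[y] \not\subseteq R_\U[x]$ is equivalent to $x \not\le_1 y$ by \cref{RU from le1}, so \cref{Urq dd 1} supplies a $C \in \L\U = \L\Pl$ with $x \in C$ and $y \notin C$; analogously, $R_\U^{-1}[y] \not\subseteq R_\U^{-1}[x]$ is equivalent to $x \not\le_2 y$ by \cref{RU from le2}, and \cref{Urq dd 2} provides $C \in \L\U$ with $x \in \phi C = -\bd C$ and $y \notin \phi C$, i.e.\ $x \notin \bd C$ and $y \in \bd C$. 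Finally, axiom \cref{Plo R from le} is essentially built into the definition of $R_\U$: if $x \rel{R_\U} y$, then by \cref{def: R from quasi-orders} there exists $z$ with $x \le_1 z$ and $y \le_2 z$, and \cref{lem: relation between R and quasi orders} converts these to $R_\U[z] \subseteq R_\U[x]$ and $R_\U^{-1}[z] \subseteq R_\U^{-1}[y]$.

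There is no genuine obstacle here: the whole proof is a translation exercise once the Galois--modal dictionary of \cref{prop: relation between Urq and Plo Galois correspondences} and the order-recovery lemma \cref{lem: relation between R and quasi orders} are in place. The only point that requires any attention is ensuring that the sets being manipulated are $\le_1$- or $\le_2$-upsets as needed (for instance, each $C \in \L\U$ is automatically a $\le_1$-upset by \cref{def: LU}, so \cref{phi} legitimately applies), but this is routine.
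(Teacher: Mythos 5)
Your proof is correct and follows essentially the same route as the paper: compactness and $T_1$ from \cref{Urq compact} and \cref{rem: U is T1}, the identification $\L\Pl=\L\U$ from \cref{LP}, the translation of \crefrange{Urq subbasis}{Urq dd 2} via \cref{prop: relation between Urq and Plo Galois correspondences} together with \cref{lem: relation between R and quasi orders}, and \cref{Plo R from le} directly from the definition of $R_\U$. (Only a cosmetic slip: in your identity $\psi(-D)=\Box D$ the hypothesis should be that $-D$ is a $\le_2$-upset, i.e.\ $D$ is a $\le_2$-downset, which is what holds for $D=\bd(C_1\cup C_2)$.)
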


\begin{proof}
To verify \cref{Plo compact}, the space $Z$ is compact by definition, and it is $T_1$ by \cref{rem: U is T1}. \cref{LP} shows that $\L\Pl = \L\U$, and so Definition~\ref{def: Plo space}(\ref{Plo subbasis}-\ref{Plo dd 2}) follows from \cref{prop: relation between Urq and Plo Galois correspondences} because $\U \in \Urq$ (see \cref{def: Urquhart space}).  To verify \cref{Plo R from le}, suppose that $x \rel{R_\U} y$. Then there is $z \in Z$ with $x \le_1 z$ and $y \le_2 z$. By \cref{lem: relation between R and quasi orders}, $R_\U[z] \subseteq R_\U[x]$ and $R_\U^{-1}[z] \subseteq R_\U^{-1}[y]$. Thus, $\Pl \in \Plo$.
\end{proof}

\begin{proposition} \label{prop: PP on morphisms}
If $(P, Q) \colon \U \to \U'$ is an $\Urq$-morphism, then $(P,Q) \colon \Pl \to \Pl'$ is a $\Plo$-morphism, where $\Pl = (Z,R_\U)$ and $\Pl'=(Z',R_{\U'})$.
\end{proposition}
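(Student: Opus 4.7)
The plan is to verify the four axioms of a \Plos~morphism in \cref{def: Plo morphisms} by directly translating the four Urquhart-morphism axioms via the dictionary between the two Galois connections. The essential tools are already in place: by \cref{prop: PP on objects} we have $\Pl, \Pl' \in \Plo$; by \cref{lem: LP = LU}\eqref{LP} the lattices of ``good'' sets agree, namely $\L\Pl = \L\U$ and $\L\Pl' = \L\U'$; and by \cref{prop: relation between Urq and Plo Galois correspondences}\eqref{phi} we have $\phi C = -\bd C$ for every $\le_1$-upset $C$ (applied with respect to either $\U$ or $\U'$, which is legitimate because each element of $\L\U$ or $\L\U'$ is by definition a $\le_1$-upset in the appropriate space).

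First I would dispatch axioms \eqref{Plo not P} and \eqref{Plo serial}, which are essentially free: \cref{Urq not P} and \cref{Urq serial} for $(P,Q)$ as an $\Urq$-morphism already supply exactly what is required, and the witnessing set $C'$ lives in $\L\U' = \L\Pl'$. Axiom \eqref{Plo not Q} is also straightforward: from \cref{Urq not Q} there is $C' \in \L\U' = \L\Pl'$ with $z \in \Box_Q \phi' C'$ and $z' \notin \phi' C'$; applying $\phi' C' = -\bd' C'$ and the set-theoretic identity $\Box_Q(-B) = -\Diamond_Q B$, the first clause becomes $z \notin \Diamond_Q \bd' C'$ and the second becomes $z' \in \bd' C'$, as required.

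The main content is axiom \eqref{Plo Box}. Take $C' \in \L\Pl' = \L\U'$. By \cref{Urq Box}, $\Box_P C' \in \L\U = \L\Pl$, giving the first half. For the equation $\bd \Box_P C' = \Diamond_Q \bd' C'$, I note that both $C'$ and $\Box_P C'$ are upsets of the respective $\le_1$-order (being members of $\L\U'$ and $\L\U$), so by \cref{prop: relation between Urq and Plo Galois correspondences}\eqref{phi} we may rewrite $\phi' C' = -\bd' C'$ and $\phi \Box_P C' = -\bd \Box_P C'$. The Urquhart identity $\phi \Box_P C' = \Box_Q \phi' C'$ then reads
\[
-\bd \Box_P C' \;=\; \Box_Q(-\bd' C') \;=\; -\Diamond_Q \bd' C',
\]
and taking complements yields exactly $\bd \Box_P C' = \Diamond_Q \bd' C'$.

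I do not expect any serious obstacle in this argument, since \cref{lem: LP = LU} and \cref{prop: relation between Urq and Plo Galois correspondences} have already done the structural work needed to pass between $\Urq$ and $\Plo$; the only point requiring care is to confirm that the sets to which the identity $\phi = -\bd$ is being applied are indeed $\le_1$-upsets (with respect to $R_\U$ or $R_{\U'}$), which is immediate from the definition of $\L\U$ (resp.\ $\L\U'$) and from $\Box_P C' \in \L\U$.
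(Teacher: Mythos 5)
Your proof is correct and follows essentially the same route as the paper, which likewise reduces the statement to \cref{lem: LP = LU} and \cref{prop: relation between Urq and Plo Galois correspondences} (the paper merely states the translation of \cref{def: Urquhart morphisms}(i) into \cref{def: Plo morphisms}(i) for $1 \le i \le 4$ without writing out the complementation computations). Your explicit verification of the identity $\bd \Box_P C' = \Diamond_Q \bd' C'$ and of axiom (3) via $\Box_Q(-B) = -\Diamond_Q B$ is exactly the intended argument.
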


\begin{proof}
By \cref{LP}, $\L\Pl = \L\U$. Therefore, by \cref{prop: relation between Urq and Plo Galois correspondences}, \cref{def: Urquhart morphisms}(i) implies \cref{def: Plo morphisms}(i) for each $1 \le i \le 4$. Consequently, $(P, Q)$ is a $\Plo$-morphism.
\end{proof}

\begin{theorem} \label{prop: PP is a functor}
There is a functor $\PP \colon \Urq \to \Plo$ given by $\PP(\U) = (Z, R_\U)$ for each $\U \in \Urq$, and $\PP(P,Q) = (P,Q)$ for each $\Urq$-morphism $(P, Q)$. 
\end{theorem}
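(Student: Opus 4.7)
The plan is to observe that the hard work has already been done in Propositions \ref{prop: PP on objects} and \ref{prop: PP on morphisms}, which show that $\PP$ is well-defined on objects and morphisms respectively. It then remains to verify that $\PP$ preserves identities and composition, both of which will follow almost immediately once one notes that the defining conditions in $\Urq$ and $\Plo$ agree term-by-term under the dictionary provided by Lemma \ref{lem: LP = LU} and Proposition \ref{prop: relation between Urq and Plo Galois correspondences}.

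For identity preservation, fix $\U = (Z, \le_1, \le_2) \in \Urq$. The identity morphism for $\U$ is $(\le_1, \le_2)$ by Proposition \ref{prop: composition in Urquhart}(\ref{Urq identity}), and the identity morphism for $\Pl := \PP(\U) = (Z, R_\U)$ is $(\le_1', \le_2')$ by Proposition \ref{prop: composition in Ploscica}, where $\le_1', \le_2'$ are defined from $R_\U$ as in \cref{def: quasi orders from R}. Lemma \ref{lem: relation between R and quasi orders} gives precisely that $\le_i' \, = \, \le_i$ for $i=1,2$, so $\PP(\le_1, \le_2) = (\le_1, \le_2) = 1_{\PP(\U)}$.

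For composition preservation, let $(P_1, Q_1) \colon \U_1 \to \U_2$ and $(P_2, Q_2) \colon \U_2 \to \U_3$ be $\Urq$-morphisms, and write $\Pl_i = \PP(\U_i)$. We must show that $P_2 \star P_1$ and $Q_2 \star Q_1$ computed in $\Urq$ agree with the same expressions computed in $\Plo$. By Lemma \ref{lem: LP = LU}\eqref{LP}, $\L\Pl_3 = \L\U_3$, so the defining condition for $P_2 \star P_1$ in $\Urq$ (quantifying over $C \in \L\U_3$) is literally the same as the one in $\Plo$ (quantifying over $C \in \L\Pl_3$). For $Q_2 \star Q_1$, each $C \in \L\U_3 = \L\Pl_3$ is a $\le_1$-upset by \cref{C in LP is an upset}, so Proposition \ref{prop: relation between Urq and Plo Galois correspondences}\eqref{phi} gives $\phi C = -\bd C$; substituting this identity into the defining condition from \cref{def: composition in Urq} yields exactly the defining condition from \cref{def: composition in Plo}. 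Consequently $\PP((P_2,Q_2) \star (P_1, Q_1)) = \PP(P_2,Q_2) \star \PP(P_1,Q_1)$.

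There is no real obstacle here; the theorem is essentially a bookkeeping statement, with all the substantive content packaged in Lemma \ref{lem: LP = LU} (asserting that the lattices $\L\U$ and $\L\Pl$ coincide) and in Proposition \ref{prop: relation between Urq and Plo Galois correspondences} (identifying the two Galois connections). Once these identifications are in place, the definitions of morphisms, composition, and identities on both sides become manifestly the same.
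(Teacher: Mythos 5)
Your proposal is correct and follows the same route as the paper: the paper likewise reduces everything to the well-definedness results and then simply asserts that preservation of identities and composition is clear. You have merely (and usefully) spelled out why it is clear, namely via \cref{lem: relation between R and quasi orders}, \cref{lem: LP = LU}, and the identity $\phi C = -\bd C$ from \cref{prop: relation between Urq and Plo Galois correspondences}.
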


\begin{proof}
\cref{prop: PP on objects} shows that $\PP$ is well defined on objects, and \cref{prop: composition in Ploscica} that it is well defined on morphisms. It is clear that $\PP$ preserves composition and identity morphisms.  Thus, $\PP$ is a functor.
\end{proof}

We now show that $\PP$ is an isomorphism of categories. For this we require the following lemma.

\begin{lemma} \label{lem: R from quasi-orders and vice-versa}
Let $\Pl = (Z, R) \in \Plo$. If $x, y \in Z$, then $x \rel{R} y$ iff there is $z \in Z$ with $x \le_1 z$ and $y \le_2 z$.
\end{lemma}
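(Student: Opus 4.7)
The plan is to prove the two implications separately, with the reflexivity of $R$ doing the main work in the reverse direction.

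For the forward direction, suppose $x \rel{R} y$. Then \cref{Plo R from le} directly yields an element $z \in Z$ with $R[z] \subseteq R[x]$ and $R^{-1}[z] \subseteq R^{-1}[y]$. By \cref{def: quasi orders from R}, these conditions mean exactly $x \le_1 z$ and $y \le_2 z$, so this direction is immediate.

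For the reverse direction, suppose there is $z \in Z$ with $x \le_1 z$ and $y \le_2 z$, i.e., $R[z] \subseteq R[x]$ and $R^{-1}[z] \subseteq R^{-1}[y]$. Since $R$ is reflexive (by the definition of a \Plos~space), we have $z \rel{R} z$, so $z \in R[z] \subseteq R[x]$. Therefore $x \rel{R} z$, which gives $x \in R^{-1}[z] \subseteq R^{-1}[y]$, and hence $x \rel{R} y$.

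There is no real obstacle here: once one unpacks the definitions of $\le_1$ and $\le_2$, the forward direction is just \cref{Plo R from le} restated, and the reverse direction uses reflexivity of $R$ together with the two inclusions applied in sequence (first $R[z] \subseteq R[x]$ applied to $z \in R[z]$, then $R^{-1}[z] \subseteq R^{-1}[y]$ applied to $x \in R^{-1}[z]$). This lemma is the counterpart to \cref{lem: relation between R and quasi orders} on the Urquhart side, and together they make it possible to define an inverse functor $\Urq \to \Plo$ and establish the isomorphism of categories.
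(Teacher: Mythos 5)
Your proof is correct and follows essentially the same route as the paper: the forward direction is \cref{Plo R from le} unpacked via \cref{def: quasi orders from R}, and the reverse direction uses reflexivity of $R$ to get $z \in R[z] \subseteq R[x]$ and then chases $x$ through $R^{-1}[z] \subseteq R^{-1}[y]$. Nothing to add.
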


\begin{proof}
One direction follows from \cref{Plo Axiom 6}. For the other direction, suppose that there is $z \in Z$ with $x \le_1 z$ and $y \le_2 z$. From $x \le_1 z$ we get $R[z] \subseteq R[x]$. Because $R$ is reflexive, $z \in R[z]$, so $z \in R[x]$. Therefore, $x \rel{R} z$, and hence $x \in R^{-1}[z]$. Because $y \le_2 z$, we have $R^{-1}[z] \subseteq R^{-1}[y]$. Consequently, $x \in R^{-1}[y]$, yielding $x \rel{R} y$.
\end{proof}

\begin{theorem}
$\PP \colon \Urq \to \Plo$ is an isomorphism of categories.
\end{theorem}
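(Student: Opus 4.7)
The plan is to construct an explicit inverse functor $\PP^{-1} \colon \Plo \to \Urq$ and verify that $\PP$ and $\PP^{-1}$ are mutually inverse. For $\Pl = (Z, R) \in \Plo$, let $\le_1, \le_2$ be the quasi-orders on $Z$ given by \cref{def: quasi orders from R}, and set $\PP^{-1}(\Pl) = (Z, \le_1, \le_2)$. On morphisms, we take $\PP^{-1}(P, Q) = (P, Q)$.

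First I would verify that $\PP^{-1}(\Pl) \in \Urq$. By \cref{lem: R from quasi-orders and vice-versa}, the relation $R$ coincides with $R_{\PP^{-1}(\Pl)}$, so \cref{prop: relation between Urq and Plo Galois correspondences} applies and \cref{LU} gives $\L(\PP^{-1}(\Pl)) = \L\Pl$. Under these identifications, the Galois connection maps $\phi, \psi$ on $\PP^{-1}(\Pl)$ translate, respectively, to $-\bd$ and $\Box-$, so axioms (2)--(5) of \cref{def: Urquhart space} follow immediately from axioms (2)--(5) of \cref{def: Plo space}. For \cref{Urq compact}, $Z$ is compact by \cref{Plo compact}; to check double orderedness, suppose $x \le_1 y$ and $x \le_2 y$ with $x \ne y$. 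Since $Z$ is $T_1$, there is a basic open neighborhood of $y$ missing $x$, so from \cref{Plo subbasis} there is $C \in \L\Pl$ with either $x \in C$ and $y \notin C$, or $x \notin \bd C$ and $y \in \bd C$. The former contradicts $x \le_1 y$ because $C$ is a $\le_1$-upset, and the latter contradicts $x \le_2 y$ because $\bd C$ is a $\le_2$-downset (both by \cref{C in LP is an upset}). Hence $x = y$.

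Next I would verify that $\PP^{-1}$ is well defined on morphisms. Given a $\Plo$-morphism $(P, Q) \colon \Pl \to \Pl'$, by the equality $\L\Pl = \L(\PP^{-1}(\Pl))$ and the analogous equality for $\Pl'$, together with \cref{prop: relation between Urq and Plo Galois correspondences}, the four clauses of \cref{def: Plo morphisms} translate verbatim into the four clauses of \cref{def: Urquhart morphisms}, so $(P, Q) \colon \PP^{-1}(\Pl) \to \PP^{-1}(\Pl')$ is an $\Urq$-morphism. Since composition in both $\Urq$ and $\Plo$ is defined purely in terms of $\Box_P, \Box_Q$ and the sets $\L\U$ resp.\ $\L\Pl$, and these coincide under our identifications, $\PP^{-1}$ preserves composition and identity morphisms. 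The equality $\PP^{-1} \circ \PP = 1_\Urq$ then follows from \cref{lem: relation between R and quasi orders}, which asserts that the quasi-orders recovered from $R_\U$ are the original ones, while $\PP \circ \PP^{-1} = 1_\Plo$ follows from \cref{lem: R from quasi-orders and vice-versa}, which asserts that the relation built from the quasi-orders associated to $R$ is $R$ itself.

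There is no serious obstacle: the whole proof is bookkeeping once the key dictionary is in place. The only subtle point is double orderedness of $\PP^{-1}(\Pl)$, which is not among the axioms of a \Plos~space and must be extracted from the $T_1$ property together with the subbasis axiom and the upset/downset behaviour of elements of $\L\Pl$, as carried out above.
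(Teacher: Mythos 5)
Your proposal is correct and follows essentially the same route as the paper: the paper verifies that $\PP$ is bijective on objects and morphisms (which is equivalent to exhibiting the inverse functor you construct), recovers the quasi-orders from $R$ via \cref{def: quasi orders from R,lem: relation between R and quasi orders}, proves double orderedness from the $T_1$ axiom together with \cref{Plo subbasis} and \cref{C in LP is an upset} exactly as you do, and transfers the remaining axioms and the morphism conditions through \cref{LU} and \cref{prop: relation between Urq and Plo Galois correspondences}. No gaps.
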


\begin{proof}
By \cite[p.~14]{Mac71}, to see that $\PP$ is an isomorphism of categories, it suffices to show that $\PP$ is a bijection on objects and on morphisms. If $\U = (Z, \le_1, \le_2)$, then $\le_1, \le_2$ are determined by $R_\U$ by \cref{def: quasi orders from R,lem: relation between R and quasi orders}. This shows that $\PP$ is 1-1 on objects. To see that $\PP$ is onto on objects, let $(Z, R) \in \Plo$, and set $\U = (Z, \le_1, \le_2)$, where $\le_1, \le_2$ are given in \cref{def: quasi orders from R}. We show that $\U \in \Urq$. To see that $\U$ is a doubly ordered space, suppose that $x, y \in Z$ with $x \le_1 y$ and $x \le_2 y$. If $x \ne y$, since $Z$ is $T_1$, there is 
a closed set $C$ containing $x$ but not $y$. We may assume that $C$ is a subbasic closed. Therefore, \cref{Urq subbasis} yields that either $C \in \L\Pl$ or $C = -\bd D$ for some $D \in \L\Pl$. If $C \in \L\Pl$, then $C$ is a $\le_1$-upset by \cref{C in LP is an upset}; and $C$ contains $x$ but not $y$, a contradiction to $x \le_1 y$. If $C = -\bd D$, then $C$ is a $\le_2$-upset, 
again yielding a contradiction since $x \le_2 y$. Therefore, $x = y$. This shows that $\U$ is a doubly ordered space. By \cref{LU}, $\L\U = \L\Pl$. Thus,
\cref{def: Plo space,prop: relation between Urq and Plo Galois correspondences} show 
that $\U \in \Urq$. If $x, y \in Z$, then by \cref{lem: R from quasi-orders and vice-versa}, $x \rel{R} y$ iff there is $z \in Z$ with $x \le_1 z$ and $y \le_2 z$, which happens iff $x \rel{R_\U} y$. Consequently, $R = R_\U$, so $\Pl = \PP(\U)$, and hence $\PP$ is onto on objects.

By \cref{prop: PP on morphisms}, if $(P,Q) \colon \U \to \U'$ is an $\Urq$-morphism, then $(P, Q) \colon \Pl \to \Pl'$ is a $\Plo$-morphism. Conversely, if 
$(P, Q) \colon \Pl \to \Pl'$ is a $\Plo$-morphism, then \cref{LU,prop: relation between Urq and Plo Galois correspondences} show that $(P, Q) \colon \U \to \U'$ is an Urquhart morphism, so $\PP$ is one-to-one and onto on morphisms, completing the proof that $\PP$ is an isomorphism of categories.
\end{proof}


\section{From Urquhart back to Dunn-Hartonas} \label{sec: Urq and DH}

In \cref{sec: DH and GvG,sec: GvG and Hg,sec: Hg and Urq} we defined the functors
\[
\begin{tikzcd}
\HD \arrow[r, "\GG"] & \GvG \arrow[r, "\HH"] & \Hg \arrow[r, "\UU"] & \Urq
\end{tikzcd}
\]
and in \cref{sec: Urq and Plo} we saw that $\PP \colon \Urq \to \Plo$ is an isomorphism.
In this section we complete the circle by defining a functor $\DD \colon \Urq \to \HD$.

\begin{definition}
Let $\U = (Z, \le_1, \le_2) \in \Urq$. Set
\[
\LC\U = \{ D \in \Up(Z, \le_1) : D \textrm{ is closed, } \psi\phi D = D\}
\]
and
\[
\RC\U = \{ E \in \Up(Z, \le_2) : E \textrm{ is closed, } \phi\psi E = E\}.
\]
We order $\LC\U$ and $\RC\U$ by reverse inclusion.
\end{definition}

\begin{remark}
In \cite[Thm.~3]{Urq78}, elements of $\LC\U$ are called left-closed stable sets and elements of $\RC\U$ right-closed stable sets, hence the notation.
\end{remark}
 
We show that $\LC\U$ and $\RC\U$ are coherent lattices. For this we require the following lemma.

\begin{lemma} \plabel{lem: LC and RC facts}
Let $\U = (Z, \le_1, \le_2) \in \Urq$.
\begin{enumerate}
\item \label[lem: LC and RC facts]{Urq D in C} If $D \in \LC\U$ and $C \in \L\U$, then $D \subseteq C$ iff $D \cap \phi C = \varnothing$.
\item \label[lem: LC and RC facts]{Urq E in phi C} If $E \in \RC\U$ and $C \in \L\U$, then $E \subseteq \phi C$ iff $C \cap E = \varnothing$.
\item \label[lem: LC and RC facts]{Urq finite}  If $\mathcal{S} \subseteq \LC\U$ and $C \in \L\U$ with $\bigcap \mathcal{S} \subseteq C$, then there are $D_1, \dots, D_n \in \mathcal{S}$ with $D_1 \cap \dots \cap D_n \subseteq C$.
\item \label[lem: LC and RC facts]{Urq finite 2}  If $\mathcal{S} \subseteq \RC\U$ and $C \in \L\U$ with $\bigcap \mathcal{S} \subseteq \phi C$, then there are $E_1, \dots, E_n \in \mathcal{S}$ with $E_1 \cap \dots \cap E_n \subseteq \phi C$. 
\item \label[lem: LC and RC facts]{D is intersection} If $D \in \LC\U$, then $D = \bigcap \{ C \in \L\U : D \subseteq C\}$.
\item \label[lem: LC and RC facts]{E is intersection} If $E \in \RC\U$, then $E = \bigcap \{ \phi C : C \in \L\U, E \subseteq \phi C\}$.
\end{enumerate}
\end{lemma}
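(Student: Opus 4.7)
The plan is to treat the six parts in order, using the Galois machinery $\phi,\psi$ together with the subbasic-closed structure from \cref{Urq subbasis} and compactness of $Z$. Parts (1) and (2) are Galois-theoretic and only use the upset property. For the forward direction of (1), antitonicity of $\phi$ yields $\phi C\subseteq\phi D$, and since $D\cap\phi D=\varnothing$ we get $D\cap\phi C=\varnothing$. For the reverse, if $D\cap\phi C=\varnothing$ and $x\in D$, then $\up_1 x\subseteq D$ (since $D$ is a $\le_1$-upset), hence $\up_1 x\cap\phi C=\varnothing$, and then $C=\psi\phi C=-\!\down_1\phi C$ gives $x\in C$. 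Part (2) is the mirror image using that $E$ is a $\le_2$-upset.

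For (3), since $\bigcap\mathcal S$ is an intersection of $\le_1$-upsets, the same upset/Galois argument used in (1) shows $\bigcap\mathcal S\subseteq C$ is equivalent to $(\bigcap\mathcal S)\cap\phi C=\varnothing$. Because $Z$ is compact and $\phi C$ is closed (by definition of $\L\U$), $\phi C$ is compact; as each $D_i\in\mathcal S$ is closed, the family $\{D_i\cap\phi C\}$ has empty intersection, so finitely many $D_1,\dots,D_n$ give $D_1\cap\dots\cap D_n\cap\phi C=\varnothing$. The upset argument applied to $D_1\cap\dots\cap D_n$ then yields (3). Part (4) is analogous, using (2), that each $E_i$ is a $\le_2$-upset, and that $C$ is closed.

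The subtle step is (5); (6) will be its mirror. Let $\mathcal F=\{C\in\L\U:D\subseteq C\}$ and suppose for contradiction that $y\in\bigcap\mathcal F\smallsetminus D$. Since $D=\psi\phi D=-\!\down_1\phi D$, there exists $z\ge_1 y$ with $z\in\phi D$, so $\up_2 z\cap D=\varnothing$. For each $d\in D$ we have $z\not\le_2 d$, so by \cref{Urq dd 2} there is $C_d\in\L\U$ with $z\in\phi C_d$ and $d\notin\phi C_d$, i.e.\ $d\in -\phi C_d$. The family $\{-\phi C_d:d\in D\}$ is an open cover of $D$, and $D$ is compact (closed in the compact space $Z$), so finitely many $d_1,\dots,d_n$ suffice: setting $X=\phi C_{d_1}\cap\dots\cap\phi C_{d_n}$, we get $D\cap X=\varnothing$. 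Let $C=C_{d_1}\vee\dots\vee C_{d_n}=\psi X\in\L\U$, which is well defined by iterating \cref{Urq lattice ops}.

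To finish (5) I need $D\subseteq C$ and $y\notin C$. Because $X$ is a $\le_2$-upset (intersection of $\le_2$-upsets $\phi C_{d_i}$) disjoint from $D$, the $\le_2$-upset argument from (2) gives $X\subseteq\phi D$, whence $\phi C=\phi\psi X\subseteq\phi\psi\phi D=\phi D$, so $D\cap\phi C=\varnothing$ and (1) yields $D\subseteq C$; thus $C\in\mathcal F$. On the other hand $z\in X\subseteq\phi\psi X=\phi C$, so $z\notin C$, and since $C$ is a $\le_1$-upset with $y\le_1 z$, we conclude $y\notin C$, contradicting $y\in\bigcap\mathcal F$. Part (6) runs symmetrically: if $y\in\bigcap\{\phi C:E\subseteq\phi C\}\smallsetminus E$, write $E=\phi\psi E=-\!\down_2\psi E$ to obtain $z\ge_2 y$ with $z\in\psi E$, so $\up_1 z\cap E=\varnothing$, apply \cref{Urq dd 1} pointwise on $E$, use compactness of $E$ to extract $C_{e_1},\dots,C_{e_n}$ whose meet $C$ in $\L\U$ (using \cref{Urq lattice ops}) is disjoint from $E$, then invoke (2) to get $E\subseteq\phi C$ while $z\in C$ and the $\le_2$-upset property of $\phi C$ forces $y\notin\phi C$. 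The main obstacle is the passage from the topological compactness argument back through the Galois closure in step (5)/(6)—keeping straight which direction of the connection applies and why $\phi C$ lands inside $\phi D$ rather than merely being disjoint from $D$.
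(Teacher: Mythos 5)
Your proof is correct and follows essentially the same route as the paper's: parts (1)--(2) via the Galois connection and the upset/downset descriptions of $\phi,\psi$, parts (3)--(4) by compactness of $Z$ combined with (1)--(2), and parts (5)--(6) by applying \cref{Urq dd 1,Urq dd 2} pointwise, extracting a finite subcover, and forming the join (resp.\ meet) in $\L\U$. The only differences are cosmetic (e.g.\ your pointwise argument for the reverse direction of (1), and the detour through $X\subseteq\phi D$ in (5) where one can note directly that $\phi C=X$ is disjoint from $D$).
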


\begin{proof}
\eqref{Urq D in C} This follows from \cite[Thm.~2b]{Urq78}. For the reader's convenience, we give a short proof. If $D \subseteq C$, then $D \cap \phi C \subseteq C \cap \phi C = \varnothing$.  Conversely, suppose that $D \cap \phi C = \varnothing$. Then $D \subseteq -\phi C = \down_2 C$. Therefore, $\phi(\down_2 C) \subseteq \phi D$. But $\phi(\down_2 C) = -\down_2 \down_2 C = -\down_2 C = \phi C$. Thus, $\phi C \subseteq \phi D$, so $\psi\phi D \subseteq \psi\phi C$, and hence $D \subseteq C$ since $C\in \L\U$ and $D \in \LC\U$.

\eqref{Urq E in phi C} The proof is similar to that of \eqref{Urq D in C}.

\eqref{Urq finite} If $\bigcap \mathcal{S} \subseteq C$, then $\bigcap \mathcal{S} \subseteq -\phi C$ because $C \cap \phi C = \varnothing$. Since $-\phi C$ is open, compactness implies that there are $D_1,\dots, D_n \in \mathcal{S}$ with $D_1 \cap \dots \cap D_n \subseteq -\phi C$. Since $\LC\U$ is closed under finite intersections, \eqref{Urq D in C} shows that $D_1 \cap \dots \cap D_n \subseteq C$. 

\eqref{Urq finite 2} The proof is similar to that of \eqref{Urq finite}.

\eqref{D is intersection} The inclusion $\subseteq $ is obvious. For the reverse inclusion, suppose that $x \notin D$. Since $D = \psi\phi D$, we have $x \in \down_1 \phi D = \down_1 (-\down_2 D)$. Therefore, $x \le_1 y$ for some $y \in -\down_2 D$. Consequently, $y \not\le_2 z$ for each $z \in D$. By \cref{Urq dd 2}, there is $C_z \in \L\U$ with ${y \in \phi C_z}$ and $z \notin \phi C_z$. Because $D \subseteq \bigcup \{ -\phi C_z : z \in D\}$, compactness implies that there are ${z_1, \dots, z_n \in D}$ with $D \subseteq -\phi C_{z_1} \cup \dots \cup -\phi C_{z_n}$. Let $C = \psi(\phi C_{z_1} \cap \dots \cap \phi C_{z_n})$. Then $C \in \L\U$ by \cref{rem: U is T1}.
Since $\phi C_{z_1} \cap \dots \cap \phi C_{z_n} = \phi(C_{z_1} \cup \dots \cup C_{z_n})$ and  $\phi\psi\phi=\phi$, we have
\begin{align*}
\phi C &= \phi\psi(\phi C_{z_1} \cap \dots \cap \phi C_{z_n}) = \phi\psi\phi(C_{z_1} \cup \dots \cup C_{z_n}) \\
&= \phi(C_{z_1} \cup \dots \cup C_{z_n}) = \phi C_{z_1} \cap \dots \cap \phi C_{z_n},
\end{align*} 
so $D \cap \phi C = \varnothing$. This implies that $D \subseteq C$ by \eqref{Urq D in C}. Moreover, $y \in \phi C$, yielding $y \notin C$. Finally, since $C$ is an upset and $x \le_1 y$, it follows that $x \notin C$.

\eqref{E is intersection} The proof is similar to that of \eqref{D is intersection}.
\end{proof}

\begin{proposition} \plabel{prop: LC and RC are coherent}
Let $\U \in \Urq$. 
\begin{enumerate}
\item \label[prop: LC and RC are coherent]{LC RC coherent} $\LC\U, \RC\U \in \CohLatL$.
\item \label[prop: LC and RC are coherent]{CLF versus LU} $K(\LC\U) = \L\U$ and $K(\RC\U) = \{ \phi C : C \in \L\U \}$.
\item \label[prop: LC and RC are coherent]{lem: CLF = LU} The map $\xi_U \colon \L\U \to \CLF(\LC\U)$, defined by $\xi_U(C) = \up C$, is a $\Lat$-isomorphism. 
\end{enumerate}
\end{proposition}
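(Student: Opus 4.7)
The plan is to reduce parts (1) and (2) to Nachbin's theorem via natural bijections $\LC\U \cong \Filt(\L\U)$ and $\RC\U \cong \Idl(\L\U)$, then derive part (3) from part (2) using \cref{lem: KOF = LX}. Recall that $\L\U$ is a bounded lattice by \cref{rem: U is T1}, so Nachbin's theorem (in its filter/ideal formulations) will give coherent lattices on the nose.

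For part (1), I would first show that the map $\Phi \colon \Filt(\L\U) \to \LC\U$ defined by $\Phi(\mathcal{F}) = \bigcap \mathcal{F}$ is a poset isomorphism, where $\Filt(\L\U)$ carries inclusion and $\LC\U$ carries reverse inclusion. Well-definedness (i.e. $\bigcap \mathcal{F} \in \LC\U$) follows because an intersection of closed $\le_1$-upsets is again a closed $\le_1$-upset, and $\psi\phi (\bigcap \mathcal{F}) = \bigcap \mathcal{F}$: the inclusion $\supseteq$ is standard, and for $\subseteq$, if $x \notin \bigcap \mathcal{F}$ then $x \notin C$ for some $C \in \mathcal{F} \subseteq \L\U$, so $\psi\phi(\bigcap \mathcal{F}) \subseteq \psi\phi C = C$ excludes $x$. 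The inverse map sends $D \in \LC\U$ to $\mathcal{F}_D = \{C \in \L\U : D \subseteq C\}$, which is a filter of $\L\U$ (upward closure and closure under finite meets in $\L\U$ are immediate). The identity $D = \bigcap \mathcal{F}_D$ is exactly \eqref{D is intersection}. Finally, $\mathcal{F}_{\bigcap \mathcal{F}} = \mathcal{F}$ uses \eqref{Urq finite}: if $C \in \L\U$ and $\bigcap \mathcal{F} \subseteq C$, then finitely many $C_1,\dots,C_n \in \mathcal{F}$ already satisfy $C_1\cap\dots\cap C_n \subseteq C$, forcing $C \in \mathcal{F}$ by the filter axioms. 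Since this bijection is clearly order-preserving (larger filters give smaller intersections, i.e.\ larger elements of $\LC\U$ under reverse inclusion), it is a poset isomorphism. An analogous argument using \eqref{E is intersection} and \eqref{Urq finite 2} gives $\Psi \colon \Idl(\L\U) \to \RC\U$ by $I \mapsto \bigcap\{\phi C : C \in I\}$, with inverse $E \mapsto I_E = \{C \in \L\U : E \subseteq \phi C\}$; here I'll need the ideal-closure step, which follows from \eqref{Urq E in phi C}: if $C_1,C_2 \in I_E$ then $E \subseteq \phi C_1 \cap \phi C_2 = \phi(C_1 \vee C_2)$, so $C_1 \vee C_2 \in I_E$. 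By Nachbin's theorem (\cref{Nachbin ideal} and the filter counterpart noted immediately after it), both $\Filt(\L\U)$ and $\Idl(\L\U)$ are coherent lattices, and hence so are $\LC\U$ and $\RC\U$.

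For part (2), Nachbin's correspondence sends compact elements of $\Filt(A)$ to principal filters $\up a$ (equivalently to $A$ itself under $a \mapsto \up a$); under the isomorphism $\Phi$, the principal filter $\up C = \{C' \in \L\U : C \subseteq C'\}$ maps to $\bigcap \{C' : C \subseteq C'\} = C$. Thus $K(\LC\U) = \Phi(K(\Filt(\L\U))) = \L\U$. Similarly, $\Psi$ sends a principal ideal $\down C$ of $\L\U$ to $\bigcap\{\phi C' : C' \subseteq C\} = \phi C$ (since $\phi$ is order-reversing), so $K(\RC\U) = \{\phi C : C \in \L\U\}$.

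For part (3), by \cref{lem: KOF = LX} applied to the algebraic lattice $\LC\U$, we have $\CLF(\LC\U) = \{\up D : D \in K(\LC\U)\}$, which equals $\{\up C : C \in \L\U\}$ by part (2). Hence $\xi_\U \colon \L\U \to \CLF(\LC\U)$, $C \mapsto \up C$, is onto. It is also order-preserving: if $C_1 \subseteq C_2$ in $\L\U$, then every $D \in \LC\U$ with $D \subseteq C_1$ satisfies $D \subseteq C_2$, so $\up C_1 \subseteq \up C_2$ in $\CLF(\LC\U)$. For injectivity (equivalently order-reflection), $\up C$ has $C$ as its minimum in the $\LC\U$-order (since $C \in \up C$ and $D \in \up C$ means $D \subseteq C$, i.e.\ $C \le D$), so $C$ is recoverable from $\up C$. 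Therefore $\xi_\U$ is an order-isomorphism between bounded lattices, and hence a $\Lat$-isomorphism. The main bookkeeping hazard throughout is keeping the reverse-inclusion order on $\LC\U$ and $\RC\U$ consistent with the natural inclusion order on $\L\U$ and the various filter/ideal lattices; once this is accounted for, the arguments are essentially applications of Nachbin's theorem together with the lemma already proved.
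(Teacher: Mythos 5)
Your proposal is correct and follows essentially the same route as the paper: part (1) is proved via the order-isomorphisms $\Filt(\L\U)\cong\LC\U$ and $\Idl(\L\U)\cong\RC\U$ (using \cref{D is intersection}, \cref{E is intersection}, and \crefrange{Urq finite}{Urq finite 2}) together with Nachbin's theorem, part (2) by identifying the compact elements with $\L\U$ and $\{\phi C : C \in \L\U\}$, and part (3) by combining (1), (2) with \cref{lem: KOF = LX}. The only cosmetic difference is that for (2) you transport the standard description of $K(\Filt(\L\U))$ as principal filters through the isomorphism, whereas the paper argues directly inside $\LC\U$; the underlying argument is the same.
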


\begin{proof}
\eqref{LC RC coherent} Define $u \colon \Filt(\L\U) \to \LC\U$ and $v \colon \Idl(\L\U) \to \RC\U$ by
\[
u(\mathcal{F}) = \bigcap \mathcal{F} \quad \textrm{and} \quad v(\mathcal{I}) = \bigcap \{ \phi C : C \in \mathcal{I}\}
\] 
for each $\mathcal{F} \in \Filt(\L\U)$ and $\mathcal{I} \in \Idl(\L\U)$. It is straightforward to see that $u$ and $v$ are well defined and, with respect to reverse inclusion on $\LC\U$ and $\RC\U$, both are order-preserving. To see that $u$ is onto, let $D \in \LC\U$. If $\mathcal{F} = \{ C \in \L\U : D \subseteq C\}$, then it is straightforward to see that $\mathcal{F}$ is a filter and $D = \bigcap \mathcal{F}$ follows from \cref{D is intersection}. A similar argument using \cref{E is intersection} shows that $v$ is onto. 

To see that $u$ is order-reflecting, we show that $\mathcal{F} = \{ C \in \L\U : u(\mathcal{F}) \subseteq C\}$ for each $\mathcal{F} \in \Filt(\L\U)$. The inclusion $\subseteq$ is clear by the definition of $u$. For the reverse inclusion, suppose that $u(\mathcal{F}) \subseteq C$ for some $C \in \L\U$. Because $u(\mathcal{F}) = \bigcap \mathcal{F}$, \cref{Urq finite} implies that there are $C_1, \dots, C_n \in \mathcal{F}$ with $C_1 \cap \dots \cap C_n \subseteq C$. Therefore, $C \in \mathcal{F}$. Thus, for $\mathcal{F}, \mathcal{F}' \in \Filt(\L\U)$, 
\[
\mathcal{F} \subseteq \mathcal{F}' \iff u(\mathcal{F}') \subseteq u(\mathcal{F}) \iff u(\mathcal{F}) \le u(\mathcal{F}').
\]
A similar argument, using \cref{Urq finite 2} instead of \cref{Urq finite}, shows that $v$ is order-reflecting. Thus, $u$ and $v$ are order-isomorphisms, and hence $\LC\U,\RC\U \in \CohLatL$.

\eqref{CLF versus LU} Since $\LC\U$ is ordered by $\supseteq$, it follows from \cref{Urq finite} that $\L\U \subseteq K(\LC\U)$.  For the reverse inclusion, by \cref{D is intersection}, each $D \in \LC\U$ is a join in $\LC\U$ from $\L\U$. Therefore, if $D \in K(\LC\U)$, then it is a finite join from $\L\U$. Since a finite join in $\LC\U$ is a finite intersection and $\L\U$ is closed under finite intersections, $D \in \L\U$. Thus, $K(\LC\U) = \L\U$. The argument for $\RC\U$ is similar but uses \crefrange{Urq finite 2}{E is intersection} instead of \crefrange{Urq finite}{D is intersection}. 

\eqref{lem: CLF = LU} Apply \eqref{LC RC coherent}, \eqref{CLF versus LU}, and \cref{lem: KOF = LX}.
\end{proof}

For $\U\in\Urq$, define $R_\U \subseteq \LC\U \times \RC\U$ by
\[
D \rel{R_\U} E \iff D \cap E \ne \varnothing,
\]
and set $\D(\U) = (\LC\U, R_\U, \RC\U)$.

\begin{proposition} \label{D(U) is in DH}
If $\U \in \Urq$, then $\D(\U) \in \HD$.
\end{proposition}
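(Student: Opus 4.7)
The plan is to realize $\D(\U)$ as (isomorphic to) the canonical DH-space $\EE(\LC\U) = (\LC\U, R_{\LC\U}, \OF(\LC\U))$ of \cref{prop: functor E}, thereby reducing everything to the already-established \cref{lem: triple in HD}. The first ingredient is \cref{prop: LC and RC are coherent}, which gives $\LC\U, \RC\U \in \CohLatL$, so that $\EE(\LC\U) \in \HD$ automatically.

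Next I would construct a $\CohLatL$-isomorphism $\nu_\U \colon \RC\U \to \OF(\LC\U)$ by composing the order-isomorphism $v^{-1} \colon \RC\U \to \Idl(\L\U)$ from the proof of \cref{prop: LC and RC are coherent} with the isomorphism $\Idl(\L\U) \cong \Idl(\CLF(\LC\U)) \cong \OF(\LC\U)$ provided by \cref{lem: CLF = LU} together with \cref{lem: OF is coherent}. Explicitly, recalling that $\LC\U$ is ordered by reverse inclusion (so that, in $\LC\U$, $\up C = \{D \in \LC\U : D \subseteq C\}$),
\[
\nu_\U(E) \;=\; \bigcup \{ \up C : C \in \L\U,\ E \subseteq \phi C\}\;=\;\{D \in \LC\U : \exists C \in \L\U,\ D \subseteq C \text{ and } E \subseteq \phi C\}.
\]

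The main step, and the only place where work is really needed, is to verify that under this isomorphism the two relations match:
\[
D \rel{R_\U} E \iff D \nr{R_{\LC\U}} \nu_\U(E), \quad\text{i.e.,}\quad D \cap E \ne \varnothing \iff D \notin \nu_\U(E).
\]
Unfolding, this reduces (via \cref{Urq E in phi C}, which equates $E \subseteq \phi C$ with $C \cap E = \varnothing$) to the statement
\[
D \cap E = \varnothing \iff \exists C \in \L\U \text{ with } D \subseteq C \text{ and } C \cap E = \varnothing.
\]
The ($\Leftarrow$) direction is immediate. For ($\Rightarrow$), by \cref{D is intersection} we have $D = \bigcap\{C \in \L\U : D \subseteq C\}$, and this family is down-directed (since $\L\U$ is closed under finite intersections and contains every $C$ containing $D$). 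If every such $C$ intersected $E$ nontrivially, then $\{C \cap E\}$ would be a down-directed family of nonempty closed subsets of the compact space $Z$, forcing $D \cap E = \bigcap (C \cap E) \ne \varnothing$, a contradiction. This compactness argument is the crux of the proof.

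Having established the matching of relations, the pair $(1_{\LC\U}, \nu_\U)$ sets up a bijection under which $R_\U$ and $R_{\LC\U}$ correspond, so by \cref{rem: morphisms} it constitutes an $\HD$-isomorphism $\D(\U) \xrightarrow{\sim} \EE(\LC\U)$ in the sense that the DH-axioms transfer. Since $\EE(\LC\U) \in \HD$ by \cref{lem: triple in HD} and the DH-space conditions (interior relation, filter conditions on $-R[x]$ and $-R^{-1}[y]$, and the order-reflection properties of \crefrange{def: DH relation 3}{def: DH relation 4}) are preserved by such bijections, it follows that $\D(\U) = (\LC\U, R_\U, \RC\U) \in \HD$.
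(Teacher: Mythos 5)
Your proposal is correct, but it takes a genuinely different route from the paper. The paper proves \cref{D(U) is in DH} by a direct, roughly two-page verification that $R_\U$ satisfies each clause of \cref{def: DH relation} — closedness of $R_\U[D]$ and $R_\U^{-1}[E]$, clopenness of $R_\U[\mathcal V]$ via an explicit computation showing $R_\U[\mathcal V]=-\up\phi C$, the filter conditions on the complements, and the two order-reflection conditions — all carried out with the combinatorics of $\L\U$, $\phi$, and $\psi$. You instead reduce everything to \cref{lem: triple in HD} by exhibiting a $\CohLatL$-isomorphism $\nu_\U\colon\RC\U\to\OF(\LC\U)$ (assembled from \cref{prop: LC and RC are coherent} and \cref{lem: OF is coherent}) and matching the relations, which boils down to the single equivalence $D\cap E=\varnothing$ iff there is $C\in\L\U$ with $D\subseteq C$ and $C\cap E=\varnothing$; your compactness argument for that equivalence is correct. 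What your approach buys is brevity and a conceptual payoff: it identifies $\D(\U)$ with $\EE(\LC\U)$ up to isomorphism from the outset, and your $\nu_\U$ is exactly the map $E\mapsto -R_\U^{-1}[E]$ that the paper only recovers later (\cref{lem: nu}, \cref{prop: eta is natural}) after $R_\U$ is known to be a DH-relation. What it costs is the (easy but necessary) check that being a DH-space transfers along a pair of $\CohLatL$-isomorphisms that match the relations — this holds because all the axioms of \cref{def: DH relation} are stated in terms of the order and the Lawson topology, both of which such isomorphisms preserve, but it is worth saying explicitly rather than appealing to \cref{rem: morphisms}, which presupposes both triples are already in $\HD$. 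One small slip: in your displayed equivalence the intermediate clause should read $D\rel{R_{\LC\U}}\nu_\U(E)$ rather than $D\nr{R_{\LC\U}}\nu_\U(E)$, since by \cref{def: R_X} the relation itself is $D\notin\nu_\U(E)$; the unfolded statement you actually prove is the correct one.
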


\begin{proof}
By \cref{LC RC coherent}, $\LC\U, \RC\U \in \CohLatL$. It is left to show that $R_\U$ is a DH-relation. For \cref{def: DH relation interior} let $D \in \LC\U$. Then 
\begin{align*}
R_\U[D] &= \{ E \in \RC\U : D \cap E \ne \varnothing \} \\
&= \{ E \in \RC\U : C \cap E \ne \varnothing \ \forall C \in \L\U \textrm{ with } D \subseteq C \},
\end{align*}
where the second equality holds by \cref{D is intersection} and compactness. For $E \in \RC\U$ and $C \in \L\U$, \cref{Urq E in phi C} shows that $C \cap E \ne \varnothing$ iff $E \not\subseteq \phi C$ iff $\phi C \not\le E$. Consequently,
\begin{align*}
R_\U[D] &= \{ E \in \RC\U : \phi C \not\le E \ \forall C \in \L\U \textrm{ with } D \subseteq C\} \\
&= \{ E \in \RC\U : E \in -\up \phi C \ \forall C \in \L\U \textrm{ with } D \subseteq C \} \\
&= \bigcap \{ -\up \phi C : C \in \L\U \textrm{ with } D \subseteq C\}.
\end{align*}
Since each $-\up \phi C$ is clopen by \cref{lem: KOF = LX,CLF versus LU}, we conclude that $R_\U[D]$ is closed. A similar argument shows that $R_\U^{-1}[E]$ is closed for each $E \in \RC\U$. 

Next, we show that $R_\U[\mathcal{V}]$ is clopen for each clopen $\mathcal{V} \subseteq \LC\U$. If $\mathcal{V} = \varnothing$, then $R_\U[\mathcal{V}] = \varnothing$, so is clopen. Suppose that $\mathcal{V} \ne \varnothing$. By \cref{CLF versus LU} and Theorems~\ref{thm: Scott facts}\eqref{basis}, \ref{thm: Lawson facts}\eqref{Lawson is patch}, 
\[
\{ \up C \cap -\up C_1 \cap \dots \cap -\up C_n : C, C_1, \dots, C_n \in \L\U\}
\]
is a basis for the Lawson topology of $\LC\U$. Since $R_\U$ preserves unions, it suffices to assume that $\mathcal{V} = \up C \cap -\up{C_1} \cap \dots \cap -\up{C_n}$ for some $C, C_1,\dots, C_n \in \L\U$ with $C \not\subseteq C_i$ for each~$i$. This implies that $C \in \mathcal{V}$ (recall that $\LC\U$ is ordered by reverse inclusion). We show that $R_\U[\mathcal{V}] = -\up \phi C$. For one inclusion, by \cref{Urq E in phi C},
\[
E \in -\up \phi C \iff E \not\subseteq \phi C \iff C \cap E \ne \varnothing,
\] 
and so $C \rel{R_\U} E$. Therefore, $-\up \phi C \subseteq R_\U[\mathcal{V}]$ since $C \in \mathcal{V}$. For the reverse inclusion, if $E \in R_\U[\mathcal{V}]$, then $D \cap E \ne \varnothing$ for some $D \in \mathcal{V}$. Because $D \in \mathcal{V}$, we have $D \subseteq C$, and so $C \cap E \ne \varnothing$. Therefore, $E \not\subseteq \phi C$, and hence $E \in -\up \phi C$. Thus, $R_\U[\mathcal{V}] = -\up \phi C$, and so $R_\U[\mathcal{V}]$ is clopen. A similar argument shows that $R_\U^{-1}[\mathcal{W}]$ is clopen for each clopen $\mathcal{W} \subseteq \RC\U$. This completes the proof that $R_\U$ is an interior relation.

We next verify \cref{def: DH relation filter}. Let $D \in \LC\U$ and $E \in \RC\U$. Since the top of $\RC\U$ is $\varnothing$, we have $D \nr{R_\U} \varnothing$ for each $D \in \LC\U$. Similarly, $\varnothing \nr{R_\U} E$ for each $E \in \RC\U$. Thus, $\LC\U - R_\U[D]$ and $\RC\U - R_\U^{-1}[E]$ are nonempty.
Because $D \rel{R_\U} E$ iff $D \cap E \ne \varnothing$ and the orders on $\LC\U$ and $\RC\U$ are each reverse inclusion, $R_\U[D]$ is a downset of $\RC\U$ and $R_\U^{-1}[E]$ is a downset of $\LC\U$. Therefore, $\LC\U - R_\U[D]$ and $\RC\U - R_\U^{-1}[E]$ are upsets. 
Finally, to see that $\LC\U - R_\U[D]$ is closed under binary meets, let $D_1, D_2 \in \LC\U$ and $E \in \RC\U$. Suppose that $D_1 \nr{R_\U}E$ and $D_2 \nr{R_\U} E$. Then $D_1 \cap E = \varnothing = D_2 \cap E$. By \cref{D is intersection} and compactness, there are $C_i \in \L\U$ with $D_i \subseteq C_i$ and $C_i \cap E = \varnothing$ ($i=1,2$). Therefore, $E \subseteq \phi C_i$ by \cref{Urq E in phi C}, so $E \subseteq \phi C_1 \cap \phi C_2$. Then $C := \psi(\phi C_1 \cap \phi C_2) \subseteq \psi E$. By \cref{rem: U is T1}, $C \in \L\U$. From $C \subseteq \psi E$ we get $C \cap E = \varnothing$. Because $D_1, D_2 \subseteq C$, we have $C \le D_1, D_2$ in $\LC\U$, and hence $C \le D_1 \wedge D_2$, which means that $D_1 \wedge D_2 \subseteq C$. Consequently, $(D_1 \wedge D_2) \cap E = \varnothing$. Thus, $(D_1 \wedge D_2) \nr{R_\U} E$. This proves that $\LC\U - R_\U[D]$ is closed under binary meets. The proof that $\RC\U - R_\U^{-1}[E]$ is closed under binary meets for each $E \in \RC\U$ is similar.

For \cref{def: DH relation 3}, let $D, D' \in \LC\U$. Suppose that $R_\U[D] \subseteq R_\U[D']$. Let $C \in \L\U$ with $D' \subseteq C$. By \cref{Urq D in C}, $D' \nr{R_\U} \phi C$, so $D \nr{R_\U} \phi C$, and hence $D \subseteq C$. Therefore, $D \subseteq D'$ by \cref{D is intersection}, so $D' \le D$. The proof of \cref{def: DH relation 4} is similar but uses \crefrange{Urq E in phi C}{E is intersection} instead of \crefrange{Urq D in C}{D is intersection}. This completes the proof that $R_\U$ is a DH-relation, and thus $(\LC\U, R_\U, \RC\U) \in \HD$.
\end{proof}

We next associate a $\HD$-morphism to each $\Urq$-morphism.

\begin{definition} \label{def: fPQ and gPQ}
Let $(P, Q) \colon \U \to \U'$ be an $\Urq$-morphism. For each $D \in \LC\U$ and ${E \in \RC\U}$, define
\begin{align*}
f_{PQ}(D) &= \bigcap \{ C' \in \L\U' : D \subseteq \Box_PC'\},\\
g_{PQ}(E) &= \bigcap \{  \phi' C' : C' \in \L\U' , E \subseteq \Box_Q\phi' C'\}.
\end{align*}
\end{definition}

Because $f_{PQ}(D)$ is an intersection from $\L\U'$, we have $f_{PQ}(D) \in \LC\U'$, and similarly $g_{PQ}(E) \in \RC\U'$. Therefore, $f_{PQ} \colon \LC\U \to \LC\U'$ and $g_{PQ} \colon \RC\U \to \RC\U'$ are well-defined functions.
In order to prove that $(f_{PQ}, g_{PQ}) \colon \D(\U) \to \D(\U')$ is a $\HD$-morphism, we require the following lemma.

\begin{lemma} \plabel{lem: Urq to DH morphism facts}
Let $(P, Q) \colon \U \to \U'$ be an $\Urq$-morphism. 
\begin{enumerate}
\item \label[lem: Urq to DH morphism facts]{f(D) in C} If $D \in \LC\U$ and $C' \in \L\U'$, then $D \subseteq \Box_PC'$ iff $f_{PQ}(D) \subseteq C'$.  
\item \label[lem: Urq to DH morphism facts]{g(E) in phi C} If $E \in \RC\U$ and $C' \in \L\U'$, then $E \subseteq \Box_Q \phi'C'$ iff $g_{PQ}(E) \subseteq \phi'C'$. 
\end{enumerate}
\end{lemma}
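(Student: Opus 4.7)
The plan is to prove both parts in parallel, relying on two ingredients: first, the compactness-style lemmas \cref{Urq finite,Urq finite 2} (which say that any $\L\U$-basic upper bound of an intersection from $\LC\U$ or $\RC\U$ is already reached by a finite subintersection); and second, the fact that the box operators $\Box_P$ and $\Box_Q$ preserve arbitrary intersections. The latter is immediate from the pointwise description $\Box_P(\bigcap_\alpha C'_\alpha) = \{z : P[z] \subseteq \bigcap_\alpha C'_\alpha\} = \bigcap_\alpha \Box_P C'_\alpha$, and analogously for $\Box_Q$.

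For part~(1), the forward direction is trivial: if $D \subseteq \Box_PC'$, then $C'$ is a member of the family whose intersection defines $f_{PQ}(D)$, so $f_{PQ}(D) \subseteq C'$. For the reverse, suppose $f_{PQ}(D) \subseteq C'$. I would set $\mathcal{S} = \{C'' \in \L\U' : D \subseteq \Box_PC''\}$, observe that $\mathcal{S} \subseteq \L\U' \subseteq \LC\U'$ (immediate from the definitions), and note that $f_{PQ}(D) = \bigcap \mathcal{S}$. Applying \cref{Urq finite} to $\U'$ yields $C'_1,\dots,C'_n \in \mathcal{S}$ with $C'_1 \cap \dots \cap C'_n \subseteq C'$. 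Since $D \subseteq \Box_P C'_i$ for each $i$, intersection preservation and monotonicity of $\Box_P$ give
\[
D \subseteq \bigcap_{i=1}^n \Box_P C'_i = \Box_P\Bigl(\bigcap_{i=1}^n C'_i\Bigr) \subseteq \Box_P C',
\]
as required.

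Part~(2) is completely analogous, with \cref{Urq finite 2} replacing \cref{Urq finite}. The forward direction again follows directly from the definition of $g_{PQ}(E)$. For the reverse, set $\mathcal{T} = \{\phi'C'' : C'' \in \L\U',\ E \subseteq \Box_Q \phi' C''\}$; one checks $\mathcal{T} \subseteq \RC\U'$ because for $C'' \in \L\U'$ the set $\phi'C''$ is a closed $\le_2$-upset with $\phi'\psi'\phi'C'' = \phi'C''$. Applying \cref{Urq finite 2} to $\mathcal{T}$ produces $C'_1,\dots,C'_n \in \L\U'$ with $E \subseteq \Box_Q\phi'C'_i$ and $\phi'C'_1 \cap \dots \cap \phi'C'_n \subseteq \phi'C'$; intersection preservation by $\Box_Q$ then yields $E \subseteq \Box_Q\phi'C'$.

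I do not anticipate any genuine obstacle: the whole argument is essentially bookkeeping once the two ingredients are in place. The only subtlety worth double-checking is the membership conditions $\mathcal{S} \subseteq \LC\U'$ and $\mathcal{T} \subseteq \RC\U'$ that are needed to invoke \cref{Urq finite,Urq finite 2}, but both reduce to inspecting the definitions of $\L\U'$, $\LC\U'$, and $\RC\U'$.
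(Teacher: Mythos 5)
Your proposal is correct and follows essentially the same route as the paper: the forward directions are immediate from the definitions, and the reverse directions extract a finite subfamily via \cref{Urq finite} (resp.\ \cref{Urq finite 2}) and then use that $\Box_P$ (resp.\ $\Box_Q$) commutes with finite intersections. The membership checks $\L\U' \subseteq \LC\U'$ and $\{\phi'C'' : C'' \in \L\U'\} \subseteq \RC\U'$ that you flag are indeed the only points needing verification, and they hold as you describe.
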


\begin{proof}
\eqref{f(D) in C} If $D \subseteq \Box_PC'$, then $f_{PQ}(D) \subseteq C'$ by definition of $f_{PQ}$. Conversely, suppose that $f_{PQ}(D) \subseteq C'$. By definition of $f_{PQ}(D)$ and \cref{Urq finite}, there are $C_1', \dots, C_n' \in \L\U'$ with $D \subseteq \Box_PC_i'$ for each $i$ and $C_1' \cap \dots \cap C_n' \subseteq C'$. Therefore,
\[
D\subseteq \Box_PC_1' \cap \dots \cap \Box_PC_n' = \Box_P (C_1' \cap \dots \cap C_n') \subseteq \Box_P C'.
\]

\eqref{g(E) in phi C} The proof is similar to that of \eqref{f(D) in C}.
\end{proof}

\begin{proposition} \label{prop: (f_PQ  g_PQ) is a morphism}
If $(P, Q) \colon \U \to \U'$ is an $\Urq$-morphism, then $(f_{PQ}, g_{PQ}) \colon \D(\U) \to \D(\U')$ is a $\HD$-morphism.
\end{proposition}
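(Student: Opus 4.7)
The plan has two stages. First, establish that $f_{PQ}\colon\LC\U\to\LC\U'$ and $g_{PQ}\colon\RC\U\to\RC\U'$ are each $\CohLatL$-morphisms; second, verify the three coherence axioms of \cref{DH maps} relating them to the relations $R_\U$ and $R_{\U'}$.

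For the first stage, the key identity is that for each $C'\in\L\U'$,
\[
f_{PQ}^{-1}(\up C') = \up(\Box_P C')
\]
in the reverse-inclusion order on $\LC\U$, which is immediate from \cref{f(D) in C}. Since $\Box_PC'\in\L\U=K(\LC\U)$ by \cref{Urq Box,CLF versus LU} and $\KOF(\LC\U)=\{\up k:k\in K(\LC\U)\}$ by \cref{lem: KOF = up k's}, this exhibits $f_{PQ}^{-1}$ as sending compact open filters to compact open filters. By \cref{lem: morphisms of JML,thm: iso between CohLat_S and CohLat_L}, to conclude that $f_{PQ}$ is a $\CohLatL$-morphism it then suffices to check that $f_{PQ}^{-1}$ preserves finite joins of compact open filters, which under the identification $\L\U'\cong\CLF(\LC\U')$ reduces to showing that $\Box_P\colon\L\U'\to\L\U$ is a bounded lattice homomorphism. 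Preservation of binary meets and the top is routine, $\Box_P\varnothing=\varnothing$ follows from \cref{Urq serial} (which forces $P[z]\ne\varnothing$ for every $z$), and preservation of joins is extracted from \cref{Urq Box} via
\[
\phi\Box_P(C_1'\vee C_2') = \Box_Q(\phi'C_1'\cap\phi'C_2') = \phi\Box_PC_1'\cap\phi\Box_PC_2' = \phi(\Box_PC_1'\vee\Box_PC_2')
\]
followed by applying $\psi$ and using $\psi\phi=\mathrm{id}$ on $\L\U$. The argument for $g_{PQ}$ is completely symmetric, via \cref{g(E) in phi C} and the identity $\Box_Q\phi'C'=\phi\Box_PC'$.

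For the second stage, axiom~(i) of \cref{DH maps} is direct: given $z\in D\cap E$, choose $z'\in P[z]\cap Q[z]$ by \cref{Urq serial}; for any $C'\in\L\U'$ with $D\subseteq\Box_PC'$ one has $P[z]\subseteq C'$ and hence $z'\in C'$, so $z'\in f_{PQ}(D)$, and symmetrically $z'\in g_{PQ}(E)$. The heart of the proof lies in axioms (ii) and (iii), both handled by a common ``compactness plus Galois connection'' template. For (ii), given $w\in D'\cap g_{PQ}(E)$, set $D=\bigcap\{\Box_PC' : C'\in\L\U',\,D'\subseteq C'\}\in\LC\U$; \cref{f(D) in C} together with \cref{D is intersection} immediately yields $f_{PQ}(D)\subseteq D'$. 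Assuming $D\cap E=\varnothing$ for contradiction, compactness of the closed set $E$ and closure of $\L\U'$ under finite intersections collapse an open cover of $E$ by the sets $-\Box_PC'$ to a single $C'\in\L\U'$ with $D'\subseteq C'$ and $E\cap\Box_PC'=\varnothing$; the elementary equivalence $E\cap A=\varnothing\iff E\subseteq\phi A$ for $E\in\RC\U$ and $A\in\L\U$ then yields $E\subseteq\phi\Box_PC'=\Box_Q\phi'C'$, so $g_{PQ}(E)\subseteq\phi'C'$ by \cref{g(E) in phi C}, contradicting $w\in D'\cap g_{PQ}(E)\subseteq C'\cap\phi'C'=\varnothing$. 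For (iii), given $w'\in f_{PQ}(D)\cap E'$, set $E=\bigcap\{\phi\Box_PC' : C'\in\L\U',\,E'\subseteq\phi'C'\}\in\RC\U$, and check $g_{PQ}(E)\subseteq E'$ directly from \cref{g(E) in phi C,E is intersection}; assuming $D\cap E=\varnothing$, compactness of $D$ and the join-preservation of $\Box_P$ from the first stage collapse a finite cover to a single $C'\in\L\U'$ with $E'\subseteq\phi'C'$ and $D\cap\phi\Box_PC'=\varnothing$, whence $D\subseteq\down_2\Box_PC'$; the identity $\phi\down_2=\phi$ on $\le_1$-upsets together with $\psi\phi=\mathrm{id}$ on $\L\U$ and on $\LC\U$ then collapse this to $D\subseteq\Box_PC'$, so $f_{PQ}(D)\subseteq C'$ by \cref{f(D) in C}, contradicting $w'\in E'\subseteq\phi'C'$ and $C'\cap\phi'C'=\varnothing$.

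The principal obstacle is orchestrating several dualities simultaneously in axioms (ii) and (iii): one must keep track of the reverse-inclusion ordering on $\LC\U$ and $\RC\U$, the $(\phi,\psi)$-Galois connection between $\le_1$- and $\le_2$-upsets, the passage between $\Box_P$ and $\Box_Q$ supplied by \cref{Urq Box}, and the fact that $\psi\phi$ acts as the identity on $\L\U$ (and on $\LC\U$) but as a nontrivial closure operator on arbitrary subsets of $Z$.
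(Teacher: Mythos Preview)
Your proof is correct and arrives at the same conclusion, but the first stage takes a different route than the paper. The paper verifies directly that $f_{PQ}$ preserves directed joins and arbitrary meets, then identifies the left adjoint as $\Box_P$ and checks it preserves finite meets of compact elements. You instead use the identity $f_{PQ}^{-1}(\up C')=\up(\Box_PC')$ to recognize $f_{PQ}$ as a $\JMS$-morphism and then invoke the $\JML$/$\CohLatS$/$\CohLatL$ equivalences from \cref{sec: JM,sec: preliminaries}, reducing everything to the single lattice-theoretic fact that $\Box_P\colon\L\U'\to\L\U$ preserves joins and $\varnothing$. This is arguably cleaner: it avoids repeating continuity arguments already encapsulated in those earlier lemmas, and it makes transparent that the only nontrivial ingredient is the join-preservation of $\Box_P$ (which you extract neatly from \cref{Urq Box} via $\phi$). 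The paper's approach, by contrast, keeps the argument self-contained within the order-theoretic framework of $\LC\U$, at the cost of reproving some of what the $\JML$ machinery already packages.

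For the second stage your arguments track the paper's closely. Your verification of axiom~(i) is more direct than the paper's: you exhibit a witness $z'\in P[z]\cap Q[z]$ lying in $f_{PQ}(D)\cap g_{PQ}(E)$, whereas the paper argues by contradiction and needs a compactness step. For axioms~(ii) and~(iii) your candidate $D$ (resp.\ $E$) is exactly the paper's $\ell(D')$ (resp.\ its analogue), written out as an intersection; the compactness-plus-Galois reductions are the same in substance. Your invocation of ``$\phi\down_2=\phi$ together with $\psi\phi=\mathrm{id}$'' to pass from $D\cap\phi\Box_PC'=\varnothing$ to $D\subseteq\Box_PC'$ is just \cref{Urq D in C} unwound, which you could cite directly.
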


\begin{proof}
We first point out that $f_{PQ}$ is order-preserving. If $D, D' \in \LC\U$ with $D \subseteq D'$, then $f_{PQ}(D) \subseteq f_{PQ}(D')$ follows from the definition of $f_{PQ}$. Because the orders on $\LC\U$ and $\LC\U'$ are reverse inclusions, this shows that $f_{PQ}$ is order-preserving.

We prove that $f_{PQ}$ is a $\CohLatL$-morphism. Let $D = \bigvee D_i$ be a directed join. Then $D = \bigcap D_i$ is a directed intersection. Since $f_{PQ}$ is order-preserving, $\bigvee f_{PQ}(D_i) \le f_{PQ}(D)$, which says that $f_{PQ}(D) \subseteq \bigcap f_{PQ}(D_i)$. For the reverse inclusion, by the definition of $f_{PQ}(D)$ it suffices to show that if $C' \in \L\U'$ with $D\subseteq \Box_PC'$, then there is $i$ with $f_{PQ}(D_i) \subseteq C'$. Because $D = \bigcap D_i$ is a directed intersection, \cref{Urq finite} shows that $D_i \subseteq \Box_PC'$ for some $i$, and so $f_{PQ}(D_i) \subseteq C'$. Thus, $f_{PQ}$ preserves directed joins. 

To see that $f_{PQ}$ preserves meets, suppose that $D = \bigwedge D_i$. Then $f_{PQ}(D) \le \bigwedge f_{PQ}(D_i)$ because $f_{PQ}$ is order-preserving, \cref{D is intersection} implies that a meet $\bigwedge \mathcal{S}$ in $\LC\U$ is the intersection of all $C \in \L\U$ containing $\bigcup\mathcal{S}$. Therefore, for the reverse inequality, it suffices to show that if $C' \in \L\U'$ such that $f_{PQ}(D_i) \subseteq C'$ for each $i$, then $f_{PQ}(D) \subseteq C'$. If $f_{PQ}(D_i) \subseteq C'$, then $D_i \subseteq \Box_PC'$ by \cref{f(D) in C}. If this happens for each $i$, then $D \subseteq \Box_PC'$ because $D = \bigwedge D_i$, and hence $f_{PQ}(D) \subseteq C'$. Thus, $f_{PQ}$ preserves arbitrary meets.

Observe that the left adjoint $\ell$ of $f_{PQ}$ is given by 
$\ell(D') = \Box_PD'$ for each $D' \in \LC\U'$. To see this, since $\Box_P$ preserves arbitrary intersections, by Lemmas~\ref{lem: LC and RC facts}\eqref{D is intersection} and \ref{lem: Urq to DH morphism facts}\eqref{f(D) in C}, for each $D \in \LC\U$ we have
\begin{align*}
D \subseteq \Box_PD' &\iff D \subseteq \Box_PC' \ \forall C' \in \L\U' \textrm{ with } D' \subseteq C' \\
& \iff f_{PQ}(D) \subseteq C' \ \forall C' \in \L\U' \textrm{ with } D' \subseteq C' \iff f_{PQ}(D) \subseteq D'.
\end{align*}
Thus, $\Box_PD' \le D$ iff $D' \le f_{PQ}(D)$, showing that $\ell(D') = \Box_PD'$.

We now show that $\ell$ preserves finite meets of compact elements. We have $K(\LC\U') = \L\U'$ by \cref{CLF versus LU}. 
If $\mathcal{S} \subseteq \L\U'$ is  finite, then $\bigwedge \mathcal{S}$ in $\LC\U'$ is equal to $\bigvee \mathcal{S}$ in $\L\U'$ since the order on $\LC\U'$ is reverse inclusion while the order on $\L\U'$ is inclusion. Thus, it is enough to prove the following:

\begin{claim} \label{claim: ell preserves finite joins}
$\ell$ preserves finite joins from $\L\U$.
\end{claim}

\begin{proof}
Let $\mathcal S$ be a finite subset of $\L\U$. If $\mathcal S = \varnothing$, then 
$
\Box_P\left(\bigvee \varnothing\right) = \Box_P \varnothing = \varnothing = \bigvee \varnothing. 
$
Thus, it suffices to prove that $\ell$ preserves binary joins from $\L\U'$. Let $C_1, C_2 \in \L\U'$. The inclusion $\Box_PC_1 \vee \Box_PC_2 \subseteq \Box_P(C_1 \vee C_2)$ is clear since $\Box_P$ is order-preserving. For the other inclusion, let $x \in \Box_P(C_1 \vee C_2)$.  Since $\Box_P(C_1 \vee C_2) = \Box_P\psi'(\phi'C_1 \cap \phi'C_2)$,
\[
P[x] \subseteq \psi'(\phi'C_1 \cap \phi'C_2) = -\down_1(\phi'C_1 \cap \phi'C_2),
\]
so $P[x] \cap \phi'C_1 \cap \phi'C_2 = \varnothing$ because $P[x]$ is a $\le_1$-upset. 

Suppose that $x \notin \Box_PC_1 \vee \Box_PC_2$. Since $\Box_PC_1 \vee \Box_PC_2 = \psi(\phi\Box_PC_1 \cap \phi\Box_PC_2)$, we have $x \notin \psi(\phi\Box_PC_1 \cap \phi\Box_PC_2)$, so $x \in \down_1(\phi\Box_PC_1 \cap \phi\Box_PC_2)$. Therefore, there is $y$ with $x \le_1 y$ and $y \in \phi\Box_PC_1 \cap \phi\Box_PC_2$. By \cref{Urq Box},
\[
\phi\Box_PC_1 \cap \phi\Box_PC_2 = \Box_Q \phi' C_1 \cap \Box_Q \phi' C_2 = \Box_Q(\phi'C_1 \cap \phi'C_2).
\]
Thus, $Q[y] \subseteq \phi'C_1 \cap \phi'C_2$. Since $x \le_1 y$, we have $P[y] \subseteq P[x]$ by \cref{P inverse is a downset}. Because
\[
P[x] \cap \phi'C_1 \cap \phi'C_2 = \varnothing \quad\textrm{and}\quad Q[y] \subseteq \phi'C_1 \cap \phi'C_2,
\]
we conclude that $P[y] \cap Q[y] \subseteq P[x] \cap Q[y] = \varnothing$, contradicting \cref{Urq serial}. Consequently, 
$
\Box_P(C_1 \vee C_2) \subseteq \Box_P C_1 \vee \Box_P C_2,
$
and hence $\Box_P$ preserves binary joins.
\end{proof}

Applying \cref{lem: ell preserves meets} completes the proof that $f_{PQ}$ is a $\CohLatL$-morphism. 
Similar arguments show that $g_{PQ}$ is a $\CohLatL$-morphism.

Finally, we show that $(f_{PQ}, g_{PQ})$ is a $\HD$-morphism. Because $f_{PQ}, g_{PQ}$ are $\CohLatL$-morphisms, it remains to verify the three axioms of \cref{DH maps}. For \cref{def: DH morphisms 1}, let $D \in \LC\U$ and $E \in \RC\U$ with $D \rel{R_\U} E$. Then $D \cap E \ne \varnothing$. If $f_{PQ}(D) \cap g_{PQ}(E) = \varnothing$, then
\[
\bigcap \{ C_1' \in \L\U' : D \subseteq \Box_PC_1' \} \cap \bigcap \{ \phi' C_2' : C_2' \in \L\U' ,\ E \subseteq \Box_Q\phi' C_2'\} = \varnothing.
\]
These intersections are directed, so compactness implies that there are $C_1', C_2' \in \L\U'$ with $D \subseteq \Box_PC_1'$, $E \subseteq \Box_Q\phi C_2'$, and $C_1' \cap \phi' C_2' = \varnothing$. Since there is $z \in D \cap E$, we have $z \in \Box_PC_1' \cap \Box_Q\phi' C_2'$. Therefore, $P[z] \cap Q[z] \subseteq C_1' \cap \phi' C_2' = \varnothing$. This is a contradiction since $P[z] \cap Q[z] \ne \varnothing$ by \cref{Urq serial}. Thus, $f_{PQ}(D) \cap f_{PQ}(E) \ne \varnothing$, so $f_{PQ}(D) \rel{R_{\U'}} f_{PQ}(E)$.

To verify \cref{def: DH morphisms 2}, suppose that $D' \rel{R_{\U'}} g_{PQ}(E)$ and set $D = \ell(D') \in \LC\U$. Then $D' \le f_{PQ}(D)$. If $D \cap E = \varnothing$, because $\ell$ preserves arbitrary joins in $\LC\U'$ (which are intersections), compactness implies that there is $C' \in \L\U'$ with $D' \subseteq C'$ and $\ell(C') \cap E = \varnothing$. Since $\ell(C') = \Box_PC'$, we have $\Box_PC' \cap E = \varnothing$. Therefore, $E \subseteq \phi\Box_PC'$ by \cref{Urq E in phi C}, and so $E \subseteq \Box_Q \phi'C'$ by \cref{Urq Box}. Thus, $g_{PQ}(E) \subseteq \phi'C'$  by \cref{g(E) in phi C}, and so $D' \cap g_{PQ}(E) \subseteq C' \cap \phi'C' = \varnothing$. This is a contradiction since $D' \rel{R_{\U'}} g_{PQ}(E)$. Consequently, $D \cap E \ne \varnothing$, and hence $D \rel{R_\U} E$.

A similar argument verifies \cref{def: DH morphisms 3}, completing the proof that $(f_{PQ}, g_{PQ})$ is a $\HD$-morphism.
\end{proof} 

\begin{theorem} \label{thm: D is a functor}
There is a functor $\DD \colon \Urq \to \HD$ given by $\DD(\U) = \D(\U)$ for each $\U \in \Urq$ and $\DD(P, Q) = (f_{PQ}, g_{PQ})$ for each $\Urq$-morphism $(P, Q)$, where $f_{PQ}, g_{PQ}$ are given in Definition~\emph{\ref{def: fPQ and gPQ}}.
\end{theorem}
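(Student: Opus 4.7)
My plan is as follows. By Propositions \ref{D(U) is in DH} and \ref{prop: (f_PQ g_PQ) is a morphism}, the assignment $\DD$ is already well defined on objects and on morphisms. Therefore it remains to check that $\DD$ preserves identities and composition.

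For identities, fix $\U = (Z, \le_1, \le_2) \in \Urq$; by \cref{Urq identity}, the identity morphism of $\U$ is $(\le_1, \le_2)$. Because every $C' \in \L\U$ is a $\le_1$-upset, $\Box_{\le_1} C' = \{z \in Z : \up_1 z \subseteq C'\} = C'$, and analogously $\Box_{\le_2} \phi'C' = \phi'C'$. Hence, from \cref{def: fPQ and gPQ},
\[
f_{\le_1 \le_2}(D) = \bigcap \{ C' \in \L\U : D \subseteq C' \} \quad \text{and} \quad g_{\le_1 \le_2}(E) = \bigcap \{ \phi C' : C' \in \L\U,\ E \subseteq \phi C'\}.
\]
By \crefrange{D is intersection}{E is intersection}, these intersections equal $D$ and $E$, respectively. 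Thus $\DD(\le_1,\le_2) = (1_{\LC\U}, 1_{\RC\U})$, which is the identity of $\D(\U)$ in $\HD$.

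For composition, consider $\Urq$-morphisms $(P_1, Q_1) \colon \U_1 \to \U_2$ and $(P_2, Q_2) \colon \U_2 \to \U_3$, and write $(P, Q) = (P_2, Q_2) \star (P_1, Q_1)$. I need to verify $f_{PQ} = f_{P_2 Q_2} \circ f_{P_1 Q_1}$ and $g_{PQ} = g_{P_2 Q_2} \circ g_{P_1 Q_1}$. Let $D \in \LC\U_1$. Using \cref{rem: composition lemma for Urq}, for any $C'' \in \L\U_3$ we have $\Box_P C'' = \Box_{P_1} \Box_{P_2} C''$, so
\[
f_{PQ}(D) = \bigcap \{ C'' \in \L\U_3 : D \subseteq \Box_{P_1} \Box_{P_2} C'' \}.
\]
Since $\Box_{P_2} C'' \in \L\U_2$ by \cref{Urq Box}, \cref{f(D) in C} gives that $D \subseteq \Box_{P_1}(\Box_{P_2} C'')$ iff $f_{P_1 Q_1}(D) \subseteq \Box_{P_2} C''$. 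Therefore
\[
f_{PQ}(D) = \bigcap \{ C'' \in \L\U_3 : f_{P_1 Q_1}(D) \subseteq \Box_{P_2} C''\} = f_{P_2 Q_2}(f_{P_1 Q_1}(D)),
\]
as required. The argument for $g_{PQ}$ is analogous, invoking the $Q$-version of \cref{rem: composition lemma for Urq} together with \cref{g(E) in phi C}. Combined with the identity case, this yields functoriality of $\DD$.

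The only nonroutine step is the composition identity, and even there the work has already been front-loaded into \cref{rem: composition lemma for Urq} and \cref{lem: Urq to DH morphism facts}; after invoking them the verification collapses to matching two directed intersections indexed by the same subset of $\L\U_3$. So I do not expect any serious obstacle, the whole proof being essentially a bookkeeping exercise built on the preparatory lemmas.
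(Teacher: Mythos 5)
Your proposal is correct and follows essentially the same route as the paper's proof: well-definedness is delegated to Propositions \ref{D(U) is in DH} and \ref{prop: (f_PQ  g_PQ) is a morphism}, identities are handled via $\Box_{\le_1}C = C$ together with \cref{D is intersection}, and composition via \cref{rem: composition lemma for Urq} and \cref{f(D) in C}. No gaps; only the cosmetic slip of writing $\phi'C'$ for $\phi C'$ in the identity computation.
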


\begin{proof}
By \cref{D(U) is in DH}, $\DD$ is well defined on objects, and it is well defined on morphisms by \cref{prop: (f_PQ  g_PQ) is a morphism}. We show that $\DD$ sends identity morphisms to identity morphisms. Let $\U = (Z, \le_1, \le_2) \in \Urq$.  By \cref{Urq identity}, the identity morphism of $\U$ is $(P, Q) := (\le_1, \le_2)$. If $z, z' \in Z$, then $z\rel{P}z'$ iff $z \le_1 z'$. Consequently, $\Box_PC = C$ for each $C \in \LC\U$ because $C$ is a $\le_1$-upset. Therefore, if $D \in \LC\U$, by \cref{D is intersection},
\[
f_{PQ}(D) = \bigcap \{ C \in \L\U : D \subseteq \Box_PC \} = \bigcap \{ C \in \L\U : D \subseteq C \} = D.
\]
This shows that $f_{PQ}$ is the identity on $\LC\U$. A similar argument shows that $g_{PQ}$ is the identity on $\RC\U$. Therefore, $(f_{PQ}, g_{PQ})$ is the identity on $\DD(\U)$. 

Next let $(P_1, Q_1) \colon \U_1 \to \U_2$ and $(P_2, Q_2) \colon \U_2 \to \U_3$ be $\Urq$-morphisms. Set $P = P_2 \star P_1$ and $Q = Q_2 \star Q_1$. To show that $\DD$ preserves composition, by definition of composition in $\HD$ it suffices to show that $f_{PQ} = f_{P_2Q_2}\circ f_{P_1Q_1}$ and $g_{PQ} = g_{P_2Q_2} \circ g_{P_1Q_1}$. If $D \in \LC\U_1$, then
\[
f_{PQ}(D) = \bigcap \{ C'' \in \L\U_3 : D \subseteq \Box_{P}C'' \} = \bigcap \{ C'' \in \L\U_3: D \subseteq \Box_{P_1}\Box_{P_2}C''\}
\]
by \cref{lem: composition lemma for Urq}. Thus,
\begin{align*}
(f_{P_2Q_2}\circ f_{P_1Q_1})(D) &= f_{P_2Q_2}(f_{P_1Q_1}(D)) = \bigcap \{ C'' \in \L\U_3 : f_{P_1Q_1}(D) \subseteq \Box_{P_2}C'' \} \\
&= \bigcap \{ C'' \in \L\U_3 : D \subseteq \Box_{P_1}\Box_{P_2}C'' \} = f_{PQ}(D),
\end{align*}
where the third equality holds by \cref{f(D) in C}. Consequently, $f_{PQ} = f_{P_2Q_2}\circ f_{P_1Q_1}$. A similar argument gives that $g_{PQ} = g_{P_2Q_2} \circ g_{P_1Q_1}$,  completing the proof that $\DD$ is a functor.
\end{proof}


\section{The resulting category equivalences} \label{sec: HD GvG Hg Urq}

In \cref{sec: DH and GvG,sec: GvG and Hg,sec: Hg and Urq} we defined the functors $\GG \colon \HD \to \GvG$, $\HH \colon \GvG \to \Hg$, and $\UU \colon \Hg \to \Urq$; in \cref{sec: Urq and Plo} we showed that the functor $\PP \colon \Urq \to \Plo$ is an isomorphism; and in \cref{sec: Urq and DH} we defined the functor $\DD \colon \Urq \to \HD$. In this section we show that the functors 
\[
\begin{tikzcd}
\HD \arrow[r, "\GG"] & \GvG \arrow[r, "\HH"] & \Hg \arrow[r, "\UU"] & \Urq \arrow[lll, bend left = 20, "\DD"] 
\end{tikzcd}
\]
 yield equivalences of $\HD, \GvG, \Hg$, and $\Urq$, and hence that each of these categories is also equivalent to $\Plo$. This concludes the circle of our equivalences. 
 
 We show that the above four functors are equivalences by producing 
 natural isomorphisms between the identity functor of each of the above four categories and the corresponding composition of the four functors. Since various of these natural isomorphisms have similar proofs, we skip some details towards the end of the section. 
 
Let $\U \in \Urq$. By \cref{D(U) is in DH}, $(\LC\U, R_\U, \RC\U) \in \HD$. Therefore, by \cref{prop: G on objects}, $(\LCUp, (R_\U)_p, \RCUp) \in \GvG$. Thus, by \cref{lem: X0 the same,prop: HH on objects}, $(\LCUO, (R_\U)_0, \RCUO) \in \Hg$. Throughout the section, we assume that an Urquhart space is the triple $\U = (Z, \le_1, \le_2)$.

\begin{lemma} \plabel{lem: connections between the lattices}
\hfill
\begin{enumerate}
\item \label[lem: connections between the lattices]{CLF from LU} Let $\D = (X, R, Y) \in \HD$. Set $\U = \UU\HH\GG(\D)$ and $\overline{\D} = \DD\UU\HH\GG(\D)$. Then
\begin{align*}
\L\U &= \{ (U \times Y) \cap Z : U \in \CLF(X) \}, \\
\CLF(\LC\U) &= \{ \up [(U \times Y) \cap Z] : U \in \CLF(X) \}.
\end{align*}
\item \label[lem: connections between the lattices]{LG from LU} Let $\G = (X, R, Y) \in \GvG$. Set $\U = \UU\HH(\G)$ and $\overline{\G} = \GG\DD\UU\HH(\G)$. Then 
\begin{align*}
\L\U &= \{ (U \times Y) \cap Z : U \in \L\G \}, \\
\L\overline{\G} &= \{ \up [(U \times Y) \cap Z]\cap \LCUp : U \in \L\G \}.
\end{align*}
\item \label[lem: connections between the lattices]{LH from LU} Let $\H = (X, R, Y) \in \Hg$. Set $\U = \UU(\H)$, and $\overline{\H} = \HH\GG\DD\UU(\H)$. Then
\begin{align*}
\L\U &= \{ (A \times Y) \cap Z : A \in \L\H \}, \\
\L\overline{\H} &= \{ \up [(A \times Y) \cap Z]\cap \LCUO : A \in \L\H \}.
\end{align*}
\end{enumerate}
\end{lemma}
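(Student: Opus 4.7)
The plan is to obtain all six equalities by chaining together the three lattice-level isomorphisms already at our disposal: Proposition \ref{lem: LG = CLF} gives $\gamma_\D \colon \CLF(X) \to \L\GG(\D)$ by $U \mapsto U \cap X_p$; Proposition \ref{lem: LH = LG} gives $\delta_\G \colon \L\G \to \L\HH(\G)$ by $V \mapsto V \cap X_0$; and Proposition \ref{lem: LU = LH} gives $\mu_\H \colon \L\H \to \L\UU(\H)$ by $A \mapsto (A \times Y) \cap Z_\H$. The second equality in each part additionally uses Proposition \ref{prop: LC and RC are coherent}(3), namely $\xi_\U \colon \L\U \to \CLF(\LC\U)$ given by $C \mapsto \up C$.

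For Part (1), starting from $\D = (X,R,Y) \in \HD$, I would apply $\gamma_\D$, then $\delta_{\GG(\D)}$, then $\mu_{\HH\GG(\D)}$ in succession, obtaining
\[
\L\U = \{(((U \cap X_p) \cap X_0) \times Y_0) \cap Z : U \in \CLF(X)\}.
\]
Since $X_0 \subseteq X_p$ (Lemma \ref{lem: X0 the same}) and $Z \subseteq X_0 \times Y_0$, this reduces to $\{(U \times Y) \cap Z : U \in \CLF(X)\}$, as claimed. For the second equality, $\overline{\D} = (\LC\U, R_\U, \RC\U)$, so applying $\xi_\U$ to the first equality immediately yields $\CLF(\LC\U) = \{\up[(U \times Y) \cap Z] : U \in \CLF(X)\}$.

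Part (2) is analogous but starts one step later in the chain: for $\G = (X,R,Y) \in \GvG$, composing $\delta_\G$ and $\mu_{\HH(\G)}$ gives the first equality $\L\U = \{(V \times Y) \cap Z : V \in \L\G\}$. For the second, $\overline{\G} = \GG\DD(\U) = (\LCUp, (R_\U)_p, \RCUp)$, so Proposition \ref{lem: LG = CLF} applied to $\DD(\U)$ gives $\L\overline{\G} = \{W \cap \LCUp : W \in \CLF(\LC\U)\}$. Substituting the description of $\CLF(\LC\U)$ supplied by $\xi_\U$ and the first equality yields the stated formula.

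For Part (3), the first equality is exactly Proposition \ref{lem: LU = LH}. For the second, set $\G' := \GG\DD(\U)$, so that $\overline{\H} = \HH(\G') = (\LCUO, (R_\U)_0, \RCUO)$. Proposition \ref{lem: LH = LG} then gives $\L\overline{\H} = \{W' \cap \LCUO : W' \in \L\G'\}$, while the argument of Part (2) applied to $\DD(\U)$ identifies $\L\G' = \{\up[(A \times Y) \cap Z] \cap \LCUp : A \in \L\H\}$. Using $\LCUO \subseteq \LCUp$ (Lemma \ref{lem: X0 the same}), the intersection with $\LCUp$ is absorbed, producing the claim. No step presents a substantial obstacle; the entire lemma is bookkeeping around the chain of known isomorphisms, and the only care needed is to track which of $X_0, X_p, \LCUO, \LCUp$ one is restricting to at each stage.
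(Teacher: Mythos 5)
Your proposal is correct and follows essentially the same route as the paper: chain the isomorphisms $\gamma_\D$, $\delta_\G$, $\mu_\H$, absorb the restrictions to $X_p$ and $X_0$ using $Z \subseteq X_0 \times Y_0$ and $X_0 \subseteq X_p$, and then use the identification of $\CLF(\LC\U)$ with $\up$-images of $\L\U$ (which you package as $\xi_\U$ and the paper unpacks via $K(\LC\U)=\L\U$ and \cref{lem: KOF = LX}). The paper only writes out part (1) and declares (2) and (3) similar, so your explicit treatment of the remaining second equalities is a harmless elaboration of the same bookkeeping.
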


\begin{proof}
\eqref{CLF from LU} Let $\G = \GG(\D)$ and $\H = \HH(\G)$. Then $\L\G = \{ U \cap X_p : U \in \CLF(X) \}$ by \cref{lem: LG = CLF} and $\L\H = \{ U \cap X_0 : U \in \CLF(X) \}$ by \cref{lem: X0 the same,lem: LH = LG}. Therefore, \cref{lem: LU = LH} yields that
\[
\L\U = \{ ((U \cap X_0) \times Y) \cap Z : U \in \CLF(X) \}.
\]
We show that $((U \cap X_0) \times Y) \cap Z = (U \times Y) \cap Z$. 
One inclusion is obvious. For the reverse inclusion, let $(x, y) \in Z$ with $x \in U$. Because $Z \subseteq X_0 \times Y_0$, we have $x \in U \cap X_0$. 
Therefore,
\[
\L\U = \{ (U \times Y) \cap Z : U \in \CLF(X) \}.
\] 
By \cref{CLF versus LU}, $K(\LC\U)=\L\U$. Thus, \cref{lem: KOF = LX} implies that
\[
\CLF(\LC\U) = \{ \up [(U \times Y) \cap Z] : U \in \CLF(X) \}.
\]

The proofs of \eqref{LG from LU} and \eqref{LH from LU} are similar to that of  \eqref{CLF from LU}.
\end{proof}

\begin{lemma} \plabel{lem: technical equalities}
\hfill
\begin{enumerate}
\item \label[lem: technical equalities]{tech 1} Let $\D = (X, R, Y) \in \HD$ and set $\UU\HH\GG(\D) = (Z, \le_1, \le_2)$. If $x \in X$ and $U \in \CLF(X)$, then 
\[
x \in U \iff 
(\up x \times Y) \cap Z \subseteq (U \times Y) \cap Z.
\]
\item \label[lem: technical equalities]{tech 2} Let $\G = (X, R, Y) \in \GvG$ and set $\UU\HH(\G) = (Z, \le_1, \le_2)$. If $x \in X$ and $U \in \L\G$, then 
\[
x \in U \iff 
(\up x \times Y) \cap Z \subseteq (U \times Y) \cap Z.
\]
\item \label[lem: technical equalities]{tech 3}Let $\H = (X, R, Y) \in \Hg$ and set $\UU(\H) = (Z, \le_1, \le_2)$. If $x \in X$ and $A \in \L\H$, then
\[
x \in A \iff 
(\up x \times Y) \cap Z \subseteq (A \times Y) \cap Z.
\]
\end{enumerate}
\end{lemma}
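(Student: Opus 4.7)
The three parts follow a common pattern: the forward implications are immediate, and the reverse implications are proved by contrapositive, by exhibiting a witness $(x', y) \in (\up x \times Y) \cap Z$ with $x' \notin U$ (resp.\ $x' \notin A$).

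The forward direction uses that the set on the right is an upset of $X$ in each case: $U \in \CLF(X)$ is a filter, $U \in \L\G$ is a clopen upset by \cref{def: LG}, and $A \in \L\H$ is an upset by \cref{Hg A is upset}. Hence $x \in U$ (resp.\ $x \in A$) yields $\up x \subseteq U$ (resp.\ $\up x \subseteq A$), and the inclusion of products intersected with $Z$ follows.

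For the reverse direction in (1) and (2), assume $x \notin U$. One first lifts $x$ to some $x' \in X_0$ with $x \le x'$ and $x' \notin U$: in (1), $U = \up k$ for some $k \in K(X)$ by \cref{lem: KOF = LX}, so $k \not\le x$, and \cref{x is meet from X0} produces such an $x'$ with $k \not\le x'$; in (2), \cref{x notin U GvG} directly supplies such an $x'$. Next, one completes $x'$ to a pair in $Z = Z_\H$: since $x' \in X_0$, choose $y^* \in Y$ with $x' \in \max R^{-1}[y^*]$, and apply \cref{producing y' in Y0} to obtain $y \in \max R[x']$ with $y^* \le y$, so $y \in Y_0$. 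Then $y$ is $x'$-maximal by choice, and any $x'' \ge x'$ with $x'' \rel{R} y$ satisfies $x'' \rel{R} y^*$ because $y \ge y^*$ forces $R^{-1}[y] \subseteq R^{-1}[y^*]$, whence the maximality of $x'$ in $R^{-1}[y^*]$ gives $x'' = x'$; thus $(x', y) \in Z_\H$, producing the required witness.

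For (3), assume $x \notin A = \Box\bd A$, so there is $y \in R[x]$ with $y \notin \bd A$. Following \cref{maximal pairs}, apply \cref{Hg maximal} twice: first to obtain $y' \ge_Y y$ that is $x$-maximal, then to obtain $x' \ge_X x$ that is $y'$-maximal. Since $x \le_X x'$ implies $R[x'] \subseteq R[x]$, the $x$-maximality of $y'$ automatically upgrades to $x'$-maximality, so $(x', y') \in Z_\H$. Because $\bd A$ is a $\le_Y$-downset (\cref{Hg A is upset}) and $y \le_Y y'$ with $y \notin \bd A$, also $y' \notin \bd A$; hence $y' \in R[x']$ yet $y' \notin \bd A$, so $R[x'] \not\subseteq \bd A$ and $x' \notin A$. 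The main subtlety throughout is verifying that the constructed pair is mutually maximal in $Z_\H$, which in (1) and (2) hinges on the DH/GvG monotonicity of $R^{-1}$, and in (3) on the two-step application of \cref{Hg maximal}.
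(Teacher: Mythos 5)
Your proof is correct. For part (3) it is essentially the paper's argument: from $x \notin A = \Box\bd A$ you extract $y \in R[x]$ with $y \notin \bd A$, lift $(x,y)$ to a mutually maximal pair, and observe that the lifted first coordinate still avoids $A$. For parts (1) and (2), however, you take a genuinely different route. The paper argues uniformly in all three cases: it uses $U = \psi\phi U$ (\cref{phi psi is 1}), resp.\ $U = \Box\bd U$, to produce $y \notin \bd U$ with $x \rel{R} y$, lifts the pair $(x,y)$ to some $(x', y') \in Z$ by \cref{maximal pairs}, and concludes $x' \notin U$ from $x' \rel{R} y$; this uniformity is what lets the paper dispatch parts (2) and (3) by reference to part (1). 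You instead first manufacture the witness $x' \in X_0$ with $x \le x'$ and $x' \notin U$ --- via \cref{x is meet from X0} in (1) and \cref{x notin U GvG} in (2) --- and only then complete $x'$ to a point of $Z$, re-deriving the mutual-maximality transfer from \cref{producing y' in Y0} together with the monotonicity of $R^{-1}$ (\cref{def: DH relation 4} via \cref{rem: R and le for DH}, resp.\ \cref{GvG 2}). Your completion step is a hands-on special case of what \cref{maximal pairs} packages, so you lose some economy; on the other hand, your version makes explicit that the separating point can always be taken in $X_0$, which is the only feature of $Z$ the statement actually needs. The one detail you leave implicit is that maximality with respect to the ambient order and relation on $X \times Y$ implies maximality with respect to the restricted data defining $Z$; this is immediate from \cref{lem: Hg partial orders and R related} and $R_0 \subseteq R$, and the paper elides the same point.
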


\begin{proof}
\eqref{tech 1} One direction is clear since $U$ is an upset. For the other, suppose that 
$x \notin U$. By \cref{phi psi is 1}, $U = \psi\phi U$. Therefore, there is $y \in \phi U$ with $x \rel{R} y$. By \cref{maximal pairs}, there are $x' \ge x$ and $y' \ge y$ with $(x', y') \in Z$. Thus, $(x',y') \in (\up x \times Y) \cap Z$. On the other hand, $x' \rel{R} y'$ and $y \le y'$ imply that $x' \rel{R} y$. Hence, $y \in \phi U$ 
yields that $x' \notin U$. Consequently,  $(x',y') \notin (U \times Y) \cap Z$, and so $(\up x \times Y) \cap Z \not\subseteq (U \times Y) \cap Z$.

The proofs of \eqref{tech 2} and \eqref{tech 3} are similar to that of  \eqref{tech 1}.
\end{proof}

\subsection{The natural isomorphism \texorpdfstring{$1_\HD \to \DD\UU\HH\GG$}{Lg}} 

\begin{lemma} \label{lem: epsilon_D}
For each $\D \in \HD$, there is a $\HD$-isomorphism $\epsilon_\D \colon \D \to \DD\UU\HH\GG(\D)$.
\end{lemma}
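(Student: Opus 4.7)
My plan is to define $\epsilon_\D = (f_\D, g_\D)$ with $f_\D \colon X \to \LC\U$ given by $f_\D(x) = (\up x \times Y) \cap Z$ and $g_\D \colon Y \to \RC\U$ given by $g_\D(y) = (X \times \up y) \cap Z$, where $\U = \UU\HH\GG(\D) = (Z, \le_1, \le_2)$, and then to invoke \cref{rem: morphisms}: since $\epsilon_\D$ is manifestly a pair of functions, it suffices to verify that $f_\D$ and $g_\D$ are bijections and that $x \rel{R} y \iff f_\D(x) \rel{R_\U} g_\D(y)$. Well-definedness of $f_\D$ is because $X \in \CohLatL$ yields $\up x = \bigcap \{\up k : k \in K(X),\, k \le x\}$, so that $f_\D(x) = \bigcap_{k \le x} (\up k \times Y) \cap Z$ is an intersection of members of $\L\U \subseteq \LC\U$ listed in \cref{CLF from LU}, and $\LC\U$ is closed under arbitrary intersections (since $\psi\phi$ is a monotone closure operator). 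Well-definedness of $g_\D$ is dual, using the Galois isomorphism $\phi \colon \CLF(X) \to \CLF(Y)$ of \cref{lem: dual iso}.

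Injectivity of $f_\D$ will be immediate from \cref{tech 1}: if $x \ne x'$, I may pick (without loss of generality) $k \in K(X)$ with $k \le x$ and $k \not\le x'$, and then $f_\D(x) \subseteq (\up k \times Y) \cap Z$ while $f_\D(x') \not\subseteq (\up k \times Y) \cap Z$. Surjectivity is the delicate point. Given $D \in \LC\U$, I will set $T = \{k \in K(X) : D \subseteq (\up k \times Y) \cap Z\}$ and observe that $T$ is an ideal of $K(X)$: downward closure follows from the order-reversing relationship between $k$ and $\up k$, while closure under binary joins uses $(\up(k \vee l) \times Y) \cap Z = (\up k \times Y) \cap Z \, \cap\, (\up l \times Y) \cap Z$ together with $k \vee l \in K(X)$. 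Setting $x = \bigvee T \in X$, the compactness of each element of $K(X)$ together with directedness of $T$ gives $T = \{k \in K(X) : k \le x\}$, so that $\up x = \bigcap_{k \in T} \up k$; combining this with \cref{D is intersection} and \cref{CLF from LU} (which together express $D$ as $\bigcap_{k \in T} (\up k \times Y) \cap Z$) yields $f_\D(x) = D$. Bijectivity of $g_\D$ is dual.

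For the $R$-compatibility, the key observation is that $f_\D(x) \rel{R_\U} g_\D(y)$ iff $f_\D(x) \cap g_\D(y) = (\up x \times \up y) \cap Z \ne \varnothing$, i.e., iff there exists $(x', y') \in Z$ with $x \le x'$ and $y \le y'$. The backward direction is routine: such $(x', y')$ lies in $R$ since $Z \subseteq R$, and the monotonicity of $R[-]$ and $R^{-1}[-]$ recorded in \cref{rem: R and le for DH} then forces $x \rel{R} y$. For the forward direction, given $x \rel{R} y$, I will apply \cref{producing y' in Y0} and then \cref{producing x' in X0} to obtain $x_1 \in X_0$, $y_1 \in Y_0$ with $x \le x_1$, $y \le y_1$, and $x_1 \rel{R_0} y_1$, landing inside the Hartung space $\HH\GG(\D) = (X_0, R_0, Y_0)$; the double-maximization argument of \cref{maximal pairs}, powered by \cref{Hg maximal}, then produces the required maximal pair $(x', y') \in Z$ with $x_1 \le x'$ and $y_1 \le y'$.

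I expect the surjectivity step to be the main obstacle, as it amounts to explicitly inverting the composite $X \cong \Idl(K(X)) \cong \Filt(\L\U) \cong \LC\U$ implicit in \cref{Nachbin ideal}, \cref{CLF from LU}, and \cref{LC RC coherent}, and to recognizing that this composite is precisely $x \mapsto (\up x \times Y) \cap Z$. Everything else---injectivity, the backward direction of $R$-compatibility, and the analogous checks for $g_\D$---reduces to the separation and maximality results already assembled in the preceding sections.
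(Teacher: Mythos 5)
Your proposal is correct and follows essentially the same route as the paper: the same maps $x \mapsto (\up x \times Y) \cap Z$ and $y \mapsto (X \times \up y) \cap Z$, the same reduction via \cref{rem: morphisms} to bijectivity plus $R$-compatibility, the same surjectivity argument through compact elements and \cref{D is intersection}/\cref{CLF from LU}, and the same use of \cref{lem: producing elements from X0 or Y0} and \cref{maximal pairs} for the forward direction of $R$-compatibility. The only cosmetic difference is that you get injectivity directly from \cref{tech 1} and a separating compact element, whereas the paper verifies order-reflection via \cref{x is meet from X0}; both are fine.
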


\begin{proof}
Let $\D = (X, R, Y)$. 
Set $\U = \UU\HH\GG(\D) := (Z, \le_1, \le_2)$ and $\overline{\D} = \DD\UU\HH\GG(\D)$, so $\overline{\D} = (\LC\U, R_\U, \RC\U)$. For $U \in \CLF(X)$ let $\overline{U} = (U \times Y) \cap Z$. By \cref{CLF from LU}, we have $\L\U = \{ \overline{U} : U \in \CLF(X) \}$ and $\CLF(\LC\U) = \{ \up \overline{U} : U \in \CLF(X) \}$.

Define $\alpha \colon X \to \LC\U$ and $\beta \colon Y \to \RC\U$ by
\[
\alpha(x)  = (\up x \times Y) \cap Z \quad\textrm{and}\quad \beta(y) = (X \times \up y) \cap Z.
\]
Since $\CLF(X)=\{ \up k : k \in K(X) \}$ (see \cref{lem: KOF = LX}), we have $\up x = \bigcap \{ U \in \CLF(X) : x \in U\}$. Therefore, $\alpha(x) = \bigcap \{ \overline{U} : U \in \CLF(X),x \in U \}$, an intersection from $\L\U$, and hence $\alpha(x) \in \LC\U$ since $\LC\U$ is closed under intersections. 
It is easy to see that $\alpha$ is order-preserving since the order on $\LC\U$ is reverse inclusion. To see that $\alpha$ is order-reflecting, suppose that $\alpha(x_1) \le \alpha(x_2)$ in $\LC\U$. Then $\alpha(x_2) \subseteq \alpha(x_1)$. By \cref{x is meet from X0}, for each $x \in X$, we have $x = \bigwedge (\up x \cap X_0)$; and by \cref{maximal pairs 2}, for each $z \in X_0$, there is $y \in Y_0$ with $(z, y) \in Z$.  Therefore,
\begin{align*}
x_1 &= \bigwedge \{ z \in X_0 : \exists y \in Y_0, (z, y) \in \alpha(x_1)\} \\
&\le \bigwedge \{ z \in X_0 : \exists y \in Y_0, (z, y) \in \alpha(x_2)\} = x_2.
\end{align*}
Thus, $\alpha$ is order-reflecting. To show that $\alpha$ is onto, let $D \in \LC\U$. By Lemmas~\ref{lem: LC and RC facts}\eqref{D is intersection} and \ref{lem: connections between the lattices}\eqref{CLF from LU}, there is a family $\{U_i\} \subseteq \CLF(X)$ with $D = \bigcap \overline{U_i}$. By \cref{lem: KOF = LX}, for each $i$ there is $k_i \in K(X)$ with $U_i = \up k_i$. If $x = \bigvee k_i$, then $\up x = \bigcap U_i$. Consequently, $\alpha(x) = D$. This shows that $\alpha$ is an order-isomorphism, hence a $\CohLatL$-isomorphism. 
Similar arguments show that $\beta$ is a $\CohLatL$-isomorphism. 

To see that $(\alpha, \beta)$ is a $\HD$-isomorphism, by \cref{rem: morphisms}, it is enough to show that $x\rel{R}y$ iff $\alpha(x) \rel{R_\U} \beta(y)$. If $\alpha(x) \rel{R_\U} \beta(y)$, then $\alpha(x) \cap \beta(y) \ne \varnothing$, so there is $(x', y') \in Z$ with $x \le x'$ and $y \le y'$. Since $(x', y') \in Z$, we have $x' \rel{R} y'$, so $x \rel{R}y$ by \cref{def: DH relation filter}. Conversely, if $x\rel{R}y$, by \cref{maximal pairs}, there are $x' \ge x$ and $y' \ge y$ with $(x', y') \in Z$, so $\alpha(x) \cap \beta(y) \ne \varnothing$, and hence $\alpha(x) \rel{R_{\U}} \beta(y)$. Setting $\epsilon_\D = (\alpha, \beta)$ yields the desired $\HD$-isomorphism.
\end{proof}

\begin{proposition} \label{prop: epsilon : 1 to DUHG}
There is a natural isomorphism $\epsilon \colon 1_{\HD} \to \DD\UU\HH\GG$.
\end{proposition}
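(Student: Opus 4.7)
The plan is to leverage \cref{lem: epsilon_D}, which already supplies $\HD$-isomorphisms $\epsilon_\D = (\alpha_\D, \beta_\D)$ as the components; since each component is an isomorphism, only naturality remains to be checked. The square to verify is
\[
\begin{tikzcd}
\D \arrow[r, "\epsilon_\D"] \arrow[d, "(f{,}g)"'] & \DD\UU\HH\GG(\D) \arrow[d, "\DD\UU\HH\GG(f{,}g)"] \\
\D' \arrow[r, "\epsilon_{\D'}"'] & \DD\UU\HH\GG(\D')
\end{tikzcd}
\]
for each $\HD$-morphism $(f, g) \colon \D \to \D'$.

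My first step will be to unfold the composition $\DD\UU\HH\GG(f, g)$. Writing $\D = (X, R, Y)$ and $\D' = (X', R', Y')$: the functor $\GG$ produces $(S_f, T_g)$ with $x \rel{S_f} x' \iff f(x) \le x'$; then $\HH$ restricts these to $X_0 \times X_0'$ and $Y_0 \times Y_0'$ using \cref{lem: X0 the same}; then $\UU$ produces $(P, Q)$ on the pair spaces $Z, Z'$ with $(z, w) \rel{P} (z', w') \iff f(z) \le z'$ and $(z, w) \rel{Q} (z', w') \iff g(w) \le w'$; and finally $\DD$ yields $(f_{PQ}, g_{PQ})$ via the formulas of \cref{def: fPQ and gPQ}. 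Naturality then reduces to the two equalities $f_{PQ} \circ \alpha_\D = \alpha_{\D'} \circ f$ and $g_{PQ} \circ \beta_\D = \beta_{\D'} \circ g$.

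For the first equality, using \cref{CLF from LU} to describe $\L\U'$, I will rewrite both sides as intersections:
\[
f_{PQ}(\alpha_\D(x)) = \bigcap \{ (U' \times Y') \cap Z' : U' \in \CLF(X'),\ \alpha_\D(x) \subseteq \Box_P ((U' \times Y') \cap Z') \},
\]
while $\alpha_{\D'}(f(x)) = (\up f(x) \times Y') \cap Z' = \bigcap \{(U' \times Y') \cap Z' : U' \in \CLF(X'),\ f(x) \in U'\}$. The equality of these two intersections therefore reduces to the single key biconditional
\[
\alpha_\D(x) \subseteq \Box_P ((U' \times Y') \cap Z') \iff f(x) \in U'.
\]
The $(\Leftarrow)$ direction is immediate: if $f(x) \in U'$ and $(z, w) \in \alpha_\D(x)$, then any $P$-successor $(z', w')$ satisfies $f(x) \le f(z) \le z'$, whence $z' \in U'$ since $U'$ is an upset.

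The main obstacle will be the $(\Rightarrow)$ direction, for which I plan to combine the principal-filter structure of $U' = \up k'$ with the meet-preservation of $f$. Given $x_0 \in \up x \cap X_0$, I will use \cref{producing y' in Y0} together with a direct check of maximality (exploiting that $x_0 \in \max R^{-1}[y_1]$ for some $y_1$ and that $R^{-1}$ is antitone in its argument by \cref{def: DH relation 4}) to produce $y_0 \in Y_0$ with $(x_0, y_0) \in Z$, placing $(x_0, y_0)$ in $\alpha_\D(x)$. Then, for any $x_0'' \in X_0'$ with $f(x_0) \le x_0''$, an analogous use of \cref{lem: producing elements from X0 or Y0} combined with \cref{def: DH morphisms 1} will yield $y_0'' \in Y_0'$ with $(x_0'', y_0'') \in Z'$ and $(x_0, y_0) \rel{P} (x_0'', y_0'')$. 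The hypothesis therefore forces $x_0'' \in U'$, i.e., $k' \le x_0''$. Since $f(x_0) = \bigwedge (\up f(x_0) \cap X_0')$ by \cref{x is meet from X0}, this gives $k' \le f(x_0)$, so $f(x_0) \in U'$. Finally, because $f$ preserves arbitrary meets and $x = \bigwedge (\up x \cap X_0)$, I conclude $f(x) \in U'$. The remaining equality $g_{PQ} \circ \beta_\D = \beta_{\D'} \circ g$ will follow by an entirely dual argument using the $\RC$-side description from \cref{lem: connections between the lattices}.
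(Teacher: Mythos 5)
Your proposal is correct and follows essentially the same route as the paper's proof: both reduce naturality to the component equalities $f_{PQ}\circ\alpha_\D=\alpha_{\D'}\circ f$ and $g_{PQ}\circ\beta_\D=\beta_{\D'}\circ g$, and both hinge on the biconditional $\alpha_\D(x)\subseteq\Box_P\overline{U'}\iff f(x)\in U'$ for $U'\in\CLF(X')$. The only difference is presentational: where the paper computes $\Box_P\overline{U'}=(f^{-1}(U')\times Y)\cap Z$ by citing \cref{tech 1}, you re-derive the needed maximality facts inline (producing points of $Z$ above a given $x_0\in X_0$, then using $x=\bigwedge(\up x\cap X_0)$ and meet-preservation of $f$), which is exactly the content of that lemma.
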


\begin{proof}
Let $\D = (X, R, Y) \in \HD$. Then $\epsilon_\D$ is a $\HD$-isomorphism by \cref{lem: epsilon_D}. To see that $\epsilon$ is natural, let $(f, g) \colon \D \to \D'$ be a $\HD$-morphism. Set $\U = \UU\HH\GG(\D)$, $\U' = \UU\HH\GG(\D')$, and $\UU\HH\GG(f, g) = (P, Q)$. Then $\DD\UU\HH\GG(f, g) = (f_{PQ}, g_{PQ})$. We need to show that the following diagram commutes.
\[
\begin{tikzcd}[column sep = 5pc]
\D \arrow[r, "(f{,}g)"] \arrow[d, "\epsilon_\D"'] & \D' \arrow[d, "\epsilon_{\D'}"] \\
\overline{\D} \arrow[r, "(f_{PQ}{,}g_{PQ})"'] & \overline{\D'}
\end{tikzcd}
\]
Write $\epsilon_\D = (\alpha, \beta)$ and $\epsilon_{\D'} = (\alpha', \beta')$. We then need to show that $\alpha' \circ f = f_{PQ} \circ \alpha$ and $\beta' \circ g = g_{PQ} \circ \beta$. Let $x \in X$. Then $\alpha'(f(x)), f_{PQ}(\alpha(x)) \in \LC\U'$. To see that they are equal, by \cref{D is intersection} it suffices to show that
\[
\alpha'(f(x)) \subseteq C' \iff f_{PQ}(\alpha(x)) \subseteq C'
\]
for each $C' \in \L\U'$. 
Recalling that $\U'=(Z',\le_1',\le_2')$ and using the notation of \cref{lem: epsilon_D}, it follows from \cref{CLF from LU} that $C' = (U' \times Y') \cap Z' = \overline{U'}$ for some $U' \in \CLF(X')$. Since $\alpha'(f(x)) = (\up f(x) \times Y') \cap Z'$, we have $\alpha'(f(x)) \subseteq \overline{U'}$ iff $f(x) \in U'$ by \cref{tech 1}. By \cref{f(D) in C}, $f_{PQ}(\alpha(x)) \subseteq \overline{U'}$ iff $\alpha(x) \subseteq \Box_P \overline{U'}$. Let $(x_1, y_1) \in Z$ and $(x_1', y_1') \in Z'$. Recalling how $\UU$, $\HH$, and $\GG$ act on morphisms, we have
\[
(x_1, y_1) \rel{P} (x_1', y_1') \iff x_1 \rel{S_f} x_1'  \iff f(x_1) \le x_1',
\]
where $S_f$ is defined in \cref{prop: G on morphisms}. Therefore,
\begin{align*}
\Box_P \overline{U'} &= \{ (x_1, y_1) \in Z : (x_1', y_1') \in Z'\ \textrm{ and } f(x_1) \le x_1' \Longrightarrow x_1' \in U' \} \\
&= \{ (x_1, y_1) \in Z : (\up f(x_1) \times Y') \cap Z' \subseteq (U' \times Y') \cap Z' \} \\
&= \{ (x_1, y_1) \in Z : f(x_1) \in U' \} = (f^{-1}(U') \times Y) \cap Z,
\end{align*}
where the second-to-last equality follows from \cref{tech 1}. Thus,
\begin{align*}
f_{PQ}(\alpha(x)) \subseteq \overline{U'} &\iff \alpha(x) \subseteq \Box_P \overline{U'} \\
&\iff (\up x \times Y) \cap Z \subseteq (f^{-1}(U') \times Y) \cap Z \\
&\iff x \in f^{-1}(U') \iff f(x) \in U' \\
&\iff \alpha'(f(x)) \subseteq \overline{U'}, 
\end{align*}
again by \cref{tech 1}. Consequently, $f_{PQ}(\alpha(x)) = \alpha'(f(x))$, and hence $f_{PQ} \circ \alpha = \alpha' \circ f$. Similar calculations show that $g_{PQ} \circ \beta = \beta' \circ g$. Thus, $\epsilon$ is natural.
\end{proof}

\subsection{The natural isomorphism \texorpdfstring{$1_\GvG \to \GG\DD\UU\HH$}{Lg}} \label{subsec: GvG}

\begin{lemma} \label{lem: isomorphisms in GvG}
Let $\G = (X, R, Y)$ and $\G' = (X', R', Y')$ be GvG-spaces. 
Suppose $\alpha \colon X \to X'$ and $\beta \colon Y \to Y'$ are $\Pries$-isomorphisms satisfying $x \rel{R} y$ iff $\alpha(x) \rel{R'} \beta(y)$. 
Define $S_\alpha \subseteq X \times X'$ and $T_\beta \subseteq Y \times Y'$ by
\[
x \rel{S_\alpha} x' \iff \alpha(x) \le x' \quad\textrm{and}\quad y \rel{T_\beta} y' \iff \beta(y) \le y'. 
\]
Then $(S_\alpha, T_\beta) \colon \G \to \G'$ is a $\GvG$-isomorphism.
\end{lemma}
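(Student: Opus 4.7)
The plan is to verify directly that $(S_\alpha,T_\beta)$ satisfies the four clauses of \cref{def: GvG morphism}, and then exhibit $(S_{\alpha^{-1}},T_{\beta^{-1}})$ as its two-sided inverse under $\star$. The key computational observation is that $S_\alpha[x]=\up \alpha(x)$ and $T_\beta[y]=\up\beta(y)$, so for any upset $U'\subseteq X'$ we have
\[
\Box_{S_\alpha}U' \;=\; \{x\in X : \up\alpha(x)\subseteq U'\} \;=\; \alpha^{-1}(U').
\]
Since $\alpha$ is a Priestley isomorphism, $\alpha^{-1}(U')$ is a clopen upset whenever $U'$ is. An analogous identity gives $\Box_{T_\beta}V'=\beta^{-1}(V')$ for downsets $V'\subseteq Y'$.

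For \cref{GvG morphism 1}, given $U'\in\L\G'$, I will use the bi-implication $x\rel{R}y\Leftrightarrow\alpha(x)\rel{R'}\beta(y)$ together with the bijectivity of $\alpha,\beta$ to compute $\bd\alpha^{-1}(U')=\beta^{-1}(\bd'U')$. Chasing the definitions once more, $\Box\beta^{-1}(\bd'U')=\alpha^{-1}(\Box'\bd'U')=\alpha^{-1}(U')$, so $\alpha^{-1}(U')=\Box\bd\alpha^{-1}(U')\in\L\G$. For the second half of \cref{GvG morphism 1}, I will observe that since $\bd'U'$ is a downset (by \cref{GvG 3}) and $\beta$ is order-preserving in both directions, $\Diamond_{T_\beta}\bd'U'=\beta^{-1}(\bd'U')=\bd\Box_{S_\alpha}U'$. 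For \cref{GvG morphism 2}, if $x\nr{S_\alpha}x'$ then $\alpha(x)\not\le x'$, and \cref{GvG up x} applied in $\G'$ yields $U'\in\L\G'$ with $\alpha(x)\in U'$ and $x'\notin U'$; then $x\in\alpha^{-1}(U')=\Box_{S_\alpha}U'$. \cref{GvG morphism 3} is proved dually using \cref{GvG up y}. Finally, \cref{GvG morphism 4} is immediate: given $x\rel{R}y$, take $x'=\alpha(x)$ and $y'=\beta(y)$, which satisfy $x\rel{S_\alpha}x'$, $y\rel{T_\beta}y'$, and $x'\rel{R'}y'$ by hypothesis.

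Applying the same argument to the pair $(\alpha^{-1},\beta^{-1})$ (which inherits the bi-implication by swapping $R$ and $R'$) shows $(S_{\alpha^{-1}},T_{\beta^{-1}})\colon\G'\to\G$ is a $\GvG$-morphism. It remains to verify that these are mutual inverses under $\star$. Using the formula $\Box_{S_\alpha}\Box_{S_{\alpha^{-1}}}U=\alpha^{-1}(\alpha(U))=U$ for each $U\in\L\G$, the definition of $\star$ gives
\[
x\rel{(S_{\alpha^{-1}}\star S_\alpha)}z \iff \bigl(\forall U\in\L\G\bigr)\,\bigl(x\in U\Rightarrow z\in U\bigr),
\]
which by \cref{GvG up x} is equivalent to $x\le z$, so $S_{\alpha^{-1}}\star S_\alpha={\le_X}$. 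The analogous computation (using that $\bd U$ is a downset so that $\bd\Box(\bd U)=\bd U$, and the corresponding $\Box$-version for $T_\beta$) yields $T_{\beta^{-1}}\star T_\beta={\le_Y}$, and symmetrically $(S_\alpha,T_\beta)\star(S_{\alpha^{-1}},T_{\beta^{-1}})=(\le_{X'},\le_{Y'})$. By \cref{GvG identity}, these are the identity morphisms, so $(S_\alpha,T_\beta)$ is a $\GvG$-isomorphism with inverse $(S_{\alpha^{-1}},T_{\beta^{-1}})$.

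The main (minor) obstacle will be the careful bookkeeping in \cref{GvG morphism 1} where one must pass between $\Box_{S_\alpha}$, $\Diamond_{T_\beta}$, and the Galois pair $(\Box,\bd)$ of $R$, using both that $\bd'U'$ is a downset and that $U'=\Box'\bd'U'$ in $\L\G'$. Once these identifications are in place, the verification of the remaining axioms and the computation of inverses are essentially formal.
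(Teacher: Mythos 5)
Your proof is correct and follows essentially the same route as the paper's: compute $\Box_{S_\alpha}U'=\alpha^{-1}(U')$, use the commutation identities $\bd\,\alpha^{-1}=\beta^{-1}\,\bd'$ and $\Box\,\beta^{-1}=\alpha^{-1}\,\Box'$ to verify the four morphism axioms, and establish the inverse via the same $\star$-computation that reduces to \cref{GvG up x}. One minor slip: the parenthetical identity ``$\Box_{T_\beta}V'=\beta^{-1}(V')$ for downsets $V'$'' should read $\Diamond_{T_\beta}$ (for $\Box_{T_\beta}$ only the inclusion $\subseteq$ holds in general), but this aside is never used---your actual argument correctly invokes $\Diamond_{T_\beta}\bd'U'=\beta^{-1}(\bd'U')$.
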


\begin{proof}
Let $U' \in \L\G'$. Since $U'$ is an upset, 
\begin{align*}
\Box_{S_\alpha} U' &= \{ x \in X : S_\alpha[x] \subseteq U' \} = \{ x \in X : \up \alpha(x) \subseteq U' \} \\
&= \{ x \in X : \alpha(x) \in U' \} = \alpha^{-1}(U').
\end{align*}
Therefore, because $U'$ is a clopen upset and $\alpha$ is a $\Pries$-morphism, $\Box_{S_\alpha} U'$ is a clopen upset. 
Similarly, $\Diamond_{T_\beta} \bd'U'=\beta^{-1}(\bd' U')$, and hence $\Diamond_{T_\beta} \bd'U'$ is a clopen downset.
We show that $(S_\alpha,T_\beta)$ is a $\GvG$-morphism. 
Since $\alpha,\beta$ are bijections such that $x \rel{R} y$ iff $\alpha(x) \rel{R'}\beta(y)$, 
\begin{equation*}
\alpha^{-1} \Box' V' = \Box \beta^{-1}  V' \quad\textrm{and}\quad \bd \alpha^{-1} U' = \beta^{-1} \bd' U'
\end{equation*}
for each $U' \subseteq X'$ and $V'\subseteq Y'$. Therefore, since $\Box'\bd' U' = U'$,  
\begin{align*}
\Box\bd(\Box_{S_\alpha}U') &= \Box\bd\alpha^{-1}(U') = \Box\beta^{-1}(\bd' U') = \alpha^{-1}(\Box'\bd' U') \\
&= \alpha^{-1}(U') = \Box_{S_\alpha} U'.
\end{align*}
Thus, $\Box_{S_\alpha} U' \in \L\G$. Moreover, 
\begin{align*}
\bd \Box_{S_\alpha} U' &= \bd \alpha^{-1}(U')= \beta^{-1}(\bd' U') = \Diamond_{T_\beta} \bd'U'.
\end{align*}
Consequently, \cref{GvG morphism 1} is satisfied.

For \cref{GvG morphism 2}, suppose that $x \nr{S_\alpha} x'$, so $\alpha(x) \not\le x'$. By \cref{GvG up x}, there is $U' \in \L\G'$ with $\alpha(x) \in U'$ and $x' \notin U'$. Since $U' $ is an upset, $S_\alpha[x] = \up \alpha(x) \subseteq U' $, so $x \in \Box_{S_\alpha}U'$. A similar proof holds for $T_\beta$, so \cref{GvG morphism 3} is satisfied. 
Suppose that $x \rel{R} y$. Then $\alpha(x) \rel{R'} \beta(y)$. Because $x\rel{S_\alpha}\alpha(x)$ and $y\rel{T_\beta}\beta(y)$, \cref{GvG morphism 4} holds. Thus, $(S_\alpha, T_\beta)$ is a $\GvG$-morphism.

Finally, to show that $(S_\alpha, T_\beta)$ is a $\GvG$-isomorphism, $(S_{\alpha^{-1}}, T_{\beta^{-1}}) \colon \G' \to \G$ is  a $\GvG$-morphism by the above argument. We prove that $(S_{\alpha^{-1}}, T_{\beta^{-1}}) \star (S_\alpha, T_\beta) = (\le_X, \le_Y)$, the identity morphism for $\G$ by \cref{GvG identity}. Since $S_{\alpha^{-1}}S_\alpha[x] = 
\up \alpha^{-1}\alpha(x) = \up x$, 
\begin{align*}
x \rel{(S_{\alpha^{-1}} \star S_\alpha)} z & \iff (\forall U \in \L\G)\,(S_{\alpha^{-1}}S_\alpha[x] \subseteq U) \Longrightarrow z \in U) \\ & \iff (\forall U \in \L\G)\,(\up x \subseteq U \Longrightarrow z \in U) \iff x \le z,
\end{align*}
where the final equivalence follows from \cref{GvG up x}. Thus, $S_{\alpha^{-1}} \star S_\alpha = {\le_X}$. Similarly, $T_{\beta^{-1}} \star T_\beta = {\le_Y}$, so $(S_{\alpha^{-1}}, T_{\beta^{-1}}) \star (S_\alpha, T_\beta) = (\le_X, \le_Y)$. A parallel argument yields that $(S_\alpha, T_\beta) \star (S_{\alpha^{-1}}, T_{\beta^{-1}}) = (\le_{X'}, \le_{Y'})$. Therefore, $(S_\alpha, T_\beta)$ is a $\GvG$-isomorphism.
\end{proof}

\begin{lemma} \label{lem: distributive joins in LG}
Let $\G = (X, R, Y)$ be a GvG-space. If $U = U_1 \vee \dots \vee U_n$ is a join in $\L\G$, then $U$ is a distributive join iff $U = U_1 \cup \dots \cup U_n$.
\end{lemma}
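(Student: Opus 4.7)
\begin{proofsketch}
My plan is to prove each direction separately, using the concrete description of the lattice operations in $\L\G$ (intersection for meets, $\Box\bd$ applied to set-theoretic union for joins) together with the separation properties in \cref{lem: properties of GvG spaces 2}.

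For the easy direction ($\Leftarrow$), assume $U = U_1 \cup \dots \cup U_n$. For any $V \in \L\G$, I would compute both sides of the distributive law. On one hand,
\[
V \wedge U = V \cap U = V \cap (U_1 \cup \dots \cup U_n) = (V \cap U_1) \cup \dots \cup (V \cap U_n).
\]
On the other hand, by definition of the join in $\L\G$,
\[
(V \wedge U_1) \vee \dots \vee (V \wedge U_n) = \Box\bd\bigl((V \cap U_1) \cup \dots \cup (V \cap U_n)\bigr).
\]
Both sides agree once I observe that $V \cap U$ is a meet of two elements of $\L\G$, hence itself lies in $\L\G$, and therefore equals its own $\Box\bd$ closure. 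This shows distributivity.

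For the hard direction ($\Rightarrow$), assume the join is distributive. The inclusion $U_1 \cup \dots \cup U_n \subseteq U$ is immediate since each $U_i$ is below $U$ in $\L\G$ (i.e. $U_i \subseteq U$). For the reverse inclusion, fix $x \in U$ and argue by contradiction assuming $x \notin U_i$ for every $i$. The main obstacle is to produce, for each $i$, a witness $V_i \in \L\G$ containing $x$ and disjoint from $U_i$. For this I would use \cref{GvG up x}: for each $y \in U_i$, since $U_i$ is an upset and $x \notin U_i$, we have $y \not\le x$, hence $x \not\le x$ fails in the required direction -- let me rephrase: the condition $x \notin U_i$ together with $U_i$ being an upset gives $y \not\le x$ for every $y \in U_i$, so by \cref{GvG up x} there is $W_y \in \L\G$ with $x \in W_y$ and $y \notin W_y$. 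Cover $U_i$ (which is closed) by the opens $\{-W_y : y \in U_i\}$, extract a finite subcover by compactness, and let $V_i$ be the corresponding finite intersection of the $W_y$'s; since $\L\G$ is closed under finite meets, $V_i \in \L\G$, $x \in V_i$, and $V_i \cap U_i = \varnothing$.

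Once the $V_i$'s are produced, set $V = V_1 \cap \dots \cap V_n \in \L\G$. Then $x \in V$ and $V \cap U_i = \varnothing$ for every $i$, so in $\L\G$ we have $V \wedge U_i = \varnothing$ for each $i$. Applying the assumed distributivity of the join,
\[
V \cap U = V \wedge U = (V \wedge U_1) \vee \dots \vee (V \wedge U_n) = \varnothing,
\]
which contradicts $x \in V \cap U$. Hence $x$ must lie in some $U_i$, completing the proof that $U = U_1 \cup \dots \cup U_n$.
\end{proofsketch}
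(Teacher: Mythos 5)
Your ($\Leftarrow$) direction is correct and is essentially the paper's one-line argument: since $V\cap U\in\L\G$, it equals $\Box\bd\bigl(\bigcup_i(V\cap U_i)\bigr)=\bigvee_i(V\wedge U_i)$.

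The ($\Rightarrow$) direction has a genuine gap. First, your separation step is reversed: for $y\in U_i$ you correctly get $y\not\le x$, but \cref{GvG up x} applied to $y\not\le x$ produces $W\in\L\G$ with $y\in W$ and $x\notin W$ --- the opposite of the $W_y$ you claim. Obtaining $x\in W_y$ and $y\notin W_y$ would require $x\not\le y$, which you do not know. Second, and more fatally, the object you are after cannot exist in general: every element of $\L\G$ is an upset, so any $V_i\in\L\G$ with $x\in V_i$ contains $\up x$, whence $V_i\cap U_i\supseteq\up x\cap U_i$. Since $x\notin U_i$ only says that $x$ lies in the downset $-U_i$, nothing prevents $\up x$ from meeting $U_i$, and in that case no member of $\L\G$ containing $x$ is disjoint from $U_i$. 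So the family $V_1,\dots,V_n$ on which your final contradiction rests need not exist.

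The paper's proof gets around this by demanding far less than disjointness. Writing $V=U_1\cup\dots\cup U_n$ and supposing $x\in U-V$, one first arranges (via \cref{x notin U GvG}, resting on \cref{GvG 6}) that $x\in X_0$, say $x\in\max R^{-1}[y]$; then $R^{-1}[y]\cap\up x=\{x\}$, so $R^{-1}[y]\cap\up x\cap V=\varnothing$, and compactness together with \cref{GvG up x} yields a single $W\in\L\G$ with $x\in W$ and $R^{-1}[y]\cap W\cap V=\varnothing$. This weaker disjointness already gives $y\in R[x]$ but $y\notin\bd(W\cap V)$, i.e.\ $x\notin\Box\bd(W\cap V)$, while distributivity forces $W\cap U=\Box\bd(W\cap V)$, a contradiction. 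The maximality of $x$ in $R^{-1}[y]$, entirely absent from your argument, is the ingredient you are missing.
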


\begin{proof}
One implication is clear since if a finite union of elements of $\L\G$ is in $\L\G$, then it is the join in $\L\G$ and it is clearly distributive. For the other, 
set $V = U_1 \cup \dots \cup U_n$, so $U = \Box\bd V$ by \cref{rem: lattice ops for LG}. We show that $U=V$. If not, 
then there is $x \in U-V$. By \cref{x notin U GvG}, we may assume that $x \in X_0$. By definition of $X_0$, there is $y \in Y$ with $x \in \max R^{-1}[y]$. Therefore, $R^{-1}[y] \cap \up x = \{x\}$, so $R^{-1}[y] \cap \up x \cap V = \varnothing$. Since $V$ is clopen and $\up x = \bigcap \{ W \in \L\G : x \in W\}$ by \cref{GvG up x}, compactness of $X$ implies that there is $W \in \L\G$ with $x \in W$ and $R^{-1}[y] \cap W \cap V = \varnothing$. Thus, $y \in R[x]$ and $y \not\in R[W \cap V]$, so $x \notin \Box\bd(W \cap V)$. On the other hand, since $U$ is a distributive join,
\begin{align*}
W \cap U &= (W \cap U_1) \vee \dots \vee (W \cap U_n) = \Box\bd((W \cap U_1) \cup \dots \cup (W \cap U_n)) \\
&= \Box\bd (W \cap V).
\end{align*}
This is a contradiction since $x \in W \cap U$ and $x \notin \Box\bd(W \cap V)$. Therefore, $U = U_1 \cup \dots \cup U_n$.
\end{proof}

\begin{lemma} \plabel{lem: zeta G}
Let $\G = (X, R, Y)$ be a GvG-space and set $\U = \UU\HH(\G)$. 
\begin{enumerate} 
\item \label[lem: zeta G]{alpha and beta for zeta} There are $\Pries$-isomorphisms $\alpha \colon X \to \LCUp$ and $\beta \colon Y \to \RCUp$ satisfying $x\rel{R}y$ iff $\alpha(x) \rel{R_\U} \beta(y)$. 
\item \label[lem: zeta G]{zeta is an iso} There is a $\GvG$-isomorphism $\zeta_\G \colon \G \to \GG\DD\UU\HH(\G)$ given by $\zeta_\G = (S_\alpha, T_\beta)$ in the notation of Lemma~\emph{\ref{lem: isomorphisms in GvG}}.
\end{enumerate}
\end{lemma}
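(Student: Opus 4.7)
The plan is to define the Priestley isomorphisms explicitly, following the pattern of \cref{lem: epsilon_D}, and then to invoke \cref{lem: isomorphisms in GvG}. For $x \in X$ and $y \in Y$, I would set
\[
\alpha(x) = (\up x \times Y) \cap Z, \qquad \beta(y) = (X \times \up y) \cap Z.
\]
By \cref{GvG up x,GvG up y}, $\alpha(x)$ and $\beta(y)$ are intersections of members of $\L\U$ (using the description of $\L\U$ in \cref{LG from LU}), so they land in $\LC\U$ and $\RC\U$ respectively.

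Next, I would verify $\alpha(x) \in (\LC\U)_p$ and $\beta(y) \in (\RC\U)_p$. The order-isomorphism $\mu \colon \L\G \to \L\U$, $\mu(U) = (U \times Y) \cap Z$, together with \cref{CLF versus LU}, identifies $K(\LC\U)$ with $\L\G$ (with the order reversed). Under this identification \cref{lem: distributive joins in LG} shows that finite distributive meets in $K(\LC\U)$ correspond exactly to finite set-theoretic unions in $\L\G$ that already lie in $\L\G$. If $\alpha(x)$ lies above such a distributive meet $\mu(\bigcup_i U_i)$ in $\LC\U$, that is, if $\alpha(x) \subseteq \mu(\bigcup_i U_i)$, then \cref{tech 2} gives $x \in \bigcup_i U_i$, whence $x \in U_i$ and $\alpha(x) \subseteq \mu(U_i)$ for some $i$; this is the d-prime condition. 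Nonemptiness $\alpha(x) \ne \varnothing$ follows from the nonemptiness of $\up x \cap X_0$, which is a consequence of \cref{maximal pairs,x notin U GvG}. The argument for $\beta(y)$ is symmetric.

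Then I would show that $\alpha \colon X \to \LCUp$ is a Priestley isomorphism (and similarly $\beta$). Order-preservation is immediate, order-reflection follows from \cref{GvG up x}, and continuity uses \cref{tech 2}: the pull-back by $\alpha$ of a subbasic clopen $\{D \in \LC\U : D \subseteq \mu(U)\}$ for $U \in \L\G$ is just $U$, a clopen upset of $X$. For surjectivity onto $\LCUp$, given $D \in (\LC\U)_p$ I would consider the filter $F_D = \{U \in \L\G : D \subseteq \mu(U)\}$ of $\L\G$; the d-primality of $D$ together with \cref{lem: distributive joins in LG} makes $F_D$ prime with respect to distributive unions in $\L\G$, and combining this with \cref{GvG up x,x notin U GvG} and the compactness of $X$ forces $\bigcap F_D$ to be a principal closed upset $\up x$ for a unique $x \in X$; then $\alpha(x) = D$ follows from \cref{tech 2} and \cref{D is intersection}. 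Finally, $x \rel{R} y \iff \alpha(x) \rel{R_\U} \beta(y)$: the right-hand side means $\alpha(x) \cap \beta(y) \ne \varnothing$, i.e.\ there is $(x', y') \in Z$ with $x \le x'$ and $y \le y'$, which by \cref{producing x' in X0,producing y' in Y0,maximal pairs} is equivalent to $x \rel{R} y$.

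With $(\alpha, \beta)$ in hand, part (2) is immediate from \cref{lem: isomorphisms in GvG}: set $\zeta_\G = (S_\alpha, T_\beta)$. The main obstacle will be the surjectivity of $\alpha$ in the third paragraph: translating d-primality of $D \in \LC\U$ into principality of the closed upset $\bigcap F_D$ in $X$ is the place where the characterization of distributive joins from \cref{lem: distributive joins in LG}, the separation built into the $\GvG$-axioms, and a compactness argument in the Priestley space $X$ must be carefully combined.
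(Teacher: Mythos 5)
Your proposal is correct and follows essentially the same route as the paper's proof: the same maps $\alpha,\beta$, the same use of \cref{lem: distributive joins in LG} and \cref{tech 2} to verify d-primality, the same prime-filter/compactness/Priestley-duality argument for surjectivity onto $\LCUp$, and the same appeal to \cref{lem: isomorphisms in GvG} for part (2). Your explicit check that $\alpha(x)\ne\varnothing$ (so that $\alpha(x)$ is not the top of $\LC\U$, as d-primality requires) is a small point the paper leaves implicit.
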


\begin{proof}
\eqref{alpha and beta for zeta} Since $\U := (Z, \le_1, \le_2)$ is an Urquhart space, $\LC\U$ and $\RC\U$ are coherent lattices by \cref{LC RC coherent}, and hence $\LCUp$ and $\RCUp$ are Priestley spaces by \cref{lem: Xa and Ya are Priestley}.
For $x \in X$ and $y \in Y$ define
\[
\alpha(x) = (\up x \times Y) \cap Z \quad\textrm{and}\quad
\beta(y) = (X \times \up y) \cap Z.
\]
By \cref{GvG up x}, $\up x = \bigcap \{ U \in \L\G, x \in U\}$, which shows that
\[
\alpha(x) = \bigcap \{ (U \times Y) \cap Z : U \in \L\G, x \in U \}, 
\] 
so is an intersection from $\L\U$ by \cref{LG from LU}. Thus, ${\alpha(x) \in \LC\U}$. 
We show that $\alpha(x) \in \LCUp$. Let $C_1, \dots, C_n \in \L\U$ such that $C := C_1 \wedge \dots \wedge C_n$ is a distributive meet in $\LC\U$ with $C \le \alpha(x)$. By \cref{LG from LU}, there are $U, U_1,\dots, U_n \in \L\G$ with $C_i = (U_i \times Y) \cap Z$ for each $i$ and $C = (U \times Y) \cap Z$. Since $C$ is a distributive meet in $\LC\U$, it is a distributive join in $\L\U$. Because the map $\L\G \to \L\U$ sending $U$  to $(U \times Y) \cap Z$ is an isomorphism by \cref{lem: LH = LG,lem: LU = LH},  $U = U_1 \vee \dots \vee U_n$ is a distributive join in $\L\G$. Therefore, $U = U_1 \cup \dots \cup U_n$ by \cref{lem: distributive joins in LG}. From $C \le \alpha(x)$ we get $\alpha(x) \subseteq C$, so $\alpha(x) \subseteq (U \times Y) \cap Z$, and hence $x \in U$ by \cref{tech 2}. Therefore, $x \in U_i$ for some $i$, so
\[
\alpha(x) = (\up x \times Y) \cap Z \subseteq (U_i \times Y) \cap Z = C_i,
\]
which yields $C_i \le \alpha(x)$. Thus, $\alpha(x) \in \LCUp$.

It is clear that $\alpha$ is order-preserving. To see that $\alpha$ is order-reflecting, suppose that $x \not\le x'$. By \cref{GvG up x}, there is $U \in \L\G$ with $x \in U$ and $x' \notin U$. We have $\alpha(x) \subseteq (U \times Y) \cap Z$ and $\alpha(x') \not\subseteq (U \times Y) \cap Z$ by \cref{tech 2}. Therefore, $\alpha(x')\not\subseteq\alpha(x)$, so $\alpha(x)\not\le\alpha(x')$, and hence $\alpha$ is order-reflecting. 

We next show that $\alpha$ is onto. Let $D \in \LCUp$. Then $D$ is an intersection from $\L\U$ by \cref{D is intersection}, and so $D = (C \times Y) \cap Z$ with $C$ an intersection from $\L\G$ by \cref{LG from LU}. Set 
\[
\mathcal{F} = \{ U \in \ClopUp(X) : C \subseteq U\}.
\]
Then $\mathcal{F}$ is a filter. We show that $\mathcal{F}$ is prime. By \cref{GvG LG basis}, each clopen upset of $X$ is a finite union from $\L\G$. Therefore, it suffices to show that if $U_1, \dots, U_n \in \L\G$ with $U_1 \cup \dots \cup U_n \in \mathcal{F}$, then some $U_i \in \mathcal{F}$. When this occurs, $C \subseteq U_1 \cup \dots \cup U_n$. Since $C$ is an intersection from $\L\G$, compactness of $X$ implies that there is $V \in \L\G$ with $C \subseteq V$ and $V \subseteq U_1 \cup \dots \cup U_n$. We have $V = (V\cap U_1) \cup \dots \cup (V \cap U_n)$, which is a distributive join in $\L\G$ by \cref{lem: distributive joins in LG}. Thus,
\[
(V \times Y) \cap Z = ([(V \cap U_1) \times Y] \cap Z) \cup \dots \cup ([(V \cap U_n) \times Y] \cap Z)
\]
is a distributive join in $\L\U$. Since $D = (C \times Y) \cap Z \subseteq (V \times Y) \cap Z$, in $\LC\U$ we have
\[
([(V \cap U_1) \times Y] \cap Z) \wedge \dots \wedge ([(V \cap U_n) \times Y] \cap Z) \le D.
\]
Because $D \in \LCUp$, $[(V \cap U_i) \times Y] \cap Z \le D$ for some $i$. Therefore,
\[
(C \times Y) \cap Z \subseteq ((V \cap U_i) \times Y) \cap Z \subseteq (U_i \times Y) \cap Z,
\]
so $C \subseteq U_i$ by \cref{tech 2}, and hence $U_i \in \mathcal{F}$. Thus, $\mathcal{F}$ is a prime filter. By Priestley duality, there is $x \in X$ with $\mathcal{F} = \{ U \in \ClopUp(X) : x \in U \}$. Therefore, $C = \bigcap \mathcal{F} = \up x$, so  
$
D = (C \times Y) \cap Z = (\up x \times Y) \cap Z = \alpha(x),
$
and hence $\alpha$ is onto.

To see that $\alpha$ is continuous, by Priestley duality  
it suffices to show that $\alpha^{-1}(V) \in \ClopUp(X)$ for each  $V \in \ClopUp(\LCUp)$. 
By \cref{GvG LG basis}, it is enough to assume that $V \in \L\overline{\G}$, where we recall that $\overline{\G} = \GG\DD\UU\HH(\G)$. By \cref{LG from LU}, we may assume that $V = \up C \cap \LCUp$, where $C = (U \times Y) \cap Z$ for some $U \in \L\G$. 
Therefore, 
\begin{align*}
\alpha^{-1}(V) &= \alpha^{-1}(\up C \cap \LCUp ) = \{ x \in X : C \le \alpha(x) \} \\
&= \{ x \in X : \alpha(x) \subseteq C \} = \{ x \in X : \up (x \times Y) \cap Z \subseteq (U \times Y) \cap Z \} \\
&= \{ x \in X : x \in U \} = U,	
\end{align*}
where the fifth equality holds by \cref{tech 2}. Thus, $\alpha$ is continuous. It is then a homeomorphism since it is a continuous bijection between compact Hausdorff spaces. Consequently, $\alpha$ is a $\Pries$-isomorphism. Similar arguments show that $\beta$ is a $\Pries$-isomorphism. 

Finally, we show that $x \rel{R} y$ iff $\alpha(x) \rel{R_\U} \beta(y)$ for each $x \in X, y \in Y$. First suppose $x \rel{R} y$. By \cref{maximal pairs}, there are $x' \ge x$ and $y' \ge y$ with $(x', y') \in Z$, so $\alpha(x) \cap \beta(y) \ne \varnothing$, and hence $\alpha(x) \rel{R_\U} \beta(y)$. Conversely, if $\alpha(x) \rel{R_\U} \beta(y)$, then $\alpha(x) \cap \beta(y) \ne \varnothing$. Therefore, by \cref{maximal pairs}, 
there are $x' \ge x$ and $y' \ge y$ with $(x', y') \in Z$. Since $(x', y') \in Z$ implies that $x' \rel{R} y'$, we then have $x \rel{R} y$ by \cref{def: GvG-space}(\ref{GvG 1},\ref{GvG 2}).  

\eqref{zeta is an iso} By \eqref{alpha and beta for zeta} and \cref{lem: isomorphisms in GvG}, $\zeta_\G$ is a $\GvG$-isomorphism.
\end{proof}
\begin{proposition} \label{prop:  zeta : 1 to GDUH}
There is a natural isomorphism $\zeta \colon 1_{\GvG} \to \GG\DD\UU\HH$.
\end{proposition}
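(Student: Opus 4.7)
By \cref{zeta is an iso}, for each $\G \in \GvG$ the pair $\zeta_\G = (S_\alpha, T_\beta)$ is a $\GvG$-isomorphism from $\G$ to $\GG\DD\UU\HH(\G)$, where $\alpha(x) = (\up x \times Y)\cap Z$ and $\beta(y) = (X \times \up y) \cap Z$ (with $Z$ the underlying set of $\UU\HH(\G)$). It remains to verify naturality, i.e.\ for every $\GvG$-morphism $(S,T) \colon \G \to \G'$ the identity
\[
\GG\DD\UU\HH(S,T) \star \zeta_\G \;=\; \zeta_{\G'} \star (S,T)
\]
holds. Writing $(S,T)^* := \GG\DD\UU\HH(S,T) = (S_{f_{PQ}}, T_{g_{PQ}})$, where $(P,Q) := \UU\HH(S,T)$ and $(f_{PQ}, g_{PQ}) := \DD(P,Q)$, this amounts to the two equalities
\[
S_{f_{PQ}} \star S_\alpha \;=\; S_{\alpha'} \star S \quad \textrm{and} \quad T_{g_{PQ}} \star T_\beta \;=\; T_{\beta'} \star T.
\]
I will prove the first equality; the second is entirely analogous.

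My plan is to test both relations against subbasic clopen upsets of $\LCUp[']$, using \cref{GvG S 1} which reduces equality of $\GvG$-style relations to equality of their $\Box$-operators on $\L\overline{\G'}$. By \cref{LG from LU}, every element of $\L\overline{\G'}$ has the form $V' = \up[(U' \times Y') \cap Z'] \cap \LCUp[']$ for some $U' \in \L\G'$, and \cref{lem: composition lemma for GvG} reduces the task to showing $\Box_{S_\alpha}\Box_{S_{f_{PQ}}} V' = \Box_S \Box_{S_{\alpha'}} V'$ for each such $V'$. First I will compute the right-hand side: by the same calculation carried out for $\epsilon$ in \cref{prop: epsilon : 1 to DUHG}, $\Box_{S_{\alpha'}}V' = \{ x' \in X' : \alpha'(x') \in V' \} = \{ x' \in X' : (U' \times Y')\cap Z' \subseteq (\up x' \times Y')\cap Z' \}$, which by \cref{tech 2} equals $U'$. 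Thus $\Box_S\Box_{S_{\alpha'}}V' = \Box_S U'$. On the left-hand side, I will compute $f_{PQ}(\alpha(x)) \subseteq$ (intersection defining $V'$) by unravelling the definition of $f_{PQ}$ through \cref{f(D) in C} and the explicit action of $P$ on $Z$ inherited from $S$ via $\UU\HH$, mirroring the calculation that appeared in the proof of \cref{prop: epsilon : 1 to DUHG}. The crucial identity will be $\Box_P[(U' \times Y') \cap Z'] = (\Box_S U' \times Y) \cap Z$, combined with \cref{tech 2}. Putting these together will give $x \in \Box_{S_\alpha}\Box_{S_{f_{PQ}}}V'$ iff $x \in \Box_S U'$, which is exactly what is needed.

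The main obstacle is bookkeeping: three functors are being composed, so the underlying points of $\GG\DD\UU\HH(\G)$ are elements of $\LCUp$, which are themselves particular closed sets inside $Z \subseteq X \times Y$. The burden is to track how $\alpha$, the relation $P$ on $Z$, and the map $f_{PQ}$ on $\LC\U$ interact, so that the computation of $\Box_{S_\alpha}\Box_{S_{f_{PQ}}} V'$ indeed collapses to $\Box_S U'$. Once this is settled, naturality of $\zeta$ follows immediately, and an analogous argument for $T_{g_{PQ}} \star T_\beta = T_{\beta'} \star T$ (using \cref{tech 2} on the $Y$-side and the clopen downset description of $\bd'$ for elements of $\L\overline{\G'}$) completes the proof.
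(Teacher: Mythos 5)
Your proposal is correct and follows essentially the same route as the paper's proof: both reduce naturality to checking that the composite box operators $\Box_{S_\alpha}\Box_{\overline{S}}$ and $\Box_{S}\Box_{S_{\alpha'}}$ agree on the generators $\alpha'[U']$ of $\L\overline{\G'}$, via the key identities $\Box_P\overline{U'} = (\Box_S U' \times Y)\cap Z$, $\Box_{S_{\alpha'}}\alpha'[U'] = U'$, and \cref{tech 2}. (One small slip: in your computation of $\Box_{S_{\alpha'}}V'$ the inclusion should read $(\up x' \times Y')\cap Z' \subseteq (U'\times Y')\cap Z'$, since the order on the left-closed stable sets is reverse inclusion; the conclusion that this set equals $U'$ is then exactly what \cref{tech 2} gives.)
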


\begin{proof}
Let $\G = (X, R, Y) \in \GvG$. Set $\U = \UU\HH(\G) := (Z, \le_1, \le_2)$. Also, set $\overline{\G} = \GG\DD\UU\HH(\G)$, so $\overline{\G} = (\LCUp, (R_\U)_p, \RCUp)$. By \cref{lem: zeta G}, there is a $\GvG$-isomorphism $\zeta_\G = (S_\alpha, T_\beta)$, where $\alpha \colon X \to \LCUp$ and $\beta \colon Y \to \RCUp$ are $\Pries$-isomorphisms given by
\[
\alpha(x) = (\up x \times Y) \cap Z \quad\textrm{and}\quad \beta(y) = (X \times \up y) \cap Z
\]
for all $x \in X$ and $y \in Y$. The relation $S_\alpha \subseteq X \times \LCUp$ is defined by $x\rel{S_\alpha} D$ if $\alpha(x) \le D$. Since $\alpha$ is a $\Pries$-isomorphism, we may write $D = \alpha(z)$ for some $z \in X$, and so 
\[
x \rel{S_\alpha}\alpha(z) \iff \alpha(x) \le \alpha(z) \iff x \le z.
\]
The relation $T_\beta$ is given similarly. 
To show that $\zeta$ is natural, let $(S, T) \colon \G \to \G'$ be a $\GvG$-morphism and set $(\overline{S}, \overline{T}) = \GG\DD\UU\HH(S,T)$. If $(P, Q) = \UU\HH(S,T) \subseteq Z \times Z'$, then
\[
(x, y)\rel{P}(x', y') \iff x\rel{S} x' \quad\textrm{and}\quad (x, y) \rel{Q} (x', y') \iff y \rel{T} y'. 
\]
Set $(f_{PQ}, g_{PQ}) = \DD(P,Q)$. Then $(\overline{S}, \overline{T}) = \GG(f_{PQ}, g_{PQ})$, so $\overline{S} = S_{f_{PQ}}$ and $\overline{T} = T_{g_{PQ}}$ (see \cref{prop: G on morphisms}), where $f_{PQ} \colon \LC\U \to \LC\U'$ and $g_{PQ} \colon \RC\U \to \RC\U'$ are given by 
\[
f_{PQ}(D) = \bigcap \{ C' \in \L\U' : D \subseteq \Box_PC'\}  \quad\textrm{and}\quad 
g_{PQ}(E) = \bigcap \{ \phi C' : C' \in \L\U', E \subseteq \Box_Q \phi' C' \}
\]
for each $D \in \LC\U$ and $E \in \RC\U$ (see \cref{def: fPQ and gPQ}). For $U \in \L\G$, set $\overline{U} = (U \times Y) \cap Z$. 
By \cref{LG from LU}, $\L\U = \{ \overline{U} : U \in \L\G\}$ 
and
$
\L\overline{\G} = \{ \up \overline{U} \cap \LCUp : U \in \L\G\}. 
$
Let $x \in X$. Then, by \cref{tech 2},
\[
\alpha(x) \in \up \overline{U} \iff \overline{U} \le (\up x \times Y) \cap Z \iff (\up x \times Y) \cap Z \subseteq \overline{U} \iff x \in U.
\]
Therefore, $\up\overline{U} \cap \LCUp = \alpha[U]$, and so
\begin{equation}
\L\overline{\G} = \{ \alpha[U] : U \in \L\G\}. \label{eqn: L overline G}
\end{equation}
We show that $\overline{S} \star S_\alpha = S_{\alpha'} \star S$ and $\overline{T} \star T_\beta = T_{\beta'} \star T$. This will show that the diagram below commutes, and hence that $\zeta$ is natural. 
\[
\begin{tikzcd}[column sep = 5pc]
\G \arrow[r, "(S{,}T)"] \arrow[d, "\zeta_{\G}"'] & \G' \arrow[d, "\zeta_{\G'}"] \\
\overline{\G} \arrow[r, "(\overline{S}{,}\overline{T})"'] & \overline{\G'}
\end{tikzcd}
\]
We verify the first equality by using the following claim. The proof of the second equality is similar.
\begin{claim} \plabel{GvG claim}
Let $x \in X$, $x' \in X'$, $U \in \L\G$, and $U' \in \L\G'$.
\begin{enumerate}
\item \label[GvG claim]{GvG claim 1} $\alpha(x) \rel{\overline{S}} \alpha'(x')$ iff $x \rel{S} x'$.
\item \label[GvG claim]{GvG claim 2} $\Box_{\overline{S}}\alpha'[U'] = \alpha[\Box_SU']$.
\item \label[GvG claim]{GvG claim 3} $\Box_{S_\alpha}\alpha[U] = U$.
\end{enumerate}
\end{claim}

\begin{proofclaim}
\eqref{GvG claim 1} We first show that
$\Box_P\overline{U'} = (\Box_SU' \times Y) \cap Z$. To see this, $(x, y) \in \Box_P \overline{U'}$ iff whenever $(x', y') \in Z'$ and $x \rel{S} x'$, then $(x', y') \in \overline{U'}$. Therefore, $(x, y) \in \Box_P \overline{U'}$ iff $x\rel{S} x'$ implies $x' \in U'$. Thus, $\Box_P \overline{U'} = (\Box_SU' \times Y) \cap Z$. 

Now let $x \in X$ and $x' \in X'$. Then
\begin{align*}
\alpha(x) \rel{\overline{S}} \alpha'(x') &\iff f_{PQ}\alpha(x) \le \alpha'(x') \iff \alpha'(x') \subseteq f_{PQ}\alpha(x). 
\end{align*}
Since $\Box_P\overline{U'} = (\Box_SU' \times Y) \cap Z$, we have 
\begin{align*}
f_{PQ}\alpha(x) &= \bigcap \{ \overline{U'} \in \L\U' : \alpha(x) \subseteq \Box_P \overline{U'} \} \\
&= \bigcap \{\overline{U'} : U' \in \L\G', (\up x \times Y) \cap Z \subseteq (\Box_SU' \times Y) \cap Z \} \\
&= \bigcap \{ \overline{U'} : U' \in \L\G', x \in \Box_S U'\},
\end{align*}
where the last equality holds by \cref{tech 2}. 
Because $\alpha'(x') \subseteq \overline{U'}$ iff $x' \in U'$, 
\begin{align*}
\alpha'(x') \subseteq f_{PQ}\alpha(x) &\iff \alpha'(x') \subseteq \overline{U'} \ \  \forall U' \in \L\G' \mbox{ with } x \in \Box_S U' 
\\
&\iff x' \in U' \ \  \forall U' \in \L\G' \mbox{ with } x \in \Box_S U' \\
&\iff x \rel{S} x' ,
\end{align*}

where the last equivalence holds by \cref{GvG S 1}. Thus, $\alpha(x) \rel{\overline{S}} \alpha'(x')$ iff $x \rel{S} x'$.

\eqref{GvG claim 2} Since $\alpha'$ is a $\Pries$-isomorphism, by \eqref{GvG claim 1} we have
\begin{align*}
\Box_{\overline{S}}\alpha'[U'] &= \{ \alpha(x) : \alpha(x) \rel{\overline{S}} \alpha'(x) \Longrightarrow \alpha'(x') \in \alpha'[U'] \} \\
&= \{ \alpha(x) : x \rel{S} x' \Longrightarrow x' \in U' \}= \alpha[\Box_SU'].
\end{align*}

\eqref{GvG claim 3} We have
\begin{align*}
\Box_{S_\alpha}\alpha[U] &= \{ x \in X : S_\alpha[x] \subseteq \alpha[U] \} = \{ x : \up \alpha(x) \subseteq \alpha[U]\} \\ 
&= \{ x : \alpha(x) \in \alpha[U]\} = U,
\end{align*}
where the third equality holds since $\alpha[U]$ is an upset.\qed
\end{proofclaim}

By definition, $x \rel{(\overline{S}\star S_\alpha)} \alpha'(x')$ iff $(\forall V \in \L\overline{\G'})\,(x \in \Box_{S_\alpha}\Box_{\overline{S}} V \Longrightarrow \alpha'(x') \in V)$. By \cref{eqn: L overline G}, $\L\overline{\G'} = \{ \alpha'[U'] : U' \in \L\G' \}$. Therefore, 
\[
x \rel{(\overline{S}\star S_\alpha)} \alpha'(x') \iff (\forall U' \in \L\G')\,(x \in \Box_{S_\alpha}\Box_{\overline{S}} \alpha'[U'] \Longrightarrow \alpha'(x') \in \alpha'[U']).
\]  
By \cref{GvG claim}(\ref{GvG claim 2},\ref{GvG claim 3}),
\[
\Box_{S_\alpha}\Box_{\overline{S}}\alpha'[U'] = \Box_{S_\alpha}\alpha[\Box_SU'] = \Box_SU'.
\]
This yields that 
\[
x \rel{(\overline{S}\star S_\alpha)} \alpha'(x') \iff (\forall U' \in \L\G')\,(x \in \Box_SU' \Longrightarrow x' \in U') \iff x \rel{S} x',
\]
where the last equivalence follows from \cref{GvG S 1}.
On the other hand,
\begin{align*}
 x \rel{(S_{\alpha'}\star S)} \alpha'(x') &\iff (\forall U' \in \L\G')\,(x \in \Box_S\Box_{S_{\alpha'}}\alpha'[U'] \Longrightarrow\alpha'(x') \in \alpha'[U']) \\
&\iff (\forall U' \in \L\G')\,(x \in \Box_S U' \Longrightarrow x' \in U') \\
&\iff x \rel{S} x'.
\end{align*}
Therefore, $\overline{S} \star S_\alpha = S_{\alpha'} \star S$. A similar argument shows that $\overline{T} \star T_\beta = T_{\beta'} \star T$. Thus, $\zeta$ is natural, and hence is a natural isomorphism.
\end{proof}

\subsection{The natural isomorphisms \texorpdfstring{$\eta \colon 1_\Hg \to \HH\GG\DD\UU$}{Lg} and \texorpdfstring{$\theta \colon 1_\Urq \to \UU\HH\GG\DD$}{Lg}}

\phantom{M}
\medskip

In this subsection we show that there are natural isomorphisms $\eta \colon 1_\Hg \to \HH\GG\DD\UU$ and $\theta \colon 1_\Urq \to \UU\HH\GG\DD$, which together with the natural isomorphisms of the previous two subsections yield our desired equivalences (see \cref{thm: categories are all equivalent}). The proofs in this subsection are largely similar to those in \cref{subsec: GvG}, so we mostly skip them and only indicate the differences. 
We start with $\eta \colon 1_\Hg \to \HH\GG\DD\UU$ for which we have the following analogues of Lemmas~\ref{lem: isomorphisms in GvG}, \ref{lem: zeta G}, and \cref{prop:  zeta : 1 to GDUH}.

\begin{lemma} \label{lem: isomorphisms in Hg}
Let $\H = (X, R, Y), \H' = (X', R', Y') \in \Hg$ and suppose that $\alpha \colon X \to X'$ and $\beta \colon Y \to Y'$ are  order-homeomorphisms which satisfy $x \rel{R} y$ iff $\alpha(x) \rel{R'} \beta(y)$. If $(S_\alpha, T_\beta)$ is defined by $x \rel{S_\alpha} x'$ if $\alpha(x) \le x'$ and $y \rel{T_\beta} y'$ if $\beta(y) \le y'$, then $(S_\alpha, T_\beta) \colon \H \to \H'$ is an $\Hg$-isomorphism.
\end{lemma}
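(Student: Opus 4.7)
The plan is to mirror the proof of \cref{lem: isomorphisms in GvG}, replacing appeals to GvG axioms by the corresponding Hartung axioms. First, I would verify that $(S_\alpha, T_\beta)$ is a Hartung morphism by checking the four conditions of \cref{def: Hartung morphisms}. The starting observation is that since $S_\alpha[x] = \up \alpha(x)$ and $T_\beta[y] = \up \beta(y)$, we have $\Box_{S_\alpha}A' = \alpha^{-1}(A')$ for every $\le_{X'}$-upset $A' \subseteq X'$ and $\Diamond_{T_\beta}B' = \beta^{-1}(B')$ for every $\le_{Y'}$-downset $B' \subseteq Y'$. Combined with the fact that $(\alpha,\beta)$ preserves and reflects $R$ (which makes $\alpha,\beta$ order-isomorphisms with respect to $\le_X,\le_Y$), this yields the exchange identities
\[
\alpha^{-1}\Box' V' = \Box \beta^{-1} V' \quad\textrm{and}\quad \bd \alpha^{-1} V' = \beta^{-1} \bd' V'
\]
for every $V' \subseteq X'$.

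For \cref{Hg morphism Box}, let $A' \in \L\H'$. Since $\alpha,\beta$ are homeomorphisms and $A',\,-\bd' A'$ are closed, the sets $\alpha^{-1}(A')$ and $-\bd\alpha^{-1}(A') = \beta^{-1}(-\bd' A')$ are closed. Using the exchange identities and $\Box'\bd' A' = A'$, I obtain $\Box\bd\alpha^{-1}(A') = \alpha^{-1}(\Box'\bd' A') = \alpha^{-1}(A')$, so $\Box_{S_\alpha}A' = \alpha^{-1}(A') \in \L\H$. The identity $\bd \Box_{S_\alpha} A' = \Diamond_{T_\beta}\bd' A'$ is then immediate from $\bd\alpha^{-1}(A') = \beta^{-1}(\bd' A')$. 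For \cref{Hg morphism not S}, if $x \nr{S_\alpha} x'$, then $\alpha(x) \not\le x'$, so by \cref{Hg up x is intersection of As} there is $A' \in \L\H'$ with $\alpha(x) \in A'$ and $x' \notin A'$; since $A'$ is an upset by \cref{Hg A is upset}, $S_\alpha[x] = \up \alpha(x) \subseteq A'$, so $x \in \Box_{S_\alpha}A'$. \cref{Hg morphism not T} is analogous, using \cref{Hg up y is intersection of diamond As}. Finally, \cref{Hg morphism R} holds trivially: if $x \rel{R} y$, then $\alpha(x) \rel{R'} \beta(y)$ by hypothesis, while $x \rel{S_\alpha} \alpha(x)$ and $y \rel{T_\beta} \beta(y)$ by reflexivity of $\le$.

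By the same argument applied to $(\alpha^{-1}, \beta^{-1})$, the pair $(S_{\alpha^{-1}}, T_{\beta^{-1}}) \colon \H' \to \H$ is also a $\Hg$-morphism. It remains to show that the two $\star$-compositions yield the identity morphisms, which are $(\le_X,\le_Y)$ and $(\le_{X'},\le_{Y'})$ by \cref{Hg identity}. Since $S_{\alpha^{-1}}S_\alpha[x] = \up x$, the definition of $\star$ gives
\[
x \rel{(S_{\alpha^{-1}} \star S_\alpha)} z \iff (\forall A \in \L\H)\, (\up x \subseteq A \Longrightarrow z \in A) \iff x \le z,
\]
where the last equivalence uses \cref{Hg up x is intersection of As}. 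Symmetrically, $T_{\beta^{-1}} \star T_\beta = {\le_Y}$, and the reversed compositions give $\le_{X'}$ and $\le_{Y'}$. Hence $(S_\alpha,T_\beta)$ is an $\Hg$-isomorphism with inverse $(S_{\alpha^{-1}}, T_{\beta^{-1}})$. The only step that requires genuine care (rather than bookkeeping) is the verification of the exchange identities $\alpha^{-1}\Box' = \Box\beta^{-1}$ and $\bd\alpha^{-1} = \beta^{-1}\bd'$; everything else is formal once these are in hand.
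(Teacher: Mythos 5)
Your proof is correct and follows essentially the same route the paper intends: the paper explicitly omits this proof, stating it is "largely the same" as that of the GvG analogue (\cref{lem: isomorphisms in GvG}), and your argument is exactly that transposition, with the Hartung axioms \cref{Hg up x is intersection of As,Hg up y is intersection of diamond As,Hg A is upset} substituted for the GvG ones. Your observation that order-preservation of $\alpha,\beta$ comes for free from $R$-compatibility (since $\le_X,\le_Y$ are defined from $R$ in a Hartung space) correctly accounts for the weaker hypothesis here compared to the GvG lemma.
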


\begin{lemma} \plabel{lem: eta H}
Let $\H = (X, R, Y) \in \Hg$ and set $\U = \UU(\H)$. \begin{enumerate}
\item \label[lem: eta H]{alpha and beta for eta} There are order-homeomorphisms $\alpha \colon X \to \LCUO$ and $\beta \colon Y \to \RCUO$ satisfying $x\rel{R}y$ iff $\alpha(x) \rel{R_\U} \beta(y)$. 
\item \label[lem: eta H]{eta is an iso} There is an $\Hg$-isomorphism $\eta_\H \colon \H \to \HH\GG\DD\UU(\H)$ given by $\eta_\H = (S_\alpha, T_\beta)$ in the notation of Lemma~\emph{\ref{lem: isomorphisms in Hg}}. 
\end{enumerate}
\end{lemma}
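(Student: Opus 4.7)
The approach is to follow the proof of \cref{lem: zeta G} closely, with the sets $A \in \L\H$ replacing clopen upsets $U \in \L\G$ and $\LCUO$ replacing $\LCUp$. For (1), I would define $\alpha \colon X \to \LCUO$ and $\beta \colon Y \to \RCUO$ by $\alpha(x) = (\up x \times Y) \cap Z$ and $\beta(y) = (X \times \up y) \cap Z$, where $Z = Z_\H$. By \cref{Hg up x is intersection of As,LH from LU}, $\alpha(x)$ is an intersection from $\L\U$, hence $\alpha(x) \in \LC\U$; similarly $\beta(y) \in \RC\U$. To see that $\alpha(x) \in \LCUO$, I would use \cref{Hg maximal} to choose $y \in Y$ with $(x, y) \in Z$, so that $(x, y) \in \alpha(x) \cap \beta(y)$ and $\alpha(x) \rel{R_\U} \beta(y)$, and then show $\alpha(x) \in \max R_\U^{-1}[\beta(y)]$: given $D \in \LC\U$ with $D \subseteq \alpha(x)$ and $(x', y') \in D \cap \beta(y)$, one has $x \le x'$, $y \le y'$, and $x' \rel{R} y'$, so by \cref{Hg leY} also $x' \rel{R} y$; the $y$-maximality of $x$ forces $x = x'$, and symmetrically $y = y'$. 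Since $D$ is a $\le_1$-upset containing $(x, y)$, it contains all of $\alpha(x)$, so $D = \alpha(x)$. A parallel argument places $\beta(y)$ in $\RCUO$.

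For the remaining verifications: order-preservation of $\alpha$ is immediate (the order on $\LC\U$ is reverse inclusion), and order-reflection follows from \cref{Hg up x is intersection of As} via the evident analogue of \cref{tech 3}. For surjectivity of $\alpha$, given $D \in \LCUO$, pick $E \in \RC\U$ with $D \in \max R_\U^{-1}[E]$ and choose $(x, y) \in D \cap E$; then $\alpha(x) \subseteq D$ (since $D$ is a $\le_1$-upset through $(x, y)$) while $\alpha(x) \cap E \ni (x, y)$, so $\alpha(x) \in R_\U^{-1}[E]$ and the maximality of $D$ yields $D = \alpha(x)$. The $R$-compatibility reduces via \cref{Hg maximal} to the chain of equivalences: $x \rel{R} y$ iff there exist $x' \ge x$, $y' \ge y$ with $(x', y') \in Z$, iff $\alpha(x) \cap \beta(y) \ne \varnothing$, iff $\alpha(x) \rel{R_\U} \beta(y)$. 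Parallel arguments handle $\beta$.

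The main obstacle is the topological step. Here I would proceed as in \cref{lem: zeta G}: by \cref{LH from LU}, the subbasic open sets of $\L\overline{\H}$ on $\LCUO$ have the form $\up[(A \times Y) \cap Z] \cap \LCUO$ for $A \in \L\H$, and the analogue of \cref{tech 3} identifies $\alpha^{-1}$ of each such set with $A$ itself, which by \cref{Hg LH subbasis} generates the closed sets of $X$. This shows $\alpha$ matches subbases on both sides; together with the bijection and a compactness argument leveraging \cref{Hg compact}, one concludes that $\alpha$ is a homeomorphism. The subtlety relative to \cref{lem: zeta G} is that neither $X$ nor $\LCUO$ is Priestley, so the shortcut of ``continuous bijection between compact Hausdorff spaces'' is unavailable, and the correspondence of subbases must be tracked directly. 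With (1) in hand, part (2) is immediate: \cref{lem: isomorphisms in Hg} applied to the pair $(\alpha, \beta)$ yields that $\eta_\H = (S_\alpha, T_\beta)$ is an $\Hg$-isomorphism.
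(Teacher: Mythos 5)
Your proposal is correct and follows essentially the same route as the paper: the same maps $\alpha(x) = (\up x \times Y) \cap Z$ and $\beta(y) = (X \times \up y) \cap Z$, the same maximality arguments for well-definedness in $\LCUO$ and for surjectivity, and the reduction of part (2) to \cref{lem: isomorphisms in Hg}. The paper only spells out well-definedness and surjectivity and declares the rest "largely the same" as \cref{lem: zeta G}; your explicit subbasis-matching argument for the homeomorphism (correctly noting that the compact-Hausdorff shortcut is unavailable here) is a faithful and in fact more careful completion of that step.
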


\begin{proposition} \label{prop: eta : 1 to HGDU}
There is a natural isomorphism $\eta \colon 1_{\Hg} \to \HH\GG\DD\UU$.
\end{proposition}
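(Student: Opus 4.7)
The plan is to follow the same strategy used for $\zeta \colon 1_\GvG \to \GG\DD\UU\HH$ in \cref{prop:  zeta : 1 to GDUH}, replacing the role of $X_p, Y_p$ by $X_0, Y_0$ throughout. The object-level isomorphism $\eta_\H = (S_\alpha, T_\beta) \colon \H \to \HH\GG\DD\UU(\H)$ is already supplied by \cref{lem: eta H}, so the work consists entirely in proving naturality of the assignment $\H \mapsto \eta_\H$.

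Fix an $\Hg$-morphism $(S,T) \colon \H \to \H'$ and write $\U = \UU(\H)$, $\U' = \UU(\H')$, $\overline{\H} = \HH\GG\DD\UU(\H)$, and $\overline{\H'} = \HH\GG\DD\UU(\H')$. Let $(P,Q) = \UU(S,T)$, $(f_{PQ}, g_{PQ}) = \DD(P,Q)$, and $(\overline{S}, \overline{T}) = \GG(f_{PQ}, g_{PQ})$ restricted to $\LCUO \times \LCUO'$ and $\RCUO \times \RCUO'$. To establish naturality, I will show that $\overline{S} \star S_\alpha = S_{\alpha'} \star S$ and $\overline{T} \star T_\beta = T_{\beta'} \star T$, which amounts to commutativity of the diagram
\[
\begin{tikzcd}[column sep = 5pc]
\H \arrow[r, "(S{,}T)"] \arrow[d, "\eta_{\H}"'] & \H' \arrow[d, "\eta_{\H'}"] \\
\overline{\H} \arrow[r, "(\overline{S}{,}\overline{T})"'] & \overline{\H'}.
\end{tikzcd}
\]

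First I would identify $\L\overline{\H}$ in a form convenient for computation. Writing $\overline{A} = (A \times Y) \cap Z$ for $A \in \L\H$, \cref{LH from LU} gives $\L\U = \{\overline{A} : A \in \L\H\}$, so $\L\overline{\H} = \{\up\overline{A} \cap \LCUO : A \in \L\H\}$. Using \cref{tech 3}, for each $x \in X$ one checks $\alpha(x) \in \up\overline{A}$ iff $\overline{A} \le \alpha(x)$ iff $(\up x \times Y) \cap Z \subseteq \overline{A}$ iff $x \in A$. Hence $\up\overline{A} \cap \LCUO = \alpha[A]$, which yields the identification $\L\overline{\H} = \{\alpha[A] : A \in \L\H\}$. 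I would then prove the three-part claim exactly paralleling the one in \cref{prop:  zeta : 1 to GDUH}: (i) $\alpha(x) \rel{\overline{S}} \alpha'(x')$ iff $x \rel{S} x'$; (ii) $\Box_{\overline{S}} \alpha'[A'] = \alpha[\Box_S A']$; and (iii) $\Box_{S_\alpha} \alpha[A] = A$. Part (i) follows from computing $\Box_P \overline{A'} = (\Box_S A' \times Y) \cap Z$ using that $P$ on $Z \times Z'$ is induced by $S$ on the first coordinate, combined with the definition of $f_{PQ}$ and \cref{equivalent conditions for S(1)}. Parts (ii) and (iii) are then straightforward, (iii) using that $\alpha[A]$ is an upset in $\LCUO$.

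Given the claim, the equality $\overline{S} \star S_\alpha = S_{\alpha'} \star S$ is obtained as in \cref{prop:  zeta : 1 to GDUH}: for $x \in X$ and $x' \in X'$, using $\L\overline{\H'} = \{\alpha'[A'] : A' \in \L\H'\}$, we get $x \rel{(\overline{S} \star S_\alpha)} \alpha'(x')$ iff for every $A' \in \L\H'$, $x \in \Box_{S_\alpha}\Box_{\overline{S}} \alpha'[A']$ implies $\alpha'(x') \in \alpha'[A']$, and parts (ii) and (iii) of the claim reduce this to $x \in \Box_S A'$ implying $x' \in A'$, which by \cref{equivalent conditions for S(1)} is the condition $x \rel{S} x'$. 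The same chain of equivalences shows $x \rel{(S_{\alpha'} \star S)} \alpha'(x')$ iff $x \rel{S} x'$. The analogous computation for $T$ yields $\overline{T} \star T_\beta = T_{\beta'} \star T$. The main obstacle will be the careful bookkeeping in part (i) of the claim, specifically the verification that $\Box_P \overline{A'} = (\Box_S A' \times Y) \cap Z$: one direction uses that membership in $\Box_P \overline{A'}$ forces every $S$-successor of $x$ that extends to a member of $Z'$ to lie in $A'$, and the other direction uses \cref{maximal pairs} to realize each $S$-successor of $x$ as the first coordinate of a point of $Z'$, so nothing is lost by restricting to $Z'$. Once this identity is in hand, the rest of the argument goes through mechanically.
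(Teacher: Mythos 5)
Your proposal is correct and follows essentially the same route as the paper, which explicitly defers the proof of this proposition to the argument for $\zeta$ in the $\GvG$ case, with $X_0,Y_0$ replacing $X_p,Y_p$ and the object-level isomorphism supplied by \cref{lem: eta H}. The one step you flag as the main obstacle, $\Box_P\overline{A'}=(\Box_S A'\times Y)\cap Z$, is handled exactly as you indicate (using $A'=\Box'\bd' A'$ together with \cref{maximal pairs} and the fact that $S[x]$ is an upset to push a witness outside $A'$ up to a maximal pair), mirroring the corresponding claim in the paper's proof of \cref{prop:  zeta : 1 to GDUH}.
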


While the proofs of these three results are largely the same as the corresponding proofs in \cref{subsec: GvG}, there's additional argument needed in \cref{alpha and beta for eta} to guarantee that $\alpha$ and $\beta$ are well defined and onto. Recalling that $\U=(Z,\le_1,\le_2)$, for $x \in X$ and $y \in Y$ define
\[
\alpha(x) = (\up x \times Y) \cap Z \quad \mbox{and} \quad
\beta(y) = (X \times \up y) \cap Z.
\]
We only show that $\alpha$ is well defined and onto since the argument for $\beta$ is similar. By \cref{Hg up x is intersection of As}, $\up x = \bigcap \{A \in \L\H : x \in A\}$, which shows that
\[
\alpha(x) = \bigcap \{ (A \times Y) \cap Z : x \in A \in \L\H\},
\]
so is an intersection from $\L\U$ by \cref{lem: LU = LH}. Thus,  $\alpha(x) \in \LC\U$. It is left to show that $\alpha(x) \in \LCUO$.
By \cref{maximal pairs 2}, there is $y \in Y$ with $(x, y) \in Z$. Therefore, ${\alpha(x) \rel{R_\U} \beta(y)}$. Suppose there is $D \in \LC\U$ with $\alpha(x) \le D$ and $D \rel{R_\U} \beta(y)$, so $D \cap \beta(y) \ne \varnothing$. Because the order on $\LC\U$ is reverse inclusion, $D \subseteq \alpha(x)$. If $(x', y') \in D \cap \beta(y)$, then $y \le y'$ and $x \le x'$. Thus, $(x, y) \le_1 (x', y')$ and $(x, y) \le_2 (x', y')$, so $(x, y) = (x', y')$ because $\U$ is doubly ordered. Consequently, $(x, y) \in D$. By \cref{lem: LU = LH,D is intersection}, $D = (B \times Y) \cap Z$ with $B$ an intersection from $\L\H$. Therefore, $x \in B$, so $\alpha(x) \subseteq D$. Thus, $D = \alpha(x)$, and hence $\alpha(x)$ is $\beta(y)$-maximal. Consequently, $\alpha(x) \in \LCUO$.

To see that $\alpha$ is onto, let $D \in \LCUO$. Then there is $E \in \RCUO$ such that $D$ is $E$-maximal. This implies that $D \rel{R_\U} E$, so $D \cap E \ne \varnothing$. Let $(x, y) \in D \cap E$. As we saw in the previous paragraph, there is $B \subseteq X$ with $D = (B \times Y) \cap Z$ and $B$ is an intersection from $\L\H$. Since $x \in B$ and $B$ is an upset, $\up x \subseteq B$, so $\alpha(x) \subseteq D$. If this is a proper inclusion, then $D < \alpha(x)$, so $\alpha(x) \cap E = \varnothing$ because $D$ is $E$-maximal. This is false since $(x, y) \in \alpha(x) \cap E$. Therefore, $D = \alpha(x)$. Consequently, $\alpha \colon X \to \LCUO$ is onto.

We next turn our attention to $\theta \colon 1_\Urq \to \UU\HH\GG\DD$. We have the following analogues of Lemmas~\ref{lem: isomorphisms in GvG}, \ref{lem: zeta G}, and \cref{prop:  zeta : 1 to GDUH}. 

\begin{lemma} \label{lem: isomorphisms in Urq}
Let $\U = (Z, \le_1, \le_2), \U' = (Z', \le_1', \le_2') \in \Urq$ and $h \colon Z \to Z'$ be a homeomorphism and order-isomorphism with respect to both $\le_1$ and $\le_2$. Define $P_h, Q_h \subseteq Z \times Z'$ by $z \rel{P_h} z'$ if $h(z) \le_1 z'$ and $z \rel{Q_h} z'$ if $h(z) \le_2 z'$. Then $(P_h, Q_h)$ is an $\Urq$-isomorphism.
\end{lemma}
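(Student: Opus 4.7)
The plan is to verify that $(P_h, Q_h)$ satisfies the four axioms of \cref{def: Urquhart morphisms}, and then exhibit $(P_{h^{-1}}, Q_{h^{-1}})$ as a two-sided inverse in $\Urq$, in the same spirit as the proofs of \cref{lem: isomorphisms in GvG,lem: isomorphisms in Hg}. The entire argument hinges on one clean identity: since $P_h[z] = \up_1' h(z)$ and any $C' \in \L\U'$ is a $\le_1'$-upset, one computes
\[
\Box_{P_h} C' = \{z \in Z : h(z) \in C'\} = h^{-1}(C'),
\]
and analogously $\Box_{Q_h} \phi' C' = h^{-1}(\phi' C')$, since $\phi' C'$ is a $\le_2'$-upset by \cite[Lem.~1]{Urq78}. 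Because $h$ is a homeomorphism and an order-isomorphism for both $\le_1,\le_2$, the operations $\down_1,\down_2$ commute with $h$, hence so do $\phi,\psi$ in the sense that $\phi\circ h^{-1} = h^{-1}\circ\phi'$ on $\le_1'$-upsets and likewise for $\psi$. Therefore $h^{-1}(C')$ is a closed $\le_1$-upset with $\psi\phi(h^{-1}(C')) = h^{-1}(\psi'\phi' C') = h^{-1}(C')$, so $\Box_{P_h} C' \in \L\U$, and $\phi \Box_{P_h} C' = h^{-1}(\phi' C') = \Box_{Q_h} \phi' C'$. This establishes \cref{Urq Box}.

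For \cref{Urq not P}, if $z \nr{P_h} z'$ then $h(z) \not\le_1' z'$, so by \cref{Urq dd 1} applied in $\U'$ there is $C' \in \L\U'$ with $h(z) \in C'$ and $z' \notin C'$; equivalently, $z \in \Box_{P_h} C'$ and $z' \notin C'$. The verification of \cref{Urq not Q} is symmetric via \cref{Urq dd 2}. Finally, seriality \cref{Urq serial} is immediate: by reflexivity of both $\le_1'$ and $\le_2'$, the element $h(z)$ lies in $P_h[z] \cap Q_h[z]$.

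The same construction applied to $h^{-1} \colon Z' \to Z$ yields an $\Urq$-morphism $(P_{h^{-1}}, Q_{h^{-1}}) \colon \U' \to \U$. To see these are mutually inverse, note that for any $C \in \L\U$ we have $\Box_{P_{h^{-1}}} C = h(C)$ by the dual of the displayed computation above, hence $\Box_{P_h}\Box_{P_{h^{-1}}} C = h^{-1}(h(C)) = C$. Unwinding \cref{def: composition in Urq},
\[
z \rel{(P_{h^{-1}} \star P_h)} z_1 \iff (\forall C \in \L\U)\,(z \in C \Longrightarrow z_1 \in C),
\]
which by \cref{Urq dd 1} is exactly $z \le_1 z_1$. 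Thus $P_{h^{-1}} \star P_h = {\le_1}$, and analogously $Q_{h^{-1}} \star Q_h = {\le_2}$, so by \cref{Urq identity} the composition is the identity morphism on $\U$. The composition in the opposite direction is handled symmetrically, and no real obstacle is anticipated: the only slightly delicate step is the interaction of $\phi$ with $h^{-1}$, which follows immediately from the fact that $h$ is an order-isomorphism for both quasi-orders.
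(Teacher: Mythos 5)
Your proposal is correct and follows essentially the same route as the paper: the key identity $\Box_{P_h}C' = h^{-1}(C')$ (and its $Q_h$ analogue), combined with the fact that $\phi,\psi$ are defined purely from the two quasi-orders and hence commute with $h^{-1}$, gives \cref{Urq Box}, and the remaining axioms plus the inverse $(P_{h^{-1}},Q_{h^{-1}})$ are handled exactly as in the analogous \cref{lem: isomorphisms in GvG}. You even supply the seriality check \cref{Urq serial}, which has no counterpart in the GvG case and which the paper leaves implicit.
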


\begin{lemma} \plabel{lem: theta U}
Let $\U := (Z, \le_1, \le_2) \in \Urq$ and $\UU\HH\GG\DD(\U) := 
(\overline{Z}, \le_1, \le_2)$. 
\begin{enumerate}
\item \label[lem: theta U]{h for theta} There is $h \colon Z \to \overline{Z}$ that is a homeomorphism and order-isomorphism with respect to both $\le_1$ and $\le_2$.
\item \label[lem: theta U]{theta is an iso} There is an $\Urq$-isomorphism $\theta_\U \colon \U \to \UU\HH\GG\DD(\U)$ given by $\theta_\U = (P_h, Q_h)$ in the notation of Lemma~\emph{\ref{lem: isomorphisms in Urq}}.
\end{enumerate}
\end{lemma}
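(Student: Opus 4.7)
The plan is to define $h \colon Z \to \overline{Z}$ by $h(z) = (\up_1 z, \up_2 z)$ and verify it has the required properties; part (2) will then follow immediately from \cref{lem: isomorphisms in Urq}. First I will show $h$ lands in $\overline{Z}$. By \cref{Urq dd 1}, $\up_1 z = \bigcap\{C \in \L\U : z \in C\}$, which is closed; since stability under $\psi\phi$ is preserved by arbitrary intersections (a direct check from the Galois connection), $\up_1 z \in \LC\U$. Dually, \cref{Urq dd 2} yields $\up_2 z \in \RC\U$. To see that the pair is maximal, suppose $D \in \LC\U$ with $D \subseteq \up_1 z$ and $D \cap \up_2 z \neq \varnothing$, witnessed by some $w$. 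Then $z \le_1 w$ (as $w \in \up_1 z$) and $z \le_2 w$ (as $w \in \up_2 z$), so $w = z$ by double orderedness of $\U$; hence $z \in D$, forcing $\up_1 z \subseteq D$ and $D = \up_1 z$. So $\up_1 z$ is $\up_2 z$-maximal and thus belongs to $(\LC\U)_0$; symmetrically $\up_2 z$ is $\up_1 z$-maximal, so $(\up_1 z, \up_2 z) \in \overline{Z}$.

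Next I would check that $h$ is a bijection which preserves and reflects both $\le_1$ and $\le_2$. Injectivity: $h(z) = h(w)$ gives $z \in \up_1 w \cap \up_2 w$, so $w \le_1 z$ and $w \le_2 z$, and hence $z = w$ by double orderedness. Surjectivity: given $(D, E) \in \overline{Z}$, pick $z \in D \cap E$; then $\up_1 z \subseteq D$ is in $\LC\U$ with $\up_1 z \rel{R_\U} E$, and the $E$-maximality of $D$ forces $\up_1 z = D$, and similarly $\up_2 z = E$. Finally, the orders on $\LC\U$ and $\RC\U$ are reverse inclusion, so $h(z) \le_i h(w)$ unpacks to $\up_i w \subseteq \up_i z$, which by testing at $w$ is equivalent to $z \le_i w$.

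For $h$ to be a homeomorphism I would work with the Urquhart subbasis given by \cref{Urq subbasis}. Chasing through \cref{lem: LU = LH,lem: S versus BX,lem: LH = LG,lem: LG = CLF,CLF versus LU,lem: KOF = LX}, a typical element of $\L\overline{\U}$ has the form $A_C := \{(D, E) \in \overline{Z} : D \subseteq C\}$ for some $C \in \L\U$. A direct calculation then shows
\[
h^{-1}(A_C) = C \qquad\textrm{and}\qquad h^{-1}(\phi^*(A_C)) = \phi C,
\]
where $\phi^*$ is the Galois map of $\overline{\U}$; both preimages are closed in $Z$ by definition of $\L\U$, so $h$ is continuous. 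By bijectivity, the same identities give $h(C) = A_C$ and $h(\phi C) = \phi^*(A_C)$, so $h$ carries subbasic closed sets to subbasic closed sets and is therefore a closed map. Hence $h$ is a homeomorphism, proving (1). The delicate step is the identity $h^{-1}(\phi^*(A_C)) = \phi C$: given a counter-witness $v \ge_2 z$ with $v \in C$, one must invoke the maximal-pair argument (applied to $v$) to produce $(\up_1 v, \up_2 v) \in \overline{Z}$ with $\up_2 v \subseteq \up_2 z$ and $\up_1 v \subseteq C$, thereby contradicting $h(z) \in \phi^*(A_C)$.
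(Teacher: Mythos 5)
Your proof is correct and follows essentially the same route as the paper: the paper defines the same map $h(z) = (\up_1 z, \up_2 z)$ and defers the details to the analogous arguments for \cref{alpha and beta for eta} (well-definedness and surjectivity via double-orderedness and maximality) and \cref{alpha and beta for zeta} (order-isomorphism and homeomorphism via the subbasis computation). You have simply written out those deferred details directly for the Urquhart case, including the correct identification of $\L\overline{\U}$ with $\{A_C : C \in \L\U\}$ and the identities $h^{-1}(A_C) = C$ and $h^{-1}(\phi^*(A_C)) = \phi C$, all of which check out.
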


\begin{proposition} \label{prop: theta : 1 to UHGD}
There is a natural isomorphism $\theta \colon 1_\Urq \to \UU\HH\GG\DD$.
\end{proposition}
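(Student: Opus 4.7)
The proof follows the pattern established by Propositions~\ref{prop: epsilon : 1 to DUHG}, \ref{prop:  zeta : 1 to GDUH}, and \ref{prop: eta : 1 to HGDU}. The plan is in two stages: first construct the homeomorphism $h$ required for Lemma~\ref{lem: theta U}, then upgrade the resulting family $\theta_\U = (P_h, Q_h)$ to a natural transformation.

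For Lemma~\ref{lem: theta U}, write $\overline{\U} := \UU\HH\GG\DD(\U) = (\overline{Z}, \le_1, \le_2)$ and define $h \colon Z \to \overline{Z}$ by $h(z) = (\up_1 z, \up_2 z)$. By \cref{Urq dd 1} together with $\L\U$ being closed under finite intersections, $\up_1 z = \bigcap \{C \in \L\U : z \in C\} \in \LC\U$, and dually $\up_2 z \in \RC\U$ by \cref{Urq dd 2}. The crucial step is to show that $\up_1 z$ is $\up_2 z$-maximal in the Hartung sense (and symmetrically): if $D' \in \LC\U$ satisfies $D' \subseteq \up_1 z$ and $D' \cap \up_2 z \ne \varnothing$, then any $w$ in the intersection satisfies $z \le_1 w$ (from $w \in D' \subseteq \up_1 z$) and $z \le_2 w$ (from $w \in \up_2 z$), so $z = w \in D'$ by double-orderedness; since $D'$ is a $\le_1$-upset, $\up_1 z \subseteq D'$ and hence $D' = \up_1 z$. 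Combined with \cref{lem: X0 the same}, this maximality places $\up_1 z \in \max R_\U^{-1}[\up_2 z] \subseteq \LCUO$ and $\up_2 z \in \RCUO$, so $h(z) \in \overline{Z}$.

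Next, $h$ is shown to be a bijection, an order-isomorphism for both $\le_1$ and $\le_2$, and a homeomorphism. Injectivity is immediate from double-orderedness applied to $\up_i z = \up_i w$ for $i = 1, 2$. For surjectivity, given $(D, E) \in \overline{Z}$, pick $z \in D \cap E$ (nonempty since $D \rel{R_\U} E$); then $\up_1 z \subseteq D$ and $z \in \up_1 z \cap E$, so the $E$-maximality of $D$ forces $D = \up_1 z$, and similarly $E = \up_2 z$. Order-preservation and reflection for $\le_i$ follow from $\up_i z$ being a $\le_i$-upset and the orders on $\LC\U$, $\RC\U$ being reverse inclusion. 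For the homeomorphism, tracing $\DD, \GG, \HH, \UU$ as in \cref{lem: connections between the lattices} yields $\L(\overline{\U}) = \{((\up C \cap \LCUO) \times \RCUO) \cap \overline{Z} : C \in \L\U\}$; a direct calculation (using that $C$ is a $\le_1$-upset and $h$ is surjective) shows $h(C) = ((\up C \cap \LCUO) \times \RCUO) \cap \overline{Z}$, so $h$ bijects the defining subbasis of closed sets of $Z$ with that of $\overline{Z}$, which suffices for $h$ to be a homeomorphism. Applying \cref{lem: isomorphisms in Urq} then produces the $\Urq$-isomorphism $\theta_\U = (P_h, Q_h)$.

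For naturality, given $(P, Q) \colon \U \to \U'$ and setting $(\overline{P}, \overline{Q}) = \UU\HH\GG\DD(P, Q)$, the required identity is $(\overline{P}, \overline{Q}) \star (P_h, Q_h) = (P_{h'}, Q_{h'}) \star (P, Q)$. Following the Claim from the proof of \cref{prop:  zeta : 1 to GDUH}, one unwinds $\overline{P}$ through \cref{def: fPQ and gPQ}, \cref{prop: G on morphisms}, \cref{prop: HH on morphisms}, and \cref{thm: U is a functor}, then uses \cref{lem: composition lemma for Urq} and the subbasis identification $\L(\overline{\U'}) = \{h'(C') : C' \in \L\U'\}$ from the previous paragraph to derive the two key identities $\Box_{P_{h'}}h'(C') = C'$ (immediate from $h'$ being an order-iso and $C'$ being a $\le_1$-upset) and $\Box_{P_h}\Box_{\overline{P}}h'(C') = \Box_P C'$ (obtained via \cref{f(D) in C} applied to $f_{PQ}$ and surjectivity of $h$). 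Both sides of the desired composition identity then reduce to the condition ``$z \in \Box_P C'$ implies $z' \in C'$ for each $C' \in \L\U'$'', which is $z \rel{P} z'$ by \cref{eq condition for Q}; the argument for $Q$ is parallel. The main obstacle is bookkeeping: one must track the maximality conditions through the chain $\HD \to \GvG \to \Hg \to \Urq$ to verify $h(z) \in \overline{Z}$ and the subbasis bijection $C \leftrightarrow h(C)$, as well as $\Box_{P_h}\Box_{\overline{P}}h'(C') = \Box_P C'$, but modulo this the argument closely mirrors the proofs of \cref{prop:  zeta : 1 to GDUH} and \cref{prop: eta : 1 to HGDU}.
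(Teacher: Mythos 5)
Your proposal is correct and follows essentially the same route as the paper, which itself only sketches this proof by defining $h(z) = (\up_1 z, \up_2 z)$ and deferring to the arguments for $\eta$ (well-definedness and surjectivity via the maximality/double-orderedness argument) and for $\zeta$ (the remaining verifications and the naturality square). Your one substantive adaptation---establishing that $h$ is a homeomorphism by matching the subbases of closed sets rather than by the continuous-bijection-of-compact-Hausdorff-spaces argument used for $\zeta$---is exactly what is needed here, since Urquhart spaces are only $T_1$.
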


Since in the case of Urquhart spaces we work with  two quasi-orders instead of a binary relation, the corresponding proofs simplify. For the proof of \cref{lem: isomorphisms in Urq}, since the Galois connections $\phi, \psi$ are described by the two quasi-orders, a quick argument gives that for each $C' \in \L\U'$ we have $\phi \Box_{P_h} C' = \Box_{Q_h} \phi' C'$ and $\psi\phi \Box_{P_h} C' = \Box_{P_h} C'$. Thus, $\Box_{P_h} C' \in \L\U$, verifying that $(P_h,Q_h)$ satisfies \cref{Urq Box}. The proof that $(P_h,Q_h)$ satisfies the remaining conditions of \cref{def: Urquhart morphisms} is similar to that of $(S_\alpha, T_\beta)$ being a $\GvG$-morphism given in \cref{lem: isomorphisms in GvG}.

For the proof of \cref{h for theta}, define $h \colon Z\to\overline{Z}$ by $h(z) = (\up_1 z, \up_2 z)$.
The argument that $h$ is well defined and onto is similar to that of \cref{alpha and beta for eta}, and the rest to that of \cref{alpha and beta for zeta}. Lastly, the proof of \cref{prop: theta : 1 to UHGD} is similar to that of \cref{prop:  zeta : 1 to GDUH}.

Finally, putting \cref{prop: epsilon : 1 to DUHG,prop:  zeta : 1 to GDUH,prop: eta : 1 to HGDU,prop: theta : 1 to UHGD} together yields the main result of this section:

\begin{theorem} \label{thm: categories are all equivalent}
The categories $\HD$, $\GvG$, $\Hg$, and $\Urq$ are equivalent.
\end{theorem}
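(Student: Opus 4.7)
The plan is to leverage the four natural isomorphisms $\epsilon \colon 1_{\HD} \to \DD\UU\HH\GG$, $\zeta \colon 1_{\GvG} \to \GG\DD\UU\HH$, $\eta \colon 1_{\Hg} \to \HH\GG\DD\UU$, and $\theta \colon 1_{\Urq} \to \UU\HH\GG\DD$ established in \cref{prop: epsilon : 1 to DUHG,prop:  zeta : 1 to GDUH,prop: eta : 1 to HGDU,prop: theta : 1 to UHGD} to simultaneously exhibit each of the four functors $\GG$, $\HH$, $\UU$, $\DD$ as (one half of) an equivalence of categories. The key observation is that once we go around the cycle, any three of the functors composed together furnishes a quasi-inverse to the remaining one.

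Concretely, I would show that $\GG \colon \HD \to \GvG$ is an equivalence by taking $\DD\UU\HH \colon \GvG \to \HD$ as its quasi-inverse: the composition $(\DD\UU\HH)\circ \GG = \DD\UU\HH\GG$ is naturally isomorphic to $1_{\HD}$ via $\epsilon$, while $\GG \circ (\DD\UU\HH) = \GG\DD\UU\HH$ is naturally isomorphic to $1_{\GvG}$ via $\zeta$. An entirely analogous argument, using the cyclic shifts of the same four natural isomorphisms, shows that $\HH$ is an equivalence with quasi-inverse $\GG\DD\UU$ (using $\zeta$ and $\eta$), that $\UU$ is an equivalence with quasi-inverse $\HH\GG\DD$ (using $\eta$ and $\theta$), and that $\DD$ is an equivalence with quasi-inverse $\UU\HH\GG$ (using $\theta$ and $\epsilon$).

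From this it follows formally that $\HD$, $\GvG$, $\Hg$, and $\Urq$ are pairwise equivalent, since equivalence of categories is an equivalence relation. There is no real obstacle at this stage: all the technical work has already been carried out in establishing the four natural isomorphisms, and the present argument is a purely formal packaging of those results. If desired, one could record additionally that, by composing with the isomorphism $\PP \colon \Urq \to \Plo$ of \cref{prop: PP is a functor}, the category $\Plo$ joins the equivalence class, completing the circle depicted in \cref{figure: intro}.
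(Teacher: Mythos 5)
Your proposal is correct and is exactly the argument the paper intends: the paper's proof consists of the single sentence that putting the four natural isomorphisms $\epsilon$, $\zeta$, $\eta$, $\theta$ together yields the result, and your explicit unpacking—each of $\GG$, $\HH$, $\UU$, $\DD$ is an equivalence with quasi-inverse the composite of the other three, witnessed by two consecutive members of the cycle of natural isomorphisms—is precisely the formal content of that sentence.
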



\section{Overview of the resulting dualities for lattices} \label{sec: conclusion}

We have provided explicit equivalences between 
various categories that were proposed in the literature as  
extensions of Priestley duality to arbitrary (not necessarily distributive) bounded lattices. 
This included developing the duals of bounded lattice homomorphisms where they were lacking. As a result, we arrive at the following diagram in \cref{Sec 12 figure}, 
where the horizontal arrows represent equivalence of categories and the upward arrows dual equivalence. 
In this section we show that each triangle in the diagram commutes up to natural isomorphism.

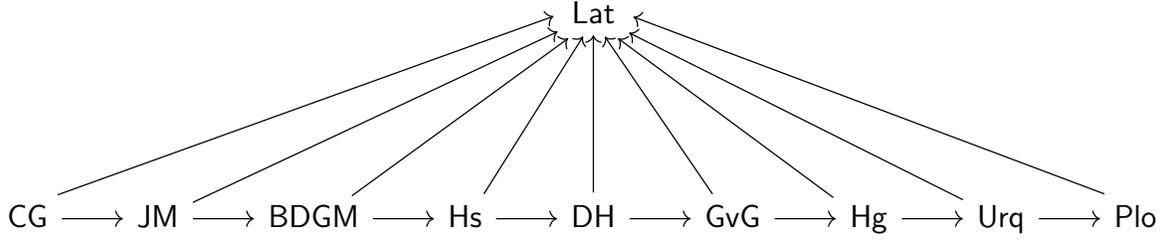
\begin{figure}[ht]
\[
\begin{tikzcd}[column sep = 2em, row sep = 5pc]
&&&& \Lat &&&& \\
\CG \arrow[urrrr, shift left = .9ex] \arrow[r] & \JML \arrow[urrr] \arrow[r] & \BDGM \arrow[urr, shift right = .3ex]\arrow[r] & \HsL \arrow[ur, shift right = .2ex] \arrow[r] & \HD \arrow[u] \arrow[r] & \GvG \arrow[ul, , shift left = .2ex] \arrow[r] & \Hg \arrow[ull, shift left = .3ex] \arrow[r] & \Urq \arrow[ulll] \arrow[r] & \Plo \arrow[ullll, shift right = .9ex]
\end{tikzcd}
\]
\caption{Various dualities for $\Lat$} \label{Sec 12 figure}
\end{figure}

By \cref{thm: CG = JM}, there is an equivalence of categories between $\CG$ and $\JML$. 
\cref{rem: functor JM to Lat} shows that there is a functor $\LL_\JML \colon \JML \to \Lat$ which sends $X \in \JML$ to $\KOF(X)$ and a $\JML$-morphism $f \colon X \to Y$ to $f^{-1} \colon \KOF(Y) \to \KOF(X)$. By \cref{rem: CG = Lat}, there is a functor $\LL_\CG \colon \CG \to \Lat$ which sends $\C \in \CG$ to $\L\C$ and a $\CG$-morphism $S \colon \C_1 \to \C_2$ to $\Box_S \colon L\C_2 \to L\C_1$. 

\begin{theorem} \label{prop: CG and JML}
The following diagram commutes up to natural isomorphism. 
\[
\begin{tikzcd}[column sep = 3em]
& \Lat & \\
\CG \arrow[ur, "\LL_\CG"] \arrow[rr, "\mathbb{H}"']&& \JML \arrow[ul, "\LL_\JML"']
\end{tikzcd}
\]
\end{theorem}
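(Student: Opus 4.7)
The plan is to exhibit a natural isomorphism $\iota \colon \LL_\CG \Rightarrow \LL_\JML \circ \mathbb{H}$ whose component at $\C = (X, \K) \in \CG$ is the map
\[
\iota_\C \colon \L\C \to \KOF(\H(\C)), \qquad \iota_\C(-U) = \Psi_U \ \text{ for } U \in \K.
\]
This is natural because both $\L\C$ and $\KOF(\H(\C))$ are parametrized by $\K$, the first via the order-reversing bijection $U \mapsto -U$ (by definition of $\L\C$) and the second via the order-reversing bijection $U \mapsto \Psi_U$ provided by \cref{5.7 of CG}.

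I would first show that each $\iota_\C$ is a $\Lat$-isomorphism. Since both $U \mapsto -U$ and $U \mapsto \Psi_U$ are order-reversing bijections, their composite $\iota_\C$ is an order-preserving bijection between the bounded lattices $\L\C$ and $\KOF(\H(\C))$, hence a $\Lat$-isomorphism. Concretely, for $U, V \in \K$ one has $\iota_\C(-U \cap -V) = \iota_\C(-(U\cup V)) = \Psi_{U\cup V} = \Psi_U \cap \Psi_V$ (using closure of $\K$ under finite unions), $\iota_\C(-U \vee -V) = \Psi_{U \wedge V} = \Psi_U \vee \Psi_V$ (this is the join computation carried out in the proof of \cref{thm: functor H}), $\iota_\C(X) = \Psi_\varnothing = \H(\C)$, and $\iota_\C(\varnothing) = \Psi_X$ is the bottom of $\KOF(\H(\C))$ by \cref{CG X in K}.

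Next I would verify naturality: given a $\CG$-morphism $S \colon \C_1 \to \C_2$, the diagram
\[
\begin{tikzcd}[column sep = 3em]
\L\C_2 \arrow[r, "\iota_{\C_2}"] \arrow[d, "\Box_S"'] & \KOF(\H(\C_2)) \arrow[d, "f_S^{-1}"] \\
\L\C_1 \arrow[r, "\iota_{\C_1}"'] & \KOF(\H(\C_1))
\end{tikzcd}
\]
should commute. Pick $-V \in \L\C_2$ with $V \in \K_2$. Going right then down yields $f_S^{-1}(\Psi_V) = \{H \in \H(\C_1) : V \subseteq f_S(H)\}$, while going down then right yields $\iota_{\C_1}(-S^{-1}[V]) = \Psi_{S^{-1}[V]} = \{H : S^{-1}[V] \subseteq H\}$ (noting that $\Box_S(-V) = -S^{-1}[V]$ and that $S^{-1}[V] \in \K_1$ by \cref{CG equivalent condition for Box}). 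The inclusion $\supseteq$ is immediate from the defining formula $f_S(H) = \bigcup\{W \in \K_2 : S^{-1}[W] \subseteq H\}$.

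The only nontrivial step, which I expect to be the main (though minor) obstacle, is the reverse inclusion. Suppose $V \subseteq f_S(H)$. Since each $W$ appearing in the union is open and $V$ is compact (as $V \in \K_2$), compactness yields finitely many $W_1, \dots, W_n \in \K_2$ with $S^{-1}[W_i] \subseteq H$ and $V \subseteq W_1 \cup \cdots \cup W_n$. As $\K_2$ is closed under finite unions, $W := \bigcup W_i \in \K_2$ and $S^{-1}[W] = \bigcup S^{-1}[W_i] \subseteq H$ (the latter because $H$ is closed under unions in $\H(\C_1)$). Therefore $S^{-1}[V] \subseteq S^{-1}[W] \subseteq H$, establishing the equality and hence the naturality square. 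This proves $\iota$ is a natural isomorphism, and the diagram commutes up to natural isomorphism as claimed.
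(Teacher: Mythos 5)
Your proof is correct and follows the same route as the paper: the paper declares the result immediate from the identification $K(\H(\C)) = \K$ (\cref{CG K(H)}), and your $\iota_\C \colon -U \mapsto \Psi_U$ is exactly the composite of the two order-reversing bijections $\L\C \cong \K \cong \KOF(\H(\C))$ underlying that claim. The only substantive detail you add is the compactness/directedness argument verifying the naturality square, which the paper leaves implicit.
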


\begin{proof}
Let $\C = (X,\K) \in \CG$. Then $$\LL_{\CG}(\C) = \L\C = \{ -U : U \in \K \}$$ and $$\LL_{\JML}\H(\C) = \{ \Psi_U : U \in \K \} = \{ \{ H \in \H(\C) : U \subseteq H \} : U \in \K \}.$$ Define $\alpha_{\C} \colon \L\C \to \KOF(\H(\C))$ by $\alpha_{\C}(-U) = \Psi_U$ for each $U\in\K$. It is straightforward to see that $$-U \subseteq -V \iff \alpha_{\C}(-U) \subseteq \alpha_{\C}(-V)$$ for all $U,V\in\K$. Moreover, $\alpha_{\C}$ is onto by \cref{5.7 of CG}. Thus, $\alpha_{\C}$ is an order-isomorphism, hence a $\Lat$-isomorphism. To see that $\alpha \colon \LL_{\CG} \to \LL_{\JML} \circ \HH$ is natural, we show that the following diagram commutes.
\[
\begin{tikzcd}[column sep = 5pc]
\L\C' \arrow[r, "\Box_S"] \arrow[d, "\alpha_{\C'}"'] & \arrow[d, "\alpha_\C"] \L\C\\
\KOF(\H(\C')) \arrow[r, "f_S^{-1}"'] & \KOF(\H(\C)) 
\end{tikzcd}
\]
Let $U\in\K'$. Recalling from \cref{sec: CG} that $f_S(H) = \bigcup \{ V \in \K' : S^{-1}[V] \subseteq H\}$ for each $H \in \H(\C)$, we have
\begin{align*}
\alpha_{\C}\Box_S(-U) &= \alpha_{\C}(-S^{-1}[U]) = \Psi_{S^{-1}[U]} 
= \{ H \in \H(\C) : S^{-1}[U]  \subseteq H \} \\
&= \{ H \in \H(\C) : U \subseteq f_S(H) \} = \{ H \in \H(\C) : f_S(H) \in \Psi_U \} \\
&= f_S^{-1}[\Psi_U].
\end{align*}
Thus, $\alpha$ is a natural isomorphism, and hence the triangle above commutes up to natural isomorphism.
\end{proof}

By \cref{JML = BDGM}, there is an isomorphism of categories between $\JML$ and $\BDGM$, which we denote by $\mathbb{B}$. 
The functor $\mathbb{B}$ sends $X \in \JML$, which is an algebraic lattice with the Scott topology, to $(X, \lambda)$, where $\lambda$ is the Lawson topology. 
By \cref{rem: BDGMS duality with SLat}, there is a functor $\LL_\BDGM \colon \BDGM \to \Lat$ which sends $X \in \BDGM$ to $\CLF(X)$ and a $\BDGM$-morphism $f \colon X \to Y$ to $f^{-1} \colon \CLF(Y) \to \CLF(X)$. 
Since $\KOF(X) = \CLF(X)$ by \cref{rem: CLF = KOF}, we have $\LL_{\BDGM} \circ \mathbb{B} = \LL_{\JML}$, yielding the next result.

\begin{theorem} \label{prop: JML and BDGM}
The following diagram commutes. 
\[
\begin{tikzcd}[column sep = 3em]
& \Lat & \\
\JML \arrow[ur, "\LL_\JML"] \arrow[rr, "\mathbb{B}"']&& \BDGM \arrow[ul, "\LL_\BDGM"']
\end{tikzcd}
\]
\end{theorem}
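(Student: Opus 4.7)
The plan is to verify that the diagram commutes strictly on the nose, both on objects and on morphisms, by exploiting the identity $\KOF(X) = \CLF(X)$ available for algebraic lattices.

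\emph{On objects.} Fix $X \in \JML$. By \cref{lem: arithmetic lattice case for JM}, $X$ is a coherent lattice equipped with its Scott topology. The isomorphism $\mathbb{B}$ is the restriction of the Cornish isomorphism (see \cref{JML = BDGM}), so $\mathbb{B}(X)$ has the same underlying set and order as $X$, but is now equipped with the Lawson topology $\lambda(X)$ (the patch of the Scott topology, cf.\ \cref{Lawson is patch}). Applying $\LL_\BDGM$ yields $\CLF(\mathbb{B}(X))$, the clopen filters with respect to $\lambda(X)$. On the other hand, $\LL_\JML(X) = \KOF(X)$, the compact open filters with respect to $\sigma(X)$. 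By \cref{rem: CLF = KOF} these two lattices coincide, so $\LL_\JML(X) = \LL_\BDGM(\mathbb{B}(X))$ as objects of $\Lat$.

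\emph{On morphisms.} Let $f \colon X \to Y$ be a $\JML$-morphism. By the Cornish isomorphism (see \cref{thm: BDGMS = AlgLatL} and \cref{prop: BDGM = CohLatL}), $\mathbb{B}(f)$ is literally the same function $f$, now regarded as a $\BDGM$-morphism between $\mathbb{B}(X)$ and $\mathbb{B}(Y)$. Both $\LL_\JML(f)$ and $\LL_\BDGM(\mathbb{B}(f))$ are defined as the pullback $f^{-1}$: the former as a map $\KOF(Y) \to \KOF(X)$ and the latter as a map $\CLF(Y) \to \CLF(X)$. By the identification $\KOF = \CLF$ from \cref{rem: CLF = KOF}, these two $\Lat$-morphisms are identical.

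Combining the two observations, the composite $\LL_\BDGM \circ \mathbb{B}$ agrees with $\LL_\JML$ on all objects and all morphisms, so the triangle commutes (indeed, strictly, not merely up to natural isomorphism). There is no real obstacle here: all the substantive content has already been packaged into the Cornish isomorphism (which is identity-on-points-and-order) and into the equality $\KOF(X) = \CLF(X)$ for algebraic lattices. The proof is therefore a short bookkeeping exercise, which can be dispatched in a couple of lines referring back to \cref{JML = BDGM,rem: CLF = KOF}.
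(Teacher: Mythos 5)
Your proof is correct and follows the same route as the paper: the paper likewise observes that $\mathbb{B}$ is the restriction of the Cornish isomorphism (identity on underlying sets and orders, Scott replaced by Lawson topology), that both $\LL_\JML$ and $\LL_\BDGM$ act by $f^{-1}$ on morphisms, and that the diagram then commutes strictly because $\KOF(X) = \CLF(X)$ by \cref{rem: CLF = KOF}. Your write-up merely spells out the bookkeeping that the paper compresses into ``immediate from these facts.''
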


By \cref{thm: CohLatL = HsL}, there is an isomorphism of categories between $\BDGM$ and $\HsL$, which we denote by $\HHs$. 
By \cref{Hs = Lat}, there is a functor $\LL_\HsL \colon \HsL \to \Lat$ which sends $X \in \HsL$ to $\CLF(X)$ and an $\HsL$-morphism $f \colon X \to Y$ to $f^{-1} \colon \CLF(Y) \to \CLF(X)$. The next result is then immediate from these facts.

\begin{theorem} \label{prop: BDGM and HsL}
The following diagram commutes. 
\[
\begin{tikzcd}[column sep = 3em]
& \Lat & \\
\BDGM \arrow[ur, "\LL_\BDGM"] \arrow[rr, "\HHs"']&& \HsL \arrow[ul, "\LL_\HsL"']
\end{tikzcd}
\]
\end{theorem}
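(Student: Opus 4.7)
The plan is to observe that the diagram commutes on the nose, rather than merely up to natural isomorphism. The key point will be that the isomorphism $\HHs \colon \BDGM \to \HsL$ supplied by \cref{thm: CohLatL = HsL} is the identity on both objects and morphisms, once we identify a partially ordered Stone space underlying a $\BDGM$-space with the same underlying space viewed as an $\HsL$-space. This is already encoded in \cref{prop: BDGMS = HsS}\eqref{BDGMS = HsS objects} together with \cref{prop: BDGM = HsL}\eqref{BDGM = HsL objects}, which tell us that a partially ordered Stone space $X$ lies in $\BDGM$ iff it lies in $\HsL$, and in \cref{prop: BDGMS = HsS}\eqref{BDGMS = HsS morphisms} together with \cref{prop: BDGM = HsL}\eqref{BDGM = HsL morphisms}, which tell us that a map $f$ is a $\BDGM$-morphism iff it is an $\HsL$-morphism. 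Thus the first step will be to invoke these results to write $\HHs(X) = X$ and $\HHs(f) = f$.

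The second step is to verify that $\LL_\BDGM$ and $\LL_\HsL$ are literally the same assignment. By \cref{rem: BDGMS duality with SLat,rem: duality between BDGM and Lat}, the functor $\LL_\BDGM$ sends $X$ to $\CLF(X)$ and $f$ to $f^{-1}$. On the other hand, by \cref{rem: functors for Hs}, the functor $\LL_\HsL$ is the restriction of the functor realizing the Hartonas duality, which coincides with the BDGM-duality; concretely, it sends $X$ to $\Lambda X$ and $f$ to $f^{-1}$. The only thing that needs explicit checking here is that $\Lambda X = \CLF(X)$ for every $X \in \HsL$, and this is immediate from \cref{lem: KOF = LX}, since such an $X$ is an algebraic lattice equipped with its Lawson topology by \cref{prop: BDGM = HsL}\eqref{BDGM = HsL objects} and \cref{BDGM 3}.

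Combining these two steps, $(\LL_\HsL \circ \HHs)(X) = \Lambda X = \CLF(X) = \LL_\BDGM(X)$ and $(\LL_\HsL \circ \HHs)(f) = f^{-1} = \LL_\BDGM(f)$, giving $\LL_\HsL \circ \HHs = \LL_\BDGM$ strictly. There is no real obstacle to overcome here: the content of the proof lies entirely in the identifications already set up in \cref{sec: BDGM,sec: Hs}, and what remains is the bookkeeping to cite them cleanly.
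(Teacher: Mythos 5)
Your proposal is correct and follows essentially the same route as the paper: the isomorphism $\HHs$ is the identity on underlying spaces and maps, both lattice functors send $X$ to $\CLF(X)$ and $f$ to $f^{-1}$, and the only identification needed is $\Lambda X = \CLF(X)$, which is exactly \cref{lem: KOF = LX}. The paper treats this as immediate; you have simply spelled out the same bookkeeping.
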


By Theorems~\ref{thm: CohLatL = HsL}, \ref{thm: BDGM = CohLatL}(\ref{BDGM = CohLatL}), and \ref{thm: DH = CohLatL}, there are functors $\HsL \to \BDGM \to \CohLatL \to \HD$. We denote the composition $\HsL \to \HD$ by $\mathbb{DH}$. This functor sends $X \in \HsL$ to $(X, R_X, \OF(X))$ (see \cref{def: R_X}) and an $\HsL$-morphism $f \colon X \to Y$ to
\[
(f, r) \colon (X, R_X, \OF(X)) \to (Y, R_Y, \OF(Y)),
\]
where $r$ is the right adjoint of $f^{-1} \colon \OF(Y) \to \OF(X)$ (see \cref{lem: OF is coherent 3}). By \cref{DH = HsL}, this functor is an equivalence. \cref{rem: functor from DH to Lat} shows that there is a functor $\LL_\HD \colon \HD \to \Lat$ which sends $(X, R, Y) \in \HD$ to $\CLF(X)$ and a $\HD$-morphism $(f, g) \colon (X, R, Y) \to (X', R', Y')$ to $f^{-1} \colon \CLF(X') \to \CLF(X)$. The next result is then immediate.

\begin{theorem} \label{prop: HsL and DH}
The following diagram commutes. 
\[
\begin{tikzcd}[column sep = 3em]
& \Lat & \\
\HsL \arrow[ur, "\LL_\HsL"] \arrow[rr, "\mathbb{DH}"']&& \HD \arrow[ul, "\LL_\HD"']
\end{tikzcd}
\]
\end{theorem}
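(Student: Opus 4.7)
The plan is to verify that $\LL_\HD \circ \mathbb{DH}$ and $\LL_\HsL$ agree on objects and on morphisms; in fact, I expect the square to commute on the nose rather than merely up to natural isomorphism, since all three functors are described by explicit set-theoretic assignments.

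For the object level, I would start with $X \in \HsL$. Passing through the isomorphism $\HsL \cong \BDGM \cong \CohLatL$ supplied by \cref{thm: CohLatL = HsL} and \cref{BDGM = CohLatL}, the underlying algebraic lattice is $X$ itself with its Lawson topology, so applying the functor $\EE$ of \cref{prop: functor E} produces $\mathbb{DH}(X) = (X, R_X, \OF(X))$. By definition, $\LL_\HD$ then returns $\CLF(X)$. On the other side, $\LL_\HsL(X) = \Lambda X$, which is equal to $\CLF(X)$ by \cref{lem: KOF = LX}. So both composites produce literally the same bounded lattice.

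For the morphism level, let $f \colon X \to Y$ be an $\HsL$-morphism. By \cref{prop: functor E,lem: OF is coherent 3}, $\mathbb{DH}(f) = (f, r)$, where $r \colon \OF(X) \to \OF(Y)$ is the right adjoint of $f^{-1} \colon \OF(Y) \to \OF(X)$; applying $\LL_\HD$ then yields $f^{-1} \colon \CLF(Y) \to \CLF(X)$. In parallel, $\LL_\HsL(f) = f^{-1} \colon \Lambda Y \to \Lambda X$, which under the identification $\Lambda = \CLF$ from \cref{lem: KOF = LX} is exactly the same arrow $f^{-1} \colon \CLF(Y) \to \CLF(X)$. Hence both composites send $f$ to the same $\Lat$-morphism.

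There is no real obstacle: no naturality square needs to be checked, since the two functors agree on the nose. The only point requiring any care is to ensure that the restriction of $f^{-1}$ to clopen filters is well defined and lands in $\CLF(X)$ in both settings, which follows from the $\HsL$-morphism condition ($f^{-1}(\Lambda Y) \subseteq \Lambda X$, see \cref{def: Hartonas space}(\ref{Hs continuity})) together with \cref{lem: KOF = LX}. With these observations in hand, the verification reduces to writing out the chain of definitions, after which the commutativity of the triangle follows at once.
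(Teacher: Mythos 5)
Your proposal is correct and matches the paper's approach: the paper gives no separate proof, declaring the result immediate from the explicit descriptions of $\mathbb{DH}$, $\LL_\HD$, and $\LL_\HsL$ (which sends $X$ to $\CLF(X)=\Lambda X$ and $f$ to $f^{-1}$), and your unwinding of those definitions is exactly the intended verification. Your observation that the triangle commutes on the nose is also consistent with the paper, whose statement asserts plain commutativity rather than commutativity up to natural isomorphism.
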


In order to finish proving the commutativity of \cref{Sec 12 figure}, we need to define functors from each of $\GvG$, $\Hg$, $\Urq$, and $\Plo$ to $\Lat$. We start with the functor from $\GvG$ to $\Lat$. 

\begin{proposition} \label{prop: The functor L for GvG}
There is a functor $\LL_\GvG \colon \GvG \to \Lat$ which sends $\G \in \GvG$ to $\L\G$ and a $\GvG$-morphism $(S, T)$ to $\Box_S$. 
\end{proposition}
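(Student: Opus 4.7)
The plan is to verify the three functor axioms in turn. For objects, \cref{rem: lattice ops for LG} already shows that $\L\G$ is a bounded lattice with $\wedge = \cap$, $\vee = \Box\bd(\cup)$, top $X$, and bottom $\varnothing$, so $\L\G \in \Lat$.

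For morphisms, I will verify that $\Box_S \colon \L\G' \to \L\G$ is a bounded lattice homomorphism. Well-definedness is immediate from \cref{GvG morphism 1} in \cref{def: GvG morphism}. Preservation of the top and binary meets is elementary: $\Box_S X' = X$ holds trivially, and $\Box_S(U_1' \cap U_2') = \Box_S U_1' \cap \Box_S U_2'$ because $S[x]$ is contained in an intersection iff it is contained in each member. Preservation of the bottom uses the identity $\bd\Box_S = \Diamond_T \bd'$ from \cref{GvG morphism 1}: from $\bd \Box_S \varnothing = \Diamond_T \bd' \varnothing = \varnothing$ and $\Box_S \varnothing \in \L\G$, one obtains $\Box_S \varnothing = \Box\bd\Box_S\varnothing = \Box\varnothing = \varnothing$, with the last equality again from \cref{rem: lattice ops for LG}.

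The main obstacle is preservation of binary joins, since it cannot be reduced to pointwise reasoning on $\Box_S$ alone and must fully exploit \cref{GvG morphism 1}. The strategy is to compute $\bd$ on both sides and use $\Box\bd A = A$ for $A \in \L\G$. Since $\bd$ and $\Diamond_T$ commute with unions, \cref{GvG morphism 1} yields
\[
\bd(\Box_S U_1' \cup \Box_S U_2') = \Diamond_T \bd' U_1' \cup \Diamond_T \bd' U_2' = \Diamond_T \bd'(U_1' \cup U_2').
\]
Applying \cref{GvG morphism 1} to $U_1' \vee U_2' = \Box'\bd'(U_1' \cup U_2') \in \L\G'$ gives $\bd \Box_S(U_1' \vee U_2') = \Diamond_T \bd'(U_1' \vee U_2')$, and a short check shows $\bd'(U_1' \vee U_2') = \bd' U_1' \cup \bd' U_2'$: the inclusion $\supseteq$ is from $U_1' \cup U_2' \subseteq U_1' \vee U_2'$ and monotonicity of $\bd'$, while the reverse follows from the general fact $\bd\Box V \subseteq V$ applied to $V = \bd'(U_1' \cup U_2')$. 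Comparing the two computations yields $\bd\Box_S(U_1' \vee U_2') = \bd(\Box_S U_1' \cup \Box_S U_2')$; applying $\Box$ and using $\Box\bd A = A$ for $A \in \L\G$ then gives $\Box_S(U_1' \vee U_2') = \Box_S U_1' \vee \Box_S U_2'$.

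Finally, the functoriality axioms: the identity $\GvG$-morphism on $\G$ is $(\le_X, \le_Y)$ by \cref{GvG identity}, and for each upset $U \in \L\G$ one has $\Box_{\le_X} U = \{x : \up x \subseteq U\} = U$, so $\Box_{\le_X}$ is the identity on $\L\G$. Preservation of composition is immediate from \cref{lem: composition lemma for GvG}, which asserts $\Box_{S_2 \star S_1} U = \Box_{S_1}\Box_{S_2} U$ for each $U \in \L\G_3$, i.e., $\Box_{S_2 \star S_1} = \Box_{S_1} \circ \Box_{S_2}$.
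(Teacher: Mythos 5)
Your proof is correct, and for the two nontrivial verifications it takes a genuinely different route from the paper's. The paper handles both $\Box_S\varnothing = \varnothing$ and preservation of binary joins by element-chasing arguments that invoke \cref{GvG morphism 4}, the condition tying $S$ and $T$ to $R$ and $R'$: for the bottom, $S[x]=\varnothing$ forces $R[x]=\varnothing$; for joins, a point $x \notin \Box_S U \vee \Box_S V$ is witnessed by some $y$ with $x \rel{R} y$, which \cref{GvG morphism 4} lifts to a pair $(x',y')$ yielding a contradiction with $x\in\Box_S(U\vee V)$. You instead derive both facts purely equationally from \cref{GvG morphism 1} together with general Galois-connection identities ($A \subseteq \Box\bd A$, $\bd\Box V \subseteq V$, and commutation of $\bd$ and $\Diamond_T$ with unions), computing $\bd$ of both sides of the desired identity and then applying $\Box$, using that both sides lie in $\L\G$. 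Each step checks out, including the auxiliary identity $\bd'(U_1'\vee U_2') = \bd' U_1' \cup \bd' U_2'$. A notable byproduct of your route is that \cref{GvG morphism 4} is never used: the lattice-homomorphism property of $\Box_S$ already follows from the intertwining identity $\bd\Box_S = \Diamond_T\bd'$ of \cref{GvG morphism 1} alone, whereas the paper's more concrete argument leans on the extra axiom. The remaining steps (objects, top, binary meets, identities, composition via \cref{lem: composition lemma for GvG}) coincide with the paper's.
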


\begin{proof}
By \cref{rem: lattice ops for LG}, $\LL_\GvG$ is well defined on objects. If $(S, T) \colon \G \to \G'$ is a $\GvG$-morphism, then $\Box_S \colon \L\G' \to \L\G$ is a well-defined function by \cref{GvG morphism 1}. We show that $\Box_S$ is a $\Lat$-morphism. It is clear that $\Box_S X' = X$. Since binary meet in $\L\G$ is intersection, we also have that $\Box_S(U \wedge V) = \Box_S U \wedge \Box_S V$ for each $U,V\in\L\G$. 
To see that $\Box_S\varnothing = \varnothing$, if $ x \in \Box_S \varnothing$, then $S[x] = \varnothing$. Consequently, $R[x] = \varnothing$ by \cref{GvG morphism 4}. Therefore, $x \in \Box\bd \varnothing$, which is false as $\Box\bd \varnothing = \varnothing$ by \cref{rem: lattice ops for LG}. Thus, $\Box_S \varnothing = \varnothing$. It remains to show that $\Box_S$ preserves binary joins. The inclusion $\Box_SU \vee \Box_SV \subseteq \Box_S(U\vee V)$ is clear since $\Box_S$ is order-preserving. For the other inclusion, let $x \notin \Box_SU \vee \Box_SV$. Since $\Box_SU \vee \Box_SV = \Box\bd(\Box_SU \cup \Box_SV)$, there is $y \in Y$ with $x \rel{R} y$ and $y \notin \bd(\Box_S U \cup \Box_S V)$. By \cref{GvG morphism 1},
\begin{align*}
\bd(\Box_SU \cup \Box_SV) &= \bd\Box_SU \cup \bd\Box_SV = \Diamond_T \bd' U \cup \Diamond_T \bd' V \\
&= \Diamond_T(\bd' U \cup \bd' V) = \Diamond_T(\bd'(U \cup V)).
\end{align*}
Therefore, $y \notin \Diamond_T(\bd'(U \cup V))$. Because $x \rel{R} y$, by \cref{GvG morphism 4} there are $x' \in X'$ and $y' \in Y'$ with $x \rel{S} x'$, $y \rel{T} y'$, and $x' \rel{R'} y'$. If $x \in \Box_S(U \vee V) = \Box_S(\Box'\bd'(U \cup V))$, then $x' \in \Box'\bd'(U \cup V)$, so $y' \in \bd'(U \cup V)$. This implies that $y \in \Diamond_T(\bd'(U \cup V))$, which is false. Thus, $x \notin \Box_S(U \vee V)$. Consequently, $\Box_S$ preserves binary joins, and hence is a $\Lat$-morphism. This shows that $\LL_\GvG$ is well-defined on morphisms. 

By \cref{lem: composition lemma for GvG}, $\LL_\GvG$ preserves composition. If $\G = (X, R, Y) \in \GvG$, then $(\le_X, \le_Y)$ is the identity morphism for $\G$ by \cref{GvG identity}. It is also immediate that $\Box_{\le_X} U = U$ for each $U \in \L\G$.  
Therefore, $\Box_{\le_X}$ is the identity morphism of $\L\G$, completing the proof that $\LL_\GvG$ is a functor. 
\end{proof}

\begin{theorem} \label{prop: DH and GvG}
The following diagram commutes up to natural isomorphism.
\[
\begin{tikzcd}[column sep = 3em]
& \Lat & \\
\HD \arrow[ur, "\LL_\HD"] \arrow[rr, "\GG"']&& \GvG \arrow[ul, "\LL_\GvG"']
\end{tikzcd}
\]
\end{theorem}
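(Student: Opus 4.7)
The plan is to exhibit a natural isomorphism $\gamma \colon \LL_\HD \to \LL_\GvG \circ \GG$. For each $\D = (X, R, Y) \in \HD$, let $\gamma_\D \colon \CLF(X) \to \L \GG(\D)$ be the map $U \mapsto U \cap X_p$. By \cref{lem: LG = CLF}, each $\gamma_\D$ is already a $\Lat$-isomorphism, so the substantive content of the theorem is naturality.

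For a $\HD$-morphism $(f, g) \colon \D \to \D'$, recall that $\LL_\HD(f, g) = f^{-1}|_{\CLF(X')}$, and, since $\GG(f, g) = (S_f, T_g)$, we have $\LL_\GvG(\GG(f, g)) = \Box_{S_f}$. Naturality of $\gamma$ therefore reduces to verifying the identity
\[
\Box_{S_f}(U' \cap X_p') \;=\; f^{-1}(U') \cap X_p
\]
for each $U' \in \CLF(X')$. By \cref{lem: KOF = LX} write $U' = \up k'$ with $k' \in K(X')$, and note that $S_f[x] = \up f(x) \cap X_p'$ for every $x \in X_p$; consequently $x \in \Box_{S_f}(U' \cap X_p')$ iff $\up f(x) \cap X_p' \subseteq \up k'$.

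The inclusion $\supseteq$ is immediate, since $U'$ is an upset. For $\subseteq$, one argues by contraposition: if $k' \not\le f(x)$, then by \cref{lem: producing elements from X0 or Y0}\eqref{x is meet from X0} applied to $f(x) \in X'$, together with $X_0' \subseteq X_p'$ from \cref{lem: X0 the same}, there is $x'' \in X_p'$ with $f(x) \le x''$ and $k' \not\le x''$. This $x''$ witnesses $\up f(x) \cap X_p' \not\subseteq \up k'$, as required. This is the only technical point, and it is essentially a fragment of the verification of \cref{GvG morphism 1} in the proof of \cref{prop: G on morphisms}, so I do not expect any genuine obstacle. Once this identity is established, naturality follows immediately, and combined with \cref{lem: LG = CLF} one obtains the desired natural isomorphism, proving that the triangle commutes up to natural isomorphism.
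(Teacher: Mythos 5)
Your proposal is correct and follows essentially the same route as the paper: the same natural transformation $\gamma_\D(U)=U\cap X_p$, the same reduction of naturality to the identity $\Box_{S_f}(U'\cap X_p')=f^{-1}(U')\cap X_p$, and the same key ingredients (\cref{lem: LG = CLF}, $S_f[x]=\up f(x)\cap X_p'$, \cref{x is meet from X0}, and $X_0'\subseteq X_p'$ from \cref{lem: X0 the same}). The only difference is cosmetic: you prove the nontrivial inclusion by contraposition, producing a witness in $X_0'$, whereas the paper argues directly that $\up f(x)\cap X_0'\subseteq U'$ forces $f(x)\in U'$.
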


\begin{proof}
Let $\D = (X, R, Y) \in \HD$ and $\G = \GG(\D)$, so $\G = (X_p, R_p, Y_p)$. By \cref{lem: LG = CLF}, there is a $\Lat$-isomorphism $\gamma_D \colon \CLF(X) \to \L\G$ given by $\gamma_D(U) = U \cap X_p$ for each ${U \in \CLF(X)}$. Let $(f, g) \colon \D \to \D'$ be a $\HD$-morphism and  $(S_f, T_g) = \GG(f, g)$. By \cref{prop: G on morphisms},
\[
x \rel{S_f} x' \iff f(x) \le x' \quad\textrm{and}\quad y \rel{T_g} y' \iff g(y) \le y'.
\]
To see that $\gamma \colon \LL_{\HD} \to \LL_{\GvG}\circ\GG$ is natural, we show that the following diagram  commutes.
\[
\begin{tikzcd}[column sep = 3em]
\CLF(X') \arrow[r, "f^{-1}"] \arrow[d, "\gamma_{\D'}"'] & \CLF(X) \arrow[d, "\gamma_\D"] \\
\L\G'  \arrow[r, "\Box_{S_f}"']& \L\G
\end{tikzcd}
\]
Let $U \in \CLF(X')$. Then $\Box_{S_f} \gamma_{\D'}(U) = \Box_{S_f} (U \cap X_p')$ while $\gamma_\D(f^{-1}(U)) = f^{-1}(U) \cap X_p$. We show that these are equal. If $x \in f^{-1}(U) \cap X_p$, then $f(x) \in U$, so $S_f[x] = \up f(x) \cap X_p' \subseteq U \cap X_p'$, and hence $x \in \Box_{S_f}(U \cap X_p')$. For the reverse inclusion, suppose that $x \in \Box_{S_f} (U \cap X_p')$, so $S_f[x] \subseteq U \cap X_p'$. By \cref{lem: KOF = LX}, $U$ is a principal upset. Therefore, to show that $f(x) \in U$, by \cref{x is meet from X0} it suffices to show that $\up f(x) \cap X_0' \subseteq U$. Since $X_0' \subseteq X_p'$ (see \cref{lem: X0 the same}), 
\[
\up f(x) \cap X_0' \subseteq \up f(x) \cap X_p' = S_f[x] \subseteq U \cap X_p' \subseteq U.
\]
Thus, $f(x) \in U$, so $\gamma \colon \LL_\HD \to \LL_\GvG \circ \GG$ is a natural isomorphism, and hence the triangle above commutes up to natural isomorphism.
\end{proof}

We next turn our attention to the functor from $\Hg$ to $\Lat$. An argument similar to that of \cref{prop: The functor L for GvG} yields:

\begin{proposition} \label{prop: The functor L for Hg}
There is a functor $\LL_\Hg \colon \Hg \to \Lat$ which sends $\H \in \Hg$ to $\L\H$ and an $\Hg$-morphism $(S, T)$ to $\Box_S$. 
\end{proposition}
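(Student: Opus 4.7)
The plan is to mirror the proof of \cref{prop: The functor L for GvG} line by line, exploiting the parallel axiomatizations of GvG-morphisms (\cref{def: GvG morphism}) and Hartung morphisms (\cref{def: Hartung morphisms}), together with the analogous composition lemma \cref{lem: composition lemma for Hg}. Well-definedness on objects is immediate from the remark following \cref{def: Hartung space}, which states that $\L\H$ is a bounded lattice with top $X$, bottom $\varnothing$, binary meet given by intersection, and binary join given by $A_1\vee A_2 = \Box\bd(A_1\cup A_2)$.

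The main work is to show that for each Hartung morphism $(S,T)\colon\H\to\H'$, the map $\Box_S\colon \L\H'\to\L\H$ is a bounded lattice homomorphism. That $\Box_S$ lands in $\L\H$ is the content of \cref{Hg morphism Box}. Clearly $\Box_S X' = X$ and $\Box_S(A\cap B) = \Box_S A \cap \Box_S B$, so $\Box_S$ preserves top and binary meets. For the bottom, if $x\in\Box_S\varnothing$ then $S[x]=\varnothing$; the contrapositive of \cref{Hg morphism R} forces $R[x]=\varnothing$, so $x\in\Box\bd\varnothing$, but $\Box\bd\varnothing=\varnothing$ by the remark, a contradiction. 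For preservation of binary joins, the inclusion $\Box_SU\vee\Box_SV\subseteq\Box_S(U\vee V)$ follows from monotonicity. For the reverse, suppose $x\notin \Box_SU\vee\Box_SV=\Box\bd(\Box_SU\cup\Box_SV)$. Then there is $y\in Y$ with $x\rel{R}y$ and $y\notin\bd(\Box_SU\cup\Box_SV)$. Using \cref{Hg morphism Box},
\[
\bd(\Box_SU\cup\Box_SV)=\bd\Box_SU\cup\bd\Box_SV=\Diamond_T\bd'U\cup\Diamond_T\bd'V=\Diamond_T\bd'(U\cup V).
\]
By \cref{Hg morphism R}, there exist $x'\in X'$ and $y'\in Y'$ with $x\rel{S}x'$, $y\rel{T}y'$, and $x'\rel{R'}y'$. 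If $x\in\Box_S(U\vee V)=\Box_S\Box'\bd'(U\cup V)$, then $x'\in\Box'\bd'(U\cup V)$, so $y'\in\bd'(U\cup V)$, and hence $y\in\Diamond_T\bd'(U\cup V)$, contradicting the choice of $y$. Thus $x\notin\Box_S(U\vee V)$, completing the proof that $\Box_S$ is a $\Lat$-morphism.

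Finally, functoriality. Preservation of composition is immediate from \cref{lem: composition lemma for Hg}: for $A\in\L\H_3$, $\Box_{S_2\star S_1}A=\Box_{S_1}\Box_{S_2}A$. For identities, by \cref{Hg identity} the identity morphism of $\H=(X,R,Y)$ is $(\le_X,\le_Y)$, and since every $A\in\L\H$ is an upset (\cref{Hg A is upset}), $\Box_{\le_X}A = A$, so $\LL_\Hg$ sends identity morphisms to identity morphisms. Since the proof is a direct adaptation of the GvG case via the corresponding axioms, no real obstacle arises; the only point requiring mild care is the verification $\Box_S\varnothing=\varnothing$, which depends on the claim that $\varnothing\in\L\H$ from the remark after \cref{def: Hartung space}.
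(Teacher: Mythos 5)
Your proof is correct and follows exactly the route the paper intends: the paper's own ``proof'' of this proposition is the single sentence that an argument similar to that of \cref{prop: The functor L for GvG} applies, and you have carried out precisely that adaptation, substituting \cref{Hg morphism Box}, \cref{Hg morphism R}, \cref{lem: composition lemma for Hg}, and \cref{Hg identity} for their GvG counterparts. Your explicit flag that $\Box_S\varnothing=\varnothing$ rests on the (unproved) assertion $\varnothing\in\L\H$ in the remark after \cref{def: Hartung space} is a fair observation, but it matches the paper's own level of rigor in the GvG case.
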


\begin{theorem} \label{prop: GvG and Hg}
The following diagram commutes up to natural isomorphism.
\[
\begin{tikzcd}[column sep = 3em]
& \Lat & \\
\GvG \arrow[ur, "\LL_\GvG"] \arrow[rr, "\HH"']&& \Hg \arrow[ul, "\LL_\Hg"']
\end{tikzcd}
\]
\end{theorem}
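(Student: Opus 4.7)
Let $\G = (X,R,Y) \in \GvG$ and set $\H = \HH(\G) = (X_0, R_0, Y_0)$. By \cref{lem: LH = LG}, the map $\delta_\G \colon \L\G \to \L\H$ defined by $\delta_\G(U) = U \cap X_0$ is a $\Lat$-isomorphism. Our plan is to show that the collection $\{\delta_\G\}_{\G \in \GvG}$ assembles into a natural isomorphism $\delta \colon \LL_\GvG \to \LL_\Hg \circ \HH$, which immediately yields the commutativity of the triangle up to natural isomorphism.

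The only remaining work is to verify naturality. Let $(S,T) \colon \G \to \G'$ be a $\GvG$-morphism and set $(S_0, T_0) = \HH(S,T)$. We must show that the square
\[
\begin{tikzcd}[column sep = 3em]
\L\G' \arrow[r, "\Box_S"] \arrow[d, "\delta_{\G'}"'] & \L\G \arrow[d, "\delta_\G"] \\
\L\H' \arrow[r, "\Box_{S_0}"'] & \L\H
\end{tikzcd}
\]
commutes, i.e., for every $U' \in \L\G'$ we have $\Box_S U' \cap X_0 = \Box_{S_0}(U' \cap X_0')$. But this is precisely the first identity established in \cref{lem: notin U GvG case}, so naturality holds.

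Thus $\delta \colon \LL_\GvG \Rightarrow \LL_\Hg \circ \HH$ is a natural isomorphism, completing the proof. The main conceptual content (that $\delta_\G$ is a $\Lat$-isomorphism and that $\Box_S$ restricts correctly to $\Box_{S_0}$ on the $X_0$-fibre) has already been done in \cref{lem: LH = LG,lem: notin U GvG case}; no further obstacle remains.
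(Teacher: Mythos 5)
Your proof is correct and follows essentially the same route as the paper: both take $\delta_\G(U)=U\cap X_0$ as the natural isomorphism via \cref{lem: LH = LG} and reduce naturality to the identity $\Box_{S_0}(U'\cap X_0')=\Box_S U'\cap X_0$. The only difference is that you cite \cref{lem: notin U GvG case} for that identity (a valid and apt citation), whereas the paper reproves it inline with a short direct argument using \cref{x notin U GvG,GvG S 2}.
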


\begin{proof}
Let $\G = (X, R, Y) \in \GvG$ and $\H = \HH(\G) = (X_0, R_0, Y_0)$. By \cref{lem: LH = LG}, there is a $\Lat$-isomorphism $\delta_\G \colon \L\G \to \L\H$ given by $\delta_\G(U) = U \cap X_0$ for each $U \in \L\G$.
To show that $\delta \colon \LL_\GvG \to \LL_\Hg \circ \HH$ is natural, let $(S, T) \colon \G \to \G'$ be a $\GvG$-morphism and $(S_0, T_0) = \HH(S, T)$. We show that the following diagram  commutes.
\[
\begin{tikzcd}[column sep = 5em]
\L\G' \arrow[r, "\Box_S"] \arrow[d, "\delta_{\G'}"'] & \L\G \arrow[d, "\delta_\G"] \\
\L\H' \arrow[r, "\Box_{S_0}"'] & \L\H
\end{tikzcd}
\]
Let $U' \in \L\G'$. Then $\Box_{S_0}\delta_{\G'}(U') = \Box_{S_0} (U' \cap X_0')$ and $\delta_\G(\Box_S U') = \Box_S U' \cap X_0$. To see that the two are equal, if $x \in \Box_S U' \cap X_0$, then $S[x] \subseteq U'$, so $S_0[x] = S[x] \cap X_0' \subseteq U' \cap X_0'$, and hence $x \in \Box_{S_0}(U' \cap X_0')$. For the reverse inclusion, suppose that $x \in \Box_{S_0} (U' \cap X_0')$. Then $S_0[x] \subseteq U'$. Let $x' \in S[x]$. If $x' \notin U'$, by \cref{x notin U GvG} there is $x_0 \in X_0'$ with $x' \le x_0$ and $x_0 \notin U'$. On the other hand, by \cref{GvG S 2}, $S[x]$ is an upset. Therefore, $x_0 \in S[x] \cap X_0' = S_0[x]$, which implies that $x_0 \in U'$. The obtained contradiction shows that $x' \in U'$. Consequently, $\delta \colon \LL_\GvG \to \LL_\Hg \circ \HH$ is a natural isomorphism, and hence the triangle above commutes up to natural isomorphism.
\end{proof}

We now concentrate on the functor from $\Urq$ to $\Lat$. 

\begin{proposition} \label{prop: The functor L for Urq}
There is a functor $\LL_\Urq \colon \Urq \to \Lat$ which sends $\U \in \Hg$ to $\L\U$ and an $\Urq$-morphism $(P, Q)$ to $\Box_P$. 
\end{proposition}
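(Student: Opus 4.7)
The plan is to mirror the argument used for \cref{prop: The functor L for GvG}. On objects, $\L\U$ is a bounded lattice by \cref{rem: U is T1}, with $C_1 \wedge C_2 = C_1 \cap C_2$ and $C_1 \vee C_2 = \psi(\phi C_1 \cap \phi C_2)$. Given an $\Urq$-morphism $(P, Q) \colon \U \to \U'$, I would verify that $\Box_P \colon \L\U' \to \L\U$ is a $\Lat$-morphism. Well-definedness is guaranteed by the first part of \cref{Urq Box}, and preservation of the top $Z$ is immediate. Preservation of binary meets follows because $\Box_P$ commutes with arbitrary intersections. For the bottom, $\Box_P \varnothing = \varnothing$ follows from seriality (\cref{Urq serial}): if $P[x] = \varnothing$, then $P[x] \cap Q[x] = \varnothing$, contradicting \cref{Urq serial}.

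The main obstacle is preservation of binary joins, $\Box_P(C_1' \vee C_2') = \Box_P C_1' \vee \Box_P C_2'$. The inclusion $\supseteq$ follows from monotonicity of $\Box_P$ together with the fact that $\Box_P(C_1' \vee C_2') \in \L\U$ and joins in $\L\U$ are least upper bounds. For the reverse, I would suppose $x \in \Box_P(C_1' \vee C_2')$ but $x \notin \Box_P C_1' \vee \Box_P C_2'$. Using the second part of \cref{Urq Box} together with the fact that $\Box_Q$ commutes with intersections, one computes
\[
\Box_P C_1' \vee \Box_P C_2' = \psi(\phi \Box_P C_1' \cap \phi \Box_P C_2') = \psi \Box_Q(\phi' C_1' \cap \phi' C_2').
\]
Since $-\psi A = \down_1 A$, the failure produces some $z \in Z$ with $x \le_1 z$ and $Q[z] \subseteq \phi' C_1' \cap \phi' C_2'$. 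Applying \cref{Urq serial} to $z$, pick $w \in P[z] \cap Q[z]$; using that $P^{-1}[w]$ is a $\le_1$-downset (\cref{P inverse is a downset}) and $x \le_1 z \rel{P} w$, we conclude $w \in P[x]$. On the other hand, $w \in Q[z] \subseteq \phi' C_1' \cap \phi' C_2'$, so $w$ lies in $\down_1'(\phi' C_1' \cap \phi' C_2') = -(C_1' \vee C_2')$, contradicting $P[x] \subseteq C_1' \vee C_2'$.

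Finally, $\LL_\Urq$ preserves composition by \cref{lem: composition lemma for Urq}: for composable $\Urq$-morphisms $(P_1, Q_1)$ and $(P_2, Q_2)$ and $C \in \L\U_3$, that lemma yields $\Box_{P_2 \star P_1} C = \Box_{P_1} \Box_{P_2} C$, which is exactly what is needed. Identity morphisms are preserved since the identity on $\U = (Z, \le_1, \le_2)$ is $(\le_1, \le_2)$ by \cref{Urq identity}, and $\Box_{\le_1} C = C$ for every $C \in \L\U$ because $C$ is a $\le_1$-upset.
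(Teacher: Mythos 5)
Your proposal is correct and follows essentially the same route as the paper: the paper likewise reduces everything except preservation of binary joins to the $\GvG$ case, and for joins it runs the identical argument (using \cref{Urq Box}, the fact that $\Box_Q$ commutes with intersections, \cref{P inverse is a downset}, and a contradiction with seriality \cref{Urq serial}), merely phrasing the final contradiction as $P[z]\cap Q[z]=\varnothing$ rather than exhibiting a witness $w$ that lies both in and outside $C_1'\vee C_2'$.
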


\begin{proof}
The proof is essentially the same as that of \cref{prop: The functor L for GvG}, but uses \cref{claim: ell preserves finite joins} to show that if $(P, Q)$ is an $\Urq$-morphism, then $\Box_P$ preserves binary joins.
\end{proof}

\begin{theorem} \label{prop: Hg and Urq}
The following diagram commutes up to natural isomorphism.
\[
\begin{tikzcd}[column sep = 3em]
& \Lat & \\
\Hg \arrow[ur, "\LL_\Hg"] \arrow[rr, "\UU"']&& \Urq \arrow[ul, "\LL_\Urq"']
\end{tikzcd}
\]
\end{theorem}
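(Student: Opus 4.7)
The plan is to exhibit the natural isomorphism $\mu : \LL_\Hg \to \LL_\Urq\circ\UU$ component-wise via the maps $\mu_\H \colon \L\H \to \L\U$ from \cref{lem: LU = LH}. Recall that $\mu_\H(A) = (A \times Y) \cap Z_\H$ for each $\H = (X, R, Y) \in \Hg$ and $A \in \L\H$, where $\U = \UU(\H) = (Z_\H, \le_1, \le_2)$, and that \cref{lem: LU = LH} already establishes $\mu_\H$ is a $\Lat$-isomorphism. So everything reduces to verifying naturality: given an $\Hg$-morphism $(S,T) \colon \H \to \H'$ and $(P,Q) := \UU(S,T)$, the square
\[
\begin{tikzcd}[column sep = 4em]
\L\H' \arrow[r, "\Box_S"] \arrow[d, "\mu_{\H'}"'] & \L\H \arrow[d, "\mu_\H"] \\
\L\U' \arrow[r, "\Box_P"'] & \L\U
\end{tikzcd}
\]
commutes. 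Unwinding the definitions, for $A' \in \L\H'$ we must prove $(\Box_S A' \times Y) \cap Z_\H = \Box_P((A' \times Y') \cap Z_{\H'})$, recalling from \cref{thm: U is a functor} that $(x,y) \rel{P} (x',y') \iff x \rel{S} x'$.

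The forward inclusion is immediate: if $(x,y) \in Z_\H$ with $S[x] \subseteq A'$, then any $(x',y') \in P[(x,y)]$ satisfies $x \rel{S} x'$, hence $x' \in A'$, so $(x',y') \in (A' \times Y') \cap Z_{\H'}$. The reverse inclusion is the main content. Suppose $(x,y) \in Z_\H$ with $P[(x,y)] \subseteq (A' \times Y') \cap Z_{\H'}$, and assume toward contradiction that some $x' \in S[x]$ lies outside $A'$. Since $A' = \Box'\bd' A'$, from $x' \notin \Box'\bd' A'$ we get $y' \in R'[x']$ with $y' \notin \bd' A'$. Apply \cref{Hg maximal} to the pair $x' \rel{R'} y'$ to produce $x'' \ge x'$ and $y'' \ge y'$ with $(x'',y'') \in Z_{\H'}$. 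By \cref{S[x] an upset}, $S[x]$ is an upset of $X'$, so $x \rel{S} x''$; consequently $(x'',y'') \in P[(x,y)] \subseteq (A' \times Y') \cap Z_{\H'}$, yielding $x'' \in A'$. Since $A'$ is an upset (\cref{Hg A is upset}) and $x' \le x''$, we conclude $x' \in A'$, a contradiction.

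The main (and only nontrivial) obstacle is this reverse inclusion, and the key insight is that the defining identity $A' = \Box'\bd' A'$ for elements of $\L\H'$ converts failure of membership into a witnessing $R'$-edge, which the Hartung maximality axiom \cref{Hg maximal} then promotes to a pair in $Z_{\H'}$; the upset-property of $S[x]$ transports this back across the morphism. With naturality in hand, the diagram commutes up to the natural isomorphism $\mu$, completing the proof.
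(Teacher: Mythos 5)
Your overall strategy is the paper's: the components of the natural isomorphism are the maps $\mu_\H$ of \cref{lem: LU = LH}, and naturality reduces to the identity $\Box_P\bigl((A' \times Y')\cap Z_{\H'}\bigr) = (\Box_S A' \times Y)\cap Z_\H$. The paper disposes of this identity in one line by citing \cref{eqn: BoxP calculation}, already recorded in the proof of \cref{thm: U is a functor}; you re-derive it, which is legitimate, and your forward inclusion is fine.

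The last step of your reverse inclusion, however, is a non sequitur. Having produced $(x'',y'') \in Z_{\H'}$ with $x' \le x''$ and concluded $x'' \in A'$, you assert that since $A'$ is an upset and $x' \le x''$ we get $x' \in A'$. Upsets are closed under passing \emph{upward}: from $x' \in A'$ and $x' \le x''$ one infers $x'' \in A'$, not the converse, so no contradiction has been reached at that point. The repair is to route the contradiction through the second coordinate rather than the first: since $(x'',y'') \in Z_{\H'} \subseteq R'$ and $x'' \in A'$, we get $y'' \in \bd' A'$; because $\bd' A'$ is a downset (\cref{Hg A is upset}) and $y' \le y''$, it follows that $y' \in \bd' A'$, contradicting the choice of $y'$ with $y' \notin \bd' A'$ coming from $x' \notin \Box'\bd' A' = A'$. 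With this one-line fix your argument is correct and coincides in substance with what the paper's cited equation encapsulates.
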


\begin{proof}
Let $\H = (X, R, Y) \in \Hg$ and $\U = \UU(\H) = (Z, \le_1, \le_2)$. By \cref{lem: LU = LH}, there is a $\Lat$-isomorphism $\mu_\H \colon \L\H \to \L\U$, given by $\mu_\H(A) = (A \times Y) \cap Z$. Let $(S, T) \colon \H \to \H'$ be an $\Hg$-morphism and $(P, Q) = \UU(S, T)$. By \cref{thm: U is a functor},
\[
(x, y) \rel{P} (x', y') \iff x \rel{S} x' \quad\textrm{and}\quad (x, y) \rel{Q} (x', y') \iff y \rel{T} y'.
\]
We show that the following diagram  commutes.
\[
\begin{tikzcd}[column sep = 5em]
\L\H' \arrow[r, "\Box_S"] \arrow[d, "\mu_{\H'}"'] & \L\H \arrow[d, "\mu_\H"] \\
\L\U' \arrow[r, "\Box_{P}"'] & \L\U
\end{tikzcd}
\]
Let $A' \in \L\H'$. Then $\Box_P \mu_{\H'}(A') = \Box_P((A' \times Y') \cap Z')$ and $\mu_\H(\Box_S A') = (\Box_S A' \times Y) \cap Z$, which are equal by \cref{eqn: BoxP calculation}. 
Thus, $\mu \colon \LL_\Hg \to \LL_\Urq \circ \UU$ is natural, and hence the triangle above commutes up to natural isomorphism.
\end{proof}

By composing $\PP^{-1} \colon \Plo \to \Urq$ and $\LL_{\Urq} \colon \Urq \to \Lat$, we get a functor $\LL_\Plo \colon \Plo \to \Lat$ which sends $\Pl$ to $\L\Pl$ (see \cref{LU}) and a $\Plo$-morphism $(P, Q)$ to $\Box_P$. The next result is then immediate.

\begin{theorem} \label{theorem: Urq and Plo}
The following diagram commutes.
\[
\begin{tikzcd}[column sep = 3em]
& \Lat & \\
\Urq \arrow[ur, "\LL_\Urq"] \arrow[rr, "\PP"'] && \Plo \arrow[ul, "\LL_\Plo"']
\end{tikzcd}
\]
\end{theorem}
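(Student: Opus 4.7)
The plan is to observe that the triangle commutes strictly (on the nose), not merely up to natural isomorphism, and that this follows almost immediately from unpacking the definitions. Recall that $\LL_\Plo$ was introduced as the composition $\LL_\Urq \circ \PP^{-1}$, so the identity $\LL_\Plo \circ \PP = \LL_\Urq$ is tautological; the only content of the theorem is to confirm that this composition indeed yields the functor described just before the statement (sending $\Pl$ to $\L\Pl$ and $(P,Q)$ to $\Box_P$), which requires comparing the lattice $\L\Pl$ built from the relation $R_\U$ with the lattice $\L\U$ built from the two quasi-orders.

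First, I would check the object level. Take $\U = (Z,\le_1,\le_2) \in \Urq$ and set $\Pl = \PP(\U) = (Z, R_\U)$. By \cref{lem: LP = LU}\eqref{LP} we have $\L\Pl = \L\U$ as sets, and by \cref{rem: basic Plo facts} the lattice structure on $\L\Pl$ is given by $C_1 \wedge C_2 = C_1 \cap C_2$ and $C_1 \vee C_2 = \Box\bd(C_1 \cup C_2)$; on the other hand, by \cref{rem: U is T1} the lattice structure on $\L\U$ is given by $C_1 \wedge C_2 = C_1 \cap C_2$ and $C_1 \vee C_2 = \psi(\phi C_1 \cap \phi C_2)$. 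The equality of these two joins follows from \cref{prop: relation between Urq and Plo Galois correspondences}\eqref{psi phi}, which gives $\psi\phi C = \Box \bd C$ and hence $\psi(\phi C_1 \cap \phi C_2) = \Box\bd(C_1 \cup C_2)$ once one rewrites the intersection via $\phi C_i = -\bd C_i$. Thus $\LL_\Plo(\PP(\U)) = \L\Pl = \L\U = \LL_\Urq(\U)$ as bounded lattices.

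Next, I would check the morphism level. For an $\Urq$-morphism $(P,Q)\colon \U \to \U'$ we have $\PP(P,Q) = (P,Q)$ by \cref{prop: PP is a functor}, so $\LL_\Plo(\PP(P,Q))$ and $\LL_\Urq(P,Q)$ are both given by the operator $\Box_P$ acting on the common underlying set $\L\U' = \L\Pl'$. Hence the two functors agree on morphisms as well, and the triangle commutes on the nose.

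There is no real obstacle here; the only thing to be careful about is that the join operation in $\L\Pl$ really does match the join operation in $\L\U$, and this is precisely the content of \cref{prop: relation between Urq and Plo Galois correspondences}, which has already been established. Consequently, the natural isomorphism $\L\U \to \L\Pl$ witnessing commutativity of the triangle can be taken to be the identity, so the triangle in fact commutes strictly.
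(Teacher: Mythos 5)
Your proposal is correct and matches the paper's approach: the paper likewise defines $\LL_\Plo$ as the composite $\LL_\Urq \circ \PP^{-1}$ and declares the triangle immediate, so commutativity is tautological on the nose. Your extra verification that the join $\psi(\phi C_1 \cap \phi C_2)$ in $\L\U$ agrees with $\Box\bd(C_1 \cup C_2)$ in $\L\Pl$ (via \cref{prop: relation between Urq and Plo Galois correspondences} and \cref{lem: LP = LU}) is exactly the detail the paper leaves implicit when it asserts that the composite sends $\Pl$ to $\L\Pl$ and $(P,Q)$ to $\Box_P$.
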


We complete the circle of equivalences with our final result of this section. 

\begin{theorem} \label{prop: Urq and DH}
The following diagram commutes up to natural isomorphism.
\[
\begin{tikzcd}[column sep = 3em]
& \Lat & \\
\Urq \arrow[ur, "\LL_\Urq"] \arrow[rr, "\DD"']&& \HD \arrow[ul, "\LL_\HD"']
\end{tikzcd}
\]
\end{theorem}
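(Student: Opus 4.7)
The plan is to exhibit a natural isomorphism $\xi \colon \LL_\Urq \to \LL_\HD \circ \DD$ whose components are the $\Lat$-isomorphisms $\xi_\U \colon \L\U \to \CLF(\LC\U)$ already furnished by \cref{lem: CLF = LU}, namely $\xi_\U(C) = \up C$, where $\up$ is taken with respect to the reverse-inclusion order on $\LC\U$. Since each $\xi_\U$ is a $\Lat$-isomorphism, it will suffice to verify naturality.

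For naturality, let $(P, Q) \colon \U \to \U'$ be an $\Urq$-morphism. By \cref{thm: D is a functor}, $\DD(P, Q) = (f_{PQ}, g_{PQ})$, and by definition of $\LL_\HD$ the map assigned to this $\HD$-morphism is $f_{PQ}^{-1} \colon \CLF(\LC\U') \to \CLF(\LC\U)$. The square to verify is
\[
\begin{tikzcd}[column sep = 5em]
\L\U' \arrow[r, "\Box_P"] \arrow[d, "\xi_{\U'}"'] & \L\U \arrow[d, "\xi_\U"] \\
\CLF(\LC\U') \arrow[r, "f_{PQ}^{-1}"'] & \CLF(\LC\U).
\end{tikzcd}
\]
Fix $C' \in \L\U'$. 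On the one hand, $\xi_\U(\Box_P C') = \up \Box_P C' = \{ D \in \LC\U : D \subseteq \Box_P C'\}$, using the reverse-inclusion order on $\LC\U$. On the other hand,
\[
f_{PQ}^{-1}(\xi_{\U'}(C')) = f_{PQ}^{-1}(\up C') = \{ D \in \LC\U : C' \le f_{PQ}(D)\} = \{ D \in \LC\U : f_{PQ}(D) \subseteq C'\},
\]
using the reverse-inclusion order on $\LC\U'$. By \cref{f(D) in C}, the conditions $f_{PQ}(D) \subseteq C'$ and $D \subseteq \Box_P C'$ are equivalent, so the two composites agree. Thus $\xi$ is a natural transformation, and since each component is a $\Lat$-isomorphism it is a natural isomorphism, completing the proof that the triangle commutes up to natural isomorphism.

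I do not anticipate any obstacle here: the argument is entirely bookkeeping once one notices that the defining equivalence for $f_{PQ}$ recorded in \cref{f(D) in C} is precisely the adjunction-style property that encodes naturality of $\xi$. The only minor subtlety is keeping the reverse-inclusion orderings on $\LC\U$ and $\LC\U'$ straight when translating between $\up$ in $\LC\U$ and set-theoretic inclusion in $\U$.
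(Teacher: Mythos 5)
Your proof is correct and follows essentially the same route as the paper: both take $\xi_\U(C)=\up C$ from \cref{lem: CLF = LU} as the components and reduce naturality of the square to the equivalence $f_{PQ}(D)\subseteq C' \iff D\subseteq \Box_P C'$ supplied by \cref{f(D) in C}. The only cosmetic difference is that the paper phrases the computation as a chain of equalities starting from $f^{-1}(\up C')$, whereas you compute both composites separately; the content is identical.
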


\begin{proof}
By \cref{lem: CLF = LU} there is a $\Lat$-isomorphism $\xi_U \colon \L\U \to \CLF(\LC\U)$, given by $\xi_U(C) = \up C$. For naturality, let $(P, Q) \colon \U \to \U'$ be an $\Urq$-morphism and let $(f, g) = \DD(P, Q)$. By \cref{def: fPQ and gPQ},
\[
f(D) = \bigcap \{ C' \in \LC\U' : D \subseteq \Box_P C'\} \quad\textrm{and}\quad g(E) = \bigcap \{ \phi'C' : C' \in \L\U', E \subseteq \Box_Q \phi'C' \}.
\]
We show that the following diagram  commutes.
\[
\begin{tikzcd}[column sep = 3em]
\L\U' \arrow[r, "\Box_P"] \arrow[d, "\xi_{\U'}"'] & \L\U \arrow[d, "\xi_\U"] \\
\CLF(\LC\U') \arrow[r, "f^{-1}"'] & \CLF(\LC\U)
\end{tikzcd}
\]
Let $C' \in \L\U'$. Then 
\begin{align*}
f^{-1}(\xi_{\U'}(C')) &= f^{-1}(\up C') = \{ D \in \LC\U : C' \le f(D) \} = \{ D \in \LC\U : \Box_P C' \le D \} \\
&= \up (\Box_P C') = \xi_\U(\Box_PC'),
\end{align*}
where the third equality follows from \cref{f(D) in C}.
Thus, $\xi \colon \LL_\Urq \to \LL_\HD \circ \DD$ is natural, and hence the triangle above commutes up to natural isomorphism.
\end{proof}


\section{Illustrating similarities and differences} \label{sec: similarities and differences}

In this final section, we discuss how each of the dualities for bounded lattices restricts to the distributive case. As we will see, in this case, Gehrke-van Gool, Urquhart, and \Plos\ dualities collapse to 
Priestley duality, while Celani-Gonz\'{a}lez and Hartung dualities to Stone duality for distributive lattices.
We then consider 
the action of each of the dual equivalences on a non-distributive lattice, thus
showcasing the similarities and differences of the various dualities for bounded lattices considered in the literature.

\subsection{The distributive case}

Let $A$ be a bounded distributive lattice and $X$ its Priestley space. We recall (see, e.g., \cite{Pri84} or \cite{BBGK10}) that there is an isomorphism between $\Filt(A)$ and the frame $\ClosedUp(X)$ of closed upsets of $X$ (ordered by $\supseteq$). By identifying $A$ with $\ClopUp(X)$, this isomorphism is given by
\[
\mathcal F \mapsto \bigcap \{ U \in \ClopUp(X) : U \in \mathcal F\} \quad\textrm{and}\quad C \mapsto \{ U \in \ClopUp(X) : C \subseteq U\}. 
\]
Similarly, there is an isomorphism between $\Idl(A)$ and the frame $\OpenUp(X)$ of open upsets of $X$, given by
\[
\mathcal I \mapsto \bigcup \{ U \in \ClopUp(X) : U \in \mathcal I \} \quad\textrm{and}\quad O \mapsto \{ U \in \ClopUp(X) : U \subseteq O\}.
\]

\subsection*{Jipsen-Moshier approach:} The lattice $A$ is sent to $(\Filt(A),\sigma)$, where $\sigma$ is the Scott topology on $\Filt(A)$. Thus, the Jipsen-Moshier dual of $A$ can be interpreted as $\ClosedUp(X)$ with the Scott topology, whose basis is given by $\{ \up U : U \in \ClopUp(X) \}$, where $\up$ is calculated with respect to $\supseteq$. 

\subsection*{BDGM-approach:}
The lattice $A$ is sent to $(\Filt(A), \lambda)$, where $\lambda$ is the Lawson topology on $\Filt(A)$. Thus, the BDGM-dual of $A$ can be interpreted as $\ClosedUp(X)$ with the Lawson topology, which is generated by \[
\{ \up U : U \in \ClopUp(X) \} \cup \{  -\up U : U \in \ClopUp(X) \},
\]
where again $\up$ is calculated with respect to $\supseteq$.

\subsection*{Hartonas approach:}
The Hartonas dual is the same as the BDGM-dual.

\subsection*{Celani-Gonz\'{a}lez approach:} The lattice $A$ is sent to $\C(A)=(X_A,\K_A)$ (see \cref{rem: CG = Lat}). Since $A$ is distributive, $X_A$ is exactly the set of prime filters of $A$. Therefore, $X_A$ is the Stone dual of $A$ and $\K_A$ is the collection of complements of compact opens of the spectral topology on $X_A$. Thus, the topology on $X_A$ is the co-compact topology of the spectral topology on $X_A$ (which, when viewing $X_A$ as a Priestley space, corresponds to the topology of open downsets).

\subsection*{Dunn-Hartonas approach:} The lattice $A$ is sent to the triple $(\Filt(A), R, \Idl(A))$, where recalling from \cref{sec: DH} that we are working with the complement of their relation, $R$ is given by $F \rel{R} I$ iff $ F \cap  I = \varnothing$. Interpreting $F$ as $\{ U \in \ClopUp(X) : C \subseteq U \}$ for some $C \in \ClosedUp(X)$ and $I$ as $\{ V \in \ClopUp(X) : V \subseteq O \}$ for some $O \in \OpenUp(X)$,  
\begin{align*}
C \subseteq O &\iff \bigcap \{ U \in \ClopUp(X) : C \subseteq U \} \subseteq \bigcup \{ V \in \ClopUp(X) : V \subseteq O \} \\
&\iff F \cap I \ne \varnothing, 
\end{align*}
where the second equivalence follows by compactness.
Thus, the Dunn-Hartonas dual of $A$ can be interpreted as $(\ClosedUp(X), R, \OpenUp(X))$, where $R$ is given by $C \rel{R} O$ iff $C \not\subseteq O$.

\subsection*{Gehrke-van Gool approach:} The lattice $A$ is sent to $(\Filt(A)_p, R_p, \Idl(A)_p)$, where $ F \rel{R_p}  I$ iff $F \cap I = \varnothing$. Since $A$ is distributive, a filter is d-prime iff it is prime. Thus, $\Filt(A)_p = X$. Moreover, $\Idl(A)_p := Y$ is the set of prime ideals of $A$. Therefore, the Gehrke-van Gool dual of $A$ is $(X, R, Y)$, where  
$x \rel{R} y$ iff $x \cap y = \varnothing$
for each $x\in X$ and $y \in Y$. Since there is an order-reversing bijection $Y \to X$ sending $y$ to $-y$, we may identify $(X,R,Y)$ with $X$, where $R$ is identified with the order on $X$ (see \cite[p.~452]{GvG14}), and hence interpret the Gehrke-van Gool dual as the Priestley dual of $A$.

\subsection*{Hartung approach:} The lattice $A$ is sent to $(\Filt(A)_0, R_0, \Idl(A)_0)$, where  recalling from \cref{Hs dual relation} that we are working with the complement of the Hartung relation, $\Filt(A)_0$ is the set of $I$-maximal filters for some ideal $I$, $\Idl(A)_0$ is the set of $F$-maximal ideals for some filter $F$, and $F \rel{R_0} I$ iff $F \cap I = \varnothing$ (see \cite[Def.~2.1.6]{Har92}). Because $A$ is distributive, 
these are precisely the sets of prime filters and prime ideals of $A$ (see \cite[p.~278]{Har92}). Therefore, $\Filt(A)_0 = \Filt(A)_p$,  $\Idl(A)_0 = \Idl(A)_p$, and $R_0 = R_p$. Thus, as in the Gehrke-van Gool approach, we may identify the Hartung dual of $A$ with $\Filt(A)_0$, but the topology of this space is the topology of open downsets. Consequently, 
the topology of the Hartung dual of $A$ may be interpreted as the Stone dual of $A$ with the co-compact topology of the spectral topology.

\subsection*{Urquhart approach:} The lattice $A$ is sent to $(Z, \le_1, \le_2)$, where $Z$ is the set of maximal filter-ideal pairs of $A$, $(F, I) \le_1 (F', I')$ iff $F \subseteq F'$, and $(F, I) \le_2 (F', I')$ iff $I \subseteq I'$ (see \cite[p.~47]{Urq78}). Since $A$ is distributive, 
$Z = \{ (x, -x) : x \in X \}$. Thus, $Z$ may be identified with $X$, the quasi-order $\le_1$ on $Z$ with the order $\le$ on $X$, and $\le_2$ with $\ge$ (see \cite[Thm.~5]{Urq78}). Consequently, we may interpret the Urquhart dual as the Priestley dual of $A$.

\subsection*{\Plos\ approach:} The \Plos\ dual of $A$ utilizes the same space as Urquhart (see \cite[p.~76]{Plo95}), but replaces the two quasi-orders with a relation $R$. Since $A$ is distributive, it follows from the Urquhart approach that each  maximal filter-ideal pair is of the form $(x,-x)$ for some $x \in X$. Thus, by \cref{def: R from quasi-orders}, $(x,-x) \rel{R} (x',-x')$ iff there is $x'' \in X$ such that $x \subseteq x''$ and $-x' \subseteq -x''$, which happens iff $x\le x'$. Consequently, in view of the Urquhart approach, we may interpret the \Plos\ dual as the Priestley dual of $A$. 

\bigskip

We now turn our attention to morphisms. 
Let $\alpha \colon A' \to A$ be a morphism of bounded distributive lattices. In  each of Jipsen-Moshier, BDGM, and Hartonas dualities, $\alpha$ is sent to 
$\alpha^{-1} \colon \Filt(A) \to \Filt(A')$. 
The Dunn-Hartonas dual of $\alpha$ is 
the $\HD$-morphism
\[
(f,g) \colon (\Filt(A), R, \Idl(A)) \to (\Filt(A'), R', \Idl(A')), 
\]
where $f = \alpha^{-1} \colon \Filt(A) \to \Filt(A')$ and $g = \alpha^{-1} \colon \Idl(A) \to \Idl(A')$. Let  $\varphi \colon X \to X'$ be the Priestley dual of $\alpha \colon A' \to A$. Interpreting $F$ as $\{ U \in \ClopUp(X) : C \subseteq U \}$ for some $C \in \ClosedUp(X)$, $\alpha^{-1}[F]$ is interpreted as the least closed upset of $X'$ containing $\varphi[C]$. Also, interpreting $I$ as $\{ V \in \ClopUp(X) : V \subseteq O \}$ for some $O \in \OpenUp(X)$, $\alpha^{-1}[I]$ is interpreted as the greatest open upset of $X'$ contained in $\varphi[O]$. Thus, 
we may view the Dunn-Hartonas dual of $\alpha$ as the $\HD$-morphism
\[
(f,g) \colon (\ClosedUp(X),R,\OpenUp(X)) \to (\ClosedUp(X'),R',\OpenUp(X')), 
\]
where $f$ and $g$ are given by $f(C)=\up \varphi[C]$ for each $C\in\ClosedUp(X)$ and $g(O) = - \down \varphi[-O]$ for each $O\in\OpenUp(X)$.

The $\GvG$-morphism corresponding to $\alpha$ is
\[
(S, T) \colon (\Filt(A)_p, R_p, \Idl(A)_p) \to (\Filt(A')_p, R_p', \Idl(A')_p). 
\]
With the identifications above, we may view these as the relation $S \subseteq X \times X'$ given by 
\[
x \rel{S} x' \iff \varphi(x) \subseteq x'.
\]
This relation, in turn, may be identified with $\varphi$. Thus, the Gehrke-van Gool dual of $\alpha$ may be thought of as its Priestley dual $\varphi$. The same applies to the Hartung, Urquhart, and \Plos\ duals of $\alpha$. 


\subsection{Illustrating the functors}

We conclude by illustrating the action of the functors described in this paper on the various duals of the lattice $A$ consisting of a countable antichain $A_0$ together with $0, 1$ shown in the following picture:  
\begin{center}
\newcommand\sca{2}
\begin{tikzpicture}[scale = \scale]
\draw [fill] (0*\sca,2) circle[radius = \rad];
\draw [fill] (1.25*\sca,2) circle[radius = \rad];
\draw [fill] (4.75*\sca,2) circle[radius = \rad];
\draw [fill] (6*\sca,2) circle[radius = \rad];
\draw [fill] (3*\sca,0) circle[radius = \rad];
\draw [fill] (3*\sca,4) circle[radius = \rad];
\node at (3*\sca,2) {$\dots$};
\draw (3*\sca,0) -- (0*\sca,2) -- (3*\sca,4);
\draw (3*\sca,0) -- (1.25*\sca,2) -- (3*\sca,4);
\draw (3*\sca,0) -- (4.75*\sca,2) -- (3*\sca,4);
\draw (3*\sca,0) -- (6*\sca,2) -- (3*\sca,4);
\node [below] at (3*\sca,0) {$0$};
\node [above] at (3*\sca,4) {$1$};
\node [left] at (0*\sca,2) {$a_0$};
\node [left] at (1.25*\sca,2) {$a_1$};
\node [right] at (6*\sca,2) {$b_0$};
\node [right] at (4.75*\sca,2) {$b_1$};
\end{tikzpicture}
\end{center}

This lattice is sometimes denoted by $M_\infty$ (where $M$ stands for modular and $\infty$ for the infinite antichain in the middle), and was  considered by Gehrke and van Gool 
\cite[Ex.~4.1]{GvG14} in contrasting their duality with Hartung duality.
Letting $X = \Filt(A)$, we see that $X = \{ \up c : c \in A\}$ is dually isomorphic to $A$, and $Y = \Idl(A)$ is isomorphic to $A$.

\subsection*{Jipsen-Moshier, BDGM, and Hartonas duals}

The Jipsen-Moshier dual of $A$ is $X$ (see \cref{Sec 15 figure}) with the Scott topology. Because $K(X) = X$, the Scott topology is simply the topology of upsets of $X$. The BDGM-dual of $A$ is $X$ with the Lawson topology. Since the lower topology has subbasis $\{ -\up x : x \in X\}$, it follows that the lower topology consists of all cofinite subsets of $X$ containing $\up 1$. Consequently, each point in $\{ \up c : c \in A_0\}$ is isolated, as is $\up 0$. Therefore, the Lawson topology is the one-point compactification of the discrete space $X - \{ \up 1\}$. The Hartonas dual of $A$ is the same as the BDGM-dual.

\subsection*{Celani-Gonz\'{a}lez dual} 

The set of meet-irreducible elements of $X$ is $X_m = \{ \up c : c \in A_0\}$. The topology on $X_m$ is the restriction of the lower 
topology on $X$, and so is the cofinite topology on $X_m$. Because each element of $X$ is compact, the subbasis $\K$ of the topology on $X_m$ is $\{ X_m - \up x : x \in X\}$.

\subsection*{Dunn-Hartonas dual} 

The Dunn-Hartonas dual of $A$ is $(X, R, Y)$. By identifying $Y$ with $A$ via $d \mapsto \down d$, from the previous case we see that the Lawson topology on $Y$ is the one-point compactification of the discrete space $Y - \{0\}$ and $R$ is given by 
\[
\up c \rel{R} d \iff \up c \cap \down d = \varnothing \iff c \not\le d.
\]

\subsection*{Gehrke-van Gool dual} 

It is straightforward to see that there are no finite distributive meets of incomparable elements in $X$. Consequently, $X_p = X - \{ \up 0 \}$. Similarly, $Y_p = Y - \{1\}$. Because the topologies on $X_p$ and $Y_p$ are the subspace topologies of the Lawson topologies, the topology on $X_p$ is the one-point compactification of $X_p - \{ \up 1 \}$, and similarly for $Y_p$. Moreover, $R_p$ is the restriction of $R$ to $X_p \times Y_p$.

\subsection*{Hartung dual} 

It is easy to see that $\up 1$ is not $d$-maximal for any $d \in A$. Moreover, $\up a_n$ is $b_0$-maximal and $\up b_n$ is $a_0$-maximal for each $n$. Therefore, $X_0 = X_p - \{ \up 1 \} = X - \{ \up 0, \up 1 \}$. Similarly, $Y_0 = Y - \{0, 1\}$. The topologies on $X_0$ and $Y_0$ are the restrictions of the open downset topologies, which are exactly the cofinite topologies, and the relation $R_0$ is the restriction of $R_p$.  Thus, $R_0$ is given by $\up c \rel{R_0} d$ iff $c \ne d$. The following picture describes $X$, $X_p$, and $X_0$:

\begin{figure}[ht]
\[
\begin{tikzpicture}[scale = \scale]
\draw [fill] (0,2) circle[radius = \rad];
\node [left] at (0,2) {$\up a_0$};
\draw [fill] (1,2) circle[radius = \rad];
\node [right] at (1.1,2) {$\up a_1$};
\draw [fill] (5,2) circle[radius = \rad];
\node [left] at (4.9,2) {$\up b_1$};
\draw [fill] (6,2) circle[radius = \rad];
\node [right] at (6,2) {$\up b_0$};
\draw [fill] (3,0) circle[radius = \rad];
\node [below] at (3,0) {$\up 1$};
\draw [fill] (3,4) circle[radius = \rad];
\node [above] at (3,4) {$\up 0$};
\node at (3,2) {$\dots$};
\draw (3,0) -- (0,2) -- (3,4);
\draw (3,0) -- (1,2) -- (3,4);
\draw (3,0) -- (5,2) -- (3,4);
\draw (3,0) -- (6,2) -- (3,4);
\node at (3,-2) {$X$};
\draw [fill] (0+\spac,2) circle[radius = \rad];
\draw [fill] (1+\spac,2) circle[radius = \rad];
\draw [fill] (5+\spac,2) circle[radius = \rad];
\draw [fill] (6+\spac,2) circle[radius = \rad];
\draw [fill] (3+\spac,0) circle[radius = \rad];
\node at (3+\spac,2) {$\dots$};
\draw (3+\spac,0) -- (0+\spac,2);
\draw (3+\spac,0) -- (1+\spac,2);
\draw (3+\spac,0) -- (5+\spac,2);
\draw (3+\spac,0) -- (6+\spac,2);
\node at (3+\spac,-2) {$X_p$};
\draw [fill] (0+2*\spac,2) circle[radius = \rad];
\draw [fill] (1+2*\spac,2) circle[radius = \rad];
\draw [fill] (5+2*\spac,2) circle[radius = \rad];
\draw [fill] (6+2*\spac,2) circle[radius = \rad];
\node at (3+2*\spac,2) {$\dots$};
\node at (3+2*\spac,-2) {$X_0$};
\end{tikzpicture}
\]
\caption{$X$, $X_p$, and $X_0$} \label{Sec 15 figure}
\end{figure}

Observe that $X_0$ is exactly the Celani-Gonz\'{a}lez space $X_m$. More generally, if $X$ is a coherent lattice, then $X_0 \subseteq X_m \subseteq X_p$. To see the second inclusion, let $x \in X_m$ and let $k_1 \wedge \dots \wedge k_n$ be a distributive meet in $K(X)$ with $k_1 \wedge \dots \wedge k_n \le x$. Then 
\[
x = x \vee (k_1 \wedge \dots \wedge k_n) = (x \vee k_1) \wedge \dots \wedge (x \vee k_n),
\]
 where the second equality can be proved along the same lines as in the proof of \cref{lem: X0 the same}.
Since $x \in X_m$, we have $x = x \vee k_i$ for some $i$, and so $k_i \le x$. Thus, $x \in X_p$.

To see the first inclusion, let $Y = \OF(X)$ and define 
\[
x \rel{R} U \Longleftrightarrow x \notin U.
\]
Then $(X,R,Y)\in\HD$ by \cref{lem: triple in HD}.
Now, let $x \in X_0$. Then $x \in \max R^{-1}[U]$ for some $U \in Y$. If $x \notin X_m$, then $x = x_1 \wedge x_2$ for some $x_1,x_2 \in X$ with $x < x_1,x_2$. 
Since $x \in \max R^{-1}[U]$, we see that $x_i \nr{R} U$, so $x_1, x_2 \in U$, and hence $x = x_1 \wedge x_2 \in U$. Thus, $x 
\nr{R} U$.
The obtained contradiction 
proves that $x \in X_m$.

Our example shows that the inclusion of $X_m$ into $X_p$ may in general be proper. We currently don't have an example showing that the inclusion $X_0$ into $X_m$ may also be proper.

\subsection*{Urquhart dual} 

If $c, d \in A_0$ with $c \ne d$, then $\up c$ is $d$-maximal and vice versa. From this it follows that
$Z = \{ (\up c, d) : c, d \in A_0, c \ne d\}.$
The topology on $Z$ is the subspace topology induced from the inclusion $Z \subseteq X_0 \times Y_0$, where $X_0 \times Y_0$ is given the product topology (of two cofinite topologies). 
The quasi-order $\le_1$ is given by $(\up c, d) \le_1 (\up c', d')$ iff $\up c \subseteq \up c'$, which happens iff $c = c'$ since $c, c' \in A_0$. Similarly, $(\up c, d) \le_2 (\up c', d')$ iff $d = d'$.

\subsection*{\Plos\ dual}

The \Plos\ relation $R$ on $Z$ is given by $(\up c, d) \rel{R} (\up c', d')$ iff there is $(\up c'', d'') \in Z$ with $(\up c, d) \le_1 (\up c'', d'')$ and $(\up c', d') \le_2 (\up c'', d'')$. From the description of $\le_1, \le_2$ above, we see that $R$ is the equality relation on $Z$.

\bigskip

Finally, we illustrate how the various functors act on morphisms by considering the inclusion $\alpha \colon A' \to A$, where $A'$ is the bounded sublattice $\{0, a_0, b_0, 1\}$ of $A$.

\begin{center}
\newcommand\sca{1.2}
\newcommand\addd{5}
\newcommand\subt{-15}
\begin{tikzpicture}[scale = \scale]
\draw [fill] (0*\sca+\subt,2) circle[radius = \rad];
\draw [fill] (6*\sca+\subt,2) circle[radius = \rad];
\draw [fill] (3*\sca+\subt,0) circle[radius = \rad];
\draw [fill] (3*\sca+\subt,4) circle[radius = \rad];
\draw (3*\sca+\subt,0) -- (0*\sca+\subt,2) -- (3*\sca+\subt,4);
\draw (3*\sca+\subt,0) -- (6*\sca+\subt,2) -- (3*\sca+\subt,4);
\draw[->] (3*\sca+\subt+\addd,0) -- (3*\sca-\addd,0);
\draw[->] (3*\sca+\subt+\addd,4) -- (3*\sca-\addd,4);
\node [below] at (3*\sca+\subt,0) {$0$};
\node [above] at (3*\sca+\subt,4) {$1$};
\node [left] at (0*\sca+\subt,2) {$a_0$};
\node [right] at (6*\sca+\subt,2) {$b_0$};
\draw [fill] (0*\sca,2) circle[radius = \rad];
\draw [fill] (1.25*\sca,2) circle[radius = \rad];
\draw [fill] (4.75*\sca,2) circle[radius = \rad];
\draw [fill] (6*\sca,2) circle[radius = \rad];
\draw [fill] (3*\sca,0) circle[radius = \rad];
\draw [fill] (3*\sca,4) circle[radius = \rad];
\node at (3*\sca,2) {$\dots$};
\draw (3*\sca,0) -- (0*\sca,2) -- (3*\sca,4);
\draw (3*\sca,0) -- (1.25*\sca,2) -- (3*\sca,4);
\draw (3*\sca,0) -- (4.75*\sca,2) -- (3*\sca,4);
\draw (3*\sca,0) -- (6*\sca,2) -- (3*\sca,4);
\node [below] at (3*\sca,0) {$0$};
\node [above] at (3*\sca,4) {$1$};
\node [left] at (0*\sca,2) {$a_0$};
\node [left] at (1.25*\sca,2) {$a_1$};
\node [right] at (6*\sca,2) {$b_0$};
\node [right] at (4.75*\sca,2) {$b_1$};
\end{tikzpicture}
\end{center}

\subsection*{Jipsen-Moshier, BDGM, and Hartonas duals} Recalling that $X = \Filt(A)$ and $X' = \Filt(A')$, in all three of these cases $\alpha \colon A' \to A$ induces the morphism $f \colon X \to X'$ given by $f(F) = \alpha^{-1}(F)$ for each $F \in X$.

\subsection*{Dunn-Hartonas dual} Let $\D = (X, R, Y)$ and $\D' = (X', R', Y')$ be the Dunn-Hartonas duals of $A$ and $A'$, where using the above identifications, $Y = A$ and $Y' = A'$. Then the Dunn-Hartonas dual of $\alpha$ is the $\HD$-morphism $(f, g) \colon \D \to \D'$, where $f(F) = \alpha^{-1}(F)$ for each $F \in X$ and $g$ is given by 
\[
g(d) = \begin{cases} d & \textrm{if }d \in A', \\ 0 & \textrm{otherwise}. \end{cases}
\] 

\subsection*{Gehrke-van Gool dual} Let $\G$ and $\G'$ be the Gehrke-van Gool duals of $A$ and $A'$. The description of $\G$ is given above, while $\G' = (X'_p, R'_p, Y'_p)$, where $X'_p = \{ \up a_0, \up b_0\}$ and $Y'_p = \{ a_0, b_0\}$.  
The $\GvG$-morphism $(S, T)$ corresponding to $\alpha$ is given by
\[
\up c \rel{S} \up c' \textrm{ if }  c \in \{a_0,b_0\} \Longrightarrow c' \le c \quad \textrm{and} \quad d \rel{T} d' \textrm{ if } d\in\{a_0,b_0\} \Longrightarrow d \le d'.
\]

\subsection*{Celani-Gonz\'{a}lez dual}

The Celani-Gonz\'{a}lez duals of $A$ and $A'$ are $X_m$ and $X_m'$, where $X_m$ is described above and $X_m' = X_p'$. The $\CG$-morphism corresponding to $\alpha$ is then the  restriction of $S$ to $X_m \times X_m'$.

\subsection*{Hartung dual} Let $\H$ and $\H'$ be the Hartung duals of $A$ and $A'$. The description of $\H$ is given above and $\H' = \G'$. 
The Hartung morphism $(S_0, T_0)$ corresponding to $\alpha$ is given by $S_0 = S \cap (X_0 \times X_0')$ and $T_0 = T \cap (Y_0 \times Y_0')$.

\subsection*{Urquhart and \Plos\ duals} Let $\U$ and $\U'$ be the Urquhart duals of $A$ and $A'$. The description of $\U$ is given above, while $\U' = (Z', \le_1', \le_2')$, where $Z' = \{ (\up a_0, b_0), (\up b_0, a_0)\}$. The two quasi-orders $\le_1', \le_2'$ are both the equality relation. The Urquhart morphism $(P, Q)$ corresponding to $\alpha$ is given by 
\begin{align*}
(\up c, d) \rel{P} (\up c', d') &\iff c \in \{ a_0,b_0 \} \mbox{ implies } c' \le c; \\
(\up c, d) \rel{Q} (\up c', d') &\iff d \in \{ a_0,b_0 \} \mbox{ implies } d \le d'.
\end{align*}
Finally, let $\Pl$ and $\Pl'$ be the \Plos\ duals of $A$ and $A'$. Since the \Plos\ dual of $\alpha$ is $(P, Q)$, the $\Plo$-morphism is the same as the Urquhart morphism described above.

\section*{Acknowledgement}

We would like to thank the referee for useful comments which have improved the readability of the paper.

\newcommand{\etalchar}[1]{$^{#1}$}
\def\cprime{$'$}

\end{document}